\def\input@path{{figures/}}\makeatother
\newtheorem{theorem}{Theorem}
\newtheorem{corollary}[theorem]{Corollary}
\newtheorem{proposition}[theorem]{Proposition}
\newtheorem{lemma}[theorem]{Lemma}
\newtheorem{conjecture}[theorem]{Conjecture}
\newtheorem*{theorem*}{Theorem}
\theoremstyle{definition}
\newtheorem{definition}[theorem]{Definition}
\newtheorem{example}[theorem]{Example}
\newtheorem{remark}[theorem]{Remark}
\newcommand{\R}{\mathbb{R}} 
\newcommand{\N}{\mathbb{N}} 
\newcommand{\fS}{\mathfrak{S}} 
\newcommand{\fB}{\mathfrak{S}^\textsc{b}} 
\renewcommand{\c}[1]{\mathcal{#1}} 
\renewcommand{\b}[1]{\boldsymbol{#1}} 
\newcommand{\set}[2]{\left\{ #1 \;\middle|\; #2 \right\}} 
\newcommand{\bigset}[2]{\big\{ #1 \;\big|\; #2 \big\}} 
\newcommand{\Bigset}[2]{\Big\{ #1 \;\Big|\; #2 \Big\}} 
\newcommand{\ssm}{\smallsetminus} 
\newcommand{\dotprod}[2]{\left\langle \, #1 \; \middle| \; #2 \, \right\rangle} 
\newcommand{\symdif}{\,\Delta\,} 
\newcommand{\one}{\b{1}} 
\newcommand{\eqdef}{\mbox{\,\raisebox{0.2ex}{\scriptsize\ensuremath{\mathrm:}}\ensuremath{=}\,}} 
\newcommand{\defeq}{\mbox{~\ensuremath{=}\raisebox{0.2ex}{\scriptsize\ensuremath{\mathrm:}} }} 
\newcommand{\simplex}{\triangle} 
\DeclareMathOperator{\conv}{conv} 
\DeclareMathOperator{\inv}{inv} 
\DeclareMathOperator{\Binv}{inv^\textsc{b}} 
\DeclareMathOperator{\Vol}{Vol} 
\newcommand{\ie}{\textit{i.e.}~} 
\newcommand{\eg}{\textit{e.g.}~} 
\newcommand{\viceversa}{\textit{vice versa}} 
\newcommand{\aka}{\textit{a.k.a.}~} 
\definecolor{darkblue}{rgb}{0,0,0.7} 
\definecolor{green}{RGB}{57,181,74} 
\definecolor{violet}{RGB}{147,39,143} 
\newcommand{\darkblue}{\color{darkblue}} 
\newcommand{\defn}[1]{\textsl{\darkblue #1}} 
\newcommand{\para}[1]{\bigskip\noindent\textbf{\uline{#1.}}} 
\newcommand{\paraspace}[1]{\bigskip\noindent\textbf{#1.}\medskip} 
\newcommand{\julian}[1]{}
\def\part{\@startsection{part}{1}%
\z@{.7\linespacing\@plus\linespacing}{.8\linespacing}%
{\LARGE\sffamily\centering}}
\def\l@part{\@tocline{1}{8pt}{0pc}{}{}}
\def\l@section{\@tocline{1}{3pt}{0pc}{}{}}
\let\oldtocpart=\tocpart
\renewcommand{\tocpart}[2]{\sc\large\oldtocpart{#1}{#2}}
\let\oldtocsection=\tocsection
\renewcommand{\tocsection}[2]{\bf\oldtocsection{#1}{#2}}
\let\oldtocsubsubsection=\tocsubsubsection
\renewcommand{\tocsubsubsection}[2]{\quad\oldtocsubsubsection{#1}{#2}}
\newcommand{\meet}{\wedge} 
\newcommand{\join}{\vee} 
\newcommand{\bigMeet}{\bigwedge} 
\newcommand{\bigJoin}{\bigvee} 
\newcommand{\projDown}{\pi_\downarrow} 
\newcommand{\projUp}{\pi^\uparrow} 
\newcommand{\PR}{\mathsf{PR}} 
\newcommand{\Bequiv}{{\equiv^\textsc{b}}} 
\newcommand{\shard}{\polytope{S}}
\newcommand{\shards}{\boldsymbol{S}\!}
\newcommand{\Bshards}{\shards^\textsc{\;b}}
\newcommand{\arc}{\alpha} 
\newcommand{\arcs}{{\mathcal{A}}} 
\newcommand{\Barc}{\beta} 
\newcommand{\Barcs}{\arcs^\textsc{b}} 
\newcommand{\decoration}{{\b{\delta}}} 
\newcommand{\includeSymbol}[1]{\ensuremath{%
	\mathchoice
		{\raisebox{-.7mm}{\includegraphics[height=2.2ex]{#1}}}	
		{\raisebox{-.7mm}{\includegraphics[height=2.2ex]{#1}}}
		{\raisebox{-.6mm}{\includegraphics[height=1.6ex]{#1}}}
		{\raisebox{-.5mm}{\includegraphics[height=1ex]{#1}}}
}}
\newcommand{\noneCirc}{\includeSymbol{none}}
\newcommand{\upCirc}{\includeSymbol{up}}
\newcommand{\downCirc}{\includeSymbol{down}}
\newcommand{\upDownCirc}{\includeSymbol{updown}}
\newcommand{\Decorations}{\{\noneCirc{}, \downCirc{}, \upCirc{}, \upDownCirc{}\}} 
\newcommand{\mat}{M} 
\newcommandx{\asize}[2][2=I]{|#1|_{#2}} 
\newcommandx{\ray}[1][1=r]{\boldsymbol{#1}} 
\newcommandx{\rays}[1][1=R]{\boldsymbol{#1}} 
\newcommand{\hyp}{\mathbb{H}} 
\newcommand{\Bhyp}{\mathbb{H}^\textsc{b}} 
\newcommand{\Hyp}{\mathbf{H}} 
\newcommand{\HA}{\mathcal{H}} 
\newcommand{\HB}{\mathcal{H}^\textsc{b}} 
\newcommand{\rhoB}{\rho^\textsc{b}} 
\newcommand{\HL}{\textrm{HL}} 
\newcommand{\polytope}[1]{\mathsf{#1}} 
\newcommandx{\Perm}[1][1=n]{\polytope{Perm}_{#1}} 
\newcommandx{\BPerm}[1][1=n]{\polytope{Perm}^\textsc{b}_{#1}} 
\newcommandx{\Asso}[1][1=n]{\polytope{Asso}_{#1}} 
\newcommandx{\Zono}[1][1=n]{\polytope{Zono}(#1)} 
\newcommandx{\quotientope}[1][1=\equiv]{\polytope{Quot}(#1)} 
\newcommandx{\shardPolytope}[1][1=\arc]{\polytope{SP}(#1)} 
\newcommandx{\translation}[2][1=\arc, 2=\arc']{\smash{\b{t}_{#1}^{#2}}} 
\newcommandx{\translatedShardPolytope}[1][1=\arc]{\overrightarrow{\polytope{SP}}(#1)} 
\newcommandx{\restShardPolytope}[2][1=\arc]{\polytope{SP}_{#2}(#1)} 
\newcommandx{\fan}[1][1=F]{\mathcal{#1}} 
\newcommandx{\Fan}[1][1=F]{\fan\!} 
\newcommandx{\BFan}{\Fan^{\;\,\textsc{b}}}
\newcommand{\deformedPermutahedronSP}{\polytope{DP\!_s}(\coeffSP)} 
\newcommand{\deformedPermutahedronFS}{\polytope{DP\!_y}(\coeffFS)} 
\newcommand{\deformedPermutahedronRHS}{\polytope{DP\!_z}(\coeffRHS)} 
\newcommand{\coeffSP}{{\b{s}}} 
\newcommand{\coeffFS}{{\b{y}}} 
\newcommand{\coeffRHS}{{\b{z}}} 
\newcommand{\typeCone}{\mathbb{TC}} 
\newcommand{\ctypeCone}{\smash{\overline{\mathbb{TC}}}} 
\newcommandx{\virtualPolytopes}[1][1=n]{\mathbb{V}^{#1}} 
\newcommandx{\VDP}[1][1=n]{\mathbb{VDP}^{#1}} 
\newcommandx{\CVDP}[1][1=n]{\overrightarrow{\mathbb{VDP}}^{#1}} 
\newcommand{\conn}{\rhd} 
\newcommandx{\nonsing}[1][1=n]{\binom{[#1]}{{\ge} 2}}
\title{Shard polytopes}
\thanks{Partially supported by the French ANR grants CAPPS~17\,CE40\,0018 and CHARMS~19\,CE40\,0017}
\author{Arnau Padrol}
\address[AP]{Institut de Math\'ematiques de Jussieu - Paris Rive Gauche, Sorbonne Universit\'e, Paris}
\email{arnau.padrol@imj-prg.fr}
\urladdr{\url{https://webusers.imj-prg.fr/~arnau.padrol/}}
\author{Vincent Pilaud}
\address[VP]{CNRS \& LIX, \'Ecole Polytechnique, Palaiseau}
\email{vincent.pilaud@lix.polytechnique.fr}
\urladdr{\url{http://www.lix.polytechnique.fr/~pilaud/}}
\author{Julian Ritter}
\address[JR]{LIX, \'Ecole Polytechnique, Palaiseau}
\email{julian.ritter@polytechnique.edu}
\urladdr{\url{http://www.nailuj.de}}
\begin{document}

\begin{abstract}
For any lattice congruence of the weak order on permutations, N.~Reading proved that gluing together the cones of the braid fan that belong to the same congruence class defines a complete fan, called a quotient fan, and V.~Pilaud and F.~Santos showed that it is the normal fan of a polytope, called a quotientope. In this paper, we provide a simpler approach to realize quotient fans based on Minkowski sums of elementary polytopes, called shard polytopes, which have remarkable combinatorial and geometric properties. In contrast to the original construction of quotientopes, this Minkowski sum approach extends to type $B$.
\end{abstract}
\subjclass[2020]{52B11, 52B12, 03G10, 06B10}
\keywords{Weak order, lattice quotient, hyperplane arrangement, braid arrangement, quotient fan, quotientope, generalized/deformed permutahedron, matroid polytope, Coxeter group, Minkowski sum, type cone}

\maketitle

\vspace{1.5cm}
\begin{figure}
	\centerline{\includegraphics[scale=.7]{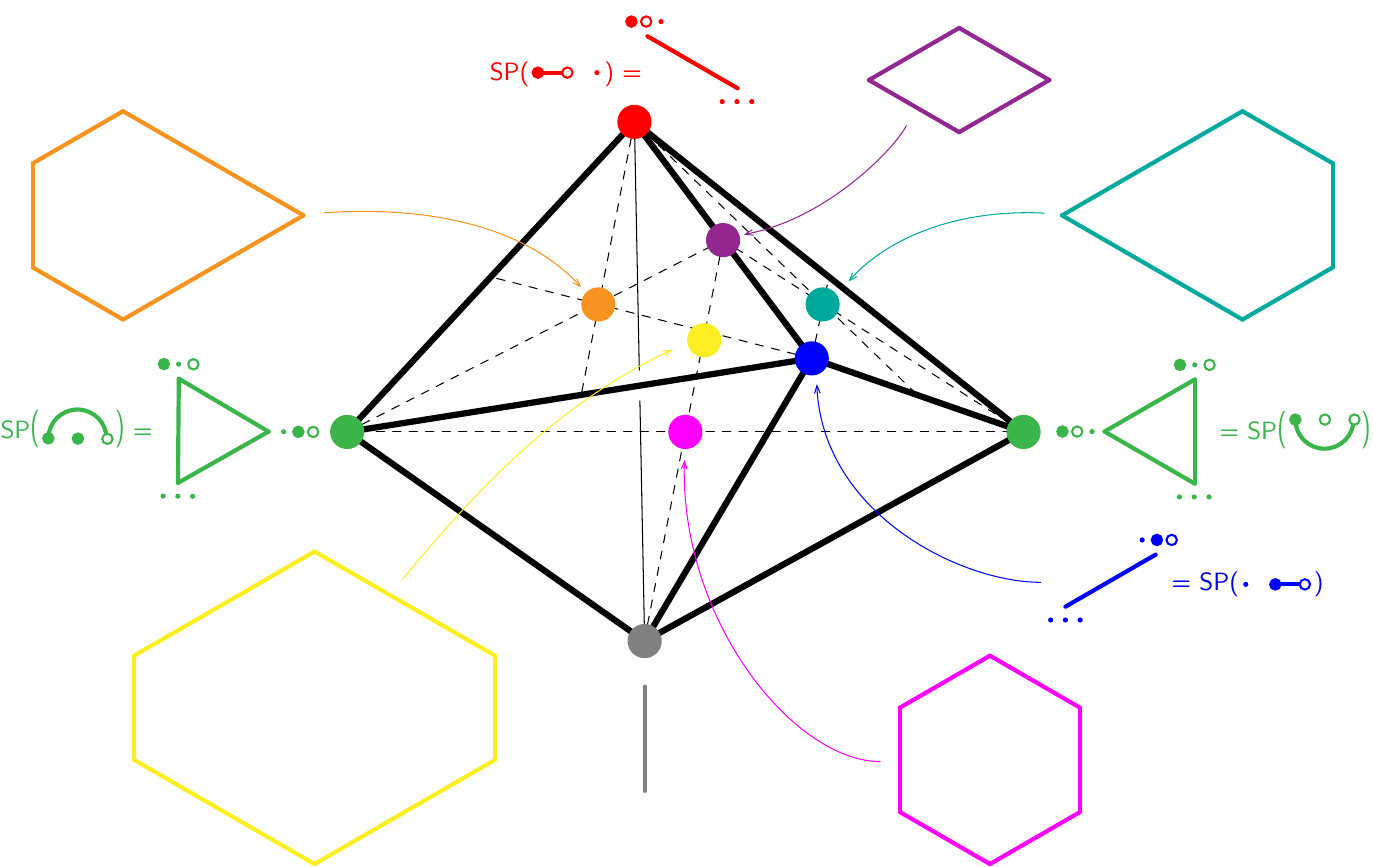}}
\end{figure}

\newpage
\enlargethispage{1.5cm}
\tableofcontents
\newpage


\section*{Introduction}

\para{Context}
This paper deals with polytopal realizations of lattice quotients of the weak order of the symmetric group~$\fS_n$ and of the hyperoctahedral group~$\fB_n$.
The prototype is the classical Tamari lattice on binary trees with $n$ nodes~\cite{Tamari, Stasheff}, seen as the quotient of the weak order  on permutations of~$[n] \eqdef \{1, \dots, n\}$ by the sylvester congruence~\cite{LodayRonco, HivertNovelliThibon-algebraBinarySearchTrees}.
Its Hasse diagram is the graph of the classical associahedron, which can be defined either as the convex hull of well-chosen points associated to all binary trees on~$n$ nodes~\cite{Loday}, or by deleting some well-chosen inequalities from the facet description of the permutahedron~\cite{ShniderSternberg}, or as the Minkowski sum of the faces of the standard simplex corresponding to all intervals of~$[n]$~\cite{Postnikov}.
Other relevant examples of polytopal realizations of lattice quotients of the weak order on~$\fS_n$ include the cube for the boolean lattice, C.~Hohlweg and C.~Lange's associahedra~\cite{HohlwegLange, LangePilaud} for N.~Reading's (type~$A$) Cambrian lattices~\cite{Reading-CambrianLattices, ChatelPilaud}, the permutreehedra for the permutree lattices~\cite{PilaudPons-permutrees}, the brick polytopes~\cite{PilaudSantos-brickPolytope} for the increasing flip lattice on acyclic twists~\cite{Pilaud-brickAlgebra}, Minkowski sums of opposite associahedra for the rotation lattice on diagonal rectangulations~\cite{LawReading, Giraudo}, etc.
In~\cite{PilaudSantos-quotientopes}, V.~Pilaud and F.~Santos constructed polytopes realizing the quotient fan defined by N.~Reading~\cite{Reading-HopfAlgebras} for an arbitrary lattice congruence of the weak order on~$\fS_n$.
In this paper, we propose a simpler approach to construct polytopal realizations of this quotient fan, using Minkowski sums of elementary polytopes called \defn{shard polytopes}.
Besides containing and explaining the construction of~\cite{PilaudSantos-quotientopes}, the motivation for this new construction is the possibility to extend it to lattice quotients of the poset of regions of hyperplane arrangements beyond the braid arrangement.
Indeed, the combinatorics and the geometry of the poset of regions of a hyperplane arrangement are strongly tied, giving rise to a natural geometric realization of lattice quotients of tight arrangements via polyhedral fans.
We refer to the recent surveys of N.~Reading~\cite{Reading-PosetRegionsChapter, Reading-FiniteCoxeterGroupsChapter} for an introduction to the topic.
However, no general polytopal realization is known.
We achieve the first step in this direction by constructing quotientopes for any lattice quotient of the weak order of the type~$B$ Coxeter group~$\fB_n$.

\para{Lattice congruences and arc ideals}
Recall that a congruence of a finite lattice~$L$ is an equivalence relation compatible with the meet and join operations on~$L$.
A congruence of~$L$ is completely determined by the join-irreducibles of~$L$ it contracts (\ie that are not minimal in their congruence classes).
One says that $j$ forces $j'$ if every congruence that contracts~$j$ also contracts~$j'$, and this relation is acyclic when $L$ is congruence uniform.
The set of congruences of~$L$ ordered by refinement is a lattice isomorphic to the lattice of upper ideals of the forcing relation on join-irreducibles of~$L$.

Specializing these observations provides a convenient and powerful combinatorial model to manipulate lattice congruences of the weak order on~$\fS_n$~\cite{Reading-arcDiagrams}.
The join-irreducible elements of the weak order on~$\fS_n$ are the permutations with a single descent.
Such a permutation~$\sigma$ can be naturally encoded by a quadruple~$(a, b, A, B)$ with~$a < b$ and~$A \sqcup B = {]a,b[}$, where~$A$ and~$B$ respectively record the values of the open interval~$]a,b[ \eqdef \{a+1, \dots, b-1\}$ that appear before or after the unique descent~$ba$ in~$\sigma$.
This quadruple can be represented by a curve, called an \defn{arc}, wiggling around points on the horizontal axis, joining~$a$ to~$b$ while passing above the points of~$A$ and below the points of~$B$.
The forcing relation on join-irreducible permutations translates to a forcing relation on arcs with a simple graphical description.
Each lattice congruence~$\equiv$ of the weak order on~$\fS_n$ thus corresponds to an upper ideal~$\arcs_\equiv$ of the forcing order among arcs, called the \defn{arc ideal} of~$\equiv$.

\para{Shards, quotient fans, and quotientopes}
Geometrically, the arcs correspond to pieces of hyperplanes, called the \defn{shards}, that partition the hyperplanes in the braid arrangement.
Namely, the arc~$\arc \eqdef (a, b, A, B)$ corresponds to the shard~$\shard(\arc)$ defined as the piece of the hyperplane~$\b{x}_a = \b{x}_b$ defined by the inequalities~${\b{x}_{a'} \le \b{x}_a = \b{x}_b \le \b{x}_{b'}}$ for all~$a' \in A$ and~$b' \in B$.
In~\cite{Reading-HopfAlgebras}, N.~Reading proved that each lattice congruence~$\equiv$ of the weak order on~$\fS_n$ defines a complete fan~$\Fan_\equiv$, called the \defn{quotient fan}, whose dual graph is the Hasse diagram of the lattice quotient~$\fS_n / {\equiv}$.
The chambers of~$\Fan_\equiv$ can be seen either by gluing together the chambers of the braid fan that belong to the same congruence class, or as the connected components of the complement of the union of the shards~$\shard(\arc)$ for all arcs~$\arc$ in the ideal~$\arcs_\equiv$.
In~\cite{PilaudSantos-quotientopes}, V.~Pilaud and F.~Santos showed that this quotient fan is the normal fan of a polytope, called a \defn{quotientope}.
These realizations were obtained by a careful but slightly obscure choice of the heights defining inequalities normal to each ray of the braid fan.
In this paper, we propose an alternative approach to construct polytopal realizations of the quotient fan, with several advantages discussed below.

\para{Minkowski sums of associahedra}
Our realizations are obtained as Minkowski sums of elementary polytopes.
We illustrate the idea with a simple construction.
For any arc~$\arc$, denote by~$\arcs_\arc$ the arc ideal generated by~$\arc$.
The corresponding congruence~$\equiv_\arc$ is a Cambrian congruence~\cite{Reading-CambrianLattices} and the corresponding quotient fan~$\Fan_\arc$ is a (type~$A$) Cambrian fan of~\cite{ReadingSpeyer}.
It is the normal fan of the $\arc$-associahedron~$\Asso[\arc]$ of~\cite{HohlwegLange}.
Strictly speaking, the setting of~\cite{Reading-CambrianLattices,ReadingSpeyer,HohlwegLange,HohlwegLangeThomas} is slightly different: it is valid for arbitrary finite Coxeter groups, but it is defined in full dimension, so that we should assume that the endpoints of~$\arc$ are~$1$ and~$n$. Their constructions however extend straightforward to arbitrary arcs and we will do so without further notice.
The motivating observation of this paper is the following statement.

\begin{theorem}[see \cref{coro:commonRefinementQuotientFan,coro:MinkowskiSumAssociahedra}]
\label{thm:main1}
Consider an arbitrary congruence~$\equiv$ of the weak order on~$\fS_n$, and let~$\arc_1, \dots, \arc_p$ denote the arcs generating the ideal~$\arcs_\equiv$. Then the quotient fan~$\Fan_\equiv$ is
\begin{itemize}
\item the common refinement of the Cambrian fans~$\Fan_{\arc_1}, \dots, \Fan_{\arc_p}$, and
\item the normal fan of the Minkowski sum of the associahedra~$\Asso[\arc_1], \dots, \Asso[\arc_p]$.
\end{itemize}
\end{theorem}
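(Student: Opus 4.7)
The plan is to derive both parts from two essentially formal ingredients: the shard description of the quotient fan~$\Fan_\equiv$, and the classical compatibility between Minkowski sums and normal fans of polytopes.

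For part~(i), I would first observe that, since $\arcs_\equiv$ is by assumption the upper ideal of the forcing order generated by~$\arc_1, \dots, \arc_p$, we have
\[
\arcs_\equiv \;=\; \arcs_{\arc_1} \cup \dots \cup \arcs_{\arc_p},
\]
where~$\arcs_{\arc_i}$ denotes the principal upper ideal generated by~$\arc_i$, which is also the arc ideal of the Cambrian congruence~$\equiv_{\arc_i}$. Taking~$\bigcup \shard(\cdot)$ of both sides, the union of shards labelled by~$\arcs_\equiv$ decomposes as the union over~$i$ of the unions of shards labelled by~$\arcs_{\arc_i}$. By the shard description of quotient fans recalled in the introduction, the chambers of~$\Fan_\equiv$ (resp.\ of each~$\Fan_{\arc_i}$) are the connected components of the complement of the former (resp.\ of the $i$th latter) union. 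Since the union of walls of a common refinement of complete fans is precisely the union of the walls of the refined fans, this equality of shard unions immediately yields~$\Fan_\equiv = \Fan_{\arc_1} \wedge \dots \wedge \Fan_{\arc_p}$.

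For part~(ii), I would then invoke the classical identity that the normal fan of a Minkowski sum of polytopes is the common refinement of the normal fans of its summands. Combined with the identification, recalled just before the statement, of each Cambrian fan~$\Fan_{\arc_i}$ with the normal fan of the associahedron~$\Asso[\arc_i]$, part~(i) gives at once that~$\Fan_\equiv$ is the normal fan of~$\Asso[\arc_1] + \dots + \Asso[\arc_p]$.

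The argument is thus essentially formal once the shard picture is in place. The only point deserving care, and which I would settle first, is that for an arc~$\arc_i$ whose endpoints are not~$1$ and~$n$ the Hohlweg--Lange construction~\cite{HohlwegLange} does not apply verbatim; I would either appeal as a black box to the extension to arbitrary arcs acknowledged in the paragraph preceding the statement, or verify the normal-fan identification directly via a product with simplices in the coordinates outside the support of~$\arc_i$. Beyond this bookkeeping, no new geometric ingredient is needed.
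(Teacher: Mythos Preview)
Your proposal is correct and follows essentially the same route as the paper: the paper also decomposes~$\arcs_\equiv$ as the union of the principal ideals~$\arcs_{\arc_i}$, translates this into an equality of unions of shards (hence of unions of walls) to obtain the common refinement, and then invokes the fact that normal fans of Minkowski sums are common refinements together with the identification~$\Fan_{\arc_i} = $ normal fan of~$\Asso[\arc_i]$. Your caveat about arcs with endpoints other than~$1$ and~$n$ is handled in the paper exactly as you suggest, by a parenthetical remark that the Hohlweg--Lange construction extends without difficulty.
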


Note that this observation was already made for certain specific quotients (\eg for the Baxter congruence corresponding to diagonal rectangulations~\cite{LawReading} or for intersections of essential Cambrian congruences~\cite{ChatelPilaud}) but it was never exploited to realize quotient fans of arbitrary lattice congruences.
In contrast to the intricate construction of~\cite{PilaudSantos-quotientopes}, the simple construction of \cref{thm:main1} has the advantage that it transfers all the geometric difficulty into the construction of the $\arc$-associahedra, which was already done in~\cite{HohlwegLange}.
Intuitively, each $\arc_i$-associahedron $\Asso[\arc_i]$ is responsible for the shards of the ideal~$\arcs_{\arc_i}$ to appear in the normal fan of the Minkowski sum.

The main idea of this paper is to push this idea further.
Just as the classical associahedron decomposes into the Minkowski sum of the faces of the standard simplex corresponding to the intervals of~$[n]$ \cite{Postnikov}, the $\arc$-associahedron can be decomposed further into Minkowski sums of indecomposable polytopes.
These polytopes are the central topic of this paper.

\para{Shard polytopes}
For an arc~$\arc \eqdef (a, b, A, B)$, we define the \defn{shard polytope}~$\shardPolytope$ as the convex hull of all vectors with coordinates in~$\{-1,0,1\}$ where the non-zero entries alternate (starting with a $1$ and ending with a $-1$), and the $1$'s occur at a subset of the positions of~$\{a\} \cup A$ while the $-1$'s occur at a subset of the positions of~$B \cup \{b\}$ (see \cref{def:alternatingMatchings,prop:shardPolytope}).
The family of shard polytopes is interesting by itself: for instance, similarly to the families of permutahedra or associahedra, any face of a shard polytope is a Cartesian product of shard polytopes.
But the crucial property of shard polytopes is~the~following.

\begin{proposition}[see \cref{prop:shardPolytopeFan}]
\label{prop:main2}
For any arc~$\arc$, the union of the walls of the normal fan of the shard polytope~$\shardPolytope$ contains the shard~$\shard(\arc)$ and is contained in the union of the shards~$\shard(\arc')$ for all arcs~$\arc'$ forcing~$\arc$.
\end{proposition}

This property enables us to construct quotientopes as Minkowski sums of shard polytopes.
The idea now is that each shard polytope~$\shardPolytope$ will be responsible for the shard~$\shard(\arc)$ to appear in the normal fan of the Minkowski sum, without introducing unwanted walls.

\begin{corollary}[see \cref{coro:MinkowskiSumShardPolytopes}]
\label{coro:main3}
For any lattice congruence~$\equiv$ of the weak order on~$\fS_n$ and any positive coefficients~$\coeffSP_\arc > 0$ for~$\arc \in \arcs_\equiv$, the quotient fan~$\Fan_\equiv$ is the normal fan of the Minkowski sum~$\shardPolytope[\arcs_\equiv] \eqdef \sum_{\arc \in \arcs_\equiv} \coeffSP_\arc \, \shardPolytope$ of the shard polytopes~$\shardPolytope$ of all arcs~${\arc \in \arcs_\equiv}$.
\end{corollary}

Already setting the coefficients~$\coeffSP_\arc = 1$, this construction recovers relevant realizations of specific quotient fans mentioned above.
For the sylvester congruence, all shard polytopes are faces of the standard simplex and the Minkowski sum~$\shardPolytope[\arcs_\equiv]$ is the classical associahedron of~\cite{ShniderSternberg, Loday, Postnikov}.
More generally, for the $\arc$-Cambrian congruence, the Minkowski sum~$\shardPolytope[\arcs_\arc]$ is the $\arc$-associahedron of~\cite{HohlwegLange}.
In contrast, we show that the standard permutahedron is not a Minkowski sum of dilated shard polytopes, although other realizations of the braid fan are.

\para{Type cones and shard polytopes}
Pursuing the quest for elementary Minkowski summands, one could ask whether shard polytopes can be further decomposed into even simpler polytopes.
However, we show that shard polytopes are elementary geometric and combinatorial objects.

\begin{proposition}[see \cref{prop:SPindecomposable}]
\label{prop:main4}
For any arc~$\arc$, the shard polytope~$\shardPolytope$ is Minkowski indecomposable.
\end{proposition}

This statement can be rephrased in the realization spaces of the quotient fans.
For a fan~$\fan$, the space of all polytopes whose normal fan coarsens the fan~$\fan$ is a cone under Minkowski addition, called the (closed) \defn{type cone} by P.~McMullen~\cite{McMullen-typeCone} or the \defn{deformation cone} by A.~Postnikov~\cite{Postnikov, PostnikovReinerWilliams}.
For a rational fan, the type cone is equivalent to the~\defn{nef (numerically effective) cone} of the toric variety associated to the fan~\cite[Sect.~6.3]{CoxLittleSchenckToric}. Nef cones are central objects in the toric minimal model program, see~\cite[Sect.~15]{CoxLittleSchenckToric}.

For instance, the type cone of the braid fan is isomorphic to the classical space of submodular functions.
\cref{coro:main3,prop:main4} affirm that for each arc~$\arc \in \arcs_\equiv$, the shard polytope~$\shardPolytope$ is a representative of a ray of the type cone of the quotient fan~$\Fan_\equiv$.
More specifically, we show that shard polytopes are the rays of the submodular cone belonging to the type cones of the Cambrian fans of~\cite{ReadingSpeyer}.
Type cones of Cambrian fans have recently received particular attention with the works of~\cite{BazierMatteDouvilleMousavandThomasYildirim, PadrolPaluPilaudPlamondon, BrodskyStump, JahnLoweStump}.
Combining the results of these papers with \cref{coro:main3,prop:main4} imply that shard polytopes can be interpreted as Newton polytopes of $F$-polynomials of cluster variables of acyclic type~$A$ cluster algebras~\cite{FominZelevinsky-ClusterAlgebrasI, FominZelevinsky-ClusterAlgebrasII, FominZelevinsky-ClusterAlgebrasIV, BazierMatteDouvilleMousavandThomasYildirim}, and brick polytope summands of certain sorting networks~\cite{PilaudSantos-brickPolytope, BrodskyStump, JahnLoweStump}.
We are not aware that our elementary vertex description in terms of alternating vectors had been observed earlier for these polytopes.

\para{Matroid polytopes and shard polytopes}
It is not difficult to derive from the definition that (up to a simple translation) shard polytopes are \defn{matroid polytopes}, \ie the convex hull of the characteristic vectors of all bases of a matroid.
It turns out that the resulting matroids actually belong to the relevant specific class of \defn{series-parallel matroids}, defined as the graphical matroids of series-parallel graphs.
More precisely, we explicitly define a graph~$\Gamma_\arc$ for each arc~$\arc$ (see \cref{def:shardgraph}) which yields the following statement.

\begin{proposition}[see \cref{prop:2connected,prop:SPisMP}]
\label{prop:main5}
For any arc~$\arc \eqdef (a, b, A, B)$, the matroid polytope of the series-parallel graph~$\Gamma_\arc$ is the translated shard polytope~${\translatedShardPolytope \eqdef \shardPolytope + \one_{B \cup \{b\}}}$.
\end{proposition}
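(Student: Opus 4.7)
The plan is to give an explicit construction of $\Gamma_\arc$ as a two-terminal series-parallel graph, then identify its matroid polytope with $\translatedShardPolytope$ via an explicit bijection between spanning trees and alternating $\pm 1$-vectors.

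First I would define $\Gamma_\arc$ recursively from the positions $\{a\}\cup A\cup B\cup\{b\}$, read left to right. Partition these positions into maximal alternating runs of ``up'' positions (in $\{a\}\cup A$) and ``down'' positions (in $B\cup\{b\}$), assign one labeled edge per position, parallel-compose the edges within each run, and series-compose consecutive runs along their shared terminals. The resulting graph is series-parallel by construction, and its edges are naturally labeled by $\{a\}\cup A\cup B\cup\{b\}$.

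Next I would establish a bijection between spanning trees of $\Gamma_\arc$ and the alternating $\pm 1$-vectors defining $\shardPolytope$. A spanning tree of a parallel bundle is obtained by deleting a single edge, while a spanning tree of a series composition is the union of spanning trees of the two factors. Consequently a spanning tree $T$ is encoded by the choice, in each up run, of a ``selected'' edge that is kept in~$T$ and, in each down run, of a ``selected'' edge that is omitted from~$T$. Setting $v_T$ to have a $+1$ at each selected up position, a $-1$ at each selected down position, and $0$ elsewhere, the up/down alternation of runs forces $v_T$ to be alternating, starting with $+1$ in the first up run (which contains~$a$) and ending with $-1$ in the last down run (which contains~$b$); the inverse map is immediate. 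A coordinatewise check then yields $v_T + \one_{B\cup\{b\}} = \mathbf{1}_T$: for $p\in\{a\}\cup A$, one has $(v_T)_p = 1$ iff $p\in T$ and $(\one_{B\cup\{b\}})_p=0$; for $p\in B\cup\{b\}$, one has $(v_T)_p=-1$ iff $p\notin T$ and $(\one_{B\cup\{b\}})_p=1$, so $(v_T+\one_{B\cup\{b\}})_p = \mathbf{1}_{p\in T}$ in both cases. Therefore $\translatedShardPolytope$ equals the matroid polytope of $\Gamma_\arc$.

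The main obstacle is to ensure that the spanning-tree to alternating-vector bijection works uniformly for all interleavings of $A$ and $B$, including runs that contain a single position (degenerate parallel bundles) and configurations where ``skipped'' runs (corresponding to zero entries of $v_T$) interact subtly with their neighbors. Handling the boundary constraints at $a$ and $b$ --- namely that the first up run must always contribute a $+1$ and the last down run must always contribute a $-1$ --- consistently with the series-parallel skeleton of $\Gamma_\arc$ is where the combinatorial bookkeeping is most delicate.
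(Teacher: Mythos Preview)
Your construction of $\Gamma_\arc$ is not the graph defined in the paper, and unfortunately the graph you describe has the wrong matroid. Take $\arc = (1,4,\{2\},\{3\})$, so $\{a\}\cup A = \{1,2\}$ and $B\cup\{b\} = \{3,4\}$. Your recipe gives one up run $\{1,2\}$ and one down run $\{3,4\}$, hence two parallel bundles in series; this graph has exactly four spanning trees $\{1,3\},\{1,4\},\{2,3\},\{2,4\}$. But there are \emph{five} $\arc$-alternating matchings: $\varnothing$, $\{1,3\}$, $\{1,4\}$, $\{2,3\}$, $\{2,4\}$, and the translated characteristic vector of $\varnothing$ is $\one_{\{3,4\}}$, which is not the indicator of any spanning tree of your graph. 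The paper's $\Gamma_\arc$ is different: the $B\cup\{b\}$-edges form a path $0\!-\!1\!-\!\cdots\!-\!(|B|+1)$ and each $\{a\}\cup A$-edge is a chord from an intermediate vertex to the last vertex. In the example this has the extra spanning tree $\{3,4\}$, matching $\varnothing$.

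The deeper problem with your bijection is that it forces exactly one selected position in \emph{every} run, so the resulting vectors always have one $+1$ per up run and one $-1$ per down run. Alternating matchings need not touch every run: the empty matching touches none, and for $\arc = (1,6,\{3\},\{2,4,5\})$ the matching $\{1,2\}$ skips the second up run and the second down run entirely. Your description of spanning trees of a parallel bundle (``deleting a single edge'') is also inconsistent with the later claim that in down runs the selected edge is the one \emph{omitted} from $T$; for a bundle of size $\ge 3$ these cannot both hold. The paper builds $\Gamma_\arc$ by processing $a,a+1,\dots,b-1$ in order, doing a parallel extension on the edge labeled~$b$ for each element of $\{a\}\cup A$ and a series extension on $b$ for each element of~$B$, and then proves the bijection with alternating matchings by the same induction; the asymmetric roles of $A$ and $B$ in that construction are exactly what your run-based symmetric picture misses.
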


Series-parallel matroids are an important well known family~\cite[Sect.~5.4]{Oxley}, and their matroid polytopes have been studied because of their extremal properties in the context of subdivisions arising from tropical linear spaces~\cite{Speyer2008,Speyer2009}.

\para{Signed Minkowski sums of simplices and of shard polytopes}
As their normal fans coarsen the braid fan, shard polytopes belong to the class of \defn{deformed permutahedra} studied in~\cite{Postnikov, PostnikovReinerWilliams} (we prefer the name ``deformed permutahedra'' rather than ``generalized permutahedra'' as there are many generalizations of permutahedra).
It thus follows from~\cite{ArdilaBenedettiDoker} that they decompose uniquely as a signed Minkowski sum of faces of the standard simplex.
As a consequence of \cref{prop:main5}, the coefficients in this decomposition can be expressed as signed beta invariants of the graphical matroid of~$\Gamma_\arc$, which can be rewritten as follows.
We denote by~$\triangle_J$ the face of the standard simplex corresponding to a subset~$J \subseteq [n]$.
For~$I, J \subseteq [n]$ of cardinality at least~$2$, we write~$I \conn J$ when~$\{\min J, \max J\} \subseteq {]\min I, \max I[} \symdif I$ and ${]\min J, \max J[} \cap I \subseteq J$.

\begin{proposition}[see \cref{prop:FStoSPa,prop:FStoSPb}]
\label{prop:main6}
For any arc~$\arc \eqdef (a, b, A, B)$, the translated shard polytope~$\translatedShardPolytope \eqdef \shardPolytope + \one_{B \cup \{b\}}$ decomposes as
\[
\translatedShardPolytope = \sum_J (-1)^{|J\cap (B\cup\{a,b\})|} \, \triangle_J
\]
where the sum is indexed by all subsets~$J \subseteq [a,b]$ such that~$|J| \ge 2$ and $(A \cup \{a,b\}) \conn J$.
\end{proposition}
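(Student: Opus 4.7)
The strategy is to combine \cref{prop:main5} with the signed Minkowski decomposition of deformed permutahedra into faces of the standard simplex, specialised to matroid polytopes by Ardila--Benedetti--Doker.

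By \cref{prop:main5}, the translated shard polytope $\translatedShardPolytope$ is the matroid polytope of the cycle matroid $M(\Gamma_\arc)$ of the series-parallel graph $\Gamma_\arc$. Since its normal fan coarsens the braid fan, $\translatedShardPolytope$ is a deformed permutahedron and therefore admits a unique decomposition $\translatedShardPolytope = \sum_{J \subseteq [a,b]} y_J \, \triangle_J$ as a signed Minkowski sum of faces of the standard simplex. The coefficients $y_J$ are obtained from the rank function of $M(\Gamma_\arc)$ by M\"obius inversion, and a classical computation identifies them with signed beta invariants of restrictions of the form $y_J = (-1)^{|J| - r(M(\Gamma_\arc)|_J)}\,\beta\bigl(M(\Gamma_\arc)|_J\bigr)$.

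Next, I would exploit two standard properties of beta invariants. First, $\beta(M)$ vanishes as soon as $M$ is disconnected, so the nonzero terms correspond to those $J$ whose restriction yields a $2$-connected minor. Second, series-parallel graphical matroids have $\beta=1$ on every $2$-connected minor, and every minor of $\Gamma_\arc$ is again series-parallel. The core combinatorial step is then to verify, using the explicit description of $\Gamma_\arc$, that $M(\Gamma_\arc)|_J$ is $2$-connected if and only if $(A \cup \{a,b\}) \conn J$: the condition $\{\min J, \max J\} \subseteq {]\min I, \max I[} \symdif I$ controls the endpoints of $J$ so that the restricted graph has no pendant vertex at its extremes, while $]\min J, \max J[ \cap I \subseteq J$ rules out interior cut vertices. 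To close the argument, for a $2$-connected graphical matroid the rank equals one less than the number of vertices, and from the construction of $\Gamma_\arc$ the vertices of $M(\Gamma_\arc)|_J$ are in natural bijection with $J \cap (A \cup \{a,b\})$; a direct count using $A \sqcup B = {]a,b[}$ then gives $|J| - r(M(\Gamma_\arc)|_J) \equiv |J \cap (B \cup \{a,b\})| \pmod{2}$, matching the claimed sign.

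\textbf{Main obstacle.} The principal difficulty is the middle step: translating $2$-connectedness of $M(\Gamma_\arc)|_J$ into the explicit relation $\conn$, and checking the parity calculation with the specific labelling of $\Gamma_\arc$ set up in \cref{prop:main5}. An alternative route would bypass matroid theory and verify the identity directly by comparing support functions of both sides on the rays of the braid fan, but this seems to duplicate the same combinatorial bookkeeping rather than simplify it.
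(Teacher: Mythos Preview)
Your high-level strategy matches the paper's exactly: identify $\translatedShardPolytope$ with the matroid polytope of $\Gamma_\arc$ via \cref{prop:main5}, apply the Ardila--Benedetti--Doker formula to get the $\triangle_J$-coefficients as signed beta invariants, and then characterise combinatorially which minors are connected (hence have $\beta=1$, as they are all series-parallel). The paper carries this out through \cref{thm:betaInvariants} and \cref{lem:connectedcontraction}.

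There is, however, a genuine error in your execution. The Ardila--Benedetti--Doker formula reads
\[
\polytope{P}_M = \sum_{K \subseteq E} \tilde\beta(M/K)\,\triangle_{E\ssm K},
\]
so the coefficient of $\triangle_J$ is $\tilde\beta\big(M/(E\ssm J)\big)$, a \emph{contraction}, not $\tilde\beta(M|_J)$. These differ: already for the up arc $\arc=(1,3,\{2\},\varnothing)$ one has $M_\arc=U_{1,3}$, and for $J=\{1,2\}$ the restriction $M_\arc|_J=U_{1,2}$ is connected with $\beta=1$, while the contraction $M_\arc/\{3\}$ consists of two loops and has $\beta=0$. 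Your formula would therefore produce a spurious $\triangle_{12}$ summand. The paper's \cref{lem:connectedcontraction} does the correct bookkeeping: it analyses how contracting edges of $\Gamma_\arc$ creates cut vertices, and shows that $M_\arc/K$ is connected precisely when the complement $J=E\ssm K$ consists of a subset $X\subseteq B\cup\{a,b\}$ of size $\ge 2$ together with all of $A\cap{]\min X,\max X[}$ --- which is exactly the relation $(A\cup\{a,b\})\conn J$. Your claims about pendant vertices and cut vertices of the restricted subgraph do not translate to this condition; once you switch to contractions, the rank of $M_\arc/(E\ssm J)$ is $|X|-1$, and the parity $(-1)^{r+1}=(-1)^{|X|}=(-1)^{|J\cap(B\cup\{a,b\})|}$ follows immediately.
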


Conversely, we show that the faces of the standard simplex also decompose uniquely as a signed Minkowski sum of translated shard polytopes as follows.

\begin{proposition}[see \cref{prop:SPtoFSa,prop:SPtoFSb}]
\label{prop:main8}
For any subset~$J \subseteq [n]$ such that~$|J| \ge 2$, the face~$\triangle_J$ of the standard simplex decomposes as
\[
\triangle_J = \sum_{\arc \eqdef (a, b, A, B)} (-1)^{|\{a,b\} \cap \{\min J, \max J\}|} \, \translatedShardPolytope
\]
where the sum is indexed by all arcs $\arc \eqdef (a, b, A, B)$ such that $J \conn (A \cup \{a, b\})$.
\end{proposition}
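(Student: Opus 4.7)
The plan is to recognize Proposition \ref{prop:main8} as the inverse transition formula of Proposition \ref{prop:main6} in the space of virtual deformed permutahedra modulo translations. By the Ardila--Benedetti--Doker theorem invoked earlier in the paper, the family $\{\triangle_J : J \subseteq [n],\, |J| \geq 2\}$ is a basis of this space, of dimension $2^n - n - 1$. Via the natural bijection $\arc \leftrightarrow I \eqdef A \cup \{a,b\}$ between arcs and subsets of $[n]$ of cardinality at least $2$, the family of translated shard polytopes $\{\translatedShardPolytope[\arc]\}_\arc$ has the same cardinality. Proposition \ref{prop:main6} provides the transition matrix $\mat$ from the shard family to the simplex basis, so it suffices to verify that the matrix $\mat'$ whose entries are the signs appearing in Proposition \ref{prop:main8} is the inverse of $\mat$; together with Proposition \ref{prop:main4}, this will also confirm that the shard polytopes themselves form a basis.

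This reduces to establishing the combinatorial identity
\[
\sum_{\arc \eqdef (a,b,A,B)} [J \conn (A \cup \{a,b\})] \, [(A \cup \{a,b\}) \conn J'] \, (-1)^{|\{a,b\} \cap \{\min J, \max J\}| + |J' \cap (B \cup \{a,b\})|} = \delta_{J, J'}
\]
for every pair of subsets $J, J' \subseteq [n]$ with $|J|, |J'| \geq 2$, where $[\,\cdot\,]$ denotes the Iverson bracket. Reindexing by $I \eqdef A \cup \{a,b\}$ casts the left-hand side as a sum over subsets $I \subseteq [n]$ of cardinality at least $2$ satisfying the double interleaving condition $J \conn I$ and $I \conn J'$.

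The main obstacle is this identity. My preferred approach is to construct a sign-reversing involution on the set of $I$'s satisfying both $\conn$-conditions: toggling a canonical element of the interior $]\min I, \max I[$ (for instance, the smallest index at which $I$ disagrees with $J$ or with $J'$, if such an index exists) should preserve both $\conn$-conditions while flipping exactly one of the two sign contributions, so that non-fixed points cancel in pairs. A careful analysis of the four endpoint positions $\min I, \max I, \min J', \max J'$ shows that the fixed points force $\min J = \min J' = \min I$, $\max J = \max J' = \max I$, and $I = J = J'$ in the interior, contributing the single term $+1$ that yields $\delta_{J,J'}$. The delicate part is ensuring that the toggling genuinely preserves the conditions $J \conn I$ and $I \conn J'$, especially for elements of $]\min I, \max I[$ that lie between $\min J'$ and $\max J'$ (where the second $\conn$-condition becomes active) or outside (where it is inactive), so that the involution is well-defined on the intersection of the two constraint regions.

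An alternative, more conceptual route passes through Proposition \ref{prop:main5}: since shard polytopes are matroid polytopes of series-parallel graphs, Proposition \ref{prop:main6} is a specialization of the Ardila--Benedetti--Doker expression of matroid polytope coefficients as signed beta invariants, and Proposition \ref{prop:main8} would then follow from M\"obius inversion on the sublattice of intervals $[\min I, \max I] \cap I$. This route is cleaner conceptually but still requires matching the asymmetric signs in the statement to the classical matroid quantities, which brings us back to a combinatorial identity close in spirit to the one above.
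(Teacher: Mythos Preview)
Your high-level strategy is exactly the paper's: substitute the expansion of each $\translatedShardPolytope$ from Proposition~\ref{prop:main6} into the right-hand side, swap the order of summation, and show that the inner sum over $I$ with $J \conn I \conn K$ collapses to $\delta_{J,K}$. So the reduction to the combinatorial identity is correct and is precisely what the paper does.

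The one place where your sketch diverges is the involution itself. You propose toggling a canonical element of the \emph{interior} ${]\min I, \max I[}$, but this runs into the constraint ${]\min I,\max I[} \cap J \subseteq I$ coming from $J \conn I$: interior elements that belong to $J$ are forced to lie in $I$ and cannot be toggled, and there is no guarantee a toggleable interior element exists when $J \ne K$. The paper's involution instead moves the \emph{endpoint} $\min I$. When $\min J \ne \min K$, each $I$ with $\min I = \min J$ is paired with $I' \eqdef I \ssm \{\min J\}$ (adjoining $\min K$ if this would otherwise leave $I' \cap {[\min J,\min K]}$ empty); one checks that $J \conn I' \conn K$ still holds, that the endpoint sign $(-1)^{|\{\min I,\max I\}\cap\{\min J,\max J\}|}$ flips while the parity of $|K \ssm I|$ is unchanged, and that every $I'$ with $\min I' \ne \min J$ arises this way. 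The symmetric argument handles $\max J \ne \max K$. Once both extrema agree, $J \conn I \conn K$ becomes $J \subseteq I \subseteq K$ with fixed extrema, and the remaining inner sum is a plain inclusion--exclusion $\sum_{J \subseteq I \subseteq K} (-1)^{|K \ssm I|} = \delta_{J,K}$.
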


In fact, we prove that all deformed permutahedra decompose in terms of shard polytopes.

\begin{proposition}[see \cref{prop:shardPolytopeBasis}]
\label{prop:main9}
Any deformed permutahedron has a unique decomposition as a Minkowski sum and difference of dilated shard polytopes (up to translation).
\end{proposition}

In other words, just as the faces of the standard simplex form a linear basis of the type cone of the braid arrangement that is adapted to its subarrangements, the translated shard polytopes form a linear basis of the type cone of the braid arrangement that is adapted to its quotient fans.
Moreover, \cref{prop:main6,prop:main8} give the exchange matrices between these two bases.
From \cref{prop:main6,prop:main8}, one can also deduce the matrices that transform the heights of deformed permutahedra into their shard polytope coefficients, and \viceversa.

\para{PS-quotientopes revisited}
These parametrizations of deformed permutahedra enable us in particular to prove that our construction recovers and explains that of V.~Pilaud and F.~Santos~\cite{PilaudSantos-quotientopes}. 
Indeed, the quotientopes of~\cite{PilaudSantos-quotientopes} are parametrized by so-called {forcing dominant functions}~$f : 2^{[n]} \to \R_{>0}$. We show that, regardless of the forcing dominant function used in the construction, the resulting quotientope can be obtained with the construction of \cref{coro:main3}.

\begin{proposition}[see \cref{prop:quotientopesMinkowskiSumsShardPolytopes}]
\label{prop:main10}
Any quotientope of~\cite{PilaudSantos-quotientopes} is a Minkowski sum of dilated shard polytopes (up to translation).
\end{proposition}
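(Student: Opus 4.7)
The strategy is to combine the unique signed Minkowski decomposition of deformed permutahedra in the shard polytope basis (\cref{prop:main9}) with a non-negativity argument that exploits the specific structure of the forcing dominant functions introduced in~\cite{PilaudSantos-quotientopes}.

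First, I would recall that every PS-quotientope is by construction a deformed permutahedron whose normal fan coincides with~$\Fan_\equiv$: it is defined by explicit right-hand sides~$z_I^f$ for~$I \subseteq [n]$, obtained from a forcing dominant function~$f : 2^{[n]} \to \R_{>0}$. By \cref{prop:main9}, such a polytope admits a unique decomposition (up to translation) as a signed Minkowski sum~$\sum_\arc \coeffSP_\arc \, \shardPolytope$ of dilated shard polytopes. Using the explicit change-of-basis formulas between the simplex basis~$\{\triangle_J\}$ and the translated shard polytope basis~$\{\translatedShardPolytope\}$ provided by \cref{prop:main6,prop:main8}, together with the standard expansion of deformed permutahedra as signed Minkowski sums of simplex faces, I would derive a closed-form expression for the coefficients~$\coeffSP_\arc = \coeffSP_\arc(f)$ as alternating sums of values of~$f$.

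The core of the argument is then to verify two things. On the one hand, $\coeffSP_\arc = 0$ whenever~$\arc \notin \arcs_\equiv$: by \cref{thm:main2}, a non-zero coefficient~$\coeffSP_\arc$ would tend to introduce the shard~$\shard(\arc)$ as a wall of the normal fan of the Minkowski sum, contradicting both the fact that the PS-quotientope realizes precisely~$\Fan_\equiv$ and the uniqueness of the decomposition in \cref{prop:main9}. On the other hand, $\coeffSP_\arc > 0$ whenever~$\arc \in \arcs_\equiv$: this should translate the defining inequalities of forcing dominance, which were precisely engineered in~\cite{PilaudSantos-quotientopes} to yield a polytopal realization, into positivity of the corresponding shard polytope coefficients. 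Once both points are established, \cref{coro:main3} confirms that~$\sum_{\arc \in \arcs_\equiv} \coeffSP_\arc \, \shardPolytope$ is itself a quotientope for~$\equiv$, and the uniqueness of the decomposition identifies it with the PS-quotientope up to translation.

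The main obstacle is the combinatorial verification of positivity in this last step: one must show that the alternating sum formula for~$\coeffSP_\arc(f)$ vanishes off the arc ideal and is strictly positive on it, directly from the forcing dominance condition. This amounts to a delicate matching between the subset-indexed right-hand sides of~\cite{PilaudSantos-quotientopes} and the arc-indexed shard polytope basis, which I expect to carry out through a telescoping argument along saturated chains in the forcing order on arcs, using the fact that the support of the expansion of~$\translatedShardPolytope$ in the simplex basis (\cref{prop:main6}) is encoded by the connectivity relation~$\conn$ that also governs the forcing dominance inequalities.
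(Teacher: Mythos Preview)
Your overall strategy matches the paper's: decompose the PS-quotientope uniquely in the shard polytope basis via \cref{prop:main9}, then check that all coefficients are non-negative using forcing dominance. Your argument that~$\coeffSP_\arc = 0$ for~$\arc \notin \arcs_\equiv$ is also fine and essentially mirrors the linear independence argument in the proof of \cref{prop:main9}.

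Where you diverge from the paper is in the mechanism for the positivity step. You propose passing through the simplex basis (\cref{prop:main6,prop:main8}) and then carrying out ``a telescoping argument along saturated chains in the forcing order on arcs''. The paper does not telescope along chains at all. Instead, it uses the direct four-term formula (\cref{prop:SPtoRHS}) expressing~$\coeffSP_I$ as an alternating sum of four right-hand sides,
\[
\coeffSP_I = \coeffRHS_I - \coeffRHS_{I \symdif [1,\min I]} - \coeffRHS_{I \symdif [\max I,n]} + \coeffRHS_{I \symdif ([1,\min I] \cup [\max I,n])},
\]
and substitutes the explicit right-hand sides~$\coeffRHS_R = \frac{|R|}{n}\,\b{h}^\equiv([n]) - \frac{1}{n}\,\b{h}^\equiv(R)$ of the PS-quotientope. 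This yields~$\coeffSP_I = \frac{1}{n}\sum_{S \in \c{S}_\equiv} f(S)\,\Gamma(S,I)$, where~$\Gamma(S,I)$ is a four-term alternating sum of the contribution function~$\gamma$. A short case analysis on the relative positions of~$\min S, \max S$ versus~$\min I, \max I$ (\cref{lem:quotientopesMinkowskiSumsShardPolytopes}) shows that~$\Gamma(S,I)\in\{-1,0,1,2\}$, with~$\Gamma(I,I)=2$ and the negative contributions coming only from subsets~$S$ whose arc is forced by~$\arc_I$. Forcing dominance of~$f$ then makes the~$2f(I)$ term dominate the at most~$2(n-1)$ negative terms, giving~$\coeffSP_I > 0$.

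So the route is shorter than you anticipate: the four-term RHS-to-SP formula and a finite case analysis replace both the detour through the simplex basis and the telescoping you expected. The ``delicate matching'' you worry about collapses to checking five cases for~$\Gamma(S,I)$.
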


\para{Mixed volumes of shard polytopes}
Applying the machinery of A.~Postnikov~\cite{Postnikov}, \cref{prop:main6} yields summation formulas for the volume and mixed volumes~\cite{SangwineYager} of shard polytopes.
These formulas are based on the dragon marriage condition of~\cite{Postnikov} to express mixed volumes of faces of the standard simplex.

\begin{theorem}[see \cref{thm:mixedVolumesShardPolytopes}]
\label{thm:main11}
For~$\arc_1, \dots, \arc_{n-1} \in \arcs_n$, the mixed volume of~$\shardPolytope[\arc_1], \dots, \shardPolytope[\arc_{n-1}]$~is
\[
\Vol(\shardPolytope[\arc_1], \dots, \shardPolytope[\arc_{n-1}]) = \frac{1}{(n-1)!}\sum_{J_1, \dots, J_{n-1}} (-1)^{\asize{J_1}[A_1] + \dots +\asize{J_{n-1}}[A_{n-1}]},
\]
summing over all collections $(J_1, \dots, J_{n-1})\in \nonsing^{n-1}$ verifying $(A_i \cup \{a_i,b_i\}) \conn J_i$ for all $i \in [n-1]$, and such that $J_1, \dots ,J_{n-1}$ satisfies the dragon marriage condition of~\cite{Postnikov}.
\end{theorem}

\para{Type $B$ quotientopes}
The second part of this paper is devoted to polytopal realizations of lattice quotients of the weak order of the type~$B$ Coxeter group~$\fB_n$ (\aka hyperoctahedral group).
The prototype example here is the cyclohedron~\cite{BottTaubes, Simion, HohlwegLange}.
In contrast to type~$A$, no systematic construction of type~$B$ quotientopes was previously known. 

The classical tool to manipulate type~$B$ objects is folding type~$A$ objects by central symmetry: the elements of the type~$B_n$ Coxeter group are commonly represented by signed permutations of size~$n$ or centrally symmetric permutations of~$[\pm n] \eqdef \{-n, \dots, -1, 1, \dots, n\}$; the type~$B_n$ Coxeter arrangement is the section of the type~$A_{2n-1}$ Coxeter arrangement by the centrally symmetric space; and the $n$-dimensional cyclohedron is obtained from a $(2n-1)$-dimensional associahedron~$\Asso[\arc]$ for some centrally symmetric arc~$\arc$ by a suitable projection~$\rhoB$.

This folding procedure gives combinatorial and geometric models for lattice congruences of the weak order on~$\fB_n$.
Namely, the join-irreducible elements of~$B_n$ are in bijection with \defn{$B$-arcs} (\ie centrally symmetric $A$-arcs or centrally symmetric pairs of non-crossing $A$-arcs on~$[\pm n]$) and with \defn{$B$-shards} (\ie intersections of $A$-shards with the centrally symmetric space).
Each lattice congruence~$\Bequiv$ of type~$B$ then corresponds to an upper ideal~$\Barcs_\Bequiv$ of the forcing order among $B$-arcs.
It was also proved in~\cite{Reading-HopfAlgebras} that any lattice congruence~$\Bequiv$ of the weak order on~$\fB_n$ defines a quotient fan~$\BFan_\Bequiv$, whose chambers can be constructed either by gluing together the chambers of the type~$B$ Coxeter fan that belong to the same congruence class, or as the connected components of the complement of the union of the $B$-shards for all $B$-arcs in the ideal~$\Barcs_\Bequiv$.

Consider a type~$B$ congruence~$\Bequiv$ whose $B$-arc ideal~$\Barcs_\Bequiv$ forms the $A$-arc ideal~$\arcs_\equiv$ of a type~$A$ congruence~$\equiv$.
Then the quotient fan~$\BFan_\Bequiv$ is just the section of the quotient fan~$\Fan_\equiv$ with the centrally symmetric space.
It can thus be realized by the image of any quotientope realizing~$\Fan_\equiv$ under the projection map~$\rhoB$, for instance by a Minkowski sum of cyclohedra of~\cite{HohlwegLange}.
This is the case, for instance, for the cyclohedron or more generally for type~$B$ permutreehedra~\cite{PilaudPons-permutrees}.

However, the main difficulty at this point is that, since the forcing order among $B$-arcs slightly differs from the forcing order among the corresponding $A$-arcs, some $B$-arc ideals do not form $A$-arc ideals.
Already in type~$B_2$, some quotient fans cannot be realized as projections of type~$A$ quotientopes, or as Minkowski sums of cyclohedra.

At this point, we need the fine granularity of shard polytopes.
We define the \defn{shard polytope}~$\shardPolytope[\Barc]$ of a $B$-arc~$\Barc$ as the image of the shard polytope~$\shardPolytope$ of the corresponding $A$-arc~$\arc$ under the projection~$\rhoB$.
Again, the crucial property of type $B$ shard polytopes is the following.

\begin{proposition}[see \cref{prop:shardPolytopeFanB}]
\label{prop:main12}
For any $B$-arc~$\Barc$, the union of the walls of the normal fan of the shard polytope~$\shardPolytope[\Barc]$ contains the shard~$\shard(\Barc)$ and is contained in the union of the shards~$\shard(\Barc')$~for~$\Barc \prec \Barc'$.
\end{proposition}

This property provides the first proof that all type~$B$ quotient fans are polytopal.

\begin{corollary}[see \cref{coro:MinkowskiSumShardPolytopesB}]
\label{coro:main13}
For any lattice congruence~$\Bequiv$ of the weak order on~$\fB_n$ and any positive coefficients~$\coeffSP_\Barc > 0$ for~$\Barc \in \Barcs_\Bequiv$, the quotient fan~$\BFan_{\Bequiv}$ is the normal fan of the Minkowski sum $\shardPolytope[\Barcs_\Bequiv] \eqdef \sum_{\Barc \in \Barcs} \coeffSP_\Barc \shardPolytope[\Barc]$ of the shard polytopes~$\shardPolytope[\Barc]$ of all $B$-arcs~${\Barc \in \Barcs_\Bequiv}$.
\end{corollary}

Again, specializing to coefficients~$\coeffSP_\Barc = 1$, we recover the cyclohedra of~\cite{HohlwegLange}.
As in type~$A$, we believe that type~$B$ shard polytopes are elementary polytopes.

\begin{conjecture}[see \cref{conj:BshardPolytopesIndecomposable}]
\label{conj:main14}
For any $B$-arc~$\Barc$, the shard polytope~$\shardPolytope[\Barc]$ is Minkowski indecomposable.
\end{conjecture}

We have checked this conjecture experimentally for all type~$B_n$ shard polytopes for small values of~$n$.
However, the simple Minkowski indecomposability criterion that we use to prove \cref{prop:main4} fails in type~$B$ and the proof of \cref{conj:main14} would require a much finer understanding of the facets of the type~$B$ shard polytopes.

If \cref{conj:main14} holds, then the work of~\cite{BazierMatteDouvilleMousavandThomasYildirim, PadrolPaluPilaudPlamondon, BrodskyStump, JahnLoweStump} implies that for any type~$B$ arc~$\Barc$ which appears in the $B$-arc ideal of some type~$B$ Cambrian congruence, the shard polytope~$\shardPolytope[\Barc]$ is the Newton polytope of the $F$-polynomial of some type~$B$ cluster variable~\cite{FominZelevinsky-ClusterAlgebrasI, FominZelevinsky-ClusterAlgebrasII, FominZelevinsky-ClusterAlgebrasIV}, or equivalently some brick polytope summand of some type~$B$ subword complex~\cite{PilaudStump-brickPolytope, BrodskyStump, JahnLoweStump}.
However, in contrast to the type~$A$ situation, there are $B$-arcs which do not belong to the $B$-arc ideal of any type~$B$ Cambrian congruence.
We call them \defn{unsortable} $B$-arcs as the corresponding join-irreducible elements are not $c$-sortable for any Coxeter element~$c$.
We are not aware that the shard polytopes of these unsortable $B$-arcs appear in any other related construction.
In particular, it is not clear whether they have a relevant algebraic interpretation.

Finally, as in type~$A$, the type~$B$ shard polytopes forms a linear basis of the type cone of the type~$B$ Coxeter arrangement.

\begin{proposition}[see \cref{coro:typeBShardPolytopesBasis}]
\label{prop:main15}
Any type~$B$ deformed permutahedron has a unique decomposition as a Minkowski sum and difference of dilated type~$B$ shard polytopes (up to translation).
\end{proposition}

Note that this solves the problem posed in~\cite[Qu.~9.3]{ArdilaCastilloEurPostnikov} to find an explicit natural basis for the type cone of type~$B$ permutahedra.
An interesting question that deserves further study is to describe the matrices that transform the heights of type~$B$ deformed permutahedra into their type~$B$ shard polytope coefficients, and \viceversa.

\para{Further directions}
We conclude the paper with a discussion on the existence of (weak) shard polytopes for arbitrary hyperplane arrangements (see \cref{def:shardPolytopesHyperplaneArrangements}).
We show, in particular, that they exist in type~$I_2(n)$ for any integer~$n$, and discuss some progress on supersolvable arrangements.
Let us conclude this introduction by mentioning that the existence of shard polytopes would have the following two consequences.

\begin{proposition}[see \cref{prop:quotientopesHyperplaneArrangements}]
\label{prop:main16}
Consider a hyperplane arrangement~$\HA$ with a base region~$\polytope{B}$ such that the poset of regions~$\PR(\HA, \polytope{B})$ is a lattice, and a lattice congruence~$\equiv$ of~$\PR(\HA, \polytope{B})$ with shard ideal~$\shards_\equiv$.
If each shard~$\shard$ of~$\shards_\equiv$ admits a weak shard polytope~$\polytope{P}_\shard$, then the quotient fan~$\Fan_\equiv$ is the normal fan of the Minkowski sum~$\sum_{\shard \in \shards_\equiv} \polytope{P}_\shard$.
\end{proposition}

\begin{proposition}[see \cref{prop:basisTypeConeHyperplaneArrangements}]
\label{prop:main17}
Consider a simplicial arrangement~$\HA$ and a base region~$\polytope{B}$ such that any shard~$\shard$ admits a weak shard polytope~$\polytope{P}_\shard$.
Then any polytope in the deformation cone of the zonotope of~$\HA$ has a unique decomposition as a Minkowski sum and difference of dilated shard polytopes~$\polytope{P}_\shard$ for all shards~$\shard$ (up to translation).
\end{proposition}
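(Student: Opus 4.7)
\medskip\noindent\textbf{Proof plan.}
The plan is to show that the shard polytopes $\polytope{P}_\shard$ form a basis of the space of virtual polytopes deforming the zonotope of $\HA$ modulo translations; this immediately yields the desired unique signed Minkowski decomposition. First, every shard polytope $\polytope{P}_\shard$ lies in the deformation cone of the zonotope: by hypothesis the walls of its normal fan are contained in the union of the shards of $\HA$ and hence in the hyperplanes of $\HA$, so its normal fan coarsens the chamber fan. Because $\HA$ is simplicial, the chamber fan is simplicial, so the space of virtual polytopes deforming it, modulo translations, has dimension $N-d$, where $N$ is the number of rays of the chamber fan and $d$ is the ambient dimension. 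By N.~Reading's enumeration of shards in simplicial arrangements, this dimension equals the total number of shards (as illustrated by the type~$A$ identity $\#\text{arcs on }[n] = 2^n - n - 1 = (2^n - 2) - (n-1)$).

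To prove linear independence modulo translation, suppose $V \eqdef \sum_\shard \lambda_\shard \polytope{P}_\shard$ vanishes as a virtual polytope with some $\lambda_\shard \neq 0$, and let $\shard^*$ be maximal in the forcing order among the shards in the support. By the generalization of \cref{thm:main2} supplied by the hypothesis, the walls of the normal fan of each $\polytope{P}_\shard$ are contained in the union of the shards $\shard'$ forcing $\shard$, and always contain $\shard$ itself. Pick a codimension-$1$ relatively open cone $W$ of the chamber fan that lies in $\shard^*$ and is a wall of the normal fan of $\polytope{P}_{\shard^*}$. For any $\shard \neq \shard^*$ in the support, maximality of $\shard^*$ implies that $\shard^*$ does not force $\shard$; since $W$ sits in the relative interior of $\shard^*$ and distinct shards meet only in codimension at least two, $W$ is not contained in any shard forcing $\shard$, and hence not a wall of the normal fan of $\polytope{P}_\shard$. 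The Minkowski-linear wall-length functional $\ell_W$ therefore vanishes on $\polytope{P}_\shard$ for every $\shard \neq \shard^*$ in the support, is strictly positive on $\polytope{P}_{\shard^*}$, and gives $0 = \ell_W(V) = \lambda_{\shard^*}\,\ell_W(\polytope{P}_{\shard^*}) \neq 0$, a contradiction.

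Combining linear independence with the dimension count, the shard polytopes form a basis of the virtual polytope space modulo translations, which produces the claimed unique signed Minkowski decomposition of any polytope in the deformation cone of the zonotope. The main obstacle is the dimension identity $\#\text{shards} = N - d$ in full generality: it is classical in type~$A$ and readily extracted from Reading's theory for simplicial arrangements, but any weakening of the simplicial assumption (for instance in the setting of \cref{prop:main16}) would require replacing this counting step with a direct inductive spanning argument along the forcing order, which is delicate because subtracting a shard polytope $\polytope{P}_{\shard^*}$ at a maximal wall of $V$ may introduce walls on shards forcing $\shard^*$.
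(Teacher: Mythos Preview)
Your proof is correct and follows essentially the same approach as the paper: count dimensions via \cref{lem:numberShards} (number of shards equals number of rays minus ambient dimension, which is the dimension of the type cone), and prove linear independence by picking a shard in the support that forces no other shard in the support, so that its shard is a wall of its own shard polytope only. The paper phrases this last step slightly differently---splitting the dependence into $\Sigma^+$ and $\Sigma^-$ and observing that the distinguished shard appears in the normal fan of only one of them---while you use an edge-length functional; these are equivalent. One terminological caveat: in the paper's conventions $\shard \succ \shard'$ means $\shard$ forces $\shard'$, so the element you want (one that forces nothing else in the support) is \emph{forcing minimal} in the paper's language, not maximal; your intended meaning is clear from context, but you should align the wording with the paper. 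Also note that the paper's detailed statement in the concluding remarks adds the hypothesis that $\PR(\HA,\polytope{B})$ is congruence uniform, which is what guarantees the forcing relation is acyclic and hence that such an extremal shard exists---you use this implicitly.
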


\para{Acknoledgements}
We are grateful to the participants of the reading seminar at the Discrete Geometry group at FU Berlin (Matthias Beck, Andrei Com\v{a}neci, Jean-Philippe Labb\'e, Claudia Mitukiewicz and Sophie Rehberg) and to an anonymous referee for numerous comments and suggestions that improved the presentation of this paper.


\newpage
\part{Type~$A$ shard polytopes}
\label{part:typeA}


\section{Preliminaries}
\label{sec:preliminaries}

We start with preliminaries on combinatorial and geometric properties of lattice quotients of the weak order on permutations.
The presentation borrows from the papers~\cite{PilaudSantos-quotientopes, Pilaud-arcDiagramAlgebra, ChatelPilaud} and we reproduce here some of their pictures.
Some combinatorial and lattice theoretic aspects presented here (\cref{subsec:noncrossingArcDiagrams,subsec:canonicalJoinRepresentations}) are not strictly needed for the geometric discussion of \cref{part:typeA}, but will be useful to introduce analogous properties in type~$B$ in \cref{part:typeB}.
Throughout the paper, $n$ is a fixed integer, and we use the notations~$[n] \eqdef \{1, \dots, n\}$ as well as~$[a,b] \eqdef \{a, \dots, b\}$ and~${{]a,b[} \eqdef \{a+1, \dots, b-1\}}$ for two integers~${1 \le a < b \le n}$.


\subsection{Fans and polytopes}
\label{subsec:fansPolytopes}

We start with basic notions of polyhedral geometry.
We refer to G.~Ziegler's classic textbook~\cite{Ziegler-polytopes} for a detailed presentation.

A hyperplane~$\polytope{H} \subset \R^n$ is a \defn{supporting hyperplane} of a set~$\polytope{X} \subset \R^n$ if~$\polytope{H} \cap \polytope{X} \ne \varnothing$ and~$\polytope{X}$ is contained in one of the two closed half-spaces of~$\R^n$ defined by~$\polytope{H}$.

A (polyhedral) \defn{cone} is a subset of~$\R^n$ defined equivalently as the positive span of finitely many vectors or as the intersection of finitely many closed linear halfspaces.
Its \defn{faces} are its intersections with its supporting linear hyperplanes, and its \defn{rays} (resp.~\defn{facets}) are its dimension~$1$ (resp.~codimension~$1$) faces.
A (polyhedral) \defn{fan}~$\fan$ is a collection of cones which are closed under faces (if~$\polytope{C} \in \fan$ and~$\polytope{F}$ is a face of~$\polytope{C}$, then~$\polytope{F} \in \fan$) and intersect properly (if~$\polytope{C}, \polytope{C}' \in \fan$, then~$\polytope{C} \cap \polytope{C}'$ is a face of both~$\polytope{C}$ and~$\polytope{C}'$).
The \defn{chambers} (resp.~\defn{walls}, resp.~\defn{rays}) of~$\fan$ are its codimension~$0$ (resp.~codimension~$1$, resp.~dimension~$1$) cones.

A \defn{polytope} is a subset of~$\R^n$ defined equivalently as the convex hull of finitely many points or as a bounded intersection of finitely many closed affine halfspaces.
Its \defn{faces} are its intersections with its supporting affine hyperplanes, and its \defn{vertices} (resp.~\defn{edges}, resp.~\defn{facets}) are its dimension~$0$ (resp.~dimension~$1$, codimension~$1$) faces.
The \defn{normal cone} of a face~$\polytope{F}$ of a polytope~$\polytope{P}$ is the cone generated by the outer normal vectors of the facets of~$\polytope{P}$ containing~$\polytope{F}$.
The \defn{normal fan} of~$\polytope{P}$ is the fan formed by the normal cones of all faces of~$\polytope{P}$.

The \defn{Minkowski sum} of two polytopes~$\polytope{P}, \polytope{Q} \subset \R^n$ is the polytope~$\polytope{P} + \polytope{Q} \eqdef \set{\b{p}+\b{q}}{\b{p} \in \polytope{P}, \, \b{q} \in \polytope{Q}}$.
For any~$\ray \in \R^n$, the face maximizing the direction~$\ray$ on~$\polytope{P} + \polytope{Q}$ is the Minkowski sum of the faces maximizing the direction~$\ray$ on~$\polytope{P}$ and~$\polytope{Q}$.
Therefore, 
\begin{itemize}
\item the normal fan of~$\polytope{P} + \polytope{Q}$ is the common refinement of the normal fans of~$\polytope{P}$ and~$\polytope{Q}$,
\item the vertex of~$\polytope{P} + \polytope{Q}$ maximizing a generic~$\ray$ is the sum of vertices of~$\polytope{P}$ and~$\polytope{Q}$ maximizing~$\ray$,
\item the facet of~$\polytope{P} + \polytope{Q}$ maximizing a ray~$\ray$ is defined by~$\dotprod{\ray}{\b{x}} = \max\limits_{\b{p} \in \polytope{P}} \dotprod{\ray}{\b{p}} + \max\limits_{\b{q} \in \polytope{Q}} \dotprod{\ray}{\b{q}}$.
\end{itemize}

Further notions on polyhedral geometry and in particular on properties of Minkowski sums will be recalled along the text when needed, in particular weak Minkowski decompositions and type cones in \cref{subsec:typeCone}, virtual polytopes in \cref{subsec:virtualPolytopes}, and mixed volumes in \cref{subsec:matroidPolytopes}.


\subsection{Permutations and noncrossing arc diagrams}
\label{subsec:noncrossingArcDiagrams}

We now briefly recall the bijection between permutations and noncrossing arc diagrams developed by N.~Reading in~\cite{Reading-arcDiagrams}.
The lattice theoretic interpretation of this bijection is presented in \cref{subsec:canonicalJoinRepresentations}

An \defn{arc} is a quadruple~$(a, b, A, B)$ consisting of two integers $1 \le a < b \le n$ and a partition~$A \sqcup B = {]a,b[}$.
This arc can be visually represented as an $x$-monotone continuous curve wiggling around the horizontal axis, with endpoints~$a$ and~$b$, and passing above the points of~$A$ and below the points of~$B$.
For instance, \smash{\raisebox{-.16cm}{\includegraphics[scale=.8]{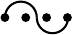}}} represents the arc~$(1,4,\{2\},\{3\})$.
Note that the same information could be recorded in a more compact way, for instance by forgetting either~${B = {]a,b[} \ssm A}$, or~${a = \min(A \sqcup B)-1}$ and~${b = \max(A \sqcup B)+1}$.
Nevertheless, it is convenient in this paper to keep the complete notation~$(a, b, A, B)$.
We denote the set of all arcs by~$\arcs_n \eqdef \set{(a, b, A, B)}{1 \le a < b \le n \text{ and } A \sqcup B = {]a,b[}}$.
Note that~$|\arcs_n| = 2^n-n-1$.

We say that two arcs~$(a, b, A, B)$ and~$(a', b', A', B')$ \defn{cross} if the interior of the two curves representing these arcs intersect in their interior, that is, if both~$(A \cap B') \cup (\{a,b\} \cap B') \cup (A \cap \{a',b'\})$ and~$(B \cap A') \cup (\{a,b\} \cap A') \cup (B \cap \{a',b'\})$ are non-empty.
A \defn{noncrossing arc diagram} is a collection of arcs of~$\arcs_n$ where any two arcs do not cross and have distinct left endpoints and distinct right endpoints (but the right endpoint of an arc can be the left endpoint of an other arc).
See \cref{fig:noncrossingArcDiagrams} for examples of noncrossing arc diagrams.

\begin{figure}
	\capstart
	\centerline{\includegraphics[scale=.8]{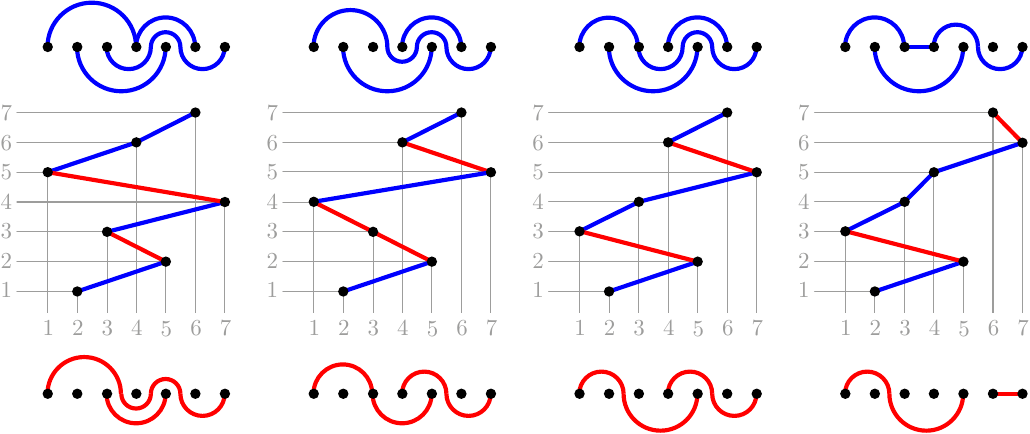}}
	\caption{The noncrossing arc diagrams~$\underline{\delta}(\sigma)$ (bottom) and~$\overline{\delta}(\sigma)$ (top) for the four permutations $\sigma = 2537146$, $2531746$, $2513746$, and $2513476$. \cite[Fig.~2]{Pilaud-arcDiagramAlgebra}}
	\label{fig:noncrossingArcDiagrams}
\end{figure}

Consider now the set~$\fS_n$ of permutations of~$[n]$.
Represent a permutation~$\sigma \in \fS_n$ by its \defn{permutation table} formed by dots at coordinates~$(\sigma_i, i)$ for~$i \in [n]$.
(This unusual choice of orientation fits with the existing constructions~\cite{LodayRonco, HivertNovelliThibon-algebraBinarySearchTrees, ChatelPilaud, PilaudPons-permutrees, Pilaud-arcDiagramAlgebra}.)
Draw a segment between any two consecutive dots~$(\sigma_i, i)$ and~$(\sigma_{i+1}, i+1)$, colored blue for an \defn{ascent}~$\sigma_i < \sigma_{i+1}$ and red for a \defn{descent}~$\sigma_i > \sigma_{i+1}$.
Then move all dots down to the horizontal axis, allowing the segments to curve, but not to cross each other nor to pass through any dot, as illustrated in \cref{fig:noncrossingArcDiagrams}.
The resulting set of blue (resp.~red) arcs is a noncrossing arc diagram~$\overline{\delta}(\sigma)$ (resp.~$\underline{\delta}(\sigma)$).
Less visually and more formally, the blue diagram is~$\overline{\delta}(\sigma) = \set{\overline{\arc}(\sigma, i)}{\sigma_i < \sigma_{i+1}}$ where~$\overline{\arc}(\sigma, i)$ is the arc~$(\sigma_i, \sigma_{i+1}, \set{\sigma_j}{j < i, \,  \sigma_i < \sigma_j < \sigma_{i+1}}, \set{\sigma_j}{j > i+1, \, \sigma_i < \sigma_j < \sigma_{i+1}})$ (resp.~the red diagram~is~$\underline{\delta}(\sigma) = \set{\underline{\arc}(\sigma, i)}{\sigma_i > \sigma_{i+1}}$ where~$\underline{\arc}(\sigma, i)$ is the arc~$(\sigma_{i+1}, \sigma_i, \set{\sigma_j}{j < i, \,  \sigma_{i+1} < \sigma_j < \sigma_i},\newline \set{\sigma_j}{j > i+1, \, \sigma_{i+1} < \sigma_j < \sigma_i})$.
These maps were introduced by N.~Reading in~\cite{Reading-arcDiagrams}, where he proved the following statement.

\begin{theorem}[{\cite[Thm.~3.1]{Reading-arcDiagrams}}]
\label{thm:bijectionNoncrossingArcDiagrams}
The map~$\underline{\delta}$ (resp.~$\overline{\delta}$) is a bijection from the permutations of~$\fS_n$ to the noncrossing arc diagrams on~$\arcs_n$.
\end{theorem}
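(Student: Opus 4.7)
The plan is to prove the statement for $\underline{\delta}$; the case of $\overline{\delta}$ then follows by the symmetry $\sigma \mapsto \sigma_n \sigma_{n-1} \cdots \sigma_1$, which swaps ascents with descents and reflects each arc. The argument splits naturally into (a) showing that $\underline{\delta}(\sigma)$ is always a noncrossing arc diagram, and (b) constructing an inverse.

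For (a), fix $\sigma \in \fS_n$. Since $\sigma$ is a permutation, distinct descent positions $i \ne j$ yield arcs with distinct left endpoints $\sigma_{i+1}$ and distinct right endpoints $\sigma_i$. To verify non-crossing, I would consider two descents at positions $i < j$ producing arcs $(a, b, A, B)$ and $(a', b', A', B')$, and show that the intersection $A \cap (\{a', b'\} \cup B')$ is empty. The key observation is that any $v \in A$ satisfies $\sigma^{-1}(v) < i$, whereas $a' = \sigma_{j+1}$, $b' = \sigma_j$, and every $v \in B'$ satisfies $\sigma^{-1}(v) \ge j > i$. Hence the double-intersection characterization of crossing, recalled in the paper, already fails in one direction, so the two arcs do not cross.

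For (b), I would construct the inverse $\delta \mapsto \sigma_\delta$ via the lattice-theoretic framework of canonical join representations, to be developed in \cref{subsec:canonicalJoinRepresentations}. Each arc $\arc \eqdef (a, b, A, B) \in \arcs_n$ corresponds to a join-irreducible element $j_\arc$ of the weak order on $\fS_n$, namely the unique permutation whose only descent is~$ba$ and whose values strictly between~$a$ and~$b$ are listed in increasing order in~$A$ before the descent and in~$B$ after it. One then checks that the image $\underline{\delta}(\sigma)$ coincides with the set of join-irreducibles appearing in the canonical join representation of~$\sigma$, and that noncrossing arc diagrams are exactly the canonical join representations in this encoding. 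The fact that every element of a finite semidistributive lattice admits a unique canonical join representation yields both existence and uniqueness of~$\sigma_\delta$ with $\underline{\delta}(\sigma_\delta) = \delta$.

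The main obstacle is part (b): specifically, the identification of $\underline{\delta}(\sigma)$ with the canonical join representation of~$\sigma$, and the matching of the noncrossing condition with the irredundancy condition for canonical joinands. A purely combinatorial inverse can also be constructed by induction on $n$, peeling off one value at a time, but this route requires substantial care: the adjacency constraints imposed by each arc (namely that its endpoints occupy consecutive positions in~$\sigma_\delta$) must be combined with the requirement that no extra descents arise, in order to uniquely determine the interleaving of the various descending runs encoded by the arcs. The lattice-theoretic approach makes this uniqueness automatic and therefore seems preferable.
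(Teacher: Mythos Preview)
The paper does not give its own proof of this theorem: it cites Reading's original paper and only sketches the explicit inverse. That sketch is purely combinatorial and quite different from your plan. Given a noncrossing arc diagram~$\c{D}$, the paper considers the poset on the connected components of~$\c{D}$ where $X < Y$ whenever some arc~$(a,b,A,B) \in \c{D}$ has either $A \cap X \ne \varnothing$ and $a,b \in Y$, or $a,b \in X$ and $B \cap Y \ne \varnothing$; the permutation~$\underline{\delta}^{-1}(\c{D})$ is then obtained from a specific linear extension of this poset, writing each component in decreasing order. This gives the inverse directly without any appeal to semidistributivity.

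Your part~(a) is correct. Your part~(b) via canonical join representations is a legitimate route---indeed it is essentially Reading's own organization of the material, and it is the content of \cref{thm:joinMeetRepresentationsPermutations} later in the paper---but it is not self-contained as stated. You correctly flag the obstacle: the step ``noncrossing arc diagrams are exactly the canonical join representations'' needs the fact that in a semidistributive lattice a set of join-irreducibles is a canonical join representation of its join if and only if every \emph{pair} is, together with the verification that two arcs form a canonical join representation precisely when they are noncrossing with distinct endpoints. Without this pairwise criterion, your argument establishes injectivity (each $\sigma$ has a unique canonical join representation) but not surjectivity (every noncrossing diagram arises). The combinatorial inverse the paper describes sidesteps this entirely, at the price of checking by hand that the resulting word is a permutation with the right descents. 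Each approach has its cost; the paper's exposition simply records the direct one.
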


The reverse bijections~$\underline{\delta}^{-1}$ and~$\overline{\delta}^{-1}$ are explicitly described in~\cite[Prop.~3.2]{Reading-arcDiagrams}.
Briefly speaking, consider the poset of connected components of~$\c{D}$ ordered by (the transitive closure of) the priority $X < Y$ if there is an arc~$(a, b, A, B) \in \c{D}$ with~$A \cap X \ne \varnothing$ and~$a, b \in Y$, or with~$a, b \in X$ and~$B \cap Y \ne \varnothing$.
To obtain~$\underline{\delta}{}^{-1}(\c{D})$ (resp.~$\overline{\delta}{}^{-1}(\c{D})$), choose the linear extension of this priority poset where ties are resolved by choosing first the leftmost (resp.~rightmost) connected component, and order decreasingly (resp.~increasingly) the values in each connected component.
See \cref{fig:noncrossingArcDiagrams}.


\subsection{Weak order and canonical join and meet representations}
\label{subsec:canonicalJoinRepresentations}

Consider a finite lattice~${(L, \le, \meet, \join)}$, \ie a finite set~$L$ partially ordered by~$\le$ where each subset~$X$ of elements admits a \defn{meet}~$\bigMeet X$ (greatest lower bound) and a \defn{join}~$\bigJoin X$ (least upper bound).
A \defn{join representation} of~$x \in L$ is a subset~$J \subseteq L$ such that~${x = \bigJoin J}$.
Such a representation is \defn{irredundant} if~$x \ne \bigJoin J'$ for every strict subset~$J' \subsetneq J$.
The irredundant join representations of an element~$x \in L$ are ordered by containment of the lower ideals of their elements, \ie~$J \le J'$ if and only if for any~$y \in J$ there exists~$y' \in J'$ such that~$y \le y'$ in~$L$.
When this order has a minimal element, it is called the \defn{canonical join representation} of~$x$.
All elements of the canonical join representation~$x = \bigJoin J$ are then \defn{join-irreducible}, \ie cover a single element.
A lattice is \defn{join-semidistributive} when every element has a canonical join representation.
Equivalently~\cite[Thm.~2.24]{FreeseNation}, ${x \join z = y \join z \Longrightarrow x \join z = (x \meet y) \join z}$ for any~${x, y, z \in L}$.
\defn{Canonical meet representations}, \defn{meet-irreducible elements} and \defn{meet-semidistributive lattices} are defined dually.
A lattice is \defn{semidistributive} if it is both join- and meet-semidistributive.

We consider the \defn{weak order} on permutations of~$\fS_n$ defined by ${\sigma \le \tau \iff \inv(\sigma) \subseteq \inv(\tau)}$ where~$\inv(\sigma) \eqdef \set{(\sigma_a, \sigma_b)}{1 \le a < b \le n \text{ and } \sigma_a > \sigma_b}$ is the \defn{inversion set} of the permutation~$\sigma$.
See \cref{fig:sylvesterCongruence}\,(left) for the Hasse diagram of the weak order on~$\fS_4$.
A cover relation in the weak order corresponds to a swap of two letters at consecutive positions.
The permutations covered by (resp.~covering) a permutation~$\sigma \in \fS_n$ in the weak order correspond to the descents (resp.~ascents) of~$\sigma$.
Hence, a permutation~$\sigma \in \fS_n$ is join-irreducible (resp.~meet-irreducible) in the weak order if and only if it has a unique descent (resp.~ascent).

The weak order on~$\fS_n$ is a semidistributive lattice, and the canonical join and meet representations of a permutation were described in~\cite{Reading-arcDiagrams} as follows.
Consider an arc~$\arc \eqdef (a, b, A, B) \in \arcs_n$, where~${A \eqdef \{a_1 < \dots < a_p\}}$ and~${B \eqdef \{b_1 < \dots < b_q\}}$.
We associate to the arc~$\arc$ the join-irreducible permutation $\underline{\lambda}(\arc) \eqdef [1, \dots, a-1, a_1, \dots, a_p, b, a, b_1, \dots, b_q, b+1, \dots, n]$ (resp.~the meet-irreducible permutation $\overline{\lambda}(\arc) \eqdef [n, \dots, b+1, b_q, \dots, b_1, a, b, a_p, \dots, a_1, a-1, \dots, 1]$).
The canonical meet and join representations of~$\sigma$ are then given by their red and blue noncrossing arc diagrams, which provides a lattice theoretic interpretation of the bijections of \cref{thm:bijectionNoncrossingArcDiagrams}.

\begin{theorem}[{\cite[Thm.~2.4]{Reading-arcDiagrams}}]
\label{thm:joinMeetRepresentationsPermutations}
The canonical join and meet representations of a permutation~$\sigma$ are given by~$\bigJoin \set{\underline{\lambda}(\underline{\arc})}{\underline{\arc} \in \underline{\delta}(\sigma)}$ and~$\bigMeet \set{\overline{\lambda}(\overline{\arc})}{\overline{\arc} \in \overline{\delta}(\sigma)}$.
\end{theorem}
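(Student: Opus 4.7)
The plan is to prove the join statement; the meet statement follows by applying the order-reversing involution $\sigma \mapsto w_0 \sigma$, which swaps ascents with descents and exchanges $\underline{\delta}$ and $\overline{\delta}$. I would proceed in three steps, working entirely with inversion sets.

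First, I would establish that $\arc \mapsto \underline{\lambda}(\arc)$ is a bijection from $\arcs_n$ onto the join-irreducibles of the weak order. Since in the weak order a permutation covers exactly one element for each of its descents, the join-irreducibles are precisely the permutations with a unique descent. Given such a $\tau$ with unique descent $\tau_i > \tau_{i+1}$, reading off $a \eqdef \tau_{i+1}$, $b \eqdef \tau_i$, $A \eqdef \set{\tau_j}{j < i, \, a < \tau_j < b}$, and $B \eqdef \set{\tau_j}{j > i+1, \, a < \tau_j < b}$ recovers an arc whose image under $\underline{\lambda}$ is $\tau$, so the map is bijective.

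Second, I would verify that $\sigma = \bigJoin \set{\underline{\lambda}(\underline{\arc})}{\underline{\arc} \in \underline{\delta}(\sigma)}$. Recall that in the weak order, the join of a family $\{\tau_k\}$ is the permutation whose inversion set is the transitive closure of $\bigcup_k \inv(\tau_k)$, whenever this closure is a valid inversion set. For each descent $i$ of $\sigma$, a direct computation gives $\inv(\underline{\lambda}(\underline{\arc}(\sigma,i))) \subseteq \inv(\sigma)$, hence $\underline{\lambda}(\underline{\arc}(\sigma, i)) \le \sigma$. Conversely, an arbitrary inversion $(x, y) \in \inv(\sigma)$, realized by positions $p < q$ with $\sigma_p = x > y = \sigma_q$, can be decomposed into a chain of descent-witnessing arcs: walking from $p$ to $q$ and tracking running maxima and minima of $\sigma$, one extracts a descent position $i \in [p, q-1]$ for which $\underline{\arc}(\sigma, i)$ contributes $(x,y)$ either directly or via transitivity with inversions obtained by induction. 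This gives the reverse inclusion and expresses $\sigma$ as the claimed join.

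Third comes canonicity, which is the main obstacle. Irredundance is easy: the cover inversion $(\sigma_i, \sigma_{i+1})$ of $\sigma$ lies in $\inv(\underline{\lambda}(\underline{\arc}(\sigma, i)))$ but in no $\inv(\underline{\lambda}(\underline{\arc}(\sigma, j)))$ for $j \ne i$, since the arcs $\underline{\arc}(\sigma, j)$ have pairwise distinct endpoints. Minimality in the partial order on irredundant join representations requires showing that for any other irredundant representation $\sigma = \bigJoin_k \tau_k$ and each $\underline{\arc} \in \underline{\delta}(\sigma)$, some $\tau_k$ satisfies $\tau_k \ge \underline{\lambda}(\underline{\arc})$. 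Here the noncrossing property of $\underline{\delta}(\sigma)$ becomes essential: the join-irreducibles that are $\le \underline{\lambda}(\underline{\arc})$ correspond to arcs obtained from $\underline{\arc}$ by shortening the endpoints or moving interior points across the curve, and the pairwise noncrossingness of the arcs in $\underline{\delta}(\sigma)$ forbids any proper replacement of $\underline{\lambda}(\underline{\arc})$ by a smaller join-irreducible while still joining to $\sigma$. I would formalize this by induction on weak-order length, or alternatively by invoking the general fact that in a congruence-uniform lattice the canonical join representation of $x$ consists of the join-irreducibles labeling the downward cover edges from $x$, here precisely the arcs of $\underline{\delta}(\sigma)$.
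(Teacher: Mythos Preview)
The paper does not supply its own proof of this theorem: it is stated with a citation to Reading's paper~\cite{Reading-arcDiagrams} and used as background. So there is no ``paper's proof'' to compare against.

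As for your outline itself, the first two steps are essentially fine, though the reverse inclusion in the second step (every inversion of $\sigma$ lies in the transitive closure of the $\inv(\underline{\lambda}(\underline{\arc}(\sigma,i)))$) deserves a cleaner argument than the informal ``walking and tracking running maxima'' you describe. The genuine gap is in your third step. Irredundance is indeed easy for the reason you give, but minimality is the whole content of the theorem, and you do not actually prove it: you gesture at the noncrossing condition and then say you would ``formalize by induction'' or invoke the general fact about congruence-uniform lattices. The latter is circular in this context, since congruence uniformity of the weak order is typically established \emph{via} an understanding of canonical join representations (or equivalently via shards), not the other way around. Reading's actual proof in~\cite{Reading-arcDiagrams} goes through the general machinery of semidistributive lattices: in such a lattice each lower cover $y \lessdot x$ is labeled by a unique join-irreducible, and the canonical join representation of $x$ is exactly the set of these labels. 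The work is then to identify the label of the cover at descent position $i$ with $\underline{\lambda}(\underline{\arc}(\sigma,i))$, which is a local computation. Your direct approach could be made to work, but as written the minimality argument is a promissory note rather than a proof.
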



\subsection{Lattice quotients}
\label{subsec:latticeQuotients}

We now consider lattice congruences and lattice quotients of the weak order on~$\fS_n$.
For details, we refer to the thorough work of N.~Reading, in particular the articles~\cite{Reading-latticeCongruences, Reading-HopfAlgebras, Reading-CambrianLattices, Reading-arcDiagrams} and the surveys~\cite{Reading-survey, Reading-PosetRegionsChapter, Reading-FiniteCoxeterGroupsChapter}.

A \defn{lattice congruence} of a lattice~$(L,\le,\meet,\join)$ is an equivalence relation on~$L$ that respects the meet and the join operations, \ie such that $x \equiv x'$ and~$y \equiv y'$ implies $x \meet y \, \equiv \, x' \meet y'$ and~$x \join y \, \equiv \, x' \join y'$.
Equivalently, the equivalence classes of~$\equiv$ are intervals of~$L$, and the up and down maps~$\projUp$ and~$\projDown$, respectively sending an element of~$L$ to the top and bottom elements of its $\equiv$-equivalence class, are order-preserving.
A lattice congruence~$\equiv$ defines a \defn{lattice quotient}~$L/{\equiv}$ on the congruence classes of~$\equiv$ where~$X \le Y$ if and only if there exist~$x \in X$ and~$y \in Y$ such that~$x \le y$, and~$X \meet Y$ (resp.~$X \join Y$) is the congruence class of~$x \meet y$ (resp.~$x \join y$) for any~$x \in X$~and~$y \in Y$.
Intuitively, the quotient~$L/{\equiv}$ is obtained by contracting the equivalence classes of~$\equiv$ in the lattice~$L$.
More precisely, we say that an element~$x$ is \defn{contracted} by~$\equiv$ if it is not minimal in its equivalence class of~$\equiv$, \ie if~$x \ne \projDown(x)$.
As each class of~$\equiv$ is an interval of~$L$, it contains a unique uncontracted element, and the quotient~$L/{\equiv}$ is isomorphic to the subposet of~$L$ induced by its uncontracted elements~$\projDown(L)$.

\begin{example}[Tamari]
\label{exm:sylvesterCongruence}
The prototype lattice congruence of the weak order on~$\fS_n$ is the \defn{sylvester congruence}~$\equiv_\textrm{sylv}$ \cite{LodayRonco, HivertNovelliThibon-algebraBinarySearchTrees}.
Its congruence classes are the fibers of the binary search tree insertion algorithm, or equivalently the sets of linear extensions of binary trees (labeled in inorder and considered as posets oriented from bottom to top).
It can also be seen as the transitive closure of the rewriting rule~$U i k V j W \equiv_\textrm{sylv} U k i V j W$ where~$i < j < k$ are letters and~$U,V,W$ are words on~$[n]$.
In other words, the uncontracted permutations in the sylvester congruence are those avoiding the pattern~$312$.
The quotient of the weak order by the sylvester congruence is (isomorphic to) the classical \defn{Tamari lattice}~\cite{Tamari}, whose elements are the binary trees on~$n$ nodes and whose cover relations are rotations in binary trees.
The sylvester congruence and the Tamari lattice are illustrated in \cref{fig:sylvesterCongruence} for~$n = 4$.
We will find the sylvester congruence and the Tamari lattice again in \cref{exm:sylvesterCongruence,exm:noncrossingPartitions,exm:LodayAsso,exm:shardsAsso,exm:LodayAssoMinkowskiSum,exm:LodayShardPolytope}.

\begin{figure}
	\capstart
	\centerline{\includegraphics[scale=.6]{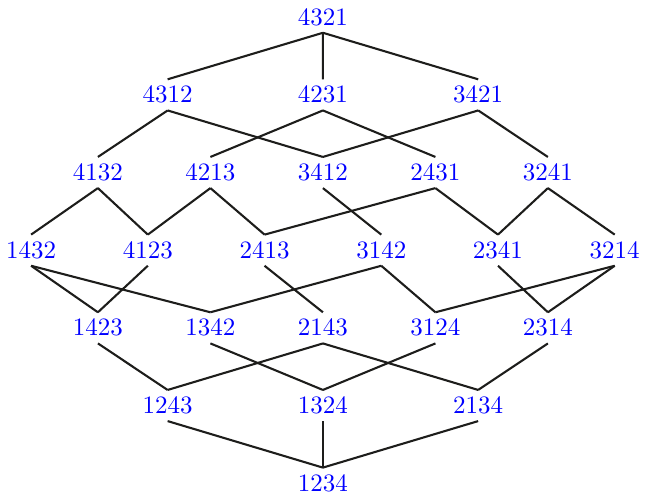} \; \includegraphics[scale=.6]{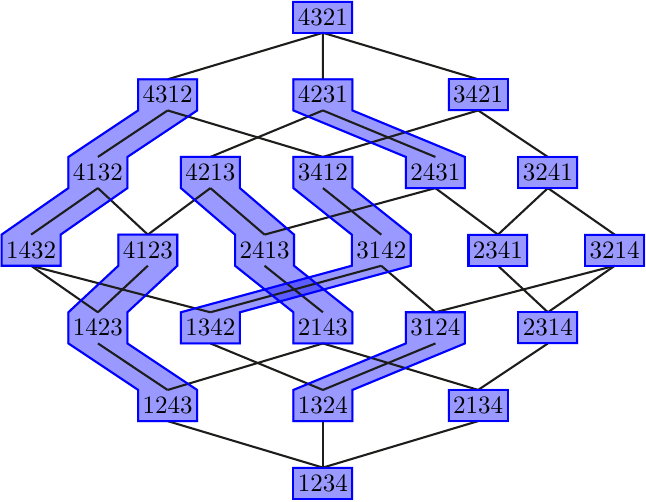} \; \includegraphics[scale=.48]{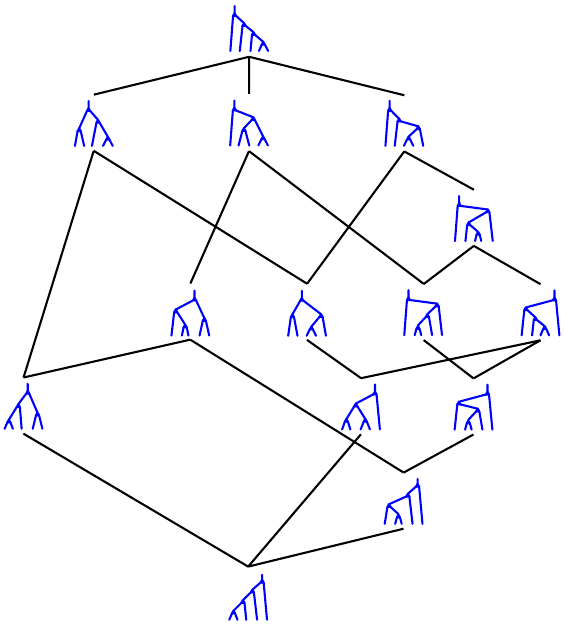}}
	\caption{The weak order on~$\fS_4$ (left), the sylvester congruence~$\equiv_\textrm{sylv}$~(middle), and the Tamari lattice (right). \cite[Fig.~1 \& 2]{PilaudSantos-quotientopes}}
	\label{fig:sylvesterCongruence}
\end{figure}
\end{example}

If a lattice~$L$ is semidistributive, then any lattice quotient~$L/{\equiv}$ is also semidistributive.
Moreover, via the identification between $\equiv$-classes and their minimal elements, the canonical join representations in the quotient~$L/{\equiv} \simeq \projDown(L)$ are precisely the canonical join representations of~$L$ that only involve join-irreducibles of~$L$ uncontracted by~$\equiv$.
We have seen in~\cref{subsec:canonicalJoinRepresentations} that the weak order on~$\fS_n$ is semidistributive, that its join-irreducibles correspond to arcs of~$\arcs_n$, and that the canonical join representations of permutations correspond to noncrossing arc diagrams.
This yields the following statement.

\begin{theorem}[{\cite[Thm.~4.1]{Reading-arcDiagrams}}]
\label{thm:joinMeetRepresentationsQuotient}
For any lattice congruence~$\equiv$ of the weak order on~$\fS_n$, the set of join-irreducibles of~$\fS_n$ uncontracted by~$\equiv$ corresponds to a set of arcs~$\arcs_\equiv$, and the canonical join representations in the lattice quotient~$\fS_n/{\equiv}$ correspond to noncrossing arc diagrams using only arcs of~$\arcs_\equiv$.
\end{theorem}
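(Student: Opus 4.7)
\smallskip

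The plan is to deduce the statement from a clean combination of three ingredients already in the text: the bijection between arcs and join-irreducible permutations, Theorem~\ref{thm:joinMeetRepresentationsPermutations} identifying canonical join representations in~$\fS_n$ with noncrossing arc diagrams, and the general principle (recalled just before the theorem) that canonical join representations in a quotient $L/{\equiv}$ of a semidistributive lattice are exactly the canonical join representations of~$L$ whose join-irreducibles are all uncontracted by~$\equiv$.

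For the first part, recall from \cref{subsec:canonicalJoinRepresentations} that the map $\underline{\lambda}$ is a bijection between~$\arcs_n$ and the set of join-irreducible permutations of~$\fS_n$ (join-irreducibles have a unique descent, whose position and whose ``above/below'' partition of~${]a,b[}$ encode an arc). One therefore simply defines
\[
\arcs_\equiv \eqdef \set{\arc \in \arcs_n}{\underline{\lambda}(\arc) \text{ is uncontracted by } \equiv},
\]
which gives the first assertion tautologically.

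For the second part, the weak order is semidistributive, so the general principle applies: for every uncontracted $\sigma \in \projDown(\fS_n) \simeq \fS_n/{\equiv}$, the canonical join representation of~$\sigma$ in~$\fS_n/{\equiv}$ is the same as in~$\fS_n$, and all its join-irreducible summands are uncontracted. By \cref{thm:joinMeetRepresentationsPermutations}, this representation is encoded by the noncrossing arc diagram~$\underline{\delta}(\sigma)$, and the summands being uncontracted translates via~$\underline{\lambda}$ into $\underline{\delta}(\sigma) \subseteq \arcs_\equiv$. Conversely, given a noncrossing arc diagram $\c{D} \subseteq \arcs_\equiv$, one must check that the preimage $\sigma \eqdef \underline{\delta}^{-1}(\c{D})$ is uncontracted, so that its canonical join representation in~$\fS_n$ descends to one in~$\fS_n/{\equiv}$. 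This is where order-preservation of $\projDown$ is used: each join-irreducible~$j = \underline{\lambda}(\arc)$ for $\arc \in \c{D}$ satisfies~$j = \projDown(j) \le \projDown(\sigma)$, so taking the join over~$\c{D}$ yields $\sigma = \bigJoin_{\arc \in \c{D}} \underline{\lambda}(\arc) \le \projDown(\sigma)$, forcing $\sigma = \projDown(\sigma)$.

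The main obstacle in this plan is not combinatorial but lies in the general semidistributive-quotient principle that I invoke as a black box: proving that canonical join representations are preserved and characterized in the quotient requires a delicate use of the semidistributive law to guarantee that minimality of the representation persists after passing to $\projDown(L)$. Since this is a classical fact available in N.~Reading's earlier work (\eg \cite{Reading-arcDiagrams}), the proof essentially amounts to assembling the three ingredients above and checking the ``if'' direction via the order-preservation argument.
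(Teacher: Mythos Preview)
Your proposal is correct and follows essentially the same approach as the paper. The paper does not give a formal proof of this theorem (it is cited from \cite{Reading-arcDiagrams}); the paragraph preceding the statement simply lists the three ingredients you use --- the general semidistributive-quotient principle, the arc/join-irreducible bijection, and \cref{thm:joinMeetRepresentationsPermutations} --- and concludes ``This yields the following statement.'' Your write-up adds the explicit verification of the converse direction via order-preservation of~$\projDown$, which is a welcome detail but not a departure from the paper's reasoning.
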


\begin{example}[Tamari]
\label{exm:noncrossingPartitions}
For the sylvester congruence~$\equiv_\textrm{sylv}$ of \cref{exm:sylvesterCongruence}, the uncontracted join-irreducibles are given by the set~$\arcs_\textrm{sylv} = \set{(a, b, {]a,b[}, \varnothing)}{1 \le a < b \le n}$ of up arcs, \ie those which pass above all dots in between their endpoints.
Therefore, the sylvester congruence classes are in bijection with noncrossing arc diagrams with arcs in~$\arcs_\textrm{sylv}$, also known as noncrossing partitions.
\end{example}

The set of all lattice congruences of a lattice~$L$ ordered by refinement is a lattice whose meet is the intersection of congruences and join is the transitive closure of union of congruences.
In particular, for any join-irreducible element~$j$ of~$L$, there is a unique minimal lattice congruence~$\equiv_j$ contracting~$j$, and any lattice congruence~$\equiv$ of~$L$ is the join~$\bigJoin_j {\equiv_j}$ over all join-irreducible elements~$j$ of~$L$ contracted by~$\equiv$.
For two join-irreducible elements~$j, j'$ of~$L$, we say that~$j$ \defn{forces} $j'$, and write~$j \succ j'$, if every congruence that contracts~$j$ also contracts~$j'$.
The forcing relation is a preposet and thus defines a poset on its equivalence classes, called the \defn{forcing poset} of~$L$.
It follows that the lattice of congruences of~$L$ is isomorphic to the lattice of upper ideals on the forcing poset of~$L$.
Let us finally mention that a lattice is called \defn{congruence uniform} when the forcing relation is a poset, so that the map~$j \mapsto {\equiv_j}$ is a bijection between the join-irreducible elements of~$L$ and that of the lattice of congruences of~$L$.

The weak order on~$\fS_n$ is a congruence uniform lattice, and the forcing order on join-irreducibles can be described visually on arcs as follows.
We say that an arc~$\arc \eqdef (a, b, A, B) \in \arcs_n$ \defn{forces} an arc~$\arc' \eqdef (a', b', A', B') \in \arcs_n$, and we write~$\arc \succ \arc'$, if~$a' \le a < b \le b'$ and~${A \subseteq A'}$ and~${B \subseteq B'}$.
Visually, $\arc$ forces~$\arc'$ if the endpoints of~$\arc$ are located in between those of~$\arc'$ and~$\arc$ agrees with~$\arc'$ in between its endpoints.
The \defn{arc poset} is the poset~$(\arcs_n, \prec)$ of all arcs ordered by inverse forcing (small elements are forced by big elements in this poset).
The forcing relation and the arc poset on~$\arcs_4$ are illustrated in \cref{fig:forcingOrder}.
We thus obtain the following description of the lattice congruences of the weak order on~$\fS_n$.

\begin{figure}
	\capstart
	\centerline{\includegraphics[scale=.7,valign=c]{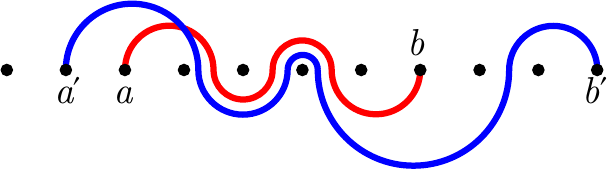} \hspace{1cm} \includegraphics[scale=.6,valign=c]{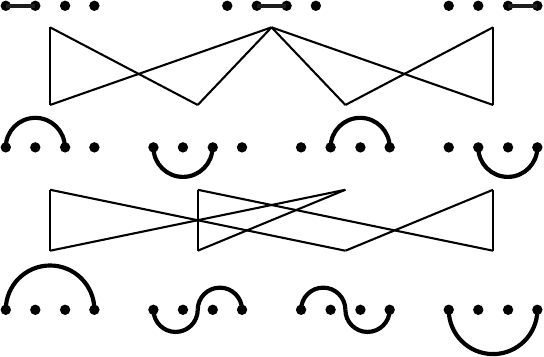}}
	\caption{The forcing relation among arcs (left) and the arc poset for~$n = 4$ (right). The red arc~$(a,b,A,B)$ forces the blue arc~$(a',b',A',B')$. \mbox{\cite[Fig.~5]{PilaudSantos-quotientopes}}}
	\label{fig:forcingOrder}
\end{figure}

\begin{theorem}[{\cite[Thm.~4.4 \& Coro.~4.5]{Reading-arcDiagrams}}]
\label{thm:arcIdeals}
The map~${\equiv} \mapsto \arcs_\equiv$ is a bijection between the lattice congruences of the weak order on~$\fS_n$ and the upper ideals of the arc poset~$(\arcs_n, \prec)$.
\end{theorem}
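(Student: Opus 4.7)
The plan is to apply the general framework for congruence-uniform lattices: if $L$ is such a lattice, then $j \mapsto {\equiv_j}$ is a bijection between the join-irreducibles of $L$ and of $\mathrm{Con}(L)$, the forcing relation on join-irreducibles is a genuine partial order, and the assignment ${\equiv} \mapsto \{j : j \text{ uncontracted by } \equiv\}$ is a bijection between $\mathrm{Con}(L)$ and the upper ideals of this forcing poset (with the convention that $j$ on top forces $j'$ below). My first step is to invoke the classical theorem of Caspard-Le Conte de Poly-Barbut-Morvan that the weak order of any finite Coxeter group is congruence uniform, which makes the framework available for $L = \fS_n$. Combined with Theorem~II.4, which parametrizes the join-irreducibles of the weak order by arcs via $\arc \mapsto \underline{\lambda}(\arc)$, this already reduces the statement to purely combinatorial content: I need to identify the abstract forcing relation on join-irreducibles of the weak order with the combinatorial arc relation defined in the paper.

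My second step is therefore to prove this identification: $\underline{\lambda}(\arc)$ forces $\underline{\lambda}(\arc')$ in the weak order if and only if $a' \le a < b \le b'$, $A \subseteq A'$, and $B \subseteq B'$. The strategy is to describe the minimal congruence $\equiv_{\underline{\lambda}(\arc)}$ explicitly and to show that its set of contracted join-irreducibles is exactly $\{\underline{\lambda}(\arc'') : \arc \succ \arc'' \text{ combinatorially}\}$. The inclusion ``combinatorially forced implies contracted'' is proved by applying Reading's polygon lemma (any congruence contracting one side of a rank-two interval also contracts the opposite side) iteratively: starting from the unique cover $\underline{\lambda}(\arc) \gtrdot \underline{\lambda}(\arc)^*$ that generates $\equiv_{\underline{\lambda}(\arc)}$, one enlarges the context one letter at a time, drawing elements alternately from $A' \setminus A$ and $B' \setminus B$, each insertion being mediated by a rank-two square of the weak order whose opposite side is the new cover we wish to collapse; after finitely many steps one reaches the cover $\underline{\lambda}(\arc') \gtrdot \underline{\lambda}(\arc')^*$. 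The converse inclusion is established by exhibiting, for each $\arc''$ not combinatorially forced by $\arc$, a congruence contracting $\underline{\lambda}(\arc)$ but not $\underline{\lambda}(\arc'')$; natural witnesses are the Cambrian and permutree congruences of~\cite{Reading-CambrianLattices, PilaudPons-permutrees}, whose sets of contracted join-irreducibles admit a direct combinatorial description.

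The main obstacle is the combinatorial bookkeeping in the ``forced implies contracted'' direction: one must carefully choose, at each step, a letter of $A' \setminus A$ or $B' \setminus B$ to insert, verify that a rank-two interval of the weak order links the current contracted cover to the next, and ensure that the process terminates at the desired cover below $\underline{\lambda}(\arc')$. A slicker geometric alternative, which is the route actually taken in~\cite{Reading-arcDiagrams}, is to translate the entire question into the shard language recalled in the Introduction: the forcing between join-irreducibles then becomes the well-studied relation among the shards of the braid arrangement, and the combinatorial arc condition on $(a, b, A, B)$ versus $(a', b', A', B')$ emerges directly from the defining inequalities $\b{x}_{a'} \le \b{x}_a = \b{x}_b \le \b{x}_{b'}$ of the shard~$\shard(\arc)$.
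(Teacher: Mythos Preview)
The paper does not prove this statement at all: it is cited verbatim from \cite[Thm.~4.4 \& Coro.~4.5]{Reading-arcDiagrams} and serves purely as background in the preliminaries section. There is therefore no ``paper's own proof'' to compare against.

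Your proposal is a reasonable high-level sketch of how one would reconstruct Reading's argument, and you explicitly acknowledge this in your final paragraph. The two-step plan (invoke congruence uniformity of the weak order, then identify the abstract forcing relation with the combinatorial arc relation) is exactly the structure of the original proof. Your suggested witnesses for the ``not forced'' direction via Cambrian/permutree congruences are a slight variation on Reading's geometric shard argument, but both work. If you intend this as an actual proof rather than a roadmap, the polygon-lemma iteration in the forward direction would need to be written out in detail; as it stands it is a proof sketch, not a proof.
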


An \defn{arc ideal} is an upper ideal of the arc poset~$(\arcs_n, \prec)$.
In view of this statement, we make no distinction between arc ideals and lattice congruences.
When needed, we write~$\equiv_\arcs$ for the lattice congruence of the weak order on~$\fS_n$ corresponding to an arc ideal~$\arcs \subseteq \arcs_n$.

\begin{example}[Cambrian]
\label{exm:CambrianCongruences}
For an arc~$\arc \eqdef (a, b, A, B) \in \arcs_n$, we denote by~$\arcs_\arc \eqdef \set{\arc' \in \arcs_n}{\arc \prec \arc'}$ the upper ideal of~${(\arcs_n, \prec)}$ generated by~$\arc$.
The corresponding lattice congruence~$\equiv_\arc$ of the weak order on~$\fS_n$ is called the \defn{$\arc$-Cambrian congruence}, and the lattice quotient~$\fS_n/{\equiv_\arc}$ is the \defn{$\arc$-Cambrian lattice}.
It was introduced and extensively studied by N.~Reading in~\cite{Reading-CambrianLattices}.
For instance, the sylvester congruence of \cref{exm:sylvesterCongruence} is the $(1, n, {]1,n[}, \varnothing)$-Cambrian congruence, and the Tamari lattice is the $(1, n, {]1,n[}, \varnothing)$-Cambrian lattice.
The $\arc$-Cambrian congruence classes are fibers of the $\arc$-Cambrian tree insertion, or equivalently linear extensions of $\arc$-Cambrian trees, see~\cite{LangePilaud, ChatelPilaud, PilaudPons-permutrees}.
Let us just say that an \mbox{$\arc$-Cambrian} tree is a tree on~$[a,b]$ such that the node~$j \in \{a\} \cup A$ (resp.~$j \in B \cup \{b\}$) has one ancestor (resp.~descendant) subtree and two descendant (resp.~ancestor) subtrees, and~$i < j < k$ for any nodes $i$ in the left descendant (resp.~ancestor) subtree of~$j$ and~$k$ in the right descendant (resp.~ancestor) subtree of~$j$.
The $\arc$-Cambrian congruence can also be seen as the transitive closure of the three rewriting rules~${U i j V \equiv_\arc U j i V}$ for~$i < a$ or~$j > b$, $U i k V j W \equiv_\arc U k i V j W$ for~$i < j < k$ with~$j \in A$, and~$U j V i k W \equiv_\arc U j V k i W$ for~$i < j < k$ with~$j \in B$.
In particular, the uncontracted permutations in the $\arc$-Cambrian congruence are those avoiding the consecutive patterns~$ji$ with~$i < j$ and~$i < a$ or~$j > b$, and the patterns~$kij$ for~$i < j < k$ with~$j \in A$ and~$jki$ for~$i < j < k$ with~$j \in B$.
We will find the Cambrian congruences in \cref{exm:CambrianCongruences,exm:HohlwegLangeAsso,exm:HohlwegLangeAssoMinkowskiSum,exm:HohlwegLangeAssoVertexFacetDescription} and use them as a fundamental tool in \cref{subsec:MinkowskiSumAssociahedra}.
\end{example}

\begin{example}[Other relevant congruences]
\label{exm:otherCongruences}
\enlargethispage{.2cm}
Let us gather some other relevant examples of lattice congruences of the weak order on~$\fS_n$, as some of them will appear along this paper:
\begin{enumerate}
\item the \defn{recoil congruence}~$\equiv_\textrm{rec}$ is defined by the ideal~$\arcs_\textrm{rec} = \set{(i, i+1, \varnothing, \varnothing)}{i \in [n-1]}$ of basic arcs. It has a congruence class for each subset~$I \subseteq [n-1]$ given by the permutations whose recoils (descents of the inverse) are at positions in~$I$. It can also be seen as the transitive closure of the rewriting rule~$U i j V \equiv_{\textrm{rec}} U j i V$ for~$|i - j| > 1$. The quotient~$\fS_n/{\equiv_\textrm{rec}}$ is the boolean lattice.
\item for~$\decoration \in \Decorations^n$, the \defn{$\decoration$-permutree congruence}~$\equiv_\decoration$ is defined by the ideal~$\arcs_\decoration$ of arcs which do not pass above the points~$j$ with~$\decoration_j \in \{\upCirc, \upDownCirc\}$ nor below the points~$j$ with~$\decoration_j \in \{\downCirc, \upDownCirc\}$. Its congruence classes correspond to $\decoration$-permutrees~\cite{PilaudPons-permutrees}. It can also be seen as the transitive closure of the rewriting rules~$U i k V j W \equiv_\decoration U k i V j W$ for~$i < j < k$ with~$\decoration_j \in \{\downCirc, \upDownCirc\}$ and~$U j V i k W \equiv_\decoration U j V k i W$ for~$i < j < k$ with~$\decoration_j \in \{\upCirc, \upDownCirc\}$.
\label{item:permutreeCongruence}
\item the \defn{Baxter congruence}~$\equiv_\textrm{Bax}$ is defined by the ideal of arcs that do not cross the horizontal axis, \ie~$\arcs_\textrm{Bax} = \set{(a, b, A, B) \in \arcs_n}{A = \varnothing \text{ or } B = \varnothing}$. Its congruence classes correspond to diagonal rectangulations~\cite{LawReading} or equivalently pairs of twin binary trees~\cite{Giraudo}, which are counted by the Baxter numbers. It can also be seen as the transitive closure of the rewriting rule~$U j V i \ell W k X \equiv_{\textrm{Bax}} U j V \ell i W k X$ for~$i < j, k < \ell$.
\label{item:BaxterCongruence}
\item for~$p \ge 1$, the \defn{$p$-recoil congruence}~$\equiv_{p\textrm{-rec}}$ is defined by the ideal of arcs of length at most~$p$, \ie~$\arcs_{p\textrm{-rec}} = \set{(a, b, A, B) \in \arcs_n}{b-a \le p}$. Its congruence classes correspond to acyclic orientations of the graph on~$[n]$ with edges~$(a,b)$ for~$|a-b| \le p$. It can also be seen as the transitive closure of the rewriting rule~$U i j V \equiv_{p\textrm{-rec}} U j i V$ for~$|i - j| > p$. See \cite{Reading-HopfAlgebras, Pilaud-brickAlgebra}.
\item for~$p \ge 1$, the \defn{$p$-twist congruence}~$\equiv_{p\textrm{-twist}}$ is defined by the ideal of arcs passing below at most~$p$ points, \ie~$\arcs_{p\textrm{-twist}} = \set{(a, b, A, B) \in \arcs_n}{|B| \le p}$. Its congruence classes correspond to certain acyclic pipe dreams~\cite{Pilaud-brickAlgebra}. It can also be seen as the transitive closure of the rewriting rule~$U i k V_1 j_1 \dots V_p j_p W \equiv_{p\textrm{-twist}} U k i V_1 j_1 \dots V_p j_p W$ for~$i < j_1, \dots, j_p < k$.
\end{enumerate}
\end{example}

\begin{remark}
\label{rem:regularQuotients}
The Hasse diagram of a lattice quotient~$\fS_n/{\equiv}$ is not always regular (\ie of constant degree).
H.~Hoang and T.~M\"utze proved in~\cite{HoangMutze} that it is regular if and only if all maximal arcs of~$\arcs_n \ssm \arcs_\equiv$ are of the form~$(a, b, {]a,b[}, \varnothing)$ or~$(a, b, \varnothing, {]a,b[})$.
This holds for instance for all $\arc$-Cambrian congruences of~\cite{Reading-CambrianLattices} and more generally for all permutree congruences of~\cite{PilaudPons-permutrees}
\end{remark}


\subsection{Braid fan and permutahedron}
\label{subsec:braidFanPermutahedron}

We now switch to some geometric considerations on~$\fS_n$.

The \defn{braid arrangement} is the set~$\HA_n$ of hyperplanes~$\set{\b{x} \in \R^n}{\b{x}_a = \b{x}_b}$ for ${1 \le a < b \le n}$.
As all hyperplanes of~$\HA_n$ contain the line~$\R \one \eqdef \R(1,1,\dots,1)$, we restrict to the hyperplane ${\hyp \eqdef \bigset{\b{x} \in \R^n}{\sum_{i \in [n]} \b{x}_i = 0}}$.
The hyperplanes of~$\HA_n$ divide~$\hyp$ into chambers, which are the maximal cones of a complete simplicial fan~$\Fan_n$, called the \defn{braid fan}.
It has 
\begin{itemize}
\item a chamber~$\polytope{C}(\sigma) \eqdef \set{\b{x} \in \hyp}{\b{x}_{\sigma_1} \le \b{x}_{\sigma_2} \le \dots \le \b{x}_{\sigma_n} }$ for each permutation~$\sigma$ of~$\fS_n$, 
\item a ray~$\polytope{C}(R) \eqdef \set{\b{x} \in \hyp}{\b{x}_{r_1} = \dots = \b{x}_{r_p} \le \b{x}_{s_1} = \dots = \b{x}_{s_{n-p}}}$ for each subset~${\varnothing \ne R \subsetneq [n]}$, where~$R = \{r_1, \dots, r_p\}$ and~$[n] \ssm R = \{s_1, \dots, s_{n-p}\}$. When needed, we use the representative vector~$\ray(R) \eqdef |R| \one - n \one_R$ in~$\polytope{C}(R)$, where~$\one \eqdef \sum_{i \in [n]} \b{e}_i$ and~$\one_R \eqdef \sum_{r \in R} \b{e}_r$.
\end{itemize}
The chamber~$\polytope{C}(\sigma)$ has rays~$\polytope{C}(\sigma([k]))$ for~$k \in [n]$.
Figures~\ref{fig:weakOrder4}\,(middle), \ref{fig:shards3}\,(left) and \ref{fig:shards4}\,(left) illustrate the braid fans~$\Fan_n$ for~$n = 3$ and~$n = 4$ (some chambers are labeled in blue and some rays are labeled in red).
Note that~$\Fan_n$ has~$n!$ chambers, $n!(n-1)/2$ walls supported by~$\binom{n}{2}$ hyperplanes, and~$2^n-2$ rays.

The \defn{permutahedron} is the polytope~$\Perm$ defined equivalently as
\begin{itemize}
\item the convex hull of the points~$\sum_{i \in [n]} i \, \b{e}_{\sigma_i}$ for all permutations~$\sigma \in \fS_n$,
\item the intersection of the hyperplane~$\Hyp \eqdef \bigset{\b{x} \in \R^n}{\sum_{i \in [n]} \b{x}_i = \binom{n+1}{2}}$ with the halfspaces $\bigset{\b{x} \in \R^n}{\sum_{r \in R} \b{x}_r \ge \binom{|R|+1}{2}}$ for all proper subsets~${\varnothing \ne R \subsetneq [n]}$,
\item (a translate of) the Minkowski sum of all segments~$[\b{e}_a, \b{e}_b]$ for all~$1 \le a < b \le n$.
\end{itemize}
\cref{fig:weakOrder4}\,(right) shows the permutahedron~$\Perm[4]$.
Note that~$\Perm$ has~$n!$ vertices, $n!(n-1)/2$ edges, and $2^n-2$ facets.
The normal fan of the permutahedron~$\Perm$ is the braid fan~$\Fan_n$.

The Hasse diagram of the weak order on~$\fS_n$ can be seen geometrically as the dual graph of the braid fan~$\Fan_n$, or as the graph of the permutahedron~$\Perm$, oriented in the linear direction~$\b{\gamma} \eqdef \sum_{i \in [n]} (2i-n-1) \, \b{e}_i = (-n+1, -n+3, \dots, n-3, n-1)$.

\begin{figure}
	\capstart
	\centerline{\includegraphics[scale=.6]{weakOrderLeft4} \; \includegraphics[scale=.6]{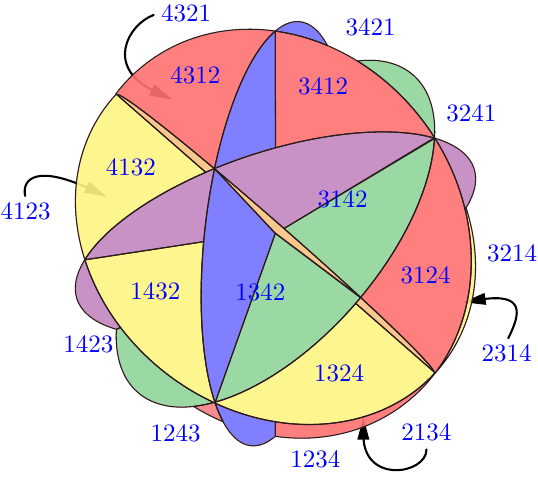} \; \includegraphics[scale=.6]{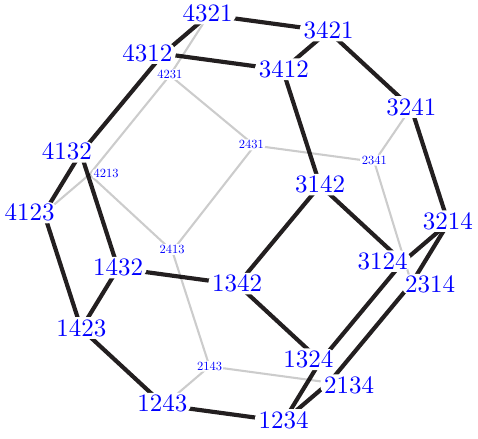}}
	\caption{The Hasse diagram of the weak order on~$\fS_4$ (left) can be seen as the dual graph of the braid fan~$\Fan_4$ (middle) or as the graph of the permutahedron~$\Perm[4]$ (right). \cite[Fig.~1]{PilaudSantos-quotientopes}}
	\label{fig:weakOrder4}
\end{figure}


\subsection{Quotient fans and quotientopes}
\label{subsec:quotientFanQuotientopes}

\enlargethispage{.4cm}
We now consider the geometry of lattice quotients of the weak order on~$\fS_n$.
First, lattice congruences naturally yield quotient fans described in the following statement.
Although stated in the more general context of hyperplane arrangements in~\cite{Reading-HopfAlgebras} (see also~\cite{Reading-PosetRegionsChapter}), we restrict to a simple version for the braid arrangement.

\begin{theorem}[\cite{Reading-HopfAlgebras}]
\label{thm:quotientFanGluing}
Any lattice congruence~$\equiv$ of the weak order on~$\fS_n$ defines a complete fan~$\fan_\equiv$, called a \defn{quotient fan}, whose chambers are obtained by gluing together the chambers~$\polytope{C}(\sigma)$ of the braid fan~$\Fan_n$ corresponding to the permutations~$\sigma$ that belong to the same congruence class~of~$\equiv$.
\end{theorem}

As it turns out, the quotient fans of all lattice congruences of the weak order on~$\fS_n$ are polytopal.

\begin{theorem}[\cite{PilaudSantos-quotientopes}]
\label{thm:quotientopes}
For any lattice congruence~$\equiv$ of the weak order on~$\fS_n$, the quotient fan~$\fan_\equiv$ is the normal fan of a polytope~$\polytope{P}_\equiv$, called a \defn{quotientope}.
\end{theorem}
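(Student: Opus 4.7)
The plan is to produce the quotientope $\polytope{P}_\equiv$ explicitly as a Minkowski sum of shard polytopes, following the strategy already announced in \cref{coro:main3}. By \cref{thm:arcIdeals}, the congruence $\equiv$ corresponds to a unique upper ideal $\arcs_\equiv$ of the arc poset $(\arcs_n, \prec)$. I would set
\[
\polytope{P}_\equiv \eqdef \sum_{\arc \in \arcs_\equiv} \shardPolytope,
\]
taking all Minkowski coefficients equal to $1$ for simplicity, and then show that its normal fan equals $\fan_\equiv$.

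The first step is standard: as recalled in \cref{subsec:fansPolytopes}, the normal fan of a Minkowski sum is the common refinement of the normal fans of its summands, so the set of walls of the normal fan of $\polytope{P}_\equiv$ is exactly the union of the walls of the normal fans of the individual shard polytopes $\shardPolytope$ for $\arc \in \arcs_\equiv$. Applying \cref{thm:main2} to each summand, this union contains $\bigcup_{\arc \in \arcs_\equiv} \shard(\arc)$ and is contained in $\bigcup_{\arc \in \arcs_\equiv} \bigcup_{\arc' \succ \arc} \shard(\arc')$. Because $\arcs_\equiv$ is an upper ideal for the forcing relation $\prec$, any arc $\arc'$ forcing some $\arc \in \arcs_\equiv$ already belongs to $\arcs_\equiv$, so these two unions coincide and the walls of the normal fan of $\polytope{P}_\equiv$ are precisely the shards $\shard(\arc)$ for $\arc \in \arcs_\equiv$.

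The second step is to identify this fan with the quotient fan. By \cref{thm:quotientFanGlueing} together with the shard description of $\fan_\equiv$ recalled in the introduction, $\fan_\equiv$ is the unique complete fan coarsening the braid fan $\Fan_n$ whose walls are exactly $\bigcup_{\arc \in \arcs_\equiv} \shard(\arc)$. \cref{thm:main2} also tells us that each $\shardPolytope$ is a deformed permutahedron, since its normal fan coarsens $\Fan_n$, and Minkowski sums of deformed permutahedra are deformed permutahedra; hence the normal fan of $\polytope{P}_\equiv$ also coarsens $\Fan_n$. Since any coarsening of a fixed simplicial fan is determined by its set of walls (its chambers are the connected components of the complement of those walls), the normal fan of $\polytope{P}_\equiv$ must equal $\fan_\equiv$, and $\polytope{P}_\equiv$ is the desired quotientope.

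The main obstacle in this strategy is \cref{thm:main2}, whose proof will require a careful analysis of the vertex description of $\shardPolytope$ in terms of alternating $\{-1,0,1\}$-vectors and of which braid-arrangement inequalities become tight along the shard $\shard(\arc)$; this is the technical heart of \cref{part:typeA}. Once \cref{thm:main2} is granted, the argument above is essentially bookkeeping, the key conceptual input being that upper ideals are closed under the forcing relation and that coarsenings of the braid fan are determined by their walls.
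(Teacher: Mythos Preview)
Your proposal is correct and matches the paper's own approach: the theorem is cited from \cite{PilaudSantos-quotientopes} without proof here, and the paper's alternative construction is precisely the Minkowski sum of shard polytopes you describe, stated as \cref{coro:MinkowskiSumShardPolytopes} as an immediate consequence of \cref{prop:shardPolytopeFan} (the body-text version of \cref{thm:main2}). Your argument simply spells out the bookkeeping the paper leaves implicit, invoking the shard description of the walls of~$\fan_\equiv$ (\cref{thm:quotientFanShards}) and the closure of the arc ideal under forcing.
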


By construction, the Hasse diagram of the quotient of the weak order by~${\equiv}$ is given by the dual graph of the quotient fan~$\fan_\equiv$, or by the graph of the quotientope~$\polytope{P}_\equiv$, oriented in the direction~$\b{\gamma}$.
In this paper, we call a \defn{quotientope} any polytope whose normal fan is the quotient fan~$\fan_\equiv$, and \defn{PS-quotientopes} the quotientopes constructed in~\cite{PilaudSantos-quotientopes}.

\begin{example}[Tamari]
\label{exm:LodayAsso}
For the sylvester congruence~$\equiv_\textrm{sylv}$ of \cref{exm:sylvesterCongruence,exm:noncrossingPartitions}, the quotient fan~$\fan_\textrm{sylv}$~has
\begin{itemize}
\item a chamber $\polytope{C}(T) = \set{\b{x} \in \hyp}{\b{x}_a \le \b{x}_b \text{ if } a \text{ is a descendant of } b \text{ in } T}$ for each binary tree~$T$, 
\item a ray~$\polytope{C}(I)$ for each proper interval~$I = [i,j] \subsetneq [n]$.
\end{itemize}
Figures~\ref{fig:Tamari4}\,(middle), \ref{fig:shards3}\,(right) and~\ref{fig:shards4}\,(right) illustrate the quotient fans~$\Fan_\textrm{sylv}$ for~$n = 3$ and~$n = 4$.
The quotient fan~$\fan_\textrm{sylv}$ is the normal fan of the classical associahedron~$\Asso$ defined equivalently~as:
\begin{itemize}
\item the convex hull of the points~$\sum_{j \in [n]} \ell(T,j) \, r(T,j) \, \b{e}_j$ for all binary trees~$T$ on~$n$ nodes, where $\ell(T,j)$ and~$r(T,j)$ respectively denote the numbers of leaves in the left and right subtrees of the node~$j$ of~$T$ (labeled in inorder), see~\cite{Loday},
\item the intersection of the hyperplane~$\Hyp$ with the halfspaces~$\bigset{\b{x} \in \R^n}{\sum_{a \le i \le b} \b{x}_i \ge \binom{b-a+2}{2}}$ for all intervals~$1 \le a \le b \le n$, see~\cite{ShniderSternberg},
\item (a translate of) the Minkowski sum of~$\simplex_{[a,b]}$ for all intervals ${1 \le a \le b \le n}$, where for~${I \subseteq [n]}$, $\simplex_I \eqdef \conv\set{\b{e}_i}{i \in I}$ is the face of the standard simplex~$\simplex_{[n]}$ labeled by~$I$, see~\cite{Postnikov}.
\end{itemize}
\cref{fig:Tamari4}\,(right) shows the associahedron~$\Asso[4]$.

\begin{figure}
	\capstart
	\centerline{\includegraphics[scale=.48]{TamariLattice4} \; \includegraphics[scale=.6]{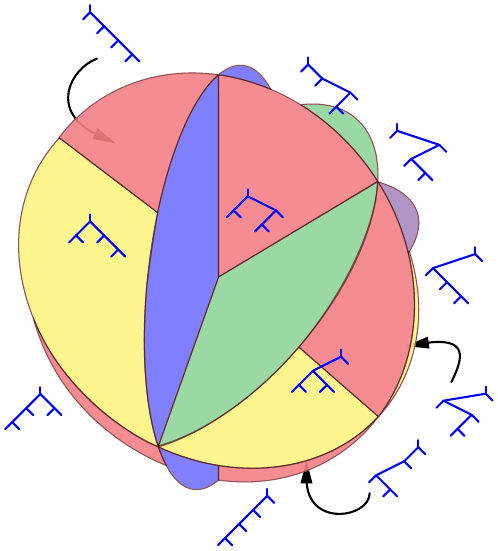} \hspace{-.3cm} \includegraphics[scale=.6]{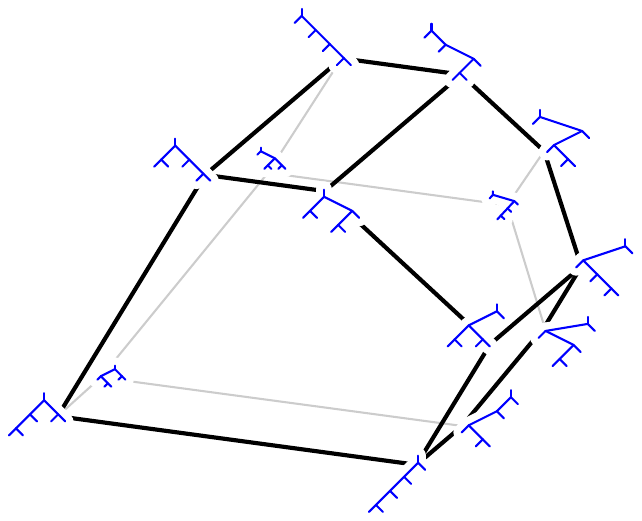}}
	\caption{The Tamari lattice (left) can be seen as the dual graph of the quotient fan~$\fan_\textrm{sylv}$ (middle) or as the graph of J.-L.~Loday's associahedron (right).}
	\label{fig:Tamari4}
\end{figure}
\end{example}

\begin{example}[Cambrian]
\label{exm:HohlwegLangeAsso}
Consider the $\arc$-Cambrian congruence of an arc~$\arc \eqdef (a, b, A, B)$ defined in \cref{exm:CambrianCongruences}.
The quotient fan is the \defn{$\arc$-Cambrian fan}~$\Fan_\arc$.
Its lineality space is generated by~$(\b{e}_i)_{i \notin [a,b]}$, and its section by~$\R^{[a,b]}$~has
\begin{itemize}
\item a chamber~$\polytope{C}(T) = \set{\b{x} \in \hyp}{\b{x}_a \le \b{x}_b \text{ if } a \text{ is a descendant of } b \text{ in } T}$ for each $\arc$-Cambrian tree~$T$ (see~\cite{LangePilaud, ChatelPilaud, PilaudPons-permutrees} or the brief description in \cref{exm:CambrianCongruences}),
\item a ray~$\polytope{C}(R)$ for each proper subset~$\varnothing \ne R \subsetneq [a,b]$ such that for all~$a \le i < j < k \le b$, if~$i,k \in R$ then~$j \in R \cup B$, and if~$i,k \notin R$ then~$j \notin R \cap B$.
\end{itemize}
The quotient fan~$\Fan_\arc$ is the normal fan of C.~Hohlweg and C.~Lange's \defn{$\arc$-associahedron}~$\Asso[\arc]$ \cite{HohlwegLange} defined equivalently as:
\begin{itemize}
\item the convex hull of the points~$\sum_{j \in [a,b]} \HL(T,j) \, \b{e}_j$ for all $\arc$-Cambrian trees~$T$, where ${\HL(T,j) = \ell(T,j) \, r(T,j)}$ (resp.~$\HL(T,j) = b - a + 2 - \ell(T,j) \, r(T,j)$), where~$\ell(T,j)$ and~$r(T,j)$ respectively denote the number of leaves in the left and right descendant (resp.~ancestor) subtrees of the node~$j$ of~$T$ if~$j \in \{a\} \cup A$ (resp.~if~$j \in B \cup \{b\}$),
\item the intersecton of the hyperplane~$\bigset{\b{x} \in \R^n}{\sum_{i \in [n]} x_i = \binom{b-a+2}{2}}$ with the halfspaces $\bigset{\b{x} \in \R^n}{\sum_{r \in R} \b{x}_r \ge \binom{|R|+1}{2}}$ for all rays~$R$ described above.
\end{itemize}
\cref{fig:shardPolytopeSums3,fig:associahedra} show the $2$- and $3$-dimensional associahedra~$\Asso[\arc]$.
\end{example}

\begin{example}[Other relevant congruences]
\label{exm:otherQuotientopes}
The quotient fans of the congruences of \cref{exm:otherCongruences} are realized by:
\begin{enumerate}
\item the parallelotope~$\sum_{i \in [n-1]} [\b{e}_i, \b{e}_{i+1}]$ for the recoil congruence,
\item the $\decoration$-permutreehedron for the $\decoration$-permutree congruence~\cite{PilaudPons-permutrees},
\item the Minkowski sum of~$\Asso$ and~$-\Asso$ for the Baxter congruence~\cite{LawReading},
\item the graphical zonotope~$\sum_{|a-b| \le p} [\b{e}_a, \b{e}_p]$ for the $p$-recoil congruence~\cite{Pilaud-brickAlgebra},
\item the brick polytope for the $p$-twist congruence~\cite{PilaudSantos-brickPolytope, Pilaud-brickAlgebra}.
\end{enumerate}
\end{example}

For an arc ideal~$\arcs \subseteq \arcs_n$, we denote by~$\Fan_\arcs$ the quotient fan~$\fan_{\equiv_\arcs}$ of the corresponding lattice congruence~$\equiv_\arcs$ via the bijection of \cref{thm:arcIdeals}.
We will use the following characterization of the rays of the quotient fan~$\Fan_\arcs$, proved for instance in~\cite[Sect.~3.1]{AlbertinPilaudRitter}.
Note that it fits with the descriptions of the rays of the quotient fans of the sylvester and Cambrian congruences given in \cref{exm:LodayAsso,exm:HohlwegLangeAsso}.

\begin{lemma}[{\cite[Sect.~3.1]{AlbertinPilaudRitter}}]
\label{lem:raysQuotientFan}
For any arc ideal~$\arcs \subseteq \arcs_n$ and any proper subset~${\varnothing \ne R \subsetneq [n]}$, the ray~$\polytope{C}(R)$ of the braid fan~$\Fan_n$ is also a ray of the quotient fan~$\Fan_\arcs$ if and only for every ${1 \le a < b \le n}$, we have~$(a, b, \varnothing, {]a,b[}) \in \arcs$ if $a, b \in R$ and~${]a,b[} \cap R = \varnothing$, and $(a, b, {]a,b[}, \varnothing) \in \arcs$ if~$a, b \notin R$ and~${]a,b[} \subseteq R$.
\end{lemma}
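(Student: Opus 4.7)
The plan is as follows. Writing $R = \{r_1 < \dots < r_p\}$ and $[n] \setminus R = \{s_1 < \dots < s_q\}$, I would first enumerate the shards of the braid fan whose relative interior meets $\polytope{C}(R)$: directly from the defining inequalities of $\shard(a,b,A,B)$, this happens exactly when either $a, b \in R$ and $A \subseteq R$, or $a, b \notin R$ and $B \cap R = \varnothing$. For an \emph{adjacent} pair $(r_i, r_{i+1})$ in $R$, the hypothesis $]r_i, r_{i+1}[ \cap R = \varnothing$ forces $A = \varnothing$ and $B = ]r_i, r_{i+1}[$, so the unique shard meeting $\polytope{C}(R)$ on the hyperplane $\{\b{x}_{r_i} = \b{x}_{r_{i+1}}\}$ is the down arc of the statement, and symmetrically for adjacent pairs in $[n] \setminus R$.

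The crucial arc-poset observation is then the following: if $(r_j, r_k, A, B)$ meets $\polytope{C}(R)$ and $j \le i < k$, then $]r_i, r_{i+1}[ \cap R = \varnothing$ combined with $A \subseteq R$ gives $]r_i, r_{i+1}[ \subseteq ]r_j, r_k[ \setminus A = B$, so $(r_j, r_k, A, B) \prec (r_i, r_{i+1}, \varnothing, ]r_i, r_{i+1}[)$ in the arc poset. Since $\arcs$ is an upper ideal of $(\arcs_n, \prec)$, the absence from $\arcs$ of the adjacent down arc at $(r_i, r_{i+1})$ implies that no shard of $\arcs$ meets $\polytope{C}(R)$ on any hyperplane $\{\b{x}_{r_j} = \b{x}_{r_k}\}$ with $j \le i < k$; symmetrically for $[n] \setminus R$. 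I would then invoke the standard fact that $\polytope{C}(R)$ is a ray of $\Fan_\arcs$ if and only if the normals $\b{e}_a - \b{e}_b$ of the shards of $\arcs$ meeting $\polytope{C}(R)$ span the $(n-2)$-dimensional normal space of $\polytope{C}(R)$ inside $\hyp$, since the orthogonal complement of this span is the lineality of every cone of $\Fan_\arcs$ containing $\polytope{C}(R)$ in its closure.

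For the forward direction, assuming both adjacent conditions, the $p-1$ vectors $\b{e}_{r_i} - \b{e}_{r_{i+1}}$ together with the $q-1$ vectors $\b{e}_{s_j} - \b{e}_{s_{j+1}}$ are edge vectors of two path graphs with disjoint supports $R$ and $[n]\setminus R$, hence form $n-2$ linearly independent vectors spanning the normal space of $\polytope{C}(R)$; equivalently, the $n-2$ walls of the chamber $\polytope{C}(\sigma)$ for $\sigma = [r_1, \dots, r_p, s_1, \dots, s_q]$ adjacent to $\polytope{C}(R)$ lie exactly on these adjacent shards, persist in $\Fan_\arcs$, and pin $\polytope{C}(R)$ down to a $1$-face of the chamber of $\Fan_\arcs$ containing $\polytope{C}(\sigma)$. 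For the converse, a missing adjacent down arc at $(r_i, r_{i+1})$ drops the span of available normals to at most $(p-2) + (q-1) = n-3$ by the cascading observation of the second paragraph, so a nonzero transverse direction lies in the lineality of the chamber of $\Fan_\arcs$ containing $\polytope{C}(R)$ in its closure and $\polytope{C}(R)$ fails to be a $1$-face. The main obstacle is precisely this converse: converting the local combinatorial absence of one adjacent shard into the global geometric deficit of the span of available normals, which requires cascading along the forcing order to rule out every longer shard that could otherwise bridge the gap.
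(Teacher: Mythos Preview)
Your argument is correct, but it takes a longer route than the one the paper implicitly uses (and spells out for the type~$B$ analogue, \cref{lem:BraysQuotientFan}). Two remarks. First, your opening characterization is misphrased: the condition ``$a,b\in R$ and $A\subseteq R$, or $a,b\notin R$ and $B\cap R=\varnothing$'' describes the shards \emph{containing} $\polytope{C}(R)$, not those whose relative interior meets it. If you actually compute the shards whose relative interior contains a generic point of $\polytope{C}(R)$, the strict inequalities force $A=\varnothing$ (resp.~$B=\varnothing$), and you land directly on the adjacent down and up arcs of the statement --- exactly $n-2$ of them, with linearly independent normals. Second, your ``standard fact'' is stated imprecisely (chambers of $\Fan_\arcs$ are pointed, so their lineality is trivial); what you want is that the linear span of the minimal face of $\Fan_\arcs$ containing the ray is the orthogonal complement of the span of normals of \emph{walls} of $\Fan_\arcs$ through the ray, and these walls lie in shards of $\arcs$ containing the ray.

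The paper's approach collapses your forcing step entirely: since the arcs of the lemma are \emph{precisely} the shards containing $\polytope{C}(R)$ in their relative interior, the statement reduces to the general principle that a braid ray survives in $\Fan_\arcs$ if and only if every shard containing it in its interior is preserved. Your cascading/forcing argument is then unnecessary for the characterization (though it is exactly what one needs to \emph{prove} that general principle, so your proof is essentially unpacking the cited result rather than invoking it). What your route buys is self-containment; what the paper's route buys is a one-line reduction to a fact proved once and for all in \cite{AlbertinPilaudRitter}.
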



\subsection{Shards}
\label{subsec:shards}

An alternative description of the quotient fan~$\Fan_\equiv$ defined in \cref{thm:quotientFanGluing} is given by its walls, each of which can be seen as the union of some preserved walls of the braid arrangement.
The conditions in the definition of lattice congruences impose strong constraints on the set of preserved walls.
Shards were introduced by N.~Reading in~\cite{Reading-posetRegions} (see also~\cite{Reading-PosetRegionsChapter, Reading-FiniteCoxeterGroupsChapter}) to understand the possible sets of preserved walls.

For any arc~$\arc \eqdef (a, b, A, B)$, the \defn{shard}~$\shard(\arc) = \shard(a, b, A, B)$ is the cone
\[
\shard(a, b, A, B) \eqdef \set{\b{x} \in \hyp}{\b{x}_a = \b{x}_b, \; \b{x}_a \ge \b{x}_{a'} \text{ for all } a' \in A, \; \b{x}_a \le \b{x}_{b'} \text{ for all } b' \in B}.
\]
We denote by~$\shards_n \eqdef \set{\shard(\arc)}{\arc \in \arcs_n}$ the set of all shards of~$\HA_n$.
Note that~$|\shards_n| = |\arcs_n| = 2^n-n-1$ is the number~$2^n-2$ of rays of~$\Fan_n$ minus the dimension~$n-1$, see \cref{lem:numberShards}.

\cref{fig:shards3,fig:shards4} illustrate the braid fans~$\Fan_n$ and their shards~$\shards_n$ when~$n = 3$ and~$n = 4$, respectively.
As the $3$-dimensional fan~$\Fan_4$ is difficult to visualize (as in \cref{fig:weakOrder4}\,(middle)), we use another classical representation in \cref{fig:shards4}\,(left): we intersect~$\Fan_4$ with a unit sphere and we stereographically project the resulting arrangement of great circles from the pole~$4321$ to the plane.
Each circle then corresponds to a hyperplane~$\b{x}_a = \b{x}_b$ with~$a < b$, separating a disk where~$\b{x}_a < \b{x}_b$ from an unbounded region where~$\b{x}_a > \b{x}_b$.
In both \cref{fig:shards3,fig:shards4}, the left picture shows the braid fan~$\Fan_n$ (where some chambers are labeled with blue permutations of~$[n]$ and some rays are labeled with red proper subsets of~$[n]$), the middle picture shows the shards~$\shards_n$ (labeled by arcs), and the right picture represents the quotient fan~$\fan_\textrm{sylv}$ of the sylvester congruence.

\begin{figure}
	\capstart
	\centerline{\includegraphics[scale=.9]{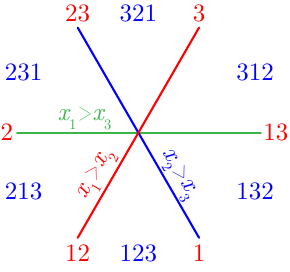} \qquad \includegraphics[scale=.9]{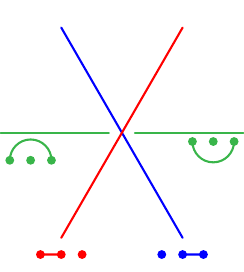} \qquad \includegraphics[scale=.9]{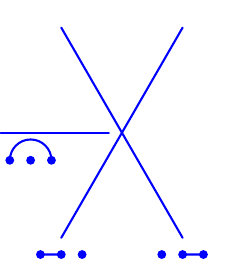}}
	\caption{The braid fan~$\Fan_3$ (left), the corresponding shards (middle), and the quotient fan of the sylvester congruence~$\equiv_\textrm{sylv}$~(right). \cite[Fig.~3]{PilaudSantos-quotientopes}}
	\label{fig:shards3}
\end{figure}

\begin{figure}
	\capstart
	\centerline{\includegraphics[scale=.45]{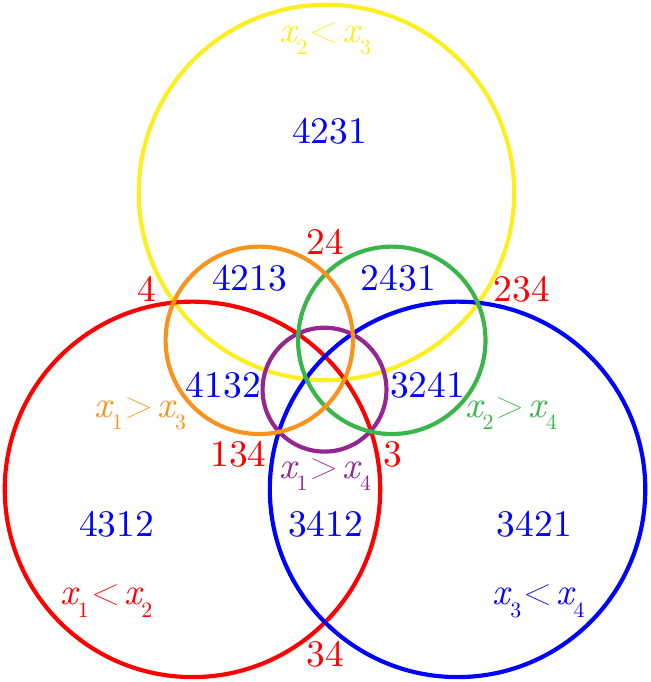} \; \includegraphics[scale=.45]{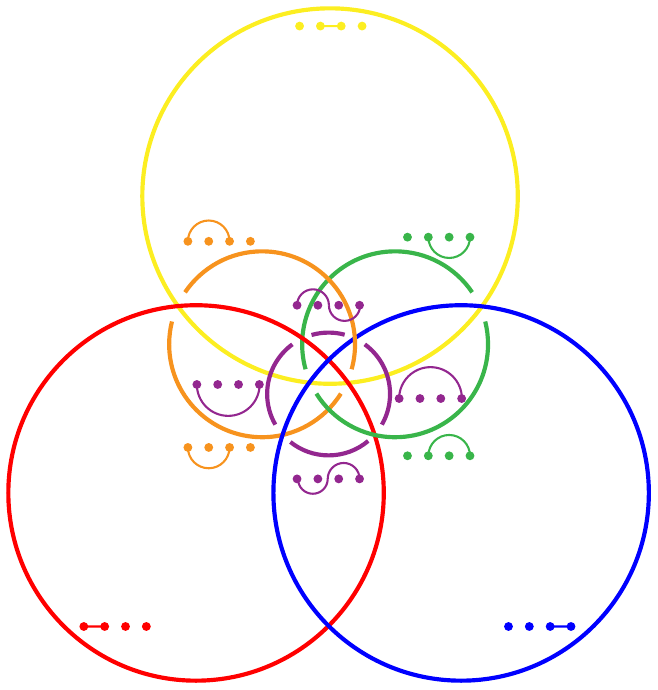} \; \includegraphics[scale=.45]{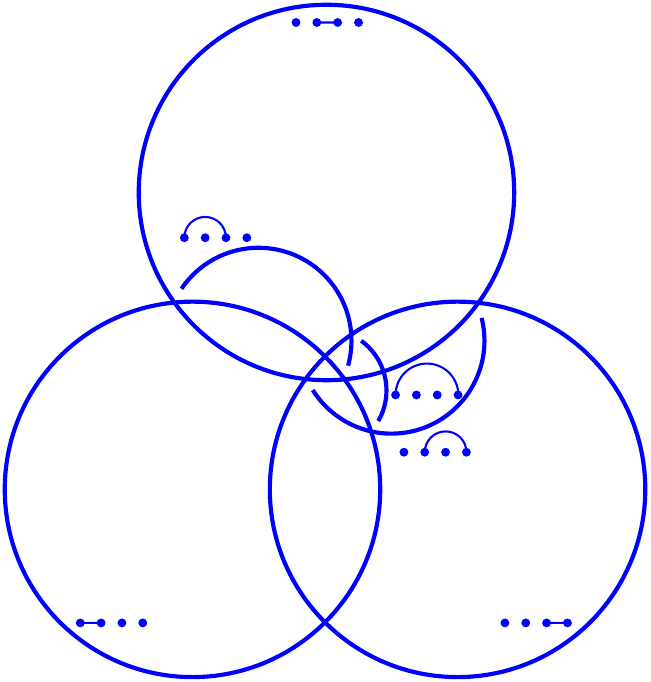}}
	\caption{A stereographic projection of the braid fan~$\Fan_4$ (left) from the pole~$4321$, the corresponding shards (middle), and the quotient fan of the sylvester congruence~$\equiv_\textrm{sylv}$~(right). \mbox{\cite[Fig.~4]{PilaudSantos-quotientopes}}}
	\label{fig:shards4}
\end{figure}

It turns out that the shards are precisely the pieces of the hyperplanes of~$\HA_n$ needed to delimit the cones of the quotient fans.
For~$\arcs \subseteq \arcs_n$, we denote by~$\shards_\arcs \eqdef \set{\shard(\arc)}{\arc \in \arcs}$ the set of shards corresponding to the arcs of~$\arcs$.

\begin{theorem}[{\cite[Sect.~10.5]{Reading-FiniteCoxeterGroupsChapter}}]
\label{thm:quotientFanShards}
For any arc ideal~$\arcs \subseteq \arcs_n$, the union of the walls of~$\Fan_\arcs$ is the union of the shards of~$\shards_\arcs$.
\end{theorem}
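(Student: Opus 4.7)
The plan is to build a dictionary between walls of the braid fan~$\Fan_n$ and arcs of~$\arcs_n$, recognize each shard as a union of walls with a common arc, and then translate the condition ``wall preserved in~$\Fan_\arcs$'' into ``arc lies in~$\arcs$.'' I would start by attaching an arc to each wall of~$\Fan_n$: such a wall~$W$ separates two adjacent chambers $\polytope{C}(\sigma)$ and $\polytope{C}(\tau)$ where $\tau$ covers $\sigma$ in the weak order by swapping the ascent at some positions $i, i+1$. Writing $a \eqdef \sigma_i < \sigma_{i+1} \eqdef b$, $A \eqdef \set{\sigma_j}{j < i, \; a < \sigma_j < b}$ and $B \eqdef \set{\sigma_j}{j > i+1, \; a < \sigma_j < b}$, the defining inequalities of $\polytope{C}(\sigma)$ immediately force $x_{a'} \le x_a = x_b \le x_{b'}$ for $a' \in A$ and $b' \in B$ on~$W$, so $W \subseteq \shard(a,b,A,B)$. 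The arc of~$W$ is then $\overline{\arc}(\sigma,i) = (a,b,A,B)$, independent of the choice between~$\sigma$ and~$\tau$.

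Next, I would establish that for each arc $\arc = (a,b,A,B)$, the shard $\shard(\arc)$ coincides with the union of all walls tagged by~$\arc$. A point in the relative interior of $\shard(\arc)$ admits a total order on all its coordinates apart from the forced equality $x_a = x_b$ (with $A$-coordinates strictly below, $B$-coordinates strictly above, and coordinates outside $[a,b]$ placed somewhere in this order), hence lies on a unique wall of~$\Fan_n$ between some $\polytope{C}(\sigma)$ and $\polytope{C}(\tau)$; unwinding the definitions recovers exactly the arc~$\arc$ for this wall. This identifies each shard with the closure of the corresponding set of walls.

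Finally, I would translate to the quotient. A wall between $\polytope{C}(\sigma)$ and $\polytope{C}(\tau)$ vanishes in $\Fan_\arcs$ iff $\sigma \equiv_\arcs \tau$, that is, iff the cover relation between $\sigma$ and $\tau$ is contracted. By \cref{thm:joinMeetRepresentationsPermutations} this happens iff the join-irreducible $\underline{\lambda}(\overline{\arc}(\sigma,i))$ is contracted, and by \cref{thm:joinMeetRepresentationsQuotient,thm:arcIdeals} iff $\overline{\arc}(\sigma,i) \notin \arcs$. Combining with the previous step yields
\[
\bigcup_{W \text{ wall of } \Fan_\arcs} \!\! W \;=\; \bigcup_{\arc \in \arcs} \shard(\arc),
\]
which is the desired equality.

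I expect the main obstacle to be the reverse inclusion in the second step, namely that every point of $\shard(\arc)$ really lies on some wall tagged by~$\arc$. This rests on the geometric observation that no braid hyperplane $x_i = x_j$ transversally cuts $\shard(\arc)$ across its arc structure: pairs with $i \in A$ and $j \in B$ are ruled out by the strict shard inequalities, while the remaining hyperplanes (separating two elements of $\{a\} \cup A$, two elements of $B \cup \{b\}$, or involving the complement of $[a,b]$) only refine $\shard(\arc)$ into smaller walls all carrying the same arc~$\arc$. Once this rigidity is in hand, the bijective dictionary between walls and arcs makes the rest of the argument essentially bookkeeping.
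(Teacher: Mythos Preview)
The paper does not actually prove \cref{thm:quotientFanShards}; it is quoted as a citation from Reading's survey~\cite[Sect.~10.5]{Reading-FiniteCoxeterGroupsChapter}, so there is no in-paper proof to compare against. Your plan is a faithful reconstruction of the standard argument behind that citation: label each wall of~$\Fan_n$ by an arc, identify each shard with the union of walls carrying that label, and then use the lattice-theoretic description of which covers are contracted to decide which walls survive in~$\Fan_\arcs$.

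One point deserves more care. In your third step you write ``by \cref{thm:joinMeetRepresentationsPermutations} this happens iff the join-irreducible $\underline{\lambda}(\overline{\arc}(\sigma,i))$ is contracted.'' \Cref{thm:joinMeetRepresentationsPermutations} only gives the canonical join representation of~$\tau$; it does not by itself say that the cover $\sigma \lessdot \tau$ is contracted precisely when the specific joinand $\underline{\lambda}(\underline{\arc}(\tau,i))$ is contracted. What you need is the general fact (valid in any join-semidistributive lattice) that the elements covered by~$\tau$ are in bijection with the canonical joinands of~$\tau$, and that a cover is collapsed by a congruence iff its associated joinand is contracted. This is standard and is implicit in Reading's framework, but you should name it rather than attribute it to \cref{thm:joinMeetRepresentationsPermutations} alone. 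Once that link is made precise, the rest of your bookkeeping goes through, including the ``reverse inclusion'' you flagged: for a generic point of the relative interior of~$\shard(\arc)$ the only coordinate equality is $x_a = x_b$, forcing $a$ and~$b$ to sit at consecutive positions in the associated permutation and recovering exactly the arc~$\arc$.
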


\begin{example}[Tamari]
\label{exm:shardsAsso}
Following \cref{exm:sylvesterCongruence,exm:noncrossingPartitions,exm:LodayAsso}, \cref{fig:shards3,fig:shards4} represent the quotient fans~$\fan_\textrm{sylv}$ corresponding to the sylvester congruences~$\equiv_\textrm{sylv}$ on~$\fS_3$ and~$\fS_4$.
It is obtained 
\begin{itemize}
\item either by gluing the chambers~$\polytope{C}(\sigma)$ of the permutations~$\sigma$ in the same sylvester class,
\item or by cutting the space with the shards of~$\shards_\textrm{sylv} = \set{\shard(a, b, {]a,b[}, \varnothing)}{1 \le a < b \le 4}$.
\end{itemize}
See also \cref{exm:LodayAsso} for a combinatorial description of the chambers and rays of this fan.
\end{example}

Finally, note that the shards and the forcing order among them can also be constructed in a purely geometrical way.
Namely, for any codimension~$2$ face~$F$ of the braid fan~$\Fan_n$, consider the subarrangement~$\HA_n^F$ of~$\HA_n$ formed by the hyperplanes containing~$F$, and call $F$-basic the two hyperplanes delimiting the region of~$\HA_n^F$ containing the fundamental chamber~$\polytope{C}(1 \dots n)$ of~$\Fan_n$.
Cut the non-$F$-basic hyperplanes by the $F$-basic hyperplanes for each codimension~$2$ face~$F$.
The resulting connected components are the (open) shards of~$\shards_n$.
The forcing relation is obtained as the transitive closure of the cutting relation: $\shard$ cuts~$\shard'$ if there is a codimension~$2$ face~$F$ contained in~$\shard$ and~$\shard'$ such that the supporting hyperplane of~$\shard$ is $F$-basic while that of~$\shard'$ is not.
Note that the cutting relation is weaker than the forcing relation: translated back on arcs, $\arc$ cuts~$\arc'$ if~$\arc$ forces~$\arc'$ and~$\arc$ and~$\arc'$ share an endpoint.
See \cite{Reading-PosetRegionsChapter} for more details.


\subsection{Minkowski sums of associahedra}
\label{subsec:MinkowskiSumAssociahedra}

\enlargethispage{.2cm}
We conclude this preliminary section with a proof of \cref{thm:main1}, which provides alternative polytopal realizations of the quotient fans using Minkowski sums of C.~Hohlweg and C.~Lange's associahedra~\cite{HohlwegLange}.
This construction was already used for certain specific quotients, \eg for the Baxter congruence corresponding to diagonal rectangulations~\cite{LawReading} or for intersections of essential Cambrian congruences~\cite{ChatelPilaud}.
Our constructions in \cref{subsec:shardsumotopes} will be based on similar ideas.

Recall from \cref{exm:CambrianCongruences,exm:HohlwegLangeAsso} that for an arc~$\arc \in \arcs_n$, we denote by~$\arcs_\arc$ the upper ideal of the arc poset~$(\arcs_n, \prec)$ generated by~$\arc$, by~$\equiv_\arc$ the $\arc$-Cambrian congruence, by $\Fan_\arc$ the $\arc$-Cambrian fan, and by~$\Asso[\arc]$ the $\arc$-associahedron.
Our main tool is the following observation.

\begin{lemma}
\label{lem:intersectionCambrianCongruences}
For any arc ideal~$\arcs \subseteq \arcs_n$ with minimal elements~$\arc_1, \dots, \arc_p$, the lattice congruence~$\equiv_\arcs$ is the intersection of the Cambrian congruences~$\equiv_{\arc_1}, \dots, \equiv_{\arc_p}$.
\end{lemma}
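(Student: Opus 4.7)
The plan is to pass to the bijection of \cref{thm:arcIdeals} between lattice congruences of the weak order on~$\fS_n$ and upper ideals of the arc poset~$(\arcs_n, \prec)$, and check that both sides of the claimed equality correspond to the same upper ideal~$\arcs$.

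First I would establish the general compatibility of meets in the congruence lattice with unions of arc ideals: for any finite family $\equiv_1, \dots, \equiv_p$ of lattice congruences of the weak order, one has $\arcs_{\bigcap_i \equiv_i} = \bigcup_i \arcs_{\equiv_i}$. This follows from the pointwise definition of intersection of equivalence relations: a join-irreducible permutation $\underline{\lambda}(\arc)$ is identified with the element it covers by $\bigcap_i \equiv_i$ if and only if it is so identified by every $\equiv_i$; complementing, $\arc$ is uncontracted by $\bigcap_i \equiv_i$ if and only if it is uncontracted by at least one $\equiv_i$. Applied to the $\arc_i$-Cambrian congruences, whose arc ideals are the upper ideals $\arcs_{\arc_i}$ of $\arc_i$ recalled in \cref{exm:CambrianCongruences}, this gives $\arcs_{\bigcap_i \equiv_{\arc_i}} = \bigcup_i \arcs_{\arc_i}$.

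The remaining step is the elementary poset-theoretic fact that any upper ideal $\arcs$ of a finite poset coincides with the union of the upper ideals generated by its minimal elements $\arc_1, \dots, \arc_p$: every $\arc \in \arcs$ dominates at least one $\arc_i$ by the descending chain condition, and each $\arcs_{\arc_i}$ is contained in $\arcs$ by upward closure. Hence $\arcs = \bigcup_i \arcs_{\arc_i}$, matching the arc ideal already computed for $\bigcap_i \equiv_{\arc_i}$; invoking \cref{thm:arcIdeals} one last time yields $\bigcap_i \equiv_{\arc_i} = \equiv_\arcs$. I do not foresee any substantive obstacle in this argument; the only ingredient that genuinely uses the lattice (rather than just the poset) structure is the first step, where one translates the pointwise intersection of equivalence relations into the Boolean union of arc ideals, and this is immediate from the definitions.
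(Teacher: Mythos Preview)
Your proposal is correct and follows essentially the same approach as the paper: the paper's proof is a terse two-sentence version of exactly your argument, first noting that $\arcs = \bigcup_i \arcs_{\arc_i}$ since the $\arc_i$ are the minimal elements of~$\arcs$, and then concluding $\equiv_\arcs = \bigcap_i \equiv_{\arc_i}$ via the (implicit) correspondence between unions of arc ideals and intersections of congruences. Your write-up simply makes explicit the step $\arcs_{\bigcap_i \equiv_i} = \bigcup_i \arcs_{\equiv_i}$ that the paper leaves to the reader.
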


\begin{proof}
The arc ideal $\arcs$ is generated by its minimal elements~$\arc_1, \dots, \arc_p$, thus it is the union of the principal ideals~$\arcs_{\arc_1}, \dots, \arcs_{\arc_p}$.
Therefore, the congruence~$\equiv_\arcs$ is the intersection of the congruences~$\equiv_{\arc_1}, \dots, \equiv_{\arc_p}$.
\end{proof}

\cref{lem:intersectionCambrianCongruences} has the following direct combinatorial and geometric consequences.
All these consequences where already observed in~\cite[arXiv version, Sect.~2.3]{ChatelPilaud} for intersections of essential Cambrian congruences (\ie with~$a = 1$ and~$b = n$), but the observation of \cref{lem:intersectionCambrianCongruences} was missing.

\begin{corollary}
For any arc ideal~$\arcs \subseteq \arcs_n$ with minimal elements~$\arc_1, \dots, \arc_p$, each congruence class of~$\equiv_\arcs$ is represented by a $p$-tuple of $\arc_1$-, \dots, $\arc_p$-Cambrian trees with a common linear extension.
\end{corollary}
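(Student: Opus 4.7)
The plan is to combine \cref{lem:intersectionCambrianCongruences} with the known combinatorial description of $\arc$-Cambrian classes in terms of linear extensions of Cambrian trees recalled in \cref{exm:CambrianCongruences}. First, recall that if $\equiv$ and $\equiv'$ are two equivalence relations on a set $X$, then the equivalence classes of $\equiv \cap \equiv'$ are exactly the non-empty pairwise intersections of classes of $\equiv$ with classes of $\equiv'$; iterating, the classes of $\bigcap_{i=1}^p \equiv_{\arc_i}$ are the non-empty intersections $C_1 \cap \dots \cap C_p$ of one $\equiv_{\arc_i}$-class $C_i$ for each $i$.

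Next, for each $i \in [p]$, the $\arc_i$-Cambrian congruence class of a permutation $\sigma \in \fS_n$ is, by the description of \cref{exm:CambrianCongruences}, precisely the set of linear extensions of an $\arc_i$-Cambrian tree $T_i = T_i(\sigma)$, where $T_i(\sigma)$ is obtained from $\sigma$ by the $\arc_i$-Cambrian tree insertion. In particular, $\sigma \equiv_{\arc_i} \sigma'$ if and only if $T_i(\sigma) = T_i(\sigma')$, and the equivalence class is recovered as the set of common linear extensions of $T_i$ (viewed as a poset).

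Applying \cref{lem:intersectionCambrianCongruences}, the $\equiv_\arcs$-class of a permutation $\sigma$ is the intersection $\bigcap_{i=1}^p [\sigma]_{\equiv_{\arc_i}}$, which by the previous paragraph equals the set of permutations that are simultaneously linear extensions of $T_1(\sigma), \dots, T_p(\sigma)$. Thus the $p$-tuple $(T_1(\sigma), \dots, T_p(\sigma))$ is a well-defined invariant of the class, and as the class is non-empty (it contains $\sigma$), this tuple admits a common linear extension by construction.

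Finally, to see that this assignment represents classes faithfully, observe that the tuple determines the class: if $(T_1(\sigma), \dots, T_p(\sigma)) = (T_1(\tau), \dots, T_p(\tau))$, then $\sigma \equiv_{\arc_i} \tau$ for every $i$, hence $\sigma \equiv_\arcs \tau$ by \cref{lem:intersectionCambrianCongruences}. The only point that requires a small check is that any $p$-tuple of $\arc_i$-Cambrian trees admitting a common linear extension $\sigma$ does arise from some $\equiv_\arcs$-class, but this is immediate by taking the class of such a $\sigma$. The characterization of which tuples actually occur (\ie which combinations of Cambrian trees share a linear extension) is subtle and is not claimed here; the statement only asserts the existence and unicity of the representation.
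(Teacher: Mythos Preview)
Your proof is correct and follows exactly the same approach as the paper: invoke \cref{lem:intersectionCambrianCongruences} to identify the $\equiv_\arcs$-classes with non-empty intersections of $\equiv_{\arc_i}$-classes, then use the description from \cref{exm:CambrianCongruences} of each $\arc_i$-Cambrian class as the set of linear extensions of an $\arc_i$-Cambrian tree. The paper's proof is essentially your first three paragraphs compressed into three sentences; your extra discussion of faithfulness and of which tuples arise is sound but goes slightly beyond what the corollary claims.
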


\begin{proof}
By \cref{lem:intersectionCambrianCongruences}, the $\equiv_\arcs$-congruence classes are precisely the non-empty intersections of $\equiv_{\arc_1}$-, \dots, $\equiv_{\arc_p}$-congruence classes.
Each $\arc_i$-congruence class is the set of linear extensions of an $\arc_i$-Cambrian tree.
The result immediately follows.
\end{proof}

\begin{corollary}
\label{coro:commonRefinementQuotientFan}
For any arc ideal~$\arcs \subseteq \arcs_n$ with minimal elements~$\arc_1, \dots, \arc_p$, the quotient fan~$\Fan_\arcs$ is the common refinement of the quotient fans~$\Fan_{\arc_1}, \dots, \Fan_{\arc_p}$.
\end{corollary}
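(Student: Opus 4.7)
The plan is to reduce the statement to \cref{lem:intersectionCambrianCongruences} by combining it with either of the two descriptions of the quotient fan recalled above (\cref{thm:quotientFanGlueing} and \cref{thm:quotientFanShards}). Both routes are short; I would present the shard-based one as the main argument, since it is the most direct.

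\textbf{Main approach (via shards).} The key observation is that the arc ideal~$\arcs$ is the union of the principal upper ideals generated by its minimal elements, \ie $\arcs = \arcs_{\arc_1} \cup \dots \cup \arcs_{\arc_p}$. Hence $\shards_\arcs = \shards_{\arcs_{\arc_1}} \cup \dots \cup \shards_{\arcs_{\arc_p}}$. By \cref{thm:quotientFanShards}, the union of the walls of~$\Fan_\arcs$ coincides with this union of shards, and likewise the union of the walls of each~$\Fan_{\arc_i}$ is the union of the shards of~$\shards_{\arcs_{\arc_i}}$. A complete fan is determined by its underlying set of walls, and the common refinement of complete fans is characterized by the property that its set of walls is the union of the sets of walls of each. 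This is exactly what we have just verified, so $\Fan_\arcs$ is the common refinement of~$\Fan_{\arc_1}, \dots, \Fan_{\arc_p}$.

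\textbf{Alternative approach (via chambers).} One can also argue directly from \cref{thm:quotientFanGlueing}: the chambers of~$\Fan_\arcs$ are obtained by glueing together chambers~$\polytope{C}(\sigma)$ and~$\polytope{C}(\tau)$ of~$\Fan_n$ whenever~$\sigma \equiv_\arcs \tau$. By \cref{lem:intersectionCambrianCongruences}, this happens if and only if $\sigma \equiv_{\arc_i} \tau$ for all~$i \in [p]$, \ie if and only if~$\polytope{C}(\sigma)$ and~$\polytope{C}(\tau)$ lie in the same chamber of every~$\Fan_{\arc_i}$. Thus the chambers of~$\Fan_\arcs$ are precisely the non-empty intersections of chambers of the~$\Fan_{\arc_i}$, which is the definition of the common refinement.

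\textbf{Expected difficulty.} There is no real obstacle here: once \cref{lem:intersectionCambrianCongruences} and the shard/chamber descriptions of the quotient fan are in hand, the statement is essentially a matter of translating an equality of ideals (equivalently, of congruences) into an equality of fans. The only mild subtlety is making sure that "common refinement" is unambiguous; but since all the~$\Fan_{\arc_i}$ are coarsenings of the braid fan~$\Fan_n$, their common refinement is well defined and coarsens~$\Fan_n$ as well, so the comparison with~$\Fan_\arcs$ (also a coarsening of~$\Fan_n$) is carried out entirely at the level of chambers of~$\Fan_n$.
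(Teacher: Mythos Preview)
Your main shard-based argument is correct and essentially identical to the paper's proof, which likewise observes that the walls of~$\Fan_\arcs$ are the shards in~$\shards_\arcs = \shards_{\arcs_{\arc_1}} \cup \dots \cup \shards_{\arcs_{\arc_p}}$ and concludes via \cref{thm:quotientFanShards}. Your alternative chamber-based argument is also valid but not used in the paper.
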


\begin{proof}
The union of the walls of $\Fan_\arcs$ is the union of the shards of~$\shards_\arcs$, thus the union of the shards of~$\shards_{\arc_1}, \dots, \shards_{\arc_p}$, thus the union of the walls of~$\Fan_{\arc_1}, \dots, \Fan_{\arc_p}$.
It follows that the quotient fan~$\Fan_\arcs$ is the common refinement of the quotient fans~$\Fan_{\arc_1}, \dots, \Fan_{\arc_p}$.
\end{proof}

\begin{corollary}
\label{coro:MinkowskiSumAssociahedra}
For any arc ideal~$\arcs \subseteq \arcs_n$ with minimal elements~$\arc_1, \dots, \arc_p$, the quotient fan~$\Fan_\arcs$ is the normal fan of the Minkowski sum of the associahedra~${\Asso[\arc_1], \dots, \Asso[\arc_k]}$.
\end{corollary}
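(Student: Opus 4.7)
The statement follows essentially by chaining together the two preceding corollaries with the standard behavior of normal fans under Minkowski sums, so the plan is very short.

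First I would invoke \cref{coro:commonRefinementQuotientFan}, which already does the combinatorial-geometric work: it identifies the quotient fan $\Fan_\arcs$ with the common refinement of the Cambrian fans $\Fan_{\arc_1}, \dots, \Fan_{\arc_p}$. Second, \cref{exm:HohlwegLangeAsso} tells me that each $\Fan_{\arc_i}$ is the normal fan of the Hohlweg--Lange associahedron $\Asso[\arc_i]$. The final ingredient, recalled in \cref{subsec:fansPolytopes}, is the standard fact that the normal fan of a Minkowski sum $\polytope{P} + \polytope{Q}$ of polytopes is the common refinement of the normal fans of $\polytope{P}$ and $\polytope{Q}$; iterating this for the $p$ summands, the normal fan of $\Asso[\arc_1] + \dots + \Asso[\arc_p]$ is the common refinement of the fans $\Fan_{\arc_1}, \dots, \Fan_{\arc_p}$.

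Stringing these three observations together yields the claim immediately: the normal fan of the Minkowski sum equals the common refinement of the Cambrian fans, which equals $\Fan_\arcs$.

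Because the proof is so light, there is no real obstacle; the work has all been done upstream. The only small care is to note that \cref{coro:MinkowskiSumAssociahedra} as stated contains a minor typo in the last index (``$\arc_k$'' instead of ``$\arc_p$''), and that the Minkowski sum must be taken with strictly positive coefficients for the argument about common refinements to give equality of fans (any choice of positive scalings works, since dilating a summand does not change its normal fan). With that understood, a two-sentence proof suffices, and no step is harder than the next.
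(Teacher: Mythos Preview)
Your proof is correct and follows exactly the same approach as the paper's own proof: invoke \cref{coro:commonRefinementQuotientFan}, use that each $\Fan_{\arc_i}$ is the normal fan of $\Asso[\arc_i]$, and conclude via the standard fact that the normal fan of a Minkowski sum is the common refinement of the summands' normal fans. Your observation about the typo $\arc_k$ versus $\arc_p$ is also correct.
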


\begin{proof}
For any arc~$\arc \in \arcs_n$, the quotient fan~$\Fan_\arc$ is the normal fan of the $\arc$-associahedron~$\Asso[\arc]$.
The statement thus follows from \cref{coro:commonRefinementQuotientFan} and the fact that the normal fan of a Minkowski sum is the common refinement of the the normal fans of its summands.
\end{proof}

\begin{example}[Other relevant congruences]
\enlargethispage{.5cm}
Consider the Baxter congruence~$\equiv_\textrm{Bax}$ of \cref{exm:otherCongruences}\,\eqref{item:BaxterCongruence}.
As already observed in \cref{exm:otherQuotientopes} and~\cite{LawReading}, the quotient fan~$\fan_\textrm{Bax}$ is the normal fan of the Minkowski sum of \mbox{J.-L.~Loday's} associahedron~\cite{Loday} and its opposite, see \cref{fig:BaxterAssociahedronMinkowskiSum}.
This was extended in~\cite{ChatelPilaud} to arbitrary pairs of opposite associahedra of C.~Hohlweg and C.~Lange~\cite{HohlwegLange}, and even to arbitrary intersections of essential Cambrian congruences (\ie with~$a = 1$ and~$b = n$).

\begin{figure}[h]
	\capstart
	\centerline{\includegraphics[scale=.32]{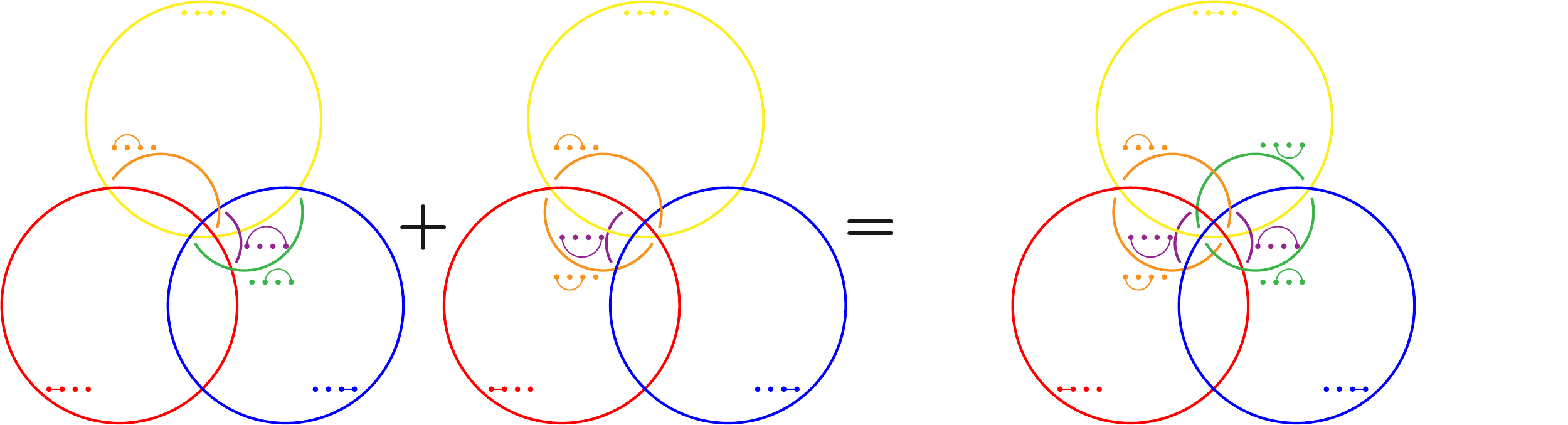}}
	\medskip
	\centerline{\includegraphics[scale=.32]{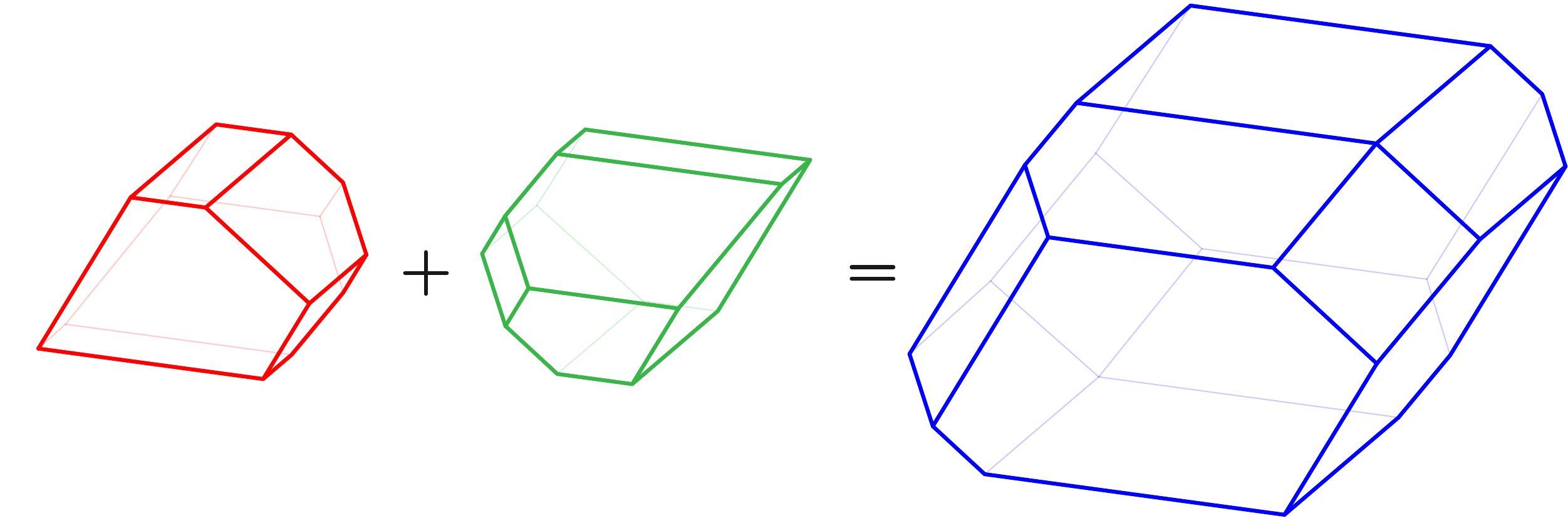}}
	\caption{The quotient fan~$\fan_\textrm{Bax}$ of the Baxter congruence is the normal fan of the Minkowski sum~$\Asso[{(1, n, [n], \varnothing)}] + \Asso[{(1, n, \varnothing, [n])}]$ of two opposite associahedra~\cite{LawReading}. \cite[Fig.~15]{ChatelPilaud}}
	\label{fig:BaxterAssociahedronMinkowskiSum}
\end{figure}
\end{example}


\section{Shard polytopes}
\label{sec:shardPolytopes}

In this section, we construct alternative realizations of the quotient fans using more elementary polytopes.
Namely, to each arc~$\arc$ we associate a shard polytope~$\shardPolytope$ that will ensure the presence of the shard~$\shard(\arc)$ in the normal fan of any Minkowski sum of shard polytopes containing~$\shardPolytope$.
We introduce these shard polytopes in \cref{subsec:shardPolytopes}, study their basic geometric properties in \cref{subsec:geometricPropertiesShardPolytopes} and their normal fans in \cref{subsec:normalFansShardPolytopes}, construct quotientopes as Minkowski sums of shard polytopes in \cref{subsec:shardsumotopes}, and finally observe in \cref{subsec:MinkowskiIdentityShardPolytopes} an intriguing valuation-like Minkowski identity between shard polytopes.


\subsection{Definition of shard polytopes}
\label{subsec:shardPolytopes}

We need the following definitions.

\begin{definition}
\label{def:alternatingMatchings}
For an arc~$\arc \eqdef (a, b, A, B)$, we define
\begin{itemize}
\item an \defn{$\arc$-alternating matching} as a (possibly empty) sequence~$M = \{a_1, b_1, \dots, a_k, b_k\}$ where ${a \le a_1 < b_1 < \dots < a_k < b_k \le b}$ and~$a_i \in \{a\} \cup A$ while~$b_i \in B \cup \{b\}$ for all~$i \in [k]$,
\item the \defn{pairs} of the $\arc$-alternating matching~$M$ as the pairs~$(a_\ell, b_\ell)$ for~$\ell \in [k]$, 
\item the \defn{characteristic vector} of the $\arc$-alternating matching~$M$ as~$\chi(M) = \sum_{i \in [k]} \b{e}_{a_i} - \b{e}_{b_i}$,
\item an \defn{$\arc$-fall} (resp.~\defn{$\arc$-rise}) as a position~$j \in [a,b[$ such that~$j \in \{a\} \cup A$ and~$j+1 \in B \cup \{b\}$ (resp.~such that~$j \in \{a\} \cup B$ and~$j+1 \in A \cup \{b\}$).
\end{itemize}
\end{definition}

Note that~$\varnothing$ and $\{a,b\}$ are always $\arc$-alternating matchings, and $a$ and~$b-1$ are always $\arc$-falls or $\arc$-rises.
Observe also that the arc~$\arc$ crosses the horizontal axis between the dots~$j$ and~$j+1$ if and only if~$j$ is an $\arc$-fall or $\arc$-rise distinct from~$a$ and~$b-1$.
The number of $\arc$-rises plus the number of $\arc$-falls is thus the number of crossings of~$\arc$ with the horizontal axis plus~$2$.

\begin{proposition}
\label{prop:shardPolytope}
The \defn{shard polytope}~$\shardPolytope$ of an arc~$\arc$ is the polytope defined equivalently as
\begin{itemize}
\item the convex hull of the characteristic vectors of all $\arc$-alternating matchings,
\item the subset of the hyperplane~$\hyp \eqdef \bigset{\b{x} \in \R^n}{\sum_{i \in [n]} \b{x}_i = 0}$ defined by
	\begin{enumerate}[$\circ$]
	\item $\b{x}_i = 0$ for any~$i \in [n] \ssm [a,b]$, $\b{x}_{a'} \ge 0$ for any~$a' \in A$, and $\b{x}_{b'} \le 0$ for any~$b' \in B$, 
	\item $\sum_{i \le f} \b{x}_i \le 1$ for any $\arc$-fall~$f$ and~$\sum_{i \le r} \b{x}_i \ge 0$ for any $\arc$-rise~$r$.
	\end{enumerate}
\end{itemize}
\end{proposition}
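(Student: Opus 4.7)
The plan is to prove both inclusions $P_V \subseteq P_H$ and $P_H \subseteq P_V$, where $P_V$ denotes the convex hull of the characteristic vectors of $\arc$-alternating matchings and $P_H$ the polyhedron defined by the listed inequalities. For $P_V \subseteq P_H$, I would verify that each $\chi(M)$ satisfies every inequality. The support and sign constraints are immediate from $a_i \in \{a\} \cup A$ and $b_i \in B \cup \{b\}$. For the partial sum constraints, the crucial observation is that $\sum_{i \le j} \chi(M)_i$ counts the pairs $(a_\ell, b_\ell)$ of $M$ that are \emph{open at} $j$, i.e.\ with $a_\ell \le j < b_\ell$; by the strict alternation $a_1 < b_1 < \ldots < a_k < b_k$ this count always lies in $\{0,1\}$, which covers both the $\le 1$ at every $\arc$-fall and the $\ge 0$ at every $\arc$-rise.

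For the reverse inclusion, my strategy is to show that every vertex of $P_H$ equals $\chi(M)$ for some alternating matching $M$. First, I would observe that each row of the constraint matrix of $P_H$ is the characteristic vector of an interval in $[n]$: singletons for the sign and support constraints, prefixes $\{1,\ldots,j\}$ for the partial sum constraints, and $[n]$ for the sum-zero equation. Interval matrices are classically totally unimodular, so with integer right-hand sides every vertex of $P_H$ is integral, and it suffices to prove that any integer point $\b{v} \in P_H$ is of the form $\chi(M)$.

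The heart of the argument is then to analyze the partial sums $S_j := \sum_{i \le j} \b{v}_i$ and show that they take only the values $0$ and $1$. The $\arc$-falls and $\arc$-rises strictly alternate along $[a,b-1]$, starting at $p_1 = a$ and ending at $p_m = b-1$, because they encode precisely the transitions between constant-sign blocks of the arc. Between two consecutive special positions, all intermediate indices lie wholly in $A$ or wholly in $B$, so $S$ is monotonic there. Combining this monotonicity with $S_{p_i} \le 1$ at falls, $S_{p_i} \ge 0$ at rises, and the boundary values $S_{a-1} = 0 = S_b$, an induction propagating along the alternating chain forces $S_j \in [0,1]$ at every $j$, hence $S_j \in \{0,1\}$ by integrality. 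Consequently $\b{v}_i = S_i - S_{i-1} \in \{-1,0,+1\}$; the positions carrying $+1$ (resp.\ $-1$) must lie in $\{a\} \cup A$ (resp.\ $B \cup \{b\}$) by the sign constraints, and since $S$ only toggles between $0$ and $1$ the $+1$'s and $-1$'s strictly alternate along $[a,b]$, with equally many of each since $S$ starts and ends at $0$. This is exactly the data of an $\arc$-alternating matching $M$ with $\chi(M) = \b{v}$.

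The main obstacle is this partial-sum propagation step: one must combine the monotonicity on constant-sign blocks with the alternation of falls and rises to force $S_j \in \{0,1\}$ globally from constraints imposed only at the special positions, with particular care needed for the two sign-unconstrained indices $a$ and $b$. Everything else is routine verification.
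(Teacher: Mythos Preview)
Your proposal is correct and follows essentially the same approach as the paper: verify that characteristic vectors satisfy the inequalities, invoke total unimodularity via the consecutive-ones (interval) structure to get integer vertices, and then use the partial-sum propagation (stated in the paper as Remark~\ref{rem:inequalitiesShardPolytope}) to force $S_j\in\{0,1\}$ and recover an alternating matching. The only cosmetic difference is that the paper isolates the $0\le S_j\le 1$ argument in a separate remark (proved for all real points, not just integral ones), whereas you fold it directly into the main proof.
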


\begin{figure}[b]
	\capstart
	\centerline{\includegraphics[scale=1]{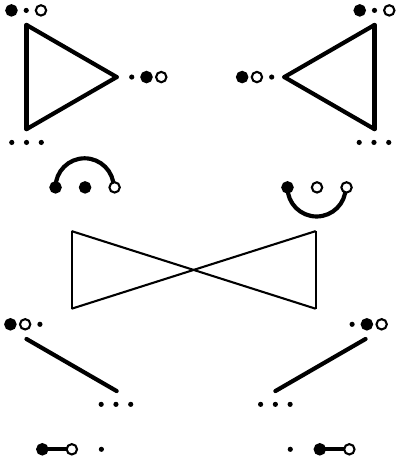}}
	\caption{Shard polytopes for all arcs with~$n = 3$.}
	\label{fig:shardPolytopes3}
\end{figure}

\begin{figure}[t]
	\capstart
	\centerline{\includegraphics[scale=1]{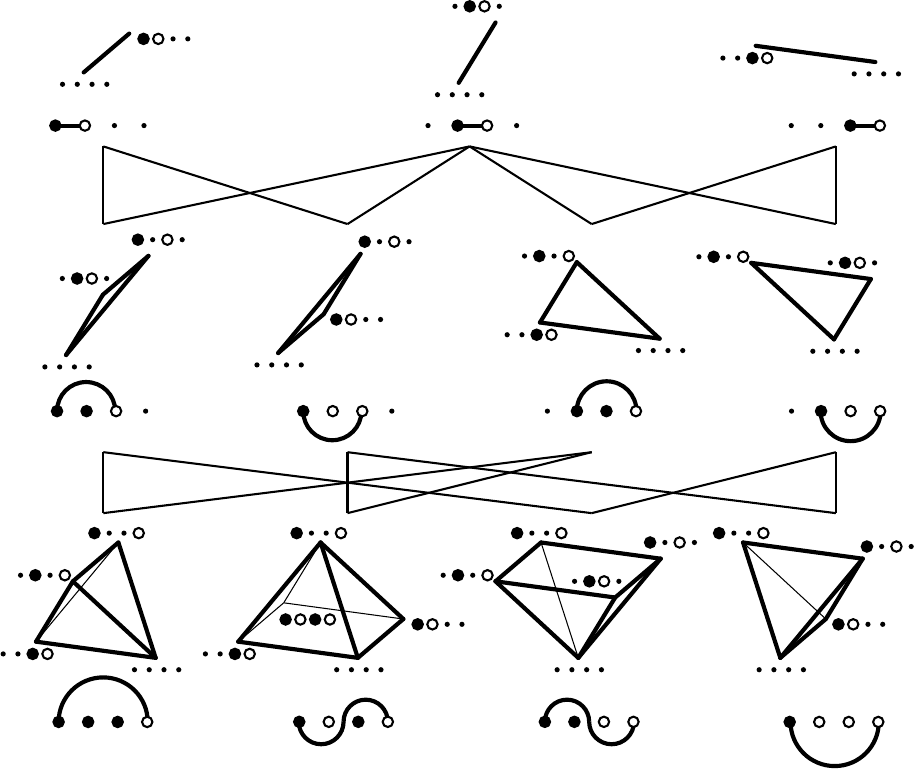}}
	\caption{Shard polytopes for all arcs with~$n = 4$.}
	\label{fig:shardPolytopes4}
\end{figure}

The shard polytopes corresponding to all arcs for~$n = 3$ and~$n = 4$ are represented in \cref{fig:shardPolytopes3,fig:shardPolytopes4}.
Both pictures are organized according to the forcing order on~$\arcs_n$.
For each~$\arc \in \arcs_n$, we represent the arc~$\arc$ below and the shard polytope~$\shardPolytope$ above.
The vertices of the shard polytopes are labeled by the corresponding $\arc$-alternating matchings, where we use solid dots~$\bullet$ for elements in~$\{a\} \cup A$ and hollow dots~$\circ$ for elements in~$B \cup \{b\}$.
The corresponding vertex coordinates are directly read replacing~$\bullet$ by~$1$ and~$\circ$ by~$-1$.
For instance, the vertex labeled~${\bullet \cdot \cdot \,\circ}$ has coordinates~$(1,0,0,-1)$.

\begin{remark}
\label{rem:inequalitiesShardPolytope}
The inequalities of \cref{prop:shardPolytope} imply that~$0 \le \sum_{i \le j} \b{x}_i \le 1$ \mbox{for any~$j \in [n]$.
Indeed,}
\begin{itemize}
\item For~$j \in [n] \ssm [a,b[$, we have~$\sum_{i \le j} \b{x}_i = 0$ since~$\sum_{i \in [n]} \b{x}_i = 0$ and~$\b{x}_i = 0$ for~$i \in [n] \ssm [a,b]$.
\item For~$j \in [a,b[$, we have~$0 \le \sum_{i \le r} \b{x}_i \le \sum_{i \le j} \b{x}_i \le \sum_{i \le f} \b{x}_i \le 1$, where 
	\begin{enumerate}[$\circ$]
	\item $r$ is the last $\arc$-rise in $[a, j[$ while $f$ is the first $\arc$-fall in $[j, b[$ if~$j \in A$,
	\item $r$ is the first $\arc$-rise in $[j, b[$ while $f$ is the last $\arc$-fall in $[a, j[$ if~$j \in B$,
	\item $r$ is the first $\arc$-rise in $[j, b[$ and $f$ is the first $\arc$-fall in $[j, b[$ if~$j = a$.
	\end{enumerate}
\end{itemize}
Considering differences of consecutive inequalities ${0 \le \sum_{i \le j} \b{x}_i \le 1}$, we obtain that~$0 \le \b{x}_{a'} \le 1$ for any~$a' \in \{a\} \cup A$ and $-1 \le \b{x}_{b'} \le 0$ for any~$b' \in B \cup \{b\}$.
The inequalities that we kept in \cref{prop:shardPolytope} will be shown to be the facet defining inequalities of~$\shardPolytope$ in \cref{prop:elemPropShardPolytope}\,\eqref{it:facetsShardPolytope}.
\end{remark}

\begin{proof}[Proof of \cref{prop:shardPolytope}]
Let~$\polytope{P}$ denote the $V$-polytope and~$\polytope{Q}$ denote the $H$-polytope defined in \cref{prop:shardPolytope}.
Since the characteristic vector of any $\arc$-alternating matching clearly satisfies the given inequalities, we have~$\polytope{P} \subseteq \polytope{Q}$.
Conversely, as the equalities and inequalities defining~$\polytope{Q}$ have coefficients in~$\{-1,0,1\}$ and satisfy the consecutive ones property, the matrix defining~$\polytope{Q}$ is unimodular, so that~$\polytope{Q}$ has integer vertices.
Consider a vertex~$\b{v}$ of~$\polytope{Q}$.
According to \cref{rem:inequalitiesShardPolytope}, the equalities and inequalities defining~$\polytope{Q}$ ensure that all the non-zero coordinates of~$\b{v}$ appear in between positions~$a$ and~$b$ and alternate between some~$1$'s at positions among~$\{a\} \cup A$ and some~$-1$'s at positions among~$B \cup \{b\}$.
Therefore, $\b{v}$ is the characteristic vector of an $\arc$-alternating matching, and we obtain that~$\polytope{Q} \subseteq \polytope{P}$.
\end{proof}

\begin{remark}
The vertex and facet descriptions of \cref{prop:shardPolytope} are nicer in the basis~$(\b{f}_{\!i})_{i \in [n-1]}$ of~$\hyp$ defined by~$\b{f}_{\!i} \eqdef \b{e}_i - \b{e}_{i+1}$.
Namely, the shard polytope~$\shardPolytope$ is given by
\begin{itemize}
\item the $0/1$-vectors~$\sum_{i \in [k]} \sum_{j \in [a_i, b_i[} \b{f}_{\!i}$ for all $\arc$-alternating matchings~${\{a_1 < b_1 < \dots < a_k < b_k\}}$,
\item the inequalities~$\b{y}_i = 0$ for any~$i \in [n] \ssm [a,b[$, $\b{y}_{a'} - \b{y}_{a'+1} \le 0$ for any~$a' \in A$ while $\b{y}_{b'} - \b{y}_{b'+1} \ge 0$ for any~$b' \in B$, and $\b{y}_f \le 0$ for any $\arc$-fall~$f$ while~$\b{y}_r \ge 0$ for any $\arc$-rise~$r$.
\end{itemize}
We stay in the basis~$(\b{e}_i)_{i \in [n]}$ of~$\R^n$ as it will be useful to fold type~$A$ in type~$B$ in \cref{part:typeB}.
\end{remark}

\begin{remark}
\label{rem:pseudoarcs}
To study certain properties of shard polytopes, it will sometimes be convenient to consider~$\shardPolytope$ for a \defn{pseudoshard}~$\arc \eqdef (a, b, A, B)$ where~$A$ and~$B$ are disjoint subsets of~$]a,b[$ whose union does not necessarily cover~$]a,b[$.
The vertex and facet descriptions of~$\shardPolytope$ are similar to \cref{prop:shardPolytope}, except that
\begin{itemize}
\item $\arc$-alternating matchings never use elements of~${]a,b[} \ssm (A \sqcup B)$,
\item we have the equalities $\b{x}_i = 0$ for all~$i \in {]a,b[} \ssm ( A \sqcup B)$, and the $\arc$-falls (resp.~$\arc$-rises) are pairs $(j,k)$ where~$j \in \{a\} \cup A$ and $k \in B \cup \{b\}$ (resp.~$j \in \{a\} \cup B$ and~$k \in A \cup \{b\}$) are consecutive in $A \sqcup B$, \ie such that~${]j,k[} \cap (A \sqcup B) = \varnothing$.
\end{itemize}
We call~$\shardPolytope$ a \defn{pseudoshard polytope}.
Note that~$\shardPolytope$ is affinely isomorphic to the shard polytope~$\shardPolytope[\pi(\arc)]$ of the arc~$\pi(\arc) \eqdef (\pi(a), \pi(b), \pi(A), \pi(B)) \in \arcs_{|\{a,b\} \cup A \cup B|}$ where $\pi$ is the order preserving bijection~$\{a,b\} \cup A \cup B \to [|\{a,b\} \cup A \cup B|]$.
\end{remark}


\subsection{Basic geometric properties of shard polytopes}
\label{subsec:geometricPropertiesShardPolytopes}

The following statement, illustrated in \cref{fig:shardPolytopes4}, describes the vertices, edges and facets of the shard polytopes.
Throughout the paper, we use the Kronecker notation~$\delta_X = 1$ if the property~$X$ holds, and~$\delta_X = 0$ otherwise.

\begin{proposition}
\label{prop:elemPropShardPolytope}
Let~$\arc \eqdef (a, b, A, B)$ be an arc of~$\arcs_n$.
\begin{enumerate}[(i)]
\item \label{it:dimensionShardPolytope} The shard polytope~$\shardPolytope$ has dimension~$b-a$.
\item \label{it:verticesShardPolytope} The vertices of~$\shardPolytope$ are precisely all characteristic vectors of $\arc$-alternating matchings. Therefore, the number of vertices of~$\shardPolytope$ is~$w_\arc(b)$ where $v_\arc$ and~$w_\arc$ are the two functions from $[a,b]$ to~$\N$ defined by the initial conditions~$v_\arc(a) \eqdef w_\arc(a) \eqdef 1$ and the induction $v_\arc(i) \eqdef v_\arc(i-1) + \delta_{i \in \{a\} \cup A} \cdot w_\arc(i-1)$ and~$w_\arc(i) \eqdef \delta_{i \in B \cup \{b\}} \cdot v_\arc(i-1) + w_\arc(i-1)$~for~$i \in {]a,b]}$.
\item \label{it:edgesShardPolytope} The characteristic vectors of two $\arc$-alternating matchings~$M$ and~$M'$ form an edge of~$\shardPolytope$ if and only if~$|M \symdif M'| = 2$. Therefore, the edge directions of~$\shardPolytope$ are the $\binom{b-a+1}{2}$ directions~$\b{e}_i - \b{e}_j$ for~$a \le i < j \le b$.
\item \label{it:facetsShardPolytope} The facets of~$\shardPolytope$ are precisely defined by the inequalities of \cref{prop:shardPolytope}. Therefore, the number of facets of~$\shardPolytope$ is~$b-a+1$ plus the number of crossings of~$\arc$ with the horizontal axis, and $\shardPolytope$ is a simplex if and only if~$\arc$ does not cross the horizontal axis.
\end{enumerate}
\end{proposition}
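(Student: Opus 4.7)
The plan is to verify the four claims in order, all following from the vertex and $H$-descriptions of Proposition~\ref{prop:shardPolytope}.

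For (i), the equalities in the facet description force $\shardPolytope$ into an affine subspace of dimension exactly $b-a$. To achieve equality I would exhibit $b-a+1$ affinely independent characteristic vectors, namely those of $\varnothing$, of $\{a',b\}$ for $a' \in \{a\}\cup A$, and of $\{a,b'\}$ for $b' \in B$. Their pairwise differences span a space of the right dimension since each new vertex introduces a distinct new nonzero coordinate. For (ii), the proof of Proposition~\ref{prop:shardPolytope} already shows that all lattice points of $\shardPolytope$ are characteristic vectors of $\arc$-alternating matchings; conversely each $\chi(M) \in \{-1,0,1\}^n$ is extreme because a convex average of two entries in $\{-1,0,1\}$ only lies in $\{-1,0,1\}$ when they coincide, forcing any alternative decomposition to be trivial. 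The enumerative recursion then falls out of a transfer-matrix interpretation where $v_\arc(i)$ counts partial alternating matchings on $[a,i]$ whose last entry is an unmatched $+1$, while $w_\arc(i)$ counts closed ones; the three moves at each new position (do nothing; open a new pair if $i \in \{a\}\cup A$; close the current pair if $i \in B\cup\{b\}$) are exactly the summands in the stated induction.

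Part (iii) is the main obstacle. The easier direction is that if $M\symdif M' = \{p,q\}$ then $\chi(M)-\chi(M') = \pm(\b{e}_p - \b{e}_q)$, and a short case analysis on whether $\{p,q\}$ is a matched pair in one of the two matchings or a swapped endpoint of a shared pair allows me to explicitly write a linear functional supported precisely on the segment $[\chi(M),\chi(M')]$: typically one takes $\ell(\b{x}) = -\sum_{i} \varepsilon_i \b{x}_i$ with $\varepsilon_i = +1$ on $A$ and $-1$ on $B$, restricted to the relevant sub-interval, and checks that any other alternating matching activates at least one element of $A\cup B$ that strictly decreases~$\ell$. Ranging over all symmetric differences and using that $a \le p < q \le b$ in each case yields exactly the $\binom{b-a}{2}$ edge directions. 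The hard direction, that $|M\symdif M'|\ge 4$ forbids adjacency, requires a symmetric-exchange argument: scanning $M\symdif M'$ from left to right, I would show that one can always perform a local swap producing two further alternating matchings $M'',M'''$ with $\chi(M)+\chi(M') = \chi(M'')+\chi(M''')$, which prevents $(\chi(M)+\chi(M'))/2$ from being the midpoint of a unique edge. The delicate point is ensuring that the exchanged matchings remain \emph{alternating}, which amounts to careful bookkeeping of the positions of swapped entries relative to $\{a\}\cup A$ and $B\cup\{b\}$. (Alternatively, once Proposition~\ref{prop:main5} is available, the edge structure is inherited immediately from the well-known description of edges of matroid polytopes as single-element basis exchanges.)

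For (iv), I would show that each listed inequality is facet-defining by exhibiting $b-a$ affinely independent vertices on which it is tight, obtained by combining alternating matchings on the two sub-intervals cut out by the inequality in the spirit of the construction used for (i). The count of facets is then $|A|+|B|+\#\text{falls}+\#\text{rises}$. The final combinatorial step is to note that each of the endpoints $a$ and $b-1$ contributes exactly one of ``fall'' or ``rise'', while an interior $j\in{]a,b-1[}$ contributes precisely when $\arc$ crosses the horizontal axis between $j$ and $j+1$. Hence the total is $(b-a-1)+2+\#\text{crossings} = (b-a+1)+\#\text{crossings}$, and $\shardPolytope$ is a simplex exactly when this matches $\dim(\shardPolytope)+1 = b-a+1$, i.e., when $\arc$ does not cross the horizontal axis.
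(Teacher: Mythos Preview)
Your outline is largely correct and follows the same overall structure as the paper, but a few points deserve comment.

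For (ii), your extremality argument is right in spirit but stated incorrectly: the claim ``a convex average of two entries in $\{-1,0,1\}$ only lies in $\{-1,0,1\}$ when they coincide'' is false, since $0 = \tfrac12(1)+\tfrac12(-1)$. What actually makes the argument work is that the inequalities of Proposition~\ref{prop:shardPolytope} force each coordinate $\b{x}_j$ into $\{0,1\}$ when $j\in\{a\}\cup A$ and into $\{-1,0\}$ when $j\in B\cup\{b\}$; then each $\chi(M)$ is a vertex of the ambient box and hence of $\shardPolytope$. The paper instead exhibits an explicit linear functional: $\dotprod{2\chi(M)-\one_A+\one_B}{\chi(M')} = |M\cap M'|-|M'\ssm M|$, which is maximized uniquely at $M'=M$. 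This same trick then handles the ``if'' direction of (iii) cleanly: $\dotprod{\chi(M)+\chi(M')-\one_A+\one_B}{\chi(M'')} = |M\cap M'\cap M''|-|M''\ssm(M\cup M')|$, maximized precisely on $\{M,M'\}$ when $|M\symdif M'|=2$. Your ``typically one takes $-\sum_i \varepsilon_i \b{x}_i$'' is vaguer and would require case analysis; the paper's formula avoids this entirely.

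For the ``only if'' direction of (iii), you are right that the key is an exchange producing $M'',M'''$ with $\chi(M)+\chi(M')=\chi(M'')+\chi(M''')$, and that the bookkeeping is delicate. The paper separates this out as a standalone combinatorial lemma (\cref{lem:matchingUnionLemma}), classifying exactly the four exceptional families where no such exchange exists; these all have $|M\symdif M'|=2$.

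For (iv), your plan of exhibiting $b-a$ affinely independent vertices on each facet would work but is more laborious than the paper's approach, which simply shows irredundancy: for each inequality, the paper writes down one explicit point (e.g.\ $\b{e}_a-\b{e}_{a'}$ for the inequality $\b{x}_{a'}\ge 0$, or $2(\b{e}_f-\b{e}_{f+1})$ for a fall inequality) that satisfies all other inequalities but violates that one. Your final count of facets is correct and matches the paper.
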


\begin{proof}
For~\eqref{it:dimensionShardPolytope}, note that the shard polytope~$\shardPolytope$ lies in the space generated by the vectors~$\b{e}_i - \b{e}_j$ for~${a \le i < j \le b}$ which has dimension~$b-a$.
Conversely, $\shardPolytope$ contains the~$b-a+1$ affinely independent vertices~$0$, $\b{e}_a - \b{e}_b$, $\b{e}_a - \b{e}_{b'}$ for~$b' \in B$, and~$\b{e}_{a'} - \b{e}_b$ for~$a' \in A$.

For~\eqref{it:verticesShardPolytope}, the shard polytope~$\shardPolytope$ is the convex hull of the characteristic vectors of the $\arc$-alter\-nating matchings.
To see that they all define vertices, note that for any $\arc$-alternating matchings $M$ and~$M'$, we have~$\dotprod{2\chi(M) - \one_{\{a\} \cup A} + \one_{B \cup \{b\}}}{\chi(M')} = |M \cap M'| - |M' \ssm M|$, so that~$\chi(M)$ is the only characteristic vector of an $\arc$-alternating matching that maximizes the direction~${2\chi(M) - \one_{\{a\} \cup A} + \one_{B \cup \{b\}}}$.
The induction formula for the number of $\arc$-alternating matchings is immediate: for all~$i \in [a,b]$, the value~$v_\arc(i)$ (resp.~$w_\arc(i)$) counts the number of odd (resp.~even) subsets of~$[i]$ of the form~$M \cap [i]$ where~$M$ is an $\arc$-alternating matching.

For~\eqref{it:edgesShardPolytope}, observe first that for any three $\arc$-alternating matchings~$M, M'$ and~$M''$, we have $\dotprod{\chi(M) + \chi(M') - \one_{\{a\} \cup A} + \one_{B \cup \{b\}}}{\chi(M'')} = |M \cap M' \cap M''| - |M'' \ssm (M \cup M')|$.
When $|M \symdif M'| = 2$, we get~$\dotprod{\chi(M) + \chi(M') - \one_{\{a\} \cup A} + \one_{B \cup \{b\}}}{\chi(M'')} \le |M \cap M'|$ with equality if and only if~${M'' \in \{M, M'\}}$ (since all matchings have even cardinality).

This shows that $\shardPolytope$ has an edge joining~$\chi(M)$ to~$\chi(M')$ as soon as~$|M \symdif M'| = 2$.
Conversely, assume that~$\chi(M)$ and~$\chi(M')$ form an edge in~$\shardPolytope$.
Then the middle of~$\chi(M)$ and~$\chi(M')$ is not the middle of another pair of vertices~$\chi(M'')$ and~$\chi(M''')$ of~$\shardPolytope$. Therefore the multisets $M \cup M'$ and~$M'' \cup M'''$ are distinct for any $\arc$-alternating matchings such that~$M, M', M'', M'''$ are all distinct.
The statement then follows from the slightly more detailed combinatorial property given in \cref{lem:matchingUnionLemma} below.

For~\eqref{it:facetsShardPolytope}, we have already seen in \cref{prop:shardPolytope} that the facets of the shard polytope~$\shardPolytope$ are all defined by these inequalities.
Conversely, we prove that all these inequalities correspond to facets by showing that none of these inequalities is redundant.
Indeed, for~$a' \in A$ (resp.~${b' \in B}$, resp.~an $\arc$-fall~$f$, resp.~an $\arc$-rise~$r$), the vector~$\b{e}_a - \b{e}_{a'}$ (resp.~$\b{e}_{b'} - \b{e}_b$, resp.~$2(\b{e}_f - \b{e}_{f+1})$, resp.~${-\b{e}_r+\b{e}_{r+1}}$) satisfies all these inequalities except~$\b{x}_{a'} \ge 0$ (resp.~$\b{x}_{b'} \le 0$, resp.~${\sum_{i \le f} \b{x}_i \le 1}$, resp.~${\sum_{i \le r} \b{x}_i \ge 0}$).
The number of facets is thus $b-a-1$ plus the number of $\arc$-falls plus the number of $\arc$-rises, thus $b-a+1$ plus the number of crossings of~$\arc$ with the horizontal axis.
\end{proof}

Shard polytopes also behave very nicely with respect to their faces.

\begin{proposition}
\label{prop:facesProductShardPolytopes}
Any face of a shard polytope is affinely isomorphic to a Cartesian product of shard polytopes.
\end{proposition}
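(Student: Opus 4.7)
The plan is to induct on the dimension~$b-a$ of the shard polytope~$\shardPolytope[\arc]$, always interpreting smaller arcs in the generalized sense of \cref{rem:pseudoarcs} so that the argument stays inside the class of shard polytopes. Since every proper face is contained in some facet, it suffices to show that each facet of~$\shardPolytope[\arc]$ is a Cartesian product of shard polytopes of dimension strictly less than~$b-a$: a proper face~$F$ then sits inside such a facet~$G = \shardPolytope[\arc_1] \times \cdots \times \shardPolytope[\arc_k]$ and splits as $F = F_1 \times \cdots \times F_k$ with each~$F_i$ a face of~$\shardPolytope[\arc_i]$, which the induction hypothesis rewrites as a product of shard polytopes.

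The facets fall into the four families described in \cref{prop:elemPropShardPolytope}. The two coordinate facets~$\{\b{x}_{a'} = 0\}$ for~$a' \in A$ and~$\{\b{x}_{b'} = 0\}$ for~$b' \in B$ consist precisely of the matchings that avoid the corresponding position, so each is itself a single shard polytope, namely~$\shardPolytope[(a, b, A \ssm \{a'\}, B)]$ or~$\shardPolytope[(a, b, A, B \ssm \{b'\})]$ in the sense of \cref{rem:pseudoarcs}. For an $\arc$-rise facet~$\{\sum_{i \le r} \b{x}_i = 0\}$ no pair of any matching crosses position~$r$, so every matching cleanly splits into a left matching on~$[a, r]$ and a right matching on~$[r+1, b]$. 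These pieces live in disjoint (hence orthogonal) coordinate blocks, and one recognizes them as $\shardPolytope[(a, r, A \cap {]a, r[}, B \cap {]a, r[})]$ and $\shardPolytope[(r+1, b, A \cap {]r+1, b[}, B \cap {]r+1, b[})]$ respectively, with degenerate factors replaced by points when~$a = r$ or~$r+1 = b$. Their Cartesian product recovers the facet.

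The $\arc$-fall facet~$\{\sum_{i \le f} \b{x}_i = 1\}$ is the subtle case and the expected main obstacle. Every matching on this facet contains a unique crossing pair~$(a_*, b_*)$ with~$a_* \le f < b_*$, and the remaining pairs split into a left matching on~$[a, a_* - 1]$ and a right matching on~$[b_* + 1, b]$. The trick is to reabsorb the crossing pair by sending it to the pair~$(a_*, f)$ on the left side and to~$(f+1, b_*)$ on the right side, using that~$f$ and~$f+1$ can serve respectively as the right and left endpoints of the smaller arcs $\arc_L \eqdef (a, f, A \cap {]a, f[}, B \cap {]a, f[})$ and $\arc_R \eqdef (f+1, b, A \cap {]f+1, b[}, B \cap {]f+1, b[})$. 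A direct verification shows that this rule defines a bijection between vertices of the facet and pairs of vertices of~$\shardPolytope[\arc_L]$ and~$\shardPolytope[\arc_R]$ and differs from the naive Cartesian embedding only by the translation~$\b{e}_f - \b{e}_{f+1}$. Since the two pieces live in orthogonal coordinate subspaces and both smaller arcs have dimension strictly less than~$b-a$, the facet is, up to translation, the Cartesian product~$\shardPolytope[\arc_L] \times \shardPolytope[\arc_R]$, and the induction closes.
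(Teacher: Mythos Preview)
Your argument is correct and follows essentially the same route as the paper's proof: reduce to facets, identify the coordinate facets~$\{\b{x}_i = 0\}$ as single shard polytopes in the sense of \cref{rem:pseudoarcs}, and identify the fall and rise facets as (translates of) Cartesian products~$\shardPolytope[\arc_{\le j}] \times \shardPolytope[\arc_{>j}]$. Your explicit bijection for the fall facet, splitting the straddling pair~$(a_*,b_*)$ into~$(a_*,f)$ and~$(f{+}1,b_*)$, is exactly what underlies the paper's terser statement, and your translation vector~$\b{e}_f - \b{e}_{f+1}$ is in fact the correct one in all cases (the paper's printed coefficient~$\delta_{j \in A}$ misses the boundary case~$f = a$, as one sees already for~$\arc = (1,2,\varnothing,\varnothing)$).
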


\begin{proof}
It is clearly sufficient to prove the result for facets (since an $i$-face is a facet of an $(i+1)$-face and the faces of a Cartesian product are the Cartesian products of the faces of the factors).
From the facet-defining inequalities of~$\shardPolytope$ given in \cref{prop:shardPolytope}, it is immediate to observe that:
\begin{itemize}
\item for~$i \in {]a,b[}$, the facet defined by the equality~$\b{x}_i = 0$ is the pseudoshard polytope~$\shardPolytope[\arc_{\bar i}]$ where~$\arc_{\bar i} \eqdef (a, b, A \ssm \{i\}, B \ssm \{i\})$, which is affinely equivalent to a shard polytope by \cref{rem:pseudoarcs},
\item for an $\arc$-fall~$j$ (resp.~$\arc$-rise~$j$), the facet defined by the equality~$\sum_{i \le j} \b{x}_i = 1$ (resp.~$0$) is the translate by~$\delta_{j \in A} (\b{e}_j - \b{e}_{j+1})$\julian{Notation fails if $a+1 = b$: Then $a$ is both rise and fall. We need it to be translated for the fall and not translated for the rise} of the Cartesian product of shard polytopes~${\shardPolytope[\arc_{\le j}] \times \shardPolytope[\arc_{>j}]}$, where~$\arc_{\le j} \eqdef (a, j, A \cap {]a,j[}, B \cap {]a,j[})$ and~$\arc_{> j} \eqdef (j+1, b, A \cap {]j+1, b[}, B \cap {]j+1, b[})$.
\qedhere
\end{itemize}
\end{proof}

Our next statement compares forcing among shards with faces of shard polytopes.

\begin{proposition}
\label{prop:facesShardPolytope}
Let~$\arc \eqdef (a, b, A, B)$ and~$\arc' \eqdef (a', b', A', B')$ be two arcs of~$\arcs_n$.
\begin{enumerate}[(i)]
\item \label{it:facesForcingShardPolytope1} The shard polytope~$\shardPolytope[\arc]$ is a face of the shard polytope~$\shardPolytope[\arc']$ if and only~if 
	\begin{itemize}
	\item $\arc$ forces~$\arc'$, \ie $a' \le a < b \le b'$, $A \subseteq A'$ and~$B \subseteq B'$, and
	\item $a \in \{a'\} \cup A'$ while~$b \in B' \cup \{b'\}$.
	\end{itemize}
\item \label{it:facesForcingShardPolytope2} If~$\arc$ forces~$\arc'$, then the translate of~$\shardPolytope[\arc]$ by the vector~$\translation \eqdef \delta_{a \in B'}(\b{e}_{a'} - \b{e}_a) + \delta_{b \in A'}(\b{e}_b-\b{e}_{b'})$ is a face of~$\shardPolytope[\arc']$.
\item \label{it:reconstructShardPolytopeFromForcing} The shard polytope~$\shardPolytope[\arc']$ is the convex hull of~$\b{0}$, $\b{e}_{a'} - \b{e}_{b'}$, and the vertices of the translated shard polytopes~$\shardPolytope + \translation$ over all arcs~$\arc$ forcing~$\arc'$ (or even over all arcs~$\arc$ cutting~$\arc'$).
\end{enumerate}
\end{proposition}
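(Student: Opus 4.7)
I plan to treat the three parts in order, with~(i) as the main building block.

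For~(i), the ``if'' direction will be proved by exhibiting an explicit supporting linear functional. Define $\b{c} \in \R^n$ by $\b{c}_i = 0$ for $i \in [a,b]$, $\b{c}_i = -1$ for $i \in (\{a'\} \cup A') \cap [a',a[$, $\b{c}_i = +1$ for $i \in B' \cap [a',a[$, and symmetrically on~${]b,b']}$. A direct computation then yields $\dotprod{\b{c}}{\chi(M')} = -|M' \cap ([a',a[ \cup {]b,b']})|$ for every $\arc'$-alternating matching~$M'$, so the maximum value~$0$ is attained precisely by those $\arc'$-matchings supported in~$[a,b]$. The assumptions $a \in \{a'\} \cup A'$ and $b \in B' \cup \{b'\}$, combined with $A \subseteq A'$, $B \subseteq B'$, and $A \sqcup B = {]a,b[}$ (which together imply $A' \cap {]a,b[} = A$ and $B' \cap {]a,b[} = B$), guarantee that these are exactly the $\arc$-alternating matchings, so the face of~$\shardPolytope[\arc']$ maximizing~$\b{c}$ is~$\shardPolytope$. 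For the converse, each vertex of~$\shardPolytope$ must be the characteristic vector of some $\arc'$-alternating matching: testing $\b{e}_a - \b{e}_b$ forces $a' \le a < b \le b'$ together with $a \in \{a'\} \cup A'$ and $b \in B' \cup \{b'\}$, while testing $\b{e}_{a''} - \b{e}_b$ for $a'' \in A$ and $\b{e}_a - \b{e}_{b''}$ for $b'' \in B$ forces $A \subseteq A'$ and $B \subseteq B'$ respectively.

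For~(ii), the plan is to adapt the above functional by setting $\b{c}_{a'} = +1$ when $a \in B'$, and $\b{c}_{b'} = -1$ when $b \in A'$, keeping the penalizing values in~$[a',a[ \cup {]b,b']}$ otherwise. A direct computation then shows that the maximum of $\dotprod{\b{c}}{\chi(M')}$ on~$\shardPolytope[\arc']$ equals $\delta_{a \in B'} + \delta_{b \in A'}$, attained precisely by the $\arc'$-alternating matchings~$M'$ that contain~$a'$ if and only if $a \in B'$, contain~$b'$ if and only if $b \in A'$, and avoid~${]a',a[} \cup {]b,b'[}$. An explicit bijection $M \mapsto M'$ between $\arc$-alternating matchings and such matchings (obtained by either prepending the pair $(a',a)$ to~$M$ or replacing its starting element~$a$ by~$a'$, depending on whether $a \in M$, and symmetrically at the right end) shows that $\chi(M') = \chi(M) + \translation$, so the maximizing face is $\shardPolytope + \translation$.

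For~(iii), it suffices to show that every vertex of~$\shardPolytope[\arc']$ lies in the claimed convex hull. The empty matching yields~$\b{0}$ and the matching $\{a',b'\}$ yields $\b{e}_{a'} - \b{e}_{b'}$. For any other $\arc'$-alternating matching $M' = \{a_1, b_1, \dots, a_k, b_k\}$, I would produce a cutting arc~$\arc$ realizing $\chi(M')$ as a vertex of $\shardPolytope + \translation$ by case analysis: if $a_1 \ne a'$, take $\arc = (a_1, b', A' \cap {]a_1, b'[}, B' \cap {]a_1, b'[})$, sharing~$b'$ with~$\arc'$; if $a_1 = a'$ and $b_k \ne b'$, take $\arc = (a', b_k, A' \cap {]a', b_k[}, B' \cap {]a', b_k[})$, sharing~$a'$; and if $a_1 = a'$ and $b_k = b'$ (forcing $k \ge 2$), take $\arc = (a', a_k, A' \cap {]a', a_k[}, B' \cap {]a', a_k[})$, where $b = a_k \in A'$ puts us in case~(ii) with $\translation = \b{e}_{a_k} - \b{e}_{b'}$ and $M = M' \ssm \{a_k, b'\}$.

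The main obstacle will be the bookkeeping in~(ii): one has to check that $\chi(M) + \translation$ is indeed the characteristic vector of a valid $\arc'$-alternating matching in all combinations of sub-cases (depending on whether $a \in M$ and whether $b \in M$), verifying that alternation is preserved under the prepending or replacing operations at each endpoint.
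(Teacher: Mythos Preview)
Your plan is correct and follows the same overall strategy as the paper: exhibit a linear functional selecting the right face for~(i) and~(ii), and do a vertex-by-vertex case analysis producing a cutting subarc for~(iii). There are, however, some genuine differences in the specific choices, and in one place your version is actually cleaner.

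For~(i), the paper uses the functional $\one_{\{a\}\cup A}-\one_{\{b\}\cup B}-\one_{A'}+\one_{B'}$, whereas your functional is zero on~$[a,b]$ and simply penalizes every element of~$M'$ outside this window. Both select the same face; your choice makes the computation $\dotprod{\b c}{\chi(M')}=-|M'\cap([a',a[\cup{]b,b']})|$ immediate. The converse argument is identical to the paper's.

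For~(ii), your explicit two-case bijection (prepend the pair~$(a',a)$ when $a\notin M$, replace the leading~$a$ by~$a'$ when $a\in M$, and symmetrically at the right end) is more careful than the paper's terse ``replace $a$ by~$\bar a$'', which as stated does not produce the correct~$\bar M$ when $a\notin M$. Your functional is also different from the paper's (you reward~$a'$ and penalize~${]a',a[}$ rather than using $\one_{\{\bar a\}\cup A}-\one_{\{\bar b\}\cup B}-\one_{A'}+\one_{B'}$), and yours does select the full face, so the ``bookkeeping obstacle'' you flag is real but entirely manageable along the lines you sketch.

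For~(iii), the paper always takes a subarc sharing the right endpoint~$b'$ (two cases, according to whether $a_1=a'$), whereas you split into three cases and allow the shared endpoint to be either~$a'$ or~$b'$. Both decompositions produce cutting arcs and cover all vertices, so this is a cosmetic difference.
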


\begin{proof}
For~\eqref{it:facesForcingShardPolytope1}, assume first that~$\arc$ forces~$\arc'$ and $a \in \{a'\} \cup A'$ while~$b \in B' \cup \{b'\}$.
Then any \mbox{$\arc$-alter}\-nating matching is also an $\arc'$-alternating matching by definition.
Moreover, the $\arc$-alternating matchings are precisely the $\arc'$-alternating matchings whose characteristic vectors are maximal in the direction~$\one_{\{a\} \cup A} - \one_{\{b\} \cup B} - \one_{A'} + \one_{B'}$.
Therefore, $\shardPolytope[\arc]$ is a face of~$\shardPolytope[\arc']$.
Conversely, assume that~$\shardPolytope[\arc]$ is a face of~$\shardPolytope[\arc']$.
For any~$i < j$ with~$i \in \{a\} \cup A$ and~$j \in B \cup \{b\}$, the vertex~$\b{e}_i - \b{e}_j$ of~$\shardPolytope[\arc]$ must be a vertex of~$\shardPolytope[\arc']$, so that~$i \in \{a'\} \cup A'$ and~$j \in B' \cup \{b'\}$.
This shows that~$\arc$ forces~$\arc'$ and that $a \in \{a'\} \cup A'$ while~$b \in B' \cup \{b'\}$.

For~\eqref{it:facesForcingShardPolytope2}, let~$\bar a \eqdef a$ if~$a \in \{a'\} \cup A'$ and~$\bar a \eqdef a'$ otherwise, and define similarly~$\bar b$.
Associate to any $\arc$-alternating matching the $\arc'$-alternating matching~$\bar M$ obtained from~$M$ by replacing~$a$ by~$\bar a$ and~$b$ by~$\bar b$.
Note that~$\chi(\bar M) = \chi(M) + (\b{e}_{\bar a} - \b{e}_a) + (\b{e}_b - \b{e}_{\bar b})$.
Moreover, the $\arc'$-alternating matchings~$\bar M$ are precisely the $\arc'$-alternating matchings whose characteristic vectors are maximal in the direction~$\one_{\{\bar a\} \cup A} - \one_{\{\bar b\} \cup B} - \one_{A'} + \one_{B'}$.

For~\eqref{it:reconstructShardPolytopeFromForcing}, observe first that~\eqref{it:facesForcingShardPolytope2} implies that all vertices of~$\shardPolytope + \translation$ are vertices of~$\shardPolytope[\arc']$ for any arc~$\arc$ forcing~$\arc'$.
Conversely, we prove that any vertex of~$\shardPolytope[\arc']$ distinct from~$\b{0}$ and~$\b{e}_{a'}-\b{e}_{b'}$ is in fact already a vertex of one of the translated shard polytopes~$\shardPolytope[\arc_a] + \translation[\arc_a][\arc']$ for~$a \in {]a', b'[}$, where~$\arc_a \eqdef (a, b', A \cap {]a,b'[}, B \cap {]a, b'[})$.
Indeed, any vertex of~$\shardPolytope[\arc']$ distinct from~$\b{0}$ and~$\b{e}_{a'}-\b{e}_{b'}$ corresponds to a $\arc'$-alternating matching~$M'$ distinct from~$\varnothing$ and~$\{a', b'\}$.
We distinguish two cases, depending on the smallest element~$a$ of~$M'$:
\begin{itemize}
\item If~$a' < a$, then~$M'$ is an $\arc_a$-alternating matching. Since~$a \in A$, we have~$\translation[\arc_a][\arc'] = \b{0}$, so that~$\chi(M')$ is a vertex of~$\shardPolytope[\arc_a] + \translation[\arc_a][\arc']$.
\item If~$a' = a$, let~$b$ denote the second smallest element of~$M'$. Then~$M = M' \ssm \{a,b\}$ is an $\arc_b$-alternating matching. Since~$b \in B'$, we have~$\translation[\arc_b][\arc'] = \b{e}_{a'} - \b{e}_b$, so that~$\chi(M') = \b{e}_{a'} - \b{e}_b + \chi(M)$ is a vertex of~$\shardPolytope[\arc_b] + \translation[\arc_b][\arc']$.
\end{itemize}
In both cases, $\chi(M')$ is a vertex of one of the translated shard polytopes~$\shardPolytope[\arc_a] + \translation[\arc_a][\arc']$.
\end{proof}

We conclude our collection of basic geometric properties of shard polytopes with a discussion of two symmetries of shard polytopes, still illustrated in \cref{fig:shardPolytopes4}.

\begin{proposition}
\label{prop:symmetriesShardPolytope}
Define~$\bar i \eqdef n+1-i$ and consider the two involutive arc maps~$\phi, \psi : \arcs_n \to \arcs_n$ defined by~${\phi : (a, b, A, B) \mapsto (a, b, B, A)}$ (horizontal symmetry) and~${\psi : (a, b, A, B) \mapsto (\bar b, \bar a, \bar A, \bar B)}$ (vertical symmetry), and the two involutive linear maps~$\Phi, \Psi : \R^n \to \R^n$ defined by~$\Phi(\b{e}_i) = -\b{e}_i$ and~$\Psi(\b{e}_i) = \b{e}_{\bar i}$.
Then for any arc~$\arc$, we have
\[
\shardPolytope[\phi(\arc)] = \Phi(\shardPolytope) + \b{e}_a - \b{e}_b
\qquad\text{and}\qquad
\shardPolytope[\psi(\arc)] = \Psi(\shardPolytope) - \b{e}_{\bar a} + \b{e}_{\bar b}.
\]
Hence, the image of~$\arc$ by central symmetry has shard polytope~$\shardPolytope[\phi \circ \psi(\arc)] = \Phi \circ \Psi(\shardPolytope)$.
\end{proposition}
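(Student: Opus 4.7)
The plan is to prove both identities directly from the facet (H-)description of shard polytopes given in \cref{prop:shardPolytope}, by applying the affine transformation to each defining (in)equality of $\shardPolytope$ and verifying that the resulting system is exactly the facet system of the target shard polytope. The central symmetry assertion will then follow by composition once the translation terms cancel. The whole argument is a careful bookkeeping verification; no new geometric input is needed beyond \cref{prop:shardPolytope}.

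For the horizontal symmetry, I would write $\b{x} = -\b{y} + \b{e}_a - \b{e}_b$ and substitute into the facet system of $\shardPolytope$. The hyperplane $\sum_i \b{x}_i = 0$ and the equalities $\b{x}_i = 0$ for $i \notin [a,b]$ are preserved, since $\b{e}_a - \b{e}_b$ is supported on $\{a,b\} \subseteq [a,b]$ and sums to $0$. The inequalities $\b{x}_{a'} \ge 0$ for $a' \in A$ become $\b{y}_{a'} \le 0$ and $\b{x}_{b'} \le 0$ for $b' \in B$ become $\b{y}_{b'} \ge 0$, matching the swap $\phi(\arc) = (a,b,B,A)$. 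For any $\arc$-fall $f$ we have $a \le f < b$, hence $\sum_{i \le f}(\b{e}_a - \b{e}_b)_i = 1$, so $\sum_{i \le f} \b{x}_i \le 1$ rewrites as $\sum_{i \le f} \b{y}_i \ge 0$; by definition this is the $\phi(\arc)$-rise inequality at $f$, since the conditions $f \in \{a\} \cup A$ and $f+1 \in B \cup \{b\}$ translate under $\phi$ to $f \in \{a\} \cup B_{\phi(\arc)}$ and $f+1 \in A_{\phi(\arc)} \cup \{b\}$. Symmetrically, $\arc$-rises map to $\phi(\arc)$-falls. Thus the H-system transforms exactly into that of $\shardPolytope[\phi(\arc)]$.

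For the vertical symmetry, I would write $\b{y} = \Psi(\b{x}) - \b{e}_{\bar a} + \b{e}_{\bar b}$, so that $\b{x}_i = \b{y}_{\bar i} + \delta_{\bar i = \bar a} - \delta_{\bar i = \bar b}$. The hyperplane condition and the support/sign conditions transfer immediately to the reversed labels $\bar A, \bar B$. For an $\arc$-fall $f$ (so $a \le f < b$ and therefore $\bar b < \bar f \le \bar a$), computing $\sum_{i \le f} \b{x}_i = \sum_{j \ge \bar f} \b{y}_j + 1$, the inequality $\sum_{i \le f} \b{x}_i \le 1$ becomes $\sum_{j \ge \bar f} \b{y}_j \le 0$, and via $\sum_j \b{y}_j = 0$ this is equivalent to $\sum_{j \le \bar f - 1} \b{y}_j \ge 0$. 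Now $\bar f - 1 = \overline{f+1}$ lies in $\bar B \cup \{\bar b\} = B_{\psi(\arc)} \cup \{a_{\psi(\arc)}\}$ and $\bar f$ lies in $\{\bar a\} \cup \bar A = \{b_{\psi(\arc)}\} \cup A_{\psi(\arc)}$, so this is precisely the $\psi(\arc)$-rise inequality at $\bar f - 1$; the parallel computation sends $\arc$-rises to $\psi(\arc)$-falls.

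For the central symmetry, I would apply the horizontal identity to $\psi(\arc)$ (whose endpoints are $\bar b$ and $\bar a$) to obtain $\shardPolytope[\phi \circ \psi(\arc)] = \Phi(\shardPolytope[\psi(\arc)]) + \b{e}_{\bar b} - \b{e}_{\bar a}$, and substitute the vertical identity; since $\Phi$ negates the translation term $-\b{e}_{\bar a} + \b{e}_{\bar b}$, the corrections $(\b{e}_{\bar a} - \b{e}_{\bar b}) + (\b{e}_{\bar b} - \b{e}_{\bar a})$ cancel, leaving $\Phi \circ \Psi(\shardPolytope)$. The only real obstacle throughout is the accounting for the constants arising when the translation vectors $\b{e}_a - \b{e}_b$ and $-\b{e}_{\bar a} + \b{e}_{\bar b}$ interact with the fall/rise sum inequalities; this is the content of the two short computations above, and everything else is routine.
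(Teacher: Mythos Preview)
Your proof is correct. It takes a genuinely different route from the paper's, though both are short and elementary. The paper works with the $V$-description: it exhibits explicit bijections on $\arc$-alternating matchings, namely $M \mapsto \{a,b\} \symdif M$ for~$\phi$ and $M \mapsto \{\bar a, \bar b\} \symdif \bar M$ for~$\psi$, and checks that these bijections transform characteristic vectors exactly by $\Phi(\cdot) + \b{e}_a - \b{e}_b$ and $\Psi(\cdot) - \b{e}_{\bar a} + \b{e}_{\bar b}$ respectively. You instead use the $H$-description from \cref{prop:shardPolytope}, pushing the affine map through each defining inequality and observing that $\arc$-falls become $\phi(\arc)$-rises (resp.\ $\psi(\arc)$-rises at the shifted index), and vice versa. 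The vertex bijection is a bit more conceptual and makes the translation term $\b{e}_a - \b{e}_b$ appear naturally (it is simply $\chi(\{a,b\})$), whereas your approach has the advantage of not requiring any verification that the symmetric-difference map really lands in the set of alternating matchings for the new arc. Either way the argument is a one-line bookkeeping check once the right correspondence (vertices or facets) is identified.
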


\begin{proof}
This follows from the bijections sending an $\arc$-alternating matching~$M$ to
\begin{itemize}
\item the $\phi(\arc)$-alternating matching~$\{a,b\} \symdif M$, with~$\chi(\{a,b\} \symdif M) = \Phi(\chi(M)) + \b{e}_a - \b{e}_b$,
\item the $\psi(\arc)$-alternating matching~$\{\bar a, \bar b\} \symdif \bar M$, with~$\chi(\{\bar a, \bar b\} \symdif \bar M) = \Psi(\chi(M)) - \b{e}_{\bar a} + \b{e}_{\bar b}$.
\qedhere
\end{itemize}
\end{proof}


\subsection{Normal fans of shard polytopes}
\label{subsec:normalFansShardPolytopes}

The main goal of this section is to show the following compatibility of the normal fans of shard polytopes with the arc poset, announced in \cref{prop:main2}.

\begin{proposition}
\label{prop:shardPolytopeFan}
For any arc~$\arc$, the union of the walls of the normal fan of the shard polytope~$\shardPolytope$ contains the shard~$\shard(\arc)$ and is contained in the union of the shards~$\shard(\arc')$ for~$\arc \prec \arc'$.
\end{proposition}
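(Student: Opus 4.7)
The plan is to exploit that every edge of $\shardPolytope$ has direction $\b{e}_i - \b{e}_j$ for some $a \le i < j \le b$ (by Proposition~\ref{prop:elemPropShardPolytope}\,\eqref{it:edgesShardPolytope}), so every wall of the normal fan of $\shardPolytope$ lies in some hyperplane $\{\b{x}_i = \b{x}_j\}$. Since $]i,j[ \subseteq ]a,b[ = A \sqcup B$, there is a unique arc $\arc_{ij} \eqdef (i, j, A \cap {]i,j[}, B \cap {]i,j[}) \in \arcs_\arc$ with endpoints $i,j$, and $\shard(\arc_{ij})$ is the only shard in $\arcs_\arc$ supported on $\{\b{x}_i = \b{x}_j\}$. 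Both halves of the statement then follow if I can (a) pin the specific shard $\shard(\arc)$ to a particular edge of $\shardPolytope$, and (b) show that the normal cone of every edge of direction $\b{e}_i - \b{e}_j$ lies inside $\shard(\arc_{ij})$.

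For (a), I would single out the edge $e_0 \eqdef [\b{0}, \b{e}_a - \b{e}_b]$ of $\shardPolytope$, which exists because $\varnothing$ and $\{a,b\}$ are $\arc$-alternating matchings with $|\varnothing \symdif \{a,b\}| = 2$. For any $\b{x} \in \shard(\arc)$ and any $\arc$-alternating matching $M$ with pairs $(a_\ell, b_\ell)$, the shard inequalities yield $\b{x}_{a_\ell} \le \b{x}_a = \b{x}_b \le \b{x}_{b_\ell}$, so that $\dotprod{\b{x}}{\chi(M)} = \sum_\ell (\b{x}_{a_\ell} - \b{x}_{b_\ell}) \le 0$. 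Since the two endpoints $\b{0}$ and $\b{e}_a - \b{e}_b$ of $e_0$ attain this bound with equality, both are maximizers of $\dotprod{\b{x}}{\cdot}$ on $\shardPolytope$, placing $\b{x}$ in the normal cone of $e_0$ and hence in the union of walls of the normal fan of~$\shardPolytope$.

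For (b), fix any edge $e = [\chi(M), \chi(M')]$ of direction $\b{e}_i - \b{e}_j$. Since the coordinates of $\chi(M)$ and $\chi(M')$ lie in $\{-1, 0, 1\}$ and their difference equals $\pm(\b{e}_i - \b{e}_j)$, one deduces $M \symdif M' = \{i, j\}$; a short case split on whether each of $i, j$ is an ``up'' position (in $\{a\} \cup A$) or a ``down'' position (in $B \cup \{b\}$) further reveals $M \cap {]i, j[} = \varnothing$. For any $\b{x}$ in the normal cone of $e$, the equality $\b{x}_i = \b{x}_j$ is immediate from constancy along the edge direction. Now for each $k \in A \cap {]i, j[}$, I would construct an $\arc$-alternating matching $M''$ by either replacing the role of $i$ or $j$ in $M$ (or in $M'$) by $k$, or by inserting a fresh pair involving $k$; the specific recipe depends on the sign-case, but in every case the emptiness of $M \cap {]i, j[}$ ensures the local alternation is preserved, and yields $\chi(M'') - \chi(M) = \pm(\b{e}_k - \b{e}_j)$ (or the analogous variant with $i$). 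The maximality of $M$ then forces $\b{x}_k \le \b{x}_i$; symmetrically, for $k \in B \cap {]i, j[}$ an analogous construction forces $\b{x}_k \ge \b{x}_i$. Hence $\b{x} \in \shard(\arc_{ij})$, as required.

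The main bookkeeping lies in producing the modified matching $M''$ uniformly across the four sign-configurations for $(i,j)$; all four cases reduce to the same structural property $M \cap {]i,j[} = \varnothing$, after which the desired inequalities fall out from optimality. I expect this case enumeration to be the main obstacle, and I would isolate it as an auxiliary lemma describing the local shape of an edge of direction $\b{e}_i - \b{e}_j$ so as to keep the main argument streamlined.
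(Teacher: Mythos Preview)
Your proposal is correct and follows essentially the same route as the paper. Part~(a) matches the paper's argument verbatim; for part~(b), the paper packages the same four-case split into an auxiliary combinatorial lemma (\cref{lem:matchingUnionLemma}) classifying the pairs~$\{M,M'\}$ that can form edges, from which both the key fact $M \cap {]i,j[} = \varnothing$ and the comparison matchings (your~$M''$, the paper's~$M_3, M_4$) are read off---your direct case split on the types of~$i$ and~$j$ arrives at the identical constructions and inequalities.
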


We start with the following elementary and purely combinatorial lemma, illustrated in \cref{fig:matchingUnionLemma}.
This lemma is a slightly more detailed reformulation of \cref{prop:elemPropShardPolytope}\,\eqref{it:edgesShardPolytope} (and proves it).

\begin{lemma}
\label{lem:matchingUnionLemma}
For an arc~$\arc \eqdef (a, b, A, B)$ and any pair of distinct $\arc$-alternating matchings $\{M_1, M_2\}$, there exists a pair of $\arc$-alternating matchings $\{M_3, M_4\}$ disjoint from~$\{M_1, M_2\}$ such that the multisets $M_1 \cup M_2$ and $M_3 \cup M_4$ coincide\julian{I'd prefer expressing this by sums of characteristic vectors}, except if there exist two (possibly empty) $\arc$-alternating matchings $H$ and $T$ such that $\{M_1,M_2\}$ is one of the following pairs:
\begin{enumerate}
	\item $\{H < a' < b' < T, \; H < T\}$ for some $a' \in A$ and $b' \in B$,
	\item $\{H < a' < b_1 < T, \; H < a' < b_2 < T\}$ for some $a' \in A$ and $b_1 < b_2 \in B$,
	\item $\{H < a_1 < b' < T, \; H < a_2 < b' < T\}$ for some $a_1 < a_2 \in A$ and $b' \in B$,
	\item $\{H < a_1 < b_1 < a_2 < b_2 < T, \; H < a_1 < b_2 < T\}$ for some $a_1 < a_2 \in A$ and $b_1 < b_2 \in B$.
\end{enumerate}
\end{lemma}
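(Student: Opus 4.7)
The plan is to read the merged multiset $U \eqdef M_1 \cup M_2$ as a type sequence (labeling each element $+$ if it lies in $\{a\}\cup A$ and $-$ if it lies in $B\cup\{b\}$), and to view a decomposition of $U$ into two $\arc$-alternating matchings as a $2$-coloring of this sequence whose color classes each read $+-+-\cdots+-$. The goal is to produce a coloring different from the one induced by $(M_1,M_2)$, unless we are in one of the four listed configurations. As a preliminary reduction I would strip off the maximal common initial and final segments $H$ and $T$ on which $M_1$ and $M_2$ agree (as increasing subsequences of matching elements), so that we may assume $D \eqdef M_1 \symdif M_2$ starts right after $H$ and ends right before $T$.

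I would then split on $|D|$. When $|D|=2$, a brief enumeration of the ways two elements of $M_1$ can be removed, or one element of $M_1$ exchanged with one element of $M_2$, while preserving alternation in both matchings, pins the situation down to the four listed configurations: removing a matched pair $(a_i,b_i)$ of $M_1$ gives case~(1); removing a consecutive but unmatched pair $(b_i,a_{i+1})$ of $M_1$ gives case~(4); and exchanging a single $+$-letter or a single $-$-letter with a letter of $M_2$ that shares its matching partner gives cases~(3) and~(2) respectively. A mixed exchange (a $+$ from one matching against a $-$ from the other) is ruled out because it destroys alternation.

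For $|D|\ge 4$ I would construct an explicit swap using the surplus walk $W_k \eqdef \#\{+\text{'s among the first $k$ entries of }U\} - \#\{-\text{'s among the first $k$ entries of }U\}$. Each of $M_1$ and $M_2$ being alternating forces $W_k\in\{0,1,2\}$ with $W_0=W_{|U|}=0$, and $|D|\ge 4$ after the reduction forces $W$ to reach height~$2$. At the first height-$2$ position both matchings have an open $+$-letter; reassigning which subsequent $-$-letter closes which open $+$-letter within the resulting height-$2$ excursion yields a distinct pair $(M_3,M_4)$ with $M_3\sqcup M_4 = U$. The main obstacle is verifying that this reassignment preserves the alternating condition in both new matchings; I would handle this by performing the exchange at the innermost level-$2$ excursion and inducting on the length of that excursion, so that the alternating check reduces to a strictly shorter instance already handled.
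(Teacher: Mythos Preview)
Your overall strategy—encoding $U=M_1\cup M_2$ via the surplus walk and first stripping the maximal common alternating head and tail $H$, $T$—is exactly the paper's, and your $|D|=2$ analysis correctly recovers the four exceptional configurations (they are precisely the $\delta$-profiles $10$, $210$, $120$, $2120$ in the paper's notation).

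The gap is in the $|D|\ge 4$ case: your assertion that ``$|D|\ge 4$ after the reduction forces $W$ to reach height~$2$'' is false. After stripping, take $M_1=\{a_1,b_1,a_2,b_2\}$ and $M_2=\varnothing$ (or more generally any disjoint $M_1,M_2$ whose set-theoretic union is itself an alternating word): then $|D|\ge 4$, yet $W$ reads $1,0,1,0,\dots$ and never reaches~$2$, so your height-$2$ swap never fires. The paper's proof handles precisely this situation first: whenever $W$ has an \emph{intermediate} zero (a position with $W=0$ lying strictly between two positive values), both $M_1$ and $M_2$ split there as $M_i=L_i\cup R_i$, and the tail-swap $M_3=L_1\cup R_2$, $M_4=L_2\cup R_1$ already yields a new pair (distinctness uses the maximality of $H$ and $T$). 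Only once intermediate zeros are excluded is $W$ forced to alternate between $1$ and $2$ until its final drop to~$0$; the paper then dispatches the two remaining shapes (walk starting $121\dots$ or $2121\dots$) with explicit one-shot swaps, so no induction is needed. Inserting the intermediate-zero step fixes your argument and in fact renders the inductive scaffolding you propose unnecessary.
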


\begin{proof}
Let $H$ (resp.~$T$) be the longest initial (resp.~final) common $\arc$-alternating matching between~$M_1$ and~$M_2$.
This part of the $\arc$-alternating matchings~$M_1$ and~$M_2$ is forced in any decomposition of~$M_1 \cup M_2$, so we can assume without loss of generality that both~$H$ and~$T$ are empty.
For a multiset~$X$ of~$[n]$, we define ${\mu(X,i) \eqdef |X \cap [i] \cap (\{a\} \cup A)| - |X \cap [i] \cap (B \cup \{b\})|}$ for~$i \in X$, and denote by~$\mu(X) \eqdef (\mu(X,i))_{i \in X}$ the sequence of these differences.
For instance, in an $\arc$-alternating matching~$M$ of semi-length~$k$, we have~$\mu(M) = (10)^k$ ($1$ for each element~$a_j$ of the $\arc$-alternating matching, and $0$ for each element~$b_j$ of the $\arc$-alternating matching).
Therefore, we have $\mu(M_1 \cup M_2, i) = 0$, $1$ or~$2$ for each~$i \in M_1 \cup M_2$.
Observe that if there is $i < j < k$ in~$M_1 \cup M_2$ such that $\mu(M_1 \cup M_2, i)$ and~$\mu(M_1 \cup M_2, k)$ are positive while $\mu(M_1 \cup M_2, j) = 0$, then both $M_1$ and $M_2$ decompose into $M_1 = L_1 \cup R_1$ and~$M_2 = L_2 \cup R_2$ where $L_1, L_2$ are on the left of~$j$ while $R_1, R_2$ are on the right of~$j$, and we obtain two distinct $\arc$-alternating matchings $M_3 = L_1 \cup R_2$ and~$M_4 = L_2 \cup R_1$ with~$M_1 \cup M_2 = L_1 \cup L_2 \cup R_1 \cup R_2 = M_3 \cup M_4$ (our assumption that~$M_1$ and~$M_2$ have empty initial and final common $\arc$-alternating matching ensures that~$\{M_1, M_2\} \cap \{M_3, M_4\} = \varnothing$).
We can thus assume that the only~$0$ of $\mu(M_1 \cup M_2)$ is at its end.
Observe moreover that $\mu(M_1 \cup M_2)$ has no consecutive repeated values since each new position of~$M_1 \cup M_2$ is either in~$\{a\} \cup A$ (in which case the entry of $\mu(M_1 \cup M_2)$ increases by $1$ or $2$) or in~$B \cup \{b\}$ (in which case the entry of $\mu(M_1 \cup M_2)$ decreases by $1$ or $2$).
We obtain the following four special cases of the statement illustrated in \cref{fig:matchingUnionLemma}\,(right), which admit a single $\arc$-alternating matching decomposition:
\begin{enumerate}
\item if $\mu(M_1 \cup M_2) = 10$, then $\{M_1, M_2\} = \{a' < b', \; \varnothing\}$ where $M_1 \cup M_2 = \{a' < b'\}$.
\item if $\mu(M_1 \cup M_2) = 210$, then $\{M_1, M_2\} = \{a' < b_1, \; a' < b_2\}$ where $M_1 \cup M_2 = \{a' < b_1 < b_2\}$.
\item if $\mu(M_1 \cup M_2) = 120$, then $\{M_1, M_2\} = \{a_1 < b', \; a_2 < b'\}$ where ${M_1 \cup M_2 = \{a_1 < a_2 < b'\}}$.
\item if $\mu(M_1 \cup M_2) = 2120$, then $\{M_1, M_2\} = \{a_1 < b_1 < a_2 < b_2, \; a_1 < b_2\}$ where $M_1 \cup M_2 = \{a_1 < b_1 < a_2 < b_2\}$.
\end{enumerate}
Assume now that we are not in these cases.
Let~$a_1 < \dots < a_k$ denote the elements of~$M_1 \cup M_2$ in~$\{a\} \cup A$ and~$b_1 < \dots < b_\ell$ denote the elements of~$M_1 \cup M_2$ in~$B \cup \{b\}$.
We distinguish again two cases, illustrated in \cref{fig:matchingUnionLemma}\,(left):
\begin{enumerate}
\addtocounter{enumi}{4}
\item if $\mu(M_1 \cup M_2)$ starts with $1$, then it starts at least with~$121$. Hence, we have~${k, \ell \ge 2}$ and $a_1 < a_2 < b_1 < b_2$. The two distinct pairs~$\{a_1 < b_1, \; a_2 < b_2\}$ and $\{a_1 < b_2, \; a_2 < b_1\}$ of $\arc$-alternating matchings covering~$\{a_1, a_2, b_1, b_2\}$ can both be completed with any decomposition of the elements of $(M_1 \cup M_2) \ssm \{a_1, a_2, b_1, b_2\}$ into two $\arc$-alternating~matchings.
\item if $\mu(M_1 \cup M_2)$ starts with $2$, then it starts at least with~$2121$. Hence, we have~$k, \ell \ge 4$ and~$a_1 = a_2 < b_1 < a_3 < b_2 < a_4 < b_3 \le b_4$. The two distinct pairs~$\{a_1 < b_4, \; a_2 < b_1 < a_3 < b_2 < a_4 < b_3\}$ and $\{a_1 < b_2 < a_4 < b_4, \; a_2 < b_1 < a_3 < b_3\}$ of $\arc$-alternating matchings covering~$\{a_1, a_2, a_3, a_4, b_1, b_2, b_3, b_4\}$ can both be completed with any decomposition of the elements of $(M_1 \cup M_2) \ssm \{a_1, a_2, a_3, a_4, b_1, b_2, b_3, b_4\}$ into two $\arc$-alternating~matchings.
\end{enumerate}
In both cases (5) and~(6), we obtained two disjoint pairs of $\arc$-alternating matchings $\{M_1, M_2\}$ and $\{M_3, M_4\}$ such that the multisets $M_1 \cup M_2$ and $M_3 \cup M_4$ coincide.
\begin{figure}
	\capstart
	\centerline{
		\begin{overpic}[scale=1.4]{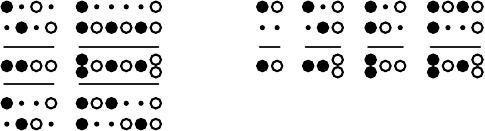}
			\put(41,24){$M_1$}
			\put(41,20.5){$M_2$}
			\put(37.5,12.5){$M_1 \cup M_2$}
			\put(41,4){$M_3$}
			\put(41,0.5){$M_4$}
			\put(57,2.5){--- no other decomposition ---}
		\end{overpic}
	}
	\caption{The union of any pair~$\{M_1,M_2\}$ of $\arc$-alternating matchings decomposes into another disjoint pair of $\arc$-alternating matchings (left), except for four special families (right).}
	\label{fig:matchingUnionLemma}
\end{figure}
\end{proof}

With the combinatorial tool of \cref{lem:matchingUnionLemma} in hand, we are ready to prove \cref{prop:shardPolytopeFan}.

\begin{proof}[Proof of \cref{prop:shardPolytopeFan}]
Fix an arc~$\arc \eqdef (a, b, A, B) \in \arcs_n$ and a vector~$\b{t} \in \R^n$.
For an $\arc$-alternating matching~$M = a_1 < b_1 < \dots < a_k < b_k$, we let~$\tau(M) \eqdef \dotprod{\b{t}}{\chi(M)} = \sum_{i = 1}^k \b{t}_{a_i} - \b{t}_{b_i}$ denote the scalar product of the vector~$\b{t}$ with the characteristic vector~$\chi(M)$ of~$M$.

Assume first that~$\b{t}$ belongs to the interior of the shard~$\shard(\arc)$.
Recall from \cref{subsec:shards} that the coordinates of~$\b{t}$ satisfy the inequalities~$\b{t}_{a'} < \b{t}_a = \b{t}_b < \b{t}_{b'}$ for all~$a < a', b' < b$ such that~$a' \in A$ and~$b' \in B$.
Since for all~$i \in [k]$, we have~$a \le a_i < b_i \le b$ and~$a_i \in \{a\} \cup A$ while~$b_i \in B \cup \{b\}$, we obtain that~$\b{t}_{a_i} \le \b{t}_{b_i}$ with equality if and only if~$a = a_i$ and~$b = b_i$.
Therefore, we obtain that~$\tau(M) \le 0$ with equality if and only if~$M = \varnothing$ or~$M = a < b$.
We conclude that~$\tau$ is maximized precisely by two $\arc$-alternating matchings, so that~$\b{t}$ belongs to the union of the walls of the normal fan of the shard polytope~$\shardPolytope$.

Assume now that $\b{t}$ belongs to the union of the walls of the normal fan of the shard polytope~$\shardPolytope$.
Let~$M_1$ and~$M_2$ be two $\arc$-alternating matchings such that~$\b{t}$ belongs to the normal cone of the edge of~$\shardPolytope$ with endpoints~$\chi(M_1)$ and~$\chi(M_2)$.
Then the midpoint between~$\chi(M_1)$ and~$\chi(M_2)$ is not the midpoint of any other pair of vertices~$\chi(M_3)$ and~$\chi(M_4)$ of~$\shardPolytope$.
It follows that~$M_1 \cup M_2 \ne M_3 \cup M_4$ for any~$\{M_3, M_4\}$ disjoint from~$\{M_1, M_2\}$.
We thus get that~$\{M_1, M_2\}$ is one of the pairs of $\arc$-alternating matchings described in \cref{lem:matchingUnionLemma}.
We now distinguish~four~cases:
\begin{enumerate}
	\item Assume $M_1 = H < a' < b' < T$ and~$M_2 = H < T$. Consider the arc~$\arc' \eqdef (a', b', A', B')$, where~$A' \eqdef A \cap {]a',b'[}$ and~$B' \eqdef B \cap {]a',b'[}$. We then have~${0 = \tau(M_1) - \tau(M_2) = \b{t}_{a'} - \b{t}_{b'}}$. Furthermore, for any~$i \in A'$, the $\arc$-alternating matching~$M_3 = H < i < b' < T$ satisfies ${0 < \tau(M_1) - \tau(M_3) = \b{t}_{a'} - \b{t}_i}$. Similarly, for any $j \in B'$, the $\arc$-alternating matching ${M_4 = H < a' < j < T}$ satisfies ${0 < \tau(M_2) - \tau(M_4) = \b{t}_j - \b{t}_{a'}}$. We get ${\b{t}_i < \b{t}_{a'} = \b{t}_{b'} < \b{t}_j}$ for any~$i \in A'$ and~$j \in B'$. Therefore~$\b{t}$ belongs to the shard~$\shard(\arc')$.
	\item Assume $M_1 = H < a' < b_1 < T$ and~$M_2 = H < a' < b_2 < T$. Consider the arc~$\arc' \eqdef (b_1, b_2, A', B')$, where~$A' \eqdef A \cap {]b_1, b_2[}$ and~$B' \eqdef B \cap {]b_1, b_2[}$. We then have ${0 = \tau(M_1) - \tau(M_2) = \b{t}_{b_2} - \b{t}_{b_1}}$. We obtain that ${\b{t}_i < \b{t}_{a'} = \b{t}_{b'} < \b{t}_j}$ for any~$i \in A'$ and ${j \in B'}$ by considering the two $\arc$-alternating matchings~${M_3 = H < a' < b_1 < i < b_2 < T}$ and $M_4 = H < a' < j < T$. Therefore~$\b{t}$ belongs to the shard~$\shard(\arc')$.
	\item Assume $M_1 = H < a_1 < b' < T$ and~$M_2 = H < a_2 < b' < T$. Consider the arc~$\arc' \eqdef (a_1, a_2, A', B')$ where~$A' \eqdef A \cap {]a_1, a_2[}$ and~$B' \eqdef B \cap {]a_1, a_2[}$. We then have ${0 = \tau(M_1) - \tau(M_2) = \b{t}_{a_1} - \b{t}_{a_2}}$. We obtain that ${\b{t}_i < \b{t}_{a'} = \b{t}_{b'} < \b{t}_j}$ for any~$i \in A'$ and ${j \in B'}$ by considering the two $\arc$-alternating matchings~$M_3 = H < i < b' < T$ and $M_4 = H < a_1 < j < a_2 < b' < T$. Therefore~$\b{t}$ belongs to the shard~$\shard(\arc')$.
	\item Assume that~$M_1 = H < a_1 < b_1 < a_2 < b_2 < T$ and~$M_2 = H < a_1 < b_2 < T$. Consider the arc~$\arc' \eqdef (b_1, a_2, A', B')$ where~$A' \eqdef A \cap {]b_1, a_2[}$ and~$B' \eqdef B \cap {]b_1, a_2[}$. We then have ${0 = \tau(M_1) - \tau(M_2) = \b{t}_{a_2} - \b{t}_{b_1}}$. We obtain that ${\b{t}_i < \b{t}_{a'} = \b{t}_{b'} < \b{t}_j}$ for any~$i \in A'$ and ${j \in B'}$ by considering the two $\arc$-alternating matchings~$M_3 = H < a_1 < b_1 < i < b_2 < T$ and $M_4 = H < a_1 < j < a_2 < b_2 < T$. Therefore~$\b{t}$ belongs to the shard~$\shard(\arc')$.
\end{enumerate}
In all four cases, we conclude that~$\b{t}$ belongs to a shard~$\shard(\arc')$ for some arc~$\arc'$ forcing~$\arc$.
\end{proof}

Using \cref{lem:matchingUnionLemma}, one can also explicitly describe the normal cones of the vertices and edges of the shard polytope~$\shardPolytope$.
We have postponed these descriptions to \cref{subsec:normalConeDescriptions} as they are a bit tedious and not essential for the rest of the paper.


\subsection{Quotientopes from shard polytopes}
\label{subsec:shardsumotopes}

We now construct polytopal realizations of quotient fans in the same spirit as in \cref{subsec:MinkowskiSumAssociahedra}, but using shard polytopes rather than associahedra as elementary summands.
The following statement, announced in \cref{coro:main3}, immediately follows from \cref{prop:shardPolytopeFan}.

\begin{figure}[b]
	\capstart
	\centerline{\includegraphics[scale=.7]{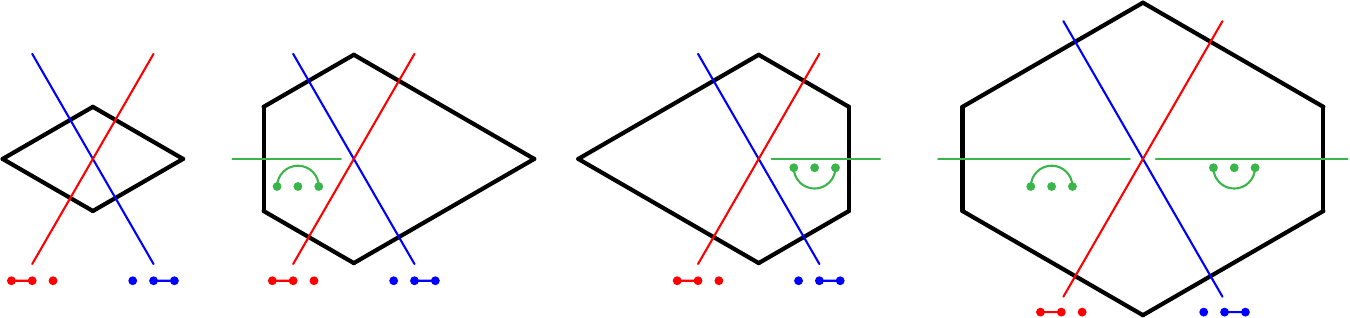}}
	\caption{The Minkowski sums~$\shardPolytope[\arcs]$ for all arc ideals~$\arcs \subseteq \arcs_3$ containing the basic arcs.}
	\label{fig:shardPolytopeSums3}
\end{figure}

\begin{corollary}
\label{coro:MinkowskiSumShardPolytopes}
For any arc ideal~$\arcs \subseteq \arcs_n$, the quotient fan~$\Fan_\arcs$ is the normal fan of the Minkowski sum~$\shardPolytope[\arcs] \eqdef \sum_{\arc \in \arcs} \shardPolytope$ of the shard polytopes~$\shardPolytope$ of all~arcs~${\arc \in \arcs}$.
\end{corollary}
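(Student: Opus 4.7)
The plan is to package the statement as a direct consequence of \cref{prop:shardPolytopeFan} together with \cref{thm:quotientFanShards}, the key glue being that the ideal condition on $\arcs$ prevents unwanted shards from appearing. I would first recall the standard fact (already stated in \cref{subsec:fansPolytopes}) that the normal fan of a Minkowski sum is the common refinement of the normal fans of its summands. In particular, a vector $\b{t} \in \hyp$ lies on a wall of the normal fan of $\shardPolytope[\arcs] = \sum_{\arc \in \arcs} \shardPolytope$ if and only if it lies on a wall of the normal fan of $\shardPolytope$ for at least one $\arc \in \arcs$. Equivalently, the union of walls of the normal fan of $\shardPolytope[\arcs]$ is exactly the union over $\arc \in \arcs$ of the unions of walls of the normal fans of the $\shardPolytope$.

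Next I would invoke \cref{prop:shardPolytopeFan} for every $\arc \in \arcs$: each individual union of walls contains the shard $\shard(\arc)$ and is contained in the union $\bigcup_{\arc \prec \arc'} \shard(\arc')$. Summing over $\arc \in \arcs$, the union of walls of the normal fan of $\shardPolytope[\arcs]$ therefore contains $\bigcup_{\arc \in \arcs} \shard(\arc)$ and is contained in $\bigcup_{\arc \in \arcs} \bigcup_{\arc \prec \arc'} \shard(\arc')$. Here the crucial point is the hypothesis that $\arcs$ is an \emph{upper} ideal of the arc poset $(\arcs_n, \prec)$: for $\arc \in \arcs$ and $\arc \prec \arc'$ we automatically have $\arc' \in \arcs$, so the outer union collapses to $\bigcup_{\arc' \in \arcs} \shard(\arc')$. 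Combining the two inclusions gives the equality
\[
\bigcup \text{walls of the normal fan of } \shardPolytope[\arcs] \;=\; \bigcup_{\arc \in \arcs} \shard(\arc).
\]

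To finish, I would apply \cref{thm:quotientFanShards}, which identifies the right-hand side with the union of walls of the quotient fan $\Fan_\arcs$. Thus the normal fan of $\shardPolytope[\arcs]$ and the quotient fan $\Fan_\arcs$ are two complete polyhedral fans of $\hyp$ with the same codimension-one skeleton; as the chambers of a complete fan are the connected components of the complement of its walls, the two fans must agree. This yields $\Fan_\arcs$ as the normal fan of $\shardPolytope[\arcs]$. There is no real obstacle in the argument: all the geometric content is already concentrated in \cref{prop:shardPolytopeFan}, and the corollary simply combines it with the upper-ideal hypothesis and the shard description of the quotient fan. The only point meriting a sentence of care is the last one, namely that two complete fans with identical wall sets coincide.
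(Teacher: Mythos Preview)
Your argument is correct and is exactly the approach the paper has in mind: the paper simply states that the corollary ``immediately follows from \cref{prop:shardPolytopeFan}'' without spelling out the details, and what you wrote is precisely the unpacking of that sentence via \cref{thm:quotientFanShards}, the upper-ideal hypothesis, and the common-refinement property of Minkowski sums.
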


The resulting polytopes are illustrated in \cref{exm:ZonoMinkowskiSum,exm:LodayAssoMinkowskiSum,exm:HohlwegLangeAssoMinkowskiSum,exm:weirdPermMinkowskiSum} below and \cref{fig:shardPolytopeSums3,fig:associahedra,fig:weirdPermutahedron}.
Observe that the quotient fan~$\Fan_\arcs$ is actually the normal fan of any Minkowski sum~$\sum_{\arc \in \arcs} \coeffSP_\arc \, \shardPolytope$ with~$\coeffSP_\arc > 0$ for any~$\arc \in \arcs$.
We stick with coefficients~$\coeffSP_\arc = 1$ here as this convention recovers the original constructions of~\cite{Loday, HohlwegLange} as described in \cref{exm:LodayAssoMinkowskiSum,exm:HohlwegLangeAssoMinkowskiSum}.
As they are not needed in the next sections, we have postponed the vertex and facet descriptions of the polytopes~$\shardPolytope[\arcs]$ to \cref{subsec:vertexFacetDescriptionsShardSumotopes}.
We will also discuss in~\cref{prop:RHStoSP} another way to derive inequality descriptions for the polytopes~$\shardPolytope[\arcs]$, passing through Minkowski decompositions of shard polytopes into sums and differences of faces of the standard simplex.
At the moment, we just want to observe here that the symmetries of the shard polytopes given in \cref{prop:symmetriesShardPolytope} translate to the following symmetries of their Minkowski sums, illustrated in \cref{fig:shardPolytopeSums3,fig:associahedra,fig:weirdPermutahedron}.

\begin{corollary}
\label{lem:symmetriesMinkowskiSumShardPolytopes}
If an arc ideal~$\arcs$ is $\phi$- or $\psi$-invariant, then the Minkowski sum~$\shardPolytope[\arcs]$ is $\Phi$- or $\Psi$-invariant up to a translation.
If~$\arcs$ is centrally symmetric, then~$\shardPolytope[\arcs] = \Phi \circ \Psi(\shardPolytope[\arcs])$.
\end{corollary}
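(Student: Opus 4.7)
The plan is to reduce the statement directly to \cref{prop:symmetriesShardPolytope}, using that Minkowski sum commutes with linear maps and with translations: for any linear map~$L$ and vector~$\b{v}$, one has $L(\polytope{P}+\polytope{Q}) = L(\polytope{P}) + L(\polytope{Q})$ and $(\polytope{P}+\b{v}) + \polytope{Q} = (\polytope{P}+\polytope{Q}) + \b{v}$.

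First I would treat the $\phi$-invariant case. Applying $\Phi$ summand by summand and invoking the identity $\Phi(\shardPolytope) = \shardPolytope[\phi(\arc)] - \b{e}_a + \b{e}_b$ from \cref{prop:symmetriesShardPolytope},
\[
\Phi(\shardPolytope[\arcs]) = \sum_{\arc \in \arcs} \Phi(\shardPolytope) = \sum_{\arc \eqdef (a,b,A,B) \in \arcs} \shardPolytope[\phi(\arc)] \;-\; \b{v}_\phi,
\]
where $\b{v}_\phi \eqdef \sum_{\arc \eqdef (a,b,A,B) \in \arcs} (\b{e}_a - \b{e}_b)$. Since $\phi$ is an involution on~$\arcs_n$ and $\phi(\arcs) = \arcs$, reindexing by $\arc' \eqdef \phi(\arc)$ gives $\sum_{\arc \in \arcs} \shardPolytope[\phi(\arc)] = \shardPolytope[\arcs]$, so $\Phi(\shardPolytope[\arcs]) = \shardPolytope[\arcs] - \b{v}_\phi$, which is the claimed equality up to translation. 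The $\psi$-invariant case is entirely analogous, using the identity $\Psi(\shardPolytope) = \shardPolytope[\psi(\arc)] + \b{e}_{\bar a} - \b{e}_{\bar b}$ and the involutivity of $\psi$, producing a translation vector $\b{v}_\psi$.

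For the centrally symmetric case, the point is that the identity $\shardPolytope[\phi \circ \psi(\arc)] = \Phi \circ \Psi(\shardPolytope)$ from \cref{prop:symmetriesShardPolytope} carries \emph{no} translation term. Hence, summing over $\arcs$ and using that $\phi \circ \psi$ is an involutive bijection of $\arcs$ onto itself,
\[
\Phi \circ \Psi(\shardPolytope[\arcs]) = \sum_{\arc \in \arcs} \Phi \circ \Psi(\shardPolytope) = \sum_{\arc \in \arcs} \shardPolytope[\phi \circ \psi(\arc)] = \shardPolytope[\arcs],
\]
giving exact equality rather than equality up to translation.

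There is essentially no obstacle here beyond bookkeeping: the argument is a direct transport of \cref{prop:symmetriesShardPolytope} through the Minkowski sum. The one place to be careful is to verify that the translation terms $\b{e}_a - \b{e}_b$ and $\b{e}_{\bar a} - \b{e}_{\bar b}$ appearing in the individual identities are already cancelled in the composed symmetry $\phi \circ \psi$, which is exactly what \cref{prop:symmetriesShardPolytope} asserts and is what makes the centrally symmetric statement hold on the nose rather than merely up to translation.
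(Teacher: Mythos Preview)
Your proof is correct and is exactly the intended derivation: the paper does not spell out a proof for this corollary, simply presenting it as an immediate translation of \cref{prop:symmetriesShardPolytope} to the Minkowski sum, which is precisely what you have written out. Your bookkeeping on the translation vectors and the reindexing via the involutions $\phi$, $\psi$, and $\phi\circ\psi$ is accurate.
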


\begin{example}
\label{exm:ZonoMinkowskiSum}
For basic arcs, the $(i, i+1, \varnothing, \varnothing)$-alternating matchings are~$\varnothing$ and~$\{i, i+1\}$, thus the shard polytope $\shardPolytope[i, i+1, \varnothing, \varnothing]$ is just the segment~$[0, \b{e}_i - \b{e}_{i+1}]$.
For the ideal of basic arcs $\arcs_\textrm{rec} \eqdef \set{(i, i+1, \varnothing, \varnothing)}{i \in [n-1]}$, we get the parallelotope~${\shardPolytope[\arcs_\textrm{rec}] = \sum_{i \in [n-1]} [0, \b{e}_i - \b{e}_{i+1}]}$.
\end{example}

\begin{example}[Tamari]
\label{exm:LodayAssoMinkowskiSum}
Consider the sylvester congruence and the Tamari lattice of \cref{exm:sylvesterCongruence,exm:noncrossingPartitions,exm:LodayAsso,exm:shardsAsso,}.
For up arcs, the~$(a, b, {]a,b[}, \varnothing)$-alternating matchings are given by~$\varnothing$ and~$\{i, b\}$ for~$a \le i < b$, thus the shard polytope $\shardPolytope[{a, b, {]a,b[}, \varnothing}]$ is the translate of the standard simplex~$\simplex_{[a,b]}$ by the vector~$-\b{e}_b$.
For the ideal of up arcs~$\arcs_\textrm{sylv} = \set{(a, b, {]a,b[}, \varnothing)}{1 \le a < b \le n}$, the Minkowski sum~$\shardPolytope[\arcs_\textrm{sylv}]$ is thus the translate by the vector~$-\sum_{i \in [n]} i \, \b{e}_i$ of J.-L.~Loday's associahedron~\cite{Loday} described in \cref{exm:LodayAsso} and illustrated in \cref{fig:associahedra}\,(left).
This Minkowski decomposition into the faces of the standard simplex corresponding to the intervals of~$[n]$ was described in~\cite{Postnikov}.

\begin{figure}
	\capstart
	\centerline{
		\begin{tabular}{c@{\quad}c@{}c@{\quad}c}
		\includegraphics[scale=.32]{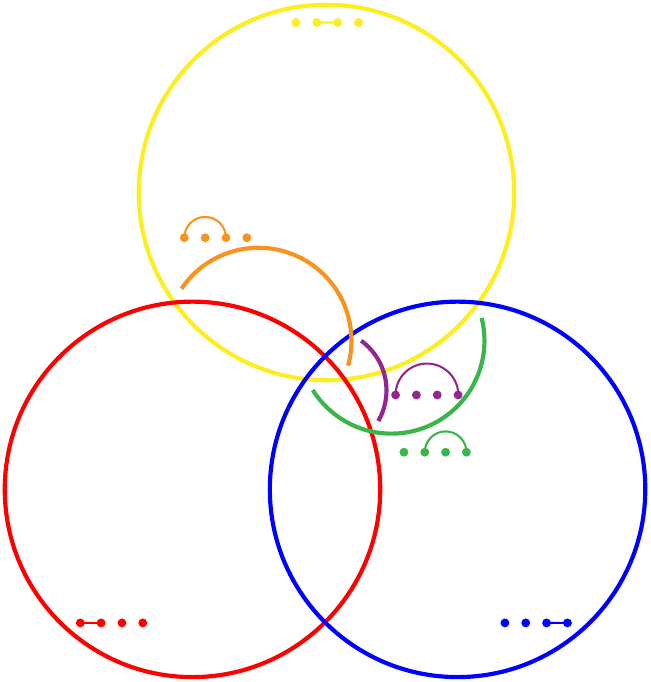} & \includegraphics[scale=.32]{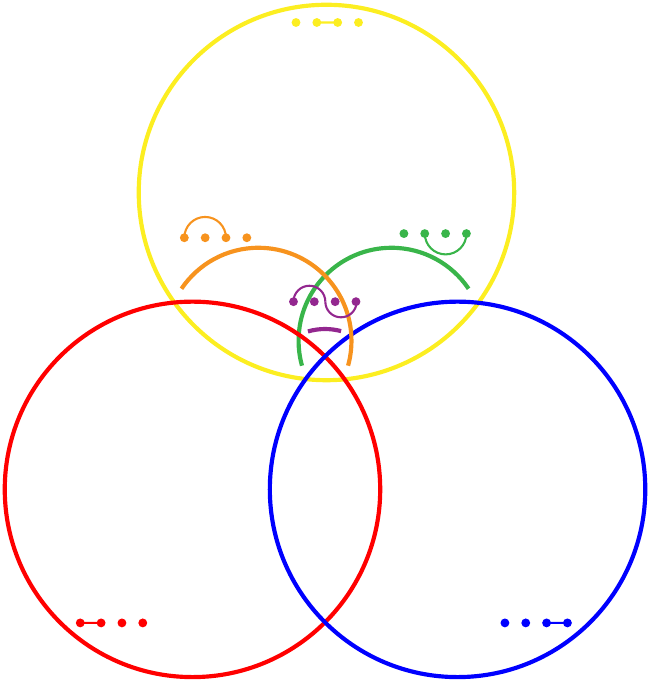} & \includegraphics[scale=.32]{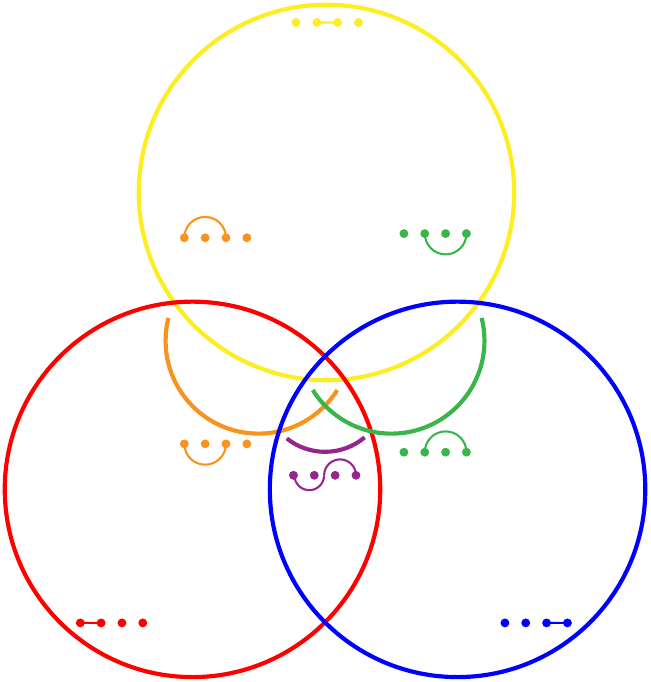} & \includegraphics[scale=.32]{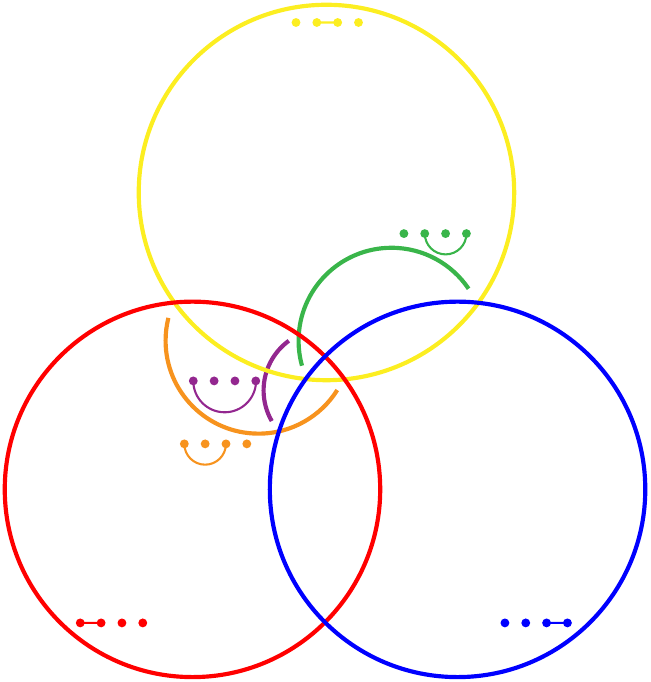} \\[.3cm]
		\includegraphics[scale=.4]{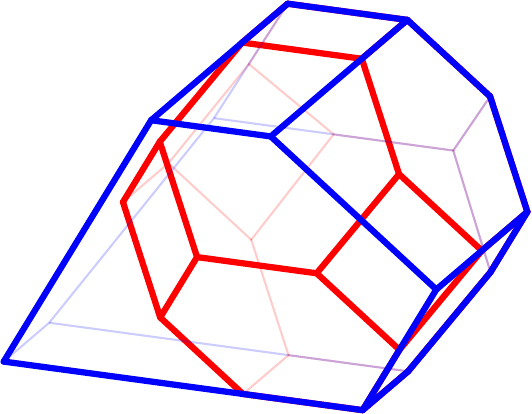} & \hspace*{.3cm}\includegraphics[scale=.4]{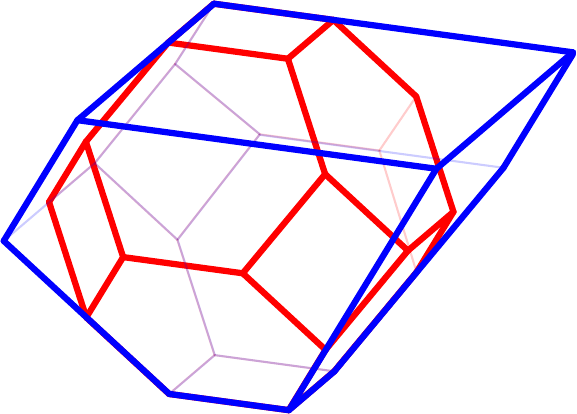} & \includegraphics[scale=.4]{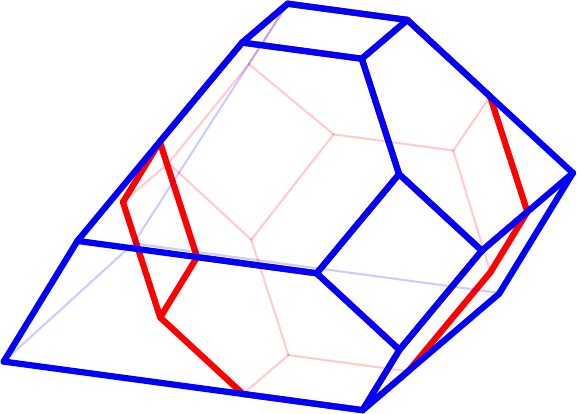}\hspace*{.3cm} & \includegraphics[scale=.4]{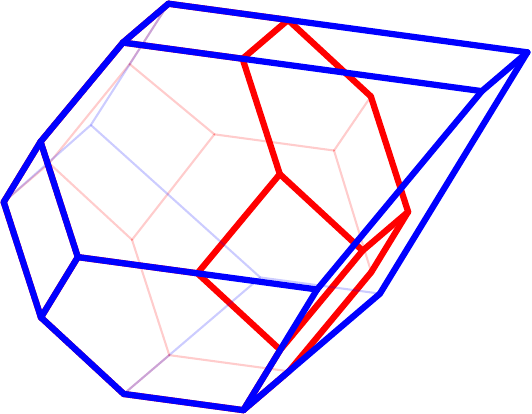}
		\end{tabular}
	}
	\caption{The associahedra (blue) obtained as Minkowski sums of shard polytopes coincide with the associahedra constructed in~\cite{HohlwegLange} by deleting inequalities in the facet description of the permutahedron~$\Perm[4]$ (red).}
	\label{fig:associahedra}
\end{figure}
\end{example}

\begin{example}[Cambrian]
\label{exm:HohlwegLangeAssoMinkowskiSum}
For the $\arc$-Cambrian congruence of \cref{exm:CambrianCongruences}, the Minkowski sum~$\shardPolytope[\arcs_\arc]$ is actually the translate by the vector~$-\sum_{i \in [a,b]} (i-a+1) \, \b{e}_i$ of C.~Hohlweg and C.~Lange's associahedron~$\Asso[\arc]$ described in \cref{exm:HohlwegLangeAsso} and  illustrated in \cref{fig:associahedra} (a formal proof of this affirmation is given in \cref{exm:HohlwegLangeAssoVertexFacetDescription}).
In fact, this Minkowski decomposition of the associahedron~$\Asso[\arc]$ already appeared in the context of brick polytopes in~\cite{PilaudSantos-brickPolytope} (see also \cref{rem:shardPolytopesAlreadyExisted}).
Alternative decompositions of the Cambrian associahedra of~\cite{HohlwegLange} as Minkowski sums and differences of faces of the standard simplex were also studied by C.~Lange in~\cite{Lange}, see \cref{coro:LangeDecomposition,coro:LangeDecompositionCambrian}.
\end{example}

\begin{example}
\label{exm:weirdPermMinkowskiSum}
For the ideal of all arcs~$\arcs_n$, the Minkowski sum of all shard polytopes gives a realization of the braid fan~$\Fan_n$.
See \cref{fig:weirdPermutahedron} for a $3$-dimensional example.
Although it is not the convex hull of all permutations of a given point as the classical permutahedron~$\Perm$, the resulting polytope has clearly a left-right and up-down symmetry given by \cref{lem:symmetriesMinkowskiSumShardPolytopes}.
\cref{coro:standardPermutahedronNotMinkowskiSumShardPolytopes} below shows that the classical permutahedron~$\Perm$ is not a Minkowski sum of dilated shard polytopes for~$n \ge 4$, and \cref{coro:permutahedronSPcoordinates} below decomposes the  classical permutahedron~$\Perm$ as a Minkowski sum and difference of dilated shard polytopes.

\begin{figure}
	\capstart
	\centerline{\raisebox{2cm}{\includegraphics[scale=.4]{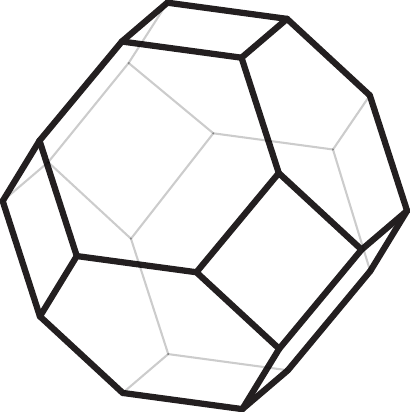}} \quad \includegraphics[scale=.4]{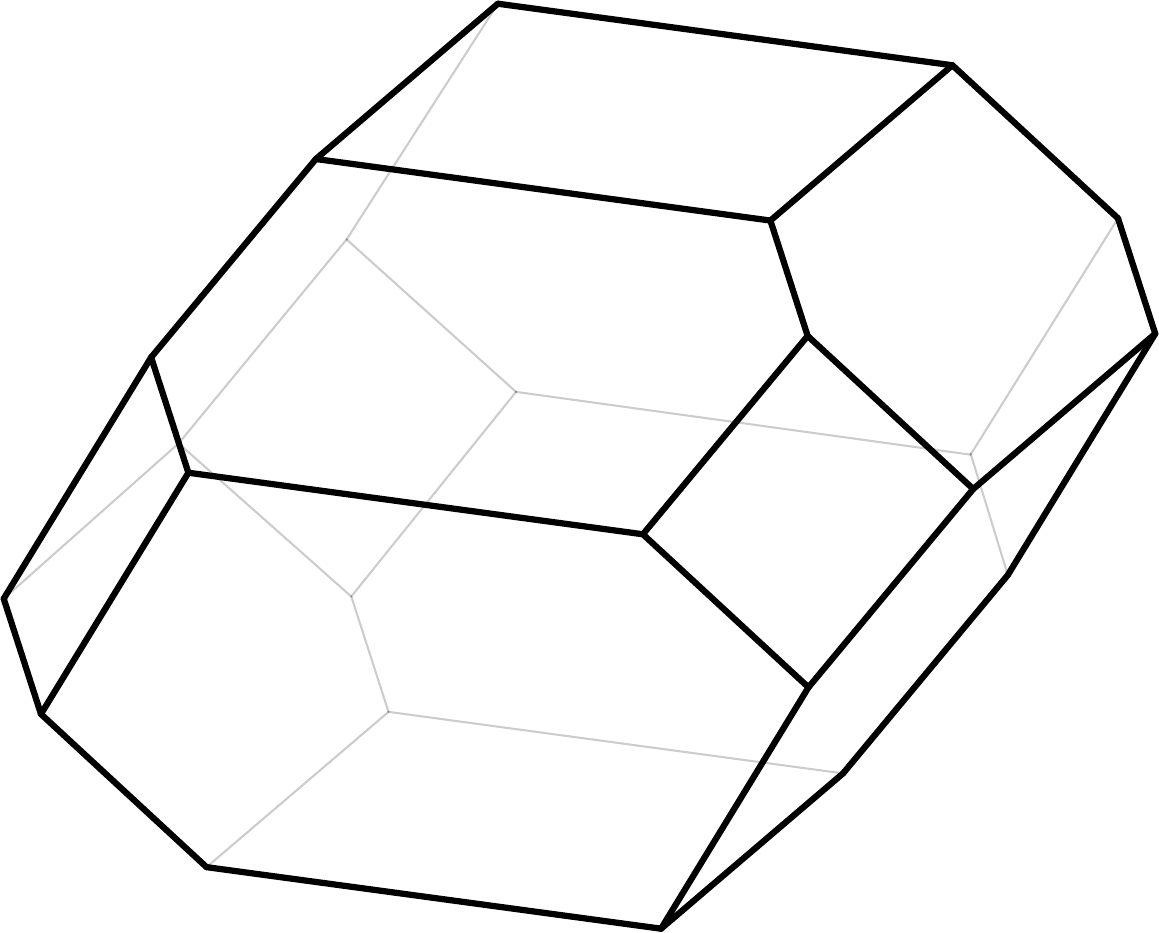} \quad \raisebox{1.6cm}{\includegraphics[scale=.4]{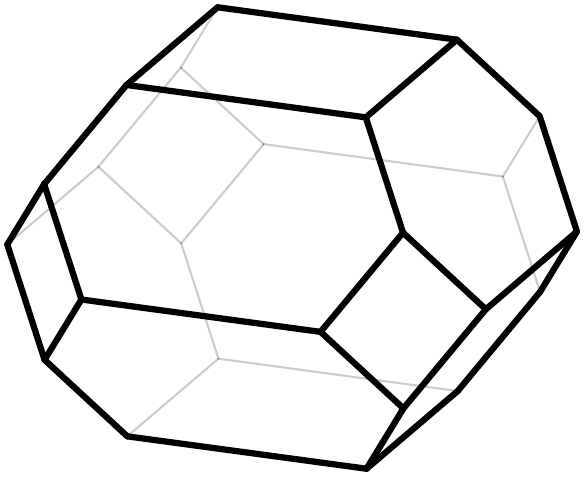}}}
	\caption{The standard permutahedron~$\Perm[4] \eqdef \conv\set{(\sigma_1, \dots, \sigma_n)}{\sigma \in \fS_4}$ (left), the Minkowski sum~$\shardPolytope[\arcs_4]$ of the shard polytopes of all arcs of~$\arcs_4$ (middle), and the Minkowski sum of the shard polytopes of all forcing minimal arcs of~$\arcs_4$ (right).}
	\label{fig:weirdPermutahedron}
\end{figure}
\end{example}

\begin{remark}
We will show in \cref{prop:quotientopesMinkowskiSumsShardPolytopes} that any quotientope constructed in~\cite{PilaudSantos-quotientopes} is a Minkowski sum of dilated shard polytopes.
\end{remark}


\subsection{A Minkowski identity on shard polytopes}
\label{subsec:MinkowskiIdentityShardPolytopes}

In view of \cref{exm:weirdPermMinkowskiSum,fig:weirdPermutahedron}, it is natural to wonder whether the standard permutahedron~$\Perm$ can be obtained as a Minkowski sum of dilated shard polytopes.
This is the case for~$n = 3$ (see \cref{fig:submodularFunctions}), but we will prove in \cref{coro:standardPermutahedronNotMinkowskiSumShardPolytopes} that it is not the case for~$n \ge 4$.
The proof is based on the following Minkowski identity among pseudoshard polytopes, in the sense of ~\cref{rem:pseudoarcs}.
This identity is illustrated in \cref{fig:signedFormula}.

\begin{theorem}
\label{thm:inductiveMinkowskiSum}
Consider an arc~$\arc \eqdef (a, b, A, B) \in \arcs_n$ with $b-a \ge 2$, and $x \in {]a,b[}$.
Then 
\[
\shardPolytope[\arc_x^A] + \shardPolytope[\arc_x^B] = \shardPolytope[\arc_{\bar x}] + \shardPolytope[\arc_{\le x}] + \shardPolytope[\arc_{\ge x}],
\]
where
\begin{itemize}
 \item $\arc_x^A \eqdef (a, b, A \cup \{x\}, B \ssm \{x\})$,
 \item $\arc_x^B \eqdef (a, b, A \ssm \{x\}, B \cup \{x\})$,
 \item $\arc_{\bar x} \eqdef (a, b, A \ssm \{x\}, B \ssm \{x\})$ (a pseudoshard, see~\cref{rem:pseudoarcs}),
 \item $\arc_{\le x} \eqdef (a, x, A \cap {]a,x[}, B \cap {]a,x[})$, and
 \item $\arc_{\ge x} \eqdef ( x,b, A \cap {]x,b[}, B \cap {]x,b[})$.
\end{itemize}
\end{theorem}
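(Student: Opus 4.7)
The plan is to verify the identity at the level of support functions. Writing $h_\polytope{P}(\b{t}) \eqdef \max_{\b{p} \in \polytope{P}} \dotprod{\b{t}}{\b{p}}$ for a polytope $\polytope{P}$, it suffices to show that for every $\b{t} \in \R^n$,
\[
h_{\shardPolytope[\arc_x^A]}(\b{t}) + h_{\shardPolytope[\arc_x^B]}(\b{t}) = h_{\shardPolytope[\arc_{\bar x}]}(\b{t}) + h_{\shardPolytope[\arc_{\le x}]}(\b{t}) + h_{\shardPolytope[\arc_{\ge x}]}(\b{t}),
\]
since the support function distinguishes polytopes and is additive under Minkowski sums. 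By \cref{prop:shardPolytope} (and \cref{rem:pseudoarcs} for $\arc_{\bar x}$), each $h_{\shardPolytope[\arc']}(\b{t})$ equals the maximum of $\dotprod{\b{t}}{\chi(M)}$ over all $\arc'$-alternating matchings $M$.

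Fix $\b{t}$ and introduce four scalars depending on $\b{t}$: let $\alpha$ (resp.~$\beta$) be the maximum of $\dotprod{\b{t}}{\chi(L)}$ over $\arc$-alternating matchings contained in $[a, x[$ (resp.~in $]x, b]$), let $\gamma$ be the maximum of $\dotprod{\b{t}}{\chi(L)}$ over $\arc_{\le x}$-alternating matchings $L$ using $x$ (necessarily as a down element, paired with some $a_i < x$), and let $\delta$ be the maximum of $\dotprod{\b{t}}{\chi(R)}$ over $\arc_{\ge x}$-alternating matchings $R$ using $x$ (as an up element, paired with some $b_j > x$). Classifying matchings of the five (pseudo)arcs by their interaction with the point $x$ then yields:
\begin{align*}
h_{\shardPolytope[\arc_{\le x}]}(\b{t}) &= \max(\alpha, \gamma), &
h_{\shardPolytope[\arc_{\ge x}]}(\b{t}) &= \max(\beta, \delta), \\
h_{\shardPolytope[\arc_{\bar x}]}(\b{t}) &= \max(\alpha+\beta,\, \gamma+\delta), \\
h_{\shardPolytope[\arc_x^A]}(\b{t}) &= \max(\alpha+\beta,\, \gamma+\delta,\, \alpha+\delta), &
h_{\shardPolytope[\arc_x^B]}(\b{t}) &= \max(\alpha+\beta,\, \gamma+\delta,\, \gamma+\beta).
\end{align*}
The nontrivial formulas rest on two combinatorial observations. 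First, an $\arc_{\bar x}$-matching can have at most one pair straddling $x$ (two would force $b_i < a_j$ and $a_j < x < b_i$); in the straddling case, attaching $\{x\}$ to the left odd prefix and to the right odd suffix respectively produces an $\arc_{\le x}$-matching using $x$ and an $\arc_{\ge x}$-matching using $x$, and the $\pm \b{t}_x$ contributions from $x$ cancel when summing the two values, giving total at most $\gamma + \delta$. Second, any $\arc_x^A$-matching using $x$ (as an up element) splits at $x$ into an even prefix (max $\alpha$) and a piece $\{x\}\cup R$ that is an $\arc_{\ge x}$-matching using $x$ (max $\delta$), yielding the contribution $\alpha + \delta$; the case of $\arc_x^B$ is symmetric.

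Setting $P \eqdef \alpha+\beta$, $Q \eqdef \gamma+\delta$, $R \eqdef \alpha+\delta$, $S \eqdef \gamma+\beta$, we have $P+Q = R+S$, and by distributing, $\max(\alpha, \gamma) + \max(\beta, \delta) = \max(P, R, S, Q)$. The desired identity therefore reduces to the scalar identity
\[
\max(P, Q, R) + \max(P, Q, S) = \max(P, Q) + \max(P, Q, R, S),
\]
which I would prove by a short case analysis on $\max(P, Q, R, S)$: if it lies in $\{P, Q\}$, both sides equal $2\max(P, Q)$; if it equals $R$, then $R > \max(P, Q)$ together with $R + S = P + Q$ forces $S < \min(P, Q)$, so both sides equal $R + \max(P, Q)$; the case $\max = S$ is symmetric. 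The main technical obstacle in this approach is establishing the five support function formulas above, which requires careful matching-theoretic decompositions based on how each matching uses the point $x$; once these are in hand, the rest of the argument is elementary.
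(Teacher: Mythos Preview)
Your proof is correct and takes a genuinely different route from the paper. The paper argues directly at the level of vertices: given an $\arc_x^A$-alternating matching~$M_x^A$ and an $\arc_x^B$-alternating matching~$M_x^B$, it explicitly constructs $\arc_{\bar x}$-, $\arc_{\le x}$-, and $\arc_{\ge x}$-alternating matchings $M_{\bar x}, M_{\le x}, M_{\ge x}$ with $\chi(M_x^A)+\chi(M_x^B)=\chi(M_{\bar x})+\chi(M_{\le x})+\chi(M_{\ge x})$, by a short case split on whether $x$ appears in $M_x^A$ or $M_x^B$; the reverse inclusion is stated as analogous. Your approach instead compresses the combinatorics into the four scalars $\alpha,\beta,\gamma,\delta$, computes all five support functions in those terms, and then finishes with a clean arithmetic identity using $P+Q=R+S$. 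The paper's argument is more constructive (you actually see the vertex correspondence), while yours separates the combinatorial content (the five max formulas, which you have justified) from a purely numerical lemma that does the balancing; the latter makes it transparent why the identity holds and would adapt easily to other ``glue at $x$'' situations.
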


\begin{proof}
Let $M_x^A$ and $M_x^B$ be alternating matchings of $\arc_x^A$ and $\arc_x^B$, respectively. We say that $M_x^A$ and $M_x^B$ are splittable if $M_x^A\cap [x,b]$ and $M_x^B\cap [a,x]$ are alternating matchings (i.e. have an even number of elements), respectively. 

If $M_x^A$ and $M_x^B$ are both splittable, then set 
$M_{\bar x}=\left(M_x^A\cap {[a,x[}\right)\cup  \left(M_x^B\cap {]x,b]}\right)$,  
$M_{\le x}= M_x^B\cap {[a,x]}$, and 
$M_{\ge x}=M_x^A\cap {[x,b]}$.
If $M_x^A$ is splittable and $M_x^B$ is not, then set 
$M_{\bar x}=M_x^B$,
$M_{\le x}= M_x^A\cap {[a,x[}$, and
$M_{\ge x}=M_x^A\cap {[x,b]}$.
If $M_x^A$ is not splittable and $M_x^B$ is, then set 
$M_{\bar x}=M_x^A$, $M_{\le x}= M_x^B\cap {[a,x]}$, and $M_{\ge x}=M_x^B\cap {]x,b]}$.
If neither $M_x^A$ nor $M_x^B$ are splittable, then set
$M_{\bar x}=M_x^A$, 
$M_{\le x}= \left(M_x^B\cap {[a,x]}\right)\cup \{x\}$, and  
$M_{\ge x}=\{x\}\cup \left(M_x^B\cap {]x,b]}\right)$.

In each case we have that $M_{\bar x}$, $M_{\le x}$, and $M_{\ge x}$ are alternating matchings of $\arc_{\bar x}$, $\arc_{\le x}$, and $\arc_{\ge x}$, respectively, and that
\[
\chi(M_x^A) + \chi(M_x^B) = \chi(M_{\bar x}) + \chi(M_{\le x}) + \chi(M_{\ge x}),
\]
which shows the inclusion from left to right. 
 (All the verifications are straightforward except for maybe the last one, where there is a cancelation of $\b{e}_{x}$ and $-\b{e}_{x}$ coming from $\chi(M_{\le x})$ and $\chi(M_{\ge x})$.)
The other inclusion is similar.
\end{proof}

\begin{figure}
	\capstart
	\centerline{
		\begin{tabular}{c@{}c@{}c@{}c@{}c@{}c@{}c@{}c@{}c@{}c@{}c}
		\raisebox{-1.2cm}{\includegraphics[scale=.25]{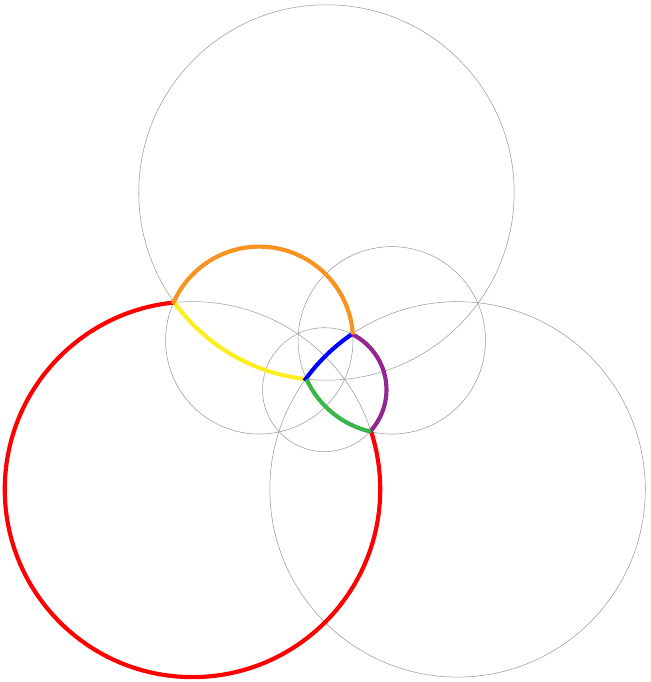}} & $\cup$ & \raisebox{-1.2cm}{\includegraphics[scale=.25]{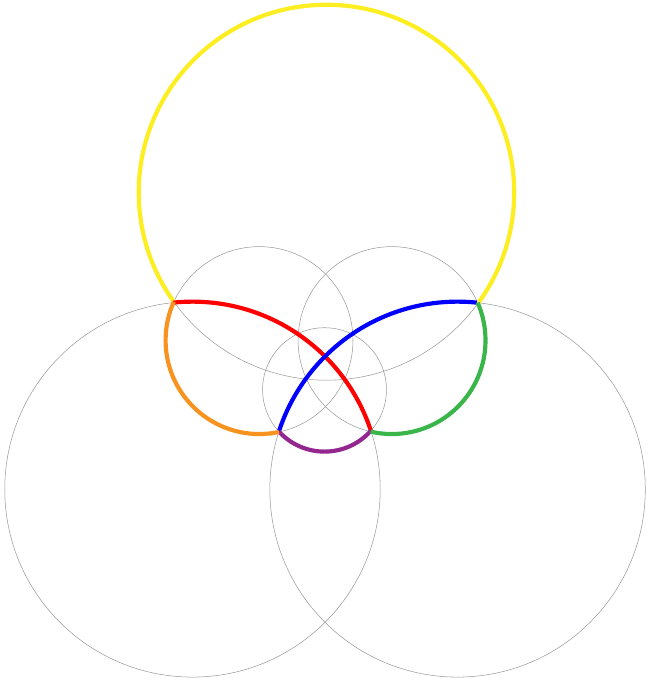}} & $=$ & \raisebox{-1.2cm}{\includegraphics[scale=.25]{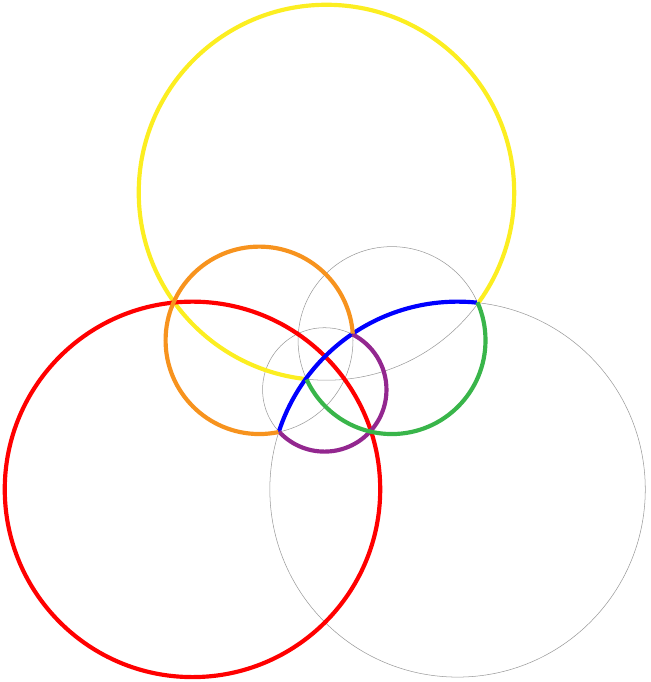}} & $=$ & \raisebox{-1.2cm}{\includegraphics[scale=.25]{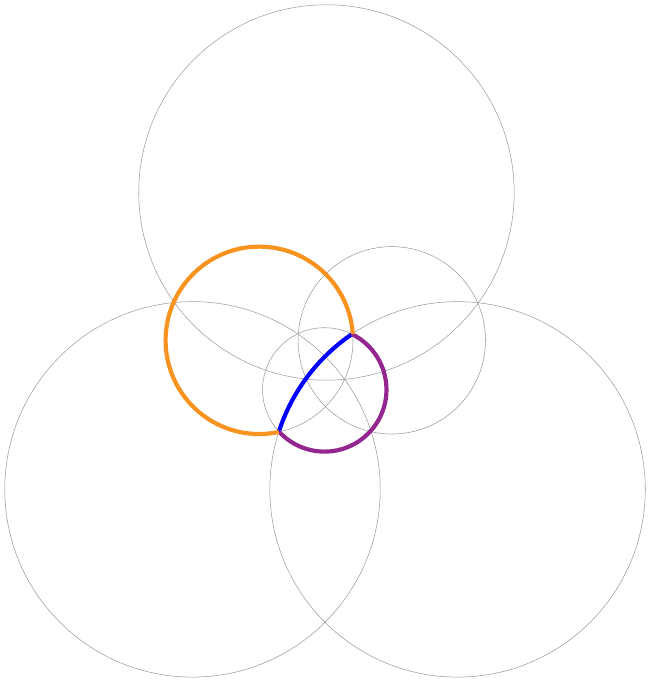}} & $\cup$ & \raisebox{-1.2cm}{\includegraphics[scale=.25]{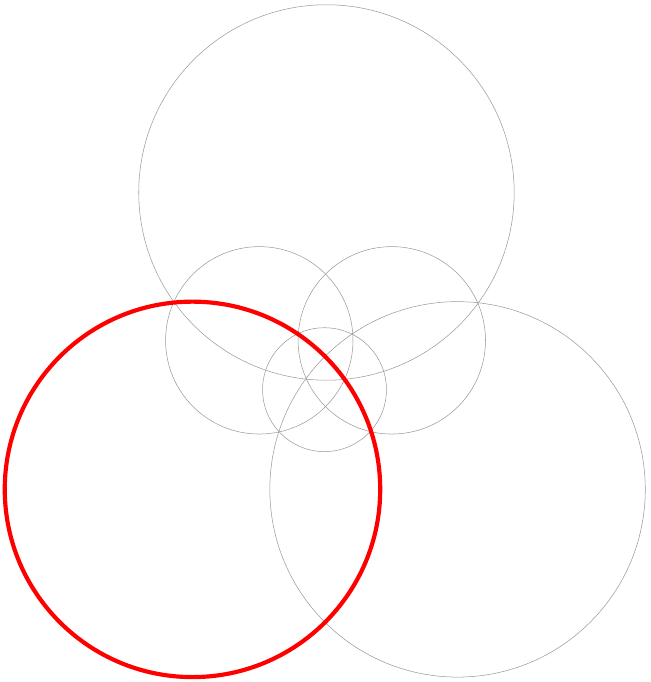}} & $\cup$ & \raisebox{-1.2cm}{\includegraphics[scale=.25]{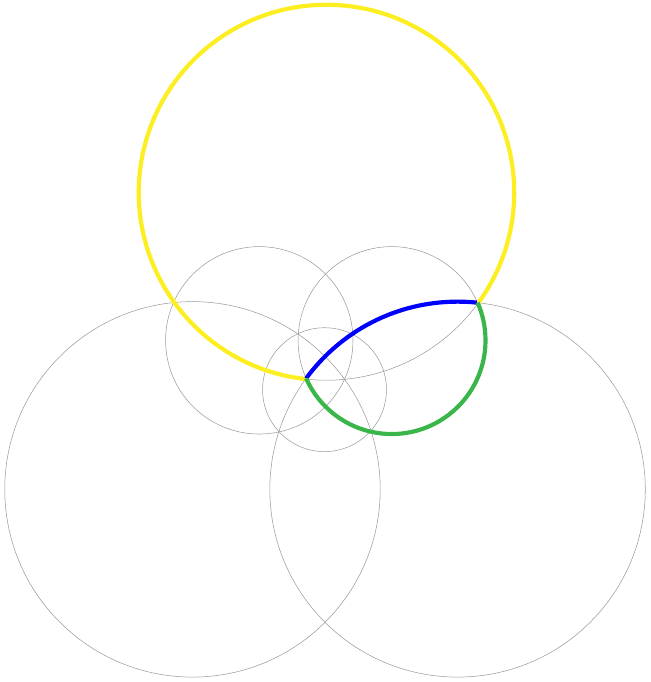}} \\[1.5cm]
		\raisebox{-.6cm}{\includegraphics[scale=.6]{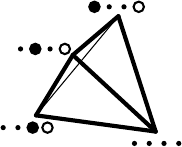}} & $+$ & \raisebox{-.6cm}{\includegraphics[scale=.6]{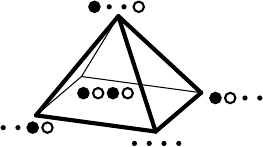}} & $=$ & \hspace{-.2cm}\raisebox{-1.8cm}{\includegraphics[scale=.6]{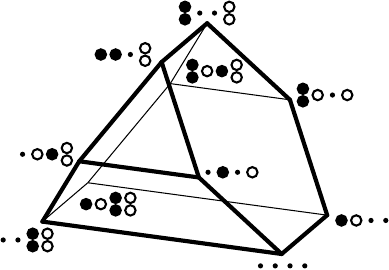}}\hspace{-.2cm} & $=$ & \raisebox{-.6cm}{\includegraphics[scale=.6]{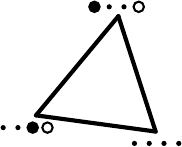}} & $+$ & \raisebox{-.6cm}{\includegraphics[scale=.6]{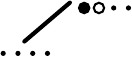}} & $+$ & \raisebox{-.6cm}{\includegraphics[scale=.6]{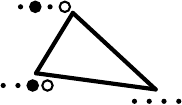}} \\[-1cm]
		\raisebox{-.25cm}{\includegraphics[scale=.6]{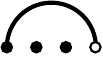}} & & \raisebox{-.25cm}{\includegraphics[scale=.6]{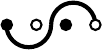}} & & & & \raisebox{-.25cm}{\includegraphics[scale=.6]{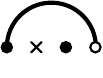}} & & \raisebox{-.25cm}{\includegraphics[scale=.6]{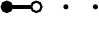}} & & \raisebox{-.25cm}{\includegraphics[scale=.6]{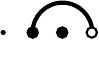}} \\[.5cm]
		\end{tabular}
	}
	\caption{An example of the Minkowski identity of \cref{thm:inductiveMinkowskiSum}.}
	\label{fig:signedFormula}
\end{figure}

Note that \cref{thm:inductiveMinkowskiSum} provides an inductive definition of shard polytopes as a Minkowski sum and difference of pseudoshard polytopes of up arcs, which are simplices as shown in~\cref{exm:LodayAssoMinkowskiSum}. As it turns out, this recovers a formula first found by F.~Ardila, C.~Benedetti and J.~Doker in the context of matroid polytopes~\cite{ArdilaBenedettiDoker}. We will revisit this formula in \cref{subsubsec:FStoSP}.

Before that, we use this formula to explain the behavior of shard polytopes reflected in Figure~\ref{fig:weirdPermutahedron}\,(right). It shows that, for $n=4$, the sum of all shard polytopes of the minimal arcs in the forcing poset (those with $a=1$ and $b=n$) is a permutahedron. The following statement shows that this is also the case for any~$n$.

\begin{corollary}
\label{coro:minimalPermutahedron}
The Minkowski sum $\sum_\arc \shardPolytope$ over all minimal arcs~$\arc$ in the forcing poset is a zonotope combinatorially equivalent to the permutahedron.
More precisely,
\begin{align*}
\sum_{A\subseteq{]1,n[}} \shardPolytope[1, n, A, {]1,n[} \ssm A] = \sum_{1 \le i < j \le n} 2^{\max(i-2, 0)} 2^{\max(n-j-1, 0)} [\b{0}, \b{e}_i-\b{e}_j].
\end{align*}
\end{corollary}

\begin{proof}
The proof is by induction on the dimension, using the pseudoshards of \cref{rem:pseudoarcs}. 
Note that, if $|S|>2$, then we can use \cref{thm:inductiveMinkowskiSum} to rewrite the sum of a pair of shard polytopes whose arcs coincide everywhere except for a point as a sum of smaller dimensional shard polytopes. 

Indeed, let $a \eqdef \min S$, $b \eqdef \max S$, and $x \eqdef \min \left( S \ssm\{a,b\} \right)$. Then 
\begin{align*}
\sum_{A \subseteq \left( S \ssm\{a,b\} \right)} \shardPolytope[a, b, A,  S \ssm (A \cup \{a,b\})] =
&   \sum_{A \subseteq \left( S \ssm\{a,b,x\} \right)} \shardPolytope[a, b, A,  S \ssm (A \cup \{a,b,x\})] \\
& + \sum_{A \subseteq \left( S \ssm\{a,b,x\} \right) } \shardPolytope[x, b, A,  S \ssm (A \cup \{a,b,x\})]\\
& + 2^{|S|-3} \shardPolytope[a, x, \varnothing, \varnothing].
\end{align*}
Now, by induction, each of these polytopes is itself a Minkowski sum of segments of the form $\shardPolytope[i, j, \varnothing, \varnothing] = [\b{0}, \b{e}_i-\b{e}_j]$, and for $S=[n]$ we recover a zonotope whose normal fan is the braid~fan~$\Fan_n$.

More precisely, applying this recursive formula to the shards corresponding to minimal arcs in the forcing order (those with $a=1$ and $b=n$), we get
\begin{align*}
\sum_{A \subseteq {]1,n[}} \shardPolytope[1, n, A, {]1,n[} \ssm A]
= & \sum_{1 \le i < j \le n} 2^{\max(i-2, 0)} 2^{\max(n-j-1, 0)} \shardPolytope[i, j, \varnothing, \varnothing]\\
= & \sum_{1 \le i < j \le n} 2^{\max(i-2, 0)} 2^{\max(n-j-1, 0)} [\b{0}, \b{e}_i-\b{e}_j].
\qedhere
\end{align*}
\end{proof}

\begin{corollary}
\label{coro:standardPermutahedronNotMinkowskiSumShardPolytopes}
For $n \ge 4$, the standard permutahedron~$\Perm$ is not a Minkowski sum of dilated shard polytopes.
\end{corollary}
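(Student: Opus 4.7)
The plan is to derive a contradiction by equating edge lengths at a carefully chosen wall of the braid fan. Suppose that $\Perm[n] = \b{t} + \sum_{\arc \in \arcs_n} c_\arc \, \shardPolytope[\arc]$ for some vector $\b{t}$ and nonnegative coefficients $c_\arc$. By \cref{prop:elemPropShardPolytope}\,\eqref{it:edgesShardPolytope}, every edge of every shard polytope is a translate of some segment $[\b{0}, \b{e}_i - \b{e}_j]$, hence has length $\sqrt{2}$; every edge of $\Perm[n]$ also has length $\sqrt{2}$. Equating the edges of the Minkowski sum and of $\Perm[n]$ dual to any wall $W$ of the braid fan thus yields
\[
1 = \sum_{\arc \,:\, \shardPolytope[\arc] \text{ has an edge at } W} c_\arc.
\]

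I would first pin down the coefficients $c_{\arc_0}$ of all arcs $\arc_0 = (1, n, A_0, B_0)$ with $A_0 \sqcup B_0 = {]1, n[}$. The forcing definition directly shows that $\arc_0$ only forces itself (since any $\arc$ with $\arc_0$ forcing $\arc$ must satisfy $a = 1$, $b = n$ and $A \supseteq A_0$, $B \supseteq B_0$, leaving $\arc = \arc_0$). Hence by \cref{prop:shardPolytopeFan}, the shard $\shard(\arc_0)$ is a wall of $\shardPolytope[\arc]$ only for $\arc = \arc_0$. Applying the edge equation to any wall in $\shard(\arc_0)$ therefore forces $c_{\arc_0} = 1$ for all $2^{n-2}$ such arcs.

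The crucial step is to apply the edge equation to the wall $W$ between $\polytope{C}(12\dots n)$ and $\polytope{C}(21 3\dots n)$, i.e., at a generic point $\b{w}$ with $w_1 = w_2 < w_3 < \dots < w_n$. For any arc $\arc = (a, b, A, B)$ and $\arc$-alternating matching $M = \{a_1, b_1, \dots, a_k, b_k\}$, one computes $\dotprod{\b{w}}{\chi(M)} = \sum_i (w_{a_i} - w_{b_i})$ as a sum of nonpositive terms (since $a_i < b_i$ and $\b{w}$ is non-decreasingly sorted), each of them strictly negative unless $(a_i, b_i) = (1, 2)$. Hence the maximum over matchings is attained precisely at $M = \varnothing$ and, when valid, at $M = \{1, 2\}$; the latter is an $\arc$-alternating matching exactly when $a = 1$ and $2 \in B \cup \{b\}$. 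In that case, $\b{0}$ and $\b{e}_1 - \b{e}_2$ are two distinct vertices of $\shardPolytope[\arc]$ whose matchings differ by exactly two elements, and hence are joined by an edge by \cref{prop:elemPropShardPolytope}\,\eqref{it:edgesShardPolytope}, so $\shardPolytope[\arc]$ has an edge at $W$.

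Among these contributing arcs, those with $b = n$ are exactly the $2^{n-3}$ arcs of the form $(1, n, A, B)$ with $2 \in B$ (the remaining points $3, \dots, n-1$ being distributed freely between $A$ and $B$), each of which has coefficient $1$ by the previous step. Plugging into the edge equation at $W$ gives
\[
1 \;=\; \sum_{\arc \text{ contributing at } W} c_\arc \;\ge\; \sum_{\substack{A \sqcup B = {]1, n[} \\ 2 \in B}} c_{(1, n, A, B)} \;=\; 2^{n-3},
\]
which is impossible whenever $n \ge 4$. The main obstacle in this plan is the matching-theoretic verification that the arcs contributing an edge at $W$ are precisely those with $a = 1$ and $2 \in B \cup \{b\}$; once this is established, the count and the contradiction follow immediately.
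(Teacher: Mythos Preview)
Your proof is correct and follows essentially the same strategy as the paper's: first pin down the coefficients of the forcing-minimal arcs $(1,n,A_0,B_0)$ via the edge-length equation at their own shards, then derive a contradiction from the edge-length equation at another wall of the braid fan. The only difference is in the second step: the paper invokes \cref{prop:minimalPermutahedron} (obtained from the Minkowski identity of \cref{thm:inductiveMinkowskiSum}) to express the sum of minimal-arc shard polytopes as an explicit zonotope and then reads off an edge of length at least~$2$, whereas you bypass this by directly determining, via the matching-maximization argument, exactly which shard polytopes contribute an edge at the wall between $\polytope{C}(12\cdots n)$ and $\polytope{C}(213\cdots n)$ and counting the $2^{n-3}$ minimal ones among them. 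Your route is slightly more elementary, as it does not require the auxiliary zonotope formula.
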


\begin{proof}
Note that for each minimal arc $\arc$ in the forcing order, $\shardPolytope$ is the only shard polytope that contains this shard in its normal fan. Hence, in order to obtain the braid fan, all of them have to be included. 
 
Moreover, they all need to be included with the same weight. Indeed, in a Minkowski sum
\[
\polytope{P} = \sum_{\arc} \coeffSP_\arc \, \shardPolytope
\]
over the forcing minimal arcs, the edge of $\polytope{P}$ corresponding to the minimal shard~$\arc$ has length~$\coeffSP_\arc$. Since in the standard permutahedron all edges have the same length, all the weights $\coeffSP_\arc$ need to coincide in order to get a standard permutahedron.
 
But as seen in \cref{coro:minimalPermutahedron} the sum of all minimal arcs contains edges of length at least~$2$ when~$n \ge 4$.
And adding more shard polytopes can only increase the edge lengths.
\end{proof}

\cref{coro:standardPermutahedronNotMinkowskiSumShardPolytopes} shows that not all realizations of the quotient fans are obtained as Minkowski sums of shard polytopes.
In the next section, we will understand which ones are, and we will show that we can in fact obtain all deformed permutahedra if we allow to consider Minkowski sums and differences of dilated shard polytopes.
In particular, \cref{coro:permutahedronSPcoordinates} will describe the classical permutahedron~$\Perm$ as a signed Minkowski sum of dilated shard polytopes.


\section{Minkowski geometry of shard polytopes}

In this section, we study further properties of shard polytopes with respect to Minkowski sums and differences.
First, we prove in \cref{subsec:typeCone} that shard polytopes are Minkowski indecomposable, so that they correspond to rays of the deformation cone of the permutahedron.
We then show in \cref{subsec:matroidPolytopes} that shard polytopes are matroid polytopes of certain series-parallel graphs.
We then study in \cref{subsec:virtualPolytopes} Minkowski sums and differences of shard polytopes in terms of faces of the standard simplex and \viceversa.
This enables us to prove in \cref{subsec:PSquotientopes} that any quotientope of~\cite{PilaudSantos-quotientopes} is indeed a Minkowski sum of dilated shard polytopes.
Finally, we compute in \cref{subsec:mixedVolumes} the (mixed) volumes of shard polytopes.


\subsection{Type cones and shard polytopes}
\label{subsec:typeCone}

In \cref{coro:MinkowskiSumAssociahedra,coro:MinkowskiSumShardPolytopes} we constructed quotientopes as Minkowski sums of associahedra and of shard polytopes, respectively.
Note that associahedra can be themselves already represented as a Minkowski sum of shard polytopes.
It is natural to ask whether shard polytopes can be further decomposed into even simpler polytopes.
In this section we will see that the answer is negative, showing that shard polytopes are elementary geometric and combinatorial objects.


\subsubsection{Minkowski summands and type cone}

A \defn{weak Minkowski summand} of a polytope~$\polytope{P}$ is a polytope $\polytope{Q}$ such that there are a real $\lambda \ge 0$ and a polytope~$\polytope{R}$ such that ${\polytope{Q} + \polytope{R} = \lambda \polytope{P}}$.
The set of weak Minkowski summands of a polytope~$\polytope{P}$ has the structure of a polyhedral cone~\cite{Meyer, Shephard1963}, which is sometimes called the (closed) \defn{type cone}~\cite{McMullen-typeCone} or the \defn{deformation cone}~\cite{Postnikov} of the polytope~$\polytope{P}$.
It only depends on the normal fan~$\fan$ of~$\polytope{P}$.
When~$\fan$ is rational, it has an associated toric variety~\cite[Ch.~3]{CoxLittleSchenckToric}.
Its embeddings into projective space give rise to the~\defn{nef (numerically effective) cone}, which is equivalent to the type cone of~$\polytope{P}$~\cite[Sect.~6.3]{CoxLittleSchenckToric}.

There are several characterizations of weak Minkowski summands of~$\polytope{P}$, discussed for instance in the appendix of~\cite{PostnikovReinerWilliams}.

\begin{proposition}
The following are equivalent for two polytopes~$\polytope{P}$ and~$\polytope{Q}$:
\begin{enumerate}
\item $\polytope{Q}$ is a weak Minkowski summand of~$\polytope{P}$,
\item the normal fan of~$\polytope{Q}$ coarsens the normal fan of~$\polytope{P}$,
\item $\polytope{Q}$ can be obtained from~$\polytope{P}$ by parallelly translating its facets without moving past vertices,
\item $\polytope{Q}$ can be obtained from~$\polytope{P}$ by moving its vertices in such a way that all edge directions and orientations are preserved.
\end{enumerate}
\end{proposition}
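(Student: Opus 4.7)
The plan is to prove the four conditions equivalent through a short cycle of implications, with support functions serving as the unifying thread. Recall that each polytope~$\polytope{P}$ is determined by its \emph{support function}~$h_\polytope{P}(\ray) \eqdef \max_{\b{p} \in \polytope{P}} \dotprod{\ray}{\b{p}}$, which is convex and piecewise linear with domains of linearity given by the maximal cones of the normal fan of~$\polytope{P}$. The identities $h_{\polytope{Q}+\polytope{R}} = h_\polytope{Q} + h_\polytope{R}$ and $h_{\lambda \polytope{P}} = \lambda h_\polytope{P}$ translate weak Minkowski summand relations into linear relations between support functions, which is the technical engine of the argument.

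For $(1) \implies (2)$, I would use the fact recalled in \cref{subsec:fansPolytopes} that the normal fan of a Minkowski sum is the common refinement of the normal fans of its summands: if $\polytope{Q} + \polytope{R} = \lambda \polytope{P}$, then the normal fan of~$\lambda \polytope{P}$, which coincides with that of~$\polytope{P}$, refines that of~$\polytope{Q}$. For $(2) \implies (3)$, assumption~$(2)$ ensures that every facet normal of~$\polytope{Q}$ is a ray of the normal fan of~$\polytope{P}$; hence $\polytope{Q}$ arises from a sufficiently large dilation~$\lambda \polytope{P}$ by translating its facets inwards parallelly, the coarsening condition guaranteeing that no such translation moves past a vertex. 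For $(3) \implies (4)$, I would invoke the standard facet-vertex duality: each wall of the normal fan of~$\polytope{P}$ is spanned by the outer normals of two adjacent facets, with dual edge orthogonal to the wall; translating these two facets parallelly moves the two endpoints of the dual edge along its unchanged direction, preserving both edge direction and orientation. For $(4) \implies (1)$, I would set $\polytope{R} \eqdef \lambda \polytope{P} \ominus \polytope{Q}$ for~$\lambda$ large enough that this Minkowski difference is a genuine polytope, and check that $\polytope{Q} + \polytope{R} = \lambda \polytope{P}$.

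The main obstacle lies in the last step $(4) \implies (1)$, where one must verify that $\polytope{R} \eqdef \lambda \polytope{P} \ominus \polytope{Q}$ is indeed a polytope recovering $\lambda \polytope{P}$ upon adding back~$\polytope{Q}$. Translated through support functions, this amounts to showing that $\lambda h_\polytope{P} - h_\polytope{Q}$ is convex (hence a support function) for~$\lambda$ large enough, which requires a quantitative wall-crossing estimate: bound from below the strict convexity jump of $h_\polytope{P}$ across each wall of its normal fan by a positive constant, bound from above the possible concavity jump of $h_\polytope{Q}$ across the same wall, and choose $\lambda$ dominating the ratio. Finiteness of the number of walls guarantees the existence of such a~$\lambda$, and the full bookkeeping of this classical folklore is carried out in the appendix of~\cite{PostnikovReinerWilliams}.
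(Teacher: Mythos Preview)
The paper does not actually prove this proposition: it is stated as a known characterization and the reader is referred to the appendix of~\cite{PostnikovReinerWilliams} for the details. Your outline of the cycle $(1)\Rightarrow(2)\Rightarrow(3)\Rightarrow(4)\Rightarrow(1)$ via support functions is a reasonable sketch of that classical argument, and you yourself defer the delicate step to the same reference, so there is nothing to compare.
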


Let $\b{G}$ be the $N \times n$ matrix whose rows are the outer normal vectors of the polytope~$\polytope{P} \subset \R^n$.
Then the rows of~$\b{G}$ also contain the outer normal vectors of the facets of any of its weak Minkowski summands, which are all of the form
\[
\polytope{P}_{\b{h}} \eqdef \set{\b{x} \in \R^n}{\b{G} \b{x} \le \b{h}}
\]
for some height vector~$\b{h} \in \R^N$.
The set of height vectors~$\b{h} \in \R^N$ for which~$\polytope{P}_{\b{h}}$ is a weak Minkowski summand of~$\polytope{P}$ is a closed polyhedral cone~\cite{McMullen-typeCone} (see for example~\cite{PadrolPaluPilaudPlamondon} for details on its description).
Its lineality subspace, which is induced by the translations of~$\R^n$, is the image of the matrix~$\b{G}$.
We can get rid of the lineality space by considering the projection onto the left kernel of~$\b{G}$.
We thus obtain a pointed polyhedral cone of dimension~$N-n$, the \defn{type cone}~$\ctypeCone(\fan)$ of the normal fan~$\fan$ of~$\polytope{P}$.
(Note that, in contrast to~\cite{PadrolPaluPilaudPlamondon}, we call the type cone the projection onto the kernel of~$\b{G}$.) 

An important property is that Minkowski sums of weak Minkowski summands translate to positive combinations in the type cone.
Thus, the rays of the type cone~$\ctypeCone(\fan)$ represent indecomposable Minkowski summands of~$\polytope{P}$ up to translation.
A polytope~$\polytope{Q} \subseteq \R^n$ is called \defn{indecomposable} if all its weak Minkowski summands are of the form $\lambda \polytope{Q} + \b{t}$ for some real~$\lambda \ge 0$ and~$\b{t} \in \R^n$.
We will use the following simple certificate of indecomposability which is a special case of an indecomposability criterion by P.~McMullen in~\cite{McMullen1987}.
We reproduce his proof adapted to our~special~case.

\begin{theorem}[{{\cite[Thm.~2]{McMullen1987}}}]
\label{thm:indecomposabilityCriterion}
If a polytope~$\polytope{P}$ has an indecomposable face $\polytope{F}$ such that every facet of $\polytope{P}$ shares at least a vertex with~$\polytope{F}$, then $\polytope{P}$ is indecomposable.
\end{theorem}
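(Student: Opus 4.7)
The plan is to deduce the structure of an arbitrary weak Minkowski summand $\polytope{Q}$ of $\polytope{P}$ from its behavior on the distinguished face $\polytope{F}$, and then propagate this structure to all of $\polytope{Q}$ using the covering condition on facets. First I would rely on the standard principle that if $\polytope{Q}$ is a weak Minkowski summand of $\polytope{P}$ and $\polytope{F}$ is any face of $\polytope{P}$, then the face $\polytope{F}_\polytope{Q}$ of $\polytope{Q}$ obtained by maximizing any functional in the relative interior of the normal cone of $\polytope{F}$ is itself a weak Minkowski summand of $\polytope{F}$. This is immediate from the normal-fan characterization of weak summands recalled just before the theorem: the normal fan of $\polytope{F}_\polytope{Q}$, viewed inside the quotient by the linear span of the normal cone of $\polytope{F}$, coarsens the normal fan of $\polytope{F}$. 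Applying this to the distinguished indecomposable face yields $\polytope{F}_\polytope{Q} = \lambda \polytope{F} + \b{t}$ for some $\lambda \ge 0$ and $\b{t} \in \R^n$; after translating $\polytope{Q}$ by $-\b{t}$ I may assume $\polytope{F}_\polytope{Q} = \lambda \polytope{F}$.

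Next, I would propagate this rigidity to every facet of $\polytope{P}$. The identification $\polytope{F}_\polytope{Q} = \lambda \polytope{F}$ and the coarsening of normal fans imply that, for each vertex $v$ of $\polytope{F}$, the point $\lambda v$ is a vertex of $\polytope{Q}$ whose normal cone contains the normal cone of $v$ in $\polytope{P}$. Writing
\[
\polytope{P} = \set{\b{x} \in \R^n}{\dotprod{n_\polytope{G}}{\b{x}} \le h_\polytope{G} \text{ for every facet } \polytope{G} \text{ of } \polytope{P}},
\]
the hypothesis provides, for each facet $\polytope{G}$ with outer normal $n_\polytope{G}$, a vertex $v \in \polytope{F} \cap \polytope{G}$. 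Since $v \in \polytope{G}$, the normal $n_\polytope{G}$ lies in the normal cone of $v$ in $\polytope{P}$, hence in the normal cone of $\lambda v$ in $\polytope{Q}$. Consequently the support function of $\polytope{Q}$ satisfies $h_\polytope{Q}(n_\polytope{G}) = \dotprod{n_\polytope{G}}{\lambda v} = \lambda h_\polytope{G} = h_{\lambda \polytope{P}}(n_\polytope{G})$ for every facet $\polytope{G}$ of $\polytope{P}$.

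To conclude I would invoke uniqueness of support functions: both $h_\polytope{Q}$ and $h_{\lambda \polytope{P}}$ are piecewise linear with respect to the normal fan of $\polytope{P}$ (since $\polytope{Q}$ and $\lambda \polytope{P}$ are both weak Minkowski summands of $\polytope{P}$), hence are determined by their values on the rays of this fan, namely the facet normals $n_\polytope{G}$. Their agreement on these rays therefore forces $\polytope{Q} = \lambda \polytope{P}$, so $\polytope{Q}$ is a trivial summand of $\polytope{P}$ and $\polytope{P}$ is indecomposable. The argument is quite short once one has the face-of-a-summand observation in hand; the only mild subtlety is tracking that the vertex of $\polytope{Q}$ associated with $v \in \polytope{F}$ is precisely $\lambda v$, which follows from $\polytope{F}_\polytope{Q} = \lambda \polytope{F}$ together with the bijection between vertices of $\polytope{F}_\polytope{Q}$ and vertices of $\polytope{F}$ given by the coarsening of normal fans.
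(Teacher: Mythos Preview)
Your proof is correct and follows essentially the same approach as the paper's: identify the face of the summand $\polytope{Q}$ corresponding to $\polytope{F}$ as $\lambda\polytope{F}+\b{t}$ by indecomposability, then use the covering hypothesis to pin down the support value of $\polytope{Q}$ at every facet normal of $\polytope{P}$ as $\lambda h_\polytope{G}+\dotprod{n_\polytope{G}}{\b{t}}$, forcing $\polytope{Q}=\lambda\polytope{P}+\b{t}$. The only cosmetic differences are that you absorb the translation at the outset and phrase the conclusion via agreement of piecewise-linear support functions on the rays of the normal fan, whereas the paper works directly with the right-hand sides $g_i$ of the facet inequalities.
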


\begin{proof}
Let $\b{n}_1, \dots, \b{n}_m$ be the outer normal vectors to the facets of~$\polytope{P}$, whose $H$-representation is
\[
\polytope{P} \eqdef \set{\b{x} \in \R^n}{ \dotprod{\b{n}_i}{\b{x}} \le h_i \text{ for all } i \in [m]}.
\]
Then any of its Minkowski summands~$\polytope{Q}$ is of the form
\[
\polytope{Q} \eqdef \set{\b{x} \in \R^n}{ \dotprod{\b{n}_i}{\b{x}} \le g_i \text{ for all } i \in [m]},
\]
where we take the $g_i$ so that all the inequalities are tight.

Suppose that $\b{p}_1, \dots, \b{p}_k$ are the vertices of~$\polytope{F}$, and let $\polytope{G}$ and $\b{q}_1, \dots, \b{q}_k$ be the faces of $\polytope{Q}$ maximizing a normal vector of $\polytope{F}$ and $\b{p}_1, \dots, \b{p}_k$, respectively.
Then $\polytope{G}$ is a Minkowski summand~of~$\polytope{F}$ and hence it is of the form $\lambda \polytope{F} + \b{t}$ for some $\lambda \ge 0$ and $\b{t} \in \R^n$.
Therefore, $\b{q}_j = \lambda \b{p}_j +\b{t}$ for all~$j \in [k]$. 

If the facet of $\polytope{P}$ maximized by $\b{n}_i$ contains $\b{p}_j$, then the facet of $\polytope{Q}$ maximized by $\b{n}_i$ contains~$\b{q}_j$, and therefore 
\[
g_i = \dotprod{\b{n}_i}{\smash{\b{q}_j}} = \lambda h_i + \dotprod{\b{n_i}}{\b{t}}.
\] 
We conclude that $\polytope{Q} = \lambda \polytope{P} + \b{t}$.  
\end{proof}


\subsubsection{Shard polytopes are indecomposable deformed permutahedra}

The weak Minkowski summands of the permutahedron form a particularly interesting family, studied by A.~Postnikov under the name \defn{generalized permutahedra}~\cite{Postnikov, PostnikovReinerWilliams}.
Here, we prefer the name ``deformed permutahedra'' rather than ``generalized permutahedra'' as there are many generalizations of permutahedra.
They have a rich combinatorial, algebraic and geometric structure, and contain numerous polytopes that arise naturally in many different areas.
Note that quotientopes and shard polytopes are deformed permutahedra since their normal fan coarsens the braid fan.
The type cone of the permutahedron is usually described by the submodular inequalities~\cite{Postnikov,AguiarArdila}.
With the standard presentation in a codimension one subspace, we get the following classical description (see~\cite[Thm.~2.1]{ArdilaDoker} and~\cite[Thm.~12.3]{AguiarArdila} and the references therein for historical background).

\begin{proposition}
\label{prop:deformedPermutahedraZ}
A polytope is a deformed permutahedron if and only if it is of the form
\[
\bigset{\b{x} \in \R^n}{ \dotprod{\one}{\b{x}} = \b{h}([n]) \text{ and } \dotprod{\one_R}{\b{x}} \le \b{h}(R) \text{ for all } \varnothing \ne R \subsetneq [n]}
\]
for a submodular boolean function~$\b{h} : 2^{[n]} \to \R$, that is, $\b{h}(\varnothing) = 0$ and for all~$R, S \in 2^{[n]}$, 
\[
\b{h}(R) + \b{h}(S) \ge \b{h}(R \cup S) + \b{h}(R \cap S).
\]
The values~$\b{h}(R)$ are called the \defn{heights} of the deformed permutahedron.
\end{proposition}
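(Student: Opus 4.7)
The plan is to establish the two implications separately via the classical greedy / polymatroid construction. For the direction $(\Leftarrow)$, assume $\b{h}$ is submodular with $\b{h}(\varnothing) = 0$. For each permutation $\sigma \in \fS_n$, I will introduce the \emph{greedy point}
\[
\b{p}_\sigma(\sigma_i) \eqdef \b{h}(\{\sigma_i, \sigma_{i+1}, \dots, \sigma_n\}) - \b{h}(\{\sigma_{i+1}, \dots, \sigma_n\}),
\]
and show two things: first, $\b{p}_\sigma \in \polytope{P}_{\b{h}}$; second, $\b{p}_\sigma$ maximizes $\dotprod{\b{v}}{\cdot}$ on $\polytope{P}_{\b{h}}$ for every $\b{v}$ in the interior of the braid chamber $\polytope{C}(\sigma)$. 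The first claim follows by telescoping: $\dotprod{\one}{\b{p}_\sigma} = \b{h}([n])$ is immediate, and for $R \subseteq [n]$, after listing $R = \{r_1, \dots, r_k\}$ in the order they appear in $\sigma$, applying submodularity at each index $i$ with $A = \{r_i, r_{i+1}, \dots, r_k\}$ and $B = \{\sigma_{\sigma^{-1}(r_i)+1}, \dots, \sigma_n\}$ reduces $\dotprod{\one_R}{\b{p}_\sigma}$ to a telescoping sum bounded by $\b{h}(R)$. The second claim is LP duality: the dual program admits an explicit feasible solution with multipliers $\b{v}_{\sigma_j} - \b{v}_{\sigma_{j-1}} \ge 0$ (for $\b{v} \in \polytope{C}(\sigma)$) on the nested sets $\{\sigma_j, \dots, \sigma_n\}$, whose dual objective value matches $\dotprod{\b{v}}{\b{p}_\sigma}$. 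Together these show that $\polytope{C}(\sigma)$ lies in the normal cone of $\b{p}_\sigma$, so the normal fan of $\polytope{P}_{\b{h}}$ coarsens $\Fan_n$.

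For the direction $(\Rightarrow)$, assume $\polytope{P}$ is a deformed permutahedron and define $\b{h}(R) \eqdef \max_{\b{x} \in \polytope{P}} \dotprod{\one_R}{\b{x}}$. The $H$-representation of $\polytope{P}$ in the stated form follows from the fact that, since its normal fan coarsens the braid fan, each facet normal of $\polytope{P}$ is, modulo $\one$, of the form $\one_R$ (the rays of $\Fan_n$ are precisely the $\polytope{C}([n] \ssm R)$). To prove submodularity, fix $R, S \subseteq [n]$ and choose any $\sigma \in \fS_n$ that lists the elements of $[n] \ssm (R \cup S)$ first, then the elements of $R \symdif S$, then the elements of $R \cap S$. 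The closure of the chamber $\polytope{C}(\sigma)$ contains both rays $\polytope{C}([n] \ssm (R \cup S))$ and $\polytope{C}([n] \ssm (R \cap S))$, which (after projection to $\hyp$) support the maximization directions $\one_{R \cup S}$ and $\one_{R \cap S}$ respectively. By the coarsening hypothesis, the whole chamber $\polytope{C}(\sigma)$ sits inside a single normal cone of $\polytope{P}$, so there is a common maximizer $\b{p} \in \polytope{P}$ of both functionals $\dotprod{\one_{R \cup S}}{\cdot}$ and $\dotprod{\one_{R \cap S}}{\cdot}$. Using the key identity $\one_{R \cup S} + \one_{R \cap S} = \one_R + \one_S$, this yields
\[
\b{h}(R \cup S) + \b{h}(R \cap S) = \dotprod{\one_R + \one_S}{\b{p}} \le \b{h}(R) + \b{h}(S),
\]
which is submodularity.

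The main obstacle is the sign bookkeeping in the coarsening argument for the second direction: verifying that, with the paper's convention $\polytope{C}(\sigma) = \{\b{x}_{\sigma_1} \le \cdots \le \b{x}_{\sigma_n}\}$, the maximization direction $\one_R$ (which favors large coordinates on $R$) indeed corresponds to the ray $\polytope{C}([n] \ssm R)$, and that the two rays associated with $R \cup S$ and $R \cap S$ share a common adjacent chamber. This is conceptually transparent since $[n] \ssm (R \cup S) \subseteq [n] \ssm (R \cap S)$, but it is the one step that does not follow from a general polyhedral principle and requires matching the specific conventions of the text; the rest of both arguments is essentially routine telescoping and LP duality.
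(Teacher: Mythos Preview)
Your proof is correct. Note, however, that the paper does not prove this proposition at all: it is stated as a classical fact with references to \cite[Thm.~2.1]{ArdilaDoker} and \cite[Thm.~12.3]{AguiarArdila}, so there is no ``paper's own proof'' to compare against. What you have written is essentially the standard Edmonds polymatroid/greedy argument that underlies those references, and it goes through cleanly with the paper's conventions.

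A couple of minor remarks on your bookkeeping worries. Your identification of the ray is right: with the paper's convention $\polytope{C}(\sigma) = \{\b{x}_{\sigma_1} \le \cdots \le \b{x}_{\sigma_n}\}$ and ray representative $\ray(R) = |R|\one - n\one_R$, the direction $\one_R$ is (modulo $\one$) a positive multiple of $\ray([n]\ssm R)$, so it is the ray $\polytope{C}([n]\ssm R)$. The rays of $\polytope{C}(\sigma)$ are the $\polytope{C}(\sigma([k]))$, so $\one_T$ lies in the closure of $\polytope{C}(\sigma)$ exactly when $T$ is a suffix $\{\sigma_k,\dots,\sigma_n\}$. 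Since $R\cap S \subseteq R\cup S$, your ordering (complement of $R\cup S$, then $R\symdif S$, then $R\cap S$) makes both $R\cup S$ and $R\cap S$ suffixes simultaneously, so the two rays do share the chamber $\polytope{C}(\sigma)$ as you claim. The telescoping feasibility check and the Abel-summation LP duality step are both standard and correct.
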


\begin{example}
Type cones are high dimensional objects difficult to visualize.
We can, however, see the type cone~$\ctypeCone(\Fan_3)$ of the $2$-dimensional braid fan~$\Fan_3$ by intersecting it with a hyperplane.
The resulting polytope is a triangular bipyramid illustrated in \cref{fig:submodularFunctions}.
We have labeled the six facets of this polytope from~$1$ to~$6$ (the grey labels $3$ and~$6$ correspond to the two hidden facets of the type polytope), and the corresponding six walls of~$\Fan_3$ from~$1$ to~$6$ (bottom right corner).
We have located in the type polytope the shard polytopes of all arcs of~$\arcs_3$ together with different polytopes considered along the paper.
Note that the four shard polytopes are all vertices of the type polytope (see \cref{prop:SPindecomposable}) and form an affine basis of the space (see \cref{prop:shardPolytopeBasis}).

\begin{figure}
	\capstart
	\centerline{\includegraphics[scale=.8]{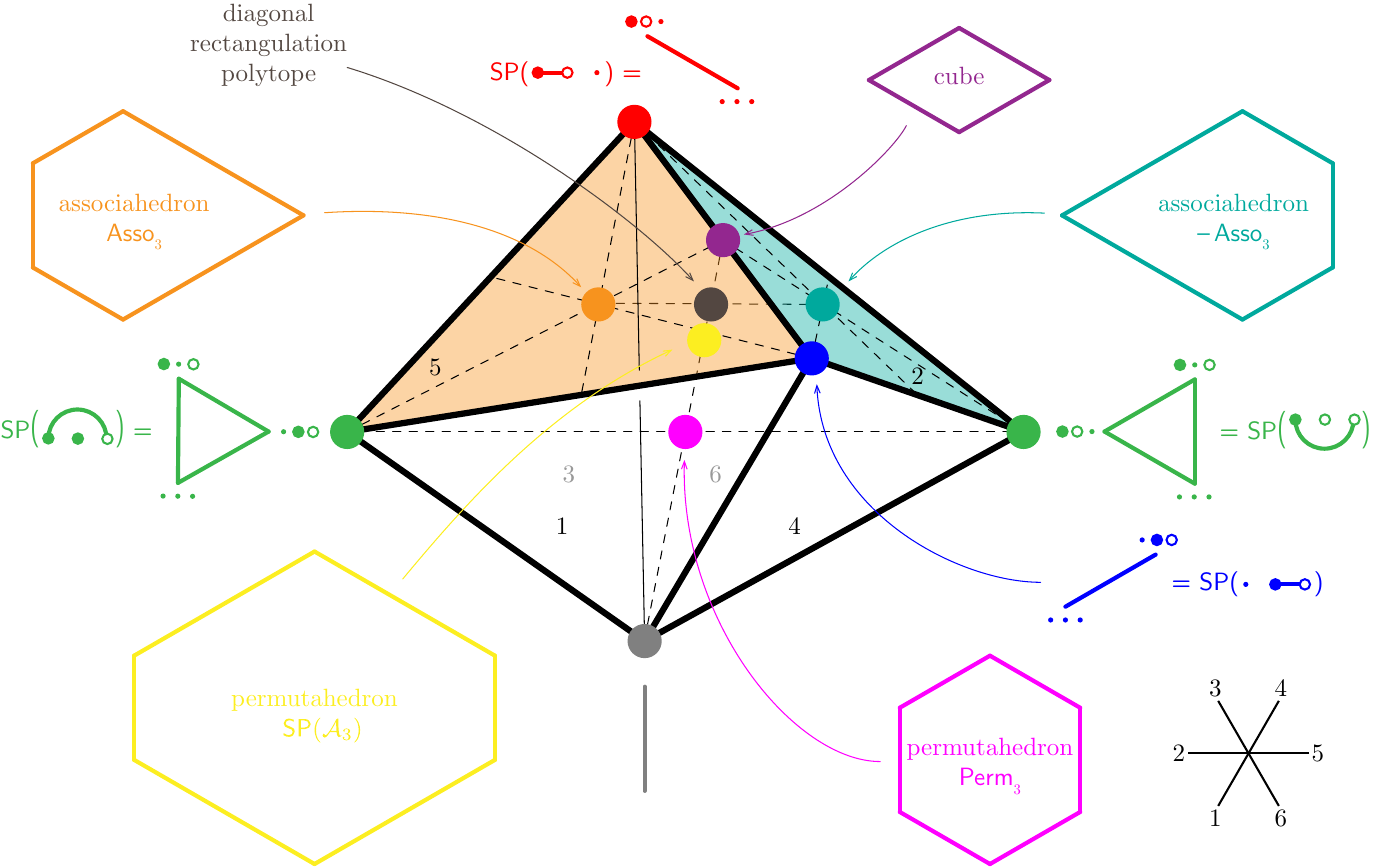}}
	\caption{The type cone of the braid fan~$\Fan_3$.}
	\label{fig:submodularFunctions}
\end{figure}
\end{example}

Indecomposable deformed permutahedra are intriguing objects.
It is very hard to characterize them, a problem first raised by J.~Edmonds in his seminal paper on polymatroids~\cite{Edmonds}.
A remarkable example is given by matroid polytopes of connected matroids~\cite{Nguyen} (see also~\cite[Sect.~7.2.]{StudenyKroupa2016}).
The same question has been extended to Coxeter deformed permutahedra and polymatroids, see \cite[Sect.~8]{ArdilaCastilloEurPostnikov} for a discussion on this topic.
As we will see in Section~\ref{subsec:matroidPolytopes}, shard polytopes are (translations of) matroid polytopes of certain connected matroids, and hence indecomposable, as stated in \cref{prop:main4}.
We provide here a direct proof with P.~McMullen's criterion.

\begin{proposition}
\label{prop:SPindecomposable}
For any arc~$\arc$, the shard polytope~$\shardPolytope$ is indecomposable.
\end{proposition}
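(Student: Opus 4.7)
The plan is to apply P.~McMullen's criterion (\cref{thm:indecomposabilityCriterion}) with the edge $\polytope{F} \eqdef [\b{0}, \b{e}_a - \b{e}_b]$ joining the characteristic vectors of the two extreme $\arc$-alternating matchings $\varnothing$ and $\{a,b\}$. First I would verify that $\polytope{F}$ really is an edge of $\shardPolytope$: both $\varnothing$ and $\{a,b\}$ are $\arc$-alternating matchings, and $|\varnothing \symdif \{a,b\}| = 2$, so by \cref{prop:elemPropShardPolytope}\,\eqref{it:edgesShardPolytope} they are joined by an edge. Being a segment, this edge is clearly indecomposable.

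Next I would check, facet by facet, using the description of the facets of~$\shardPolytope$ given in \cref{prop:shardPolytope} and \cref{prop:elemPropShardPolytope}\,\eqref{it:facetsShardPolytope}, that every facet of $\shardPolytope$ contains at least one of the two endpoints $\b{0}$ or $\b{e}_a - \b{e}_b$:
\begin{itemize}
\item The facets defined by $\b{x}_{a'} = 0$ for $a' \in A$ and by $\b{x}_{b'} = 0$ for $b' \in B$ contain both vertices, since neither $\b{0}$ nor $\b{e}_a - \b{e}_b$ has a nonzero coordinate at a position in ${]a,b[}$.
\item For an $\arc$-fall $f \in [a,b[$, the facet inequality $\sum_{i \le f} \b{x}_i \le 1$ is tight at~$\b{e}_a - \b{e}_b$, since $a \le f < b$ forces $\sum_{i \le f}(\b{e}_a - \b{e}_b)_i = 1$.
\item For an $\arc$-rise $r \in [a,b[$, the facet inequality $\sum_{i \le r} \b{x}_i \ge 0$ is tight at $\b{0}$.
\end{itemize}
Hence every facet of $\shardPolytope$ shares at least a vertex with the edge $\polytope{F}$, and \cref{thm:indecomposabilityCriterion} immediately yields the indecomposability of $\shardPolytope$.

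There is no real obstacle here beyond carefully reading off the facet description: the choice of the edge joining the two ``trivial'' matchings $\varnothing$ and $\{a,b\}$ is dictated by the observation that the $\arc$-fall facets are exactly those that are activated by the full matching while the $\arc$-rise facets are exactly those activated by the empty matching, so the two endpoints together cover all facets. In the degenerate case $b - a = 1$, the arc is basic, $\shardPolytope$ is itself the segment $[\b{0}, \b{e}_a - \b{e}_b]$, and indecomposability is trivial; this edge case can be handled as a preliminary remark before invoking McMullen's criterion in the remaining cases.
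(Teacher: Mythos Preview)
Your proof is correct and follows essentially the same approach as the paper's: both apply McMullen's criterion (\cref{thm:indecomposabilityCriterion}) to the edge joining $\chi(\varnothing)=\b{0}$ and $\chi(\{a,b\})=\b{e}_a-\b{e}_b$, and verify facet-by-facet that the $\b{x}_{a'}\ge 0$ and $\b{x}_{b'}\le 0$ facets contain both endpoints while the $\arc$-fall and $\arc$-rise facets contain $\b{e}_a-\b{e}_b$ and $\b{0}$ respectively. Your version is slightly more detailed (explicitly citing \cref{prop:elemPropShardPolytope}\,\eqref{it:edgesShardPolytope} for the edge and treating the basic-arc case separately), but the argument is the same.
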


\begin{proof}
Observe that
\begin{itemize}
\item the facet of~$\shardPolytope$ defined by the inequality~$\b{x}_{a'} \ge 0$ for some~$a' \in A$ (resp.~by~$\b{x}_{b'} \le 0$ for some~$b' \in B$) contains both vertices~$\chi(\varnothing)$ and~$\chi(\{a,b\})$ of~$\shardPolytope$,
\item the facet of~$\shardPolytope$ defined by the inequality~$\sum_{i \le r} \b{x}_i \ge 0$ for some $\arc$-rise~$r$ (resp.~by the inequality~$\sum_{i \le f} \b{x}_i \le 1$ for some $\arc$-fall~$f$) contains the vertex~$\chi(\varnothing)$ (resp.~$\chi(\{a,b\})$)~of~$\shardPolytope$.
\end{itemize}
Therefore, the edge joining~$\chi(\varnothing)$ to~$\chi(\{a,b\})$ touches all facets of~$\shardPolytope$.
The result thus follows directly from the simple criterion of \cref{thm:indecomposabilityCriterion}.
\end{proof}

Thus, shard polytopes correspond to certain rays of the submodular cone.
However, not all indecomposable deformed permutahedra are shard polytopes. Indeed, shard polytopes are precisely the rays of the submodular cone that belong to a face associated to a Cambrian fan.

\begin{theorem}
\label{thm:shardPolytopesRaysTypeCone}
For any arc~$\arc \in \arcs_n$, the shard polytopes of the arcs forcing~$\arc$ are precisely (representatives of) the rays of the type cone of the $\arc$-Cambrian fan~$\Fan_\arc$.
\end{theorem}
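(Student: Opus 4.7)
The approach has two parts. First, I would verify that each shard polytope $\shardPolytope[\arc']$ with $\arc' \succeq \arc$ is a representative of a ray of $\ctypeCone(\Fan_\arc)$. By \cref{prop:shardPolytopeFan}, the walls of the normal fan of $\shardPolytope[\arc']$ sit inside the union of the shards $\shard(\arc'')$ with $\arc'' \succeq \arc'$. By transitivity of the forcing order, every such $\arc''$ also forces $\arc$, so all these shards belong to $\shards_{\arcs_\arc}$. By \cref{thm:quotientFanShards}, this union of shards is precisely the union of the walls of $\Fan_\arc$, so the normal fan of $\shardPolytope[\arc']$ coarsens $\Fan_\arc$, placing $\shardPolytope[\arc']$ inside $\ctypeCone(\Fan_\arc)$. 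Since $\shardPolytope[\arc']$ is Minkowski indecomposable by \cref{prop:SPindecomposable}, it spans a ray of this type cone.

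Second, I would show that these rays are distinct and exhaust all the rays. Distinctness follows because \cref{prop:shardPolytopeFan} also guarantees that the shard $\shard(\arc')$ itself lies in the union of walls of the normal fan of $\shardPolytope[\arc']$, and it is in fact a forcing-minimal wall. Two shard polytopes on the same ray of the type cone would have the same normal fan up to translation, hence the same set of walls, which forces the two arcs to coincide. To see that no other rays arise, note that \cref{coro:MinkowskiSumShardPolytopes} places $\sum_{\arc' \in \arcs_\arc} \shardPolytope[\arc']$ in the relative interior of $\ctypeCone(\Fan_\arc)$, so the family $\{\shardPolytope[\arc']\}_{\arc' \in \arcs_\arc}$ positively spans the type cone. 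It then suffices to show that this family is linearly independent modulo translations, equivalently that $\dim \ctypeCone(\Fan_\arc) = |\arcs_\arc|$.

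The main obstacle is precisely this last dimension count, which amounts to the simpliciality of the Cambrian type cone. The cleanest route is to invoke the results of \cite{BazierMatteDouvilleMousavandThomasYildirim, PadrolPaluPilaudPlamondon, BrodskyStump, JahnLoweStump}, which establish that the type cone of the $\arc$-associahedron is simplicial, with rays indexed by data (cluster variables of acyclic type $A$, or brick-polytope summands of a suitable sorting network) that are in explicit bijection with $\arcs_\arc$. Alternatively, one could proceed intrinsically by exhibiting, for each $\arc' \in \arcs_\arc$, a linear functional on the space of heights that is strictly positive on $\shardPolytope[\arc']$ and vanishes on $\shardPolytope[\arc'']$ for every other $\arc'' \in \arcs_\arc$; such functionals should come from the wall-crossing inequality of $\Fan_\arc$ along the distinguished forcing-minimal wall $\shard(\arc')$ identified above, and their existence would yield the desired linear independence directly.
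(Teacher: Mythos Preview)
Your proposal is correct and follows essentially the same route as the paper: show that each $\shardPolytope[\arc']$ with $\arc' \succeq \arc$ lies on a ray of $\ctypeCone(\Fan_\arc)$ (via \cref{prop:shardPolytopeFan}/\cref{coro:MinkowskiSumShardPolytopes} together with \cref{prop:SPindecomposable}), and then invoke the simpliciality of the Cambrian type cone from \cite{BazierMatteDouvilleMousavandThomasYildirim, PadrolPaluPilaudPlamondon} to conclude that these rays exhaust the type cone. The paper's exhaustion step is phrased slightly differently---it either counts rays directly from simpliciality or notes that in a simplicial cone any subset of rays spans a face, which must be the whole cone since $\sum_{\arc' \in \arcs_\arc} \shardPolytope[\arc']$ is interior---but this matches your second paragraph in substance.
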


\begin{proof}
Recall that the $\arc$-Cambrian fan~$\Fan_\arc$ is the quotient fan that corresponds to the upper ideal~$\arcs_\arc$ of the arc poset~$(\arcs_n, \prec)$ generated by~$\arc$.
By \cref{coro:MinkowskiSumShardPolytopes}, the $\arc$-Cambrian fan~$\Fan_\arc$ refines the normal fan of the shard polytope~$\shardPolytope[\arc']$ for any arc~$\arc \prec \arc'$.
Since shard polytopes are indecomposable by \cref{prop:SPindecomposable}, this ensures that~$\shardPolytope[\arc']$ is a ray of the type cone~$\ctypeCone(\Fan_\arc)$ for each~$\arc \prec \arc'$.

It turns out that the type cone~$\ctypeCone(\Fan_\arc)$ of the $\arc$-Cambrian fan is simplicial.
This follows from works of~\cite{ArkaniHamedBaiHeYan,BazierMatteDouvilleMousavandThomasYildirim} as observed in~\cite{PadrolPaluPilaudPlamondon} (and actually extends to $\b{g}$-vector fans of any finite type cluster algebra with respect to arbitrary initial seeds, see~\cite{PadrolPaluPilaudPlamondon} and the references therein for Cambrian fans and cluster algebras).
It follows that the type cone~$\ctypeCone(\Fan_\arc)$ has as many rays as the number of rays of the $\arc$-Cambrian fan~$\Fan_\arc$ minus the dimension.
This is precisely the number of shards of~$\Fan_\arc$.

Alternatively, to see that shard polytopes give representatives of all rays of $\ctypeCone(\Fan_\arc)$, note that by the simpliciality of the type cone, they must generate one of its faces.
But $\shardPolytope[\arcs_\arc] = \sum_{\arc \prec \arc'} \shardPolytope[\arc']$ is a Minkowski sum with positive coefficients whose normal fan is the $\arc$-Cambrian fan, and hence represents a point in the relative interior of~$\ctypeCone(\Fan_\arc)$.
Therefore the face of the type cone~$\ctypeCone(\Fan_\arc)$ spanned by the shard polytopes must be the whole type cone.
\end{proof}

We get the following result as a direct consequence of the simpliciality of~$\ctypeCone(\Fan_\arc)$ and the description of its rays~\cite{PadrolPaluPilaudPlamondon}.

\begin{corollary}
\label{coro:uniqueDecompositionCambrian}
Any polytope whose normal fan is the $\arc$-Cambrian fan~$\Fan_\arc$ has a unique decomposition (up to translation) as a Minkowski sum of dilated shard polytopes~$\shardPolytope[\arc']$ for~$\arc'$ forcing~$\arc$.
\end{corollary}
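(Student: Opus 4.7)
The plan is to deduce this corollary directly from the simpliciality of the type cone $\ctypeCone(\Fan_\arc)$ established in the proof of Theorem~\ref{thm:shardPolytopesRaysTypeCone}. Recall that $\ctypeCone(\Fan_\arc)$ parametrizes, modulo translation, all polytopes whose normal fan coarsens $\Fan_\arc$, and that Minkowski addition of polytopes corresponds to addition of the associated points in $\ctypeCone(\Fan_\arc)$.

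First, I would observe that any polytope $\polytope{P}$ whose normal fan is exactly $\Fan_\arc$ (as opposed to a strict coarsening of it) corresponds to a point in the \emph{relative interior} of $\ctypeCone(\Fan_\arc)$: any point on a proper face of the type cone represents a polytope whose normal fan strictly coarsens $\Fan_\arc$, which contradicts the hypothesis.

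Next, I would invoke the two crucial facts packaged in Theorem~\ref{thm:shardPolytopesRaysTypeCone}: the type cone $\ctypeCone(\Fan_\arc)$ is simplicial, and its rays are precisely represented by the shard polytopes $\shardPolytope[\arc']$ for $\arc' \in \arcs_\arc$. In a simplicial cone, every point of the relative interior admits a unique expression as a strictly positive linear combination of representatives of its rays. Translating this statement back via the polytope/type-cone dictionary yields a unique decomposition $\polytope{P} = \sum_{\arc' \in \arcs_\arc} \coeffSP_{\arc'} \, \shardPolytope[\arc']$, up to translation, with coefficients $\coeffSP_{\arc'} > 0$ for each $\arc' \in \arcs_\arc$, which is the content of the corollary.

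No step presents a real obstacle once Theorem~\ref{thm:shardPolytopesRaysTypeCone} is in hand; this corollary is essentially a formal consequence of it. The only point deserving a sentence of care is the standard identification of deformed polytopes (modulo translation) with points of the type cone, under which Minkowski addition corresponds to coordinate-wise addition and the relative interior of the cone corresponds exactly to the polytopes whose normal fan equals $\Fan_\arc$.
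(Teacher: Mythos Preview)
Your proposal is correct and follows essentially the same approach as the paper, which simply states the corollary as a direct consequence of the simpliciality of $\ctypeCone(\Fan_\arc)$ and the description of its rays established in Theorem~\ref{thm:shardPolytopesRaysTypeCone}. You have spelled out the standard type-cone dictionary (Minkowski sums correspond to addition, polytopes with normal fan exactly $\Fan_\arc$ correspond to interior points, simpliciality gives unique nonnegative combinations of ray representatives) that the paper leaves implicit.
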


\begin{remark}
\label{rem:shardPolytopesAlreadyExisted}
\cref{thm:shardPolytopesRaysTypeCone} connects shard polytopes to other interpretations of the rays of the type cone of the Cambrian fans:
\begin{itemize}
\item according to~\cite{BazierMatteDouvilleMousavandThomasYildirim}, shard polytopes are Newton polytopes of $F$-polynomials of cluster variables of acyclic type~$A$ cluster algebras~\cite{FominZelevinsky-ClusterAlgebrasI, FominZelevinsky-ClusterAlgebrasII, FominZelevinsky-ClusterAlgebrasIV},
\item according to~\cite{BrodskyStump,JahnLoweStump}, shard polytopes are brick polytope summands of certain sorting networks~\cite{PilaudSantos-brickPolytope, PilaudStump-brickPolytope}.
\end{itemize}
We skip all precise definitions here as these interpretations are not needed in the rest of this paper.
We are not aware that our vertex and facet descriptions from \cref{prop:shardPolytope} have been observed earlier for these polytopes.
\end{remark}


\subsection{Matroid polytopes and shard polytopes}
\label{subsec:matroidPolytopes}

In this section, we show that any shard polytope is the matroid polytope of a series-parallel graph as stated in \cref{prop:main5}.
This will have strong consequences to decompose shard polytopes as Minkowski sums and differences of faces of the standard simplex in \cref{subsec:virtualPolytopes} and to compute the (mixed) volumes of shard polytopes in \cref{subsec:mixedVolumes}.


\subsubsection{Shard polytopes are matroid polytopes}

Let $\mat$ be a matroid on the ground set $[n]$ (see~\cite{Oxley} for an introduction to matroid theory).
Its \defn{matroid polytope}~$\polytope{P}_{\mat}\subset\R^n$ is the convex hull of the characteristic vectors of its bases,
\[
\polytope{P}_{\mat} \eqdef \conv \Bigset{\sum_{i \in B} \b{e}_i}{B \text{ basis of } \mat}.
\]
The following characterization gives a geometric axiomatization of matroids.

\begin{theorem}[{\cite[Thm.~4.1]{GelfandGoreskyMacPhersonSerganova1987}}]
A polytope is a matroid polytope if and only if all its vertices have $0/1$ coordinates and all its edges are translations of some vectors~$\b{e}_i-\b{e}_j$ with $i \ne j$.
\end{theorem}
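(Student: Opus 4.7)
My plan is to prove each implication separately. For the forward direction, assuming $\polytope{P}$ is the matroid polytope $\polytope{P}_{\mat}$, I would first note that the $0/1$ vertex condition is immediate from the definition of characteristic vectors of bases. For the edge condition, I would argue by contradiction: given an edge joining vertices $\chi_B$ and $\chi_{B'}$, suppose $|B \symdif B'| \ge 4$. Then I would invoke the symmetric basis exchange property for any $i \in B \ssm B'$ to produce $j \in B' \ssm B$ such that both $B'' \eqdef (B \ssm \{i\}) \cup \{j\}$ and $B''' \eqdef (B' \ssm \{j\}) \cup \{i\}$ are bases of $\mat$, with $\chi_B + \chi_{B'} = \chi_{B''} + \chi_{B'''}$ (verified element by element). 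Since $|B \symdif B'| \ge 4$ forces $\{B, B'\} \ne \{B'', B'''\}$, this means the midpoint of the alleged edge is also an average of two vertices not on that edge, contradicting the standard fact that any convex combination yielding a point in the relative interior of a face involves only vertices of that face. Thus $|B \symdif B'| = 2$ and the edge direction is $\b{e}_j - \b{e}_i$.

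For the converse direction, let $\mathcal{B} \subseteq 2^{[n]}$ denote the family of subsets $B$ for which $\chi_B$ is a vertex of $\polytope{P}$, and the task is to verify that $\mathcal{B}$ forms the set of bases of a matroid on~$[n]$. Equicardinality of the bases is the easy step: every edge direction $\b{e}_i - \b{e}_j$ lies in the hyperplane $\sum_k x_k = 0$, so the functional $\b{x} \mapsto \sum_k x_k$ is constant along the (connected) $1$-skeleton. The heart of the argument is the basis exchange axiom, which I would prove by induction on~$n$. For each $i \in [n]$, the faces $F_i \eqdef \polytope{P} \cap \{x_i = 1\}$ and $F_{\bar i} \eqdef \polytope{P} \cap \{x_i = 0\}$ inherit the hypotheses after projecting out the $x_i$-coordinate (their edges are edges of $\polytope{P}$ fixing $x_i$, hence still of the form $\b{e}_k - \b{e}_\ell$ with $k, \ell \ne i$), and the induction hypothesis identifies them with matroid polytopes of matroids $M_i$ and $M_{\bar i}$ on $[n] \ssm \{i\}$; the degenerate cases where one of these faces is empty reduce directly to the inductive hypothesis in dimension $n-1$.

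To verify the exchange axiom, I would then fix $B_1, B_2 \in \mathcal{B}$ with $i \in B_1 \ssm B_2$ and set $C \eqdef B_1 \ssm \{i\}$, so that $C$ is a basis of $M_i$. The key intermediate step is to show that $C$ is also \emph{independent} in $M_{\bar i}$: if this failed, no edge of $\polytope{P}$ leaving $\chi_{B_1}$ would exit $F_i$, so the tangent cone of $\polytope{P}$ at $\chi_{B_1}$ would lie in the tangent space of $F_i$; this would force $\dim \polytope{P} = \dim F_i$, contradicting $\chi_{B_2} \in F_{\bar i}$. Once $C$ is known to be independent in $M_{\bar i}$, I would finish by applying the matroid augmentation axiom (valid for $M_{\bar i}$ by the induction hypothesis) to $C$ and the basis $B_2$ of $M_{\bar i}$, obtaining $j \in B_2 \ssm C = B_2 \ssm B_1$ with $C \cup \{j\} \in \mathcal{B}$, which is exactly the required exchange.

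The main obstacle I expect in this program is the exchange axiom itself. A direct linear-programming argument running the simplex method from $\chi_{B_1}$ toward $\chi_{B_2}$ readily produces an improving edge, but such an edge generically neither removes $i$ nor adds an element of $B_2 \ssm B_1$ at the first step, and the combined requirement is precisely the exchange axiom we are trying to prove. The inductive strategy around the contraction/deletion faces $F_i$ and $F_{\bar i}$ is tailored exactly to sidestep this obstacle, by reducing the nontrivial matching part of the exchange to the matroid augmentation axiom already available in the strictly smaller polytope $F_{\bar i}$.
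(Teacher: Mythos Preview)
The paper does not prove this statement: it is quoted from \cite{GelfandGoreskyMacPhersonSerganova1987} and invoked only as a black box to conclude that translated shard polytopes are matroid polytopes. There is thus no proof in the paper to compare your attempt against.

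For what it is worth, your argument is correct. The forward direction via symmetric basis exchange is the standard one. For the converse, your induction through the faces $F_i = \polytope{P} \cap \{x_i = 1\}$ and $F_{\bar i} = \polytope{P} \cap \{x_i = 0\}$ works: the key step---that if $C = B_1 \ssm \{i\}$ were dependent in $M_{\bar i}$ then every edge of $\polytope{P}$ at $\chi_{B_1}$ would lie in $\{x_i = 1\}$, forcing the whole tangent cone and hence $\polytope{P}$ itself into $\{x_i = 1\}$, in contradiction with $\chi_{B_2}$---is sound, and the subsequent appeal to the augmentation axiom in $M_{\bar i}$ is legitimate since $|C| = |B_2| - 1$ by the equicardinality you already established. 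One small point worth making explicit in a write-up: $F_i$ and $F_{\bar i}$ are genuine faces of $\polytope{P}$ (because $\polytope{P} \subseteq [0,1]^n$), so their edges are edges of $\polytope{P}$ and the inductive hypothesis indeed applies to them after projecting out the $i$th coordinate.
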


As edge directions are not modified by translations, this provides directly the proof that shard polytopes are actually matroid polytopes.

\begin{corollary}
For any arc~$\arc\eqdef (a, b, A, B) \in \arcs_n$, the translated shard polytope~$\shardPolytope + \one_{B \cup \{b\}}$ is a matroid polytope.
\end{corollary}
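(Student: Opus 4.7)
The plan is to apply the Gelfand--Goresky--MacPherson--Serganova characterization stated just above: a polytope is a matroid polytope if and only if all its vertices lie in~$\{0,1\}^n$ and all its edges are parallel to some~$\b{e}_i - \b{e}_j$. Both conditions will follow directly from \cref{prop:shardPolytope,prop:elemPropShardPolytope}, once one notices that the translation by~$\one_{B \cup \{b\}}$ exactly cancels the $-1$ entries in the vertex description.

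First, I would translate the vertex description. By \cref{prop:elemPropShardPolytope}\,\eqref{it:verticesShardPolytope}, the vertices of~$\shardPolytope$ are the characteristic vectors $\chi(M) = \sum_{i \in [k]} (\b{e}_{a_i} - \b{e}_{b_i})$ of $\arc$-alternating matchings~$M = \{a_1 < b_1 < \dots < a_k < b_k\}$, where by definition the~$a_i$ lie in~$\{a\} \cup A$ and the~$b_i$ lie in~$B \cup \{b\}$. Adding~$\one_{B \cup \{b\}}$ turns each coordinate~$-1$ at a position~$b_i$ into~$0$, each coordinate~$0$ at a position of~$(B \cup \{b\}) \ssm \{b_1,\dots,b_k\}$ into~$1$, and leaves all other coordinates (the~$+1$'s at the~$a_i$, the~$0$'s at positions of~$(\{a\}\cup A)\ssm\{a_1,\dots,a_k\}$, and the~$0$'s at positions outside~$[a,b]$) unchanged. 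Hence $\chi(M) + \one_{B \cup \{b\}}$ is the $0/1$ characteristic vector of the subset $\{a_1,\dots,a_k\} \cup \big((B\cup\{b\})\ssm\{b_1,\dots,b_k\}\big)$ of~$[n]$, verifying the first condition of the characterization.

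Second, I would verify the edge condition, which is immediate: \cref{prop:elemPropShardPolytope}\,\eqref{it:edgesShardPolytope} already states that every edge of~$\shardPolytope$ is parallel to some~$\b{e}_i - \b{e}_j$ with $a \le i < j \le b$, and translating by a fixed vector preserves edge directions. Combining these two observations with the characterization finishes the proof.

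There is no real obstacle here: the only genuine insight is choosing the translation vector so that the $-1$ entries in~$\chi(M)$, which occur precisely at positions of~$B \cup \{b\}$, become~$0$'s, while the~$0$ entries at those same positions (absent from~$M$) become~$1$'s. The deeper content, namely identifying the resulting matroid as the graphical matroid of the series-parallel graph~$\Gamma_\arc$, is the subject of the subsequent \cref{prop:main5} and is not needed for this corollary.
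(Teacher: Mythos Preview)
Your proof is correct and takes essentially the same approach as the paper, which simply invokes the Gelfand--Goresky--MacPherson--Serganova characterization and notes that edge directions are preserved under translation. You have spelled out in more detail why the translated vertices land in~$\{0,1\}^n$, but this is exactly the implicit verification the paper leaves to the reader.
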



\subsubsection{Series-parallel matroid polytopes}

We can give a precise description of these matroids, which are actually certain connected series-parallel graphic matroids.
Let us recall some terminology.
A graph is \defn{series-parallel} if it can be obtained from a single edge with distinct endpoints via the operations of series extension (replacing an edge by a path of length~$2$) and parallel extension (replacing an edge by two parallel edges with the same endpoints).
The \defn{(cycle) matroid} of a connected graph $G \eqdef (V,E)$ is the matroid on $E$ whose bases are the edge sets of spanning trees of $G$.
A matroid is \defn{graphic} if it is the cycle matroid of a graph, and \defn{series-parallel} if it is the cycle matroid of a series-parallel graph, see~\cite[Sect.~5.4]{Oxley}.
A matroid is \defn{connected} if every pair of distinct elements is contained in a common circuit.
Therefore, a graphic matroid is connected if every pair of distinct edges is contained in a common cycle, see~\cite[Sect.~6.2]{Oxley} (for a loopless graph without isolated vertices and at least three vertices, this is equivalent to the graph being $2$-connected).

\begin{definition}
\label{def:shardgraph}
For an arc $\arc \eqdef (a, b, A, B) \in \arcs_n$, let $\{a\} \cup A = \{a = a_1 < a_2 < \dots < a_{|A|+1}\}$ and $B \cup \{b\} = \{b_1 < b_2 < \dots < b_{|B|+1} = b\}$, and set $b_{0} = a-1$ for convenience.
Define the \defn{shard graph} $\Gamma_\arc$ to be the (multi-)graph with vertex set $[0, |B|+1]$ and
\begin{itemize}
\item for each $1 \le i \le |A|+1$, an edge labeled $a_i$ joining vertex~$k$ to vertex~$|B|+1$, where ${0 \le k \le |B|}$ is such that ${b_k < a_i < b_{k+1}}$,
\item for each $1 \le j \le |B|+1$, an edge labeled $b_j$ joining vertex~$j-1$ to vertex~$j$,
\item for each $k \in [n] \ssm [a,b]$, a loop labeled by~$k$ on vertex~$|B|+1$.
\end{itemize}
The \defn{shard matroid} of the arc~$\arc$ is the cycle matroid $\mat_\arc$ of $\Gamma_\arc$, whose ground set is $[n]$.
\end{definition}

This definition is illustrated in \cref{fig:seriesParallel}.
Note that the loops corresponding to~$k \in [n] \ssm [a,b]$ in the definition are only included to get a matroid with ground set~$[n]$ whose matroid polytope is embedded in~$\R^n$ like~$\shardPolytope + \one_{B \cup \{b\}}$.
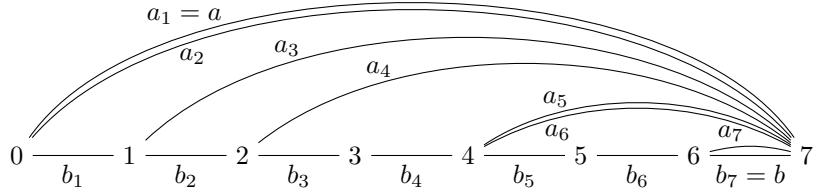
\begin{figure}
	\capstart
	\centerline{\begin{tikzpicture}[xscale=1.5]

\node (1) at (1, 0) {$0$};
\node (2) at (2, 0) {$1$};
\node (3) at (3, 0) {$2$};
\node (4) at (4, 0) {$3$};
\node (5) at (5, 0) {$4$};
\node (6) at (6, 0) {$5$};
\node (7) at (7, 0) {$6$};
\node (8) at (8, 0) {$7$};

\path (1) edge [bend left=65] node [near start, above=1pt] {$a_1 = a$} (8);
\path (1) edge [bend left=60] node [near start, below=-1pt] {$a_2$} (8);
\path (2) edge [bend left=55] node [near start, above=-1pt] {$a_3$} (8);
\path (3) edge [bend left=50] node [near start, above=-1pt] {$a_4$} (8);
\path (5) edge [bend left=45] node [near start, above=-1pt] {$a_5$} (8);
\path (5) edge [bend left=40] node [near start, below=-1pt] {$a_6$} (8);
\path (7) edge [bend left=20] node [near start, above=-1pt] {$a_7$} (8);

\draw (1) -- (2) node [midway, below] {$b_1$};
\draw (2) -- (3) node [midway, below] {$b_2$};
\draw (3) -- (4) node [midway, below] {$b_3$};
\draw (4) -- (5) node [midway, below] {$b_4$};
\draw (5) -- (6) node [midway, below] {$b_5$};
\draw (6) -- (7) node [midway, below] {$b_6$};
\draw (7) -- (8) node [midway, below] {$b_7 = b$};

\end{tikzpicture}}
	\caption{The graph~$\Gamma_{\arc}$ for the arc~$\arc \eqdef (1, 14, \{2, 4, 6, 9, 10, 13\}, \{3, 5, 7, 8, 11, 12\})$.}
	\label{fig:seriesParallel}
\end{figure}

\begin{proposition}
\label{prop:2connected}
The graph~$\Gamma_\arc$ stripped of loops is a $2$-connected series-parallel graph.
\end{proposition}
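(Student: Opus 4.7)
The plan is to establish the two properties separately and combine them via the classical Dirac--Duffin characterization: a $2$-connected loopless graph is series-parallel if and only if it has no $K_4$ minor (see for instance~\cite[Sect.~5.4]{Oxley}).

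For the $2$-connectedness of $\Gamma_\arc$, I would argue directly that the graph has no cut vertex by a short case analysis on which vertex is removed. Removing the apex vertex $|B|+1$ leaves the path $0 - 1 - \cdots - |B|$ traced out by the edges $b_1, \dots, b_{|B|}$. Removing vertex $0$ leaves the path $1 - 2 - \cdots - |B|+1$ traced out by the edges $b_2, \dots, b_{|B|+1}$. Removing any intermediate vertex $k$ with $1 \le k \le |B|$ splits the $b$-path into two pieces, but the edge $a_1 = a$ directly joins vertex $0 = k_1$ to vertex $|B|+1$ and thus bridges the two halves. The degenerate case $b = a+1$, in which $|B| = 0$ and $\Gamma_\arc$ (without loops) consists of two parallel edges between two vertices, is trivially $2$-connected.

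For the absence of a $K_4$ minor, the key observation is that every non-loop edge of $\Gamma_\arc$ is either a $b_j$-edge lying on the path $0 - 1 - \cdots - |B|+1$ or an $a_i$-edge incident to the apex vertex $|B|+1$. Suppose for contradiction that four pairwise disjoint connected branch sets $V_1, V_2, V_3, V_4$ realize a $K_4$ minor in $\Gamma_\arc$. Since they are disjoint, at most one of them contains the apex $|B|+1$; up to relabeling, $|B|+1 \notin V_1 \cup V_2 \cup V_3$. Each $V_i$ for $i \in \{1,2,3\}$ is then a connected subgraph of the induced subgraph $\Gamma_\arc[\{0, \dots, |B|\}]$, which is just the $b$-path $0 - 1 - \cdots - |B|$, so each $V_i$ is an interval of this path. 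Moreover, any edge of $\Gamma_\arc$ joining $V_i$ to $V_j$ for $i, j \in \{1,2,3\}$ must be a $b$-edge, since the $a_i$-edges require $|B|+1$ as an endpoint. Hence the corresponding intervals must be immediately consecutive on the path for each pair $\{i,j\}$. However, three pairwise disjoint intervals on a path cannot all be mutually consecutive, since the middle interval separates the two outer ones, yielding the desired contradiction.

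The main obstacle is to argue carefully that the branch sets contained in $\{0,\dots,|B|\}$ must be intervals and that the $a_i$-edges cannot be used to realize the adjacencies among them; both follow cleanly from the two-layered structure of $\Gamma_\arc$ (a single path plus a pencil of edges through the apex). Combining both steps, $\Gamma_\arc$ stripped of loops is $2$-connected with no $K_4$ minor, and is therefore a $2$-connected series-parallel graph.
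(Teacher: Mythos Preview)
Your argument is correct and complete, but it follows a genuinely different route from the paper's proof. The paper proceeds constructively: it shows that $\Gamma_\arc$ (stripped of loops) can be built from a single edge by processing the elements of $[a,b[$ in order, performing a parallel extension on the edge labeled~$b$ for each element of $\{a\} \cup A$ and a series extension on~$b$ for each element of~$B$. After the very first step one already has a double edge, and it is classical that any graph obtained from a double edge by series and parallel extensions is $2$-connected. This yields both the series-parallel property and the $2$-connectedness in one stroke.

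Your approach instead verifies the two properties independently: $2$-connectedness by a direct cut-vertex analysis, and the series-parallel property via the Dirac--Duffin forbidden-minor characterization, exploiting the ``path plus apex'' shape of $\Gamma_\arc$ to rule out a $K_4$ minor. Both arguments are short and clean. The paper's constructive description has the additional advantage that the same inductive build-up is reused in the very next proposition to match spanning trees of $\Gamma_\arc$ with $\arc$-alternating matchings; your proof does not set up that induction, though of course it is not required to. One small imprecision: you describe the case $|B|=0$ as the case $b=a+1$, but in fact $|B|=0$ only forces $A = {]a,b[}$, so $\Gamma_\arc$ may have more than two parallel edges; your cut-vertex analysis still covers this without modification.
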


\begin{proof}
If we do not append the loops to $\Gamma_\arc$, then it can be constructed from a single edge with a parallel extension for every element in $\{a\} \cup A$ and a series extension for every element in~$B$.
Indeed, treat the elements of $[a,b[$ in order, and for each element of $A \cup a$ do a parallel extension on $b$, and for each element of $B$ do a series extension on $b$ (the second edge of the replacing path is the one keeping the label $b$).
After the first step one obtains a double edge labeled with~$a$ and~$b$, and it is well-known that series-parallel graphs are $2$-connected when they can be obtained from a double edge via series and parallel extensions.
\end{proof}

\begin{proposition}
\label{prop:SPisMP}
The matroid polytope of the shard matroid $\mat_\arc$ is the translated shard polytope~${\translatedShardPolytope \eqdef \shardPolytope + \one_{B \cup \{b\}}}$.
\end{proposition}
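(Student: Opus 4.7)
The plan is to produce an explicit bijection between bases of $\mat_\arc$ (i.e.\ spanning trees of $\Gamma_\arc$) and $\arc$-alternating matchings, and then check that, under this bijection, the characteristic vector of a basis coincides with the corresponding vertex~$\chi(M)+\one_{B\cup\{b\}}$ of~$\translatedShardPolytope$ (using \cref{prop:elemPropShardPolytope}\,\eqref{it:verticesShardPolytope}). Since both polytopes are convex hulls of these vertex sets, this gives the equality.

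First I would discard the loops labeled by $k\in[n]\ssm[a,b]$: they lie in no basis and the $k$-th coordinate of every point of $\translatedShardPolytope$ is $0$, so these coordinates match trivially. The loopless part of $\Gamma_\arc$ has $|B|+2$ vertices, hence every basis has $|B|+1$ edges, which matches the number of $1$'s in $\chi(M)+\one_{B\cup\{b\}}$ for any $\arc$-alternating matching~$M$. I now decompose a basis into the set $S\subseteq B\cup\{b\}$ of path-edges it \emph{omits} and the set $T\subseteq\{a\}\cup A$ of $a$-labeled edges it \emph{includes}. Writing $S=\{b_{j_1}<\dots<b_{j_s}\}$, removing the omitted edges from the path $0\text{--}1\text{--}\cdots\text{--}(|B|+1)$ produces $s+1$ intervals of consecutive vertices, and only the last contains the terminal vertex $|B|+1$. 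Spanning connectivity forces each of the other $s$ intervals to be attached to $|B|+1$ by at least one $a$-edge; acyclicity forbids more than one per interval; and $|T|+(|B|+1-|S|)=|B|+1$ pins down $|T|=s$, so each left interval is attached by exactly one $a$-edge.

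Using the endpoint rule of \cref{def:shardgraph}, the $a$-edge $a_i$ attaches to the vertex $k$ with $b_k<a_i<b_{k+1}$, so the chosen $a$-edges are some labels $a_{i_1}<\dots<a_{i_s}$ satisfying $b_{j_{l-1}}<a_{i_l}<b_{j_l}$ for each $l\in[s]$, with the convention $b_{j_0}\eqdef b_0=a-1$. But this is exactly the data of an $\arc$-alternating matching $M\eqdef\{a_{i_1}<b_{j_1}<\dots<a_{i_s}<b_{j_s}\}$, and the construction is manifestly a bijection. The characteristic vector of the basis has $1$'s exactly at the positions in $T\cup((B\cup\{b\})\ssm S)$, which matches $\chi(M)+\one_{B\cup\{b\}}$ coordinate by coordinate: $+1+0=1$ at each~$a_{i_l}$, $-1+1=0$ at each~$b_{j_l}$, $0+1=1$ at every other element of $B\cup\{b\}$, and $0$ everywhere else.

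The only real difficulty is bookkeeping between the three parallel indexings (the arc~$\arc$, the graph's vertex labels $0,\dots,|B|+1$, and the matching~$M$). In particular, the auxiliary convention $b_0=a-1$ is needed to handle the leftmost interval uniformly, so that the inequality $b_{j_{l-1}}<a_{i_l}$ specializes, when $l=1$ and the first interval extends back to vertex~$0$, to the correct condition $a\le a_{i_1}$ ensuring the resulting sequence is an $\arc$-alternating matching in the sense of \cref{subsec:shardPolytopes}.
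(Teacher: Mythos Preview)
Your proof is correct and gives a clean, self-contained argument. The approach, however, differs from the paper's. The paper argues by induction on the construction of~$\Gamma_\arc$: it starts from the base case $A=B=\varnothing$ and adds the elements of $A\cup B$ one at a time, each addition being a parallel or series extension of the graph, and tracks in parallel how the sets of spanning trees and of $\arc$-alternating matchings grow under each extension. Your argument instead exhibits the bijection in one shot, by decomposing a spanning tree into its omitted path-edges $S$ and included $a$-edges $T$ and observing that the connectivity/acyclicity constraints force exactly the interlacing condition $b_{j_{l-1}}<a_{i_l}<b_{j_l}$ that characterizes alternating matchings. Your route is arguably more transparent about \emph{why} the vertex sets coincide, and it makes the coordinate check $\chi(M)+\one_{B\cup\{b\}}$ immediate; the paper's inductive route, on the other hand, dovetails with the proof of \cref{prop:2connected} and explains structurally why series-parallel extensions are the right operations.
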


\begin{proof}
To prove that $\shardPolytope + \one_{B \cup \{b\}}$ is the matroid polytope of $\mat_\arc$, we have to prove that a set $A'\cup B'$, with $A' \subseteq \{a\} \cup A$ and $B' \subseteq B \cup \{b\}$, is the support of an $\arc$-alternating matching if and only if $A' \cup (B \cup \{b\} \ssm B')$ indexes a spanning tree of $\Gamma_\arc$.
This can be proved by induction.
It is clear if $A = B = \varnothing$.
And now we can build $\arc$ by adding to $\{a,b\}$ the elements of $A \cup B$ by order.
If we add an element  $a_i \in A$, we keep the previous alternating matchings and we are allowed to add the pair $\{a_i,b\}$ to those matchings not using~$b$.
In terms of spanning trees, when we do the parallel extension the spanning trees are the previous spanning trees where we allowed to replace $b$ by $a_i$ whenever we had a spanning tree using~$b$.
If we add an element $b_j\in B$, then we keep the previous alternating matchings and we are allowed to replace $b$ by $b_j$ whenever $b$ was in the support.
In terms of spanning trees, when we do the series extension, we have to add $b_j$ to spanning trees using~$b$, and we have to add either $b$ or $b_j$ to spanning trees not using $b$.
\end{proof}

A result of H.~Q.~Nguyen~\cite[Thm.~2.1.5]{Nguyen} (see also~\cite[Sect.~7.2.]{StudenyKroupa2016}) characterizes indecomposable matroid polytopes.

\begin{theorem}[{\cite[Thm.~2.1.5]{Nguyen}}]
\label{thm:indecomposableMP}
The matroid polytope $\polytope{P}_\mat$ is indecomposable if and only if the matroid $\mat'$ obtained by removing all the loops from $\mat$ is connected.
\end{theorem}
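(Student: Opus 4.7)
My plan is to first reduce to the loopless case, handle the easy (sufficiency) direction via the product structure of matroid polytopes under direct sums, and then attack the harder (necessity) direction by tracking how a hypothetical Minkowski decomposition acts on vertices and edges of $\polytope{P}_\mat$ using matroid circuit connectivity.

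First I would observe that loops contribute trivially to the polytope: if $e \in [n]$ is a loop of $\mat$, then no basis contains $e$, so every vertex of $\polytope{P}_\mat$ has its $e$-coordinate equal to zero. Hence $\polytope{P}_\mat$ lies in the hyperplane $\{\b{x}_e = 0\}$ and is affinely isomorphic to $\polytope{P}_{\mat \ssm e}$ in one lower dimension. Since Minkowski indecomposability is preserved under this identification, we may from now on assume that $\mat$ is loopless and argue that $\polytope{P}_\mat$ is indecomposable if and only if $\mat$ is connected.

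For the easy (disconnected implies decomposable) direction, suppose $\mat = \mat_1 \oplus \mat_2$, with nonempty ground sets $E_1 \sqcup E_2 = [n]$. Since bases of $\mat$ are exactly disjoint unions $B_1 \sqcup B_2$ of bases of $\mat_1$ and $\mat_2$, the characteristic vectors satisfy $\chi_{B_1 \sqcup B_2} = \iota_1(\chi_{B_1}) + \iota_2(\chi_{B_2})$, where $\iota_i : \R^{E_i} \hookrightarrow \R^n$ is the coordinate embedding. This yields the Minkowski decomposition $\polytope{P}_\mat = \iota_1(\polytope{P}_{\mat_1}) + \iota_2(\polytope{P}_{\mat_2})$. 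Provided at least one of the $\polytope{P}_{\mat_i}$ is not a single point, the summand $\iota_i(\polytope{P}_{\mat_i})$ is confined to a proper coordinate subspace and cannot be a dilate plus translate of $\polytope{P}_\mat$, exhibiting $\polytope{P}_\mat$ as genuinely Minkowski decomposable. The degenerate case where both factors are points corresponds to $\polytope{P}_\mat$ itself being a point, which is treated as a trivial matroid.

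For the harder (connected implies indecomposable) direction, assume $\mat$ is connected and loopless, and suppose $\polytope{P}_\mat = \polytope{P}_1 + \polytope{P}_2$. For each basis $B$ of $\mat$, the vertex $\chi_B$ decomposes uniquely as $\chi_B = \b{v}_B^1 + \b{v}_B^2$ with $\b{v}_B^i$ a vertex of $\polytope{P}_i$. Since the normal fan of $\polytope{P}_\mat$ refines those of $\polytope{P}_1$ and $\polytope{P}_2$, whenever $B$ and $B' \eqdef (B \ssm \{i\}) \cup \{j\}$ are adjacent bases along an edge of $\polytope{P}_\mat$ in direction $\b{e}_j - \b{e}_i$, one has $\b{v}_{B'}^1 - \b{v}_B^1 = \lambda_{B,B'} (\b{e}_j - \b{e}_i)$ for some $\lambda_{B,B'} \in [0,1]$, and symmetrically for $\polytope{P}_2$ with coefficient $1 - \lambda_{B,B'}$. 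The goal is to show that all such coefficients coincide with a single constant $\lambda$, whence $\polytope{P}_1 = \lambda \polytope{P}_\mat + \b{t}$, contradicting non-triviality. The mechanism for propagating this equality is matroid circuit connectivity: in a connected matroid, every pair of elements lies in a common circuit, and each circuit yields closed cycles in the vertex-edge graph of $\polytope{P}_\mat$ via symmetric exchange. Summing the vectors $\lambda_{B,B'}(\b{e}_j - \b{e}_i)$ around such a cycle must yield zero, producing linear relations among the $\lambda$'s that propagate equality from one exchange edge to all others.

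The main obstacle is this last propagation step. One must select enough closed cycles in the edge graph of $\polytope{P}_\mat$ and check that the resulting vector relations among the local coefficients $\lambda_{B,B'}$ are rich enough to force global equality using only circuit connectivity; the bookkeeping is delicate because different circuits of $\mat$ interact in subtle ways through multiple overlapping symmetric exchanges. Once this propagation is carried out, the conclusion $\polytope{P}_1 = \lambda \polytope{P}_\mat + \b{t}$ is immediate, completing the proof.
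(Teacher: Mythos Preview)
The paper does not give its own proof of this theorem: it is quoted as a black-box result from Nguyen~\cite{Nguyen} (also referenced to \cite{StudenyKroupa2016}) and then combined with \cref{prop:2connected,prop:SPisMP} to deduce \cref{prop:SPindecomposable}. So there is nothing in the paper to compare your argument against.

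That said, let me comment on the substance of your attempt. Your reduction to the loopless case and your argument for the direction ``disconnected $\Rightarrow$ decomposable'' via the Minkowski product structure $\polytope{P}_{\mat_1 \oplus \mat_2} = \iota_1(\polytope{P}_{\mat_1}) + \iota_2(\polytope{P}_{\mat_2})$ are both correct and standard. The gap is in the converse: you set up the right framework (assign an edge-dilation coefficient $\lambda_{B,B'} \in [0,1]$ to each basis exchange and aim to show all coefficients agree), but you explicitly leave the propagation step unfinished, writing that ``the bookkeeping is delicate'' and ``once this propagation is carried out, the conclusion\dots is immediate.'' This is precisely the nontrivial content of Nguyen's theorem. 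Circuit connectivity alone tells you any two elements share a circuit, but turning that into a chain of edge relations forcing \emph{all} $\lambda_{B,B'}$ to coincide requires a genuine argument (for instance, showing the coefficient depends only on the exchanged pair $\{i,j\}$ rather than on the ambient basis, and then using circuits to link any two pairs). As written, your proposal is a correct outline with the key lemma stated but not proved.
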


Therefore, we obtain \cref{prop:SPindecomposable} as a corollary of \cref{prop:2connected,prop:SPisMP,thm:indecomposableMP}.


\subsection{Virtual deformed permutahedra and shard polytopes}
\label{subsec:virtualPolytopes}

Under Minkowski addition, the set of convex polytopes in~$\R^n$ forms a commutative monoid with the cancellation property.
Its Grothendieck group is called the group of \defn{virtual polytopes}~\cite{PukhlikovKhovanskii}.
It is the group of formal differences of polytopes $\polytope{P} - \polytope{Q}$ under the equivalence relation $(\polytope{P}_1 - \polytope{Q}_1) = (\polytope{P}_2 - \polytope{Q}_2)$ whenever $\polytope{P}_1 + \polytope{P}_2 = \polytope{Q}_1 + \polytope{Q}_2$.
Note that the semigroup of polytopes is embedded into virtual polytopes via the map~$\polytope{P} \mapsto \polytope{P} - \{\b{0}\}$.

The group of virtual polytopes is extended to a real vector space $\virtualPolytopes$ via dilation. Indeed, for any real $\lambda \ge 0$, let $\lambda \polytope{P} \eqdef \set{\lambda \b{p}}{\b{p} \in \polytope{P}}$ be the dilation of $\polytope{P}$ by $\lambda$. For $\polytope{P} - \polytope{Q} \in \virtualPolytopes$ and $\lambda \in \R$, we set $\lambda (\polytope{P} - \polytope{Q}) \eqdef \lambda \polytope{P} - \lambda \polytope{Q}$ when $\lambda \ge 0$, and $\lambda (\polytope{P} - \polytope{Q}) \eqdef ((-\lambda) \polytope{Q}) - ((-\lambda) \polytope{P})$ when $\lambda < 0$. (Note in particular that $-\polytope{P}$ does not represent the reflection of $\polytope{P}$, but its group inverse.)

When $\polytope{Q}$ is a Minkowski summand of $\polytope{P}$, \ie when there is some polytope $\polytope{R}$ such that $\polytope{P} = \polytope{Q} + \polytope{R}$, then $\polytope{P} - \polytope{Q} = \polytope{R} - \{\b{0}\}$ and we say that $\polytope{R} = \set{\b{x} \in \R^n}{\b{x} + \polytope{Q} \subseteq \polytope{P}}$ is the \defn{Minkowski difference} of~$\polytope{P}$ and~$\polytope{Q}$.

Define the space $\VDP \subset \virtualPolytopes$ of \defn{virtual deformed permutahedra} as the vector subspace of virtual polytopes generated by the deformed permutahedra in~$\R^n$.
F.~Ardila, C.~Benedetti and J.~Doker proved in~\cite{ArdilaBenedettiDoker} that virtual deformed permutahedra admit the following Minkowski decompositions as sums and differences of simplices.

\begin{proposition}[\cite{ArdilaBenedettiDoker}]
\label{prop:faceSimplexBasis}
Any deformed permutahedron has a unique representation as a Minkowski sum and difference of dilated faces~$\triangle_J \eqdef \conv \set{\b{e}_j}{j \in J}$ of the standard simplex~$\triangle_{[n]}$.
In other words, the faces of the standard simplex~$(\triangle_J)_{\varnothing \ne J \subseteq [n]}$ form a linear basis of the space~$\VDP$ of virtual deformed permutahedra.
\end{proposition}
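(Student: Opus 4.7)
The plan is to identify $\VDP$ with the $(2^n-1)$-dimensional space of right-hand-side vectors appearing in \cref{prop:deformedPermutahedraZ}, and then to invert the linear map sending a signed Minkowski combination of the $\triangle_J$'s to its support function by Möbius inversion on the boolean lattice.

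First, \cref{prop:deformedPermutahedraZ} shows that every deformed permutahedron $\polytope{P}$ is determined by the vector $\big(h_{\polytope{P}}(R)\big)_{\varnothing \ne R \subseteq [n]}$, where $h_{\polytope{P}}(R) \eqdef \max_{\b{x} \in \polytope{P}} \dotprod{\one_R}{\b{x}}$, and additivity of support functions under Minkowski sum extends this to an injective linear map $\VDP \hookrightarrow \R^{2^n-1}$. Since the cone of submodular functions has nonempty interior in this ambient space, the image is all of $\R^{2^n-1}$ and $\dim \VDP = 2^n-1$. There are exactly $2^n-1$ faces $\triangle_J$ of the standard simplex indexed by nonempty $J \subseteq [n]$, each a deformed permutahedron (its normal fan coarsens that of $\triangle_{[n]}$, hence the braid fan), so it suffices to prove that their images in $\R^{2^n-1}$ are linearly independent.

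A direct computation gives $h_{\triangle_J}(\one_R) = \delta_{J \cap R \ne \varnothing}$, so linear independence amounts to invertibility of the $(2^n-1)\times(2^n-1)$ matrix $A_{R,J} \eqdef \delta_{J \cap R \ne \varnothing}$ indexed by nonempty subsets of $[n]$. I would argue this directly: suppose $Ay = 0$ and set $Y \eqdef \sum_{J \ne \varnothing} y_J$ and $f(T) \eqdef \sum_{\varnothing \ne J \subseteq T} y_J$. Then for each $R \ne \varnothing$ the equation $\sum_{J \cap R \ne \varnothing} y_J = Y - f([n] \ssm R) = 0$ forces $f(T) = Y$ for every $T \subsetneq [n]$; taking $T = \varnothing$ yields $Y = 0$, so $f \equiv 0$ on all subsets, and Möbius inversion on the boolean lattice then forces $y_J = \sum_{S \subseteq J}(-1)^{|J \ssm S|} f(S) = 0$ for every nonempty $J$. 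Running the same computation backwards makes the decomposition explicit: the unique coefficients are $y_J = \sum_{S \subseteq J}(-1)^{|J \ssm S|}\big(h([n]) - h([n] \ssm S)\big)$, with the convention $h(\varnothing)=0$. The main subtlety lies in the boundary bookkeeping: the naive $2^n \times 2^n$ matrix indexed by all subsets (including $\varnothing$) is singular because of the empty set on both sides, so one must carefully restrict to nonempty indices throughout, which fortunately matches the submodular parametrization of $\VDP$; once this is in place, the argument reduces to the standard invertibility of the zeta matrix of the boolean lattice.
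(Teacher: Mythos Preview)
Your argument is correct. The paper does not prove this proposition---it is quoted from \cite{ArdilaBenedettiDoker}---but the approach recorded in \cref{rem:FStoRHS} (following \cite{Postnikov,ArdilaBenedettiDoker}) works with the \emph{inner} support function $\coeffRHS_R \eqdef \min_{\b{x}\in\polytope{P}}\dotprod{\one_R}{\b{x}}$ rather than your outer one. For the simplex face~$\triangle_J$ this gives $\coeffRHS_R=\delta_{J\subseteq R}$, so the change-of-basis matrix is directly the zeta matrix of the boolean lattice and M\"obius inversion is immediate, yielding $\coeffFS_J=\sum_{R\subseteq J}(-1)^{|J\ssm R|}\,\coeffRHS_R$. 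Your choice of the outer support function produces instead the non-triangular matrix $A_{R,J}=\delta_{J\cap R\ne\varnothing}$ and forces the small detour through $f(T)=Y-h([n]\ssm T)$ before M\"obius inversion applies. The two routes are in fact the same computation in disguise: since $\dotprod{\one_T}{\b{x}}+\dotprod{\one_{[n]\ssm T}}{\b{x}}$ is constant equal to $h([n])$ on any deformed permutahedron, your auxiliary function $f(T)$ \emph{is} the inner support function $\coeffRHS_T$, and after this identification your M\"obius inversion and explicit formula collapse to the standard ones above. So the only genuine difference is the choice of sign convention on the normals; the inner convention buys a triangular matrix and a one-line proof, while your outer convention costs one extra complementation step.
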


In this section, we show that shard polytopes have the same property up to translation as announced in \cref{prop:main9}.
To manipulate the quotient of the vector subspace of virtual deformed permutahedra modulo translations, we pick a representative in each translation class.
We say that a deformed permutahedron~$\polytope{P}$ is \defn{caged} if~$\b{x}_i \ge 0$ is a tight inequality for~$\polytope{P}$ for any~$i \in [n]$.
A virtual deformed permutahedron~$\polytope{P} - \polytope{Q}$ is \defn{caged} if both~$\polytope{P}$ and~$\polytope{Q}$ are caged.
We denote by~$\CVDP \subset \VDP$ the vector subspace of caged virtual deformed permutahedra.
We obtain the following analogue of \cref{prop:faceSimplexBasis}, which generalizes \cref{coro:uniqueDecompositionCambrian}.

\begin{proposition}
\label{prop:shardPolytopeBasis}
Any caged deformed permutahedron has a unique decomposition as a Minkowski sum and difference of dilated translated shard polytopes~$\translatedShardPolytope$ for~$\arc \in \arcs_n$.
In other words, the translated shard polytopes~$\big( \translatedShardPolytope \big)_{\arc \in \arcs_n}$ form a linear basis of the space~$\CVDP$ of caged virtual deformed permutahedra.
\end{proposition}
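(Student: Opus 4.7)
The plan is to combine the simplex-face basis of virtual deformed permutahedra from \cref{prop:faceSimplexBasis} with the explicit change-of-basis formulas between simplex faces and translated shard polytopes already stated as \cref{prop:main6,prop:main8}.

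First I would record the dimension count. \cref{prop:faceSimplexBasis} produces a basis of $\VDP$ given by the faces $\triangle_J$ for all nonempty $J \subseteq [n]$. Since the singletons $\triangle_{\{j\}} = \{\b{e}_j\}$ are exactly the Minkowski generators of the $n$-dimensional group of translations of $\R^n$, their classes vanish modulo translation and the classes of the $\triangle_J$ with $|J| \ge 2$ form a basis of the quotient. Identifying $\CVDP$ with this quotient by choosing in each translation class its unique caged representative yields $\dim \CVDP = 2^n - n - 1 = |\arcs_n|$. A routine check on the vertex description of \cref{prop:shardPolytope} shows that every translated shard polytope $\translatedShardPolytope = \shardPolytope + \one_{B \cup \{b\}}$ has all vertex coordinates in $\{0,1\}$, with the supporting hyperplane $\b{x}_i = 0$ realized for each $i \in [n]$ by some $\arc$-alternating matching; hence $\translatedShardPolytope$ is caged and represents a well-defined element of $\CVDP$.

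Second, the two directions of the basis statement follow directly from the two change-of-basis formulas at our disposal. By \cref{prop:main6}, each $\translatedShardPolytope$ is a signed Minkowski sum of dilated simplex faces $\triangle_J$ with $|J| \ge 2$, so the $|\arcs_n|$ translated shard polytopes live in $\CVDP$. Conversely, by \cref{prop:main8}, each $\triangle_J$ with $|J| \ge 2$ is itself a signed Minkowski sum of dilated translated shard polytopes, so the translated shard polytopes span $\CVDP$. A spanning set of $|\arcs_n|$ vectors in a space of dimension $|\arcs_n|$ is necessarily a basis, yielding both the existence and the uniqueness of the claimed Minkowski decomposition (translations being absorbed once and for all by the caging convention).

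The main obstacle is thus the verification of \cref{prop:main6,prop:main8} themselves. For \cref{prop:main6}, I would build on \cref{prop:main5}, which identifies $\translatedShardPolytope$ with the matroid polytope of the series-parallel graph $\Gamma_\arc$, and combine it with the general decomposition of matroid polytopes as signed Minkowski sums of simplex faces from \cite{ArdilaBenedettiDoker}, whose coefficients are expressible through beta invariants of matroid minors. The delicate point is translating the non-vanishing minors of the graphical matroid of $\Gamma_\arc$ and their signs into the combinatorial condition $(A \cup \{a,b\}) \conn J$ together with the sign $(-1)^{|J \cap (B \cup \{a,b\})|}$. \cref{prop:main8} should then follow by inverting the resulting transition matrix; the fact that the two formulas interchange the two arguments of the asymmetric relation $\conn$ strongly suggests that this inversion admits a direct combinatorial description, which I would pursue via a Möbius-type identity on the poset of subsets of $[a,b]$.
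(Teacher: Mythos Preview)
Your argument is correct, but it follows a genuinely different route from the paper's. The paper does not go through \cref{prop:main6,prop:main8} at all: after the same dimension count $\dim\CVDP = 2^n-n-1 = |\arcs_n|$, it proves linear \emph{independence} directly from the geometry. Assuming a dependence, one separates positive and negative coefficients to get an equality of honest Minkowski sums $\Sigma^+=\Sigma^-$, picks a forcing-minimal arc $\arc_\bullet$ among those with nonzero coefficient, and invokes \cref{prop:shardPolytopeFan}: $\shardPolytope[\arc_\bullet]$ is then the only summand whose normal fan contains the shard $\shard(\arc_\bullet)$, so this shard appears in exactly one of the two normal fans, contradicting $\Sigma^+=\Sigma^-$. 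This is short and uses nothing beyond the fundamental wall-containment property of shard polytopes.

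Your route establishes spanning rather than independence, by citing the two explicit transition formulas between shard polytopes and simplex faces. Logically this is fine---in the body of the paper the proofs of \cref{prop:FStoSPa} and \cref{prop:SPtoFSa} do not rely on \cref{prop:shardPolytopeBasis}, so there is no circularity if one reorders---but it is considerably heavier: you need \cref{prop:main5} (the series-parallel matroid identification), the Ardila--Benedetti--Doker beta-invariant decomposition, the analysis of connected contractions of $\Gamma_\arc$, and then the explicit inversion. What you gain is the transition matrices themselves, which are of independent interest; what the paper's argument buys is a two-paragraph proof that generalizes verbatim to any simplicial arrangement admitting shard polytopes (see the concluding remarks).
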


\begin{proof}
Observe first that
\begin{itemize}
\item the dimension of~$\CVDP$ is the dimension of the type cone of the braid fan~$\Fan_n$, which is~$2^n-n-1$ (because~$\Fan_n$ had~$2^n - 2$ rays and dimension~$n-1$),
\item the number of shard polytopes is~$|\arcs_n| = \sum_{1 \le a < b \le n} 2^{b-a-1} = 2^n-n-1$.
\end{itemize}
It thus suffices to prove that the shard polytopes are linearly independent.
Suppose, for the sake of contradiction, that there is a linear dependence~$\sum_{\arc \in \arcs_n} \coeffSP_\arc \, \shardPolytope$ in the type cone~$\typeCone(\Fan_n)$.
Writing~$\arcs^+ \eqdef \set{\arc \in \arcs_n}{\coeffSP_\arc > 0}$ and~$\arcs^- \eqdef \set{\arc \in \arcs_n}{\coeffSP_\arc < 0}$, we obtain an equality of Minkowski sums~$\Sigma^+ \eqdef \sum_{\arc \in \arcs^+} \coeffSP_\arc \, \shardPolytope = \sum_{\arc \in \arcs^-} (-\coeffSP_\arc) \, \shardPolytope \defeq \Sigma^-$.
Let~$\arc_\bullet$ be a forcing minimal arc in~$\arcs^+ \cup \arcs^-$.
Then~$\shardPolytope[\arc_\bullet]$ is the only polytope among the summands~$\shardPolytope$ for~$\arc \in \arcs^+ \cup \arcs^-$ whose normal fan contains the shard~$\shard(\arc_\bullet)$.
Since~$\coeffSP_{\arc_\bullet} \ne 0$, this implies that the shard~$\shard(\arc_\bullet)$ appears only in one of the normal fans of~$\Sigma^+$ and~$\Sigma^-$.
A contradiction to the equality~$\Sigma^+ = \Sigma^-$.
\end{proof}

\begin{remark}
Note that \cref{thm:inductiveMinkowskiSum,prop:shardPolytopeBasis} seem to contradict each other.
However, recall that the identity of \cref{thm:inductiveMinkowskiSum} involves pseudoshards that are not shards.
\end{remark}

\begin{remark}
Let~$\polytope{P}_\equiv$ be a quotientope for a lattice congruence~$\equiv$ of the weak order on~$\fS_n$. That is, a polytope whose normal fan is the quotient fan~$\fan_\equiv$.
By~\cref{prop:shardPolytopeBasis}, $\polytope{P}_\equiv$ can be represented as a signed Minkowski sum of dilated shard polytopes.
In this representation, only shard polytopes $\shardPolytope$ for arcs~$\arc$ belonging to the arc ideal~$\arcs_\equiv$ can appear.
Indeed, a forcing minimal arc~${\arc_\bullet \in \arcs\ssm \arcs_\equiv}$ would introduce a shard in the normal fan that does not belong to the quotient fan $\fan_\equiv$.
This means that the shard polytopes corresponding to arcs in~$\arcs_\equiv$ form a basis for the subspace of (caged) virtual polytopes spanned by (caged) quotientopes realizing the quotient fan~$\fan_\equiv$.
See also \cref{subsec:PSquotientopes} for the discussion of the particular situation of the quotientopes of~\cite{PilaudSantos-quotientopes}.
\end{remark}

It follows from \cref{prop:deformedPermutahedraZ,prop:faceSimplexBasis,prop:shardPolytopeBasis} that we have three natural parametrizations of the space of caged deformed permutahedra:
\begin{itemize}
\item by the shard polytopes as described in~\cref{prop:shardPolytopeBasis},
\item by the faces of the standard simplex as described in \cref{prop:faceSimplexBasis}, and
\item by the heights as described in~\cref{prop:deformedPermutahedraZ}.
\end{itemize}
To be consistent, we label these three families of parameters by the set~$\nonsing \eqdef \set{J \subseteq [n]}{|J| \ge 2}$.
For the parametrization in terms of shard polytopes, we thus need to relabel arcs as follows.

\begin{definition}
\label{def:coeffSP}
For a family~$\smash{\coeffSP \eqdef (\coeffSP_I)_{I \in \nonsing}}$ of real parameters, we define the caged virtual deformed permutahedron
\[
\deformedPermutahedronSP \eqdef \sum_{I \in \nonsing} \coeffSP_I \, \translatedShardPolytope[I],
\]
where~$\translatedShardPolytope[I] \eqdef \translatedShardPolytope[\arc_I] \eqdef \shardPolytope[\arc_I] + \one_{B_I \cup \{b_I\}}$ is the translated shard polytope of the arc defined by~$\arc_I \eqdef (a_I, b_I, A_I, B_I)$ where
\[
a_I \eqdef \min I,
\qquad
b_I \eqdef \max I,
\qquad
A_I \eqdef {]a_I, b_I[} \cap I
\qquad\text{and}\qquad
B_I \eqdef {]a_I, b_I[} \ssm I.
\]
\end{definition}

For the parametrization in terms of faces of the standard simplex, the caged condition is equivalent to using only faces corresponding to~$\nonsing$.

\begin{definition}
\label{def:coeffFS}
For a family~$\smash{\coeffFS \eqdef (\coeffFS_J)_{J \in \nonsing}}$ of real parameters, we define the caged virtual deformed permutahedron
\[
\deformedPermutahedronFS \eqdef \sum_{J \in \nonsing} \coeffFS_J \, \triangle_J,
\]
where~$\triangle_J \eqdef \conv \set{\b{e}_j}{j \in J}$ is a face of the standard simplex~$\triangle_{[n]}$.
\end{definition}

Finally, for the parametrization in terms of heights, we use inner normal vectors and supermodular functions (rather than outer normal vectors and submodular functions as in \cref{prop:deformedPermutahedraZ}) to fit the original presentation of~\cite{Postnikov, ArdilaBenedettiDoker}.

\begin{definition}
\label{def:coeffRHS}
For a family~$\coeffRHS \eqdef (\coeffRHS_R)_{R \in \nonsing}$ of real parameters such that~$\coeffRHS_R + \coeffRHS_S \le \coeffRHS_{R \cup S} + \coeffRHS_{R \cap S}$ (with the convention that~$\coeffRHS_R = 0$ for~$|R| \le 1$), we define the caged deformed permutahedron
\[
\deformedPermutahedronRHS \eqdef  \Bigset{\b{x} \in (\R_{\ge0})^n}{ \dotprod{\one}{\b{x}} = \coeffRHS_{[n]} \text{ and } \dotprod{\one_R}{\b{x}} \ge \coeffRHS_R \text{ for all } R \in \textstyle{\nonsing}}.
\]
\end{definition}

Note that different values of~$\coeffRHS$ can describe the same caged deformed permutahedron.
Indeed, if an inequality~$\dotprod{\one_R}{\b{x}} \ge \coeffRHS_R$ does not define a facet, then we can increase~$\coeffRHS_R$ without altering the polytope.
When writing~$\deformedPermutahedronRHS$, we always implicitly assume that all inequalities are tight.
In other words, that~$\coeffRHS$ gives the support function of~$\deformedPermutahedronRHS$ on the rays of the braid fan.

The following consequence of \cref{prop:deformedPermutahedraZ,prop:faceSimplexBasis,prop:shardPolytopeBasis} summarizes the previous discussion.

\begin{corollary}
\label{coro:threeParametrizations}
Any caged deformed permutahedron admits parametrizations of the form
\[
\deformedPermutahedronSP = \deformedPermutahedronFS = \deformedPermutahedronRHS
\]
for unique parameters~$\coeffSP$, $\coeffFS$ and~$\coeffRHS$.
\end{corollary}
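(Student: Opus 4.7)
The statement is a direct synthesis of the three preceding results, one per parametrization; the plan is to show that each of the three families provides a linear isomorphism $\R^{\nonsing} \to \CVDP$, after which any caged deformed permutahedron~$\polytope{P}$ acquires all three sets of coordinates and the three polytopes $\deformedPermutahedronSP$, $\deformedPermutahedronFS$, $\deformedPermutahedronRHS$ all coincide with~$\polytope{P}$.

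For the shard polytope coordinates, I would invoke \cref{prop:shardPolytopeBasis}: the shard polytopes already form a basis of~$\CVDP$. The map $I \mapsto \arc_I$ of \cref{def:coeffSP} is a bijection from~$\nonsing$ to~$\arcs_n$, and the translation by~$\one_{B_I \cup \{b_I\}}$ only selects a caged representative of each translation class, so $\{\translatedShardPolytope[I]\}_{I \in \nonsing}$ is still a basis of~$\CVDP$; existence and uniqueness of~$\coeffSP$ follow. For the face-of-simplex coordinates, I would start from \cref{prop:faceSimplexBasis}: the faces of the standard simplex form a basis of the larger space~$\VDP$. Discarding the singleton faces $\triangle_{\{i\}} = \{\b{e}_i\}$ (which Minkowski-span exactly the $n$-dimensional subspace of pure translations) leaves $\{\triangle_J\}_{J \in \nonsing}$; each of these is caged, since $\b{x}_i \ge 0$ is tight for every~$i \in [n]$, so this family is a basis of the complement $\CVDP$, yielding existence and uniqueness of~$\coeffFS$.

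Finally, for the right-hand-side coordinates, I would apply \cref{prop:deformedPermutahedraZ}: a deformed permutahedron is determined by its (super)modular support function on proper subsets of~$[n]$, which is precisely the collection of tight right-hand sides along the rays~$-\one_R$ of the braid fan. The caged condition forces the values on the singleton rays~$-\one_{\{i\}}$ to vanish, and the value at~$R = [n]$ fixes the ambient hyperplane; what remains is exactly the family $\coeffRHS \in \R^{\nonsing}$ of \cref{def:coeffRHS}, assembled into a unique polytope by tight inequalities.

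The only (quite mild) obstacle is to confirm that restricting the index set to~$\nonsing$ really parametrizes all of~$\CVDP$ and not a proper subspace. In each of the three cases this reduces to a dimension count, matching $|\nonsing| = 2^n - n - 1 = \dim \CVDP$ with the number of proposed basis vectors, together with the observation that each proposed basis element is itself caged. The explicit change-of-basis matrices between $\coeffSP$, $\coeffFS$ and $\coeffRHS$ are not needed for the corollary itself and will be computed separately via \cref{prop:main6,prop:main8}.
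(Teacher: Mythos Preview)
Your proposal is correct and matches the paper's approach: the paper does not give an explicit proof but simply presents the corollary as an immediate consequence of \cref{prop:deformedPermutahedraZ,prop:faceSimplexBasis,prop:shardPolytopeBasis}, and your argument just fleshes out precisely how each of those three results yields its respective unique parametrization on~$\CVDP$. The only extra content you supply---the reduction from the full index set to~$\nonsing$ via the caged condition and the matching dimension count $|\nonsing| = 2^n - n - 1 = \dim \CVDP$---is exactly the routine verification implicit in the paper's ``summarizes the previous discussion.''
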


\begin{remark}
\label{rem:FStoRHS}
It was proved in~\cite{Postnikov, ArdilaBenedettiDoker} that the parameters~$\coeffFS$ and~$\coeffRHS$ in \cref{coro:threeParametrizations} are related by
\[
\coeffRHS_R = \sum_{J \subseteq R} \coeffFS_J
\qquad\text{and}\qquad
\coeffFS_J = \sum_{R \subseteq J} (-1)^{|J \ssm R|} \, \coeffRHS_R,
\]
assuming that the inequalities defining~$\deformedPermutahedronRHS$ are tight.
\end{remark}

The goal of this section is to give explicit relations between the parameters~$\coeffSP$ and the other two, $\coeffFS$ and~$\coeffRHS$.
These relations are given in \cref{prop:FStoSPb,prop:SPtoFSb,prop:RHStoSP,prop:SPtoRHS}.
As applications, we will obtain the $\coeffFS$-coordinates of Cambrian associahedra~$\Asso[\arc]$ in \cref{coro:LangeDecompositionCambrian} and the $\coeffSP$-coordinates of the classical permutahedron~$\Perm$ in \cref{coro:permutahedronSPcoordinates}.


\subsubsection{From simplices to shard polytopes}
\label{subsubsec:FStoSP}

First, we decompose the shard polytopes in terms of the faces of the standard simplex, proving \cref{prop:main6}.
This representation is specially nice as all coefficients are $\pm 1$, and can be easily derived using a combinatorial formula of F.~Ardila, C.~Benedetti and J.~Doker for matroid polytopes in terms of $\beta$-invariants~\cite{ArdilaBenedettiDoker}.

Introduced by H.~Crapo~\cite{Crapo1967}, the \defn{beta invariant} of a matroid $\mat$ on the ground set~$E$ is the non-negative integer given by
\[
\beta(\mat) \eqdef (-1)^{r(\mat)} \sum_{X \subseteq E} (-1)^{|X|} \, r(X),
\]
where $r: 2^E \to \N$ is the rank function of~$\mat$.
It has the property that $\beta(\mat) = 0$ if and only if $\mat$ is disconnected, or empty, or a loop, and that $\beta(\mat) = 1$ if and only if $\mat$ is series-parallel.
The \defn{signed beta invariant} of~$\mat$ is
\[
\tilde \beta(\mat) \eqdef (-1)^{r(\mat)+1} \beta(\mat).
\]

\begin{theorem}[{\cite[Thm.~2.5]{ArdilaBenedettiDoker}}]
\label{thm:betaInvariants}
For a matroid $\mat$, we have 
\[
\polytope{P}_\mat = \sum_{K \subseteq E} \tilde \beta(M/K) \, \triangle_{E \ssm K}.
\]
\end{theorem}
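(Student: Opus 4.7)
The plan is to compute the coefficients of the unique representation of $\polytope{P}_\mat$ as a signed Minkowski combination of simplex faces. First, note that $\polytope{P}_\mat$ is a deformed permutahedron: by the Gelfand--Goresky--MacPherson--Serganova characterization, its edges lie in root directions $\b{e}_i - \b{e}_j$, so its normal fan coarsens the braid fan. Hence \cref{prop:faceSimplexBasis} yields unique coefficients $\coeffFS_J$ with $\polytope{P}_\mat = \sum_J \coeffFS_J \triangle_J$, and the task reduces to showing $\coeffFS_J = \tilde\beta(\mat/(E \ssm J))$.

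Next, I would express the support function of $\polytope{P}_\mat$ in the directions $\one_R$ in terms of the rank function. Since the vertices of $\polytope{P}_\mat$ are the characteristic vectors of bases, and since $|B \cap R| = r(\mat) - |B \cap (E \ssm R)|$ is minimized precisely when $B$ extends a maximal independent subset of $E \ssm R$, we get
\[
\coeffRHS_R = \min_{\b{x} \in \polytope{P}_\mat} \dotprod{\one_R}{\b{x}} = r(\mat) - r(E \ssm R).
\]
Applying the M\"obius inversion of \cref{rem:FStoRHS}, combined with the change of variables $S = J \ssm R$ and the cancellation of the constant $r(\mat)$ (which vanishes under the inversion as long as $|J| \ge 1$), we obtain
\[
\coeffFS_J = -\sum_{S \subseteq J} (-1)^{|S|} \, r_\mat\!\big((E \ssm J) \cup S\big).
\]
Now setting $L = E \ssm J$ and using $r_\mat(L \cup S) = r_{\mat/L}(S) + r_\mat(L)$, the new constant $r_\mat(L)$ again cancels, yielding
\[
\coeffFS_J = -\sum_{S \subseteq J} (-1)^{|S|} r_{\mat/L}(S) = (-1)^{r(\mat/L)+1} \beta(\mat/L) = \tilde\beta(\mat/L)
\]
by the very definition of the Crapo beta invariant. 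Relabeling $K = L$ gives the claimed identity.

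The main obstacle is the careful sign-bookkeeping through the two successive applications of M\"obius inversion and rank-constant cancellation, together with the verification of the support-function formula via a matroid-extension argument. The degenerate cases $|J| \le 1$ can be treated directly: a short computation shows that $\coeffFS_{\{i\}}$ equals $1$ exactly when $i$ is a coloop of $\mat$, matching $\tilde\beta$ of the rank-one single-element matroid $\mat/(E \ssm \{i\})$, and the $J = \varnothing$ contribution vanishes on both sides, so the identity extends to all $K \subseteq E$.
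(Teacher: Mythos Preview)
The paper does not give its own proof of this theorem: it is quoted verbatim from \cite[Thm.~2.5]{ArdilaBenedettiDoker} and used as a black box to derive \cref{prop:FStoSPa}. So there is nothing in the paper to compare against.

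Your argument is correct and is in fact essentially the original Ardila--Benedetti--Doker proof: identify the support function of~$\polytope{P}_\mat$ in the directions~$\one_R$ as $r(\mat) - r(E \ssm R)$ via the greedy/basis-exchange property, M\"obius-invert to get the simplex coefficients, and recognize the result as the signed beta invariant of the contraction. The two cancellation steps (of $r(\mat)$ and then $r_\mat(L)$) are handled correctly.

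One small caveat: you invoke \cref{rem:FStoRHS}, but that remark is stated within the paper's \emph{caged} framework (parameters indexed by~$\nonsing$, with the convention $\coeffRHS_R = 0$ for $|R| \le 1$). A matroid polytope is caged exactly when the matroid has no coloops, so in general you need the unrestricted M\"obius relation $\coeffFS_J = \sum_{R \subseteq J} (-1)^{|J \ssm R|} \coeffRHS_R$ over all $R \subseteq J$ (including singletons and~$\varnothing$), which is the version in \cite{Postnikov,ArdilaBenedettiDoker} rather than the specialization recorded in \cref{rem:FStoRHS}. You implicitly use this broader version, and your separate treatment of $|J| \le 1$ shows you are aware of the issue; it would be cleaner to state up front that you are using the general inversion formula rather than the paper's caged variant.
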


\begin{lemma}
\label{lem:connectedcontraction}
Let $\arc \eqdef (a, b, A, B) \in \arcs_n$ and $K \subseteq[n]$.
The contraction $\mat_\arc/K$ is series-parallel and connected if and only if its complement is of the form $[n] \ssm K = X \cup Y$ where $X \subseteq B \cup \{a,b\}$ has at least $2$ elements, and $Y = {]i,j[} \cap A$ where $\min X \defeq i \ne j \eqdef \max X$.
\end{lemma}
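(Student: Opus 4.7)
The plan is to identify $\mat_\arc/K$ with the cycle matroid of the contracted graph $\Gamma_\arc/K$ and study when the latter is loopless and $2$-connected: since $\Gamma_\arc$ stripped of its loops is a $2$-connected series-parallel graph by \cref{prop:2connected} and minors of series-parallel graphs are series-parallel, this characterizes both the connectedness and the series-parallel nature of $\mat_\arc/K$. Setting $L \eqdef [n] \ssm K$, the first immediate reduction is $L \subseteq [a,b]$: every element of $L$ outside $[a,b]$ is a loop of $\mat_\arc/K$, which breaks connectivity when $|L| \ge 2$ and gives a non-series-parallel single-loop matroid when $|L| = 1$.

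For direction $(\Leftarrow)$, assume $L = X \cup Y$ has the prescribed form. Write $X \cap (B \cup \{b\}) = \{c_1 < \dots < c_q\}$ and let $C_0, C_1, \dots, C_q$ be the maximal intervals of bottom-path vertices left connected after contracting the bottom edges not in $L$, with $C_q$ containing the hub. The spokes of $K$ all land on $C_0$ (the spoke $a$ and the spokes in $A \cap {]a,i[}$, which lie in $K$ only when $a \notin X$) or on $C_q$ (the spokes in $A \cap {]j,b[}$), so the hub's class $H$ in the contracted graph equals $C_q$ if $a \in X$ and $C_0 \cup C_q$ if $a \notin X$. The condition $Y = {]i,j[} \cap A$ guarantees that every spoke of $L$ lands on an intermediate chunk and therefore is not a loop; the contracted graph is then a bottom path through $C_1, \dots, C_{q-1}$ joined to $H$ at both ends (via $c_1, c_q$ when $a \notin X$, and via the spoke $a$ together with $c_q$ when $a \in X$), decorated with parallel spokes to $H$. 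A direct check shows that removing any single vertex preserves connectedness through $H$ or through the remaining bottom path, so the graph is $2$-connected.

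For direction $(\Rightarrow)$, assume $\Gamma_\arc/K$ is loopless and $2$-connected, and set $L_B = L \cap (B \cup \{a,b\})$ and $L_A = L \cap A$. If $|L_B| \le 1$ then almost all bottom edges collapse and, together with the contraction of the spoke $a$ or of the edge $b$, they merge every bottom vertex with the hub, turning each element of $L$ into a loop; hence $|L_B| \ge 2$. Let $i = \min L_B$ and $j = \max L_B$, and define the chunks $C_0, \dots, C_q$ and the hub class $H$ as above. The key structural observation is that $H$ must equal $C_q$ or $C_0 \cup C_q$: indeed if some intermediate chunk $C_{s^*}$ with $1 \le s^* \le q-1$ belonged to $H$, then either an adjacent chunk would also be in $H$ (making an adjacent bottom edge or an $L$-spoke a loop) or both adjacent chunks would be separate (making $H$ a cut vertex of the contracted graph), in either case contradicting our assumption. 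From this dichotomy, every spoke in $K \cap (\{a\} \cup A)$ sits outside ${]i,j[}$, which yields $]i,j[ \cap A \subseteq L_A$; conversely no $L$-spoke lands on $H$, which yields $L_A \subseteq {]i,j[} \cap A$. Combining these, $L_A = {]i,j[} \cap A$, which completes the argument.

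The main technical obstacle is the dichotomy in the reverse direction: ruling out intermediate chunks in $H$ requires showing that such a configuration always yields either a loop (when the adjacent chunks on the bottom path are also in $H$) or the cut vertex $H$ (otherwise). The explicit fan structure of $\Gamma_\arc$ keeps this case analysis elementary and bounded by the position of $s^*$ relative to the endpoints of the bottom path.
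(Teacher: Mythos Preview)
Your approach is correct and genuinely different from the paper's. The paper argues recursively, contracting one edge at a time and showing that each single-element contraction of a shard graph either stays $2$-connected (when contracting a $b_i \in B$) or breaks into blocks that are themselves smaller shard graphs (when contracting a spoke $a_j$ or the edge $b$); iterating this identifies all loopless $2$-connected minors with shard graphs of arcs $(x,y,A\cap{]x,y[},B')$. You instead analyze the quotient $\Gamma_\arc/K$ globally via your chunk decomposition $C_0,\dots,C_q$ and the single structural dichotomy $H\in\{C_q,\,C_0\cup C_q\}$. Your route is more self-contained (no induction on smaller shard graphs) and makes the shape of $L$ appear in one shot; the paper's route, by contrast, explains \emph{why} these contractions are again shard matroids, which is morally what feeds into \cref{thm:betaInvariants}.

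One small gap to patch in the reverse direction: the assertion ``from this dichotomy, every spoke in $K\cap(\{a\}\cup A)$ sits outside $]i,j[$'' does not follow from $H\in\{C_q,C_0\cup C_q\}$ alone when $a\in L_B$. In that case $i=a$, and a contracted spoke landing on $C_0$ would lie in $]a,c_1[\subseteq{]i,j[}$. You need the extra observation (which you already have in hand) that if $a\in L_B$ then the kept spoke $a$ lands on $C_0$, so $C_0\subseteq H$ would make it a loop; hence $H=C_q$ and contracted spokes land only on $C_q$, i.e.\ above $j=c_q$. Likewise, in the forward direction your sentence ``every spoke of $L$ lands on an intermediate chunk'' is false for the spoke $a$ when $a\in X$ (it lands on $C_0$); the conclusion is unaffected since $C_0\neq H=C_q$ there, but the write-up should treat $C_0$ as part of the path when $a\in X$.
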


\begin{proof}
All connected contractions of a series-parallel matroid, that are not empty or loops, are series-parallel.
Therefore, we need to compute the loopless non-empty connected contractions of~$\mat_\arc$.

With the notation of \cref{def:shardgraph}, let $a_j \in \{a\} \cup A$ correspond to the edge $(k,m)$, meaning that $b_k<a_j<b_{k+1}$.
If the edge labeled $a_j$ is contracted in $\Gamma_\arc$, then the contracted vertex becomes a cut vertex.
The $2$-connected blocks attached to this cut-vertex are: a loop for each $a_\ell \ne a_j$ with $b_k < a_\ell < b_{k+1}$; a (possibly empty) block with all the edges labeled with $i \le b_k$; and a block with all the edges labeled with $i \ge b_{k+1}$.
Everything but one of the loopless blocks must be contracted in order to be still $2$-connected.

If it is not empty, that is if $k \ge 1$, the block with all the edges labeled with $i \le b_k$ is isomorphic to the shard graph of $(a, b_k, A \cap {]a,b_k[}, B \cap {]a,b_k[})$ stripped of loops, and is $2$-connected.

The block with all the edges labeled with $i \ge b_{k+1}$ is either a loop if $b_{k+1} = b$, or isomorphic to the shard graph of $(b_{k+1}, b, A \cap {]b_{k+1},b[}, B \cap {]b_{k+1},b[})$ stripped of loops, in which case it is $2$-connected.

Therefore, for $K \in A \cup \{a,b\}$ the $2$-connected loopless non-empty blocks of $\mat_\arc/K$ are all isomorphic to a shard graph of an arc of the form $(x, y, A \cap {]x,y[}, B \cap {]x,y[})$ stripped of loops for some $x \ne y \in B \cup \{a,b\}$.
This reduces the study to contractions by elements in $B \cup \{b\}$.

If $b = b_m$ is contracted, then all the $a_j \in \{a\} \cup A$ with $b_{m-1} < a_j < b_m$ become loops and must be contracted too.
If $B = \varnothing$ then there are only loops left, but otherwise we recover the shard graph of  $(a, b_{m-1}, A \cap {]a,b_{m-1}[}, B \cap {]a,b_{m-1}[})$ stripped of loops, which is $2$-connected.

Finally, for any $b_i \in B$, then its contraction is the shard graph of  $(a, b, A, B \ssm \{b_i\})$ stripped of loops (on the ground set $[a,b] \ssm \{b_i\}$), which is $2$-connected.

All this combined shows the $2$-connected loopless non-empty blocks of a contraction of $\mat_\arc$ are all isomorphic to a shard graph stripped of loops of an arc of the form $(x, y, A \cap {]x,y[}, B')$ for some $x \ne y \in B \cup \{a,b\}$ and $B' \subseteq B \cap {]x,y[})$.
\end{proof}

In view of \cref{thm:betaInvariants,lem:connectedcontraction}, we define for any~$I, J \in \nonsing$
\begin{itemize}
\item $I \conn J$ if and only if $\{\min J, \max J\} \subseteq {]\min I, \max I[} \symdif I$ and ${]\min J, \max J[} \cap I \subseteq J$,
\item $\asize{J} \eqdef |J \ssm (\{\min J, \max J\} \cup I)|$.
\end{itemize}

\begin{proposition}
\label{prop:FStoSPa}
For any~$I \in \nonsing$, we have the Minkowski decomposition
\[
\translatedShardPolytope[I] = \sum_{I \conn J} (-1)^{\asize{J}} \, \triangle_J.
\]
\end{proposition}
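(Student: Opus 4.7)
The plan is to apply the Ardila--Benedetti--Doker Minkowski decomposition of \cref{thm:betaInvariants} to the shard matroid $\mat_{\arc_I}$, whose matroid polytope equals the translated shard polytope $\translatedShardPolytope[I]$ by \cref{prop:SPisMP}. This yields
\[
\translatedShardPolytope[I] \;=\; \sum_{K \subseteq [n]} \tilde\beta(\mat_{\arc_I}/K) \, \triangle_{[n] \ssm K},
\]
so the whole argument reduces to identifying the subsets $K \subseteq [n]$ for which $\tilde\beta(\mat_{\arc_I}/K) \ne 0$, and computing the resulting sign.

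For the first task, I invoke \cref{lem:connectedcontraction}: the contribution is nonzero precisely when $\mat_{\arc_I}/K$ is loopless, connected and non-empty, in which case it is series-parallel by \cref{prop:2connected}, so $\beta(\mat_{\arc_I}/K) = 1$. Setting $J \eqdef [n] \ssm K$, the lemma characterizes these $J$'s as $J = X \sqcup Y$ with $X \subseteq B_I \cup \{a_I,b_I\}$, $|X| \ge 2$, and $Y = {]\min X, \max X[} \cap A_I$. I will check that this characterization coincides with $I \conn J$: setting $X \eqdef J \ssm A_I$ and $Y \eqdef J \cap A_I$, the endpoints of $J$ must lie in $X \subseteq B_I \cup \{a_I, b_I\} = {]\min I, \max I[} \symdif I$, and since $a_I, b_I \notin {]\min X, \max X[}$ the inclusion ${]\min J, \max J[} \cap I \subseteq J$ is equivalent to $Y = {]\min X, \max X[} \cap A_I$.

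To compute the sign, I determine the rank of $\mat_{\arc_I}/K$. From the proof of \cref{lem:connectedcontraction}, after contracting $K$ and discarding loops, the resulting $2$-connected graph is isomorphic to the shard graph of the sub-arc $\arc' \eqdef (\min X, \max X, Y, X \ssm \{\min X, \max X\})$, whose cycle matroid has rank $|X \ssm \{\min X, \max X\}| + 1 = |X| - 1$. Thus $\tilde\beta(\mat_{\arc_I}/K) = (-1)^{|X|}$. Finally, since $a_I \in X \subseteq [a_I, b_I]$ forces $a_I = \min X$ and symmetrically for $b_I$, we have $\{a_I, b_I\} \cap X \subseteq \{\min X, \max X\}$, so
\[
\asize{J} \;=\; |J \ssm (\{\min J, \max J\} \cup I)| \;=\; |X \ssm \{\min X, \max X, a_I, b_I\}| \;=\; |X| - 2,
\]
giving $(-1)^{\asize{J}} = (-1)^{|X|}$, as required. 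The only delicate point is the careful unpacking of \cref{lem:connectedcontraction} into the compact condition $I \conn J$ together with the identification of the residual shard graph used for the rank computation; everything else is routine bookkeeping.
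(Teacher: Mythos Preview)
Your proof is correct and follows exactly the approach the paper intends: the paper sets up \cref{thm:betaInvariants} and \cref{lem:connectedcontraction}, then introduces the notation $I \conn J$ and $\asize{J}$ ``in view of'' these results, leaving the actual derivation of \cref{prop:FStoSPa} implicit. You have filled in precisely those details --- the translation from the $X \cup Y$ description of \cref{lem:connectedcontraction} to the condition $I \conn J$, and the rank computation giving the sign $(-1)^{|X|} = (-1)^{\asize{J}}$ --- all of which are correct.
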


For example, \cref{fig:signedMSFacesSimplex,fig:signedMSFacesSimplexProof} illustrate the decomposition
\[
\translatedShardPolytope[134] = \translatedShardPolytope[\raisebox{-.2cm}{\includegraphics[scale=.6]{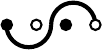}}] = \triangle_{12} + \triangle_{134} + \triangle_{234} - \triangle_{1234}.
\]

\begin{figure}
	\capstart
	\centerline{
		\begin{tabular}{c@{$=$}c@{$+$}c@{$+$}c@{$-$}c}
		\raisebox{-1cm}{\includegraphics[scale=.8]{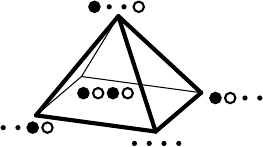}} & \raisebox{-1cm}{\includegraphics[scale=.8]{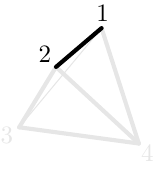}} & \raisebox{-1cm}{\includegraphics[scale=.8]{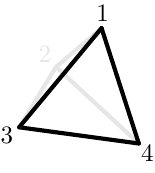}} & \raisebox{-1cm}{\includegraphics[scale=.8]{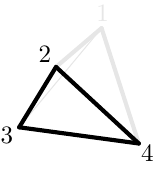}} & \raisebox{-1cm}{\includegraphics[scale=.8]{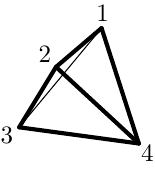}} \\[1.2cm]
		$\translatedShardPolytope[\raisebox{-.2cm}{\includegraphics[scale=.6]{signedMSFacesSimplexArc}}]$ & $\triangle_{12}$ & $\triangle_{134}$ & $\triangle_{234}$ & $\triangle_{1234}$
		\end{tabular}
	}
	\caption{An example of Minkowski decomposition into signed faces of simplices of \cref{prop:FStoSPa}. See also \cref{fig:signedMSFacesSimplexProof}.}
	\label{fig:signedMSFacesSimplex}
\end{figure}

\begin{figure}
	\capstart
	\centerline{
		\begin{tabular}{c@{}c@{}c@{}c@{}c@{}c@{}c@{}c@{}c@{}c@{}c}
		\raisebox{-1cm}{\includegraphics[scale=.8]{signedMSFacesSimplexPolytope1}} & $+$ & \raisebox{-1cm}{\includegraphics[scale=.8]{signedMSFacesSimplexPolytope5}} & $=$ & \raisebox{-2cm}{\includegraphics[scale=.8]{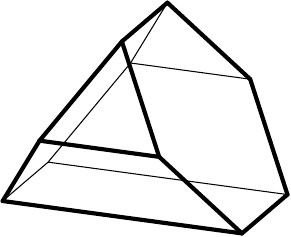}} & $=$ & \raisebox{-1cm}{\includegraphics[scale=.8]{signedMSFacesSimplexPolytope2}} & $+$ & \raisebox{-1cm}{\includegraphics[scale=.8]{signedMSFacesSimplexPolytope3}} & $+$ & \raisebox{-1cm}{\includegraphics[scale=.8]{signedMSFacesSimplexPolytope4}} \\[-.7cm]
		$\translatedShardPolytope[\raisebox{-.2cm}{\includegraphics[scale=.6]{signedMSFacesSimplexArc}}]$ & & $\triangle_{1234}$ & & & & $\triangle_{12}$ & & $\triangle_{134}$ & & $\triangle_{234}$
		\end{tabular}
	}
	\caption{Visual proof of the identity of \cref{fig:signedMSFacesSimplex}.}
	\label{fig:signedMSFacesSimplexProof}
\end{figure}

Let us rephrase \cref{prop:FStoSPa} in terms of the parameters of \cref{coro:threeParametrizations}.

\begin{proposition}
\label{prop:FStoSPb}
In \cref{coro:threeParametrizations}, the parameters~$\coeffFS$ are obtained from the parameters~$\coeffSP$ by
\[
\coeffFS_J = \sum_{I \conn J} (-1)^{\asize{J}} \, \coeffSP_I.
\]
\end{proposition}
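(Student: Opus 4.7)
The plan is to deduce Proposition \ref{prop:FStoSPb} directly from Proposition \ref{prop:FStoSPa} by linearity and the uniqueness of the simplex basis decomposition. The key point is that Proposition \ref{prop:FStoSPa} already gives the decomposition of each individual translated shard polytope $\translatedShardPolytope[I]$ in the linear basis of faces of the standard simplex, and Proposition \ref{prop:FStoSPb} is just the induced change-of-basis formula.

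More concretely, I would start from the definition $\deformedPermutahedronSP = \sum_{I \in \nonsing} \coeffSP_I \, \translatedShardPolytope[I]$ in the space $\CVDP$ of caged virtual deformed permutahedra (Definition \ref{def:coeffSP}). Substituting the expansion from Proposition \ref{prop:FStoSPa},
\[
\deformedPermutahedronSP \;=\; \sum_{I \in \nonsing} \coeffSP_I \sum_{I \conn J} (-1)^{\asize{J}} \, \triangle_J \;=\; \sum_{J \in \nonsing} \Bigl( \sum_{I \conn J} (-1)^{\asize{J}} \, \coeffSP_I \Bigr) \triangle_J,
\]
where swapping the order of summation is legal because both indexing sets $I, J$ range over the finite set $\nonsing$, and the relation $I \conn J$ is symmetric as an incidence condition on pairs.

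Next, I would invoke Corollary \ref{coro:threeParametrizations} together with Proposition \ref{prop:faceSimplexBasis}: the faces $\{\triangle_J\}_{J \in \nonsing}$ form a linear basis of $\CVDP$, so comparing the expression above with $\deformedPermutahedronFS = \sum_J \coeffFS_J \, \triangle_J$ yields the claimed identity coefficient by coefficient. Since this reduces to a routine coefficient-matching in a known basis, there is no genuine obstacle once Proposition \ref{prop:FStoSPa} is in hand; the only care needed is to check that the sums make sense inside $\CVDP$ (which they do since each $\translatedShardPolytope[I]$ and each $\triangle_J$ with $|J| \ge 2$ is caged, and linear combinations of caged virtual deformed permutahedra remain in $\CVDP$ by definition). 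Thus the proof is essentially a one-line derivation from Proposition \ref{prop:FStoSPa}, and the real work lies in establishing that earlier statement.
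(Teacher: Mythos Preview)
Your proof is correct and matches the paper's approach exactly: the paper treats Proposition~\ref{prop:FStoSPb} as a direct rephrasing of Proposition~\ref{prop:FStoSPa} and gives no separate argument. One minor correction: the relation~$\conn$ is \emph{not} symmetric (for instance $\{1,3\} \conn \{1,2,3\}$ holds while $\{1,2,3\} \conn \{1,3\}$ fails), but this is irrelevant to your argument since interchanging a finite double sum over the pairs $(I,J)$ with $I \conn J$ requires no symmetry whatsoever.
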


\begin{remark}
\label{rem:FStoSPb}
To visualize this sum matricially, arrange the subsets of~$\nonsing$ by lexicographic order on the pair $(\max J - \min J, -|J|)$.
For instance, this order is given by
\(
\{1, 2\}, \{2, 3\}, \{1, 2, 3\}, \{1, 3\}
\)
for~$n = 3$ and
\(
\{1, 2\}, \{2, 3\}, \{3, 4\}, \{1, 2, 3\}, \{2, 3, 4\}, \{1, 3\}, \{2, 4\}, \{1, 2, 3, 4\}, \{1, 2, 4\}, \{1, 3, 4\}, \{1, 4\}
\)
for~$n = 4$.
Then the matrix~$M_\coeffFS^\coeffSP$ such that~$\coeffFS = M_\coeffFS^\coeffSP \cdot \coeffSP$ becomes upper triangular for this order.
For instance, when~$n = 3$ and~$n = 4$, the matrix~$M_\coeffFS^\coeffSP$ is given by
\[
\begin{blockarray}{cccccc}
	12 & 23 & 123 & 13 & & \\
	\begin{block}{(cccc)cc}
	 1 &  0 &  0 &  1 & 12  & \\
	 0 &  1 &  0 &  1 & 23  & \\
	 0 &  0 &  1 & -1 & 123 & \\
	 0 &  0 &  0 &  1 & 13  & \\
	\end{block}
\end{blockarray}
\qquad
%
\begin{blockarray}{ccccccccccccc}
	12 & 23 & 34 & 123 & 234 & 13 & 24 & 1234 & 124 & 134 & 14 & & \\
	\begin{block}{(ccccccccccc)cc}
	 1 &  0 &  0 &  0 &  0 &  1 &  0 &  0 &  0 &  1 &  1 & 12   & \\
	 0 &  1 &  0 &  0 &  0 &  1 &  1 &  0 &  0 &  0 &  1 & 23   & \\
	 0 &  0 &  1 &  0 &  0 &  0 &  1 &  0 &  1 &  0 &  1 & 34   & \\
	 0 &  0 &  0 &  1 &  0 & -1 &  0 &  0 &  1 &  0 & -1 & 123  & \\
	 0 &  0 &  0 &  0 &  1 &  0 & -1 &  0 &  0 &  1 & -1 & 234  & \\
	 0 &  0 &  0 &  0 &  0 &  1 &  0 &  0 &  0 &  0 &  1 & 13   & \\
	 0 &  0 &  0 &  0 &  0 &  0 &  1 &  0 &  0 &  0 &  1 & 24   & \\
	 0 &  0 &  0 &  0 &  0 &  0 &  0 &  1 & -1 & -1 &  1 & 1234 & \\
	 0 &  0 &  0 &  0 &  0 &  0 &  0 &  0 &  1 &  0 & -1 & 124  & \\
	 0 &  0 &  0 &  0 &  0 &  0 &  0 &  0 &  0 &  1 & -1 & 134  & \\
	 0 &  0 &  0 &  0 &  0 &  0 &  0 &  0 &  0 &  0 &  1 & 14   & \\
	\end{block}
\end{blockarray}
\]
\end{remark}

\begin{remark}
The representation of \cref{prop:FStoSPa} is unique, as the faces of a standard simplex form a base of~$\VDP$.
However, in this representation the roles of $A$ and $B$ are highly asymmetric: its size grows exponentially on the size of~$B$.
It is natural to wonder whether there is a larger generating set of~$\VDP$ that would allow to represent shard polytopes in a sparse symmetric way.
Note in particular that shard polytopes of down arcs of the form~$\arc \eqdef (a, b, \varnothing, {]a,b[})$ are represented by the exponential alternating sum~$\sum_J (-1)^{|J|} \, \triangle_J$ for all ${J \in \nonsing[{a,b}]} $, whereas $\translatedShardPolytope$ is just a translate of the reflected standard simplex~$\bigtriangledown_{[a,b]} \eqdef \conv \set{-\b{e}_j}{j \in [a,b]}$. Thus, the faces $\bigtriangledown\!_J$ of the reflected standard simplex are natural candidates.
In any case, any nontrivial Minkowski decomposition of shard polytopes in terms of $\triangle_J$ and $\bigtriangledown\!_J$ (or any other family) must involve positive and negative summands, as shard polytopes are indecomposable.
\end{remark}

Decompositions of associahedra realizing the $\arc$-Cambrian fan~$\Fan_\arc$ as signed Minkowski sums of faces of the standard simplex were thoroughly studied by C.~Lange in~\cite{Lange}.
Remember from \cref{thm:shardPolytopesRaysTypeCone} that the shard polytopes of the arcs forcing~$\arc$ are the rays of the type cone of~$\Fan_\arc$.
Therefore, by \cref{coro:uniqueDecompositionCambrian}, any associahedron realizing the $\arc$-Cambrian fan~$\Fan_\arc$ has a unique expression of the form $\sum_{\arc' \succ \arc} \coeffSP_{\arc'} \, \shardPolytope[\arc']$ up to translation.
This provides an alternative way to recover the Minkowski decompositions from~\cite{Lange}.

\begin{corollary}
\label{coro:LangeDecomposition}
For any positive coefficients~$\coeffSP_I > 0$, let $\polytope{P} = \sum_{\arc_I \succ \arc} \coeffSP_{I} \, \translatedShardPolytope[I]$ be a caged associahedron realizing the $\arc$-Cambrian fan~$\Fan_\arc$. Then 
\[
\polytope{P} = \sum_{J\subseteq [a,b]} \coeffFS_J \, \triangle_J
\qquad\text{where}\qquad
\coeffFS_J = \sum_{\substack{I \conn J \\ \arc_I \succ \arc}} (-1)^{\asize{J}} \, \coeffSP_I.
\]
\end{corollary}

In particular, for the $\arc$-associahedron~$\Asso[\arc]$ described in \cref{exm:HohlwegLangeAsso} (see \cref{exm:HohlwegLangeAssoMinkowskiSum,exm:HohlwegLangeAssoVertexFacetDescription}), all the coefficients $\coeffSP_I$ are~$1$.
This case was studied with special detail in~\cite{Lange}, who gave a combinatorial description of the coefficients in the Minkowski decomposition in terms of nested up and down interval decompositions. We obtain the following elementary description. 

\begin{corollary}
\label{coro:LangeDecompositionCambrian}
For~$\arc \eqdef (a, b, A, B)$, the $\arc$-associahedron~$\Asso[\arc]$ decomposes as
\[
\Asso[\arc] = \sum_{\varnothing \ne J \subseteq [a,b]} \coeffFS_J \, \triangle_J,
\]
where for $|J| \ge 2$ 
\[
\coeffFS_J =
\begin{cases}
(-1)^{\asize{J}[A]} & \text{ if } A \cap {]\min J, \max J[} \subset J \text{ and } \min J, \max J \in A,\\
(-1)^{\asize{J}[A]}\cdot (b - \max J + 1) & \text{ if } A \cap {]\min J, \max J[} \subset J \text{ and } \max J \notin A,\\
(-1)^{\asize{J}[A]} \cdot \min J & \text{ if } A \cap {]\min J, \max J[} \subset J \text{ and } \max J \in A,\\
(-1)^{\asize{J}[A]} \cdot (b - \max J + 1) \cdot \min J & \text{ if } A \cap {]\min J, \max J[} \subset J \text{ and } \min J, \max J \notin A,\\
0 & \text{otherwise,}
\end{cases}
\]
and for $J=\{j\}$ 
\[
\coeffFS_{\{j\}} =
\begin{cases}
j\cdot (b-j-1) & \text{ if }j\in A\cup\{a\},\\
(b-j-1) & \text{ otherwise.}
\end{cases}
\]
\end{corollary}


\subsubsection{From shard polytopes to simplices}
\label{subsubsec:shardPolytopeBasis}

We now decompose the faces of the standard simplex in terms of the shard polytopes, giving the reverse direction of \cref{prop:FStoSPa} and proving \cref{prop:main8}.

\begin{proposition}
\label{prop:SPtoFSa}
For any subset~$J \in \nonsing$, we have the Minkowski decomposition
\[
\triangle_J = \sum_{J \conn I} (-1)^{|\{\min I, \max I\} \cap \{\min J, \max J\}|} \, \translatedShardPolytope[I].
\]
\end{proposition}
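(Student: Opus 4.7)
The strategy is to verify the claim by inverting the change-of-basis relation already established in Proposition \ref{prop:FStoSPa}. Since both $\{\triangle_J\}_{J \in \nonsing}$ and $\{\translatedShardPolytope[I]\}_{I \in \nonsing}$ are bases of the space $\CVDP$ of caged virtual deformed permutahedra (Propositions \ref{prop:faceSimplexBasis} and \ref{prop:shardPolytopeBasis}), a Minkowski identity of the form $\triangle_J = \sum_I c_{JI}\, \translatedShardPolytope[I]$ is uniquely determined by its coefficients. Substituting Proposition \ref{prop:FStoSPa} into the proposed formula, the claim reduces to the purely combinatorial identity
\[
\sum_{\substack{I \in \nonsing \\ J \conn I,\; I \conn K}} (-1)^{|\{\min I, \max I\} \cap \{\min J, \max J\}|\,+\,\asize{K}[I]} = \delta_{JK} \qquad \text{for all } J, K \in \nonsing.
\]

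For the diagonal case $J = K$, I would argue that $I = J$ is the unique contributor. Indeed, the endpoint containments extracted from $J \conn I$ and $I \conn J$ read $\{\min I, \max I\} \subseteq \{\min J, \max J\} \cup B_J$ and $\{\min J, \max J\} \subseteq \{\min I, \max I\} \cup B_I$; combined they force $\min I = \min J$ and $\max I = \max J$. The interior containments ${]\min I, \max I[} \cap J \subseteq I$ and ${]\min I, \max I[} \cap I \subseteq J$ then give $A_I = A_J$, so $I = J$. The corresponding sign is $(-1)^{2+0} = +1$, as required.

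For the off-diagonal case $J \neq K$, I would construct a sign-reversing involution on the set $\{I \in \nonsing : J \conn I,\; I \conn K\}$. The natural candidate is to identify a position $x \in [n]$ depending only on $J$ and $K$ whose membership in $I$ is free to be toggled subject to both relations, and whose toggling flips the parity of exactly one of the two exponents. When the endpoint pairs $\{\min J, \max J\}$ and $\{\min K, \max K\}$ disagree, a suitable extremal position provides a toggle affecting the first exponent $|\{\min I, \max I\} \cap \{\min J, \max J\}|$. When those pairs coincide, an element of the symmetric difference $A_J \symdif A_K$ lying in $]\min J, \max J[$ supplies a toggle affecting the second exponent $\asize{K}[I]$.

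The main obstacle is the explicit construction and verification of this involution, since the joint constraints $J \conn I$ and $I \conn K$ couple the endpoints of $I$ to those of both $J$ and $K$ and simultaneously restrict $A_I$ via containments on both sides, so one must check that toggling $x$ genuinely preserves both relations. A cleaner alternative approach is inductive: the Minkowski identity of Theorem \ref{thm:inductiveMinkowskiSum} encodes a recursion that expresses shard polytopes of longer arcs in terms of shorter ones, and dualizing this recursion produces the inverse change of basis directly by induction on $\max J - \min J$, bypassing the need for a global involution.
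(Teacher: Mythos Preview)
Your reduction to the combinatorial identity
\[
\sum_{\substack{I \in \nonsing \\ J \conn I,\; I \conn K}} (-1)^{|\{\min I,\max I\}\cap\{\min J,\max J\}| + \asize{K}[I]} = \delta_{JK}
\]
is exactly the paper's strategy, and your diagonal case is correct (and even a bit crisper than the paper's handling of it).

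The gap is the off-diagonal cancellation, which you correctly identify as the crux but do not carry out. The paper does execute this explicitly, and the involution is more delicate than a single ``toggle'' at a free position. When $\min J \ne \min K$, the paper pairs each $I$ with $\min I = \min J$ to an $I'$ with $\min I' \ne \min J$ defined by
\[
I' =
\begin{cases}
(I \ssm \{\min J\}) \cup \{\min K\} & \text{if } I \cap {]\min J,\min K[} = \varnothing,\\
I \ssm \{\min J\} & \text{otherwise},
\end{cases}
\]
and checks that this is a sign-reversing bijection on $\{I : J \conn I \conn K\}$; the case $\max J \ne \max K$ is symmetric. Once both extremes agree, the constraints $J \conn I \conn K$ force $\min I = \min J = \min K$, $\max I = \max J = \max K$, and $J \subseteq I \subseteq K$, whereupon $(-1)^{\asize{K}[I]} = (-1)^{|K\ssm I|}$ and ordinary inclusion--exclusion finishes. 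So the involution is not a local toggle of a single element of $A_J \symdif A_K$; it moves the endpoint of $I$, possibly deleting $\min J$ and possibly inserting $\min K$, and the bijectivity and preservation of both relations $J \conn I'$ and $I' \conn K$ require checking.

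Your proposed alternative via \cref{thm:inductiveMinkowskiSum} is plausible but would require formulating and proving a dual recursion for $\triangle_J$; the paper does not pursue this route and it is not obviously shorter.
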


For example, \cref{fig:signedMSShardPolytopes,fig:signedMSShardPolytopesProof} illustrate the decomposition
\[
\begin{array}{c@{\;}c@{\;}c@{\;}c@{\;}c@{\;}c@{\;}c@{\;}c@{\;}c}
\triangle_{124} & = & \translatedShardPolytope[124] & + & \translatedShardPolytope[1234] & - & \translatedShardPolytope[34] & - & \translatedShardPolytope[123] \\[.1cm]
& = & \translatedShardPolytope[\raisebox{-.2cm}{\includegraphics[scale=.6]{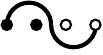}}] & + & \translatedShardPolytope[\raisebox{-.2cm}{\includegraphics[scale=.6]{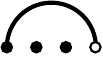}}] & - & \translatedShardPolytope[\raisebox{-.2cm}{\includegraphics[scale=.6]{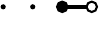}}] & - & \translatedShardPolytope[\raisebox{-.2cm}{\includegraphics[scale=.6]{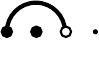}}]
\end{array}
\]

\begin{figure}
	\capstart
	\centerline{
		\begin{tabular}{c@{$=$}c@{$+$}c@{$-$}c@{$-$}c}
		\raisebox{-1cm}{\includegraphics[scale=.8]{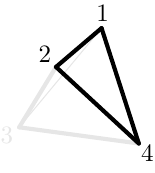}} & \raisebox{-1cm}{\includegraphics[scale=.8]{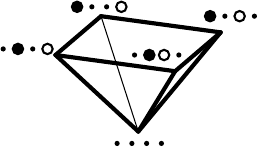}} & \raisebox{-1cm}{\includegraphics[scale=.8]{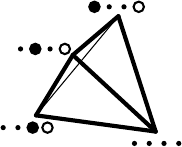}} & \hspace{.2cm}\raisebox{-.2cm}{\includegraphics[scale=.8]{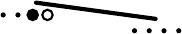}}\hspace{.2cm} & \raisebox{-1cm}{\includegraphics[scale=.8]{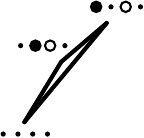}} \\[1.2cm]
		$\triangle_{124}$ & $\translatedShardPolytope[\raisebox{-.2cm}{\includegraphics[scale=.6]{signedMSShardPolytopesArc2}}]$ & $\translatedShardPolytope[\raisebox{-.2cm}{\includegraphics[scale=.6]{signedMSShardPolytopesArc3}}]$ & $\translatedShardPolytope[\raisebox{-.2cm}{\includegraphics[scale=.6]{signedMSShardPolytopesArc4}}]$ & $\translatedShardPolytope[\raisebox{-.2cm}{\includegraphics[scale=.6]{signedMSShardPolytopesArc5}}]$
		\end{tabular}
	}
	\caption{An example of Minkowski decomposition into signed faces of simplices of \cref{prop:SPtoFSa}. See also \cref{fig:signedMSShardPolytopesProof}.}
	\label{fig:signedMSShardPolytopes}
\end{figure}

\begin{figure}
	\capstart
	\centerline{
		\begin{tabular}{c@{}c@{}c@{}c@{}c@{}c@{}c@{}c@{}c@{}c@{}c}
		\raisebox{-1cm}{\includegraphics[scale=.8]{signedMSShardPolytopesPolytope1}} & $+$ & \hspace{.2cm}\raisebox{-.2cm}{\includegraphics[scale=.8]{signedMSShardPolytopesPolytope4}}\hspace{.2cm} & $+$ & \raisebox{-1cm}{\includegraphics[scale=.8]{signedMSShardPolytopesPolytope5}} & $=$ & \raisebox{-2.2cm}{\includegraphics[scale=.8]{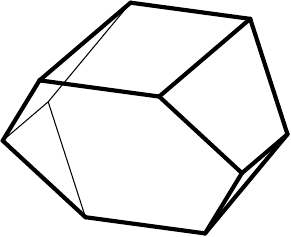}} & $=$ & \raisebox{-1cm}{\includegraphics[scale=.8]{signedMSShardPolytopesPolytope2}} & $+$ & \raisebox{-1cm}{\includegraphics[scale=.8]{signedMSShardPolytopesPolytope3}} \\[-.7cm]
		$\triangle_{124}$ & & $\translatedShardPolytope[\raisebox{-.2cm}{\includegraphics[scale=.6]{signedMSShardPolytopesArc4}}]$ & & $\translatedShardPolytope[\raisebox{-.2cm}{\includegraphics[scale=.6]{signedMSShardPolytopesArc5}}]$ & & & & $\translatedShardPolytope[\raisebox{-.2cm}{\includegraphics[scale=.6]{signedMSShardPolytopesArc2}}]$ & & $\translatedShardPolytope[\raisebox{-.2cm}{\includegraphics[scale=.6]{signedMSShardPolytopesArc3}}]$
		\end{tabular}
	}
	\caption{Visual proof of the identity of \cref{fig:signedMSShardPolytopes}.}
	\label{fig:signedMSShardPolytopesProof}
\end{figure}

Let us rephrase \cref{prop:SPtoFSa} in terms of the parameters of \cref{coro:threeParametrizations}.

\begin{proposition}
\label{prop:SPtoFSb}
In \cref{coro:threeParametrizations}, the parameters~$\coeffSP$ are obtained from the parameters~$\coeffFS$ by
\[
\coeffSP_I = \sum_{J \conn I} (-1)^{|\{\min I, \max I\} \cap \{\min J, \max J\}|} \, \coeffFS_J.
\]
\end{proposition}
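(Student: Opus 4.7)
The plan is to deduce Proposition~\ref{prop:SPtoFSb} as an immediate consequence of the Minkowski identity in Proposition~\ref{prop:SPtoFSa} combined with the uniqueness of the shard polytope decomposition provided by Proposition~\ref{prop:shardPolytopeBasis}. In other words, Propositions~\ref{prop:FStoSPb} and~\ref{prop:SPtoFSb} are just the coordinate statements of the change-of-basis between the two bases $(\triangle_J)_{J \in \nonsing}$ and $(\translatedShardPolytope[I])_{I \in \nonsing}$ of the space $\CVDP$, applied in opposite directions.

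First I would start from the expression of a caged virtual deformed permutahedron in the simplex basis, $\deformedPermutahedronFS = \sum_{J \in \nonsing} \coeffFS_J \, \triangle_J$, and substitute each $\triangle_J$ using the Minkowski identity of Proposition~\ref{prop:SPtoFSa}:
\[
\triangle_J = \sum_{J \conn I} (-1)^{|\{\min I, \max I\} \cap \{\min J, \max J\}|} \, \translatedShardPolytope[I].
\]
Exchanging the order of summation then yields
\[
\deformedPermutahedronFS = \sum_{I \in \nonsing} \Bigg( \sum_{J \conn I} (-1)^{|\{\min I, \max I\} \cap \{\min J, \max J\}|} \, \coeffFS_J \Bigg) \, \translatedShardPolytope[I].
\]
By Proposition~\ref{prop:shardPolytopeBasis} the shard polytopes form a basis of $\CVDP$, so the coefficients in this expansion agree with the unique coefficients $\coeffSP_I$ for which $\deformedPermutahedronSP = \deformedPermutahedronFS$. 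This forces the stated formula $\coeffSP_I = \sum_{J \conn I} (-1)^{|\{\min I, \max I\} \cap \{\min J, \max J\}|} \, \coeffFS_J$.

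There is essentially no obstacle here once Proposition~\ref{prop:SPtoFSa} is in hand; all the combinatorial work lies in establishing that identity (which is the real content) and in the linear independence statement of Proposition~\ref{prop:shardPolytopeBasis}. The only point to keep in mind is that both sides of the identity in Proposition~\ref{prop:SPtoFSa} must genuinely be viewed as elements of $\CVDP$, so one has to ensure that the rearrangement is carried out in the group of virtual polytopes (and not just among ordinary Minkowski sums), which is legitimate because $\CVDP$ is a real vector space. After that, the matrix identity $M_\coeffSP^\coeffFS = (M_\coeffFS^\coeffSP)^{-1}$ implicit in the pairing of Propositions~\ref{prop:FStoSPb} and~\ref{prop:SPtoFSb} follows automatically from the fact that both expansions describe two sides of the same change of basis.
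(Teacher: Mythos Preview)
Your proposal is correct and matches the paper's approach exactly: the paper states Proposition~\ref{prop:SPtoFSb} as a ``rephrasing'' of Proposition~\ref{prop:SPtoFSa} and gives no separate proof, so your explicit substitution-and-uniqueness argument is precisely the intended passage from the Minkowski identity to the coefficient formula.
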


\begin{remark}
\label{rem:SPtoFSb}
The matrix~$M_\coeffSP^\coeffFS$ such that~$\coeffSP = M_\coeffSP^\coeffFS \cdot \coeffFS$ is upper triangular when ordering the subsets of~$\nonsing$ by lexicographic order on the pair~$(\max J - \min J, -|J|)$.
For instance, when~$n = 3$ and~$n = 4$, the matrix~$M_\coeffSP^\coeffFS$ is given by
\[
\begin{blockarray}{cccccc}
	12 & 23 & 123 & 13 & & \\
	\begin{block}{(cccc)cc}
	 1 &  0 &  0 & -1 & 12  & \\
	 0 &  1 &  0 & -1 & 23  & \\
	 0 &  0 &  1 &  1 & 123 & \\
	 0 &  0 &  0 &  1 & 13  & \\
	\end{block}
\end{blockarray}
\qquad
%
\begin{blockarray}{ccccccccccccc}
	12 & 23 & 34 & 123 & 234 & 13 & 24 & 1234 & 124 & 134 & 14 & & \\
	\begin{block}{(ccccccccccc)cc}
	 1 &  0 &  0 &  0 &  0 & -1 &  0 &  0 &  0 & -1 & -1 & 12   & \\
	 0 &  1 &  0 &  0 &  0 & -1 & -1 &  0 &  0 &  0 &  1 & 23   & \\
	 0 &  0 &  1 &  0 &  0 &  0 & -1 &  0 & -1 &  0 & -1 & 34   & \\
	 0 &  0 &  0 &  1 &  0 &  1 &  0 &  0 & -1 &  0 & -1 & 123  & \\
	 0 &  0 &  0 &  0 &  1 &  0 &  1 &  0 &  0 & -1 & -1 & 234  & \\
	 0 &  0 &  0 &  0 &  0 &  1 &  0 &  0 &  0 &  0 & -1 & 13   & \\
	 0 &  0 &  0 &  0 &  0 &  0 &  1 &  0 &  0 &  0 & -1 & 24   & \\
	 0 &  0 &  0 &  0 &  0 &  0 &  0 &  1 &  1 &  1 &  1 & 1234 & \\
	 0 &  0 &  0 &  0 &  0 &  0 &  0 &  0 &  1 &  0 &  1 & 124  & \\
	 0 &  0 &  0 &  0 &  0 &  0 &  0 &  0 &  0 &  1 &  1 & 134  & \\
	 0 &  0 &  0 &  0 &  0 &  0 &  0 &  0 &  0 &  0 &  1 & 14   & \\
	\end{block}
\end{blockarray}
\]
\end{remark}

\begin{proof}[Proof of \cref{prop:SPtoFSa}]
By \cref{prop:FStoSPa}, we have 
\begin{align*}
\sum_{J \conn I} (-1)^{|\{\min I, \max I\} \cap \{\min J, \max J\}|} \translatedShardPolytope[\arc_I]
& = \sum_{J \conn I} (-1)^{|\{\min I, \max I\} \cap \{\min J, \max J\}|} \sum_{I \conn K} (-1)^{\asize{K}} \triangle_{K} \\
& = \sum_{J \conn K} \triangle_{K} \sum_{J \conn I \conn K} (-1)^{|\{\min I, \max I\} \cap \{\min J, \max J\}|} (-1)^{\asize{K}}.
\end{align*}

If $\min J \ne \min K$, then for each $I$ with $J \conn I \conn K$ and $\min I = \min J$, the set $I' \in \nonsing$ defined as
\[
I' \eqdef 
\begin{cases}
(I \ssm \{\min J\}) \cup \{\min K\} & \text{ if } I \cap {]\min J, \min K[} = \varnothing, \\
I \ssm \{\min J\} & \text{otherwise}
\end{cases}
\]
fulfills 
\[
(-1)^{\delta_{\max I = \max J}}  (-1)^{\asize{K}}  = (-1)^{\delta_{\max I' = \max J}} (-1)^{\asize{K}[I']},
\]
and moreover every $I'$ with $J \conn I' \conn K$ and $\min I' \ne \min J$ can be obtained this way. 

This implies that, whenever $\min J \ne \min K$, we have
\[
\sum_{J \conn I \conn K} (-1)^{|\{\min I, \max I\} \cap \{\min J, \max J\}|} (-1)^{\asize{K}}=0.
\]
And analogously whenever $\max J \ne \max K$.

Therefore, it suffices to consider the case $\{\min J, \max J\} = \{\min K,\max K\}$.
Under this constraint, $J \conn K$ if and only if $J \subseteq K$.
Moreover, any  $J \conn I \conn K$ also has the same extrema and we have $J \subseteq I \subseteq K$ and $(-1)^{\asize{K}} = (-1)^{|K \ssm I|}$.
That is, 
\begin{align*}
\sum_{J \conn K} \triangle_{K} \sum_{J \conn I \conn K} (-1)^{|\{\min I, \max I\} \cap \{\min J, \max J\}|} (-1)^{\asize{K}}
& = \sum_{\substack{J \subseteq K \\ \min J = \min K \\ \max J = \max K}} \triangle_{K} \sum_{J \subseteq I \subseteq K} (-1)^{|K \ssm I|} = \triangle_{K}, 
\end{align*}
by the inclusion-exclusion principle.
\end{proof}

Applying \cref{prop:SPtoFSb}, we can now describe the classical permutahedron as a Minkowski combination of shard polytopes.

\begin{corollary}
\label{coro:permutahedronSPcoordinates}
The classical permutahedron~$\Perm$ decomposes as
\[
\Perm = \sum_{1 \le i < j \le n} \simplex_{\{i,j\}} = \sum_{I \in \nonsing} (\min I - 2) (n - \max I -1) \, \translatedShardPolytope[I]. 
\]
\end{corollary}

When~$n = 3$, we obtain that
\[
\Perm[3] = \translatedShardPolytope[123] + \translatedShardPolytope[13] = \translatedShardPolytope[\raisebox{-.25cm}{\includegraphics[scale=.6]{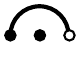}}] + \translatedShardPolytope[\raisebox{-.25cm}{\includegraphics[scale=.6]{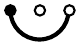}}]
\]
as illustrated in \cref{fig:submodularFunctions}.
When~$n = 4$, we obtain that
\[
\begin{array}{@{}c@{\;}c@{\;}c@{\;}c@{\;}c@{\;}c@{\;}c@{\;}c@{\;}c@{\;}c@{\;}c@{\;}c@{\;}c}
\Perm[4] & = & \translatedShardPolytope[1234] & + & \translatedShardPolytope[124] & + & \translatedShardPolytope[134] & + & \translatedShardPolytope[14] & - & \translatedShardPolytope[12] & - & \translatedShardPolytope[34] \\[.1cm]
& = & \translatedShardPolytope[\raisebox{-.4cm}{\includegraphics[scale=.6]{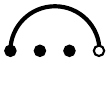}}] & + & \translatedShardPolytope[\raisebox{-.4cm}{\includegraphics[scale=.6]{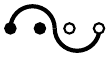}}] & + & \translatedShardPolytope[\raisebox{-.4cm}{\includegraphics[scale=.6]{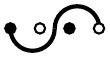}}] & + & \translatedShardPolytope[\raisebox{-.4cm}{\includegraphics[scale=.6]{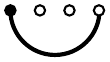}}] & - & \translatedShardPolytope[\raisebox{-.4cm}{\includegraphics[scale=.6]{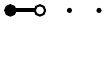}}] & - & \translatedShardPolytope[\raisebox{-.4cm}{\includegraphics[scale=.6]{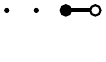}}]
\end{array}
\]
as illustrated in \cref{fig:signedMSShardPolytopesPermutahedron,fig:signedMSShardPolytopesPermutahedronProof}.
Note that for~$n \ge 4$, the coefficients of~$\shardPolytope[{[i]}]$ are always negative for all~$i \in [n-2]$, which gives another proof that~$\Perm$ is not a Minkowski sum of dilated shard polytopes as seen in \cref{coro:standardPermutahedronNotMinkowskiSumShardPolytopes}.

\begin{figure}
	\capstart
	\centerline{
		\begin{tabular}{c@{\;$=$\;}c@{$+$}c@{$+$}c@{$+$}c@{$-$}c@{$-$}c}
		\raisebox{-1.5cm}{\includegraphics[scale=.5]{permutahedron}}\hspace{.2cm} & \raisebox{-.6cm}{\includegraphics[scale=.5]{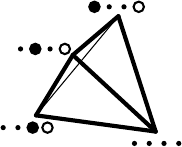}} & \raisebox{-.6cm}{\includegraphics[scale=.5]{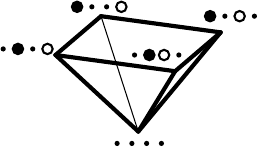}} & \hspace{-.2cm}\raisebox{-.6cm}{\includegraphics[scale=.5]{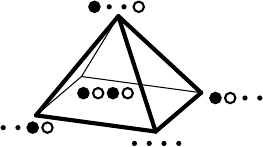}}\hspace{-.2cm} & \raisebox{-.6cm}{\includegraphics[scale=.5]{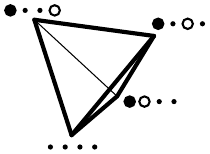}} & \raisebox{-.2cm}{\includegraphics[scale=.5]{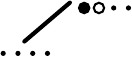}} & \raisebox{-.2cm}{\includegraphics[scale=.5]{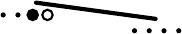}} \\
		$\Perm[4]$ & $\translatedShardPolytope[\raisebox{-.4cm}{\includegraphics[scale=.6]{signedMSShardPolytopesPermutahedronArc3}}]$ & $\translatedShardPolytope[\raisebox{-.4cm}{\includegraphics[scale=.6]{signedMSShardPolytopesPermutahedronArc4}}]$ & $\translatedShardPolytope[\raisebox{-.4cm}{\includegraphics[scale=.6]{signedMSShardPolytopesPermutahedronArc5}}]$ & $\translatedShardPolytope[\raisebox{-.4cm}{\includegraphics[scale=.6]{signedMSShardPolytopesPermutahedronArc6}}]$ & $\translatedShardPolytope[\raisebox{-.4cm}{\includegraphics[scale=.6]{signedMSShardPolytopesPermutahedronArc7}}]$ & $\translatedShardPolytope[\raisebox{-.4cm}{\includegraphics[scale=.6]{signedMSShardPolytopesPermutahedronArc8}}]$
		\end{tabular}
	}
	\caption{The Minkowski decomposition of the classical permutahedron into shard polytopes of \cref{coro:permutahedronSPcoordinates}. See also \cref{fig:signedMSShardPolytopesPermutahedronProof}.}
	\label{fig:signedMSShardPolytopesPermutahedron}
\end{figure}

\begin{figure}
	\capstart
	\centerline{
		\begin{tabular}{c@{}c@{}c@{}c@{}c@{}c@{}c@{}c@{\hspace{-.3cm}}c}
		$\translatedShardPolytope[\raisebox{-.4cm}{\includegraphics[scale=.6]{signedMSShardPolytopesPermutahedronArc3}}]$ & & $\translatedShardPolytope[\raisebox{-.4cm}{\includegraphics[scale=.6]{signedMSShardPolytopesPermutahedronArc4}}]$ & & & & & & \raisebox{-.8cm}{$\translatedShardPolytope[\raisebox{-.4cm}{\includegraphics[scale=.6]{signedMSShardPolytopesPermutahedronArc7}}]$} \\[-.7cm]
		\raisebox{-.6cm}{\includegraphics[scale=.5]{signedMSShardPolytopesPermutahedronPolytope3}} & & \raisebox{-.6cm}{\includegraphics[scale=.5]{signedMSShardPolytopesPermutahedronPolytope4}} & & & & & & \raisebox{-.5cm}{\includegraphics[scale=.5]{signedMSShardPolytopesPermutahedronPolytope7}} \\[-1.7cm]
		& $+$ & & $=$ & \hspace{.2cm}\raisebox{-1.9cm}{\includegraphics[scale=.5]{anotherPermutahedron}}\hspace{.2cm} & $=$ & \hspace{.2cm}\raisebox{-1.7cm}{\includegraphics[scale=.5]{permutahedron}}\hspace{.2cm} & $+$ & \\[-1.7cm]
		\hspace{-.2cm}\raisebox{-.6cm}{\includegraphics[scale=.5]{signedMSShardPolytopesPermutahedronPolytope5}}\hspace{-.2cm} & & \raisebox{-.6cm}{\includegraphics[scale=.5]{signedMSShardPolytopesPermutahedronPolytope6}} & & & & & & \raisebox{.3cm}{\includegraphics[scale=.5]{signedMSShardPolytopesPermutahedronPolytope8}} \\[-.6cm]
		$\translatedShardPolytope[\raisebox{-.4cm}{\includegraphics[scale=.6]{signedMSShardPolytopesPermutahedronArc5}}]$ & & $\translatedShardPolytope[\raisebox{-.4cm}{\includegraphics[scale=.6]{signedMSShardPolytopesPermutahedronArc6}}]$ & & & & & & \raisebox{.9cm}{$\translatedShardPolytope[\raisebox{-.4cm}{\includegraphics[scale=.6]{signedMSShardPolytopesPermutahedronArc8}}]$}
		\end{tabular}
	}
	\caption{Visual proof of the identity of \cref{fig:signedMSShardPolytopesPermutahedron}.}
	\label{fig:signedMSShardPolytopesPermutahedronProof}
\end{figure}


\subsubsection{From heights to shard polytopes}

Composing the formulas of \cref{rem:FStoRHS,prop:SPtoFSb}, we now pass from heights to shard polytopes.

\begin{proposition}
\label{prop:RHStoSP}
In \cref{coro:threeParametrizations}, the parameters~$\coeffRHS$ are obtained from the parameters~$\coeffSP$ by
\[
\coeffRHS_R = \sum_{I \in \nonsing} \Cap(I,R) \, \coeffSP_I.
\]
where $\Cap(I,R)$ is the number of pairs~$r < s \in \big( {]\min I, \max I[} \symdif I \big) \cap R$ such that~${]r,s[} \cap I = {]r,s[} \cap R$.
\end{proposition}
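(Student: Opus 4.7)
The plan is to compose the two identities already available in the paper: by \cref{rem:FStoRHS}, $\coeffRHS_R = \sum_{J \in \nonsing, \, J \subseteq R} \coeffFS_J$, and by \cref{prop:FStoSPb}, $\coeffFS_J = \sum_{I \conn J} (-1)^{\asize{J}} \coeffSP_I$. Substituting the second into the first and exchanging the order of summation immediately reduces the claim to the purely combinatorial identity
\[
\Cap(I,R) = \sum_{\substack{J \in \nonsing \\ J \subseteq R, \; I \conn J}} (-1)^{\asize{J}}.
\]

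I would prove this identity by grouping the sets $J$ according to their extrema. Fix $r < s$ and restrict to those $J$ with $\min J = r$ and $\max J = s$. Unfolding the definitions of $\conn$ and of $J \subseteq R$, such a $J$ exists iff $\{r,s\} \subseteq ({]\min I, \max I[} \symdif I) \cap R$ and ${]r,s[} \cap I \subseteq R$; in that case, decomposing $J \cap {]r,s[} = ({]r,s[} \cap I) \sqcup T$ with $T \eqdef (J \cap {]r,s[}) \ssm I$, one checks directly from the definition of $\asize{J}$ that $\asize{J} = |T|$, and that $T$ ranges freely over all subsets of $({]r,s[} \cap R) \ssm I$. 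Summing $(-1)^{|T|}$ over such $T$ gives $0$ unless $({]r,s[} \cap R) \ssm I = \varnothing$, in which case the pair $(r,s)$ contributes exactly $1$. Together with the compatibility constraint ${]r,s[} \cap I \subseteq R$, the surviving pairs are precisely those with $r < s$, $r, s \in ({]\min I, \max I[} \symdif I) \cap R$, and ${]r,s[} \cap I = {]r,s[} \cap R$, which by definition count $\Cap(I,R)$.

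This proof is essentially bookkeeping: the main (minor) obstacle is simply to check that once the extrema $r, s$ of $J$ are fixed, the defining conditions cleanly separate into a fixed part ($\{r,s\} \cup ({]r,s[} \cap I) \subseteq J$) and a free part ($T \subseteq ({]r,s[} \cap R) \ssm I$), so that the signed sum collapses to a standard inclusion–exclusion cancellation. No new geometric input is required beyond the two preceding formulas.
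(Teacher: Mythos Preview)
Your proof is correct and follows essentially the same approach as the paper: compose \cref{rem:FStoRHS} with \cref{prop:FStoSPb}, exchange summations, group the resulting inner sum by the extrema $(r,s)$ of $J$, and collapse the free part $T \subseteq ({]r,s[} \cap R) \ssm I$ by inclusion--exclusion. Your write-up is in fact slightly more explicit than the paper's (for instance in spelling out that $\asize{J} = |T|$), but the argument is the same.
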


For example, we have
\[
\coeffRHS_{234} = \coeffSP_{23} + \coeffSP_{34} + \coeffSP_{234} + \coeffSP_{13} + 2 \, \coeffSP_{24} + \coeffSP_{124} + \coeffSP_{134} + 2 \, \coeffSP_{14}.
\]

\begin{remark}
\label{rem:RHStoSP}
The matrix~$M_\coeffRHS^\coeffSP$ such that~$\coeffRHS = M_\coeffRHS^\coeffSP \cdot \coeffSP$ is given when~$n = 3$ and~$n = 4$ by
\[
\begin{blockarray}{cccccc}
	12 & 23 & 123 & 13 & & \\
	\begin{block}{(cccc)cc}
	 1 &  0 &  0 &  1 & 12  & \\
	 0 &  1 &  0 &  1 & 23  & \\
	 1 &  1 &  1 &  2 & 123 & \\
	 0 &  0 &  0 &  1 & 13  & \\
	\end{block}
\end{blockarray}
\qquad
\begin{blockarray}{ccccccccccccc}
	12 & 23 & 34 & 123 & 234 & 13 & 24 & 1234 & 124 & 134 & 14 & & \\
	\begin{block}{(ccccccccccc)cc}
	 1 &  0 &  0 &  0 &  0 &  1 &  0 &  0 &  0 &  1 &  1 & 12   & \\
	 0 &  1 &  0 &  0 &  0 &  1 &  1 &  0 &  0 &  0 &  1 & 23   & \\
	 0 &  0 &  1 &  0 &  0 &  0 &  1 &  0 &  1 &  0 &  1 & 34   & \\
	 1 &  1 &  0 &  1 &  0 &  2 &  1 &  0 &  1 &  1 &  2 & 123  & \\
	 0 &  1 &  1 &  0 &  1 &  1 &  2 &  0 &  1 &  1 &  2 & 234  & \\
	 0 &  0 &  0 &  0 &  0 &  1 &  0 &  0 &  0 &  0 &  1 & 13   & \\
	 0 &  0 &  0 &  0 &  0 &  0 &  1 &  0 &  0 &  0 &  1 & 24   & \\
	 1 &  1 &  1 &  1 &  1 &  2 &  2 &  1 &  2 &  2 &  3 & 1234 & \\
	 1 &  0 &  0 &  0 &  0 &  1 &  1 &  0 &  1 &  1 &  2 & 124  & \\
	 0 &  0 &  1 &  0 &  0 &  1 &  1 &  0 &  1 &  1 &  2 & 134  & \\
	 0 &  0 &  0 &  0 &  0 &  0 &  0 &  0 &  0 &  0 &  1 & 14   & \\
	\end{block}
\end{blockarray}
\]
While the matrices~$M_\coeffRHS^\coeffFS$ and $M_\coeffFS^\coeffSP$ are both upper triangular for well-chosen orders on~$\nonsing$, these orders differ so that the product~$M_\coeffRHS^\coeffSP = M_\coeffRHS^\coeffFS \cdot M_\coeffFS^\coeffSP$ is not upper triangular anymore.
Already when~$n = 4$, the matrix~$M_\coeffRHS^\coeffSP$ is not triangularizable since its minimal polynomial~$(x - 1)^7 (x^2 - 3x + 1) (x^2 - x + 1)$ does not split over~$\R$.
\end{remark}

\begin{proof}[Proof of \cref{prop:RHStoSP}]
From \cref{rem:FStoRHS,prop:FStoSPb}, we have
\[
\coeffRHS_R = \sum_{J \subseteq R} \coeffFS_J = \sum_{J \subseteq R} \sum_{I \conn J} (-1)^{\asize{J}} \, \coeffSP_I = \sum_{I \in \nonsing} \bigg( \sum_{\substack{J \subseteq R \\ I \conn J}} (-1)^{\asize{J}} \bigg) \, \coeffSP_I.
\]

Consider now a subset~$J$ such that~$J \subseteq R$ and~$I \conn J$.
Define~$r \eqdef \min J$ and~$s \eqdef \max J$.
By definition, we have~$r < s \in \big( {]\min I, \max I[} \symdif I \big) \cap R$.
Moreover, we have~${{]r,s[} \cap I \subseteq J \subseteq R}$, so that~${]r,s[} \cap I \subseteq {]r,s[} \cap R$.
Finally,
\begin{itemize}
\item if ${]r,s[} \cap I = {]r,s[} \cap R$, then the only subset~$J$ with~$J \subseteq R$ and~$I \conn J$ also satisfies ${]r,s[} \cap J = {]r,s[} \cap I$, so that~$(-1)^{\asize{J}} = 1$.
\item if ${]r,s[} \cap I \ne {]r,s[} \cap R$, then we obtain a sum over subsets of~${]r,s[} \cap (R \ssm I)$ of terms of the form~$(-1)^{\asize{J}}$ which vanishes by the inclusion-exclusion principle.
\qedhere
\end{itemize}
\end{proof}

\begin{remark}
Note that an alternative description of the parameters~$\coeffRHS$ in terms of the parameters~$\coeffSP$ can be derived from \cref{subsec:vertexFacetDescriptionsShardSumotopes}.
Indeed, we obtain in \cref{def:h,prop:h} directly the heights of the shard polytopes.
The only difference is that \cref{def:h,prop:h} are written in terms of outer normal vectors while this section is written in terms of inner normal vectors.
\end{remark}


\subsubsection{From shard polytopes to heights}
\label{subsubsec:SPtoRHS}

Composing the formulas of \cref{rem:FStoRHS,prop:SPtoFSb}, we finally pass from shard polytopes to heights, giving the reverse direction of \cref{prop:RHStoSP}.

\begin{proposition}
\label{prop:SPtoRHS}
In \cref{coro:threeParametrizations}, the parameters~$\coeffSP$ are obtained from the parameters~$\coeffRHS$ by
\[
\coeffSP_I = \coeffRHS_I - \coeffRHS_{I \symdif [1, \min I]} - \coeffRHS_{I \symdif [\max I, n]} + \coeffRHS_{I \symdif ([1, \min I] \cup [\max I, n])},
\]
with the convention that~$\coeffRHS_R = 0$ if~$|R| \le 1$.
\end{proposition}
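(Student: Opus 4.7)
The plan is to compose the formulas from Remark~\ref{rem:FStoRHS} and Proposition~\ref{prop:SPtoFSb}: substituting $\coeffFS_J = \sum_{R \subseteq J, \, |R| \ge 2} (-1)^{|J \ssm R|} \coeffRHS_R$ into $\coeffSP_I = \sum_{J \conn I} (-1)^{|\{a, b\} \cap \{\min J, \max J\}|} \coeffFS_J$, where $a \eqdef \min I$ and $b \eqdef \max I$, and swapping the order of summation, one obtains $\coeffSP_I = \sum_R c_R(I) \, \coeffRHS_R$ with
\[
c_R(I) \eqdef \sum_{\substack{J \conn I \\ R \subseteq J}} (-1)^{|\{a, b\} \cap \{\min J, \max J\}|} (-1)^{|J \ssm R|}.
\]
The goal is then to show that $c_R(I)$ vanishes except for $R$ in the four sets $\{I, \, I \symdif [1, a], \, I \symdif [b, n], \, I \symdif ([1, a] \cup [b, n])\}$, with coefficients $+1, -1, -1, +1$ respectively.

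To evaluate $c_R(I)$, I would decompose each $J$ along the partition $[n] = [1, a{-}1] \sqcup \{a\} \sqcup {]a, b[} \sqcup \{b\} \sqcup {]b, n]}$, writing $J = X \sqcup \{a\}^\alpha \sqcup Y \sqcup \{b\}^\beta \sqcup Z$ with $\alpha, \beta \in \{0, 1\}$, and decompose $R$ analogously into pieces $R_X, R_Y, R_Z$ with indicators $r_a, r_b$. The condition $J \conn I$ translates, on the one hand, to $Y \subseteq I \cap {]a, b[}$ (coming from ${]\min I, \max I[} \cap J \subseteq I$), and on the other hand to a constraint on $(\alpha, X, \beta, Z)$ forcing $\{a, b\} \subseteq {]\min J, \max J[} \symdif J$: namely $X = \varnothing$ iff $\alpha = 1$ and $Z = \varnothing$ iff $\beta = 1$. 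A direct check in the four resulting $(\alpha, \beta)$-cases shows that the sign $(-1)^{|\{a, b\} \cap \{\min J, \max J\}|}$ equals $(-1)^{\alpha + \beta}$.

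The inner sum then factors across the five blocks. The sum over $Y$ vanishes by inclusion-exclusion unless $R_Y = I \cap {]a, b[}$; and the sums over $\alpha, X$ collapse, once the combined sign $(-1)^{\alpha + \beta + \alpha(1-r_a) + \beta(1-r_b)} = (-1)^{\alpha r_a + \beta r_b}$ is written in this form, to an indicator $A(r_a, R_X)$ equal to $-1$ for $(r_a, R_X) = (1, \varnothing)$, equal to $+1$ for $(r_a, R_X) = (0, [1, a{-}1])$, and zero otherwise; symmetrically the sums over $\beta, Z$ collapse to an analogous $B(r_b, R_Z)$. Multiplying these two indicators identifies exactly the four surviving values of $R$ with the asserted signs, yielding the formula.

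The main technical point will be the sign bookkeeping: the four $(\alpha, \beta)$-cases must be organized so that the overall sign depends only on $\alpha r_a + \beta r_b$, at which point the block sums separate cleanly and the result falls out from four independent instances of inclusion-exclusion. A final check is needed to ensure that the formula remains correct in the degenerate cases $a = 1$ or $b = n$, where $[1, a{-}1]$ or $]b, n]$ is empty and two of the four $R$'s in the statement may coincide or drop below cardinality~$2$; both situations are absorbed uniformly by the convention $\coeffRHS_R = 0$ for $|R| \le 1$.
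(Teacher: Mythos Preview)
Your proof is correct and follows essentially the same route as the paper: both start by composing \cref{rem:FStoRHS} with \cref{prop:SPtoFSb}, swap the order of summation to isolate the coefficient $c_R(I)$, and then show via inclusion--exclusion that this coefficient vanishes except at the four sets $I$, $I \symdif [1,a]$, $I \symdif [b,n]$, $I \symdif ([1,a] \cup [b,n])$. Your organization via the block decomposition $[1,a{-}1] \sqcup \{a\} \sqcup {]a,b[} \sqcup \{b\} \sqcup {]b,n]}$ and the resulting factorization into $A(r_a,R_X) \cdot B(r_b,R_Z)$ is a slightly more systematic packaging of the same case analysis the paper carries out narratively (first reducing to $R \cap {]a,b[} = I \cap {]a,b[}$, then handling $R \cap [1,a]$ and $R \cap [b,n]$ separately).
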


For example, we have
\[
\coeffSP_{234} = \coeffRHS_{234} + \coeffRHS_{13} - \coeffRHS_{23} - \coeffRHS_{134}.
\]

\begin{remark}
\label{rem:SPtoRHS}
The matrix~$M_\coeffSP^\coeffRHS$ such that~$\coeffSP = M_\coeffSP^\coeffRHS \cdot \coeffRHS$ is given when~$n = 3$ and~$n = 4$ by
\[
\begin{blockarray}{cccccc}
	12 & 23 & 123 & 13 & & \\
	\begin{block}{(cccc)cc}
	 1 &  0 &  0 & -1 & 12  & \\
	 0 &  1 &  0 & -1 & 23  & \\
	-1 & -1 &  1 &  0 & 123 & \\
	 0 &  0 &  0 &  1 & 13  & \\
	\end{block}
\end{blockarray}
\quad
\begin{blockarray}{ccccccccccccc}
	12 & 23 & 34 & 123 & 234 & 13 & 24 & 1234 & 124 & 134 & 14 & & \\
	\begin{block}{(ccccccccccc)cc}
	 1 &  0 &  1 &  0 &  0 &  0 &  0 &  0 &  0 & -1 &  0 & 12   & \\
	 0 &  1 &  0 &  0 &  0 & -1 & -1 &  0 &  0 &  0 &  1 & 23   & \\
	 1 &  0 &  1 &  0 &  0 &  0 &  0 &  0 & -1 &  0 &  0 & 34   & \\
	 0 & -1 &  0 &  1 &  0 &  0 &  1 &  0 & -1 &  0 &  0 & 123  & \\
	 0 & -1 &  0 &  0 &  1 &  1 &  0 &  0 &  0 & -1 &  0 & 234  & \\
	 0 &  0 &  0 &  0 &  0 &  1 &  0 &  0 &  0 &  0 & -1 & 13   & \\
	 0 &  0 &  0 &  0 &  0 &  0 &  1 &  0 &  0 &  0 & -1 & 24   & \\
	 0 &  1 &  0 & -1 & -1 &  0 &  0 &  1 &  0 &  0 &  0 & 1234 & \\
	-1 &  0 &  0 &  0 &  0 &  0 & -1 &  0 &  1 &  0 &  0 & 124  & \\
	 0 &  0 & -1 &  0 &  0 & -1 &  0 &  0 &  0 &  1 &  0 & 134  & \\
	 0 &  0 &  0 &  0 &  0 &  0 &  0 &  0 &  0 &  0 &  1 & 14   & \\
	\end{block}
\end{blockarray}
\]
Again, $M_\coeffSP^\coeffRHS$ is not triangularizable (it is the inverse of~$M_\coeffRHS^\coeffSP$, which is not triangularizable).
\end{remark}

\begin{proof}[Proof of \cref{prop:SPtoRHS}]
For concision, let us write~$I \star J \eqdef |\{\min I, \max I\} \cap \{\min J, \max J\}|$.
From \cref{rem:FStoRHS,prop:SPtoFSb}, we have
\[
\coeffSP_I
= \sum_{J \conn I} (-1)^{I \star J} \, \coeffFS_J
= \sum_{J \conn I} (-1)^{I \star J} \sum_{R \subseteq J} (-1)^{|J \ssm R|} \, \coeffRHS_R
= \sum_{R \in \nonsing} \bigg( \sum_{\substack{J \conn I \\ R \subseteq J}} (-1)^{I \star J \, + \, |J \ssm R|} \bigg) \, \coeffRHS_R.
\]

Observe first that if~$R \cap {]\min I, \max I[} \not\subseteq I$, then there is no~$J$ such that~$J \conn I$ and~${R \subseteq J}$.
Moreover, if~$I \ssm \{\min I, \max I\} \not\subseteq R$, then grouping the subsets~$J$ with~$J \conn I$ and~$R \subseteq J$ according to $J \cap {]\min I, \max I[}$, we obtain sums over subsets of~$(I \ssm \{\min I, \max I\}) \ssm R$ of terms of the form~$(-1)^{|J \ssm R|}$ which vanish by the inclusion-exclusion principle.
We thus focus on the case~${R \cap {]\min I, \max I[} = I \cap {]\min I, \max I[}}$.

Observe now that if~$R$ contains both~$\min I$ and an element smaller than~$\min I$, then there is no~$J$ such that~$J \conn I$ and~$R \subseteq J$.
Now if~$R \cap [1, \min I]$ is neither~$\{\min I\}$ nor~${[1, \min I[}$, then we can group again the subsets~$J$ with~$J \conn I$ and~$R \subseteq J$ according to $J \cap {[1, \min I[}$ to obtain vanishing subsums by the inclusion-exclusion principle.
By symmetry, we obtain that~$R \cap [\max I, n]$ must be either~$\{\max I\}$ or~${]\max I, n]}$ for the sum not to vanish.

Finally, assume that~$R$ is one of~$I$, $I \symdif [1, \min I]$, $I \symdif [\max I, n]$, or~$I \symdif \big( [1, \min I] \cup [\max I, n]$.
Then the only subset~$J$ such that~$J \conn I$ and~$R \subseteq J$ is the set~$R$ itself.
Moreover, we have ${I \star R = 0}$ if~$I = R$, $1$ if~$R = I \symdif [1, \min I]$ or~$R = I \symdif [\max I, n]$, and~$2$ if~$R = I \symdif ([1, \min I] \cup [\max I, n])$.
\end{proof}


\subsection{PS-quotientopes from shard polytopes}
\label{subsec:PSquotientopes}

Using \cref{prop:SPtoRHS}, we can finally show that all PS-quotientopes constructed in~\cite{PilaudSantos-quotientopes} are Minkowski sums of dilated shard polytopes, as announced in \cref{prop:main10}.
Let us quickly recall the details of the construction in~\cite{PilaudSantos-quotientopes}, adapted to the notations used here.
It starts with a \defn{forcing dominant function}~$f : 2^{[n]} \to \R_{>0}$, \ie a function such that
\[
f(S) > \sum_R f(R),
\]
where the sum ranges over~$R \subseteq [n]$ such that~$R \cap {]\min S, \max S[} = S \cap {]\min S, \max S[}$.
For~${S, R \in \nonsing}$, define the \defn{contribution} of~$S$ to~$R$ as
\[
\gamma(S,R) \eqdef
\begin{cases}
1 & \text{if } |\{\min S, \max S\} \cap R| = 1 \text{ and } S \cap {]\min S, \max S[} = R \cap {]\min S, \max S[}, \\
0 & \text{otherwise.}
\end{cases}
\]
Consider a lattice congruence~$\equiv$ of the weak order on~$\fS_n$, and let~$\c{S}_\equiv \subseteq \nonsing$ be such that~$\arcs_\equiv = (\arc_S)_{S \in \c{S}_\equiv}$.
Define the \defn{height function}~$\b{h}^\equiv : 2^{[n]} \to \R_{>0}$ by
\[
\b{h}^\equiv(R) \eqdef \sum_{S \in \c{S}_\equiv} f(S) \, \gamma(S,R).
\]
Finally, recall that we can choose~$\ray(R) \eqdef |R| \one - n \one_R$ as representative of the ray of the braid fan~$\Fan_n$ corresponding to~$R$.
Then the \defn{PS-quotientope}~$\quotientope$ of~\cite{PilaudSantos-quotientopes} is defined as
\[
\quotientope \eqdef \set{\b{x} \in \R^n}{\dotprod{\one}{\b{x}} = \b{h}^\equiv([n]) \text{ and } \dotprod{\ray(R)}{\b{x}} \le \b{h}^\equiv(R)}.
\]

\begin{proposition}
\label{prop:quotientopesMinkowskiSumsShardPolytopes}
For any forcing dominant function~$f$ and any lattice congruence~$\equiv$ of the weak order on~$\fS_n$, the PS-quotientope~$\quotientope$ is a Minkowski sum of dilated shard polytopes (up to translation).
\end{proposition}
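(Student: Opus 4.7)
The plan is to explicitly compute the decomposition of the PS quotientope~$\quotientope$ in the basis of shard polytopes provided by \cref{prop:shardPolytopeBasis}, and to verify that the resulting coefficients are all non-negative. More precisely, I will apply the change-of-basis formula~\cref{prop:SPtoRHS}, writing
\[
\coeffSP_I = \coeffRHS_I - \coeffRHS_{I \symdif [1, \min I]} - \coeffRHS_{I \symdif [\max I, n]} + \coeffRHS_{I \symdif ([1, \min I] \cup [\max I, n])},
\]
and to show that $\coeffSP_I \ge 0$ for all $I \in \nonsing$, with $\coeffSP_I = 0$ whenever $I \notin \c{S}_\equiv$. Up to translation this will yield $\quotientope = \sum_{I \in \c{S}_\equiv} \coeffSP_I \translatedShardPolytope[I]$, the desired Minkowski sum of dilated shard polytopes.

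First, I will translate $\quotientope$ to make it caged, and then read off its inner-normal right-hand sides $\coeffRHS_R$ from the PS data. For $R$ that is a ray of~$\Fan_\equiv$, the PS inequality is tight and $\coeffRHS_R$ is (up to a linear translation correction that is absorbed automatically by the alternating-sign formula of \cref{prop:SPtoRHS}) a linear function of $\b{h}^\equiv(R) = \sum_{S \in \c{S}_\equiv} f(S) \gamma(S, R)$. For $R$ that is a ray of~$\Fan_n$ but not of~$\Fan_\equiv$, the PS inequality is slack and the tight value must be recovered from the support function of $\quotientope$; using the characterization of rays of~$\Fan_\equiv$ in \cref{lem:raysQuotientFan} and the fact that the normal fan of~$\quotientope$ is~$\Fan_\equiv$, such a tight value at~$R$ is expressed as an explicit combination of tight values at the neighbouring rays of $\Fan_\equiv$ obtained by merging chambers of $\Fan_n$ along the shards of~$\arcs_\equiv$.

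Substituting these right-hand sides and reorganizing the sum by~$S$ yields an expression $\coeffSP_I = \sum_{S \in \c{S}_\equiv} f(S)\,\Gamma(S,I)$ with explicit combinatorial coefficients $\Gamma(S, I)$ obtained by applying the fourfold difference operator~$R \mapsto \gamma(S, R) - \gamma(S, R_1) - \gamma(S, R_2) + \gamma(S, R_{12})$ of \cref{prop:SPtoRHS} to~$\gamma(S, \cdot)$. A direct case analysis on the relative positions of~$\min S, \max S, \min I, \max I$ (using the fact that $\gamma(S, R)$ depends only on the restriction of~$R$ to~$[\min S, \max S]$) will show that $\Gamma(S, I) = 0$ unless $\arc_S$ forces $\arc_I$, that the contribution of $S=I$ is a strictly positive multiple of $f(I)$, and that every other nonzero $\Gamma(S, I)$ has a sign controlled by a forcing-dominant inequality bounding $f(S)$ in terms of values $f(S')$ for arcs $\arc_{S'}$ forced by~$\arc_S$. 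The main obstacle is this case analysis and matching its signs with the precise form of the forcing-dominance hypothesis so that, for each $I \in \c{S}_\equiv$, the positive contribution of~$S = I$ dominates all remaining contributions. Once non-negativity of all $\coeffSP_I$ is established, \cref{prop:shardPolytopeBasis} ensures that the resulting Minkowski sum equals $\quotientope$ up to translation, completing the proof.
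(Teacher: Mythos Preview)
Your overall strategy is correct and is exactly the paper's: rewrite the PS inequalities in inner-normal form, apply the change-of-basis formula of \cref{prop:SPtoRHS} to obtain $\coeffSP_I = \sum_{S \in \c{S}_\equiv} f(S)\,\Gamma(S,I)$, compute the combinatorial quantity~$\Gamma(S,I)$ by a case analysis (this is the content of \cref{lem:quotientopesMinkowskiSumsShardPolytopes}), and use forcing dominance to show $\coeffSP_I \ge 0$.

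However, your detour through tight right-hand sides at rays of~$\Fan_n$ that are not rays of~$\Fan_\equiv$ is unnecessary and would make the argument considerably harder. You may simply plug in the PS heights $\coeffRHS_R \eqdef \frac{|R|}{n}\,\b{h}^\equiv([n]) - \frac{1}{n}\,\b{h}^\equiv(R)$ for \emph{all} $R$, regardless of whether the corresponding inequality is a priori tight for~$\quotientope$. The reason this works is that the transformation of \cref{prop:SPtoRHS} and its inverse \cref{prop:RHStoSP} are linear and mutually inverse on the whole parameter space. So once you prove that the resulting $\coeffSP_I$ are all non-negative, the polytope $P \eqdef \sum_I \coeffSP_I \translatedShardPolytope[I]$ is an honest deformed permutahedron whose support function at every ray~$\one_R$ is exactly~$\coeffRHS_R$. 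Hence $P$ is precisely the intersection of the half-spaces $\dotprod{\one_R}{\b{x}} \ge \coeffRHS_R$, which is the definition of (the caged translate of)~$\quotientope$. In other words, the tightness of all PS inequalities is a \emph{consequence} of the non-negativity you establish, not a prerequisite you need to verify beforehand.

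One small inaccuracy: it is not true that $\Gamma(S,I) = 0$ unless $\arc_S$ forces $\arc_I$. The full list of non-zero cases in \cref{lem:quotientopesMinkowskiSumsShardPolytopes} includes positive contributions from certain $S$ extending~$I$ on both sides. What matters is that every $S$ with $\Gamma(S,I) < 0$ satisfies $\arc_S \prec \arc_I$ in the arc poset, so that $S \in \c{S}_\equiv$ forces $I \in \c{S}_\equiv$; and that the sum of these negative contributions is dominated by the term~$\Gamma(I,I) \cdot f(I) = 2f(I)$ via forcing dominance.
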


To prove \cref{prop:quotientopesMinkowskiSumsShardPolytopes}, we need the following combinatorial statement.

\begin{lemma}
\label{lem:quotientopesMinkowskiSumsShardPolytopes}
For~$I, S \in \nonsing$, the linear combination
\[
\Gamma(S,I) \eqdef \gamma(S, I \symdif [1, \min I]) + \gamma(S, I \symdif [\max I, n]) - \gamma(S, I) - \gamma(S, I \symdif ([1, \min I] \cup [\max I, n]))
\]
is
\begin{enumerate}[(i)]
\item $2$ if~$S = I$,
\item $1$ if~$S = [x, \min I] \symdif I \cup \{y\}$ or~$S = \{x\} \cup I \symdif [\max I, y]$ for~$x < \min I$ and~$y > \max I$,
\item $-1$ if~$S = [x, \min I] \symdif I$ for~$x < \min I$ or~$S = I \symdif [\max I, y]$ for~$y > \max I$,
\item $-1$ if~$S = \{x\} \cup I$ for~$x < \min I$ or~$S = I \cup \{y\}$ for~$y > \max I$,
\item $0$ otherwise.
\end{enumerate}
\end{lemma}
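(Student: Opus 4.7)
The proof proceeds by a systematic case analysis on the relative positions of $\min S$ and $\max S$ with respect to $\min I$ and $\max I$. The key initial observation is structural: writing $R_1 \eqdef I$, $R_2 \eqdef I \symdif [1,\min I]$, $R_3 \eqdef I \symdif [\max I, n]$, $R_4 \eqdef I \symdif ([1,\min I] \cup [\max I, n])$, the four sets all agree on ${]\min I, \max I[}$ with the common intersection $I \cap {]\min I, \max I[}$, and their only differences lie in the three ``boundary'' regions $[1, \min I]$, $\{\max I\}$, and $[\max I, n]$. Concretely, writing $s_- \eqdef \min S$ and $s_+ \eqdef \max S$, the evaluation $\gamma(S, R_j)$ depends only on (a) whether exactly one of $s_-, s_+$ belongs to $R_j$, and (b) whether $S$ and $R_j$ coincide on $]s_-, s_+[$.

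The plan is then to enumerate the positions of $s_-, s_+$ relative to $\{\min I, \max I\}$. When $[s_-,s_+] \subseteq [\min I, \max I]$, all four $R_j$ have the same intersection with $]s_-,s_+[$, so condition (b) either holds for all four or for none; the only non-cancelling subcase is $s_- = \min I$ and $s_+ = \max I$, where condition (b) forces $S = I$, and a direct count of (a) yields $\gamma = (0,1,1,0)$ and $\Gamma = 2$, giving case (i). When $s_- = \min I$, $s_+ < \max I$ (and its mirror $s_- > \min I$, $s_+ = \max I$), the endpoint counts in (a) pair the $R_j$'s symmetrically, so the two positive and two negative contributions always cancel.

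Next, the cases with $s_- < \min I$ fall into three sub-cases. If $s_- < \min I$ and $s_+ = \max I$, then (a) allows only $R_1$ and $R_4$, and (b) forces either $S = \{s_-\} \cup I$ (contributing $-1$ via $R_1$) or $S = [s_-, \min I] \symdif I$ (contributing $-1$ via $R_4$); these are precisely the first alternatives in (iii) and (iv). If $s_- < \min I$ and $s_+ > \max I$, then (a) allows only $R_2$ and $R_3$, and (b) forces $S = ([s_-,\min I] \symdif I) \cup \{s_+\}$ or $S = \{s_-\} \cup (I \symdif [\max I, s_+])$, contributing $+1$ each via $R_2$, $R_3$ respectively — this is case (ii). The remaining sub-case $s_- < \min I$ and $s_+ \le \max I$ (with $s_+ \ne \max I$) is handled by observing that the two $R_j$ with a chance at satisfying (a) produce matching contributions of opposite sign (as in the analysis of condition (a) again), yielding $\Gamma = 0$.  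The mirror sub-cases with $s_+ > \max I$ handle the second alternatives in (iii) and (iv).

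The entire argument is a finite case analysis driven by the ``five-region'' decomposition of $[n]$ induced by $\min I$ and $\max I$; the main obstacle is purely bookkeeping — making sure that cancellations occur systematically outside the five configurations listed in (i)--(iv), and that each listed configuration is counted exactly once without overlap with the others. The non-overlap in case (ii) between its two alternatives uses $|I| \ge 2$ (so $\min I < \max I$), and the non-overlap between cases (iii) and (iv) uses that their shapes differ in cardinality at the boundary element $\min I$ or $\max I$.
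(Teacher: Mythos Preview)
Your proposal is correct and follows essentially the same approach as the paper's proof: both arguments set up the four sets (your $R_1,\dots,R_4$ are the paper's $I,G,H,J$), observe they agree on~$]\min I,\max I[$, and then perform a case analysis on the position of~$\min S$ and~$\max S$ relative to~$\min I$ and~$\max I$. The paper's write-up is a bit more compact (for instance, it dispatches all cases with~$\max S < \max I$ at once by noting~$\gamma(S,G)=\gamma(S,J)$ and~$\gamma(S,H)=\gamma(S,I)$), but the underlying logic is the same.
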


\begin{proof}
For convenience in this proof, define
\[
G \eqdef I \symdif [1, \min I],
\qquad
H \eqdef I \symdif [\max I, n],
\qquad\text{and}\qquad
J \eqdef I \symdif ([1, \min I] \cup [\max I, n]),
\]
so that~$\Gamma(S,I) = \gamma(S,G) + \gamma(S,H) - \gamma(S,I) + \gamma(S,J)$.
Let~$x \eqdef \min S$ and~$y \eqdef \max S$.
We distinguish cases depending on the position of~$x$ and~$y$ with respect to~$\min I$ and~$\max I$:
\begin{itemize}
\item if~$x = \min I$ and~$y = \max I$, then~${\gamma(S,G) = \gamma(S,H) = \delta_{S = I}}$ and~$\gamma(S,I) = \gamma(S,J) = 0$,
\item if~$x < \min I$ and~$y > \max I$, then~$\gamma(S,G) = \delta_{S = [x, \min I] \symdif I \cup \{y\}}$, $\gamma(S,H) = \delta_{S = \{x\} \cup I \symdif [\max I, y]}$ and~$\gamma(S,I) = \gamma(S,J) = 0$,
\item if~$x = \min I$ and~$y > \max I$, or~$x < \min I$ and~$y = \max I$, then~$\gamma(S,G) = \gamma(S,H) = 0$, $\gamma(S,I) = \delta_{S = I \cup \{x,y\}}$, and~$\gamma(S,J) = \delta_{S = I \symdif ([x, \min I] \cup [\max I, y])}$,
\item if~$x > \min I$, then~$\gamma(S,G) = \gamma(S,I)$ and $\gamma(S,H) = \gamma(S,J)$,
\item if~$y < \max I$, then~$\gamma(S,G) = \gamma(S,J)$ and $\gamma(S,H) = \gamma(S,I)$.
\qedhere
\end{itemize}
\end{proof}

\begin{proof}[Proof of \cref{prop:quotientopesMinkowskiSumsShardPolytopes}]
The inequalities defining~$\quotientope$ can be rewritten as~$\dotprod{\one_R}{\b{x}} \ge \coeffRHS_R$~where
\[
\coeffRHS_R \eqdef \frac{|R|}{n} \, \b{h}^\equiv([n]) - \frac{1}{n} \, \b{h}^\equiv(R).
\]
By \cref{prop:SPtoRHS}, we thus have for any~$I \in \nonsing$
\[
\coeffSP_I = \coeffRHS_I - \coeffRHS_{I \symdif [1, \min I]} - \coeffRHS_{I \symdif [\max I, n]} + \coeffRHS_{I \symdif ([1, \min I] \cup [\max I, n])} = \frac{1}{n} \sum_{S \in \c{S}_\equiv} f(S) \, \Gamma(S,I),
\]
where the last equality uses~$|I| - |I \symdif [1, \min I]| - |I \symdif [\max I, n]| + |I \symdif ([1, \min I] \cup [\max I, n])| = 0$.
By \cref{lem:quotientopesMinkowskiSumsShardPolytopes}, all the values~$\Gamma(S,I)$ are positive, except when~$S = [x, \min I] \symdif I$ or~$S = \{x\} \cup I$ for~$x < \min I$ or ${S = I \symdif [\max I, y]}$ or~$S = I \cup \{y\}$ for~$y > \max I$.
But as soon as~$\c{S}_\equiv$ contains one of these subsets~$S$, it also contains~$I$ since~$\c{S}_\equiv$ is an ideal.
Moreover, since~$f$ is forcing dominant, we have
\[
2f(I) > \sum_{x < \min I} \big( f([x, \min I] \symdif I) + f(\{x\} \cup I) \big) + \sum_{y > \max I} \big( f(I \symdif [\max I, y]) + f(I \cup \{y\}) \big),
\]
so that the fact that~$\Gamma(I,I) = 2$ ensures that~$\coeffSP_I > 0$.
\end{proof}

\subsection{Mixed volumes of shard polytopes}
\label{subsec:mixedVolumes}

The presentation of \cref{prop:FStoSPa} allows for the determination of (mixed) volumes of shard polytopes, as stated in \cref{thm:main11}.
An introduction to the topic is given, for example, in~\cite{SangwineYager}.
Mixed volumes are defined as coefficients in the expression of the volume of a weighted Minkowski sum.
As noted in~\cite{ArdilaBenedettiDoker}, the standard properties of mixed volumes extend to virtual polytopes.

\begin{theorem}[{\cite[Prop.~3.2]{ArdilaBenedettiDoker}}]
There exists a unique function $\Vol(\polytope{P}_1, \dots, \polytope{P}_n)$ defined on $n$-tuples of polytopes in $\R^n$, called the \defn{mixed volume} of $\polytope{P}_1, \dots, \polytope{P}_n$, such that, for any collection of~$m \ge n$ polytopes $\polytope{Q}_1, \dots, \polytope{Q}_m \in \R^n$ and any real numbers $y_1, \dots, y_m$ such that the virtual polytope~${y_1 \polytope{Q}_1 + \dots + y_m \polytope{Q}_m}$ is a convex polytope, the volume of~${y_1 \polytope{Q}_1 + \dots + y_m \polytope{Q}_m}$ is a polynomial in $y_1, \dots, y_m$ given by
\[
\Vol(y_1 \polytope{Q}_1 + \dots + y_m \polytope{Q}_m) = \sum_{(i_1, \dots, i_n) \in \binom{[m]}{n}} \Vol(\polytope{Q}_{i_1}, \dots, \polytope{Q}_{i_n}) \, y_{i_1} \dots y_{i_n},
\]
where the sum is over all ordered $n$-tuples~$(i_1, \dots, i_n)$ of~$[m]$.
\end{theorem}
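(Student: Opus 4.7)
The plan is to bootstrap from the classical case of nonnegative coefficients, extend multilinearly to virtual polytopes, and finally match the polynomial expansion on the cone of convex polytopes. I would first prove the classical statement that for convex polytopes $\polytope{Q}_1, \dots, \polytope{Q}_m \subset \R^n$ and nonnegative reals $y_1, \dots, y_m \ge 0$, the volume $\Vol(y_1\polytope{Q}_1 + \dots + y_m\polytope{Q}_m)$ is a homogeneous polynomial of degree $n$ in the $y_i$'s. This proceeds by induction on $n$: the normal fan of the Minkowski sum is the common refinement of the normal fans of the $\polytope{Q}_i$ (independent of the $y_i$'s in the positive orthant), and the support number of each facet is a linear combination of the support numbers of the $\polytope{Q}_i$; computing volume as $\frac{1}{n}$ times the sum over facets of the support number times the facet $(n-1)$-volume, and invoking the inductive hypothesis on each facet, gives the polynomial structure. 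One then defines $\Vol(\polytope{Q}_{i_1}, \dots, \polytope{Q}_{i_n})$ as $\frac{1}{n!}$ times the symmetric coefficient of $y_{i_1}\cdots y_{i_n}$; uniqueness follows from the uniqueness of polynomial expansion.

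To extend to virtual polytopes, I would use that every element of $\VDP$ is a formal difference $\polytope{P} - \polytope{R}$ of convex polytopes and extend $\Vol$ multilinearly in each argument:
\[
\Vol(\polytope{P} - \polytope{R}, \polytope{S}_2, \dots, \polytope{S}_n) \eqdef \Vol(\polytope{P}, \polytope{S}_2, \dots, \polytope{S}_n) - \Vol(\polytope{R}, \polytope{S}_2, \dots, \polytope{S}_n),
\]
and symmetrically in the other arguments. The well-definedness check is that if $\polytope{P}_1 - \polytope{R}_1 = \polytope{P}_2 - \polytope{R}_2$, equivalently $\polytope{P}_1 + \polytope{R}_2 = \polytope{P}_2 + \polytope{R}_1$ as convex polytopes, then both candidate definitions agree, which follows from additivity of the mixed volume in the first argument on the cone of convex polytopes (itself a consequence of the polynomial structure established above).

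The remaining step is to verify the polynomial expansion formula when the virtual combination $y_1\polytope{Q}_1 + \dots + y_m\polytope{Q}_m$ happens to lie in the cone of convex polytopes. The main obstacle is that, on the convex cone, the mixed volume is nailed down by the classical polynomial identity restricted to nonnegative $y_i$'s, while the multilinear extension to virtual polytopes must be shown to agree with the Euclidean volume elsewhere. This is handled by a polynomial density argument: both sides of the expansion are polynomial functions of $y \in \R^m$, and they agree on the open orthant where all $y_i \ge 0$ by the classical theorem, hence they must coincide on the entire locus where the virtual combination is convex. This shows that the polynomial in mixed volumes indeed computes the volume of the Minkowski combination whenever the latter is well-defined, establishing both existence and uniqueness of the claimed function.
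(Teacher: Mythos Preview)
The paper does not supply a proof of this statement: it is quoted verbatim as \cite[Prop.~3.2]{ArdilaBenedettiDoker} and used as a black box in the subsequent computation of mixed volumes of shard polytopes. So there is no ``paper's own proof'' to compare against.

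That said, your outline is essentially the standard argument and is correct in spirit. One point deserves tightening. In your final step you assert that ``both sides of the expansion are polynomial functions of $y \in \R^m$'', but the left-hand side $\Vol(y_1\polytope{Q}_1 + \dots + y_m\polytope{Q}_m)$ is \emph{a priori} only defined on the locus where the virtual sum is a convex polytope, so you cannot directly invoke polynomial identity on an open set. The clean fix is a one-variable translation: given $y$ with $\polytope{R} \eqdef \sum_i y_i\polytope{Q}_i$ convex, pick $T$ large enough that all $y_i + T > 0$. Then $\sum_i (y_i+T)\polytope{Q}_i = \polytope{R} + T\sum_i \polytope{Q}_i$ is a genuine Minkowski sum of convex bodies. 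The classical theorem applied to the pair $\polytope{R}, \sum_i \polytope{Q}_i$ says $\Vol(\polytope{R} + T\sum_i \polytope{Q}_i)$ is a polynomial in $T$; the classical theorem applied to the $\polytope{Q}_i$ says it also equals your right-hand side evaluated at $(y_1+T,\dots,y_m+T)$, another polynomial in $T$. These two polynomials in $T$ agree for all large $T$, hence for $T=0$, giving $\Vol(\polytope{R}) = \sum \Vol(\polytope{Q}_{i_1},\dots,\polytope{Q}_{i_n})\,y_{i_1}\cdots y_{i_n}$ as desired. With this adjustment your argument goes through.
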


Among its properties, note that $\Vol(\polytope{P}, \dots, \polytope{P}) = \Vol(\polytope{P})$ and that mixed volumes are multilinear.
Therefore, via \cref{prop:FStoSPa}, we can compute (mixed) volumes of shard polytopes using mixed volumes of simplices.
This was done by A.~Postnikov~\cite{Postnikov}.

\begin{definition}
An ordered collection of subsets $J_1, \dots, J_{n-1} \subseteq [n]$ verifies the \defn{dragon marriage condition} if it satisfies any of the following equivalent conditions:
\begin{enumerate}
\item For any distinct $i_1,\dots,i_k$, we have $|J_{i_1} \cup \cdots \cup J_{i_k}| \ge k+1$.
\item For any $j\in[n]$, there is a system of distinct representatives in $J_1, \dots, J_{n-1}$ that avoids~$j$.
\item There is a system of $2$-element representatives $\{a_i,b_i\} \in J_i$ for~$i \in [n-1]$, such that $(a_1,b_1), \dots, (a_{n-1},b_{n-1})$ are edges of a spanning tree in the complete graph~$K_n$.
\end{enumerate}
\end{definition}

\begin{lemma}[\cite{Postnikov}]
The mixed volume of a collection~$\triangle_{J_1}, \dots, \triangle_{J_{n-1}}$ of faces of the standard simplex is
\[
\Vol(\triangle_{J_1}, \dots, \triangle_{J_{n-1}}) =
\begin{cases}
	\displaystyle \frac{1}{(n-1)!} & \text{if } J_1, \dots, J_{n-1} \text{ satisfy the dragon marriage condition,} \\
	0 & \text{otherwise.}
\end{cases}
\]
\end{lemma}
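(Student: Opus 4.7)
The plan is to establish both cases separately, leveraging the standard monotonicity and positivity properties of mixed volumes, together with a direct computation for segments.

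First I would fix the normalization: all $\triangle_{J_i}$ lie in the affine hyperplane $\{\sum_i \b{x}_i = 1\}$ (an $(n-1)$-dimensional ambient space), and I take the volume form so that the fundamental parallelepiped of $\Z^n \cap \{\sum \b{x}_i = 0\}$ has unit volume; with this convention $\Vol(\triangle_{[n]}) = \tfrac{1}{(n-1)!}$. In both cases the \emph{upper bound} $\Vol(\triangle_{J_1}, \dots, \triangle_{J_{n-1}}) \le \tfrac{1}{(n-1)!}$ is immediate from the facts that $\triangle_{J_i} \subseteq \triangle_{[n]}$ and that mixed volume is monotone under inclusion (a standard property, see e.g.\ Schneider's treatise), so that the mixed volume is at most $\Vol(\triangle_{[n]},\dots,\triangle_{[n]}) = \Vol(\triangle_{[n]})$.

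For the \emph{lower bound} under the dragon marriage condition, I would use the equivalent formulation (3): there exist pairs $\{a_i, b_i\} \subseteq J_i$ such that the edges $(a_i, b_i)$ form a spanning tree of~$K_n$. Setting $L_i \eqdef [\b{e}_{a_i}, \b{e}_{b_i}] \subseteq \triangle_{J_i}$, monotonicity gives
\[
\Vol(\triangle_{J_1}, \dots, \triangle_{J_{n-1}}) \ge \Vol(L_1, \dots, L_{n-1}).
\]
For segments $L_i = [\b{0}, \b{v}_i]$ (after translating, which does not affect mixed volume), the mixed volume of $n-1$ segments in the $(n-1)$-dimensional hyperplane $\{\sum \b{x}_i = 0\}$ equals $|\det(\b{v}_1, \dots, \b{v}_{n-1})|/(n-1)!$, computed in that hyperplane. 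Here $\b{v}_i = \b{e}_{b_i} - \b{e}_{a_i}$ is (up to sign) a column of the vertex--edge incidence matrix of $K_n$ corresponding to a spanning tree, and it is a classical fact (essentially the matrix-tree theorem) that such a square submatrix has determinant $\pm 1$. Hence $\Vol(L_1, \dots, L_{n-1}) = \tfrac{1}{(n-1)!}$, matching the upper bound.

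For the \emph{vanishing} case, if the dragon marriage condition fails then there exist indices $i_1 < \dots < i_k$ with $|J_{i_1} \cup \dots \cup J_{i_k}| \le k$. Since each $\triangle_{J_{i_j}}$ is contained in the coordinate subspace indexed by $J_{i_j}$ and in the hyperplane $\{\sum \b{x}_i = 1\}$, the Minkowski sum $\triangle_{J_{i_1}} + \dots + \triangle_{J_{i_k}}$ lies in the intersection of the coordinate subspace indexed by $J_{i_1} \cup \dots \cup J_{i_k}$ with a hyperplane, an affine space of dimension at most $k-1 < k$. The standard dimension criterion for positivity of mixed volumes (Minkowski's theorem: $\Vol(P_1, \dots, P_d) > 0$ if and only if $\dim(P_{i_1} + \dots + P_{i_k}) \ge k$ for every subset $\{i_1, \dots, i_k\} \subseteq [d]$) then forces the mixed volume to vanish.

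The main subtlety will be bookkeeping around volume normalization, particularly in justifying that the spanning-tree minor of the incidence matrix indeed has determinant $\pm 1$ in the chosen volume form on the hyperplane; the rest is essentially a packaging of monotonicity, translation invariance, and the Minkowski positivity criterion.
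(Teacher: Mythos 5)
Your proof is correct, but note that the paper offers no proof of this lemma at all: it is imported verbatim with the citation \cite{Postnikov}, where it arises as a byproduct of Postnikov's study of volumes of generalized permutahedra (his volume formula for sums $\sum_I y_I \triangle_I$ and the attendant combinatorics of draconian sequences and the ``dragon marriage problem''). Your argument is a genuinely different, self-contained convex-geometry route: monotonicity of mixed volumes squeezes the value between $\Vol(\triangle_{[n]}) = \tfrac{1}{(n-1)!}$ from above and the mixed volume of the segments $[\b{e}_{a_i},\b{e}_{b_i}]$ from below, and the latter equals exactly $\tfrac{1}{(n-1)!}$ because writing $\b{e}_{b_i}-\b{e}_{a_i}$ in the lattice basis $(\b{e}_j-\b{e}_n)_{j<n}$ turns your determinant into a spanning-tree minor of the reduced incidence matrix of $K_n$, which is $\pm 1$ (this is total unimodularity of graph incidence matrices rather than the matrix-tree theorem per se, and it disposes of the normalization bookkeeping you flag, since your lattice normalization is the one the paper implicitly uses, as it gives $\Vol(\triangle_{[n]})=\tfrac{1}{(n-1)!}$); the vanishing case is precisely Minkowski's dimension criterion, since a violating subset of $k$ indices yields summands whose Minkowski sum has affine dimension at most $k-1$. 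The steps you invoke (monotonicity, the segment--determinant formula, the positivity criterion) are all standard, and your use of formulation (3) of the dragon marriage condition for the lower bound is legitimate since the paper's definition states the equivalence of (1)--(3) (that equivalence is itself Postnikov's Hall-type result). What your approach buys is a short proof independent of Postnikov's machinery; what the citation buys the paper is exactly the normalization and the dragon-marriage formulation it then reuses in the mixed-volume formula for shard polytopes.
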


\begin{theorem}
\label{thm:mixedVolumesShardPolytopes}
For any arcs~$\arc_1, \dots, \arc_{n-1} \in \arcs_n$, the mixed volume of~$\shardPolytope[\arc_1], \dots, \shardPolytope[\arc_{n-1}]$ is
\[
\Vol(\shardPolytope[\arc_1], \dots, \shardPolytope[\arc_{n-1}]) = \frac{1}{(n-1)!}\sum_{J_1, \dots, J_{n-1}} (-1)^{\asize{J_1}[A_1] + \dots +\asize{J_{n-1}}[A_{n-1}]},
\]
summing over all collections $(J_1, \dots, J_{n-1})\in \nonsing^{n-1}$ verifying $(A_i \cup \{a_i,b_i\}) \conn J_i$ for all $i \in [n-1]$, and such that $J_1, \dots ,J_{n-1}$ satisfies the dragon marriage condition.
\end{theorem}

\begin{corollary}
For any arc~$\arc\in \arcs_n$, the volume of its shard polytope~$\shardPolytope$ is
\[
\Vol(\shardPolytope) = \frac{1}{(n-1)!}\sum_{J_1,\dots, J_{n-1}} (-1)^{\asize{J_1}[A] + \dots + \asize{J_{n-1}}[A]},
\]
summing over all collections $(J_1,\dots, J_{n-1}) \in \nonsing^{n-1}$ verifying $(A \cup \{a,b\}) \conn J_i$ for all $i \in [n-1]$, and such that $J_1,\dots ,J_{n-1}$ satisfies the dragon marriage condition. 
\end{corollary}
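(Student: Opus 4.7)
The plan is to obtain this corollary as an immediate specialization of the preceding theorem (on mixed volumes of shard polytopes) to the case where all the arcs coincide. Recall that, by definition of mixed volumes, $\Vol(\polytope{P}, \polytope{P}, \dots, \polytope{P}) = \Vol(\polytope{P})$ for any polytope $\polytope{P}$. Hence I would set $\arc_1 = \arc_2 = \dots = \arc_{n-1} = \arc$ in the formula of the preceding theorem. Every compatibility condition $(A_i \cup \{a_i, b_i\}) \conn J_i$ then becomes $(A \cup \{a, b\}) \conn J_i$, and every sign $(-1)^{\asize{J_i}[A_i]}$ becomes $(-1)^{\asize{J_i}[A]}$, yielding exactly the claimed expression. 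There is no real obstacle: the entire content of the corollary is already packaged in the preceding theorem.

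As a sanity check, I would verify that the formula behaves correctly when $b - a < n-1$, in which case $\shardPolytope$ has dimension $b-a < n-1$ and hence its $(n-1)$-dimensional volume vanishes. On the right-hand side, the condition $(A \cup \{a,b\}) \conn J_i$ forces $\{\min J_i, \max J_i\} \subseteq {]a,b[} \symdif (A \cup \{a,b\}) = B$, so in particular $J_i \subseteq [a,b]$ for every~$i$. Then $|J_1 \cup \dots \cup J_{n-1}| \le b-a+1 < n$, which violates the dragon marriage condition, so the indexing set of the sum is empty and both sides are indeed zero.
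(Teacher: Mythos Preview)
Your proof is correct and matches the paper's approach: the corollary is an immediate specialization of the preceding mixed-volume theorem to $\arc_1 = \dots = \arc_{n-1} = \arc$, using $\Vol(\polytope{P}) = \Vol(\polytope{P}, \dots, \polytope{P})$. One small slip in your sanity check: ${]a,b[} \symdif (A \cup \{a,b\}) = B \cup \{a,b\}$, not just $B$ (the elements $a,b$ lie in $A \cup \{a,b\}$ but not in ${]a,b[}$); this does not affect your conclusion that $J_i \subseteq [a,b]$, so the check still goes through.
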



\newpage
\part{Type $B$ shard polytopes}
\label{part:typeB}


\section{Type~$B$ combinatorics and geometry}

In this section, we first briefly recall the classical combinatorial model for type~$B$ Coxeter groups in terms of centrally symmetric permutations, and then describe the geometry of type~$B$ lattice congruences in terms of centrally symmetric arcs.
Note that the theory of shards was developed for arbitrary hyperplane arrangements whose poset of regions is a lattice (see~\cite{Reading-PosetRegionsChapter} and \cref{subsec:concludingRemarks}), and the particular situation of type~$B$ was detailed in~\cite{Reading-latticeCongruences}.
We just present here a convenient combinatorial model in terms of centrally symmetric arcs.
We omit the proofs of the statements that are just specializations of more general statements on hyperplane arrangements, or that are proved exactly as in type~$A$.


\subsection{Type~$B$ permutations and noncrossing arc diagrams}
\label{subsec:noncrossingArcDiagramsB}

\enlargethispage{.2cm}
We use the standard notations~$[n] \eqdef \{1, \dots, n\}$, $-[n] \eqdef \{-n, \dots, -1\}$ and~$[\pm n] \eqdef -[n] \cup [n]$.
A \defn{$B$-permutation} is a permutation of~$[\pm n]$ whose permutation table is centrally symmetric, that is~$\sigma(-i) = -\sigma(i)$ for all~$i \in [\pm n]$.
We denote by~$\fB_n$ the set of $B$-permutations of~$[\pm n]$, which forms a group under composition, called the \defn{type~$B$ Coxeter group}.
A $B$-permutation~$\sigma$ is clearly determined by the \defn{signed permutation} $\sigma_1 \dots \sigma_n$, where~$\sigma_i = \sigma(i)$ and where~$-k$ is denoted by~$\bar k$.
We use this convention for compactness in many pictures.
We refer to the monographs~\cite{Humphreys} and~\cite{BjornerBrenti} (in particular Section~8.1) for more details on the combinatorics of the type~$B$ Coxeter groups.

Following \cref{subsec:noncrossingArcDiagrams}, we define an \defn{$A$-arc} as a quadruple~$(a, b, A, B)$ consisting of two integers~$a < b \in [\pm n]$ and a partition~$A \sqcup B = {]a,b[} \ssm \{0\}$.
We use the same diagrammatic representation of $A$-arcs as in \cref{subsec:noncrossingArcDiagrams}.
We denote by~$-\arc \eqdef (-b, -a, -B, -A)$ the symmetric of~$\arc \eqdef (a, b, A, B)$, whose diagram is obtained by a central symmetry with respect to the origin.

\begin{definition}
A \defn{$B$-arc} on $[\pm n]$ is either a centrally symmetric $A$-arc on~$[\pm n]$ or a centrally symmetric and noncrossing pair of $A$-arcs on~$[\pm n]$ with disjoint endpoints.
\end{definition}

In both cases, we write a $B$-arc as~$\Barc \eqdef (-\arc, \arc)$, where~$\arc$ is the rightmost of the two $A$-arcs $-\arc$ and~$\arc$ (we repeat~$-\arc = \arc$ when the $B$-arc~$\Barc$ is just a centrally symmetric $A$-arc~$\arc$).
We call~$\arc$ the \defn{representative} $A$-arc of the $B$-arc~$\Barc$.
If~$a < b$ denote the endpoints of the representative $A$-arc~$\arc$, we thus have~$0 < b$, and as illustrated in \cref{fig:Barcs}, we say that~$\Barc$ is
\begin{itemize}
\item \defn{separated} if $a \in [n]$ (\ie $-\arc$ and~$\arc$ are strictly separated by the origin),
\item \defn{singular} if $a = -b$ (\ie $-\arc = \arc$ is centrally symmetric),
\item \defn{overlapped} if $a \in -[n] \ssm \{-b\}$ (\ie $-\arc$ and~$\arc$ overlap over the origin).
\end{itemize}

\begin{figure}[h]
	\capstart
	\centerline{\includegraphics[scale=.85]{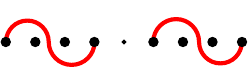} \hspace{1cm} \includegraphics[scale=.85]{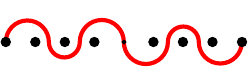} \hspace{1cm} \includegraphics[scale=.85]{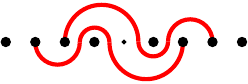}}
	\vspace{-.1cm}
	\caption{Some separated (left), singular (middle), and overlapped (right) $B$-arcs on~$[\pm 4]$.}
	\label{fig:Barcs}
\end{figure}

When~$\Barc$ is overlapped, the \defn{upper} (resp.~\defn{lower}) $A$-arc of~$\Barc$ is the $A$-arc which passes above (resp.~below) the other one (it is well-defined since the two $A$-arcs overlap and are non-crossing).

We denote by~$\Barcs_n$ the set of $B$-arcs on~$[\pm n]$.
\cref{fig:BforcingOrder} shows the $23$ $B$-arcs on~$[\pm 3]$.
More generally, $B$-arcs are enumerated as follows (an alternative argument is given in \cref{lem:numberShards}).

\begin{proposition}
\label{prop:numberBarcs}
The number of $B$-arcs on~$[\pm n]$ is $|\Barcs_n| = 3^n-1-n$.
\end{proposition}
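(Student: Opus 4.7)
The approach is to partition $\Barcs_n$ into the three types introduced above (singular, separated, and overlapped) and count each separately. Throughout, we fix the convention that the representative $\arc \eqdef (a, b, A, B)$ of the pair $\{-\arc, \arc\}$ is the rightmost of the two, so $b \ge -a$, with equality exactly in the singular case.

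The singular and separated counts reduce essentially to type~$A$ enumerations. For each $b \in [n]$, a singular $B$-arc is given by a centrally symmetric partition of $]-b, b[ \ssm \{0\}$, which is specified by its restriction to $[b-1]$, giving $\sum_{b=1}^n 2^{b-1} = 2^n - 1$ in total. Separated $B$-arcs correspond to $A$-arcs $(a, b, A, B)$ with $1 \le a < b \le n$ and an arbitrary partition of $]a, b[$, yielding the familiar type~$A$ total $\sum_{1 \le a < b \le n} 2^{b-a-1} = 2^n - n - 1$.

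The heart of the proof, and the main obstacle, is the overlapped case, where the representative satisfies $-b < a < 0 < b$. The interior $]a, b[ \ssm \{0\}$ decomposes into $k \eqdef -a - 1$ symmetric overlap pairs $\{p, -p\}$, the distinguished point $c \eqdef -a$ (right endpoint of $-\arc$, but interior to $\arc$), and the $b+a-1$ remaining non-overlap points in $]c, b[$. The plan is to translate the non-crossing condition on $(\arc, -\arc)$ into a combinatorial condition on $(A, B)$ by comparing the signs of the two curves at the salient $x$-coordinates of the overlap region $[a, -a]$, namely the endpoints $\pm a$ and each overlap integer. This sign analysis should show that the pair is non-crossing if and only if either $c \in A$ and no overlap pair lies entirely in $B$, or $c \in B$ and no overlap pair lies entirely in $A$; the condition at the endpoints $\pm a$ forces the dichotomy (the class of $c$ determines which of the two arcs sits above the other throughout the overlap), while the condition at each overlap integer rules out the offending pair. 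These two cases are mutually exclusive, and each contributes $3^k \cdot 2^{b+a-1}$ valid partitions (three admissible states per overlap pair, two per remaining non-overlap point), for a total of $2 \cdot 3^{-a-1} \cdot 2^{b+a-1}$ overlapped $B$-arcs with fixed representative endpoints $(a, b)$.

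A short geometric-series computation then gives
\begin{equation*}
\sum_{b=2}^n \sum_{a=-b+1}^{-1} 2 \cdot 3^{-a-1} \cdot 2^{b+a-1} \;=\; \sum_{b=2}^n 2\,(3^{b-1} - 2^{b-1}) \;=\; 3^n - 2^{n+1} + 1,
\end{equation*}
and summing the three contributions yields $|\Barcs_n| = (2^n - 1) + (2^n - n - 1) + (3^n - 2^{n+1} + 1) = 3^n - n - 1$.
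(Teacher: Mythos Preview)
Your argument is correct and follows essentially the same tripartite decomposition as the paper (separated, singular, overlapped), with identical per-case counts. The only organizational difference is that the paper proceeds by induction on~$n$, counting at each step only the $B$-arcs incident to~$\pm n$ (obtaining $u_n = 2^{n-1}-1$, $v_n = 2^{n-1}$, $w_n = 2\cdot 3^{n-1}-2^n$ for the three types), whereas you sum directly over all endpoint pairs~$(a,b)$; your inner sum $2(3^{b-1}-2^{b-1})$ is exactly the paper's~$w_b$. The paper also phrases the overlapped count more geometrically (each overlap point lies ``above both, below both, or between'' the two arcs), while you derive the same~$3^k$ factor by analyzing the crossing condition combinatorially; both descriptions encode the same exclusion of one of the four states for each overlap pair.
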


\begin{proof}
We prove the statement by induction on~$n$.
For~$n = 1$, there is indeed a single $B$-arc (the centrally symmetric $A$-arc connecting~$-1$ to $1$).
For the induction step, we note that the set of $B$-arcs on~$[\pm n]$ is the disjoint union of the $B$-arcs on~$[\pm (n-1)]$ and those $B$-arcs on~$[\pm n]$ incident to the dots $n$ and $-n$.
We count the latter by splitting them into three categories, depending on the endpoints~$a < b$ of their representative $A$-arc~$\arc$:
\begin{itemize}
\item there are~$u_n \eqdef \sum_{k = 1}^{n-1} 2^{n-k-1} = 2^{n-1}-1$ separated ones (if $a \in [n]$, we can choose the points of~$]a,n[$ to lie above or~below~$\arc$, and the rest is determined by central~symmetry),
\item there are~$v_n \eqdef 2^{n-1}$ singular ones (if~$a = -b$, we can choose the points of~$[1,n[$ to lie above and below~$\arc$, and the rest is determined by central symmetry),
\item there are~$w_n \eqdef \sum_{k = 1}^{n-1} 2^{n-k}3^{k-1} = 2 \cdot 3^{n-1} - 2^n$ overlapped ones (if $a \in -[n] \ssm \{b\}$, we can choose the points of~$[-a,n[$ to lie above or below~$\arc$, and the points of~$[1,-a[$ to lie above both, below both, or between~$-\arc$ and~$\arc$, and the rest is determined by central symmetry).
\end{itemize}
We conclude by induction that
\[
|\Barcs_n| = |\Barcs_{n-1}| + u_n + v_n + w_n = 3^{n-1} - n + 2^{n-1} - 1 + 2^{n-1} + 2 \cdot 3^{n-1} - 2^n = 3^n - 1 - n.
\qedhere
\]
\end{proof}

We now consider noncrossing collections of $B$-arcs, which will represent $B$-permutations.

\begin{definition}
A \defn{noncrossing $B$-arc diagram} is a collection of $B$-arcs whose union forms a noncrossing $A$-arc diagram.
Equivalently, it is a centrally symmetric noncrossing arc diagram~of~$[\pm n]$.
\end{definition}

Applying the procedure of \cref{subsec:noncrossingArcDiagrams} on centrally symmetric permutations of~$[\pm n]$, we obtain maps~$\overline{\delta}{}^\textsc{b}$ and~$\underline{\delta}{}^\textsc{b}$ from $B$-permutations on~$[\pm n]$ to noncrossing $B$-arc diagrams on~$[\pm n]$ illustrated in \cref{fig:noncrossingBArcDiagrams}.
More formally, the noncrossing $B$-arc diagram~$\overline{\delta}{}^\textsc{b}(\sigma)$ associated to a $B$-permutation~$\sigma$ on~$[\pm n]$ contains:
\begin{itemize}
\item for all~$1 \le i < n$ such that~$\sigma_i < \sigma_{i+1}$, the $B$-arc~$\overline{\Barc}(\sigma, i) \eqdef (\overline{\arc}(\sigma, i), \overline{\arc}(\sigma, -i-1))$, and
\item if~$\sigma_{-1} < \sigma_1$, the singular $B$-arc~$\overline{\Barc}(\sigma, 0)$ defined as the centrally symmetric $A$-arc given by $(\sigma_{-1}, \sigma_1, \set{\sigma_j}{j < -1, \, \sigma_{-1} < \sigma_j < \sigma_1}, \set{\sigma_j}{j > 1, \, \sigma_{-1} < \sigma_j < \sigma_1})$.
\end{itemize}
The noncrossing $B$-arc diagram~$\underline{\delta}{}^\textsc{b}(\sigma)$ is defined symmetrically.
See \cref{fig:noncrossingBArcDiagrams}.
The proof of the following statement is similar to that of~\cref{thm:bijectionNoncrossingArcDiagrams} (or can be deduced from it by specializing to centrally symmetric objects on~$[\pm n]$).

\begin{figure}
	\capstart
	\centerline{\includegraphics[scale=.8]{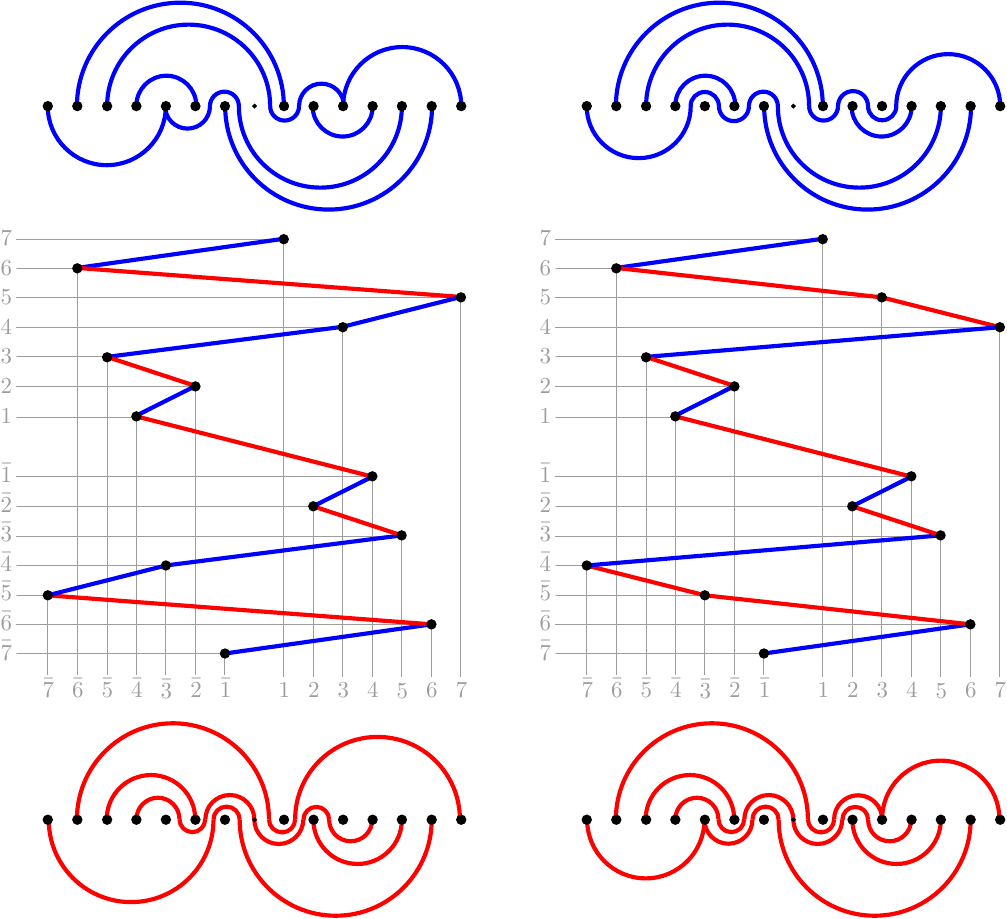}}
	\caption{The noncrossing $B$-arc diagrams~$\underline{\delta}{}^\textsc{b}(\sigma)$ (bottom) and~$\overline{\delta}{}^\textsc{b}(\sigma)$ (top) for the $B$-permutations $\sigma = \bar4 \bar2 \bar5 3 7 \bar6 1$ and $\bar4 \bar2 \bar5 7 3 \bar6 1$.}
	\label{fig:noncrossingBArcDiagrams}
\end{figure}

\begin{theorem}
\label{thm:bijectionNoncrossingArcDiagramsB}
The map~$\underline{\delta}{}^\textsc{b}$ (resp.~$\overline{\delta}{}^\textsc{b}$) is a bijection from the $B$-permutations of~$\fB_n$ to the noncrossing $B$-arc diagrams on~$\Barcs_n$.
\end{theorem}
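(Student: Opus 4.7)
The plan is to deduce this from the type~$A$ bijection of \cref{thm:bijectionNoncrossingArcDiagrams}. We identify~$\fB_n$ with the fixed-point set of the involution $\iota : \fS_{[\pm n]} \to \fS_{[\pm n]}$ defined by $\iota(\sigma)(k) \eqdef -\sigma(-k)$, and noncrossing $B$-arc diagrams with the fixed-point set of the central symmetry involution $\c{D} \mapsto -\c{D} \eqdef \set{-\arc}{\arc \in \c{D}}$ on noncrossing $A$-arc diagrams on~$[\pm n]$. Once we prove that the map~$\overline{\delta}$ on~$\fS_{[\pm n]}$ (obtained from \cref{thm:bijectionNoncrossingArcDiagrams} by relabeling~$[\pm n]$ to~$[2n]$) is equivariant with respect to these two involutions, it will follow that $\overline{\delta}$ restricts to a bijection between $\fB_n$ and centrally symmetric noncrossing $A$-arc diagrams on~$[\pm n]$, and we will only need to match the latter with noncrossing $B$-arc diagrams and check that the restriction agrees with the map~$\overline{\delta}{}^\textsc{b}$ as explicitly defined.

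The first key step is the equivariance~$\overline{\delta}(\iota(\sigma)) = -\overline{\delta}(\sigma)$. This is a direct verification from the explicit formula for~$\overline{\arc}(\sigma,i)$. Indeed, if~$\sigma$ is centrally symmetric, then $\sigma_{-i-1} = -\sigma_{i+1}$ and $\sigma_{-i} = -\sigma_i$, so an ascent at position~$i$ produces an ascent at position~$-i-1$, and a short calculation using the substitution~$j \leftrightarrow -j$ (which reverses the inequalities on positions and negates values) shows that $\overline{\arc}(\sigma, -i-1) = -\overline{\arc}(\sigma, i)$. This provides the pairs of arcs composing the separated and overlapped $B$-arcs.

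The second key step is the handling of the singular arc, which is the only subtle point. For a centrally symmetric~$\sigma$, a position~$i$ with $i+1 = -i$ cannot occur since $0 \notin [\pm n]$; the corresponding phenomenon is an ascent between positions~$-1$ and~$1$ (which are adjacent in~$[\pm n]$). When~$\sigma_{-1} < \sigma_1$, the arc~$\overline{\arc}(\sigma, -1)$ has endpoints~$(\sigma_{-1}, \sigma_1) = (-\sigma_1, \sigma_1)$, symmetric about the origin, and the sets of points passing above and below are swapped by $j \mapsto -j$; hence this arc is fixed by central symmetry, yielding a singular $B$-arc. Conversely, any arc fixed by central symmetry must arise this way since its endpoints are negatives of each other and hence surround~$0$, forcing the underlying ascent to straddle positions~$-1$ and~$1$. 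This matches exactly the definition of $\overline{\Barc}(\sigma, 0)$.

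With the two steps above, the restriction of~$\overline{\delta}$ to~$\fB_n$ produces exactly the centrally symmetric noncrossing $A$-arc diagrams, which uniquely decompose into $\iota$-orbits of arcs: singletons (singular $B$-arcs) and pairs $\{\arc, -\arc\}$ with~$\arc \ne -\arc$. It remains to observe that a pair $\{\arc, -\arc\}$ of centrally symmetric noncrossing $A$-arcs with disjoint endpoints is precisely a separated or overlapped $B$-arc, so this decomposition realizes a bijection between centrally symmetric noncrossing $A$-arc diagrams and noncrossing $B$-arc diagrams in the sense of \cref{subsec:noncrossingArcDiagramsB}. The proof for~$\underline{\delta}{}^\textsc{b}$ is entirely analogous. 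The main obstacle is purely notational: keeping the indexing~$[\pm n]$ straight in the equivariance computation, especially around the ``missing position~$0$'' that gives rise to the singular~arc.
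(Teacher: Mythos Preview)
Your proposal is correct and follows precisely the approach the paper indicates. The paper does not give a detailed proof of \cref{thm:bijectionNoncrossingArcDiagramsB}; it only remarks that the statement ``can be deduced from [\cref{thm:bijectionNoncrossingArcDiagrams}] by specializing to centrally symmetric objects on~$[\pm n]$,'' and your argument---identifying~$\fB_n$ and the noncrossing $B$-arc diagrams as fixed points of commuting involutions and checking the equivariance of~$\overline{\delta}$, with careful handling of the singular arc at the adjacent positions~$-1,1$---is exactly this specialization carried out in full.
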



\subsection{Type~$B$ weak order and canonical join and meet representations}
\label{subsec:canonicalJoinRepresentationsB}

We consider the \defn{weak order} on~$\fB_n$ defined by~$\sigma \le \tau \iff \Binv(\sigma) \subseteq \Binv(\tau)$ where
\[
\Binv(\sigma) \eqdef \set{(\sigma_a, \sigma_b)}{1 \le a < b \le n \text{ and } \sigma_a > \sigma_b} \cup \set{(\sigma_{-a}, \sigma_b)}{1 \le a \le b \le n \text{ and } \sigma_{-a} > \sigma_b}
\]
is the \defn{inversion set} of the $B$-permutation~$\sigma$.
Cover relations in the weak order correspond on $B$-permutations to centrally symmetric swaps of two letters at consecutive positions: either one swap of positions~$\{\pm 1\}$, or two simultaneous swaps at positions~$\{-i-1, -i\}$ and~$\{i, i+1\}$.
On the signed permutation model, the former changes the sign of the first letter, while the latter swaps positions~$i$ and~$i+1$.
See \cref{fig:B3permutahedron} for the Hasse diagram of the weak order on~$\fB_3$ and some geometric representations recalled in \cref{subsec:CoxeterFanPermutahedronB}.

As in \cref{subsec:canonicalJoinRepresentations}, the weak order on~$\fB_n$ is a semidistributive lattice, and one can describe the canonical join and meet representations of a $B$-permutation as follows.
Consider a $B$-arc~${\Barc \eqdef (-\arc, \arc)}$ where~$\arc \eqdef (a, b, A, B)$ with~$A \eqdef \{a_1 < \dots < a_p\}$ and~$B \eqdef \{b_1 < \dots < b_q\}$.
We associate to the $B$-arc~$\Barc$ the $B$-permutation~$\underline{\lambda}(\Barc)$ defined as follows:
\begin{itemize}
\item if~$a = -b$, then~$\underline{\lambda}(\Barc) \eqdef [-n, \dots, a-1, a_1, \dots, a_p, b, a, b_1, \dots, b_q, b+1, \dots, n]$,
\item if~$a \in [n]$, then~$\underline{\lambda}(\Barc) \eqdef [-n, \dots, -b-1, -b_q, \dots, -b_1, -a, -b, -a_p, \dots, -a_1, -a+1, \dots, -1, \newline 1, \dots, a-1, a_1, \dots, a_p, b, a, b_1, \dots, b_q, b+1, \dots, n]$,
\item if~$a \in [-n] \ssm \{-b\}$ and~$-a \in A$, then~$\underline{\lambda}(\Barc) \eqdef [-n, \dots, -b-1, -b_q, \dots, -b_1, -a, -b, c_1, \dots, \newline c_r, b, a, b_1, \dots, b_q, b+1, \dots, n]$ where~$\{c_1 < \dots < c_r\} = (-A \ssm B) \cup (A \ssm -B)$,
\item if~$a \in [-n] \ssm \{-b\}$ and~$-a \notin A$, then~$\underline{\lambda}(\Barc) \eqdef [-n, \dots, -b-1, -d_s, \dots, -d_1, a_1, \dots, a_p, b, a, \newline c_1, \dots, c_r, -a, -b, -a_p, \dots, -a_1, d_1, \dots, d_s, b+1, \dots, n]$ where~$\{c_1 < \dots < c_r\} = B \cap -B$ and~$\{d_1, \dots, d_s\} = B \cap {]{-a}, b[}$.
\end{itemize}
We define similarly~$\overline{\lambda}(\Barc)$.
The following statement can be seen as a specialization of a more general statement on hyperplane arrangements~\cite[Thm.~9-7.11]{Reading-PosetRegionsChapter}, or can be proved as \cref{thm:joinMeetRepresentationsPermutations} in~\cite[Thm.~2.4]{Reading-arcDiagrams}.

\begin{theorem}
\label{thm:joinMeetRepresentationsPermutationsB}
The canonical join and meet representations of a $B$-permutation~$\sigma$ are given by $\bigJoin \set{\underline{\lambda}{}^\textsc{b}(\underline{\Barc})}{\underline{\Barc} \in \underline{\delta}{}^\textsc{b}(\sigma)}$ and~$\bigMeet \set{\overline{\lambda}{}^\textsc{b}(\overline{\Barc})}{\overline{\Barc} \in \overline{\delta}{}^\textsc{b}(\sigma)}$.
\end{theorem}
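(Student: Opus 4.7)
The plan is to follow the strategy of the analogous type~$A$ result \cite[Thm.~2.4]{Reading-arcDiagrams}, or alternatively to specialize the shard-theoretic statement \cite[Thm.~9-7.11]{Reading-PosetRegionsChapter} to the type~$B$ Coxeter arrangement. By the standard duality between join-irreducibles and meet-irreducibles (and between $\underline{\delta}{}^\textsc{b}$ and $\overline{\delta}{}^\textsc{b}$), it is enough to treat the canonical join representation.

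First, I would verify that $\Barc \mapsto \underline{\lambda}{}^\textsc{b}(\Barc)$ is a bijection from $B$-arcs on $[\pm n]$ to the join-irreducible elements of the weak order on $\fB_n$. Since cover relations in $\fB_n$ correspond either to a single sign-swap at positions $\{\pm 1\}$ or to a symmetric pair of adjacent swaps at positions $\{-i-1,-i\}$ and $\{i,i+1\}$, a $B$-permutation is join-irreducible precisely when it has exactly one such descent. A case analysis on whether $\Barc$ is singular, separated, or overlapped shows that each of the four formulas defining $\underline{\lambda}{}^\textsc{b}(\Barc)$ produces a centrally symmetric permutation with a single type~$B$ descent, from which $\Barc$ is recoverable; surjectivity follows by comparing cardinalities via \cref{prop:numberBarcs}. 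Then, for a $B$-permutation $\sigma$, I would establish $\sigma = \bigJoin \set{\underline{\lambda}{}^\textsc{b}(\underline{\Barc})}{\underline{\Barc} \in \underline{\delta}{}^\textsc{b}(\sigma)}$ by checking the inversion set identity $\Binv(\sigma) = \bigcup_{\underline{\Barc}} \Binv(\underline{\lambda}{}^\textsc{b}(\underline{\Barc}))$: each descent of $\sigma$ (singular or paired) contributes exactly the inversions that ``wiggle through'' the corresponding $B$-arc of $\underline{\delta}{}^\textsc{b}(\sigma)$, matching the inversion set of the associated join-irreducible. Irredundancy follows from the noncrossing property — each $\underline{\Barc}$ contributes the distinguished inversion given by the endpoints of its descent, which is not contributed by any other arc of the diagram — and canonicality follows by the same forcing argument as in \cite[Prop.~3.6]{Reading-arcDiagrams}, combined with the join-semidistributivity of the type~$B$ weak order.

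The hardest part is the overlapped case, where the formula for $\underline{\lambda}{}^\textsc{b}(\Barc)$ splits into two subcases depending on whether $-a \in A$, and where the inversion structure must be unraveled carefully to match the pair $(-\arc, \arc)$ of overlapping $A$-arcs. An alternative route that avoids part of this case work is to fold the type~$A$ result along central symmetry: view $\sigma \in \fB_n$ as a centrally symmetric permutation of $[\pm n]$, apply \cref{thm:joinMeetRepresentationsPermutations} to obtain its canonical join representation in $\fS_{[\pm n]}$, and pair up the centrally symmetric arcs of $\underline{\delta}(\sigma)$ into the $B$-arcs of $\underline{\delta}{}^\textsc{b}(\sigma)$. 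The price to pay is an additional lemma: joins of centrally symmetric permutations in $\fS_{[\pm n]}$ coincide with joins in $\fB_n$, and canonicality descends along this embedding.
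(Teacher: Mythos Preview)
Your proposal is correct and aligns precisely with the paper's own treatment: the paper gives no proof of this theorem at all, merely remarking that it ``can be seen as a specialization of a more general statement on hyperplane arrangements~\cite[Thm.~9-7.11]{Reading-PosetRegionsChapter}, or can be proved as \cref{thm:joinMeetRepresentationsPermutations} in~\cite[Thm.~2.4]{Reading-arcDiagrams}.'' Your sketch follows exactly these two indicated routes, with a reasonable outline of the type~$A$-style argument and an honest flag on the overlapped case and the folding lemma.
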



\subsection{Type~$B$ lattice quotients}
\label{subsec:latticeQuotientsB}

\enlargethispage{.2cm}
The following statement is the analogue of \cref{thm:joinMeetRepresentationsQuotient} and is based on the semidistributivity of the weak order on~$\fB_n$.

\begin{theorem}
\label{thm:joinMeetRepresentationsQuotientB}
For any lattice congruence~$\Bequiv$ of the weak order on~$\fB_n$, the set of join-irreducibles of~$\fB_n$ uncontracted by~$\Bequiv$ corresponds to a set of $B$-arcs~$\Barcs_\Bequiv$, and the canonical join representations in the lattice quotient~$\fB_n/{\Bequiv}$ correspond to noncrossing $B$-arc diagrams using only $B$-arcs of~$\Barcs_\Bequiv$.
\end{theorem}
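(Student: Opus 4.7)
The plan is to mirror the type~$A$ argument of~\cite[Thm.~4.1]{Reading-arcDiagrams} (stated as \cref{thm:joinMeetRepresentationsQuotient} above), using the type~$B$ ingredients already assembled in \cref{subsec:noncrossingArcDiagramsB,subsec:canonicalJoinRepresentationsB}. The scaffolding is: (i) the weak order on $\fB_n$ is semidistributive, (ii) its join-irreducibles are in bijection with $B$-arcs via $\underline{\lambda}{}^\textsc{b}$, and (iii) the canonical join representation of a $B$-permutation is recorded by its noncrossing $B$-arc diagram (\cref{thm:joinMeetRepresentationsPermutationsB}). I will then invoke the general semidistributive principle recalled at the start of \cref{subsec:latticeQuotients}: in a semidistributive lattice~$L$, the quotient $L/{\equiv}$ is semidistributive, and under the identification ${L/{\equiv}} \simeq \projDown(L)$, a canonical join representation in the quotient is precisely a canonical join representation in $L$ all of whose join-irreducible summands are uncontracted by~$\equiv$.

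Concretely, I will first set $\Barcs_\Bequiv \eqdef \set{\Barc \in \Barcs_n}{\underline{\lambda}{}^\textsc{b}(\Barc) \text{ is not contracted by } \Bequiv}$, which is well-defined because \cref{thm:bijectionNoncrossingArcDiagramsB} (together with the fact that join-irreducibles of $\fB_n$ are exactly the $B$-permutations with a single ascent, whose unique canonical join representation is themselves) shows that $\Barc \mapsto \underline{\lambda}{}^\textsc{b}(\Barc)$ is a bijection from $B$-arcs to join-irreducibles of $\fB_n$. Next, given a $\Bequiv$-class $C$, let $\sigma \eqdef \projDown(C)$ be its minimum. By \cref{thm:joinMeetRepresentationsPermutationsB}, $\sigma = \bigJoin \set{\underline{\lambda}{}^\textsc{b}(\Barc)}{\Barc \in \underline{\delta}{}^\textsc{b}(\sigma)}$ is the canonical join representation of $\sigma$ in $\fB_n$. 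I will then verify, using the semidistributive principle above, that this is also the canonical join representation of $C$ in $\fB_n/{\Bequiv}$, so every $\Barc \in \underline{\delta}{}^\textsc{b}(\sigma)$ is uncontracted and hence lies in $\Barcs_\Bequiv$. Conversely, any noncrossing $B$-arc diagram $\c{D} \subseteq \Barcs_\Bequiv$ equals $\underline{\delta}{}^\textsc{b}(\sigma)$ for some $\sigma \in \fB_n$ by \cref{thm:bijectionNoncrossingArcDiagramsB}, and this $\sigma$ must be $\projDown$-minimal in its class since all of its canonical join summands are uncontracted; so $\c{D}$ records the canonical join representation of the corresponding class in~$\fB_n/{\Bequiv}$.

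The only nontrivial point is the semidistributivity of the weak order on~$\fB_n$ needed to apply the general principle; this is recalled in \cref{subsec:canonicalJoinRepresentationsB} and follows either from the general result that the weak order of any finite Coxeter group is semidistributive, or more directly by specializing the type~$A_{2n-1}$ statement to the centrally symmetric subgroup, using that semidistributivity is inherited by intervals of the form $\fix{w_0}$. Everything else is just a cleanly packaged transport of the bijection $\underline{\delta}{}^\textsc{b}$ through the semidistributive-quotient principle, so the proof should be a short paragraph of about the same length as the type~$A$ proof of~\cite[Thm.~4.1]{Reading-arcDiagrams}. No obstacle is anticipated; the dual statement for meet representations and $\overline{\delta}{}^\textsc{b}$ will of course follow by the symmetric argument.
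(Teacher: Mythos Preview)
Your approach is correct and matches the paper's: the paper omits the proof entirely, stating only that the result ``is the analogue of \cref{thm:joinMeetRepresentationsQuotient} and is based on the semidistributivity of the weak order on~$\fB_n$,'' which is precisely the scaffolding you outline. One small slip: join-irreducibles of~$\fB_n$ are the $B$-permutations with a single \emph{descent} (not ascent), since $\underline{\delta}{}^\textsc{b}$ records descents; this does not affect your argument.
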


We now aim at an analogue of \cref{thm:arcIdeals} based on the congruence uniformity of the weak order on~$\fB_n$.
For this, we just need to describe the forcing order on $B$-arcs.

\begin{definition}
\label{def:forcingB}
A $B$-arc~$\Barc \eqdef (-\arc, \arc) \in \Barcs_n$ \defn{forces} a $B$-arc~$\Barc' \eqdef (-\arc', \arc') \in \Barcs_n$ when
\begin{enumerate}[(i)]
\item if~$\Barc$ is overlapped, then~$\Barc'$ is overlapped and the upper $A$-arc of~$\Barc$ forces the upper $A$-arc~of~$\Barc'$,
\item otherwise, some $A$-arc of~$\Barc$ forces some $A$-arc of~$\Barc'$.
\end{enumerate}
\end{definition}

We denote this relation by~$\Barc \succ \Barc'$.
Note that, for overlapped $B$-arcs, forcing is a bit subtle:
\begin{itemize}
\item an overlapped $B$-arc only forces overlapped $B$-arcs, but might be forced by any arc, (\eg \smash{\raisebox{-.22cm}{\includegraphics[scale=.8]{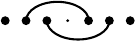}}} does not force \smash{\raisebox{-.22cm}{\includegraphics[scale=.8]{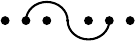}}} which forces \smash{\raisebox{-.22cm}{\includegraphics[scale=.8]{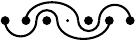}}}),
\item the forcing condition for overlapped $B$-arcs involves the vertical order between the $A$-arcs in each $B$-arc (\eg \smash{\raisebox{-.22cm}{\includegraphics[scale=.8]{BshardForcing2}}} is forced by \smash{\raisebox{-.22cm}{\includegraphics[scale=.8]{BshardForcing1}}} but not by~\smash{\raisebox{-.22cm}{\includegraphics[scale=.8]{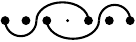}}}).
\end{itemize}
The \defn{$B$-arc poset} is the poset~$(\Barcs_n, \prec)$ of all $B$-arcs ordered by forcing.
The forcing relation and the arc poset on~$\Barcs_3$ are illustrated in \cref{fig:BforcingOrder}.
We thus obtain the following description of the lattice congruences of the weak order on~$\fB_n$.
This description can be seen as a translation of~\cite{Reading-latticeCongruences} in terms of $B$-arcs, or can be proved directly using the geometric definition of shards~\cite{Reading-PosetRegionsChapter}.

\begin{figure}
	\capstart
	\centerline{\includegraphics[scale=.58]{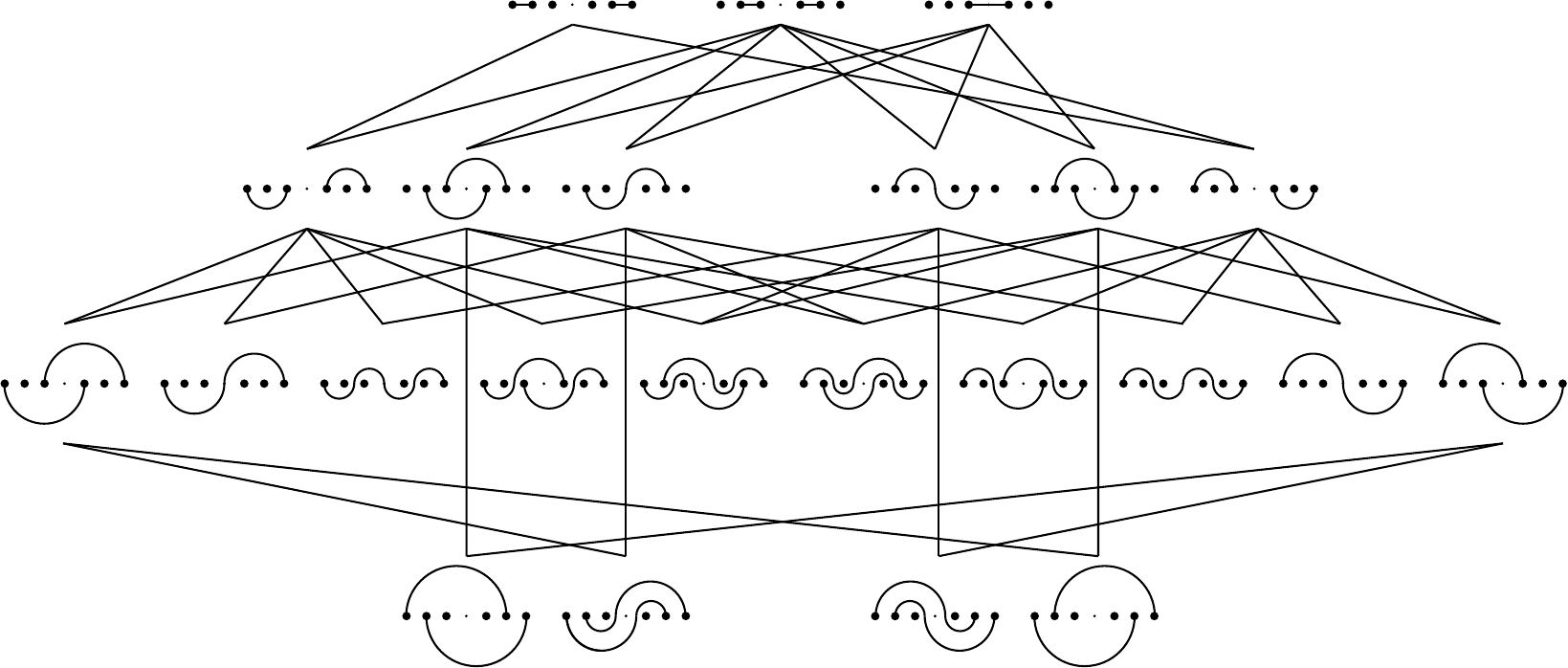}}
	\caption{The $B$-arc poset for~$n = 3$.}
	\label{fig:BforcingOrder}
\end{figure}

\begin{theorem}
\label{thm:BarcIdeals}
The map~${\Bequiv} \mapsto \Barcs_\Bequiv$ is a bijection between the lattice congruences of the weak order on~$\fB_n$ and the upper ideals of the $B$-arc poset~$(\Barcs_n, \prec)$.
\end{theorem}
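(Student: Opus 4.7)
The plan is to reduce this to the general framework for congruence uniform lattices and then translate the geometric forcing relation on shards into the combinatorial forcing relation on $B$-arcs of Definition~\ref{def:forcingB}. Recall from Subsection~\ref{subsec:latticeQuotients} that if a finite lattice $L$ is congruence uniform, then the map sending a congruence~${\equiv}$ to the set of join-irreducibles of~$L$ that are \emph{uncontracted} by~${\equiv}$ is a bijection between congruences of~$L$ and upper ideals of the forcing poset on join-irreducibles of~$L$. The weak order on~$\fB_n$ is well known to be a congruence uniform lattice (see~\cite{Reading-latticeCongruences} or~\cite{Reading-PosetRegionsChapter}), and by Theorem~\ref{thm:joinMeetRepresentationsPermutationsB} its join-irreducibles are in bijection with $B$-arcs via~$\Barc \mapsto \underline{\lambda}(\Barc)$. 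Hence the map~${\Bequiv} \mapsto \Barcs_\Bequiv$ is a bijection onto some family of upper ideals of the forcing poset on join-irreducible $B$-permutations, and the whole content of the theorem is to show that this forcing poset coincides with~$(\Barcs_n, \prec)$ from Definition~\ref{def:forcingB}.

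The first step is to identify the forcing relation on join-irreducibles geometrically. In any simplicial hyperplane arrangement~$\HA$ whose poset of regions is a lattice, join-irreducible regions correspond to shards, and the forcing on join-irreducibles is the transitive closure of the local cutting relation between shards at codimension~$2$ faces (cf.~the end of Subsection~\ref{subsec:shards} and~\cite{Reading-PosetRegionsChapter}). Applied to the type~$B_n$ Coxeter arrangement, this gives a notion of $B$-shard and a geometric forcing relation between $B$-shards. The second step is to use the standard folding presentation: the type~$B_n$ Coxeter arrangement is the section of the type~$A_{2n-1}$ Coxeter arrangement by the centrally symmetric subspace, and each $B$-shard is precisely the intersection of either a centrally symmetric $A$-shard (singular case) or a centrally symmetric pair of non-crossing $A$-shards (separated or overlapped case) with the centrally symmetric subspace. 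This gives a bijection between $B$-arcs and $B$-shards and allows one to compute the geometric forcing relation locally in the centrally symmetric subspace from the known forcing relation between the ambient $A$-shards.

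The third and main step is to check that the resulting geometric forcing relation matches Definition~\ref{def:forcingB} case by case on the type (singular, separated, overlapped) of the forcing and forced $B$-arcs. For singular and separated $B$-arcs, the $B$-shard is (the trace on the centrally symmetric subspace of) a genuine pair of $A$-shards and both $A$-arcs contribute to cutting neighbouring $B$-shards, whence condition~(ii) of Definition~\ref{def:forcingB}. The delicate case is when~$\Barc$ is overlapped: its two $A$-shards overlap above the centrally symmetric subspace, and at any codimension~$2$ face of~$\Bhyp$ only the \emph{upper} $A$-shard of~$\Barc$ appears among the basic hyperplanes of the local subarrangement in~$\Bhyp$, while the lower $A$-shard is shielded by the upper one. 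Consequently an overlapped $\Barc$ can only cut overlapped $\Barc'$ and it does so through its upper $A$-arc acting on the upper $A$-arc of~$\Barc'$, which yields condition~(i). The main obstacle is precisely this overlapped case: one must carefully enumerate the rank~$2$ subarrangements of the type~$A_{2n-1}$ arrangement cut by~$\Bhyp$ and verify that the ``lower'' half of an overlapped $B$-shard never participates in the local basic hyperplane description inside~$\Bhyp$, so that the transitive closure of the geometric cutting relation recovers exactly the forcing of Definition~\ref{def:forcingB}. This local analysis is implicit in~\cite{Reading-latticeCongruences} and can be streamlined by working out once and for all the four possible combinatorial types of codimension~$2$ intersections of $B$-shards.
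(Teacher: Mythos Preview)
The paper does not actually supply a proof of this theorem: immediately after the statement it only remarks that the description ``can be seen as a translation of~\cite{Reading-latticeCongruences} in terms of $B$-arcs, or can be proved directly using the geometric definition of shards~\cite{Reading-PosetRegionsChapter}.'' Your sketch follows the second of these suggestions and is the natural approach: reduce to congruence uniformity, identify join-irreducibles with $B$-shards, and then check that the geometric cutting relation among $B$-shards coincides with Definition~\ref{def:forcingB}. So at the level of overall strategy you are exactly where the paper points.

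That said, your treatment of the overlapped case contains a genuine soft spot. You write that ``only the upper $A$-shard of~$\Barc$ appears among the basic hyperplanes of the local subarrangement in~$\Bhyp$, while the lower $A$-shard is shielded by the upper one.'' But after intersecting with~$\Bhyp$, the two $A$-shards $\shard(\arc)$ and $\shard(-\arc)$ give the \emph{same} $B$-shard (this is how the $B$-shard is defined), so there is no geometric ``upper'' versus ``lower'' piece inside~$\Bhyp$ to speak of, and nothing is literally shielded. The upper/lower distinction in Definition~\ref{def:forcingB} is a combinatorial bookkeeping device that records which of the two possible $A$-subarcs on an overlap interval is compatible with the noncrossing constraint, not a statement about two distinct geometric pieces. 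What actually has to be checked is the list of rank~$2$ subarrangements of~$\HB_n$ (they are of types $A_1 \times A_1$, $A_2$, and $B_2$), which hyperplanes are basic in each, and how the resulting cuts translate back to $B$-arcs; the $B_2$ sublattices are where overlapped arcs arise and where conditions~(i) and~(ii) diverge. Your final sentence acknowledges that this local analysis is the real content, but the ``shielding'' heuristic you offer before it does not survive as stated and should be replaced by that explicit rank~$2$ computation.
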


A \defn{$B$-arc ideal} is an upper ideal of the $B$-arc poset~$(\Barcs_n, \prec)$, and we make no distinction between $B$-arc ideals and lattice congruences of the weak order on~$\fB_n$.

\medskip
We now observe that certain $B$-arc ideals are already familiar, as they can be obtained from $A$-arc ideals of~$[\pm n]$.
Observe first that some $B$-arc ideals can be considered as $A$-arc ideals.

\begin{corollary}
\label{coro:BcongGivesAcong}
The $B$-arc ideal generated by a separated (resp.~singular) $B$-arc only contains separated (resp.~separated or singular) $B$-arcs, and their union is an $A$-arc ideal of~$[\pm n]$.
\end{corollary}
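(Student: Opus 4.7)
The plan is to prove the corollary in two stages: first, to confirm that the upper ideal is confined to separated or singular $B$-arcs, and second, to verify that the resulting union of $A$-arcs is closed under the type~A forcing on $[\pm n]$.

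For the first stage, I would begin by noting the contrapositive of clause~(i) in \cref{def:forcingB}: a non-overlapped $B$-arc can only be forced by non-overlapped $B$-arcs, so no overlapped $B$-arc appears in the upper ideal generated by a separated or singular $\Barc$. To refine this for a separated $\Barc$ with representative $\arc \eqdef (a, b, A, B)$ where $a \in [n]$, the two $A$-arcs of $\Barc$ lie strictly on opposite sides of the origin, with intervals $[a,b] \subseteq [n]$ and $[-b,-a] \subseteq -[n]$. By clause~(ii), any $\Barc''$ forcing $\Barc$ has an $A$-arc that is inner to $\arc$ or $-\arc$, hence lies entirely in $[a,b]$ or $[-b,-a]$, so it is one-sided. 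A singular $B$-arc's unique $A$-arc is centrally symmetric and crosses the origin, so $\Barc''$ cannot be singular and must be separated. For singular $\Barc$, whose centrally symmetric $A$-arc spans both sides of the origin, inner $A$-arcs forcing it can be one-sided (yielding a separated $\Barc''$) or centrally symmetric (yielding a singular $\Barc''$), which gives exactly the claim.

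For the second stage, I would show that the union $\arcs$ of the $A$-arcs of all $B$-arcs in the ideal is an upper ideal in the $A$-arc poset of $[\pm n]$. Take $\arc_\star \in \arcs$ and $\gamma$ forcing $\arc_\star$. In the separated case, $\arc_\star$ is one-sided and $\gamma$'s interval is contained in $\arc_\star$'s, so $\gamma$ is also one-sided on the same side; by transitivity of $A$-arc forcing, $\gamma$ then forces the one-sided representative of a separated $\Barc''$ in the ideal, so the separated $B$-arc $(-\gamma, \gamma)$ belongs to the upper ideal, giving $\gamma \in \arcs$. In the singular case, the same transitivity argument applies when $\gamma$ is one-sided or centrally symmetric, placing $\gamma$ as an $A$-arc of a separated or singular $B$-arc in the upper ideal respectively.

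The main obstacle will be excluding, in the singular case, a crossing-origin and non-centrally-symmetric $\gamma$ that forces the centrally symmetric $A$-arc of the generator. Such a $\gamma$ would naturally live in an overlapped $B$-arc, which stage one rules out of the $B$-arc ideal. I would resolve this by exploiting the partition structure $A = -B$ of centrally symmetric $A$-arcs to show that any such $\gamma$ in fact forces a one-sided or centrally symmetric $A$-arc already in $\arcs$, or equivalently by reinterpreting the claim via the canonical centrally symmetric type~A congruence on $\fS_{2n}$ that restricts to the given type~B congruence~$\Bequiv$.
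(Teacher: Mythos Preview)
The paper states this corollary without proof, so there is no argument to compare against; the intended reading is that it follows directly from \cref{def:forcingB}. Your first stage and the separated half of your second stage are correct and are exactly the kind of direct verification one expects.

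The gap is precisely where you locate it, but your proposed resolution does not close it --- and in fact cannot, because the second assertion of the corollary fails as literally stated in the singular case. Take $n=2$ and the singular $B$-arc $\Barc$ with representative $\arc_\star = (-2,2,\{1\},\{-1\})$. The $A$-arc $\gamma = (-1,2,\{1\},\varnothing)$ forces $\arc_\star$, crosses the origin, and is not centrally symmetric. The only $B$-arc having $\gamma$ among its $A$-arcs is the overlapped pair $(-\gamma,\gamma)$, and by clause~(i) of \cref{def:forcingB} an overlapped $B$-arc never forces a singular one. Hence $\gamma$ does not occur among the $A$-arcs of any $B$-arc in the ideal generated by $\Barc$, so that union is not an upper ideal of the $A$-arc poset on~$[\pm n]$. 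Your proposed fix --- showing that $\gamma$ ``forces a one-sided or centrally symmetric $A$-arc already in $\arcs$'' --- is true but irrelevant: upward closure requires $\gamma$ itself to lie in $\arcs$, not merely to force something there. The alternative route via a centrally symmetric type~$A$ congruence runs into the obstruction recorded in \cref{rem:typeBvstypeA}.

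That this is a genuine issue with the statement rather than with your argument is corroborated by \cref{exm:CambrianCongruencesB}, which explicitly notes that for a singular $\Barc$ the $B$-arc ideal $\Barcs_\Barc$ coming from the $A$-arc ideal $\arcs_\arc$ (via \cref{coro:AcongImpliesBcong}) is \emph{not} the principal ideal generated by $\Barc$: one must throw in extra overlapped generators. The ``$A$-arc ideal'' claim of the corollary is thus only correct in the separated case; in the singular case it should be read with the larger ideal of \cref{exm:CambrianCongruencesB} in mind.
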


Conversely, any $A$-arc ideal can be converted into a $B$-arc ideal as follows.
We say that an \mbox{$A$-arc~$\arc$} on~$[\pm n]$ is \defn{centrally symmetrizable} if~$(-\arc, \arc)$ is a $B$-arc, \ie if either $\arc$ is centrally symmetric, or $a \ne -b$ and~$-\arc$ and~$\arc$ are noncrossing.

\begin{corollary}
\label{coro:AcongImpliesBcong}
For any $A$-arc ideal~$\arcs$ on~$[\pm n]$, the set~$\Barcs$ of $B$-arcs~$(-\arc, \arc)$ for all centrally symmetrizable arcs~$\arc \in \arcs$ is a $B$-arc ideal.
\end{corollary}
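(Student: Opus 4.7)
The plan is to verify directly that $\Barcs$ is closed under being forced, i.e., that $\Barc \in \Barcs$ and $\Barc' \succ \Barc$ imply $\Barc' \in \Barcs$. I would first identify each $B$-arc with its underlying set of one or two $A$-arcs, under which (since any $A$-arc of a $B$-arc is automatically centrally symmetrizable) $\Barcs$ is exactly $\set{\Barc \in \Barcs_n}{\Barc \cap \arcs \ne \varnothing}$. The key preliminary fact to record is that $A$-arc forcing on $[\pm n]$ is invariant under central symmetry $\arc \mapsto -\arc$: unwinding the definition of $\succ$ against $-\arc = (-b,-a,-B,-A)$ shows that $\arc \succ \arc''$ if and only if $-\arc \succ -\arc''$. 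A direct geometric consequence is that, for an overlapped $B$-arc $\{\arc, -\arc\}$, central symmetry swaps the upper and lower $A$-arcs, because the underlying map $(x,y) \mapsto (-x,-y)$ flips above and below the horizontal axis.

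Given $\Barc \in \Barcs$ and $\Barc' \succ \Barc$, I would split according to Definition~\ref{def:forcingB} applied to $\Barc'$. In the overlapped case, $\Barc$ is also overlapped and the upper $A$-arc of $\Barc'$ forces the upper $A$-arc of $\Barc$; the central-symmetry invariance then also gives that the lower $A$-arc of $\Barc'$ forces the lower $A$-arc of $\Barc$. Whichever of the upper or lower $A$-arc of $\Barc$ lies in $\arcs$, the upper-ideal property of $\arcs$ (Theorem~\ref{thm:arcIdeals}) places the corresponding $A$-arc of $\Barc'$ into $\arcs$, and hence $\Barc' \in \Barcs$. In the non-overlapped case, some $A$-arc $\arc_1 \in \Barc'$ forces some $A$-arc $\arc_2 \in \Barc$; if $\arc_2 \in \arcs$, then $\arc_1 \in \arcs$ by the upper-ideal property of $\arcs$ and we conclude $\Barc' \in \Barcs$.

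The only delicate step is the possibility that the $A$-arc $\arc_2 \in \Barc$ handed to us by Definition~\ref{def:forcingB}(ii) is not the one lying in $\arcs$. In that case the hypothesis $\Barc \cap \arcs \ne \varnothing$ forces $\Barc$ to be non-singular with $-\arc_2 \in \arcs$, and invariance of forcing under central symmetry upgrades $\arc_1 \succ \arc_2$ to $-\arc_1 \succ -\arc_2$; since $\arcs$ is an $A$-arc ideal this gives $-\arc_1 \in \arcs$, and since $-\arc_1$ is an $A$-arc of $\Barc'$ we again conclude $\Barc' \in \Barcs$.
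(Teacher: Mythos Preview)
Your proof is correct. The paper actually states this corollary without any proof; it is treated as an immediate consequence of \cref{def:forcingB} and \cref{thm:BarcIdeals}, falling under the blanket remark at the start of the section that such statements ``are proved exactly as in type~$A$''. Your argument supplies the routine verification that the paper omits, and the case split along \cref{def:forcingB} together with the central-symmetry invariance of $A$-forcing is exactly the right way to do it.
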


\begin{example}[$B$-Cambrian]
\label{exm:CambrianCongruencesB}
For a separated or singular $B$-arc~$\Barc \eqdef (-\arc, \arc)$, we denote by~$\Barcs_\Barc$ the $B$-arc ideal obtained as in \cref{coro:AcongImpliesBcong} from the $A$-arc ideal~$\arcs_\arc$ generated by~$\arc$ already considered in \cref{exm:CambrianCongruences}.
Note that when the $B$-arc~$\Barc$ is separated, the $B$-arc ideal~$\Barcs_\Barc$ is generated by~$\Barc$ as in \cref{coro:BcongGivesAcong}.
However, this is not anymore the case when the $B$-arc~$\Barc$ is singular.
Indeed, while the $A$-arc ideal~$\arcs_\arc$ is always primitive, the corresponding $B$-arc ideal~$\Barcs_\Barc$ is not primitive in that case.
For instance, the Cambrian congruence corresponding to the $B$-arc~\smash{\raisebox{-.22cm}{\includegraphics[scale=.8]{BshardForcing4}}} is generated by \smash{\raisebox{-.22cm}{\includegraphics[scale=.8]{BshardForcing4}}} and \smash{\raisebox{-.22cm}{\includegraphics[scale=.8]{BshardForcing1}}}.
The congruence~$\equiv_\Barc$ of the weak order on~$\fB_n$ with $B$-arc ideal~$\Barcs_\Barc$ is called the \defn{$\Barc$-Cambrian congruence}, and the lattice quotient~$\fB_n/{\equiv_\Barc}$ is the \defn{$\Barc$-Cambrian lattice}.
It was introduced and extensively studied by N.~Reading in~\cite{Reading-latticeCongruences, Reading-CambrianLattices}.
\end{example}

\begin{example}
\label{exm:PermutreeCongruencesB}
More generally, for any centrally symmetric decoration~$\decoration \in \Decorations^{2n}$, the type~$B$ $\decoration$-permutree congruence is defined by the $B$-arc ideal~$\Barcs_\decoration$ obtained as in \cref{coro:AcongImpliesBcong} from the $A$-arc ideal~$\arcs_{\decoration}$ of \cref{exm:otherCongruences}\,\eqref{item:permutreeCongruence}.
\end{example}

\begin{remark}
\label{rem:typeBvstypeA}
It would be tempting to believe that lattice congruences of the weak order on~$\fB_n$ are ``just centrally symmetric lattice congruences of the type~$A$ weak order'', but the type~$B$ forcing order is not ``just centrally symmetric type~$A$ forcing order''.
More precisely, while $A$-arc ideals always yield $B$-arc ideals by \cref{coro:AcongImpliesBcong}, the converse is wrong in general.
For instance, only $12$ (resp.~$1370$) of the $19$ (resp.~$8368$) type~$B$ congruences for~$n = 2$ (resp.~$n = 3$) arise from~\cref{coro:AcongImpliesBcong}.
\end{remark}

\begin{remark}
As in \cref{rem:regularQuotients}, the Hasse diagram of a lattice quotient~$\fB_n/{\Bequiv}$ is not always regular (\ie of constant degree).
From computational experiments, it seems that:
\begin{enumerate}[(i)]
\item if a forcing maximal $B$-arcs of~$\Barcs_n \ssm \Barcs_\Bequiv$ is non-singular and crosses the horizontal axis, then the Hasse diagram of a lattice quotient~$\fB_n/{\Bequiv}$ is not regular,
\label{cond:regularWeak}
\item if the forcing maximal arcs of~$\arcs_n \ssm \arcs_\equiv$ never cross the horizontal axis except possibly at the origin, then the Hasse diagram of a lattice quotient~$\fB_n/{\Bequiv}$ is regular.
\label{cond:regularStrong}
\end{enumerate}
However, none of these conditions characterizes regularity: Condition~\eqref{cond:regularWeak} is necessary but not sufficient, while Condition~\eqref{cond:regularStrong} is sufficient but not necessary.
A characterization would be interesting.
\end{remark}


\subsection{Type~$B$ Coxeter arrangement and permutahedron}
\label{subsec:CoxeterFanPermutahedronB}

We now switch to some geometric considerations on the weak order on~$\fB_n$.
We still consider the canonical basis~$(\b{e}_i)_{i \in [n]}$ of~$\R^n$ and define~$\b{e}_{-i} \eqdef -\b{e}_i$.
Similarly, for a vector~$\b{x} = (\b{x}_1, \dots, \b{x}_n) \in \R^n$, we define~$\b{x}_{-i} \eqdef -\b{x}_i$.
The motivation for these notations will become clear in \cref{rem:projectionAtoB}.

The \defn{type~$B$ Coxeter arrangement} is the set~$\HB_n$ of hyperplanes of the form~$\set{\b{x} \in \R^n}{\b{x}_a = \b{x}_b}$ for~$a < b \in [\pm n]$.
We could also define three families of hyperplanes (either~$\b{x}_a = \b{x}_b$, or~$\b{x}_a = 0$, or~$\b{x}_a = -\b{x}_b$ for~$1 \le a < b \le n$), but it is convenient to take advantage of the notation~$\b{x}_{-i} = -\b{x}_i$.
The hyperplane arrangement~$\HB_n$ defines a fan~$\BFan_n$ called the \defn{type~$B$ Coxeter fan}.
It~has
\begin{itemize}
\item a chamber~$\polytope{C}(\sigma) \eqdef \set{\b{x} \in \R^n}{\b{x}_{\sigma_1} \le \b{x}_{\sigma_2} \le \dots \le \b{x}_{\sigma_n}}$ for each $B$-permutation~$\sigma \in \fB_n$,
\item a ray~$\polytope{C}(R) \eqdef \R_{\ge0} \ray(R)$ for each non-empty signed subset~$R$ (\ie subset of~$[\pm n]$ such that $\{-i,i\} \not\subset R$), where~$\ray(R) = \one_R = \sum_{i \in R} \b{e}_i$.
\end{itemize}
The chamber~$\polytope{C}(\sigma)$ has rays~$\polytope{C}(\sigma([k,n]))$ for~$k \in [n]$.
Note that~$\BFan_n$ has~$2^n n!$ chambers, $n 2^{n-1} n!$ walls supported by~$n^2$ hyperplanes, and~$3^n-1$ rays.
\cref{fig:B3permutahedron}\,(middle) shows the fan~$\BFan_3$ intersected with the cube~$[-1,1]^3$.
This illustrates the fact that the hyperplanes (resp.~rays) of the arrangement~$\HB_n$ are the reflection hyperplanes (resp.~correspond to the faces) of the $n$-dimensional cube~$[-1,1]^n$, or dually of the $n$-dimensional cross-polytope~$\conv\set{\pm \b{e}_i}{i \in [n]}$.
See also \cref{fig:B3shards} for the same fan~$\BFan_3$ intersected with a sphere and stereographically projected to the plane, and \cref{fig:B2shards}\,(left) for the fan~$\BFan_2$.
In these pictures, hyperplanes are labeled with inequalities of the same color corresponding to the halfspace in which the inequality appears, chambers are labeled with blue signed permutations and rays are labeled with red signed subsets.

\begin{figure}
	\capstart
	\centerline{\includegraphics[scale=.6]{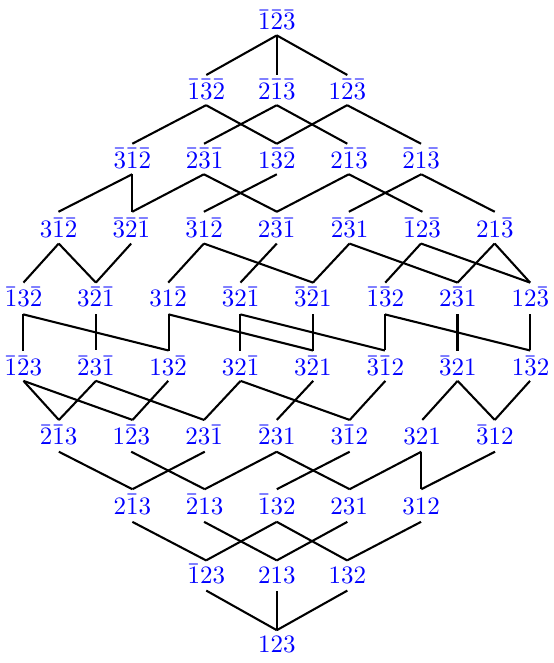} \; \includegraphics[scale=.6]{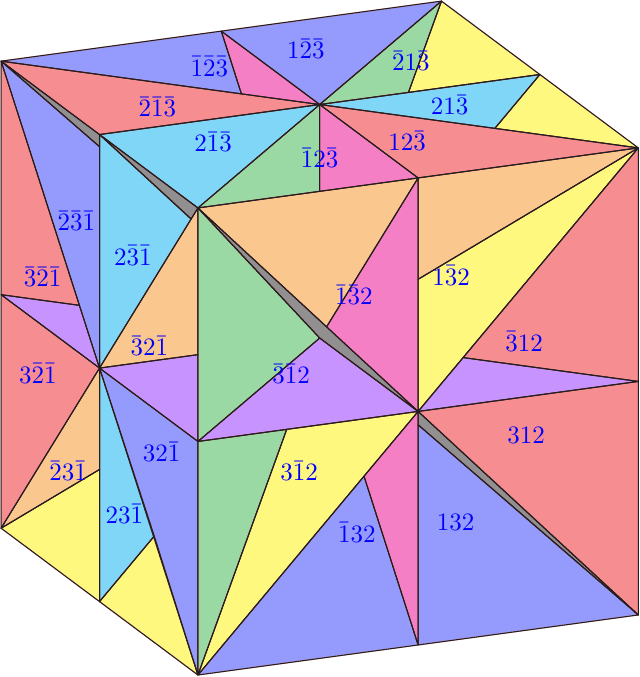} \; \includegraphics[scale=.6]{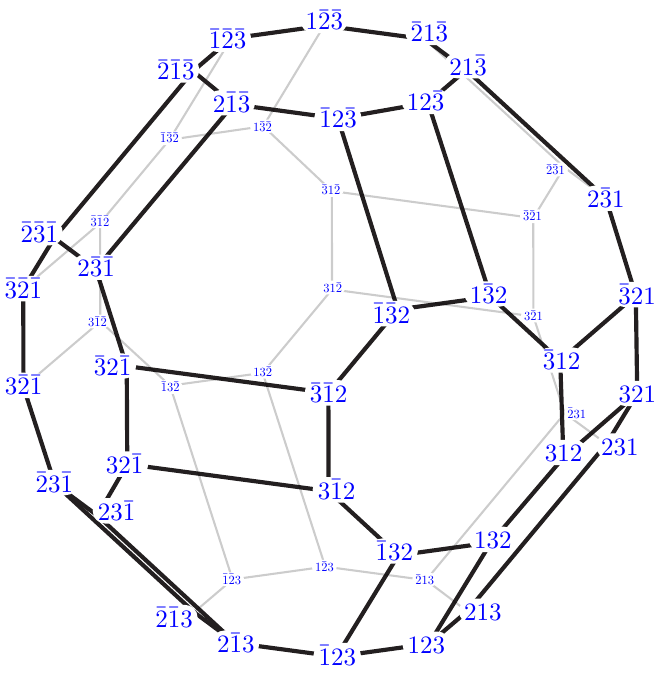}}
	\caption{The Hasse diagram of the weak order on~$\fB_3$ (left) can be seen as the dual graph of type $B_3$ Coxeter fan (middle) or as the graph of the type $B_3$ permutahedron~$\Perm[B_3]$ (right).}
	\label{fig:B3permutahedron}
\end{figure}

\begin{figure}
	\capstart
	\centerline{\includegraphics[scale=.6]{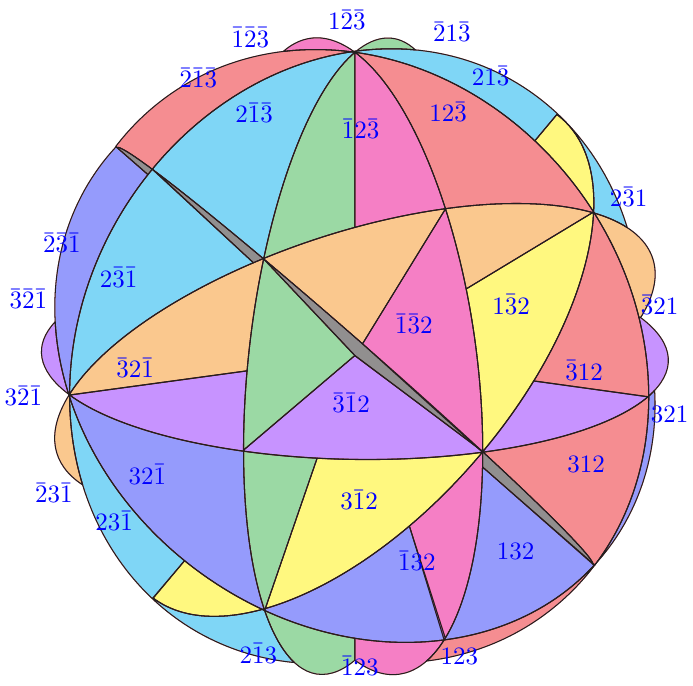} \quad \includegraphics[scale=.6]{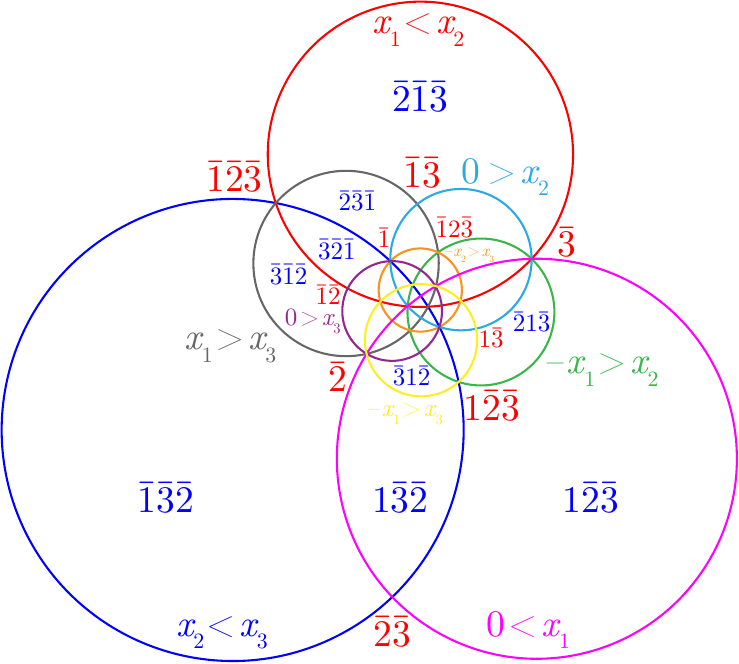}}
	\caption{The type~$B_3$ Coxeter fan~$\BFan_3$ (left) and a stereographic projection (right).}
	\label{fig:B3CoxeterArrangement}
\end{figure}

The \defn{type~$B$ permutahedron} is the polytope~$\BPerm$ defined equivalently as
\begin{itemize}
\item the convex hull of the points~$\sum_{i \in [n]} i \b{e}_{\sigma_i}$ for all $B$-permutations~$\sigma \in \fB_n$ (where~$\b{e}_{-i} = -\b{e}_i$),
\item the intersection of the halfspaces~$\bigset{\b{x} \in \R^n}{\sum_{r \in R} \b{x}_r \le \binom{n+1}{2} - \binom{|R|+1}{2}}$ for all signed subsets~$R$ of~$[\pm n]$ (where~$\b{x}_{-i} = -\b{x}_i$),
\item (a translate of) the Minkowski sum of all segments~$[\b{e}_a, \b{e}_b]$ for all $a < b \in [\pm n]$.
\end{itemize}
Note that~$\BPerm$ has~$2^n n!$ vertices, $n 2^{n-1} n!$ edges, and $3^n-1$ facets.
\cref{fig:B3permutahedron}\,(right) show the type~$B$ permutahedron~$\BPerm[3]$.
As illustrated in~\cref{fig:B3permutahedron},
\begin{itemize}
\item the normal fan of the type~$B$ permutahedron~$\BPerm$ is the type $B$ Coxeter fan~$\BFan_n$,
\item the Hasse diagram of the weak order on~$\fB_n$ can be seen geometrically as the dual graph of the type~$B$ Coxeter fan~$\BFan_n$, or the graph of the type~$B$ permutahedron~$\BPerm$, oriented in the linear direction~$-\sum_{i \in [n]} i \, \b{e}_i$.
\end{itemize}

\pagebreak

\begin{remark}
\label{rem:projectionAtoB}
There is a strong connection between the type~$A$ and $B$ Coxeter arrangements and permutahedra.
Recall from~\cite[Lem.~7.11]{Ziegler-polytopes} that, for any projection~$\rho : \R^d \to \R^e$ and any polytope~$\polytope{P} \subset \R^d$, the normal fan of~$\rho(\polytope{P})$ is isomorphic (via~$\rho^*$) to the section of the normal fan of~$\polytope{P}$ by the image of~$\rho^*$.
Let~$(\b{e}_i)_{i \in [n]}$ and~$(\b{f}_i)_{i \in [\pm n]}$ denote the canonical bases of~$\R^n$ and~$\R^{[\pm n]}$ respectively.
Consider the projection map~$\rhoB : \R^{[\pm n]} \to \R^n$ given by~${\rhoB(\b{f}_{-i}) = -\b{e}_i}$ and~${\rhoB(\b{f}_i) = \b{e}_i}$ for~$i \in [n]$.
Its dual map~${\rhoB}^* : \R^n \to \R^{[\pm n]}$ is given by~${\rhoB}^*(\b{e}_i) = \b{f}_i - \b{f}_{-i}$.
Note that the image of~${\rhoB}^*$ is the \defn{centrally symmetric space}~$\Bhyp \eqdef \set{\b{x} \in \R^{[\pm n]}}{\b{x}_{-i} = -\b{x}_i \text{ for all } i \in [\pm n]}$.~Then:
\begin{itemize}
\item the type~$B$ permutahedron~$\BPerm$ is the image of the type~$A$ permutahedron~$\Perm[{[\pm n]}]$~by~$\rhoB$,
\item the type~$B$ arrangement~$\HB_n$ is the section of the type~$A$ arrangement~$\HA_{[\pm n]}$ by~$\Bhyp$.
\end{itemize}
\end{remark}


\subsection{Type~$B$ quotient fans and shards}
\label{subsec:shardsB}

As in \cref{subsec:quotientFanQuotientopes,subsec:shards}, we now consider the fans defined by lattice congruences of the weak order on~$\fB_n$ and their walls.
We use again the theory of shards~\cite{Reading-PosetRegionsChapter} that we specialize here explicitly in type~$B$.

Consider a $B$-arc~$\Barc \eqdef (-\arc, \arc)$ where~$\arc \eqdef (a, b, A, B)$.
The \defn{shard}~$\shard(\Barc)$ is the cone
\[
\shard(\Barc) \eqdef \set{\b{x} \in \R^n}{\b{x}_a = \b{x}_b, \; \b{x}_a \ge \b{x}_{a'} \text{ for all } a' \in A, \; \b{x}_a \le \b{x}_{b'} \text{ for all } b' \in B},
\]
where we use again the convenient convention that~$\b{x}_{-i} = -\b{x}_i$.
In other words, the shard $\shard(\Barc)$ is (the projection to~$\R^n$ of) the intersection of~$\shard(\arc)$ with the centrally symmetric space~$\Bhyp$.
\cref{fig:B2shards,fig:B3shards} illustrate the shards~$\Bshards_n$ when~$n = 2$ and~$n = 3$, labeled by the corresponding $B$-arcs.
Note that the shards corresponding to the $B$-arcs whose representative $A$-arc has endpoints~$a$ and~$b$ cover the hyperplane~$\b{x}_a = \b{x}_b$.
For instance, the shards of singular $B$-arcs with~$\arc \eqdef (a, -a, A, -A)$ cover the hyperplane~$\b{x}_a = 0$.
We denote by~$\Bshards_n \eqdef \set{\shard(\Barc)}{\Barc \in \Barcs_n}$ the set of all shards of~$\HB_n$ and by~$\Bshards_{\Barcs} \eqdef \set{\shard(\Barc)}{\Barc \in \Barcs}$ the set of shards corresponding to a set~$\Barcs$ of~$B$-arcs.
Note that ${|\Bshards_n| = |\Barcs_n| = 3^n-n-1}$, as can be seen by the combinatorial argument of \cref{prop:numberBarcs} or by the alternative geometric argument of \cref{lem:numberShards}.

\medskip
We now consider lattice congruences of the weak order on~$\fB_n$ and the geometry of their quotient fans.
The following statement is again a specialization of N.~Reading's results.
See for instance~\cite[Thm.~9-8.3]{Reading-PosetRegionsChapter}.

\begin{theorem}
\label{thm:quotientFanB}
Any lattice congruence~$\Bequiv$ of the weak order on~$\fB_n$ corresponding to a $B$-arc ideal~$\Barcs \subseteq \Barcs_n$ defines a \defn{quotient fan}~$\BFan_\Bequiv = \BFan_{\Barcs}$ whose chambers are obtained as
\begin{enumerate}[(i)]
\item either the unions of the chambers~$\polytope{C}(\sigma)$ of the Coxeter fan~$\BFan_n$ corresponding to $B$-permutations~$\sigma$ that belong to the same congruence class of~$\Bequiv$,
\item or the closures of the connected components of the complement of the union of the shards~of~$\Bshards_{\Barcs}$.
\end{enumerate}
\end{theorem}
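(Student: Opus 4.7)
The plan is to invoke N.~Reading's general theory of quotient fans for posets of regions of hyperplane arrangements, and specialize it to the type~$B$ Coxeter arrangement~$\HB_n$ with base region~$\polytope{C}(\textrm{id})$. The required hypothesis is that the poset of regions $\PR(\HB_n, \polytope{C}(\textrm{id}))$ is a lattice, which is precisely the type~$B$ weak order of \cref{subsec:canonicalJoinRepresentationsB}; it is moreover semidistributive and congruence uniform, so all lattice-theoretic ingredients of~\cite{Reading-PosetRegionsChapter} are available. Reading's results then give two things at once: (a)~gluing the chambers~$\polytope{C}(\sigma)$ for $B$-permutations~$\sigma$ lying in a common $\Bequiv$-class produces a complete polyhedral fan on~$\R^n$, and (b)~the union of the walls of this fan equals the union of the shards of~$\HB_n$ indexed by the join-irreducibles of~$\fB_n$ uncontracted by~$\Bequiv$.

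With this in hand, the remaining work is purely dictionary translation. By \cref{thm:joinMeetRepresentationsPermutationsB,thm:BarcIdeals}, uncontracted join-irreducibles of~$\fB_n$ correspond bijectively to the $B$-arcs in the $B$-arc ideal~$\Barcs = \Barcs_\Bequiv$, and the shard of a join-irreducible $B$-permutation $\underline{\lambda}(\Barc)$ matches the shard~$\shard(\Barc)$ defined in~\cref{subsec:shardsB} (this amounts to checking that the abstract shard of the join-irreducible, defined via codimension~$2$ basic hyperplanes as recalled at the end of \cref{subsec:shards}, is cut out by exactly the inequalities $\b{x}_a = \b{x}_b$, $\b{x}_a \ge \b{x}_{a'}$ for $a' \in A$, $\b{x}_a \le \b{x}_{b'}$ for $b' \in B$ with the convention $\b{x}_{-i} = -\b{x}_i$). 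This identification turns (a) into part~(i) of the theorem and (b) into part~(ii).

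The main obstacle, and the step deserving real care, is the correct identification of $B$-shards with their combinatorial labels in the overlapped case. For a separated or singular $B$-arc, the shard is essentially a centrally symmetric slice of a type~$A$ shard, so the identification is straightforward. For an overlapped $B$-arc $\Barc = (-\arc, \arc)$, the section of the two type~$A$ shards $\shard(-\arc)$ and $\shard(\arc)$ by the centrally symmetric space~$\Bhyp$ coincide as one single cone in~$\R^n$, and the upper/lower asymmetry of \cref{def:forcingB} reflects exactly which of the two $A$-shards is basic for the relevant codimension~$2$ face of~$\HB_n$. Once this matching is verified (a short case analysis on the types of codimension~$2$ faces of~$\HB_n$ and their basic hyperplanes), the two descriptions of chambers of~$\BFan_{\Barcs}$ coincide, and the theorem follows directly from~\cite[Thm.~9-8.3]{Reading-PosetRegionsChapter}.
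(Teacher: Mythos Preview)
Your proposal is correct and matches the paper's approach: the paper does not give an independent proof of this theorem but simply states that it is a specialization of N.~Reading's general results on quotient fans of posets of regions, citing \cite[Thm.~9-8.3]{Reading-PosetRegionsChapter}. Your additional dictionary work identifying the abstract shards with the explicit cones~$\shard(\Barc)$ (including the overlapped case) is more detail than the paper provides, but is in the same spirit and is consistent with the paper's remark that the type~$B$ shard description ``can be seen as a translation of~\cite{Reading-latticeCongruences} in terms of $B$-arcs.''
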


\begin{figure}[b]
	\capstart
	\centerline{\includegraphics[scale=.9]{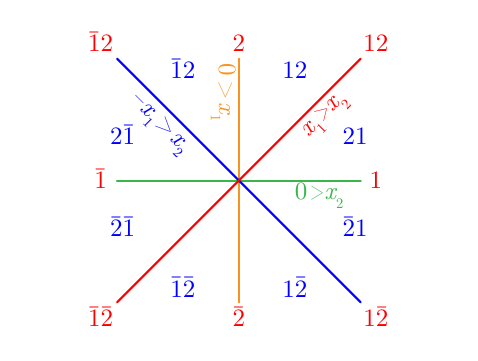} \includegraphics[scale=.9]{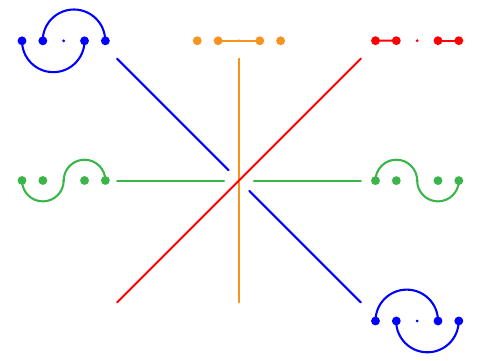}}
	\caption{The type~$B_2$ Coxeter fan~$\BFan_2$ (left) and the corresponding $B$-shards (right).}
	\label{fig:B2shards}
\end{figure}

\begin{figure}
	\capstart
	\centerline{\includegraphics[scale=.8]{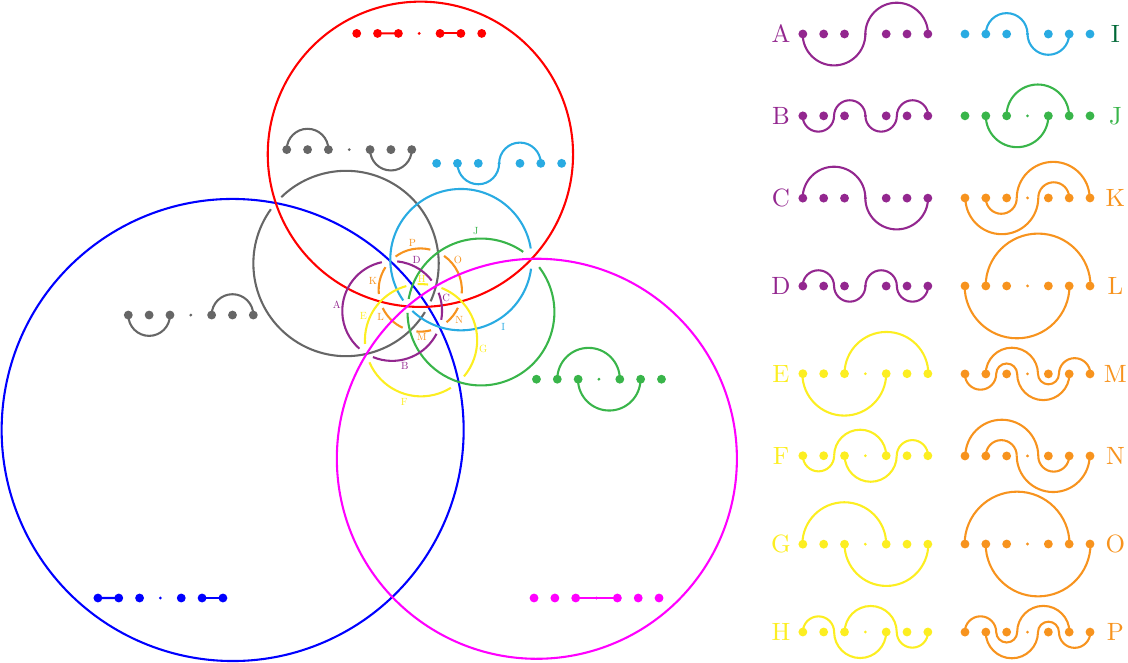}}
	\caption{The $B$-shards seen on the stereographic projection of the type~$B_3$ Coxeter fan.}
	\label{fig:B3shards}
\end{figure}

Similar to \cref{lem:raysQuotientFan}, one can describe the rays of the quotient fan~$\BFan_{\Barcs}$.

\begin{lemma}
\label{lem:BraysQuotientFan}
For any $B$-arc ideal~$\Barcs \subseteq \Barcs_n$ and any non-empty signed subset~$\varnothing \ne R \subset [\pm n]$, the ray~$\polytope{C}(R)$ of the Coxeter fan~$\BFan_n$ is also a ray of the quotient fan~$\BFan_{\Barcs}$ if and only if for any~$a < b \in [\pm n]$, the ideal~$\Barcs$ contains a $B$-arc with representative $A$-arc
\begin{itemize}
\item $(a, b, \varnothing, {]a,b[})$ if $a, b \in R$ and~${]a,b[} \cap R = \varnothing$,
\item $(a, b, {]a,b[}, \varnothing)$ if $a, b \in -R$ and~${]a,b[} \cap -R = \varnothing$,
\item and~$(a, b, {]a,b[} \cap R, {]a,b[} \cap -R)$ if~$a, b \notin R \cup -R$ and~${]a,b[} \ssm (R \cup -R) = \varnothing$.
\end{itemize}
\end{lemma}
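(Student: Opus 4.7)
The plan is to adapt the type~$A$ argument of \cite[Sect.~3.1]{AlbertinPilaudRitter} used in the proof of \cref{lem:raysQuotientFan}. First I would establish the general criterion that $\polytope{C}(R)$ remains a ray of $\BFan_{\Barcs}$ if and only if for every hyperplane $H \in \HB_n$ containing $\polytope{C}(R)$ there exists some $\Barc \in \Barcs$ with $\shard(\Barc) \subset H$ whose closure contains $\polytope{C}(R)$. This is a direct consequence of \cref{thm:quotientFanB}: the walls of $\BFan_{\Barcs}$ are exactly the shards of $\Bshards_\Barcs$, and examining the link of $\polytope{C}(R)$, the minimal face of $\BFan_{\Barcs}$ containing an interior point of $\polytope{C}(R)$ is locally cut out by the hyperplanes through $\polytope{C}(R)$ that are met by a shard of $\Bshards_\Barcs$ at that point. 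For $\polytope{C}(R)$ to remain one-dimensional, every such hyperplane must be so cut, otherwise the two sides of the uncut hyperplane would merge and absorb $\polytope{C}(R)$ into a face of dimension at least two.

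Next I would classify the hyperplanes of $\HB_n$ through $\polytope{C}(R) = \R_{\ge 0} \one_R$. Since $(\one_R)_c$ equals $1$, $-1$, or $0$ according to whether $c \in R$, $c \in -R$, or $c \notin R \cup -R$, the hyperplane $\{\b{x}_a = \b{x}_b\}$ with $a < b$ contains $\polytope{C}(R)$ precisely when $(\one_R)_a = (\one_R)_b$, giving exactly the three cases of the statement. Then, for each such hyperplane I would substitute $\b{x} = t\one_R$ into the inequalities defining $\shard(\Barc)$ from \cref{subsec:shardsB}, translating the closure-containment condition into combinatorial constraints on the partition $(A, B)$ of the representative $A$-arc of $\Barc$: elements of $A$ must have $\one_R$-coordinate at most $(\one_R)_a$, while elements of $B$ at least $(\one_R)_a$. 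The extremal hypothesis in each case (that $]a,b[$ meets no element of $R$, no element of $-R$, or no element outside $R \cup -R$, respectively) pins down $(A, B)$ uniquely, singling out the specific $B$-arc listed in the lemma. Both directions of the equivalence then follow: necessity because the named $B$-arc is the only candidate that could cut its hyperplane at $\polytope{C}(R)$, and sufficiency because its membership in $\Barcs$ directly supplies the required shard.

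The hardest part, I expect, will be checking that the unique candidate $A$-arc extracted in each case is genuinely centrally symmetrizable, so that it really corresponds to a $B$-arc in the sense of \cref{subsec:noncrossingArcDiagramsB}. This requires a case split on the relative position of $a$, $b$, and the origin: depending on whether $a > 0$, $a = -b$, or $a < 0 < b$ with $a \ne -b$, the resulting $B$-arc is separated, singular, or overlapped, and in the overlapped case one must further verify the noncrossing condition between the candidate $A$-arc and its reflection. A related bookkeeping point is that a single hyperplane admits two parametrizations $\{\b{x}_a = \b{x}_b\} = \{\b{x}_{-b} = \b{x}_{-a}\}$ via the convention $\b{x}_{-i} = -\b{x}_i$, so both yield conditions in the lemma that must be checked to single out the same $B$-arc up to taking its rightmost representative.
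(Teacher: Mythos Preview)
Your plan follows essentially the same strategy as the paper's (very brief) proof, which simply states that the listed $B$-arcs are precisely those whose shards contain $\polytope{C}(R)$ in their \emph{relative interior}, and then invokes the general principle from \cite[Sect.~3.1]{AlbertinPilaudRitter} that a ray is preserved if and only if all shards containing it in their interior belong to the ideal. Your more detailed outline --- classifying hyperplanes through $\polytope{C}(R)$, translating shard membership into coordinate inequalities on $\one_R$, and verifying central symmetrizability --- fleshes out exactly what the paper sweeps under ``it is easy to check.''

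One point to tighten: your general criterion should use \emph{relative interior} rather than \emph{closure}. When $H = \{\b{x}_a = \b{x}_b\}$ contains $\polytope{C}(R)$ but the extremal hypothesis fails (say $a,b \in R$ yet ${]a,b[} \cap R \ne \varnothing$), the ray lies on the common boundary of several shards in $H$ and in the interior of none; correspondingly the lemma imposes no condition for that pair $(a,b)$. Your ``extremal hypothesis'' step implicitly captures this --- it is exactly what forces the shard inequalities to be strict and hence singles out a unique shard with $\polytope{C}(R)$ in its interior --- but the closure-based criterion as you stated it would also be satisfied by boundary-containing shards, and would therefore not directly give the ``only if'' direction (nor match the list in the statement). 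Once you replace closure by interior, the rest of your plan goes through and coincides with the paper's argument.
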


\begin{proof}
It is easy to check from the definition of~$\shard(\Barc)$ that the $B$-arcs described in the statement are precisely those whose shards contain the ray~$\polytope{C}(R)$ in their interior.
The result thus follows from the fact that a ray is preserved if and only if all shards containing it in their interior are preserved.
See \cite[Sect.~3.1]{AlbertinPilaudRitter} for details.
\end{proof}

In contrast to type~$A$, the polytopality of the quotient fan~$\BFan_{\Barcs}$ remained open.
However, there is a natural realization of the quotient fan~$\BFan_{\Barcs}$ when the $B$-arc ideal~$\Barcs$ can be understood as an $A$-arc ideal.
Recall that an $A$-arc~$\arc$ on~$[\pm n]$ is centrally symmetrizable when~$(-\arc,\arc)$ is a $B$-arc.
Observe that if~$\arc$ is not centrally symmetrizable, then the relative interior of its shard does not meet the centrally symmetric space~$\Bhyp$.
Our next statement is the geometric counterpart of \cref{coro:AcongImpliesBcong}.

\begin{corollary}
\label{coro:AcongImpliesBcongFan}
Consider an $A$-arc ideal~$\arcs$ on~$[\pm n]$ and the $B$-arc ideal~$\Barcs$ consisting of~$(-\arc, \arc)$ for all centrally symmetrizable arcs~$\arc \in \arcs$.
Then the quotient fan~$\BFan_{\Barcs}$ is the section of the quotient fan~$\Fan_\arcs$ by the centrally symmetric space~$\Bhyp$.
Therefore, if $\Fan_\arcs$ is the normal fan of a polytope~$\polytope{P}$ then $\BFan_{\Barc}$ is the normal fan of the image of~$\polytope{P}$ by the projection~$\rhoB$ (and even of the section of~$\polytope{P}$ by~$\Bhyp$ if~$\polytope{P}$ is symmetric with respect to~$\Bhyp$).
\end{corollary}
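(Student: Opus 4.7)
The plan is to establish the fan equality first via the shard descriptions of both quotient fans, then deduce the polytopal consequence by invoking the duality between projections and sections of normal fans.

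First, I would apply Theorem~\ref{thm:quotientFanShards} to get that the union of walls of $\Fan_\arcs$ is $\bigcup_{\arc \in \arcs} \shard(\arc)$, and the analogous type~$B$ statement (which is part of Theorem~\ref{thm:quotientFanB}) to get that the union of walls of $\BFan_\Barcs$ is $\bigcup_{\Barc \in \Barcs} \shard(\Barc)$. Intersecting the type~$A$ description with $\Bhyp$ and using the definition $\shard((-\arc, \arc)) = \shard(\arc) \cap \Bhyp$, the key reduction is to show
\[
\bigcup_{\arc \in \arcs} \shard(\arc) \cap \Bhyp = \bigcup_{\arc \in \arcs \text{ c.s.'ble}} \shard(\arc) \cap \Bhyp.
\]
The inclusion $\supseteq$ is obvious. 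For $\subseteq$, I would use the observation (noted in the paragraph preceding the corollary) that when $\arc$ is not centrally symmetrizable, the relative interior of $\shard(\arc)$ is disjoint from $\Bhyp$, so $\shard(\arc) \cap \Bhyp$ lies in the boundary of $\shard(\arc)$. This boundary is covered by shards of arcs cutting (and hence forcing) $\arc$; since $\arcs$ is an $A$-arc ideal, all these arcs also belong to $\arcs$. An induction on the dimension of $\shard(\arc) \cap \Bhyp$ then lets one replace each non-centrally-symmetrizable contribution by centrally symmetrizable ones, proving the fan equality. Once walls coincide, chambers coincide, giving $\BFan_\Barcs = \Fan_\arcs \cap \Bhyp$.

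For the polytopality statement, I would invoke the projection lemma recalled in Remark~\ref{rem:projectionAtoB}: for any projection $\rho \colon \R^d \to \R^e$ and any polytope $\polytope{P} \subset \R^d$, the normal fan of $\rho(\polytope{P})$ is identified (via $\rho^*$) with the section of the normal fan of $\polytope{P}$ by the image of $\rho^*$. Applied to $\rhoB$, whose dual has image $\Bhyp$, this immediately gives that the normal fan of $\rhoB(\polytope{P})$ equals the section of $\Fan_\arcs$ by $\Bhyp$, which by the first claim is $\BFan_\Barcs$.

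Finally, for the parenthetical refinement, I would show that when $\polytope{P}$ is symmetric with respect to $\Bhyp$, its section $\polytope{P} \cap \Bhyp$ has the same normal fan in $\Bhyp$ as its projection $\rhoB(\polytope{P})$. The argument is that for any linear functional $\ell$ on $\Bhyp$, lifted to $\R^{[\pm n]}$ via ${\rhoB}^*$, the symmetry forces its maximizer on $\polytope{P}$ to meet $\Bhyp$ (e.g.\ by averaging with its reflection), so the face of $\polytope{P}$ maximizing $\ell$ intersects $\Bhyp$ in the face of $\polytope{P} \cap \Bhyp$ maximizing $\ell$; therefore the normal cone in $\Bhyp$ of a face of $\polytope{P} \cap \Bhyp$ equals the section of the corresponding normal cone of $\polytope{P}$ by $\Bhyp$.

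The main obstacle will be the careful treatment of non-centrally-symmetrizable arcs in the fan equality: one must verify that every contribution they make to $\Bhyp$ is already covered by shards of arcs lying in the ideal. I expect this to hinge on the geometric (cutting) characterization of the forcing order from the end of Section~\ref{subsec:shards}, namely that codimension-one boundary pieces of a shard are covered by shards of arcs that cut it, combined with the ideal property of $\arcs$.
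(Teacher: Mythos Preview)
Your proposal is correct and considerably more detailed than the paper's own treatment: the paper states this corollary without proof, regarding it as an immediate consequence of the observation just before it (that the relative interior of $\shard(\arc)$ misses $\Bhyp$ whenever $\arc$ is not centrally symmetrizable) together with the projection lemma recalled in Remark~\ref{rem:projectionAtoB}. Your argument for the parenthetical refinement (section versus projection when $\polytope{P}$ is symmetric) is likewise sound and more explicit than anything the paper writes down.

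One small correction: your induction variable ``dimension of $\shard(\arc) \cap \Bhyp$'' is not the right one, since passing from $\arc$ to an arc cutting it need not decrease this dimension. The clean induction is on the arc length $b-a$. Each facet of $\shard(\arc)$ is contained in a shard $\shard(\arc')$ for some $\arc'$ cutting $\arc$; such $\arc'$ is strictly shorter and, being in the upper ideal $\arcs$, the recursion terminates. With this adjustment your argument goes through as written.
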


\begin{example}[$B$-Cambrian]
\label{exm:HohlwegLangeAssoB}
Consider the $\Barc$-Cambrian congruence of a separated or singular $B$-arc~$\Barc \eqdef (-\arc, \arc)$ defined in \cref{exm:CambrianCongruencesB}.
The quotient fan~$\BFan_\Barc$, called the \defn{$\Barc$-Cambrian fan}, is the section of the $\arc$-Cambrian fan of \cref{exm:HohlwegLangeAsso} with~$\Bhyp$.
It is realized by the \defn{$\Barc$-cyclohedron}~$\Asso[\Barc]$ of~\cite{HohlwegLange}, which can be described either as the image under the projection~$\rhoB$ of the $\arc$-associahedron $\Asso[\arc]$ of \cref{exm:HohlwegLangeAsso}, or by deleting some inequalities in the facet description of the type~$B$ permutahedron~$\BPerm$.
See \cref{fig:B3associahedra} for the $3$-dimensional cyclohedra~$\Asso[\Barc]$.
Note that the polytopality of Cambrian fans is a type~$B$ incarnation of a general phenomenon: any Cambrian lattice~\cite{Reading-latticeCongruences, Reading-CambrianLattices} in any finite Coxeter group is realized by a Cambrian fan~\cite{ReadingSpeyer} and by a generalized associahedron~\cite{HohlwegLangeThomas, Stella, PilaudStump-brickPolytope, HohlwegPilaudStella}.
\end{example}

\begin{example}
More generally, consider the type~$B$ $\decoration$-permutree congruence of \cref{exm:PermutreeCongruencesB} for a centrally symmetric decoration~$\decoration \in \Decorations^{2n}$.
The quotient fan~$\BFan_\decoration$ is the section of the $\decoration$-permutree fan, and the normal fan of the image under~$\rhoB$ of the $\decoration$-permutreehedron of~\cref{exm:otherQuotientopes}.
\end{example}


\section{Type~$B$ shard polytopes and quotientopes}

In this section, we construct polytopal realizations of the type~$B$ quotient fans.
Again, we use Minkowski sums of shard polytopes, defined as projections of type~$A$ shard polytopes.


\subsection{Type~$B$ shard polytopes}

As in \cref{sec:shardPolytopes}, our construction of type~$B$ quotientopes is based on elementary polytopes associated to $B$-arcs.

\begin{definition}
\label{def:BshardPolytope}
The \defn{shard polytope}~$\shardPolytope[\Barc]$ of a $B$-arc~$\Barc \eqdef (-\arc, \arc)$ is the convex hull of the characteristic vectors of all $\arc$-alternating matchings, with the convention that~$\b{e}_{-i} = -\b{e}_i$.
\end{definition}

\begin{remark}
\label{rem:projectionShardPolytopes}
In other words, $\shardPolytope[\Barc]$ is the image of~$\shardPolytope[\arc]$ under the projection~$\rhoB : \R^{[\pm n]} \to \R^n$ of \cref{rem:projectionAtoB}.
This interpretation is essential for the proof of \cref{prop:shardPolytopeFanB} below.
\end{remark}

The shard polytopes corresponding to all $B$-arcs for~$n = 2$ and~$n = 3$ are represented in \cref{fig:B2shardPolytopes,fig:B3shardPolytopes}.
\begin{figure}[b]
	\capstart
	\centerline{\includegraphics[scale=1]{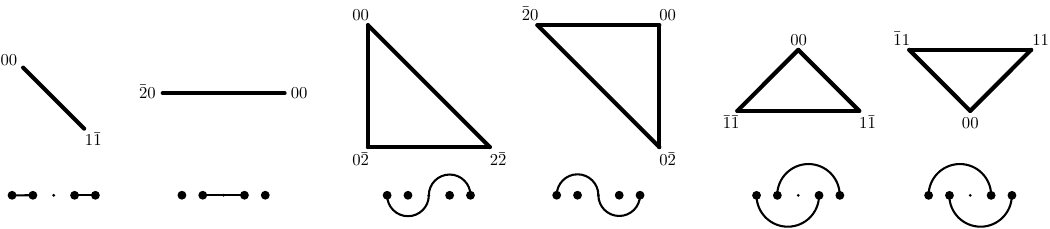}}
	\caption{Shard polytopes for all $B$-arcs with~$n = 2$.}
	\label{fig:B2shardPolytopes}
\end{figure}
\begin{figure}
	\capstart
	\centerline{\includegraphics[scale=.58]{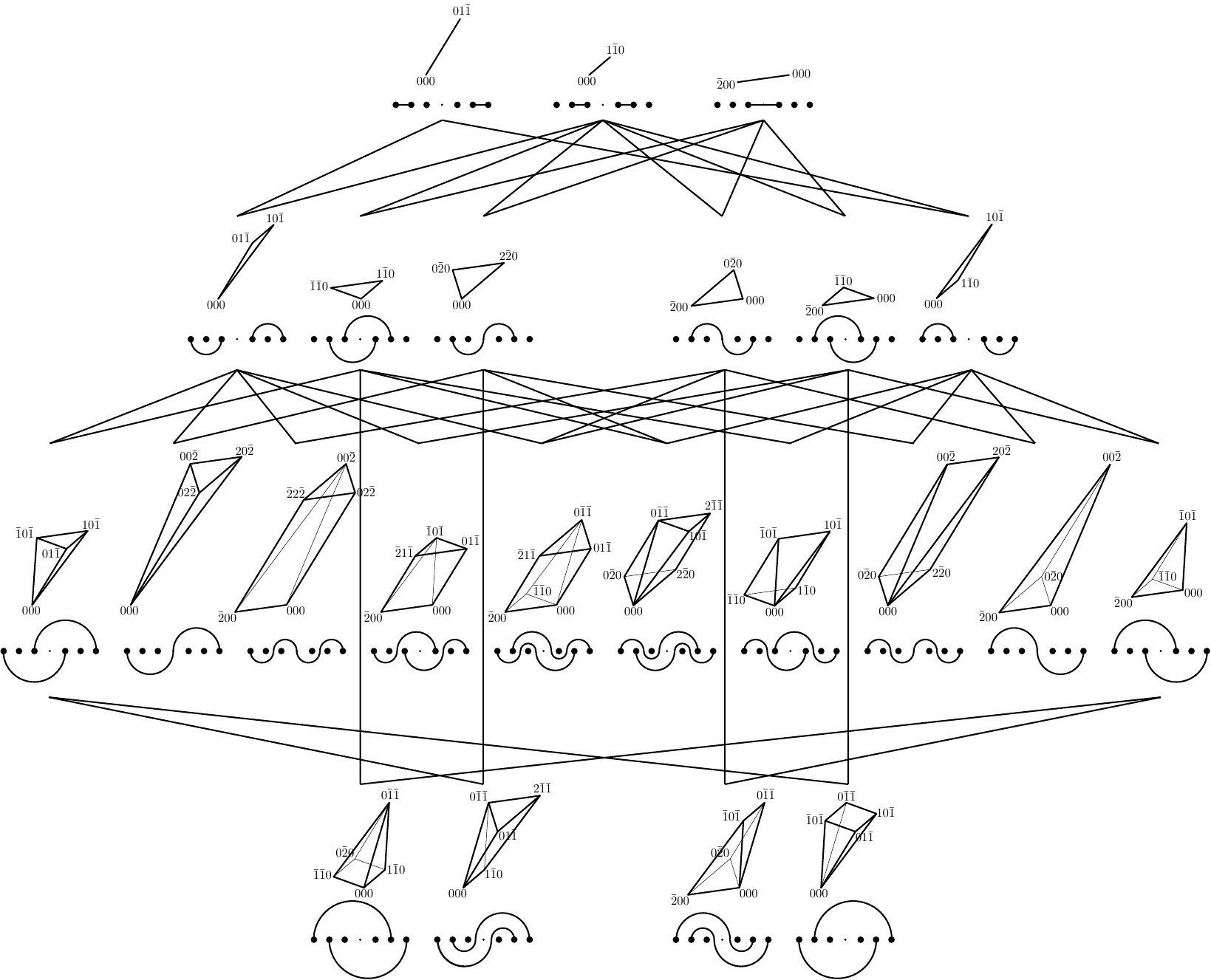}}
	\caption{Shard polytopes for all $B$-arcs with~$n = 3$.}
	\label{fig:B3shardPolytopes}
\end{figure}

\begin{remark}
For a $B$-arc~$\Barc \eqdef (-\arc, \arc)$ with $A$-arc representative~$\arc \eqdef (a, b, A, B)$, one can check that the dimension of~$\shardPolytope[\Barc]$ is~$b$ if~$\Barc$ is singular or overlapped, and~$b-a$ if~$\Barc$ is separated.
The vertex and facet descriptions are more intricate.
We just want to observe here that:
\begin{enumerate}[(i)]
\item For a separated $B$-arc~$\Barc \eqdef (-\arc, \arc)$, we just have~$\shardPolytope[\Barc] = \shardPolytope[\arc]$, so that the vertices of~$\shardPolytope[\Barc]$ are precisely the characteristic vectors of $\arc$-alternating matchings.
\item For a singular $B$-arc~$\Barc \eqdef (-\arc, \arc)$ (\ie with~$-\arc = \arc$), the vertices of~$\shardPolytope[\Barc]$ are precisely the characteristic vectors of the centrally symmetric $\arc$-alternating matchings. See \cref{lem:verticesShardPolytopeBsingular}. 
\item For an overlapped $B$-arc~$\Barc \eqdef (-\arc, \arc)$, the situation is much more intricate. In particular, in contrast to the type~$A$ situation described in \cref{prop:elemPropShardPolytope}, the vertices of the shard polytope~$\shardPolytope[\Barc]$ are not in bijection with the $\arc$-alternating matchings: some $\arc$-alternating matchings are not vertices, and some vertices correspond to multiple $\arc$-alternating matchings.
\end{enumerate}
\end{remark}

Similarly to \cref{prop:shardPolytopeFan}, we now state the main property of~$\shardPolytope[\Barc]$, whose proof is postponed to \cref{sec:proofShardPolytopeFan}.
The next two results were stated in \cref{prop:main12,coro:main13}.

\begin{proposition}
\label{prop:shardPolytopeFanB}
For any $B$-arc~$\Barc$, the union of the walls of the normal fan of the shard polytope~$\shardPolytope[\Barc]$ contains the shard~$\shard(\Barc)$ and is contained in the union of the shards~$\shard(\Barc')$ for~$\Barc \prec \Barc'$.
\end{proposition}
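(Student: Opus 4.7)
The plan is to leverage the projection description $\shardPolytope[\Barc] = \rhoB(\shardPolytope[\arc])$ from \cref{rem:projectionShardPolytopes} together with the projection-section duality recalled in \cref{rem:projectionAtoB}: the normal fan of $\shardPolytope[\Barc]$ is identified (via the dual map $(\rhoB)^*$) with the restriction to the centrally symmetric space $\Bhyp$ of the normal fan of $\shardPolytope[\arc]$, and the shard $\shard(\Barc)$ is identified with $\shard(\arc) \cap \Bhyp$. This should allow the transfer of the type~$A$ statement \cref{prop:shardPolytopeFan} through the section.

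For the first containment, I would take a point $\b{t}$ in the relative interior of $\shard(\Barc)$, lift it via $(\rhoB)^*$ to $\tilde{\b{t}} \in \shard(\arc) \cap \Bhyp$, and reuse the computation from the proof of \cref{prop:shardPolytopeFan}: the linear functional $\tilde{\b{t}}$ is maximized on $\shardPolytope[\arc]$ exactly on the edge between $\chi(\varnothing) = \b{0}$ and $\chi(\{a,b\}) = \b{e}_a - \b{e}_b$. Projecting by $\rhoB$, this becomes the segment between $\b{0}$ and $\rhoB(\b{e}_a - \b{e}_b)$ in $\shardPolytope[\Barc]$, and these two endpoints remain distinct in all three types of $B$-arcs since $a \ne b$ and $b > 0$. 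Hence $\b{t}$ lies on a wall of the normal fan of $\shardPolytope[\Barc]$.

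For the second containment, I would take $\b{t}$ on a wall of the normal fan of $\shardPolytope[\Barc]$, lift it to $\tilde{\b{t}} \in \Bhyp$, and observe that any wall of a section of a fan is the intersection with $\Bhyp$ of a cone of codimension at least~$1$ in the ambient fan. Hence $\tilde{\b{t}}$ is not in the interior of any chamber of the normal fan of $\shardPolytope[\arc]$, so it lies on some wall and therefore on a shard $\shard(\arc')$ with $\arc' \succ \arc$ by \cref{prop:shardPolytopeFan}. A direct check using $\tilde{\b{t}}_{-i} = -\tilde{\b{t}}_i$ shows that $\tilde{\b{t}}$ then belongs also to $\shard(-\arc')$. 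Whenever $(-\arc', \arc')$ is a genuine $B$-arc $\Barc'$, one deduces $\Barc' \succ \Barc$ directly from $\arc' \succ \arc$ and the shape of $\Barc$.

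The main obstacle will be the case where $-\arc'$ and $\arc'$ cross (which can only occur when both overlap the origin): then $(-\arc',\arc')$ is not a valid $B$-arc, and one must either shrink $\arc'$ to a subarc whose centrally symmetric double is a valid $B$-arc, or invoke the special forcing rule~(i) of \cref{def:forcingB} for overlapped $B$-arcs, which compares only upper arcs. Handling this correctly will require a case analysis on the type of $\Barc$ (separated, singular, or overlapped) and on which points of $A \cup B$ the arc~$\arc'$ traverses, exploiting the extra information that $\tilde{\b{t}}$ lies in $\Bhyp$ to constrain the possible~$\arc'$.
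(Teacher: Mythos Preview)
Your overall framework matches the paper's exactly: use the projection-section duality of \cref{rem:projectionAtoB} to identify the normal fan of $\shardPolytope[\Barc]$ with the section of the normal fan of $\shardPolytope[\arc]$ by $\Bhyp$, deduce the first containment from the edge $[\chi(\varnothing),\chi(\{a,b\})]$, and then transfer \cref{prop:shardPolytopeFan} through the section to get a weak second containment indexed by $A$-arcs $\arc'$ with $\arc \prec \arc'$. The paper does precisely this in \cref{lemma:ShardPolytopeFanBcontainsShard} and \cref{lem:weakShardPolytopeFanB}.

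The gap is in your diagnosis of the remaining obstacle. You write that ``whenever $(-\arc',\arc')$ is a genuine $B$-arc $\Barc'$, one deduces $\Barc' \succ \Barc$ directly from $\arc' \succ \arc$'', and that the main problem is when $-\arc'$ and $\arc'$ cross. Both claims are wrong. The case where $\arc'$ is not centrally symmetrizable is essentially harmless: the relative interior of $\shard(\arc')$ then misses $\Bhyp$ entirely (see the remark preceding \cref{coro:AcongImpliesBcongFan}). The genuine obstruction occurs when $(-\arc',\arc')$ \emph{is} a valid $B$-arc $\Barc'$ yet $\Barc' \not\succ \Barc$ in the type~$B$ forcing of \cref{def:forcingB}. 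This happens in two situations: (a) $\Barc$ is singular but $\Barc'$ is overlapped (an overlapped $B$-arc can only force overlapped $B$-arcs), and (b) both $\Barc$ and $\Barc'$ are overlapped but the upper $A$-arc of $\Barc'$ forces the \emph{lower} $A$-arc of $\Barc$ rather than its upper one.

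The paper handles (a) by first characterizing the vertices of $\shardPolytope[\Barc]$ for singular $\Barc$ (\cref{lem:verticesShardPolytopeBsingular}: they are exactly the centrally symmetric $\arc$-alternating matchings) and then showing via a parity argument that no edge can be in direction $\b{e}_i+\b{e}_j$, which is the direction of any overlapped shard. Case (b) is the real work: the paper carries out a four-case analysis (\cref{lemma:overlappedShardPolytopeFanB}) depending on the upper/lower and left/right roles of $\arc$ and $\arc'$, using the explicit normal-cone descriptions of \cref{lem:normalconematch,lem:normalconeedge} together with \cref{lem:forcesonlyone,lem:cs} to derive contradictory sign inequalities from $\b{t} \in \Bhyp$. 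Your final sentence correctly anticipates that such a case analysis is needed, but your plan as stated would not find these cases because it is looking for the wrong failure mode.
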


\begin{corollary}
\label{coro:MinkowskiSumShardPolytopesB}
For any $B$-arc ideal~$\Barcs \subseteq \Barcs_n$, the quotient fan~$\BFan_{\Barcs}$ is the normal fan of the Minkowski sum~$\shardPolytope[\Barcs] \eqdef \sum_{\Barc \in \Barcs} \shardPolytope[\Barc]$ of the shard polytopes~$\shardPolytope[\Barc]$ of all $B$-arcs~${\Barc \in \Barcs}$.
\end{corollary}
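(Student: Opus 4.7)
The plan is to derive this corollary directly from Proposition~\ref{prop:shardPolytopeFanB} in the same way that Corollary~\ref{coro:MinkowskiSumShardPolytopes} follows from Proposition~\ref{prop:shardPolytopeFan} in type~$A$. The two key inputs are the standard fact that the normal fan of a Minkowski sum is the common refinement of the normal fans of its summands (recalled in Section~\ref{subsec:fansPolytopes}), together with the characterization of the quotient fan~$\BFan_\Barcs$ by its walls given in Theorem~\ref{thm:quotientFanB}: the chambers of~$\BFan_\Barcs$ are the closures of the connected components of the complement of the union of the shards of~$\Bshards_\Barcs$.

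First, I would show that the union of the walls of the normal fan of~$\shardPolytope[\Barcs]$ is contained in the union of the shards of~$\Bshards_\Barcs$. Indeed, since the normal fan of the Minkowski sum~$\shardPolytope[\Barcs] = \sum_{\Barc \in \Barcs} \shardPolytope[\Barc]$ is the common refinement of the normal fans of the summands~$\shardPolytope[\Barc]$ for~$\Barc \in \Barcs$, its walls are contained in the union, over~$\Barc \in \Barcs$, of the walls of the normal fan of~$\shardPolytope[\Barc]$. By Proposition~\ref{prop:shardPolytopeFanB}, each such set of walls is contained in~$\bigcup_{\Barc' \succ \Barc} \shard(\Barc')$. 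Since~$\Barcs$ is an upper ideal of the $B$-arc poset, every such~$\Barc'$ lies in~$\Barcs$, so the union of the walls of the normal fan of~$\shardPolytope[\Barcs]$ is contained in~$\bigcup_{\Barc \in \Barcs} \shard(\Barc)$.

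Conversely, I would show that every shard~$\shard(\Barc)$ with~$\Barc \in \Barcs$ is contained in the union of the walls of the normal fan of~$\shardPolytope[\Barcs]$. By Proposition~\ref{prop:shardPolytopeFanB}, $\shard(\Barc)$ is contained in the union of the walls of the normal fan of~$\shardPolytope[\Barc]$, hence in the union of the walls of the normal fan of the common refinement, which is precisely the normal fan of~$\shardPolytope[\Barcs]$. Combining the two inclusions, the walls of the normal fan of~$\shardPolytope[\Barcs]$ form exactly the set~$\bigcup_{\Barc \in \Barcs} \shard(\Barc)$, which by Theorem~\ref{thm:quotientFanB} is precisely the union of the walls of~$\BFan_\Barcs$. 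Since both fans are complete and share the same wall set, they coincide, and~$\BFan_\Barcs$ is the normal fan of~$\shardPolytope[\Barcs]$.

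There is no real obstacle here; the entire geometric content is concentrated in Proposition~\ref{prop:shardPolytopeFanB} (whose proof is postponed to Section~\ref{sec:proofShardPolytopeFan} and is the technical heart of the type~$B$ story). The corollary itself is a purely formal consequence, strictly parallel to the type~$A$ derivation. The same argument evidently yields the stronger statement that for any positive coefficients~$\coeffSP_\Barc > 0$, the Minkowski sum~$\sum_{\Barc \in \Barcs} \coeffSP_\Barc \shardPolytope[\Barc]$ realizes~$\BFan_\Barcs$, since positive dilation does not affect normal fans and the common refinement argument is unchanged.
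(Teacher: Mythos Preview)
Your proposal is correct and matches the paper's approach: the paper states this corollary with no proof, as an immediate consequence of Proposition~\ref{prop:shardPolytopeFanB}, exactly parallel to how Corollary~\ref{coro:MinkowskiSumShardPolytopes} follows from Proposition~\ref{prop:shardPolytopeFan} in type~$A$. You have simply spelled out the formal argument the paper leaves implicit, using the common-refinement property of Minkowski sums, the upper-ideal condition, and the wall description of~$\BFan_\Barcs$ from Theorem~\ref{thm:quotientFanB}.
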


\begin{example}[$B$-Cambrian]
For the $\Barc$-Cambrian congruence of \cref{exm:CambrianCongruencesB}, the Minkowski sum~$\shardPolytope[\Barcs_\Barc]$ actually coincides with C.~Hohlweg and C.~Lange's cyclohedron~$\Asso[\Barc]$ described in \cref{exm:HohlwegLangeAssoB}.
In fact, this Minkowski decomposition of the associahedron~$\Asso[\Barc]$ already appeared in the context of brick polytopes in~\cite{PilaudStump-brickPolytope}.
\begin{figure}
	\capstart
	\centerline{
		\begin{tabular}{c@{\;}c@{\;\;}c@{\;}c}
		\includegraphics[scale=.3]{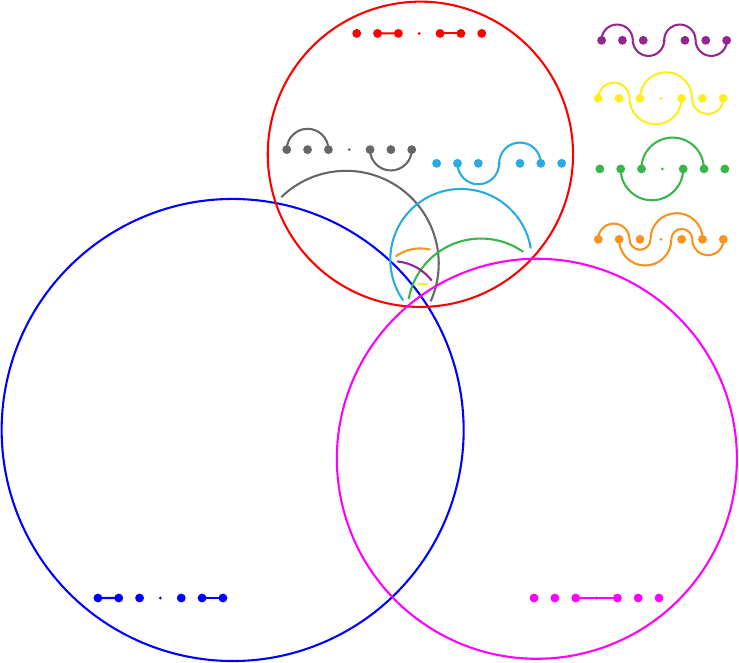} & \includegraphics[scale=.3]{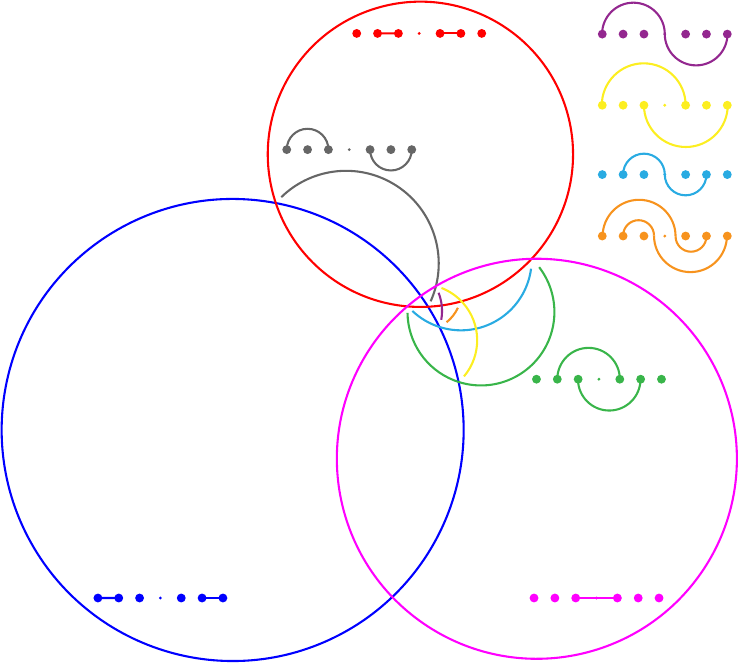} & \includegraphics[scale=.3]{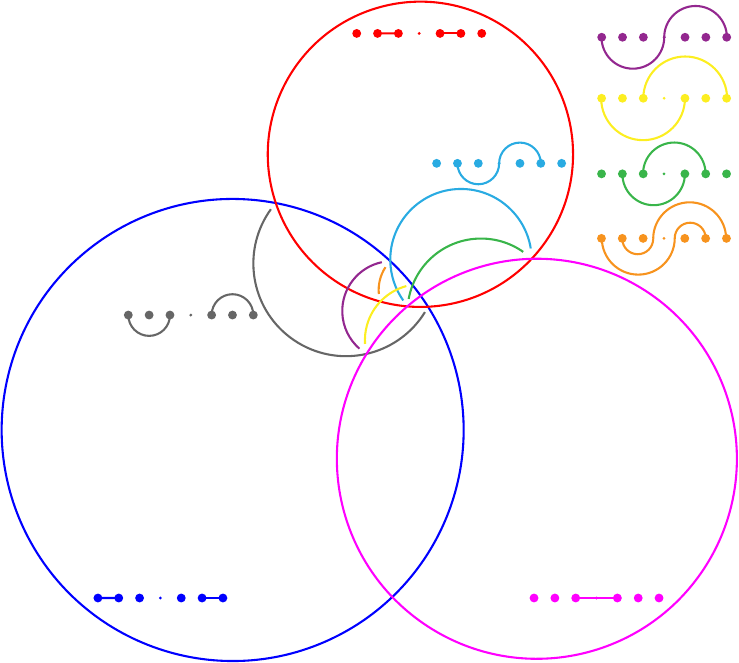} & \includegraphics[scale=.3]{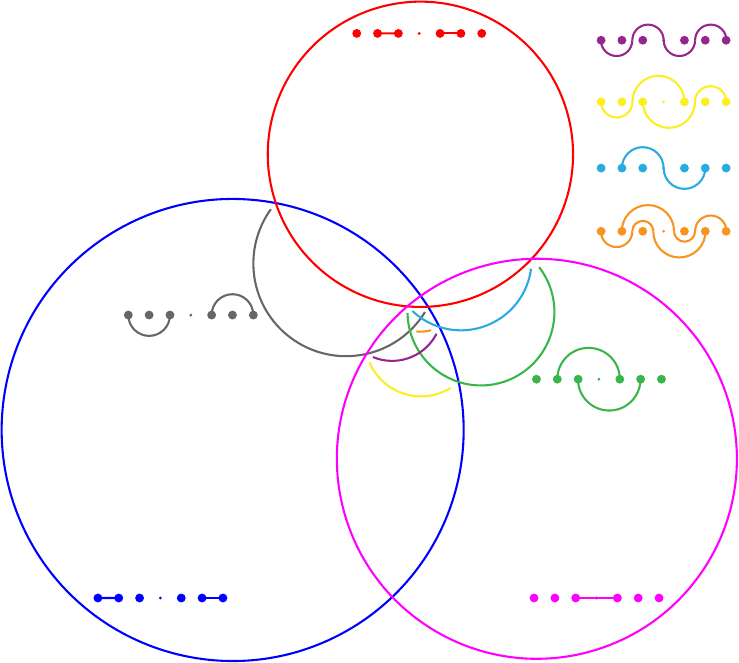} \\[.3cm]
		\input{B3asso1} & \input{B3asso2} & \input{B3asso3} & \input{B3asso4}
		\end{tabular}
	}
	\caption{Type $B_3$ associahedra obtained as Minkowski sums of shard polytopes (blue) coincide with the cyclohedra constructed in~\cite{HohlwegLange, HohlwegLangeThomas} by deleting inequalities in the facet description of the type $B$ permutahedron~$\BPerm[3]$ (red).}
	\vspace{-1cm}
	\label{fig:B3associahedra}
\end{figure}
\end{example}

\begin{example}
\label{exm:weirdPermMinkowskiSumB}
For the ideal of all $B$-arcs~$\Barcs_n$, the Minkowski sum of all shard polytopes gives a realization of the type~$B$ Coxeter fan~$\BFan_n$.
See \cref{fig:B3weirdPermutahedron} for a $3$-dimensional example.
\begin{figure}
	\capstart
	\centerline{\raisebox{2cm}{

\begin{tikzpicture}%
	[x={(0.487424cm, -0.236521cm)},
	y={(0.872822cm, 0.104998cm)},
	z={(0.024475cm, 0.965936cm)},
	scale=.5,
	back/.style={very thin, opacity=0.5},
	edge/.style={color=black, very thick, cap=round},
	facet/.style={fill=blue!95!black, fill opacity=0},
	vertex/.style={inner sep=0pt, circle, anchor=base}]

\coordinate (-4, -4, -4) at (-4, -4, -4);
\coordinate (-4, -4, -2) at (-4, -4, -2);
\coordinate (-4, -3, -5) at (-4, -3, -5);
\coordinate (-4, -3, -1) at (-4, -3, -1);
\coordinate (-4, -1, -5) at (-4, -1, -5);
\coordinate (-4, -1, -1) at (-4, -1, -1);
\coordinate (-4, 0, -4) at (-4, 0, -4);
\coordinate (-4, 0, -2) at (-4, 0, -2);
\coordinate (-3, -5, -4) at (-3, -5, -4);
\coordinate (-3, -5, -2) at (-3, -5, -2);
\coordinate (-3, -3, -6) at (-3, -3, -6);
\coordinate (-3, -3, 0) at (-3, -3, 0);
\coordinate (-3, -1, -6) at (-3, -1, -6);
\coordinate (-3, -1, 0) at (-3, -1, 0);
\coordinate (-3, 1, -4) at (-3, 1, -4);
\coordinate (-3, 1, -2) at (-3, 1, -2);
\coordinate (-2, -5, -5) at (-2, -5, -5);
\coordinate (-2, -5, -1) at (-2, -5, -1);
\coordinate (-2, -4, -6) at (-2, -4, -6);
\coordinate (-2, -4, 0) at (-2, -4, 0);
\coordinate (-2, 0, -6) at (-2, 0, -6);
\coordinate (-2, 0, 0) at (-2, 0, 0);
\coordinate (-2, 1, -5) at (-2, 1, -5);
\coordinate (-2, 1, -1) at (-2, 1, -1);
\coordinate (0, -5, -5) at (0, -5, -5);
\coordinate (0, -5, -1) at (0, -5, -1);
\coordinate (0, -4, -6) at (0, -4, -6);
\coordinate (0, -4, 0) at (0, -4, 0);
\coordinate (0, 0, -6) at (0, 0, -6);
\coordinate (0, 0, 0) at (0, 0, 0);
\coordinate (0, 1, -5) at (0, 1, -5);
\coordinate (0, 1, -1) at (0, 1, -1);
\coordinate (1, -5, -4) at (1, -5, -4);
\coordinate (1, -5, -2) at (1, -5, -2);
\coordinate (1, -3, -6) at (1, -3, -6);
\coordinate (1, -3, 0) at (1, -3, 0);
\coordinate (1, -1, -6) at (1, -1, -6);
\coordinate (1, -1, 0) at (1, -1, 0);
\coordinate (1, 1, -4) at (1, 1, -4);
\coordinate (1, 1, -2) at (1, 1, -2);
\coordinate (2, -4, -4) at (2, -4, -4);
\coordinate (2, -4, -2) at (2, -4, -2);
\coordinate (2, -3, -5) at (2, -3, -5);
\coordinate (2, -3, -1) at (2, -3, -1);
\coordinate (2, -1, -5) at (2, -1, -5);
\coordinate (2, -1, -1) at (2, -1, -1);
\coordinate (2, 0, -4) at (2, 0, -4);
\coordinate (2, 0, -2) at (2, 0, -2);

\draw[edge,back] (-4, -4, -4) -- (-4, -4, -2);
\draw[edge,back] (-4, -4, -4) -- (-4, -3, -5);
\draw[edge,back] (-4, -4, -4) -- (-3, -5, -4);
\draw[edge,back] (-4, -4, -2) -- (-4, -3, -1);
\draw[edge,back] (-4, -4, -2) -- (-3, -5, -2);
\draw[edge,back] (-4, -3, -5) -- (-4, -1, -5);
\draw[edge,back] (-4, -3, -5) -- (-3, -3, -6);
\draw[edge,back] (-4, -3, -1) -- (-4, -1, -1);
\draw[edge,back] (-4, -3, -1) -- (-3, -3, 0);
\draw[edge,back] (-4, -1, -5) -- (-4, 0, -4);
\draw[edge,back] (-4, -1, -5) -- (-3, -1, -6);
\draw[edge,back] (-4, -1, -1) -- (-4, 0, -2);
\draw[edge,back] (-4, -1, -1) -- (-3, -1, 0);
\draw[edge,back] (-4, 0, -4) -- (-4, 0, -2);
\draw[edge,back] (-4, 0, -4) -- (-3, 1, -4);
\draw[edge,back] (-4, 0, -2) -- (-3, 1, -2);
\draw[edge,back] (-3, -3, -6) -- (-3, -1, -6);
\draw[edge,back] (-3, -3, -6) -- (-2, -4, -6);
\draw[edge,back] (-3, -1, -6) -- (-2, 0, -6);
\draw[edge,back] (-3, 1, -4) -- (-3, 1, -2);
\draw[edge,back] (-3, 1, -4) -- (-2, 1, -5);
\draw[edge,back] (-3, 1, -2) -- (-2, 1, -1);
\draw[edge,back] (-2, 0, -6) -- (-2, 1, -5);
\draw[edge,back] (-2, 0, -6) -- (0, 0, -6);
\draw[edge,back] (-2, 0, 0) -- (-2, 1, -1);
\draw[edge,back] (-2, 1, -5) -- (0, 1, -5);
\draw[edge,back] (-2, 1, -1) -- (0, 1, -1);

\node[vertex,back] at (-4, -4, -2)     {};
\node[vertex,back] at (-4, -3, -1)     {};
\node[vertex,back] at (-4, -3, -5)     {};
\node[vertex,back] at (-4, -1, -5)     {};
\node[vertex,back] at (-3, -3, -6)     {};
\node[vertex,back] at (-3, -1, -6)     {};
\node[vertex,back] at (-4, -4, -4)     {};
\node[vertex,back] at (-4, -1, -1)     {};
\node[vertex,back] at (-4, 0, -4)     {};
\node[vertex,back] at (-3, 1, -4)     {};
\node[vertex,back] at (-2, 0, -6)     {};
\node[vertex,back] at (-2, 1, -5)     {};
\node[vertex,back] at (-4, 0, -2)     {};
\node[vertex,back] at (-3, 1, -2)     {};
\node[vertex,back] at (-2, 1, -1)     {};

\fill[facet] (2, 0, -2) -- (2, -1, -1) -- (2, -3, -1) -- (2, -4, -2) -- (2, -4, -4) -- (2, -3, -5) -- (2, -1, -5) -- (2, 0, -4) -- cycle {};
\fill[facet] (2, 0, -2) -- (1, 1, -2) -- (0, 1, -1) -- (0, 0, 0) -- (1, -1, 0) -- (2, -1, -1) -- cycle {};
\fill[facet] (2, 0, -2) -- (1, 1, -2) -- (1, 1, -4) -- (2, 0, -4) -- cycle {};
\fill[facet] (2, -1, -1) -- (1, -1, 0) -- (1, -3, 0) -- (2, -3, -1) -- cycle {};
\fill[facet] (0, -4, -6) -- (-2, -4, -6) -- (-2, -5, -5) -- (0, -5, -5) -- cycle {};
\fill[facet] (0, -4, 0) -- (-2, -4, 0) -- (-2, -5, -1) -- (0, -5, -1) -- cycle {};
\fill[facet] (2, -3, -5) -- (1, -3, -6) -- (0, -4, -6) -- (0, -5, -5) -- (1, -5, -4) -- (2, -4, -4) -- cycle {};
\fill[facet] (2, -3, -1) -- (1, -3, 0) -- (0, -4, 0) -- (0, -5, -1) -- (1, -5, -2) -- (2, -4, -2) -- cycle {};
\fill[facet] (1, -5, -2) -- (0, -5, -1) -- (-2, -5, -1) -- (-3, -5, -2) -- (-3, -5, -4) -- (-2, -5, -5) -- (0, -5, -5) -- (1, -5, -4) -- cycle {};
\fill[facet] (2, 0, -4) -- (1, 1, -4) -- (0, 1, -5) -- (0, 0, -6) -- (1, -1, -6) -- (2, -1, -5) -- cycle {};
\fill[facet] (2, -4, -2) -- (1, -5, -2) -- (1, -5, -4) -- (2, -4, -4) -- cycle {};
\fill[facet] (2, -1, -5) -- (1, -1, -6) -- (1, -3, -6) -- (2, -3, -5) -- cycle {};
\fill[facet] (1, -1, 0) -- (0, 0, 0) -- (-2, 0, 0) -- (-3, -1, 0) -- (-3, -3, 0) -- (-2, -4, 0) -- (0, -4, 0) -- (1, -3, 0) -- cycle {};

\draw[edge] (-3, -5, -4) -- (-3, -5, -2);
\draw[edge] (-3, -5, -4) -- (-2, -5, -5);
\draw[edge] (-3, -5, -2) -- (-2, -5, -1);
\draw[edge] (-3, -3, 0) -- (-3, -1, 0);
\draw[edge] (-3, -3, 0) -- (-2, -4, 0);
\draw[edge] (-3, -1, 0) -- (-2, 0, 0);
\draw[edge] (-2, -5, -5) -- (-2, -4, -6);
\draw[edge] (-2, -5, -5) -- (0, -5, -5);
\draw[edge] (-2, -5, -1) -- (-2, -4, 0);
\draw[edge] (-2, -5, -1) -- (0, -5, -1);
\draw[edge] (-2, -4, -6) -- (0, -4, -6);
\draw[edge] (-2, -4, 0) -- (0, -4, 0);
\draw[edge] (-2, 0, 0) -- (0, 0, 0);
\draw[edge] (0, -5, -5) -- (0, -4, -6);
\draw[edge] (0, -5, -5) -- (1, -5, -4);
\draw[edge] (0, -5, -1) -- (0, -4, 0);
\draw[edge] (0, -5, -1) -- (1, -5, -2);
\draw[edge] (0, -4, -6) -- (1, -3, -6);
\draw[edge] (0, -4, 0) -- (1, -3, 0);
\draw[edge] (0, 0, -6) -- (0, 1, -5);
\draw[edge] (0, 0, -6) -- (1, -1, -6);
\draw[edge] (0, 0, 0) -- (0, 1, -1);
\draw[edge] (0, 0, 0) -- (1, -1, 0);
\draw[edge] (0, 1, -5) -- (1, 1, -4);
\draw[edge] (0, 1, -1) -- (1, 1, -2);
\draw[edge] (1, -5, -4) -- (1, -5, -2);
\draw[edge] (1, -5, -4) -- (2, -4, -4);
\draw[edge] (1, -5, -2) -- (2, -4, -2);
\draw[edge] (1, -3, -6) -- (1, -1, -6);
\draw[edge] (1, -3, -6) -- (2, -3, -5);
\draw[edge] (1, -3, 0) -- (1, -1, 0);
\draw[edge] (1, -3, 0) -- (2, -3, -1);
\draw[edge] (1, -1, -6) -- (2, -1, -5);
\draw[edge] (1, -1, 0) -- (2, -1, -1);
\draw[edge] (1, 1, -4) -- (1, 1, -2);
\draw[edge] (1, 1, -4) -- (2, 0, -4);
\draw[edge] (1, 1, -2) -- (2, 0, -2);
\draw[edge] (2, -4, -4) -- (2, -4, -2);
\draw[edge] (2, -4, -4) -- (2, -3, -5);
\draw[edge] (2, -4, -2) -- (2, -3, -1);
\draw[edge] (2, -3, -5) -- (2, -1, -5);
\draw[edge] (2, -3, -1) -- (2, -1, -1);
\draw[edge] (2, -1, -5) -- (2, 0, -4);
\draw[edge] (2, -1, -1) -- (2, 0, -2);
\draw[edge] (2, 0, -4) -- (2, 0, -2);

\node[vertex] at (-3, -5, -4)     {};
\node[vertex] at (-3, -5, -2)     {};
\node[vertex] at (-3, -3, 0)     {};
\node[vertex] at (-3, -1, 0)     {};
\node[vertex] at (-2, -5, -5)     {};
\node[vertex] at (-2, -5, -1)     {};
\node[vertex] at (-2, -4, -6)     {};
\node[vertex] at (-2, -4, 0)     {};
\node[vertex] at (-2, 0, 0)     {};
\node[vertex] at (0, -5, -5)     {};
\node[vertex] at (0, -5, -1)     {};
\node[vertex] at (0, -4, -6)     {};
\node[vertex] at (0, -4, 0)     {};
\node[vertex] at (0, 0, -6)     {};
\node[vertex] at (0, 0, 0)     {};
\node[vertex] at (0, 1, -5)     {};
\node[vertex] at (0, 1, -1)     {};
\node[vertex] at (1, -5, -4)     {};
\node[vertex] at (1, -5, -2)     {};
\node[vertex] at (1, -3, -6)     {};
\node[vertex] at (1, -3, 0)     {};
\node[vertex] at (1, -1, -6)     {};
\node[vertex] at (1, -1, 0)     {};
\node[vertex] at (1, 1, -4)     {};
\node[vertex] at (1, 1, -2)     {};
\node[vertex] at (2, -4, -4)     {};
\node[vertex] at (2, -4, -2)     {};
\node[vertex] at (2, -3, -5)     {};
\node[vertex] at (2, -3, -1)     {};
\node[vertex] at (2, -1, -5)     {};
\node[vertex] at (2, -1, -1)     {};
\node[vertex] at (2, 0, -4)     {};
\node[vertex] at (2, 0, -2)     {};

\end{tikzpicture}} \qquad 

\begin{tikzpicture}%
	[x={(0.487424cm, -0.236521cm)},
	y={(0.872822cm, 0.104998cm)},
	z={(0.024475cm, 0.965936cm)},
	scale=0.300000,
	back/.style={very thin, opacity=0.5},
	edge/.style={color=black, very thick, cap=round},
	facet/.style={fill=blue!95!black, fill opacity=0},
	vertex/.style={inner sep=0pt, circle, anchor=base}]

\coordinate (-23, -10, -5) at (-23, -10, -5);
\coordinate (-23, -10, -3) at (-23, -10, -3);
\coordinate (-23, -9, -2) at (-23, -9, -2);
\coordinate (-23, -3, -2) at (-23, -3, -2);
\coordinate (-23, 0, -15) at (-23, 0, -15);
\coordinate (-23, 6, -15) at (-23, 6, -15);
\coordinate (-23, 7, -14) at (-23, 7, -14);
\coordinate (-23, 7, -12) at (-23, 7, -12);
\coordinate (18, -2, -16) at (18, -2, -16);
\coordinate (18, -2, -18) at (18, -2, -18);
\coordinate (18, -3, -19) at (18, -3, -19);
\coordinate (-21, -9, 0) at (-21, -9, 0);
\coordinate (18, -9, -19) at (18, -9, -19);
\coordinate (-21, -3, 0) at (-21, -3, 0);
\coordinate (18, -12, -6) at (18, -12, -6);
\coordinate (18, -18, -6) at (18, -18, -6);
\coordinate (18, -19, -7) at (18, -19, -7);
\coordinate (18, -19, -9) at (18, -19, -9);
\coordinate (16, -3, -21) at (16, -3, -21);
\coordinate (16, -9, -21) at (16, -9, -21);
\coordinate (-19, 11, -14) at (-19, 11, -14);
\coordinate (-19, 11, -12) at (-19, 11, -12);
\coordinate (-18, 0, 0) at (-18, 0, 0);
\coordinate (-18, 11, -11) at (-18, 11, -11);
\coordinate (-17, 0, -21) at (-17, 0, -21);
\coordinate (-17, 6, -21) at (-17, 6, -21);
\coordinate (14, -23, -7) at (14, -23, -7);
\coordinate (14, -23, -9) at (14, -23, -9);
\coordinate (13, -12, -21) at (13, -12, -21);
\coordinate (13, -23, -10) at (13, -23, -10);
\coordinate (12, -12, 0) at (12, -12, 0);
\coordinate (12, -18, 0) at (12, -18, 0);
\coordinate (-13, 10, -21) at (-13, 10, -21);
\coordinate (-13, 11, -20) at (-13, 11, -20);
\coordinate (8, -22, 0) at (8, -22, 0);
\coordinate (8, -23, -1) at (8, -23, -1);
\coordinate (-10, -23, -5) at (-10, -23, -5);
\coordinate (-10, -23, -3) at (-10, -23, -3);
\coordinate (5, 11, -16) at (5, 11, -16);
\coordinate (5, 11, -18) at (5, 11, -18);
\coordinate (-8, -23, -1) at (-8, -23, -1);
\coordinate (-8, -22, 0) at (-8, -22, 0);
\coordinate (3, 11, -20) at (3, 11, -20);
\coordinate (3, 10, -21) at (3, 10, -21);
\coordinate (-5, -23, -10) at (-5, -23, -10);
\coordinate (-5, -12, -21) at (-5, -12, -21);
\coordinate (0, 11, -11) at (0, 11, -11);
\coordinate (0, 0, 0) at (0, 0, 0);

\draw[edge,back] (-23, -10, -5) -- (-23, -10, -3);
\draw[edge,back] (-23, -10, -5) -- (-23, 0, -15);
\draw[edge,back] (-23, -10, -5) -- (-10, -23, -5);
\draw[edge,back] (-23, -10, -3) -- (-23, -9, -2);
\draw[edge,back] (-23, -10, -3) -- (-10, -23, -3);
\draw[edge,back] (-23, -9, -2) -- (-23, -3, -2);
\draw[edge,back] (-23, -9, -2) -- (-21, -9, 0);
\draw[edge,back] (-23, -3, -2) -- (-23, 7, -12);
\draw[edge,back] (-23, -3, -2) -- (-21, -3, 0);
\draw[edge,back] (-23, 0, -15) -- (-23, 6, -15);
\draw[edge,back] (-23, 0, -15) -- (-17, 0, -21);
\draw[edge,back] (-23, 6, -15) -- (-23, 7, -14);
\draw[edge,back] (-23, 6, -15) -- (-17, 6, -21);
\draw[edge,back] (-23, 7, -14) -- (-23, 7, -12);
\draw[edge,back] (-23, 7, -14) -- (-19, 11, -14);
\draw[edge,back] (-23, 7, -12) -- (-19, 11, -12);
\draw[edge,back] (-19, 11, -14) -- (-19, 11, -12);
\draw[edge,back] (-19, 11, -14) -- (-13, 11, -20);
\draw[edge,back] (-19, 11, -12) -- (-18, 11, -11);
\draw[edge,back] (-18, 0, 0) -- (-18, 11, -11);
\draw[edge,back] (-18, 11, -11) -- (0, 11, -11);
\draw[edge,back] (-17, 0, -21) -- (-17, 6, -21);
\draw[edge,back] (-17, 0, -21) -- (-5, -12, -21);
\draw[edge,back] (-17, 6, -21) -- (-13, 10, -21);
\draw[edge,back] (-13, 10, -21) -- (-13, 11, -20);
\draw[edge,back] (-13, 10, -21) -- (3, 10, -21);
\draw[edge,back] (-13, 11, -20) -- (3, 11, -20);

\node[vertex,back] at (-18, 11, -11)     {};
\node[vertex,back] at (-23, -9, -2)     {};
\node[vertex,back] at (-23, -3, -2)     {};
\node[vertex,back] at (-23, -10, -5)     {};
\node[vertex,back] at (-23, -10, -3)     {};
\node[vertex,back] at (-23, 0, -15)     {};
\node[vertex,back] at (-23, 6, -15)     {};
\node[vertex,back] at (-23, 7, -14)     {};
\node[vertex,back] at (-23, 7, -12)     {};
\node[vertex,back] at (-17, 0, -21)     {};
\node[vertex,back] at (-19, 11, -12)     {};
\node[vertex,back] at (-19, 11, -14)     {};
\node[vertex,back] at (-13, 11, -20)     {};
\node[vertex,back] at (-17, 6, -21)     {};
\node[vertex,back] at (-13, 10, -21)     {};

\fill[facet] (18, -19, -9) -- (18, -9, -19) -- (18, -3, -19) -- (18, -2, -18) -- (18, -2, -16) -- (18, -12, -6) -- (18, -18, -6) -- (18, -19, -7) -- cycle {};
\fill[facet] (0, 0, 0) -- (12, -12, 0) -- (18, -12, -6) -- (18, -2, -16) -- (5, 11, -16) -- (0, 11, -11) -- cycle {};
\fill[facet] (5, 11, -18) -- (18, -2, -18) -- (18, -2, -16) -- (5, 11, -16) -- cycle {};
\fill[facet] (8, -22, 0) -- (12, -18, 0) -- (12, -12, 0) -- (0, 0, 0) -- (-18, 0, 0) -- (-21, -3, 0) -- (-21, -9, 0) -- (-8, -22, 0) -- cycle {};
\fill[facet] (-8, -22, 0) -- (8, -22, 0) -- (8, -23, -1) -- (-8, -23, -1) -- cycle {};
\fill[facet] (8, -23, -1) -- (14, -23, -7) -- (18, -19, -7) -- (18, -18, -6) -- (12, -18, 0) -- (8, -22, 0) -- cycle {};
\fill[facet] (14, -23, -9) -- (18, -19, -9) -- (18, -19, -7) -- (14, -23, -7) -- cycle {};
\fill[facet] (13, -23, -10) -- (14, -23, -9) -- (18, -19, -9) -- (18, -9, -19) -- (16, -9, -21) -- (13, -12, -21) -- cycle {};
\fill[facet] (-10, -23, -3) -- (-10, -23, -5) -- (-5, -23, -10) -- (13, -23, -10) -- (14, -23, -9) -- (14, -23, -7) -- (8, -23, -1) -- (-8, -23, -1) -- cycle {};
\fill[facet] (-5, -12, -21) -- (13, -12, -21) -- (13, -23, -10) -- (-5, -23, -10) -- cycle {};
\fill[facet] (3, 10, -21) -- (16, -3, -21) -- (18, -3, -19) -- (18, -2, -18) -- (5, 11, -18) -- (3, 11, -20) -- cycle {};
\fill[facet] (12, -18, 0) -- (18, -18, -6) -- (18, -12, -6) -- (12, -12, 0) -- cycle {};
\fill[facet] (16, -9, -21) -- (18, -9, -19) -- (18, -3, -19) -- (16, -3, -21) -- cycle {};

\draw[edge] (18, -2, -16) -- (18, -2, -18);
\draw[edge] (18, -2, -16) -- (18, -12, -6);
\draw[edge] (18, -2, -16) -- (5, 11, -16);
\draw[edge] (18, -2, -18) -- (18, -3, -19);
\draw[edge] (18, -2, -18) -- (5, 11, -18);
\draw[edge] (18, -3, -19) -- (18, -9, -19);
\draw[edge] (18, -3, -19) -- (16, -3, -21);
\draw[edge] (-21, -9, 0) -- (-21, -3, 0);
\draw[edge] (-21, -9, 0) -- (-8, -22, 0);
\draw[edge] (18, -9, -19) -- (18, -19, -9);
\draw[edge] (18, -9, -19) -- (16, -9, -21);
\draw[edge] (-21, -3, 0) -- (-18, 0, 0);
\draw[edge] (18, -12, -6) -- (18, -18, -6);
\draw[edge] (18, -12, -6) -- (12, -12, 0);
\draw[edge] (18, -18, -6) -- (18, -19, -7);
\draw[edge] (18, -18, -6) -- (12, -18, 0);
\draw[edge] (18, -19, -7) -- (18, -19, -9);
\draw[edge] (18, -19, -7) -- (14, -23, -7);
\draw[edge] (18, -19, -9) -- (14, -23, -9);
\draw[edge] (16, -3, -21) -- (16, -9, -21);
\draw[edge] (16, -3, -21) -- (3, 10, -21);
\draw[edge] (16, -9, -21) -- (13, -12, -21);
\draw[edge] (-18, 0, 0) -- (0, 0, 0);
\draw[edge] (14, -23, -7) -- (14, -23, -9);
\draw[edge] (14, -23, -7) -- (8, -23, -1);
\draw[edge] (14, -23, -9) -- (13, -23, -10);
\draw[edge] (13, -12, -21) -- (13, -23, -10);
\draw[edge] (13, -12, -21) -- (-5, -12, -21);
\draw[edge] (13, -23, -10) -- (-5, -23, -10);
\draw[edge] (12, -12, 0) -- (12, -18, 0);
\draw[edge] (12, -12, 0) -- (0, 0, 0);
\draw[edge] (12, -18, 0) -- (8, -22, 0);
\draw[edge] (8, -22, 0) -- (8, -23, -1);
\draw[edge] (8, -22, 0) -- (-8, -22, 0);
\draw[edge] (8, -23, -1) -- (-8, -23, -1);
\draw[edge] (-10, -23, -5) -- (-10, -23, -3);
\draw[edge] (-10, -23, -5) -- (-5, -23, -10);
\draw[edge] (-10, -23, -3) -- (-8, -23, -1);
\draw[edge] (5, 11, -16) -- (5, 11, -18);
\draw[edge] (5, 11, -16) -- (0, 11, -11);
\draw[edge] (5, 11, -18) -- (3, 11, -20);
\draw[edge] (-8, -23, -1) -- (-8, -22, 0);
\draw[edge] (3, 11, -20) -- (3, 10, -21);
\draw[edge] (-5, -23, -10) -- (-5, -12, -21);
\draw[edge] (0, 11, -11) -- (0, 0, 0);

\node[vertex] at (18, -2, -16)     {};
\node[vertex] at (18, -2, -18)     {};
\node[vertex] at (18, -3, -19)     {};
\node[vertex] at (-21, -9, 0)     {};
\node[vertex] at (18, -9, -19)     {};
\node[vertex] at (-21, -3, 0)     {};
\node[vertex] at (18, -12, -6)     {};
\node[vertex] at (18, -18, -6)     {};
\node[vertex] at (18, -19, -7)     {};
\node[vertex] at (18, -19, -9)     {};
\node[vertex] at (16, -3, -21)     {};
\node[vertex] at (16, -9, -21)     {};
\node[vertex] at (-18, 0, 0)     {};
\node[vertex] at (14, -23, -7)     {};
\node[vertex] at (14, -23, -9)     {};
\node[vertex] at (13, -12, -21)     {};
\node[vertex] at (13, -23, -10)     {};
\node[vertex] at (12, -12, 0)     {};
\node[vertex] at (12, -18, 0)     {};
\node[vertex] at (8, -22, 0)     {};
\node[vertex] at (8, -23, -1)     {};
\node[vertex] at (-10, -23, -5)     {};
\node[vertex] at (-10, -23, -3)     {};
\node[vertex] at (5, 11, -16)     {};
\node[vertex] at (5, 11, -18)     {};
\node[vertex] at (-8, -23, -1)     {};
\node[vertex] at (-8, -22, 0)     {};
\node[vertex] at (3, 11, -20)     {};
\node[vertex] at (3, 10, -21)     {};
\node[vertex] at (-5, -23, -10)     {};
\node[vertex] at (-5, -12, -21)     {};
\node[vertex] at (0, 11, -11)     {};
\node[vertex] at (0, 0, 0)     {};

\end{tikzpicture}}
	\caption{The standard type $B_3$ permutahedron~$\BPerm[3] \eqdef \conv\set{(\sigma_1, \dots, \sigma_n)}{\sigma \in \fB_3}$ (left) and the Minkowski sum~$\shardPolytope[\Barcs_3]$ of the shard polytopes of all $B$-arcs of~$\Barcs_3$ (right).}
	\label{fig:B3weirdPermutahedron}
\end{figure}
\end{example}

Further properties concerning the Minkowski geometry of type~$B$ shard polytopes are briefly discussed in \cref{subsec:concludingRemarks}.


\subsection{Proof of \cref{prop:shardPolytopeFanB}}
\label{sec:proofShardPolytopeFan}

Fix a $B$-arc~$\Barc \eqdef (-\arc, \arc)$.
Recall from~\cref{rem:projectionShardPolytopes} that the type~$B$ shard polytope~$\shardPolytope[\Barc]$ can be seen as the image of the type~$A$ shard polytope~$\shardPolytope$ under the projection~$\rhoB : \R^{[\pm n]} \to \R^n$ defined in \cref{rem:projectionAtoB}.
As the image of the dual map~${{\rhoB}^* : \R^n \to \R^{[\pm n]}}$ is the centrally symmetric space~$\Bhyp$, we obtain that the normal fan of~$\shardPolytope[\Barc]$ is the section of the normal fan of~$\shardPolytope$ with~$\Bhyp$, see~\cite[Lem.~7.11]{Ziegler-polytopes}.
Therefore, the walls of the normal fan of~$\shardPolytope[\Barc]$ are the walls of the normal fan of~$\shardPolytope$ intersected with~$\Bhyp$.
This already shows the first part of \cref{prop:shardPolytopeFanB}.

\begin{lemma}
\label{lemma:ShardPolytopeFanBcontainsShard}
The union of the walls of the normal fan of~$\shardPolytope[\Barc]$ contains the shard~$\shard(\Barc)$.
\end{lemma}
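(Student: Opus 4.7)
The plan is to reduce the statement to its type~$A$ counterpart (\cref{prop:shardPolytopeFan}) via the projection interpretation $\shardPolytope[\Barc] = \rhoB(\shardPolytope[\arc])$ from \cref{rem:projectionShardPolytopes}. First, I would fix a point~$\b{t}$ in the relative interior of $\shard(\Barc) \subset \R^n$ and lift it to $\tilde{\b{t}} \eqdef {\rhoB}^*(\b{t}) \in \Bhyp \subset \R^{[\pm n]}$, so that $\tilde{\b{t}}_i = \b{t}_i$ and $\tilde{\b{t}}_{-i} = -\b{t}_i$ for $i \in [n]$. A direct comparison of the defining (in)equalities of $\shard(\Barc)$ (where the convention $\b{x}_{-i} = -\b{x}_i$ is used) with those of the type~$A$ shard $\shard(\arc) \subset \hyp$ (where all $2n$ coordinates are independent) shows that $\tilde{\b{t}}$ lies in the relative interior of~$\shard(\arc)$.

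Next, I would invoke (the proof of) \cref{prop:shardPolytopeFan}: since $\tilde{\b{t}}$ is in the interior of~$\shard(\arc)$, the only two $\arc$-alternating matchings maximizing $\dotprod{\tilde{\b{t}}}{\chi(\cdot)}$ are $\varnothing$ and $\{a,b\}$. Therefore the face of $\shardPolytope[\arc]$ maximizing direction $\tilde{\b{t}}$ is the edge $E$ joining $\chi(\varnothing) = \b{0}$ to $\chi(\{a,b\}) = \b{e}_a - \b{e}_b$ in $\R^{[\pm n]}$. By the projection-of-faces lemma~\cite[Lem.~7.11]{Ziegler-polytopes}, the face of $\shardPolytope[\Barc] = \rhoB(\shardPolytope[\arc])$ maximizing $\b{t}$ equals $\rhoB(E)$, and to conclude that $\b{t}$ lies on a wall of the normal fan of~$\shardPolytope[\Barc]$ it suffices to check that $\rhoB(E)$ is not a single point, i.e., that the vector $\rhoB(\b{e}_a - \b{e}_b) \in \R^n$ is nonzero.

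The last step is a short three-case verification on the classification of $\Barc$ as singular, separated, or overlapped: if $\Barc$ is singular then $a = -b$ and $\rhoB(\b{e}_a - \b{e}_b) = -2\b{e}_b$; if $\Barc$ is separated then $0 < a < b$ and $\rhoB(\b{e}_a - \b{e}_b) = \b{e}_a - \b{e}_b$; and if $\Barc$ is overlapped then $a < 0 < b$ with $a \ne -b$, so $\rhoB(\b{e}_a - \b{e}_b) = -\b{e}_{-a} - \b{e}_b$ with $-a \ne b$ both positive. In all three cases the result is nonzero, which completes the argument. The only non-automatic step — and the one deserving care rather than being a genuine obstacle — is the bookkeeping between the free coordinates in $\R^{[\pm n]}$ used to define $\shard(\arc)$ and the sign convention $\b{x}_{-i} = -\b{x}_i$ used to define $\shard(\Barc)$, which is precisely what the lifting ${\rhoB}^*$ is designed to reconcile.
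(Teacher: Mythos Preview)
Your proof is correct and follows essentially the same route as the paper's: both reduce to the type~$A$ statement via the identity $\shardPolytope[\Barc] = \rhoB(\shardPolytope[\arc])$ and the normal-fan/projection duality of~\cite[Lem.~7.11]{Ziegler-polytopes}, noting that $\shard(\Barc) = \shard(\arc) \cap \Bhyp$. Your version is slightly more explicit in that you verify, via the three-case check on $\rhoB(\b{e}_a-\b{e}_b)$, that the edge $[\b{0},\b{e}_a-\b{e}_b]$ does not collapse under~$\rhoB$; the paper's proof absorbs this into the blanket assertion that $\shard(\arc)\cap\Bhyp$ is a wall of the sectioned fan.
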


\begin{proof}
By \cref{prop:shardPolytopeFan}, one of the walls of the normal fan of $\shardPolytope$ is $\shard(\arc)$, and hence $\shard(\arc) \cap \Bhyp$ is a wall of the normal fan of~$\shardPolytope[\Barc]$. But $\shard(\arc) \cap \Bhyp$ is precisely the shard~$\shard(\Barc)$.
\end{proof}

This also shows the following weak version of the second part of \cref{prop:shardPolytopeFanB}.

\begin{lemma}
\label{lem:weakShardPolytopeFanB}
The union of the walls of the normal fan of~$\shardPolytope[\Barc]$ is contained in the union of the shards~$\shard(\Barc')$ for all $B$-arcs~$\Barc' \eqdef (-\arc', \arc')$ such that~$\arc \prec \arc'$.
\end{lemma}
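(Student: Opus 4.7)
The plan is to push \cref{prop:shardPolytopeFan} through the projection $\rhoB$ that produces $\shardPolytope[\Barc]$ from $\shardPolytope[\arc]$, and then identify the surviving pieces as type~$B$ shards.

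From \cref{rem:projectionShardPolytopes}, $\shardPolytope[\Barc] = \rhoB(\shardPolytope[\arc])$, so \cite[Lem.~7.11]{Ziegler-polytopes} --- exactly as in the proof of \cref{lemma:ShardPolytopeFanBcontainsShard} --- identifies the normal fan of $\shardPolytope[\Barc]$ with the section of the normal fan of $\shardPolytope[\arc]$ by the centrally symmetric space $\Bhyp$. In particular, every wall of the normal fan of $\shardPolytope[\Barc]$ arises as $W \cap \Bhyp$ for some wall $W$ of the normal fan of $\shardPolytope[\arc]$. Applying \cref{prop:shardPolytopeFan} to each such $W$ then puts every wall of the normal fan of $\shardPolytope[\Barc]$ inside $\bigcup_{\arc \prec \tilde\arc} \bigl(\shard(\tilde\arc) \cap \Bhyp\bigr)$, where the union is over $A$-arcs $\tilde\arc$ on $[\pm n]$.

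For each such $\tilde\arc$, I would then distinguish two cases. If $\tilde\arc$ is centrally symmetrizable, setting $\arc' \eqdef \tilde\arc$ makes $\Barc' \eqdef (-\arc', \arc')$ a bona fide $B$-arc with representative $\arc'$, and the defining inequalities of $\shard(\tilde\arc)$, rewritten on $\Bhyp$ via the convention $\b{x}_{-i} = -\b{x}_i$, are literally those of $\shard(\Barc')$ from \cref{subsec:shardsB}. Hence $\shard(\tilde\arc) \cap \Bhyp = \shard(\Barc')$, and $\arc \prec \arc'$ puts this contribution into exactly the form claimed by the lemma. If $\tilde\arc$ is not centrally symmetrizable, the goal is to show that $\shard(\tilde\arc) \cap \Bhyp$ has codimension at least two in $\Bhyp$, so it cannot contain any wall (which is codimension one in $\Bhyp$) and therefore contributes nothing to the union.

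The main obstacle is the codimension claim in the non-symmetrizable case. Non-symmetrizability means $\tilde\arc = (\tilde a, \tilde b, \tilde A, \tilde B)$ has $\tilde a < 0 < \tilde b$, $\tilde a \ne -\tilde b$, and $\tilde\arc$ crosses $-\tilde\arc$ in the sense of \cref{subsec:noncrossingArcDiagrams}. By that crossing condition, there exist witnesses $x \in \tilde A \cap (\{-\tilde b, -\tilde a\} \cup -\tilde A)$ and $y \in (\{\tilde a, \tilde b\} \cup \tilde B) \cap -\tilde B$. Using $\b{x}_{-i} = -\b{x}_i$ on $\Bhyp$, each witness collapses a pair of defining inequalities of $\shard(\tilde\arc)$ into an equality (for example, a witness $x$ with $-x \in \tilde A$ forces $\b{x}_{\tilde a} \ge \b{x}_x$ and $\b{x}_{\tilde a} \ge -\b{x}_x$, which combined with the opposite inequality coming from $y$ squeezes $\b{x}_x = \b{x}_{\tilde a} = 0$). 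A short case analysis on whether each witness is an endpoint or a strictly interior point, and on whether $-x \in \tilde A$ or $x \in \{-\tilde a,-\tilde b\}$ (and analogously for $y$), should confirm that the two resulting equalities are linearly independent and independent from $\b{x}_{\tilde a} = \b{x}_{\tilde b}$, thereby dropping the dimension of $\shard(\tilde\arc) \cap \Bhyp$ by at least two below that of $\Bhyp$. Verifying this independence case-by-case is the technical heart of the argument.
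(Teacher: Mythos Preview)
Your opening moves match the paper exactly: use \cite[Lem.~7.11]{Ziegler-polytopes} (via \cref{rem:projectionShardPolytopes}) to identify the normal fan of $\shardPolytope[\Barc]$ with the section of the normal fan of $\shardPolytope[\arc]$ by $\Bhyp$, then invoke \cref{prop:shardPolytopeFan} to bound the walls by $\bigcup_{\arc\prec\arc'} \shard(\arc')\cap\Bhyp$. The paper's proof stops right there, asserting in one line that $\shard(\arc')\cap\Bhyp$ is precisely $\shard(\Barc')$ for $\Barc'\eqdef(-\arc',\arc')$ --- this is literally the definition of $\shard(\Barc')$ given in \cref{subsec:shardsB}.

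Where you diverge is in your second case: you worry that some $\arc'$ forcing $\arc$ might fail to be centrally symmetrizable, and you propose a codimension argument to dispose of such $\arc'$. The paper does not do this; it simply writes $\shard(\Barc')$ regardless, effectively treating the defining formula for $\shard(\Barc')$ as meaningful for any $\arc'$. This is the intended reading of the ``weak'' lemma: it is deliberately coarse, and the hard work of restricting to genuine $B$-arcs with $\Barc\prec\Barc'$ (as opposed to $\arc\prec\arc'$) is deferred entirely to the case-by-case \cref{lemma:separatedShardPolytopeFanB,lemma:singularShardPolytopeFanB,lemma:overlappedShardPolytopeFanB}. Your codimension-two claim for the non-symmetrizable case is only sketched (the ``short case analysis'' you promise is not carried out, and the independence of the two squeezed equalities from $\b{x}_{\tilde a}=\b{x}_{\tilde b}$ is not obvious in all endpoint configurations), so this part of your proposal is both unnecessary for the lemma as the paper uses it and not yet a complete argument on its own terms.
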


\begin{proof}
The walls of the normal fan of~$\shardPolytope[\Barc]$ are the walls of the normal fan of~$\shardPolytope$ intersected with~$\Bhyp$.
By \cref{prop:shardPolytopeFan}, the walls of $\shardPolytope$ are contained in the shards $\shard(\arc')$ for~$\arc \prec \arc'$.
Hence, the walls of $\shardPolytope[\Barc]$ are contained in $\shard(\arc') \cap \Bhyp$ for $\arc\prec \arc'$.
But $\shard(\arc') \cap \Bhyp$ is precisely the shard of the $B$-arc~$\shard(\Barc')$ where~$\Barc' \eqdef (-\arc',\arc')$.
\end{proof}

Our main problem here is that for two $B$-arcs~$\Barc \eqdef (-\arc, \arc)$ and~$\Barc' \eqdef (-\arc', \arc')$, the type~$A$ forcing relation $\arc \prec \arc'$ does not necessarily imply the type~$B$ forcing relation~$\Barc \prec \Barc'$.
See \cref{def:forcingB} and the discussion thereafter.
Our objective is thus to show that when~$\arc \prec \arc'$ but $\Barc \nprec \Barc'$, the centrally symmetric space~$\Bhyp$ does not intersect any wall of the normal fan $\shardPolytope$ contained in~$\shard(\arc')$.
To achieve this, we distinguish three cases, according on whether the $B$-arc~$\Barc$ is separated, singular or overlapped (see \cref{subsec:noncrossingArcDiagramsB} for this distinction).


\subsubsection{Separated $B$-arcs}

We start with the simplest case of separated $B$-arcs.

\begin{lemma}
\label{lemma:separatedShardPolytopeFanB}
For a separated $B$-arc~$\Barc$, the union of the walls of the normal fan of the shard polytope~$\shardPolytope[\Barc]$ is contained in the union of the shards~$\shard(\Barc')$ for~$\Barc \prec \Barc'$.
\end{lemma}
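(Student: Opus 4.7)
The plan is to reduce the statement to type~$A$ via the identity $\shardPolytope[\Barc] = \shardPolytope[\arc]$, which holds for any separated $B$-arc~$\Barc \eqdef (-\arc, \arc)$ and was recorded in the remark just after \cref{def:BshardPolytope}. Indeed, writing $\arc \eqdef (a, b, A, B)$ with $0 < a < b$, every $\arc$-alternating matching uses only positive indices, so the characteristic vectors (with the convention $\b{e}_{-i} = -\b{e}_i$) coincide with the usual type~$A$ characteristic vectors in~$\R^n$. Consequently the normal fan of $\shardPolytope[\Barc] \subset \R^n$ is identical to the normal fan of the type~$A$ shard polytope $\shardPolytope[\arc]$ viewed as an element of $\arcs_n$.

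First I would apply the type~$A$ result of \cref{prop:shardPolytopeFan} in $\arcs_n$: the walls of the normal fan of $\shardPolytope[\arc]$ are contained in $\bigcup_{\arc \prec \arc'} \shard(\arc')$, where $\arc \prec \arc'$ in the arc poset means that $\arc' \eqdef (a', b', A', B')$ satisfies $a \le a' < b' \le b$, $A' \subseteq A$ and $B' \subseteq B$. In particular $a' \ge a > 0$, so $a', b' \in [n]$; hence $-\arc'$ and $\arc'$ sit strictly on opposite sides of the origin and $\Barc' \eqdef (-\arc', \arc')$ is a well-defined separated $B$-arc. Since $a', b', A', B'$ are entirely in $[n]$, the defining inequalities of the type~$A$ shard~$\shard(\arc') \subset \R^n$ are exactly the same as those of the $B$-shard $\shard(\Barc') \subset \R^n$, so the two coincide as subsets of $\R^n$.

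Finally I would verify that $\Barc \prec \Barc'$ in the $B$-arc poset. As $\Barc'$ is separated, case~(ii) of \cref{def:forcingB} applies, so it suffices to exhibit an $A$-arc of $\Barc'$ that forces an $A$-arc of~$\Barc$. The representative pair $\arc' \in \Barc'$ and $\arc \in \Barc$ works: the hypothesis $\arc \prec \arc'$ in $\arcs_n$ is precisely the statement $\arc' \succ \arc$, i.e.\ $\arc'$ forces~$\arc$ in type~$A$. Thus $\Barc' \succ \Barc$, i.e.\ $\Barc \prec \Barc'$, and combining the three steps yields the inclusion of the walls of $\shardPolytope[\Barc]$ into $\bigcup_{\Barc \prec \Barc'} \shard(\Barc')$. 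There is no genuine obstacle in this case: the separated hypothesis on~$\Barc$ forces every forcer $\arc'$ to have positive endpoints, thereby avoiding the overlapped $B$-arcs where the type~$A$ and type~$B$ forcing relations disagree; the remainder is just bookkeeping between the two posets.
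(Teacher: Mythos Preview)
Your proof is correct and follows essentially the same approach as the paper's. The paper packages the reduction to type~$A$ into the preceding \cref{lem:weakShardPolytopeFanB} (via the projection $\rhoB$) and then observes that for separated~$\Barc$ the $B$-arcs forcing~$\Barc$ are exactly the $\Barc' = (-\arc',\arc')$ with $\arc \prec \arc'$; you instead invoke the identity $\shardPolytope[\Barc] = \shardPolytope[\arc]$ directly and apply \cref{prop:shardPolytopeFan} in~$\arcs_n$, which is the same argument unwound.
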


\begin{proof}
When~$\Barc \eqdef (-\arc, \arc)$ is separated, \cref{def:forcingB} ensures that the $B$-arcs that force~$\Barc$ are precisely the $B$-arcs~$\Barc' \eqdef (-\arc', \arc')$ where~$\arc'$ forces~$\arc$.
The statement thus immediately follows from~\cref{lem:weakShardPolytopeFanB}.
\end{proof}


\subsubsection{Singular $B$-arcs}

The situation is already slightly more subtle for singular $B$-arcs.
We start by describing the vertices of the shard polytopes of singular $B$-arcs.

\begin{lemma}
\label{lem:verticesShardPolytopeBsingular}
For a singular $B$-arc~$\Barc \eqdef (-\arc, \arc)$, the vertices of the shard polytope~$\shardPolytope[\Barc]$ are the characteristic vectors of the centrally symmetric $\arc$-alternating matchings. 
\end{lemma}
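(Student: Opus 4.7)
My approach is to identify $\shardPolytope[\Barc]$ with a section of $\shardPolytope[\arc]$ and then classify vertices of the section. By Remark 4.6 (the dual of the projection description in Remark 4.6), $\shardPolytope[\Barc] = \rhoB(\shardPolytope[\arc])$. Since $\arc = -\arc$ is centrally symmetric, Proposition 2.23 gives that $\shardPolytope[\arc]$ is invariant under $\Phi \circ \Psi$, whose fixed subspace in $\R^{[\pm n]}$ is exactly the image $\Bhyp$ of $\rhoB^*$. For any $\b{v} \in \shardPolytope[\arc]$, the antisymmetric part $\b{v}_A = \frac{1}{2}(\b{v} + \Phi \circ \Psi (\b{v}))$ lies in $\shardPolytope[\arc] \cap \Bhyp$ (by convexity and invariance), and one checks $\b{v} - \b{v}_A \in \ker \rhoB$, so $\rhoB(\b{v}) = \rhoB(\b{v}_A)$. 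As $\rhoB$ is injective on $\Bhyp$, this identifies $\shardPolytope[\Barc]$ with $\shardPolytope[\arc] \cap \Bhyp$ as polytopes, and it suffices to classify the vertices of the intersection.

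The first (easier) direction is to show that centrally symmetric matchings contribute vertices. A direct computation from the definitions yields the key identity $\chi(-M)_i = -\chi(M)_{-i}$ for any $\arc$-alternating matching $M$. For $M = -M$ this gives $\chi(M) \in \Bhyp$; and $\chi(M)$ is already a vertex of $\shardPolytope[\arc]$ by Proposition 2.21(ii). Since a vertex of an ambient polytope lying in a linear subspace is automatically a vertex of the intersection, $\chi(M)$ is a vertex of $\shardPolytope[\arc] \cap \Bhyp$, hence $\rhoB(\chi(M))$ is a vertex of $\shardPolytope[\Barc]$.

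The second direction is to show there are no other vertices. Given any point $\b{p} \in \shardPolytope[\arc] \cap \Bhyp$, I write $\b{p}$ as a convex combination of vertex characteristic vectors $\chi(M_i)$ and then group the non-CS matchings into pairs $\{M, -M\}$. For each such pair, I use the decomposition $\chi(M) + \chi(-M) = \chi(M_3) + \chi(M_4)$ where $M_3 \eqdef (M \cap \Z_{<0}) \cup (-(M \cap \Z_{<0}))$ and $M_4 \eqdef (M \cap \Z_{>0}) \cup (-(M \cap \Z_{>0}))$. The equality at the level of multisets is clear; that $M_3$ and $M_4$ are valid centrally symmetric $\arc$-alternating matchings follows from case analysis using that central reflection $x \mapsto -x$ swaps $A$-type and $B$-type elements (since $A = -B$ for the CS arc $\arc$), so the type alternation is preserved across the ``seam'' of the concatenation. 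After this rewriting, $\b{p}$ is a convex combination of $\chi(M')$ for CS matchings $M'$, so $\b{p}$ can only be a vertex if it equals such a $\chi(M')$. Alternatively, for a hands-on midpoint argument: if $M$ is non-CS, some central pair $\{i,-i\}$ contains exactly one element of $M$, so $\rhoB(\chi(M))_i = \pm 1$, whereas both $\rhoB(\chi(M_3))_i$ and $\rhoB(\chi(M_4))_i$ lie in $\{0, \pm 2\}$; hence $\rhoB(\chi(M))$ is the midpoint of two distinct points of $\shardPolytope[\Barc]$ and is not a vertex.

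The main obstacle is the case analysis verifying that $M_3$ and $M_4$ remain valid alternating matchings, in particular when a pair $(a_j,b_j)$ of $M$ straddles the origin (so $a_j < 0 < b_j$); in that case, $|M \cap \Z_{<0}|$ is odd and the symmetric completion inserts the new CS pair $(a_j,-a_j)$ in the middle of $M_3$, with the symmetric story for $M_4$. Checking that the types still alternate at the junction and the endpoints requires tracking the type flip under central reflection, but is routine once set up correctly.
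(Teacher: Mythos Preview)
Your proof is correct and reaches the same conclusion as the paper, but via a somewhat different and more explicit route.

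The paper works directly in~$\R^n$ (with the convention $\b{e}_{-i} = -\b{e}_i$), notes that~$\chi(-M) = \chi(M)$ there, and then invokes the matching-union Lemma~2.25 applied to the pair~$\{M,-M\}$ to obtain two other $\arc$-alternating matchings~$M_3,M_4$ with $\chi(M)+\chi(-M)=\chi(M_3)+\chi(M_4)$, so $\chi(M)$ is a midpoint and not extremal. For the converse it observes that the characteristic vectors of centrally symmetric matchings have coordinates in $\{0,\pm 2\}$, hence are vertices of a cube and therefore extremal.

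By contrast, you first recast $\shardPolytope[\Barc]$ as the slice $\shardPolytope[\arc]\cap\Bhyp$ (using the $\Phi\circ\Psi$-invariance of $\shardPolytope[\arc]$), and then construct $M_3,M_4$ explicitly as the centrally symmetric completions of $M\cap\Z_{<0}$ and $M\cap\Z_{>0}$. This avoids Lemma~2.25 entirely, is self-contained, and yields the additional information that $M_3,M_4$ are themselves centrally symmetric. Your coordinate check ($\pm 1$ versus $\{0,\pm 2\}$) makes explicit that the midpoint is proper, a point the paper leaves implicit. For the first direction you use the clean observation that a vertex of the ambient polytope lying in the subspace is automatically a vertex of the slice, which is equivalent to but phrased differently from the paper's cube argument.

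One small phrasing issue in your argument~(a): ``group the non-CS matchings into pairs $\{M,-M\}$'' is slightly misleading, since an arbitrary convex combination need not assign equal weight to $M$ and $-M$. What you actually do (and what works) is first symmetrize the convex combination using $\b{p} = \tfrac{1}{2}(\b{p} + \Phi\circ\Psi(\b{p}))$, which forces the coefficients of $\chi(M)$ and $\chi(-M)$ to agree, and only then replace each pair. Your argument~(b) sidesteps this entirely and is the cleaner of the two.
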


\begin{proof}
Consider an $\arc$-alternating matching~$M \eqdef \{a_1 < b_1 < \dots < a_k < b_k\}$ and its centrally symmetric image $-M \eqdef \{-b_k < -a_k < \dots < -b_1 < -a_1\}$.
Note that~$-M$ is also an $\arc$-alternating matching since~$\arc$ is centrally symmetric.
Moreover, its characteristic vector is~$\chi(-M) = \sum_{i \in [k]} \b{e}_{-b_i} - \b{e}_{-a_i} = \sum_{i \in [k]} \b{e}_{a_i} - \b{e}_{b_i} = \chi(M)$.
If~$M$ is not centrally symmetric, \ie~$M \ne -M$, then we obtain by \cref{lem:matchingUnionLemma} that there exist two other $\arc$-alternating matchings~$M_3$ and~$M_4$ such that~$\chi(M) + \chi(-M) = \chi(M_3) + \chi(M_4)$.
In other words, $\chi(M)$ is in the middle of~$\chi(M_3)$ and~$\chi(M_4)$ and is thus not extremal.

Conversely, the characteristic vectors of the centrally symmetric $\arc$-alternating matchings have $i$th coordinate either~$0$, or~$2$ if~$i \in [n] \cap (\{a\} \cup A)$, or~$-2$ if~$i \in [n] \cap (B \cup \{b\})$.
In other words, they are some vertices of the cube~$\sum_{i \in \{a\} \cup A} [0,2\b{e}_i] + \sum_{i \in B \cup \{b\}} [0,-2\b{e}_i]$.
Hence, they are all extremal.
\end{proof}

\begin{lemma}
\label{lemma:singularShardPolytopeFanB}
For a singular $B$-arc~$\Barc$, the union of the walls of the normal fan of the shard polytope~$\shardPolytope[\Barc]$ is contained in the union of the shards~$\shard(\Barc')$ for~$\Barc \prec \Barc'$.
\end{lemma}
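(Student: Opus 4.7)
The plan is to reduce \cref{lemma:singularShardPolytopeFanB} to a codimension count by exploiting the projection $\shardPolytope[\Barc] = \rhoB(\shardPolytope[\arc])$, which is available because $\Barc$ singular means $\arc$ centrally symmetric. Applying \cref{prop:shardPolytopeFan} in $\R^{[\pm n]}$ and using the fact that the normal fan of $\rhoB(\shardPolytope[\arc])$ is the section of the normal fan of $\shardPolytope[\arc]$ by $\Bhyp = \mathrm{Im}(\rhoB^*)$ shows that each wall of the normal fan of $\shardPolytope[\Barc]$ is contained in some $\shard(\arc') \cap \Bhyp$ for a type~$A$ arc~$\arc'$ on~$[\pm n]$ with $\arc \prec \arc'$ in type~$A$. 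The remaining task is to force $\arc'$ to be centrally symmetrizable, so that $\Barc' \eqdef (-\arc', \arc')$ is a genuine $B$-arc that forces~$\Barc$.

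The key tool is the involution $\sigma : \R^{[\pm n]} \to \R^{[\pm n]}$ sending $\b{e}_i$ to $-\b{e}_{-i}$, whose fixed locus is exactly~$\Bhyp$ and which satisfies $\sigma(\shard(\arc')) = \shard(-\arc')$ for every $A$-arc~$\arc'$. Since every point of $\shard(\arc') \cap \Bhyp$ is $\sigma$-fixed, it lies simultaneously in $\shard(\arc')$ and in $\shard(-\arc')$, so that
\[
\shard(\arc') \cap \Bhyp \; = \; \shard(\arc') \cap \shard(-\arc') \cap \Bhyp.
\]
When $\arc'$ is centrally symmetrizable, this intersection coincides with the $B$-shard $\shard(\Barc')$ of the valid $B$-arc $\Barc' \eqdef (-\arc', \arc')$, and since $\Barc$ is not overlapped, \cref{def:forcingB} upgrades $\arc \prec \arc'$ to the type~$B$ forcing $\Barc \prec \Barc'$, settling the wall.

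The main obstacle is to discard contributions from non-centrally-symmetrizable arcs~$\arc'$, which fall into two regimes: either $\arc'$ and~$-\arc'$ share endpoints (so $a' = -b'$) with $\arc' \neq -\arc'$, or they have disjoint endpoints but cross. I would handle both by extracting, from the combined inequalities of $\shard(\arc')$ and $\shard(-\arc')$ read on $\Bhyp$, a second independent linear equation. In the shared-endpoints case, the hyperplane equation $\b{x}_{a'} = \b{x}_{b'}$ already forces $\b{x}_{b'} = 0$ in $\Bhyp$, and any $c$ witnessing $A' \neq -B'$ lies in $A' \cap (-A')$ or in $B' \cap (-B')$, so that the opposite shard inequalities on $\b{x}_c$ collapse to $\b{x}_c = 0$. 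In the crossing case, the crossing condition yields an index in $A' \cap (-A')$ forcing $\b{x}_{a'} \geq 0$ and an index in $B' \cap (-B')$ forcing $\b{x}_{a'} \leq 0$, so that $\b{x}_{a'} = 0$ is an extra equation on top of $\b{x}_{a'} = \b{x}_{b'}$. In either regime, $\shard(\arc') \cap \Bhyp$ has codimension at least two in~$\Bhyp$ and is therefore too thin to contain a wall of~$\shardPolytope[\Barc]$.

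Combining these pieces, every wall of $\shardPolytope[\Barc]$ is contained in some $B$-shard $\shard(\Barc')$ for a centrally symmetrizable $\arc'$ with $\arc \prec \arc'$, whence $\Barc \prec \Barc'$ by \cref{def:forcingB}. The technical crux will be to verify the codimension count uniformly, including the corner cases where some of the indices provided by the crossing condition lie at the endpoints of~$-\arc'$ rather than strictly inside its support; once that is settled, the remainder of the argument reduces to a direct translation between the type~$A$ and type~$B$ forcing relations.
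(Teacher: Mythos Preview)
Your invocation of \cref{def:forcingB} is backwards, and this creates a genuine gap. The relation $\Barc \prec \Barc'$ means $\Barc'$ forces $\Barc$; in \cref{def:forcingB} the case distinction is on whether the \emph{forcing} arc is overlapped. Thus if $\Barc'$ is overlapped, clause~(i) applies and $\Barc'$ can only force overlapped $B$-arcs, so an overlapped $\Barc'$ never forces the singular~$\Barc$. Your sentence ``since $\Barc$ is not overlapped, \cref{def:forcingB} upgrades $\arc \prec \arc'$ to $\Barc \prec \Barc'$'' is therefore false whenever $\Barc'$ is overlapped.

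This case is not vacuous and is not absorbed by your codimension argument. Take $\arc = (-2,2,\{-1\},\{1\})$ (singular) and $\arc' = (-1,2,\varnothing,\{1\})$. Then $\arc \prec \arc'$ in type~$A$, the pair $(-\arc',\arc')$ is noncrossing so $\arc'$ is centrally symmetrizable, and $\Barc' \eqdef (-\arc',\arc')$ is overlapped. The $B$-shard $\shard(\Barc') = \shard(\arc') \cap \Bhyp$ has full codimension~$1$ in~$\Bhyp$ (it is $\{\b{x}_1 + \b{x}_2 = 0,\, \b{x}_1 \ge 0\}$ in~$\R^2$), so your codimension-two mechanism, which you only run for non-centrally-symmetrizable $\arc'$, does not touch it. Yet $\Barc' \not\succ \Barc$, so landing in $\shard(\Barc')$ does not establish the lemma.

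The paper closes exactly this gap by a different route: rather than analysing the shards, it rules out the edge directions orthogonal to overlapped $B$-shards. Using \cref{lem:verticesShardPolytopeBsingular} (vertices of $\shardPolytope[\Barc]$ are characteristic vectors of centrally symmetric $\arc$-alternating matchings), it shows by a short parity count that no edge of $\shardPolytope[\Barc]$ can have direction $\b{e}_i + \b{e}_j$ with $1 \le i < j \le n$, hence no wall lies in an overlapped $B$-shard. Your codimension approach for non-centrally-symmetrizable $\arc'$ is fine in spirit (indeed the paper notes informally that such shards miss the interior of~$\Bhyp$), but to complete your argument you still need a separate treatment of the centrally-symmetrizable-but-overlapped case, for which the paper's edge-direction argument is the natural tool.
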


\begin{proof}
When~$\Barc \eqdef (-\arc,\arc)$ is singular, the $B$-arcs that force~$\Barc$ are precisely the separated and singular $B$-arcs~$\Barc' \eqdef (-\arc', \arc')$ where~$\arc'$ forces~$\arc$.
According to \cref{lem:weakShardPolytopeFanB}, it thus suffices to show that no wall of the normal fan of $\shardPolytope[\Barc]$ is contained in the shard of an overlapped arc.
Said differently, that no edge of $\shardPolytope[\Barc]$ is in direction $\b{e}_i + \b{e}_j$ with $1 \le i < j \le n$.

Suppose, for the sake of contradiction, that $M$ and $M'$ are two $\arc$-alternating matchings whose characteristic vectors form an edge of $\shardPolytope[\Barc]$ in direction $\b{e}_i + \b{e}_j$ with~$1 \le i < j \le n$. We have $\chi(M') = \chi(M) + \lambda (\b{e}_i + \b{e}_j)$ for some $\lambda \ne 0$. We can assume that $\lambda > 0$ by exchanging the roles of $M$ and $M'$. Moreover, by~\cref{lem:verticesShardPolytopeBsingular}, the $k$th coordinate of any vertex of $\shardPolytope[\Barc]$ belongs to either $\{0,2\}$ or to $\{-2,0\}$, depending on $k$. Therefore, it suffices to consider the case $\lambda = 2$.

Note that, for $k \in \{i,j\}$, if $k \in \{a\} \cup A$ then $\pm k \notin M$ and $\pm k \in M'$, and if $k \in B \cup \{b\}$ then $\pm k \in M$ and $\pm k \notin M'$.
In any case, we have that
\[
|M' \cap [-n] \cap (\{a\} \cup A)| - |M' \cap [-n] \cap (B \cup \{b\})| = |M \cap [-n] \cap (\{a\} \cup A)| - |M \cap [-n] \cap (B \cup \{b\})| + 2.
\]
However, for any $\arc$-alternating matching $N$, it holds that
\[
|N \cap [-n] \cap (\{a\} \cup A)| - |N \cap [-n] \cap (B \cup \{b\})| \in \{0,1\},
\]
which gives a contradiction.
\end{proof}


\subsubsection{Overlapped $B$-arcs}

The most complicated case is when $\Barc$ is overlapped. 
We need some auxiliary results first.
We start with a simple observation on upper and lower arcs of overlapped $B$-arcs.

\begin{remark}
\label{rmk:upperlower}
If the $B$-arc $\Barc \eqdef (-\arc, \arc)$ is overlapped, then
\begin{itemize}
\item $\arc$ is upper if and only if $-a \in A$ and $-i \in A$ for all $i \in {]a,-a[} \cap B$,
\item $\arc$ is lower if and only if $-a \in B$ and $-i \in B$ for all $i \in {]a,-a[} \cap A$.
\end{itemize}
\end{remark}

\begin{lemma}
\label{lem:forcesonlyone}
Consider a $B$-arc~$\Barc \eqdef (-\arc, \arc)$, where $\arc \eqdef (a, b, A, B)$ with $-b < a < 0 < -a < b$, and an $A$-arc~$\arc' \eqdef (a', b', A', B')$ such that $\arc \prec \arc'$.
If $-\arc \nprec \arc'$ then
\begin{enumerate}[(i)]
\item $b' > -a$, or
\item there is $r \in {]a',b'[}$ such that $\{-r,r\} \subseteq B$ (if $\arc$ is lower) or $\{-r,r\} \subseteq A$ (if $\arc$ is upper).
\end{enumerate}
\end{lemma}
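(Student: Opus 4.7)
The plan is to translate both the hypothesis $\arc\prec\arc'$ and the negated hypothesis $-\arc\nprec\arc'$ into concrete set-theoretic conditions, then read off which of (i), (ii) must hold. From the description of the arc poset, $\arc \prec \arc'$ unpacks as $a \le a' < b' \le b$, $A'\subseteq A$, and $B'\subseteq B$. Since $-\arc = (-b,-a,-B,-A)$, the statement $-\arc \prec \arc'$ would require $-b \le a' < b' \le -a$, $A'\subseteq -B$, and $B'\subseteq -A$. The assumption $-b<a$ combined with $a\le a'$ already gives $a' > -b$, so the left-endpoint inclusion for $-\arc\prec\arc'$ is automatic. Hence $-\arc\nprec\arc'$ forces one of three things: $b' > -a$ (which is exactly conclusion~(i)), or $A'\not\subseteq -B$, or $B'\not\subseteq -A$.

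Assuming (i) fails, so that $b' \le -a$ and therefore $]a',b'[ \subseteq ]a,-a[$, I would treat the two remaining cases in parallel. In the first case, pick any $i \in A' \smallsetminus (-B)$; then $i \in A$, the point $-i$ lies in $]a,-a[\smallsetminus\{0\}$ and so belongs to $A \sqcup B$, and $-i \notin B$ forces $-i \in A$. This produces an $r \in ]a',b'[$ with $\{-r,r\}\subseteq A$. The case $B'\not\subseteq -A$ is entirely dual and yields $r \in ]a',b'[$ with $\{-r,r\}\subseteq B$.

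What remains is to match the side ($A$ versus $B$) on which the symmetric pair lies with the correct branch of~(ii) (namely $\arc$ upper versus lower). For this I would invoke \cref{rmk:upperlower}: if $\arc$ were lower, then for every $j \in ]a,-a[ \cap A$ we would have $-j \in B$; applied to $j = i$ this contradicts $-i \in A$, so $\arc$ is necessarily upper, and the output $\{-r,r\}\subseteq A$ matches the ``$\arc$ upper'' clause of~(ii). The symmetric contradiction in the case $B'\not\subseteq -A$ forces $\arc$ to be lower. The only substantive point requiring care is the orientation of the forcing preorder $\prec$; once the two inclusion packages are written down correctly the remainder is a short case analysis, so I expect no real obstacle.
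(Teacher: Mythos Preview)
Your proposal is correct and follows essentially the same approach as the paper's proof: both unpack $\arc\prec\arc'$ and $-\arc\nprec\arc'$ into endpoint and inclusion conditions, observe that $a'\ge a>-b$ makes the left endpoint condition automatic so that failure reduces to $b'>-a$ or to a witness $r\in{]a',b'[}$ with $\{r,-r\}\subseteq A$ (from $A'\not\subseteq -B$) or $\{r,-r\}\subseteq B$ (from $B'\not\subseteq -A$), and then invoke \cref{rmk:upperlower} to match the $A$-witness with $\arc$ upper and the $B$-witness with $\arc$ lower. The only cosmetic difference is that the paper phrases the second case as the equivalent statement ${]a',b'[}\cap(-B)\ne A'$ and ${]a',b'[}\cap(-A)\ne B'$ rather than your disjunction.
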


\begin{proof}
If $-\arc \eqdef (-b,-a,-B,-A) \nprec \arc'$ we have:
\begin{itemize}
\item either $[a',b'] \nsubseteq [-b,-a]$, which implies that $b' > -a$ (because we have $-b < a \le a' < b' \le b$ as $\arc$ is to the right of $-\arc$ and $\arc \prec \arc'$), 
\item or ${]a',b'[} \cap -B \ne {]a',b'[} \cap A' = {]a',b'[} \cap A$ and ${]a',b'[} \cap -A \ne {]a',b'[} \cap B' = {]a',b'[} \cap B$. This means that there is some $r \in {]a',b'[} \cap A$ that does not belong to $-B$ (and hence $r \in -A$ and $-r \in A$), or some $r \in {]a',b'[} \cap B$ that does not belong to $-A$ (and hence $r \in -B$ and $-r \in B$). Note that by \cref{rmk:upperlower} if $r \in{]a', b'[}$ with $\pm r \in A$ can only happen when $\arc$ is upper, and $r \in{]a', b'[}$ with $\pm r \in B$ can only happen when $\arc$ is lower.
\qedhere
\end{itemize}
\end{proof}

\begin{lemma}
\label{lem:cs}
If $\Barc \eqdef (-\arc, \arc)$ and $\Barc' \eqdef (\mp \arc', \pm \arc')$ are overlapped $B$-arcs with $\arc$ lower (resp.~upper) and $\arc'$ upper (resp.\ lower), such that $\arc \prec \arc'$, then $\arc$ is centrally symmetric in the interval $[ -\min(|a'|, |b'|), \min(|a'|, |b'|) ]$, where $\arc' \eqdef (a', b', A', B')$.
\end{lemma}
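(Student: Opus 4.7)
The plan is to unpack the hypotheses using \cref{rmk:upperlower} and the definition of forcing, and then to establish the central symmetry on the overlap region $]a, -a[$ of $\Barc$. First, since $\Barc' = (-\arc', \arc')$ is overlapped with $\arc'$ the rightmost representative, we have $-b' < a' < 0 < -a' < b'$, so $|a'| = -a' < b' = |b'|$ and the interval in the statement equals $[a', -a']$. From the forcing $\arc \prec \arc'$ we also have $a' \le a < b \le b'$, $A \subseteq A'$, and $B \subseteq B'$.

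Second, I would reduce the central symmetry of $\arc$ in $[a', -a']$ to the equivalence $i \in A \Leftrightarrow -i \in B$ for every $i \in {]a, -a[}$. Indeed, for $i \in {]a, b[}$, the reflection $-i$ also lies in $]a, b[$ iff $i \in {]-b, -a[}$, and since $\Barc$ is overlapped we have $-b < a$ and $-a < b$, so this simultaneous condition amounts to $i \in {]a, -a[}$; moreover $]a, -a[ \subseteq [a', -a']$ by $a' \le a$, so outside $]a, -a[$ the central symmetry condition on $\arc$ is vacuous.

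The forward direction $i \in A \Rightarrow -i \in B$ for $i \in {]a, -a[}$ is precisely the ``lower'' criterion of \cref{rmk:upperlower} applied to $\arc$. The key step is the converse: assume $i \in B \cap {]a, -a[}$. Then $i \in B \subseteq B'$ and $i \in {]a, -a[} \subseteq {]a', -a'[}$, so by the ``upper'' criterion of \cref{rmk:upperlower} applied to $\arc'$ we get $-i \in A'$. Since $-i \in {]a, -a[} \subseteq {]a, b[}$, we have $-i \in A \sqcup B$; if $-i$ were in $B$, then $-i \in B \subseteq B'$ would contradict $-i \in A'$ together with the disjointness $A' \cap B' = \varnothing$. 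Hence $-i \in A$, as required. The parenthetical ``resp.'' case ($\arc$ upper, $\arc'$ lower) follows by swapping the roles of $A$ and $B$ throughout.

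The only subtle point--and the heart of the lemma--is the recognition that the asymmetric definition of ``lower'' in \cref{rmk:upperlower} supplies only one direction of the equivalence, while the missing converse must be extracted from the opposite (upper) orientation of $\arc'$ combined with the forcing inclusions $A \subseteq A'$ and $B \subseteq B'$. It is precisely the impossibility of having both $i$ and $-i$ in $B \cap {]a', -a'[}$ when $\arc'$ is upper that produces the central symmetry, and this obstruction is available only for $i \in {]a', -a'[}$, which is exactly what accounts for the bound $\min(|a'|, |b'|) = -a'$ in the statement.
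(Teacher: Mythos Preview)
Your argument is correct and matches the paper's (one-line) proof, which also just invokes \cref{rmk:upperlower}. There is, however, one case you silently exclude. The notation $\Barc' \eqdef (\mp\arc', \pm\arc')$ in the statement means that $\arc'$ may be either the right or the \emph{left} $A$-arc of the overlapped pair $\Barc'$; both possibilities actually arise when the lemma is invoked (Cases~1,~3 versus Cases~2,~4 of \cref{lemma:overlappedShardPolytopeFanB}). You assume from the outset that $\arc'$ is rightmost, which forces $-b' < a' < 0 < -a' < b'$ and gives $\min(|a'|,|b'|) = -a'$. When $\arc'$ is the left arc one has instead $a' < -b' < 0 < b' < -a'$, so $\min(|a'|,|b'|) = b'$ and the relevant interval is $[-b',b']$; the containment $]a,-a[ \subseteq [-b',b']$ then follows from $b \le b'$ (rather than from $a' \le a$), and the ``upper'' criterion of \cref{rmk:upperlower} must be applied to the right arc $-\arc'$ of $\Barc'$ rather than to $\arc'$ itself.

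The mechanism you isolate is exactly the right one in both cases, and the missing case is a routine variant of what you wrote; just be aware that your ``resp.''\ clause covers the $\arc$ upper/lower dichotomy but not the $\arc'$ left/right dichotomy.
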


\begin{proof}
Follows directly from the characterization of upper and lower in \cref{rmk:upperlower}.
\end{proof}

\begin{lemma}
\label{lemma:overlappedShardPolytopeFanB}
For an overlapped $B$-arc~$\Barc$, the union of the walls of the normal fan of the shard polytope~$\shardPolytope[\Barc]$ is contained in the union of the shards~$\shard(\Barc')$ for~$\Barc \prec \Barc'$.
\end{lemma}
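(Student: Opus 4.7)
The plan is to upgrade \cref{lem:weakShardPolytopeFanB} by showing that whenever a wall of the normal fan of $\shardPolytope[\Barc]$ lies in $\shard(\arc') \cap \Bhyp$ for some $\arc' \succ \arc$ such that the associated $B$-arc $\Barc' \eqdef (-\arc', \arc')$ fails to force $\Barc$, one can replace $\arc'$ by another centrally symmetrizable $A$-arc $\arc''$ with $\Barc'' \eqdef (-\arc'', \arc'') \succ \Barc$ and with the same wall still contained in $\shard(\arc'')$. Concretely, I would pick $\b{t} \in \Bhyp$ in the relative interior of such a wall, lift it via $(\rhoB)^*$, and invoke \cref{prop:shardPolytopeFan} to locate an arc $\arc' \succ \arc$ with $\b{t} \in \shard(\arc')$; the fact that $\b{t} \in \Bhyp$ forces $\arc'$ to be centrally symmetrizable, since shards of non-centrally-symmetrizable arcs have relative interior disjoint from $\Bhyp$.

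The case analysis then follows \cref{def:forcingB}. If $\Barc'$ already forces $\Barc$, we are done. Otherwise, since $\Barc$ is overlapped, the failure occurs exactly when either (A) $\Barc'$ is singular (i.e. $\arc'$ is centrally symmetric), or (B) $\Barc'$ is also overlapped but the upper-arc forcing condition fails. In subcase (B) with matching upper/lower orientations between $\arc$ and $\arc'$, the type-$A$ forcing already forces the upper arc, so this situation does not arise. In subcase (B) with opposite orientations, \cref{lem:cs} forces $\arc$ to be centrally symmetric on the interior interval $[-\min(|a'|,|b'|), \min(|a'|,|b'|)]$, while \cref{lem:forcesonlyone} pinpoints the failure of $-\arc \prec \arc'$ to either $b' > -a$ or to the existence of an interior index $r \in {]a',b'[}$ with $\{\pm r\} \subseteq A$ (if $\arc$ is upper) or $\{\pm r\} \subseteq B$ (if $\arc$ is lower). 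In the first alternative, I would construct $\arc''$ by truncating $\arc'$ to the symmetric interval around $\arc$; in the second, by cutting $\arc'$ at position $r$ to split it into a suitable overlapped pair. The central symmetry $\b{t}_{-i} = -\b{t}_i$ is precisely what ensures that the defining inequalities of $\shard(\arc'')$ remain satisfied by $\b{t}$ after the modification.

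The main obstacle is Case (A): here $\Barc'$ is singular while $\Barc$ is overlapped, so $\Barc \nprec \Barc'$ follows automatically from \cref{def:forcingB}, and we must produce an overlapped $\Barc''$ forcing $\Barc$ from scratch out of the centrally symmetric $\arc'$. The plan is to use the asymmetric endpoints of $\arc$ (note $-b < a$ and $-a < b$ with $a \ne -b$) together with the partition $A \sqcup B = {]a,b[} \ssm \{0\}$ to peel off from $\arc'$ a centrally symmetrizable but non-symmetric arc $\arc''$ with the same upper/lower orientation as $\arc$, so that $(-\arc'',\arc'')$ is overlapped and $\arc \prec \arc''$. The delicate point is the verification that $\b{t} \in \shard(\arc'')$; I expect this to reduce, via the central symmetry of $\b{t}$, to the corresponding inclusion $\b{t} \in \shard(\arc')$ plus the monotonicity of the coordinates of $\b{t}$ on the portion of $\arc'$ that is "folded over" into $\arc''$. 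This step is the combinatorial core of the proof and is where the fine-grained definition of type-$B$ forcing on overlapped $B$-arcs must be matched precisely to the geometry of centrally symmetric vectors in $\shard(\arc')$.
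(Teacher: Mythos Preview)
Your proposal has two genuine gaps.

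\medskip
\textbf{First, Case~(A) is a non-case.} You claim that if $\Barc'$ is singular and $\Barc$ is overlapped then $\Barc \nprec \Barc'$ ``automatically from \cref{def:forcingB}''. This misreads the definition: condition~(i) in \cref{def:forcingB} applies when the \emph{forcer} is overlapped. To check whether $\Barc'$ forces $\Barc$ (i.e.\ $\Barc \prec \Barc'$) with $\Barc'$ singular, you use condition~(ii), which only asks that some $A$-arc of~$\Barc'$ force some $A$-arc of~$\Barc$. Since $\arc \prec \arc'$ by hypothesis, this holds, so $\Barc \prec \Barc'$ and there is nothing to do. The paper makes this explicit: the only problematic~$\Barc'$ are overlapped ones whose upper $A$-arc forces the lower $A$-arc of~$\Barc$ but not its upper $A$-arc.

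\medskip
\textbf{Second, the replacement strategy cannot work.} A wall of the normal fan of~$\shardPolytope[\Barc]$ is $(n-1)$-dimensional and lies in a fixed hyperplane $\b{x}_{a'} = \b{x}_{b'}$. If you want to replace $\arc'$ by a centrally symmetrizable~$\arc''$ with the wall still in~$\shard(\arc'')$, then $\arc''$ must live on the same hyperplane, so its endpoints are $\{a',b'\}$ (or $\{-b',-a'\}$, giving the same $B$-arc). But $\arc'' \succ \arc$ then forces $A'' = A \cap {]a',b'[}$ and $B'' = B \cap {]a',b'[}$, so $\arc'' = \arc'$. Your proposed truncations and cuts change the endpoints and hence the hyperplane; the wall, being full-dimensional in the old hyperplane, cannot lie in the new one. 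So there is no ``other'' $\Barc''$ to move to.

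\medskip
The paper takes the complementary route: rather than relocating the wall, it shows the wall is not there at all. Concretely, it assumes $\b{t} \in \shard(\arc') \cap \Bhyp$ lies in the normal cone of an edge of~$\shardPolytope$ in direction $\b{e}_{a'} - \b{e}_{b'}$, and derives a contradiction in four cases (according to upper/lower and left/right positions of $\arc$ and $\arc'$). The argument exploits the explicit description of edge normal cones (\cref{lem:normalconematch,lem:normalconeedge}) to produce an index~$r$ with both $\b{t}_r > 0$ and $-\b{t}_r > 0$, using the central symmetry~$\b{t}_{-i} = -\b{t}_i$ and \cref{rmk:upperlower,lem:forcesonlyone,lem:cs} to locate the right~$r$. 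Your sketch never engages with these normal-cone constraints, which is precisely where the geometry of the bad case is ruled out.
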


\begin{proof}
If~$\Barc \eqdef (-\arc, \arc)$ is overlapped, the $B$-arcs~$\Barc'$ that force~$\Barc$ are precisely the $B$-arcs~$\Barc' \eqdef (-\arc', \arc')$ where~$\arc'$ forces~$-\arc$ or~$\arc$, except those where $\Barc'$ is overlapped and the upper $A$-arc of~$\Barc'$ forces the lower $A$-arc of~$\Barc$. 
Therefore, we will consider :
\begin{itemize}
\item an overlapped $B$-arc $\Barc \eqdef (-\arc, \arc)$, where $\arc \eqdef (a, b, A, B)$ with $-b < a < 0 < -a < b$,
\item an $A$-arc $\arc'$ with $\arc \prec \arc'$, such that
	\begin{itemize}
	\item $\Barc' \eqdef (\mp\arc', \pm\arc')$ is overlapped, where $\arc' \eqdef (a', b', A', B')$ and the signs depend on whether $-b' < a' < 0 < -a' < b'$ or $a' < -b' < 0 < b' < -a'$,
	\item the upper $A$-arc of $\Barc'$ forces the lower $A$-arc of $\Barc$,
	\item the upper $A$-arc of $\Barc'$ does not force the upper $A$-arc of $\Barc$,
	\end{itemize}
\item and a vector $\b{t} \in \shard(\arc') \cap \Bhyp \subset \R^{[\pm n]}$, that is,
	\begin{align}
	& \b{t}_i = -\b{t}_{-i} \text{ for all } i \in [\pm n], \label{eq:sym} \\
	& \b{t}_{a'} = \b{t}_{b'}, \label{eq:abshard}\\
	& \b{t}_{i} < \b{t}_{a'} = \b{t}_{b'} \text{ for all } i \in {]a',b'[} \cap A, \label{eq:Ashard}\\
	& \b{t}_{j} > \b{t}_{a'} = \b{t}_{b'} \text{ for all } j \in {]a',b'[} \cap B. \label{eq:Bshard}
	\end{align}
\end{itemize}
We need to prove that $\b{t}$ does not lie in the interior of the normal cone of any edge of $\shardPolytope$, which would necessarily be in direction $\b{e}_{a'}-\b{e}_{b'}$.

The proof is by contradiction and consists of a detailed case analysis of this situation. Four essential big cases arise, according to the possible combinations of upper/lower and right/left arcs of the overlapped pairs. We always assume $\arc\prec\arc'$ and $-\arc\nprec\arc'$.


\paraspace{Case 1} \emph{$\arc$ is the lower right arc of $\Barc \eqdef (-\arc, \arc)$ and $\arc'$ is the upper right arc of~$\Barc' \eqdef (-\arc', \arc')$.}

\medskip
\centerline{\includegraphics[scale=1]{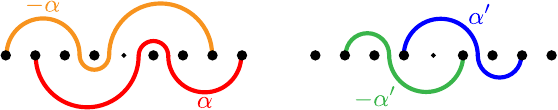}}
\medskip
  
In this case we have $-b \le -b' < a' < 0 < -a' < b' \le b$.   
By \cref{rmk:upperlower}, we have~$-a' \in A$ (because $\arc'$ is upper).
Therefore, by \eqref{eq:Ashard} we have $\b{t}_{-a'} < \b{t}_{a'} = \b{t}_{b'}$, and by \eqref{eq:sym} we have 
\begin{equation}
\label{eq:1a}
-\b{t}_{a'} < 0 < \b{t}_{a'}. 
\end{equation}
Moreover, this also means that $a' \ne a$ (because $-a \in B$ since $\arc$ is lower), and hence $a'\in B$ (because $-a' \in {]a,-a[}$ and $\arc$ is lower).

By \cref{lem:normalconeedge}, if $\b{t}$ belongs to the normal cone of an edge in direction $\b{e}_{a'} - \b{e}_{b'}$, with $a' \in B$, then there must be some $a \le s<a'$ with~$s \in \{a\} \cup A$, such that $\b{t}$ selects an alternating matching containing the pair $(s, a')$. By \cref{lem:normalconematch}, this implies that
\begin{equation}
\label{eq:1s}
\b{t}_s \ge \b{t}_{a'} > 0.
\end{equation}
Note that $s \in {[a,-a[} \cap (\{a\} \cup A)$. Since $\arc$ is lower, we conclude via \cref{rmk:upperlower} that $-s \in B$. 

We now distinguish two subcases:
\begin{itemize}
\item If $s>-b'$ then $-s \in {]a',b'[} \cap B$. In this case by \eqref{eq:Bshard} and~\eqref{eq:sym} we get 
\begin{equation}
\label{eq:1-s}
-\b{t}_s = \b{t}_{-s} > \b{t}_{a'} > 0.
\end{equation}
A contradiction with \eqref{eq:1s}.
 
\item Otherwise, $s \le -b'$. In this case, we have $b' \le -s \le -a$. By \cref{lem:forcesonlyone} there must be some $r \in {]a',b'[}$ such that $\pm r \in B$. By \cref{lem:cs}, we must have $r \in {]-a',b'[}$.
Moreover, $s<-r$, because $s \le -b'<-r$. We have thus that $-r \in {]s,a'[} \cap B$. By \cref{lem:normalconematch}, this implies that
\begin{equation}
\label{eq:1-r}
-\b{t}_{r} = \b{t}_{-r} \ge \b{t}_{a'} > 0.
\end{equation}
Moreover, $r \in {]a',b'[}\cap B$. Therefore, by~\eqref{eq:Bshard}, we have 
\begin{equation}
\label{eq:1r}
\b{t}_{r} \ge \b{t}_{a'} > 0.
\end{equation}
Together, \eqref{eq:1-r} and~\eqref{eq:1r} give a contradiction.
\end{itemize}
 

\paraspace{Case 2} \emph{$\arc$ is the lower right arc of $\Barc \eqdef (-\arc, \arc)$ and $\arc'$ is the upper left arc of $\Barc' \eqdef (\arc', -\arc')$}.

\medskip
\centerline{\includegraphics[scale=1]{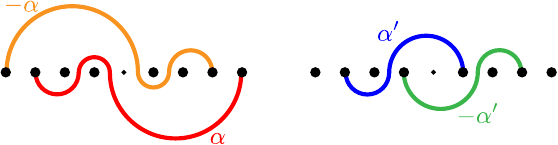}}
\medskip

In this case we have $-b < a \le a' < -b' < 0 < b' < -a' \le -a < b$.   
By \cref{rmk:upperlower}, we have $-b' \in A$ (because $\arc'$ is upper).
Therefore, by \eqref{eq:Ashard} we have $\b{t}_{-b'} < \b{t}_{a'} = \b{t}_{b'}$, and by \eqref{eq:sym} we have
\begin{equation}
\label{eq:2b}
-\b{t}_{b'} < 0 < \b{t}_{b'}.  
\end{equation}
Moreover, this implies that $b' \in B$ (because $-b' \in {]a,-a[}$ and $\arc$ is lower).
  
Since $b' < -a$, \cref{lem:forcesonlyone} guarantees the existence of an $r \in {]a',b'[}$ such that $\pm r \in B$. By \cref{lem:cs}, we must have $r \in {]a', - b'[}\subset {]a', b'[}$ and $-r \in {]b', -a'[} \subset {]b', b]}$.
  
We distinguish two subcases according to the value of $\b{t}_{-r}$:
\begin{itemize}
\item If $\b{t}_{-r} \ge \b{t}_{b'}$ then by~\eqref{eq:sym} we have
\begin{equation}
\label{eq:2-r}
-\b{t}_{r} \ge \b{t}_{b'} > 0.
\end{equation}
On the other hand, as $r \in {]a',b'[} \cap B$, then by~\eqref{eq:Bshard}
\begin{equation}
\label{eq:2r}
\b{t}_{r} \ge \b{t}_{b'} > 0.
\end{equation}
Together, \eqref{eq:2-r} and~\eqref{eq:2r} are in contradiction.
 
\item Otherwise $\b{t}_{-r} < \b{t}_{b'}$. By \cref{lem:normalconeedge}, if $\b{t}$ belongs to the normal cone of an edge in direction $\b{e}_{a'} - \b{e}_{b'}$, with $b' \in B$, then $\b{t}$ selects an alternating matching containing a pair $(s, b')$ for some $s \le a'$. By \cref{lem:normalconematch}, this implies that for all $j \in {]b',b]} \cap (B \cup \{b\})$ with $\b{t}_j < \b{t}_{b'}$ there is some $\ell \in {]b',j[} \cap A$ and $\b{t}_\ell \ge \b{t}_{b'}$. This applies in particular to $-r \in {]b', b]}$. 
There is therefore some $\ell \in {]b',-r[}\cap A$ such that
\begin{equation}
\label{eq:2l}
\b{t}_{\ell} \ge \b{t}_{b'} > 0.
\end{equation}
On the other hand, since $\ell \in {]a,-a[} \cap A$ and $\arc$ is lower, we have $-\ell \in B$. Moreover, $-\ell \in {] a',b'[}$. Therefore, by~\eqref{eq:Bshard} we have
\begin{equation}
\label{eq:2-l}
-\b{t}_{\ell} = \b{t}_{-\ell} \ge \b{t}_{b'} > 0.
\end{equation}
Together, \eqref{eq:2l} and~\eqref{eq:2-l} are in contradiction.
\end{itemize}


\paraspace{Case 3} \emph{$\arc$ is the upper right arc of $\Barc \eqdef (-\arc, \arc)$ and $\arc'$ is the lower right arc of $\Barc' \eqdef (-\arc', \arc')$}.

\medskip
\centerline{\includegraphics[scale=1]{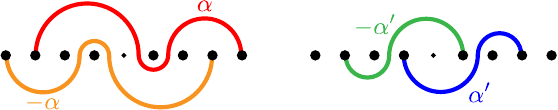}}
\medskip

In this case we have $-b \le -b' < a' < 0 < -a' < b' \le b$.   
By \cref{rmk:upperlower}, we have $-a'\in B$ (because $\arc'$ is lower).
Therefore, by \eqref{eq:Bshard} we have $\b{t}_{-a'} > \b{t}_{a'} = \b{t}_{b'}$, and by \eqref{eq:sym} we have 
\begin{equation}
\label{eq:3a} 
\b{t}_{a'} < 0 < -\b{t}_{a'}. 
\end{equation}
Moreover, this also means that $a' \ne a$ (because $-a \in A$ since $\arc$ is upper), and hence $a' \in A$ (because $-a' \in {]a,-a[}$ and $\arc$ is upper).

We claim that there is some $u \in {[a,a'[} \cap {]-b',a'[}$ such that $\pm u \in \{a\} \cup A$. Indeed, by \cref{lem:forcesonlyone} either $b' > -a$, in which case we set $u = a$ and we have $-a \in A$ since $\arc$ is upper using \cref{rmk:upperlower}; or $b' \le -a$ and then there is some $r \in {]a',b'[}$ such that $\pm r \in A$. In this case, $r \in {]-a', b'[}$ by \cref{lem:cs}, and we set $u = -r$.

We have $-u\in{]a',b'[}\cap A$. Therefore, by \eqref{eq:Ashard} we have $\b{t}_{-u} \le \b{t}_{a'} = \b{t}_{b'}$, and by \eqref{eq:sym} we conclude 
\begin{equation}
\label{eq:3-u}
-\b{t}_{u} \le \b{t}_{a'} < 0.  
\end{equation}

We distinguish two subcases according to the value of $\b{t}_{u}$:
\begin{itemize}
\item If $\b{t}_{u} \le \b{t}_{a'}$ then we have
\begin{equation}
\label{eq:3u}
\b{t}_{u} \le \b{t}_{a'} < 0.
\end{equation}
This contradicts \eqref{eq:3-u}.

\item Otherwise $\b{t}_{u} > \b{t}_{a'}$.
By \cref{lem:normalconeedge}, if $\b{t}$ belongs to the normal cone of an edge in direction $\b{e}_{a'} - \b{e}_{b'}$, with $a'\in A$, then $\b{t}$ selects an alternating matching containing a pair $(a', s)$ for some $s \ge b'$. By \cref{lem:normalconematch}, this implies that for all $i \in {[a,a'[} \cap (\{a\} \cup A)$ with $\b{t}_i> \b{t}_{a'}$ there is some $h \in {]i, a'[} \cap B$ such that $\b{t}_h \le \b{t}_{a'}$. This applies in particular to $u \in [a, a'[$. 
There is therefore some $h \in {]u, a'[} \cap B$ such that
\begin{equation}
\label{eq:3h}
\b{t}_{h} \le \b{t}_{a'} < 0.
\end{equation}
On the other hand, since $h \in {]a,-a[} \cap B$ and $\arc$ is upper, we have $-h \in A$ by \cref{rmk:upperlower}. Moreover, $-h \in {] a',b'[}$. Therefore, by~\eqref{eq:Ashard} and \eqref{eq:sym} we have
\begin{equation}
\label{eq:r-h}
-\b{t}_{h} = \b{t}_{-h} \le \b{t}_{a'} < 0.
\end{equation}
Together, \eqref{eq:3h} and \eqref{eq:r-h} are in contradiction.
\end{itemize}


\paraspace{Case 4} \emph{$\arc$ is the upper right arc of $\Barc \eqdef (-\arc, \arc)$ and $\arc'$ is the lower left arc of $\Barc' \eqdef (\arc', -\arc')$}.

\medskip
\centerline{\includegraphics[scale=1]{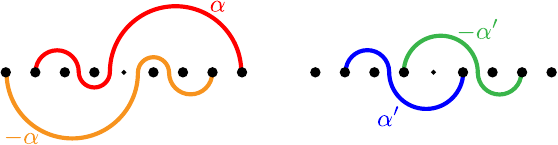}}
\medskip
 
In this case we have $-b < a \le a' < -b' < 0 < b' < -a' \le -a < b$.   
By \cref{rmk:upperlower}, we have $-b' \in B$ (because $\arc'$ is lower).
Therefore, by \eqref{eq:Bshard} we have $\b{t}_{-b'} > \b{t}_{a'} = \b{t}_{b'}$, and by \eqref{eq:sym} we have 
\begin{equation}
\label{eq:4b}
\b{t}_{b'} < 0 < -\b{t}_{b'}.
\end{equation}
Moreover, this implies that $b' \in A$ (because $-b' \in {]a,-a[}$ and $\arc$ is upper).

Since $b' < -a$,  \cref{lem:forcesonlyone} guarantees the existence of an $r \in {]a',b'[}$ such that $\pm r \in A$.
By \cref{lem:cs}, we must have $r \in {]a',- b'[}\subset {]a',b'[}$.
By~\eqref{eq:Ashard} we know that
\begin{equation}
\label{eq:4r}
\b{t}_{r} \le \b{t}_{b'} < 0.
\end{equation}
  
Moreover, by \cref{lem:normalconeedge}, if $\b{t}$ belongs to the normal cone of an edge in direction $\b{e}_{a'} - \b{e}_{b'}$, with $b'\in A$, then $\b{t}$ selects an alternating matching containing a pair $(b',v)$ for some $v\in {]b',b]} \cap (B\cup \{b\})$.
By \cref{lem:normalconematch}, this implies that
\begin{equation}
\label{eq:4v}
\b{t}_v \le \b{t}_{b'} < 0.
\end{equation}
 
We distinguish two subcases:
\begin{itemize}
\item If $v \ge -a'$, then $-r \in {]b',v[} \cap A$ because $b' < -r < -a' \le v$.  By \cref{lem:normalconematch} and~\eqref{eq:sym}, this implies that
\begin{equation}
\label{eq:4-r}
-\b{t}_r = \b{t}_{-r} \le \b{t}_{b'} < 0.
\end{equation}
This contradicts \eqref{eq:4r}.
 
\item Otherwise, $v < -a'$. Since $v \in {]a,-a[} \cap B$ and $\arc$ is upper, we get that $-v \in A$ by \cref{rmk:upperlower}. Furthermore, $-v \in {]a',-b[}$. Therefore, by~\eqref{eq:Bshard}, we get
\begin{equation}
\label{eq:4-v}
-\b{t}_{v} = \b{t}_{-r} \le \b{t}_{b'} < 0.
\end{equation}
This contradicts~\eqref{eq:4v}.
\qedhere
\end{itemize}
\end{proof}


\newpage
\addtocontents{toc}{ \vspace{.1cm} }
\section*{Concluding remarks and further directions}
\label{subsec:concludingRemarks}

We conclude the paper with some observations and conjectures about shard polytopes and quotientopes in type~$B$ and beyond.

\para{Quotient fans, shard polytopes, and quotientopes for lattices of regions of hyperplane arrangements}
First, we quickly recall the general setting in which we hope to construct shard polytopes and quotientopes.
We refer to the recent surveys of N.~Reading~\cite{Reading-PosetRegionsChapter, Reading-FiniteCoxeterGroupsChapter} for a thourough introduction to the topic.

Consider a central hyperplane arrangement~$\HA$ defining a fan~$\fan$, and a distinguished base region~$\polytope{B}$ of~$\fan$.
The \defn{poset of regions}~$\PR(\HA, \polytope{B})$ is the poset whose elements are the regions of~$\fan$ ordered by inclusion of separating sets (the set of hyperplanes of~$\HA$ that separate the given region form the base region~$\polytope{B}$).
Its Hasse diagram is the graph of the zonotope of~$\HA$ oriented from the base region~$\polytope{B}$ to its opposite~$-\polytope{B}$.
By the work of A.~Bj\"orner, P.~Edelman and G.~Ziegler~\cite{BjornerEdelmanZiegler}, we know that~$\PR(\HA, \polytope{B})$ is always a lattice when the fan~$\fan$ is simplicial, and that the region~$\polytope{B}$ must be simplicial for~$\PR(\HA, \polytope{B})$ to be a lattice.
See also the survey of N.~Reading~\cite[Sect.~9-3]{Reading-PosetRegionsChapter} for further conditions, in particular a discussion on tight arrangements.
We assume here that~$\PR(\HA, \polytope{B})$ is a lattice.

As in type~$A$ and~$B$, the join-irreducible elements of~$\PR(\HA, \polytope{B})$ correspond to certain pieces of hyperplanes of~$\HA$ called \defn{shards}.
The shards of~$\HA$ can be defined geometrically as the pieces that remain after certain cuts.
Namely, for each codimension $2$ face~$F$ of the arrangement, consider the subarrangement~$\HA_F$ of all hyperplanes of~$\HA$ containing~$F$, and cut all non-basic hyperplanes of~$\HA_F$ by the basic hyperplanes of~$\HA_F$ (\ie the hyperplanes bounding the region of~$\HA_F$ containing the base region~$\polytope{B}$).
The shards are the pieces that remain once the cuts corresponding to all codimension~$2$ faces of~$\HA$ have been performed.
Moreover, these cuts define the forcing graph on shards.
If the hyperplane~$\polytope{H}$ cuts the hyperplane~$\polytope{H}'$, and we have two shards $\shard \subseteq \polytope{H}$ and $\shard' \subseteq \polytope{H}'$ whose intersection has codimension~$2$, then we have~$\shard \to \shard'$ in the forcing graph.
Note that this graph has no oriented cycle if and only if the lattice~$\PR(\HA, \polytope{B})$ is congruence uniform.
Finally, the lattice of congruences of the poset of regions~$\PR(\HA, \polytope{B})$ is isomorphic to the inclusion poset of upper ideals of the forcing relation.
See \cite[Sect.~9-7]{Reading-PosetRegionsChapter}.

Consider now a lattice congruence~$\equiv$ of the poset of regions~$\PR(\HA, \polytope{B})$, and let~$\shards_\equiv$ denote the corresponding shard ideal.
As in type~$A$ and~$B$, N.~Reading proved in~\cite{Reading-HopfAlgebras} that~$\equiv$ defines a \defn{quotient fan}~$\Fan_\equiv$ whose chambers are obtained as
\begin{itemize}
\item either the unions of the chambers of the fan~$\BFan_n$ in the same congruence class of~$\equiv$,
\item or the closures of the connected components of the complement of the union of the shards~of~$\shards_\equiv$.
\end{itemize}
It is then tempting to conjecture the following statement.

\begin{conjecture}
\label{conj:quotientopesHyperplaneArrangements}
For any hyperplane arrangement~$\HA$ defining a fan~$\fan$ and any base region~$\polytope{B}$ of~$\fan$ such that the poset of regions~$\PR(\HA, \polytope{B})$ is a congruence uniform lattice, and for any lattice congruence~$\equiv$ of the poset of regions~$\PR(\HA, \polytope{B})$, the quotient fan~$\Fan_\equiv$ is the normal fan of a polytope.
\end{conjecture}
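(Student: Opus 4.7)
The proof plan proceeds in two stages. First, by \cref{prop:main16}, it suffices to establish that for each shard~$\shard$ of the arrangement~$\HA$ there exists a polytope~$\polytope{P}_\shard$ (a \emph{shard polytope}) satisfying the sandwich property of \cref{thm:main2,prop:main12}: the union of the walls of its normal fan contains~$\shard$ and is contained in the union of the shards~$\shard'$ with~$\shard \prec \shard'$ in the forcing order. Granting this, for any lattice congruence~$\equiv$ with shard ideal~$\shards_\equiv$, the Minkowski sum~$\sum_{\shard \in \shards_\equiv} \polytope{P}_\shard$ has normal fan equal to the quotient fan~$\Fan_\equiv$, which proves the conjecture. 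The congruence uniformity assumption guarantees that the forcing relation is a genuine partial order, which will be used to run inductions below.

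The heart of the argument is therefore the construction of shard polytopes in this general setting. The natural strategy is to proceed by induction on the forcing poset. For a forcing-minimal shard~$\shard$ supported on a hyperplane~$H$, no codimension~$2$ cut reaches~$\shard$ from below, so one may take~$\polytope{P}_\shard$ to be a short segment normal to~$H$, whose normal fan has~$H$ as its unique wall. For a general shard~$\shard$, let~$\shard_1, \dots, \shard_k$ be the shards immediately cutting~$\shard$; they lie on hyperplanes bounding~$\shard$ inside its support~$H$, and by induction we possess polytopes~$\polytope{P}_{\shard_1}, \dots, \polytope{P}_{\shard_k}$. The goal is to construct~$\polytope{P}_\shard$ from these data, mimicking \cref{prop:facesShardPolytope}\,\eqref{it:reconstructShardPolytopeFromForcing} in type~$A$, where~$\shardPolytope$ was recovered as the convex hull of translates of shard polytopes of shards forcing~$\arc$. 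Concretely, one would take~$\polytope{P}_\shard$ to be the convex hull of~$\b{0}$, a vertex along the direction orthogonal to~$H$, and translated copies of the~$\polytope{P}_{\shard_i}$ glued onto the appropriate facets.

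The main obstacle is to verify the sandwich property for the constructed~$\polytope{P}_\shard$, that is, to ensure that~$\shard$ itself appears as a wall and that no spurious walls are introduced outside the forcing ideal of~$\shard$. In type~$A$, this relied on the fine combinatorial control of the edges of~$\shardPolytope$ provided by \cref{lem:matchingUnionLemma}, and in type~$B$ on the delicate four-case analysis in the proof of \cref{lemma:overlappedShardPolytopeFanB} after projecting to the centrally symmetric space. A general analogue would require either a combinatorial model of the vertices of~$\polytope{P}_\shard$ generalizing alternating matchings, or a purely geometric criterion tied to the basic-hyperplane structure of the codimension~$2$ subarrangements. Two concrete routes seem promising: (i) exploit fibred structure in supersolvable arrangements to build shard polytopes iteratively from shard polytopes of the fibres, and (ii) invoke the cluster-algebraic interpretation of shard polytopes discussed in \cref{rem:shardPolytopesAlreadyExisted}, using Newton polytopes of $F$-polynomials as candidate shard polytopes whenever the quotient fan is a $\b{g}$-vector fan of a finite-type cluster algebra. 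The partial verifications for type~$I_2(n)$ and for supersolvable arrangements mentioned at the end of the paper suggest that the inductive scheme is viable, but that a uniform proof of \cref{conj:quotientopesHyperplaneArrangements} will likely demand a new structural understanding of how shards are cut by their forcing predecessors, beyond what the pure existence of shards gives.
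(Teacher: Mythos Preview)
This statement is a \emph{conjecture} in the paper, not a theorem: the paper gives no proof, and indeed explicitly presents it as open and proposes the stronger \cref{conj:existenceShardPolytopes} (existence of shard polytopes for all shards) as a possible route. Your proposal is likewise not a proof but a strategy outline, and you acknowledge as much in your final paragraph. So there is nothing to compare in the sense of ``the paper's proof versus yours.''

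That said, there is a concrete gap in your proposed inductive construction that the paper itself identifies. You propose to build~$\polytope{P}_\shard$ as the convex hull of a base edge together with translated copies of the inductively constructed~$\polytope{P}_{\shard_i}$ for the shards~$\shard_i$ cutting~$\shard$, mimicking \cref{prop:facesShardPolytope}\,\eqref{it:reconstructShardPolytopeFromForcing}. The paper's concluding remarks describe precisely this idea (in the remark following \cref{prop:rank2ShardPolytopes}) and report that it \emph{already fails in type~$B_4$}: for two explicit overlapped $B$-arcs, the convex hull so obtained does not satisfy the sandwich property. So the inductive scheme you sketch is not merely incomplete but is known to break down, and any genuine attack on the conjecture must use a different mechanism for producing~$\polytope{P}_\shard$. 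The two directions you mention (supersolvable arrangements, Newton polytopes of $F$-polynomials) are the same ones the paper discusses; note that the latter only covers \emph{sortable} shards, and unsortable shards exist from type~$B_3$ onward, so neither route currently yields the general case.

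A smaller point: your base case is slightly garbled. The shards for which a segment suffices are the \emph{basic} shards (those equal to an entire hyperplane bounding the base region); these are the forcing-\emph{maximal} elements in the convention of the paper (they force everything and are forced by nothing), not forcing-minimal.
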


We hope that this conjecture might be approached using shard polytopes in the sense of \cref{prop:shardPolytopeFan,prop:shardPolytopeFanB}, as already mentioned in \cref{prop:main16,prop:main17}.

\begin{definition}
\label{def:shardPolytopesHyperplaneArrangements}
A polytope~$\polytope{P}$ is a \defn{weak shard polytope} for a shard~$\shard$ if the union of the walls of the normal fan of~$\polytope{P}$ contains the shard~$\shard$ and is contained in the union of the shards~$\shard'$ forcing~$\shard$.
\end{definition}

Note that we do not require weak shard polytopes to be Minkowski indecomposable.
For instance, both the $\arc$-associahedron~$\Asso$ and the shard polytope~$\shardPolytope$ are weak shard polytopes for the shard~$\shard_\arc$ of an arc~$\arc \in \arcs_n$, but only the latter is Minkowski indecomposable.

\begin{proposition}
\label{prop:quotientopesHyperplaneArrangements}
Consider a hyperplane arrangement~$\HA$ with a base region~$\polytope{B}$ such that the poset of regions~$\PR(\HA, \polytope{B})$ is a lattice, and a lattice congruence~$\equiv$ of~$\PR(\HA, \polytope{B})$ with shard ideal~$\shards_\equiv$.
If each shard~$\shard$ of~$\shards_\equiv$ admits a weak shard polytope~$\polytope{P}_\shard$, then the quotient fan~$\Fan_\equiv$ is the normal fan of the Minkowski sum~$\sum_{\shard \in \shards_\equiv} \polytope{P}_\shard$.
\end{proposition}

Therefore, \cref{conj:quotientopesHyperplaneArrangements} is implied by the following stronger conjecture, which is motivated by \cref{prop:shardPolytopeFan} in type~$A$ and \cref{prop:shardPolytopeFanB} in type~$B$.

\begin{conjecture}
\label{conj:existenceShardPolytopes}
For any hyperplane arrangement~$\HA$ and any base region~$\polytope{B}$ such that the poset of regions~$\PR(\HA, \polytope{B})$ is a congruence uniform lattice, any shard admits a weak shard polytope.
\end{conjecture}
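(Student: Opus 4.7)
The plan is to reformulate the conjecture in terms of the type cone of~$\fan$ and then attack it by a combination of inductive and local arguments. A first observation is that a shard polytope for~$\shard$ is, up to translation, exactly a polytope whose normal fan refines~$\fan$, whose walls lie in the union of shards forcing~$\shard$, and which has~$\shard$ itself as a wall. Such polytopes parametrize a face~$F_\shard$ of the (closed) type cone~$\ctypeCone(\fan)$, namely the face obtained by declaring tight exactly the walls not on shards forcing~$\shard$. By the shard description of quotient fans, the relative interior of~$F_\shard$ corresponds to realizations of the principal quotient fan associated with the upper ideal generated by~$\shard$, so the task reduces to exhibiting any element of~$F_\shard$ strictly above the lineality subspace of translations. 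In other words, the conjecture is equivalent to the polytopality of all principal quotient fans.

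My first attempt would use a rank-induction combined with supersolvable structure when available. For a supersolvable arrangement with a modular coatom~$M$, the arrangement fibers over a lower-rank arrangement supported on~$M$: shards lying in hyperplanes of the sub-arrangement on~$M$ can be lifted by the induction hypothesis, while shards transverse to~$M$ could be handled by a product-like or projection-like construction, in the spirit of how type~$B$ shard polytopes are built from type~$A$ ones via the projection~$\rhoB$. The rank-two case~$I_2(n)$ is already handled in the paper and serves as the base. For a non-supersolvable simplicial arrangement I would try to mimic the type~$A$ construction by identifying a combinatorial structure on the set of hyperplanes cutting~$\shard$, analogous to $\arc$-alternating matchings, and building a polytope with $\pm 1$ vertex coordinates; the correct forcing property would then be verified by a local codimension-two analysis in the spirit of \cref{lem:matchingUnionLemma}, where each case in the lemma corresponds to one way a codimension-two face of the arrangement can be traversed by an edge of the polytope.

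The main obstacle will be controlling the walls of the constructed polytope to avoid spurious walls supported on non-forcing shards. In types~$A$ and~$B$ this rigidity comes from the very specific structure of alternating matchings (matroid polytopes of series-parallel graphs) or by projection from a lower-rank case that already has the property; for a general congruence-uniform simplicial arrangement no analogue of the alternating-matching model is known, and the forcing poset can have long chains with no transparent minimal generators to anchor the construction. A complementary strategy is to show that the candidate~$\polytope{P}_\shard$ is a vertex of~$F_\shard$ via a McMullen-type indecomposability argument along the lines of \cref{thm:indecomposabilityCriterion}: if one can produce a distinguished face of~$\polytope{P}_\shard$ that is itself indecomposable and meets every facet, indecomposability follows and pins~$\polytope{P}_\shard$ onto a ray of~$F_\shard$. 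I expect the genuinely hard step to be guessing a good candidate polytope in the absence of a unifying combinatorial model, and progress may require either a case analysis over families of congruence-uniform arrangements (supersolvable, graphical, Shi, and Ish-type) or the discovery of a new invariant of shards that behaves well under forcing.
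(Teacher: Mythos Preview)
The statement you are addressing is \cref{conj:existenceShardPolytopes}, which is an open \emph{conjecture} in the paper, not a theorem. There is no proof in the paper to compare against; the paper only establishes the conjecture in type~$A$ (\cref{prop:shardPolytopeFan}), type~$B$ (\cref{prop:shardPolytopeFanB}), rank~$2$ (\cref{prop:rank2ShardPolytopes}), and for canonical shards of supersolvable arrangements (\cref{prop:canonicalShardSupersolvable}). Your proposal is accordingly not a proof but a research outline, and you say so yourself when you write ``the main obstacle will be\dots'' and ``I expect the genuinely hard step to be\dots''.

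That said, a few of your observations are worth recording. Your reformulation that the conjecture is equivalent to the polytopality of all \emph{principal} quotient fans is correct and clean: a polytope realizing the quotient fan of the upper ideal generated by~$\shard$ is automatically a shard polytope for~$\shard$, and conversely the Minkowski sum of shard polytopes realizes any quotient fan. Your supersolvable strategy overlaps with what the paper already does in \cref{prop:canonicalShardSupersolvable}, which constructs shard polytopes for the canonical shard in each hyperplane; the missing piece, as the paper itself notes, is an analogue of the valuation identity of \cref{thm:inductiveMinkowskiSum} to pass from canonical shards to arbitrary ones. Your idea of imitating the alternating-matching model via a codimension-two analysis is exactly the mechanism behind \cref{lem:matchingUnionLemma}, but as you acknowledge, no substitute for that combinatorial model is known in general, and this is precisely why the statement remains a conjecture.
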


We now focus on a simplicial arrangement~$\HA$.
Observe first that one easily knows the number of shards of~$\HA$.

\begin{lemma}
\label{lem:numberShards}
In a simplicial arrangement, the number of shards is the number of rays minus the dimension.
\end{lemma}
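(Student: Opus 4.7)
The plan is to prove that the number of shards equals $r - d$, where $r$ is the number of rays of $\HA$, by constructing an explicit bijection between shards and those rays of $\HA$ that are not rays of the base region $\polytope{B}$. Since $\polytope{B}$ is itself simplicial, it has exactly $d$ rays, which yields the formula.

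First, I would use the standard identification of shards with join-irreducible elements of the lattice $\PR(\HA, \polytope{B})$: each shard $\shard$ is the lower wall of a unique join-irreducible region $j_\shard$, and the map $\shard \mapsto j_\shard$ is a bijection between shards and join-irreducibles. Since $\HA$ is simplicial, both $j_\shard$ and its unique lower cover $j_\shard^\flat$ are simplicial $d$-cones sharing the facet $j_\shard \cap j_\shard^\flat$, which is a $(d-1)$-dimensional simplicial cone spanned by exactly $d-1$ rays belonging to both regions. Let $\rho_\shard$ be the unique ray of $j_\shard$ not contained in $j_\shard^\flat$, namely the ray opposite to this common facet. The heart of the proof then consists in showing that $\shard \mapsto \rho_\shard$ is a bijection onto the non-base rays.

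The non-base property is the easiest step: denoting by $H_\shard$ the hyperplane supporting $\shard$ (and thus containing the facet $j_\shard \cap j_\shard^\flat$), we have $H_\shard \in \mathrm{sep}(j_\shard) \setminus \mathrm{sep}(j_\shard^\flat)$, so $\polytope{B}$ and $j_\shard^\flat$ lie on the same side of $H_\shard$ while $j_\shard$ lies on the opposite side. Since $\rho_\shard$ is opposite to a facet contained in $H_\shard$, it does not lie in $H_\shard$ and lies strictly on the $j_\shard$ side; in particular, it cannot be a ray of $\polytope{B}$.

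For bijectivity, I would rely on the classical fact (valid for simplicial arrangements whose poset of regions is a lattice, see e.g.\ the surveys~\cite{Reading-PosetRegionsChapter, Reading-FiniteCoxeterGroupsChapter}) that for every ray $\rho$ of $\HA$, the set $\mathcal{R}_\rho$ of regions of $\HA$ containing $\rho$ is an interval of $\PR(\HA, \polytope{B})$, and in particular admits a unique minimum $R_\rho$. If $\rho$ is non-base, then $\polytope{B} \notin \mathcal{R}_\rho$, so $R_\rho \neq \polytope{B}$ admits a lower cover $R' \lessdot R_\rho$. Simpliciality forces $R_\rho$ and $R'$ to share exactly $d-1$ rays; if $R'$ still contained $\rho$ it would lie in $\mathcal{R}_\rho$, contradicting the minimality of $R_\rho$, so $R'$ is obtained from $R_\rho$ by crossing the facet opposite to $\rho$, which is uniquely determined. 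Any hypothetical other lower cover $R'' \lessdot R_\rho$ would be obtained by crossing a facet opposite to some ray $\sigma \neq \rho$ of $R_\rho$, hence would still contain $\rho$ and contradict minimality. Thus $R_\rho$ covers a unique region and is join-irreducible with distinguishing ray $\rho$, providing an inverse to $\shard \mapsto \rho_\shard$; conversely, for any shard $\shard$, the join-irreducible $j_\shard$ lies in $\mathcal{R}_{\rho_\shard}$ while its unique cover $j_\shard^\flat$ does not, so $j_\shard$ is minimal in $\mathcal{R}_{\rho_\shard}$ and must equal $R_{\rho_\shard}$. The main obstacle is precisely the interval property of $\mathcal{R}_\rho$, which is where the simpliciality hypothesis is essentially used, and which I would import from the general theory of posets of regions.
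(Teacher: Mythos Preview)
Your proof is correct and follows essentially the same approach as the paper: both establish a bijection between join-irreducible regions and the rays not belonging to the base region, sending a join-irreducible to the unique ray opposite its descent wall, and a non-base ray to the minimal region containing it. The paper's proof is much terser (it simply states the bijection without verifying it), whereas you actually argue injectivity and surjectivity via the interval property of the star of a ray; this extra detail is welcome and does not diverge from the paper's line of argument.
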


\begin{proof}
In a simplicial arrangement, there is a natural bijection between the join-irreducible regions and the rays not in the base region.
Namely, each join-irreducible region maps to the unique ray not contained in its unique descent hyperplane, and conversely each ray not in the base region maps to the smallest region containing it.
\end{proof}

For instance, we have observed along the paper that the number of shards in the Coxeter arrangements of type~$A$ and~$B$ are given by~$|\arcs_n| = 2^n-n-1$ and~$|\Barcs_n| = 3^n-n-1$.
The attentive reader will have recognized that the number of shards in a simplicial arrangement~$\HA$ is the dimension of the type cone of~$\HA$.
This yields the following generalization of \cref{prop:shardPolytopeBasis}.

\begin{proposition}
\label{prop:basisTypeConeHyperplaneArrangements}
Consider a simplicial arrangement~$\HA$ and a base region~$\polytope{B}$ such that the poset of regions~$\PR(\HA,B)$ is a congruence uniform lattice, and any shard~$\shard$ admits a weak shard polytope~$\polytope{P}_\shard$.
Then the collection of weak shard polytopes~$(\polytope{P}_\shard)_{\shard \in \shards}$ is a linear basis of the vector subspace of virtual polytopes generated by the deformations of the zonotope of~$\HA$ up to translations.
\end{proposition}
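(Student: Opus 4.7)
The plan is to follow the template of the proof of \cref{prop:shardPolytopeBasis}: match the cardinality of the family $(\polytope{P}_\shard)_{\shard \in \shards}$ with the dimension of the ambient vector space, and then establish linear independence via a forcing-minimal-shard argument.

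First, I would identify the ambient space. The vector subspace of virtual polytopes generated by the deformations of the zonotope of $\HA$ modulo translations is canonically isomorphic to the linear span of the type cone $\ctypeCone(\fan)$, whose dimension equals the number of rays of $\fan$ minus the ambient dimension. Since $\HA$ is simplicial, \cref{lem:numberShards} states that this number coincides with the cardinality $|\shards|$ of the set of shards. Moreover, each shard polytope $\polytope{P}_\shard$ is a weak Minkowski summand of the zonotope of $\HA$ because, by definition, the walls of its normal fan lie in shards, hence on hyperplanes of $\HA$, so its normal fan coarsens $\fan$. Thus the family $(\polytope{P}_\shard)_{\shard \in \shards}$ sits inside the correct subspace and has the correct cardinality.

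Next I would prove linear independence by mimicking the end of the proof of \cref{prop:shardPolytopeBasis}. Suppose, for contradiction, that there is a nontrivial relation $\sum_{\shard \in \shards} c_\shard \polytope{P}_\shard = 0$ in the space of virtual polytopes modulo translations. Writing $\shards^+ \eqdef \set{\shard \in \shards}{c_\shard > 0}$ and $\shards^- \eqdef \set{\shard \in \shards}{c_\shard < 0}$, one obtains an equality (up to a translation) of the two genuine Minkowski sums $\Sigma^+ \eqdef \sum_{\shard \in \shards^+} c_\shard \polytope{P}_\shard$ and $\Sigma^- \eqdef \sum_{\shard \in \shards^-} (-c_\shard) \polytope{P}_\shard$, which in particular share the same normal fan and thus the same union of walls. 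Since $\PR(\HA, \polytope{B})$ is congruence uniform, the forcing relation on shards is a genuine partial order, so the finite nonempty set $\shards^+ \cup \shards^-$ admits a forcing-minimal element $\shard_\bullet$. The defining property of a shard polytope asserts that the walls of the normal fan of $\polytope{P}_\shard$ are contained in the union of shards $\shard'$ forcing $\shard$. Combined with the minimality of $\shard_\bullet$ in $\shards^+ \cup \shards^-$, this forces $\shard_\bullet$ to appear as a wall of $\polytope{P}_\shard$ (for $\shard$ in the support) only when $\shard = \shard_\bullet$. Since $c_{\shard_\bullet} \ne 0$, the wall $\shard_\bullet$ therefore appears in the normal fan of exactly one of $\Sigma^+$ and $\Sigma^-$, a contradiction.

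The hard part will not be any single computation but rather making sure the three hypotheses are each deployed where genuinely needed: simpliciality of $\HA$ is used only in the dimension count, through \cref{lem:numberShards}; congruence uniformity is what guarantees that the forcing relation is acyclic, so that the forcing-minimal element $\shard_\bullet$ exists; and the defining property of shard polytopes is what converts this minimality into the uniqueness of the summand whose normal fan carries the wall $\shard_\bullet$. Once these three ingredients are lined up, the cardinality match from the first paragraph together with the linear independence from the second immediately yield that $(\polytope{P}_\shard)_{\shard \in \shards}$ is a basis of the subspace of deformations of the zonotope of $\HA$ up to translations.
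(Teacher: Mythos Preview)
Your proposal is correct and follows exactly the paper's approach: the paper's proof simply says it is identical to that of \cref{prop:shardPolytopeBasis}, matching the number of shards with the dimension of the type cone via \cref{lem:numberShards} and then deriving linear independence from the fact that a forcing-minimal shard in the support of a dependence is carried by a single summand (using acyclicity of forcing from congruence uniformity). Your write-up is more detailed than the paper's, and your closing paragraph correctly pinpoints where each hypothesis is used.
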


\begin{proof}
The proof is identical to that of \cref{prop:shardPolytopeBasis}.
Namely, the number of shards is the dimension of the type cone of~$\HA$.
To see that~$(\polytope{P}_\shard)_{\shard \in \shards}$ is linearly independent, observe that the forcing minimal non-zero coefficient in a linear dependence is carried by a single shard, since the forcing relation on shards is acyclic.
\end{proof}

For instance, the Cambrian associahedra~$\big( \Asso[\arc] \big)_{\arc \in \arcs_n}$ and the shard polytopes~$\big( \shardPolytope \big)_{\arc \in \arcs_n}$ form two linear bases of the virtual deformed permutahedra, as mentioned in \cref{prop:main9,prop:shardPolytopeBasis}.
Similarly, this yields the following relevant basis in type~$B$, answering a question raised in~\cite{ArdilaCastilloEurPostnikov} as mentioned in \cref{prop:main15}. The problem of finding an explicit natural basis remains open for the type cones of Coxeter permutahedra beyond types~$A$ and~$B$. 

\begin{corollary}
\label{coro:typeBShardPolytopesBasis}
Any type~$B$ deformed permutahedron has a unique decomposition as a Minkowski sum and difference of dilated type~$B$ shard polytopes (up to translation).
In other words, for any~$n \in \N$, the type~$B$ shard polytopes~$\big( \shardPolytope[\Barc] \big)_{\Barc \in \Barcs_n}$ form a linear basis of the vector subspace of virtual polytopes generated by type~$B$ deformed permutahedra.
\end{corollary}

\para{Dimension~$2$ shard polytopes}
We now discuss the special situation of rank~$2$, which includes in particular the case of dihedral groups~$I_2(n)$.
Of course, there is nothing deep here since any rank~$2$ complete fan is polytopal.
However, we want to use shard polytopes to construct these polytopal realizations.

\begin{proposition}
\label{prop:rank2ShardPolytopes}
Any shard~$\shard$ of any rank~$2$ hyperplane arrangement~$\HA$ with respect to any base region~$\polytope{B}$ admits a shard polytope which is a segment if~$\shard$ is a basic shard, and a triangle otherwise.
\end{proposition}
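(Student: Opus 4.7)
My plan is to exploit the special planar structure of rank $2$. All hyperplanes of $\HA$ are lines through the origin in $\R^2$, and the unique codimension $2$ face of $\HA$ is $\{\b{0}\}$. Since the subarrangement at $\{\b{0}\}$ coincides with $\HA$, its basic hyperplanes are precisely the two lines $\polytope{H}_1, \polytope{H}_2$ bounding $\polytope{B}$. It follows immediately that each basic shard is a full basic line, and each non-basic line $\polytope{H}$ is cut only at the origin, splitting into its two rays which are its two shards. The cutting relation also simplifies dramatically: each basic shard cuts every non-basic shard (and nothing else). Therefore the set of shards forcing a basic shard $\polytope{H}_i$ is $\{\polytope{H}_i\}$, while the set of shards forcing a non-basic shard $\shard \subseteq \polytope{H}$ is $\{\shard, \polytope{H}_1, \polytope{H}_2\}$.

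For a basic shard $\polytope{H}_i$, I would take any segment $[\b{0}, \b{v}]$ with $\b{v} \ne \b{0}$ orthogonal to $\polytope{H}_i$; its normal fan has a single wall, namely the line through the origin orthogonal to $\b{v}$, which is precisely $\polytope{H}_i$, so this is trivially a shard polytope. For a non-basic shard $\shard$ on a line $\polytope{H} \notin \{\polytope{H}_1, \polytope{H}_2\}$, I would build a triangle whose normal fan has walls $\{\shard, \rho_1, \rho_2\}$, where $\rho_i$ is the ray of $\polytope{H}_i$ lying in the open half-plane across the perpendicular to $\shard$ opposite to $\shard$; equivalently, $\rho_i$ is the unique ray of $\polytope{H}_i$ whose direction vector has negative inner product with the direction of $\shard$. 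The walls of such a triangle are then $\shard \cup \rho_1 \cup \rho_2 \subseteq \shard \cup \polytope{H}_1 \cup \polytope{H}_2$, which both contains $\shard$ and is contained in the union of shards forcing $\shard$.

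The main step I need to verify is that this choice of $\rho_1, \rho_2$ really yields a polytopal (and not merely complete) planar fan, \ie that the three rays $\shard, \rho_1, \rho_2$ partition $\R^2$ into three open cones each of angle strictly less than $\pi$. This follows from the elementary observation that $\polytope{H}, \polytope{H}_1, \polytope{H}_2$ are three pairwise distinct lines through the origin, so the three rays are pairwise non-opposite, and that the perpendicular to $\shard$ at $\b{0}$ strictly separates $\shard$ from both $\rho_1$ and $\rho_2$, so that no closed half-plane can contain all three. Once this planar check is made, the existence of a triangle realizing the prescribed fan (unique up to translation and positive scaling of edge lengths) is standard, and the construction is complete.
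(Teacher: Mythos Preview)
Your argument has a genuine gap in the non-basic case. The inference ``the perpendicular to $\shard$ at $\b{0}$ strictly separates $\shard$ from both $\rho_1$ and $\rho_2$, so that no closed half-plane can contain all three'' is a non-sequitur: knowing that one particular line separates $\shard$ from $\{\rho_1,\rho_2\}$ says nothing about half-planes bounded by other lines. What you actually need is that the three direction vectors positively span $\R^2$, and your choice of $\rho_i$ does not guarantee this. Concretely, take $\polytope{H}_1$ the $x$-axis, $\polytope{H}_2$ the line at angle $10^\circ$, and $\polytope{H}$ the line at angle $101^\circ$, with $\polytope{B}$ the narrow wedge between the rays at $0^\circ$ and $10^\circ$. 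For the shard $\shard$ of $\polytope{H}$ at angle $101^\circ$, your rule selects $\rho_1$ at $0^\circ$ and $\rho_2$ at $10^\circ$ (both have negative inner product with the direction of $\shard$, since $\cos 101^\circ<0$ and $\cos 91^\circ<0$). But then all three rays lie in the closed upper half-plane, the sector from $101^\circ$ around to $0^\circ$ has angle $259^\circ>\pi$, and no triangle has this normal fan.

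The paper sidesteps this by writing the normal $\b{n}$ of $\polytope{H}$ as $\alpha_1\b{n}_1+\alpha_2\b{n}_2$ in the basis of basic normals, and then taking the two explicit triangles $T_1=\conv\{\b{0},\alpha_1\b{n}_1,\b{n}\}$ and $T_2=\conv\{\b{0},\alpha_2\b{n}_2,\b{n}\}$. Both have edge directions $\b{n}_1,\b{n}_2,\b{n}$, hence walls inside $\polytope{H}_1\cup\polytope{H}_2\cup\polytope{H}$; and since $\alpha_1\b{n}_1$ and $\alpha_2\b{n}_2$ lie on opposite sides of the line $\R\b{n}$ (their components orthogonal to $\b{n}$ sum to zero), the outer normals to the common edge $[\b{0},\b{n}]$ in $T_1$ and $T_2$ are the two opposite rays of $\polytope{H}$, so exactly one of them equals $\shard$. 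In the example above this picks the ray at $190^\circ$ (not $10^\circ$) for $\rho_2$, giving walls at $0^\circ,101^\circ,190^\circ$, which is polytopal. Your approach can be repaired along these lines: the correct rule is not ``negative inner product with the direction of $\shard$'' but rather to express the direction of $\shard$ as a combination of direction vectors along $\polytope{H}_1,\polytope{H}_2$ and flip the signs, which is exactly what the decomposition $\b{n}=\alpha_1\b{n}_1+\alpha_2\b{n}_2$ encodes.
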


\begin{proof}
Let~$\b{n}$ denote a normal vector to the shard~$\shard$.
If~$\shard$ is basic, then it consists of a complete line, so that the segment~$[\b{e}, \b{n}]$ is a shard polytope for~$\shard$.
Otherwise, let~$\shard_1$ and~$\shard_2$ denote the two basic shards and~$\b{n}_1$ and~$\b{n}_2$ denote their normal vectors.
Since~$\{\b{n}_1, \b{n}_2\}$ is a basis of the plane, we can write~$\b{n} = \alpha_1 \b{n}_1 + \alpha_2 \b{n}_2$.
Consider now the two triangles~$T_1 \eqdef \conv \{ \b{0}, \alpha_1 \b{n}_1, \b{n} \}$ and~$T_2 \eqdef \conv \{\b{0}, \alpha_2 \b{n}_2, \b{n} \}$.
The walls of their normal fans are all contained in the lines orthogonal to~$\b{n}_1$, $\b{n}_2$, and~$\b{n}$, and one of them contains the shard~$\shard$.
We therefore conclude that either~$T_1$ or~$T_2$ is a shard polytope for~$\shard$.
\end{proof}

This provides the following polytopal realizations of rank~$2$ quotient fans.

\begin{corollary}
Any quotient fan of a rank~$2$ hyperplane arrangement is the normal fan of a Minkowski sum of segments and triangles.
\end{corollary}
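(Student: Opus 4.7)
The plan is to derive this corollary as an immediate consequence of the two preceding results, namely \cref{prop:rank2ShardPolytopes} and the general Minkowski-sum proposition stated earlier (the one asserting that if each shard of~$\shards_\equiv$ admits a shard polytope, then~$\Fan_\equiv$ is the normal fan of the Minkowski sum of these shard polytopes). The only thing I need to verify before invoking them is that the general hypothesis ``$\PR(\HA, \polytope{B})$ is a lattice'' holds automatically in rank~$2$.

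First, I would observe that in rank~$2$ every chamber of a central hyperplane arrangement is a $2$-dimensional simplicial cone (bounded by exactly two rays), so the arrangement is simplicial. By the result of Bj\"orner--Edelman--Ziegler recalled at the beginning of the discussion, it follows that~$\PR(\HA, \polytope{B})$ is a lattice for every choice of base region~$\polytope{B}$. Consequently, the framework of shards, shard ideals, and quotient fans of N.~Reading applies: every lattice congruence~$\equiv$ of~$\PR(\HA, \polytope{B})$ corresponds to a shard ideal~$\shards_\equiv$, and the quotient fan~$\Fan_\equiv$ is well defined.

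Next, I would invoke \cref{prop:rank2ShardPolytopes} to assign to each shard $\shard \in \shards_\equiv$ a shard polytope~$\polytope{P}_\shard$, which is a segment if~$\shard$ is basic and a triangle otherwise. Then the hypothesis of the general Minkowski-sum proposition is satisfied, and we conclude that
\[
\Fan_\equiv = \text{normal fan of } \sum_{\shard \in \shards_\equiv} \polytope{P}_\shard,
\]
which is the desired Minkowski sum of segments and triangles. There is no real obstacle here: the entire content of the corollary sits in \cref{prop:rank2ShardPolytopes}, and this final statement is just a clean packaging of it via the general Minkowski-sum principle. The only subtle point to mention is the automatic lattice property of the poset of regions in rank~$2$, which ensures that quotient fans and shard ideals are even defined in this setting.
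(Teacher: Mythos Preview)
Your proposal is correct and matches the paper's intended approach exactly: the corollary is stated without proof because it follows immediately from \cref{prop:rank2ShardPolytopes} together with the general Minkowski-sum proposition, and your observation that rank~$2$ arrangements are automatically simplicial (hence the poset of regions is a lattice by Bj\"orner--Edelman--Ziegler) correctly verifies the one hypothesis that needs checking.
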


\begin{remark}
\cref{prop:rank2ShardPolytopes} was the motivation for \cref{prop:facesShardPolytope}.
We tried to mimic this idea for hyperplane arrangements of arbitrary rank as follows.

Consider a shard~$\shard$ and a normal vector~$\b{n}$ to~$\shard$.
Each facet~$\polytope{F}$ of~$\shard$ corresponds to two shards~$\shard_\polytope{F}$ and~$\shard_\polytope{F}'$ cutting~$\shard$.
Choose normal vectors~$\b{n}_\polytope{F}$ and~$\b{n}_\polytope{F}'$ to these shards such that~$\b{n} = \b{n}_\polytope{F} + \b{n}_\polytope{F}'$.
Assume that we constructed by induction shard polytopes~$\polytope{P}_\polytope{F}$ and~$\polytope{P}_\polytope{F}'$ for the shards~$\shard_\polytope{F}$ and~$\shard_\polytope{F}'$ for all facets~$\polytope{F}$ of~$\shard$.
Up to translation and scaling, we can assume that~$\polytope{P}_\polytope{F}$ contains the edge~$\b{e}_\polytope{F} \eqdef [\b{0}, \b{n}_\polytope{F}]$ and~$\polytope{P}_\polytope{F}'$ contains the edge~$\b{e}_\polytope{F}' \eqdef [\b{0}, \b{n}_\polytope{F}']$.
To construct a shard polytope for~$\shard$, first place an edge~$\b{e} \eqdef [\b{0}, \b{n}]$.
Then for each facet~$\polytope{F}$ of~$\shard$, translate the shard polytopes~$\polytope{P}_\polytope{F}$ and~$\polytope{P}_\polytope{F}'$ so that the edges~$\b{e}$, $\b{e}_\polytope{F}$, and~$\b{e}_\polytope{F}'$ form a triangle as in \cref{prop:rank2ShardPolytopes} (we either translate~$\polytope{P}_\polytope{F}$ by~$\b{n}-\b{n}_\polytope{F}$ or~$\polytope{P}_\polytope{F}'$ by~$\b{n}-\b{n}_\polytope{F}'$, depending on whether~$\shard$ is above~$\shard_\polytope{F}$ and below~$\shard_\polytope{F}'$ or the opposite).
We then take the convex hull of the translated shard polytopes~$\polytope{P}_\polytope{F}$ and~$\polytope{P}_\polytope{F}'$ for all facets~$\polytope{F}$ of~$\shard$.

While this natural construction works in type~$A$ by \cref{prop:facesShardPolytope}\,\eqref{it:facesForcingShardPolytope2}, it unfortunately already fails in type~$B_4$ for the $B$-arcs
\[
\raisebox{-1.1cm}{\includegraphics[scale=1]{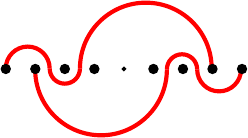}} 
\qquad\text{and}\qquad
\raisebox{-1.1cm}{\includegraphics[scale=1]{constructionFails1}} \; . 
\]
\end{remark}

\para{Sortable and unsortable shards}
We now want to underline an important subtlety that drastically impacts the hunt for shard polytopes beyond rank~$2$ or type~$A$.
In type~$A$, any shard of~$\Fan_n$ belongs to the union of the walls of the $\arc$-Cambrian fan for at least one forcing minimal~$\arc \in \arcs_n$.
In other words, any join-irreducible permutation is $c$-sortable for some Coxeter element~$c$.
Nevertheless, this property is lost beyond rank~$2$ or type~$A$.
For instance, the shards labeled~L and~O in \cref{fig:B3shards} appear in none of the Cambrian fans of \cref{fig:B3associahedra}, so that the corresponding elements~$s_2 s_3 s_2 s_1$ and~$s_1 s_2 s_3 s_2 s_1$ are not $c$-sortable for any type~$B$ Coxeter element~$c$.
In an arbitrary Coxeter group, we say that a shard is \defn{sortable} if it belongs to at least one Cambrian fan (or equivalently, its corresponding join-irreducible is $c$-sortable for at least one Coxeter element~$c$), and \defn{unsortable} otherwise.
One can check that any sortable shard forces at most one shard per hyperplane.

\para{Indecoposability of type~$B$ shard polytopes}
We have seen in \cref{prop:SPindecomposable} that the shard polytope~$\shardPolytope$ of any~$A$-arc~$\arc$ is indecomposable.
We expect the same property to hold in type~$B$, as already mentioned in \cref{conj:main14}.

\begin{conjecture}
\label{conj:BshardPolytopesIndecomposable}
For any $B$-arc~$\Barc$, the shard polytope~$\shardPolytope[\Barc]$ is indecomposable.
\end{conjecture}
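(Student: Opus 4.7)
The plan is to proceed by case analysis on the three types of $B$-arcs introduced in Subsection~\ref{subsec:noncrossingArcDiagramsB}: separated, singular, and overlapped. The separated case is essentially free: when $\Barc = (-\arc, \arc)$ is separated, the support of $\arc$ lies entirely in $[n]$, so $\rhoB$ restricted to the affine hull of $\shardPolytope[\arc]$ is an isomorphism and $\shardPolytope[\Barc] \cong \shardPolytope[\arc]$. Indecomposability then follows directly from Proposition~\ref{prop:SPindecomposable}.

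For a singular $B$-arc $\Barc = (-\arc, \arc)$ with $\arc = (a,b,A,B)$ and $a = -b$, I would first leverage the vertex description in Lemma~\ref{lem:verticesShardPolytopeBsingular} to derive an explicit $H$-representation: the characteristic vectors of centrally symmetric $\arc$-alternating matchings are $0/\pm 2$ vectors, so $\tfrac{1}{2}\shardPolytope[\Barc]$ is a $0/1$-polytope contained in the cube $\prod_i[0,1]$ (with appropriate sign conventions on $\{a\}\cup A$ versus $B\cup\{b\}$) whose edges lie along directions $\b{e}_i - \b{e}_j$. This strongly suggests $\tfrac{1}{2}\shardPolytope[\Barc]$ is itself a matroid polytope; the next step is to exhibit an explicit graph $\Gamma_\Barc$, analogous to the shard graph of Definition~\ref{def:shardgraph} but folded by central symmetry, whose cycle matroid is connected (after stripping loops) and whose bases are exactly the centrally symmetric matchings. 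Once this is established, Nguyen's theorem (Theorem~\ref{thm:indecomposableMP}) provides indecomposability.

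The overlapped case is the main obstacle, as expected: multiple $\arc$-alternating matchings may project under $\rhoB$ to the same vertex of $\shardPolytope[\Barc]$, so there is no direct combinatorial bijection between vertices and a clean family of matchings, and the simple McMullen criterion of Theorem~\ref{thm:indecomposabilityCriterion} is known to fail. My strategy would be a lifting argument: given a weak Minkowski summand $\polytope{Q}$ of $\shardPolytope[\Barc]$, I would attempt to produce a centrally symmetric weak Minkowski summand $\widetilde{\polytope{Q}}$ of $\shardPolytope[\arc] + \shardPolytope[-\arc]$ whose image under $\rhoB$ is $\polytope{Q}$. Because the normal fan of $\rhoB(\polytope{X})$ is the $(\rhoB)^*$-image of the slice of the normal fan of $\polytope{X}$ by $\Bhyp$ (see~\cite[Lem.~7.11]{Ziegler-polytopes}), and since $\shardPolytope[\arc] + \shardPolytope[-\arc]$ is already centrally symmetric, such a lift should exist and decompose as a Minkowski sum of a summand of $\shardPolytope[\arc]$ and its central reflection. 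Applying Proposition~\ref{prop:SPindecomposable} to $\shardPolytope[\arc]$ would then force $\widetilde{\polytope{Q}}$ to be a dilation plus translation of $\shardPolytope[\arc] + \shardPolytope[-\arc]$, and projecting back would conclude.

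The hard part will be carrying out this lift concretely: projection collapses faces and the inverse image of a polytope under $\rhoB$ is not unique, so one must use the normal-fan refinement carefully, and in particular check that the height data defining $\polytope{Q}$ on the rays of the sliced fan extends consistently to height data on the full normal fan of $\shardPolytope[\arc] + \shardPolytope[-\arc]$ in a centrally symmetric manner. I expect this extension to require a non-trivial compatibility analysis for those shards $\shard(\Barc')$ with $\Barc \prec \Barc'$ that are overlapped and whose upper and lower $A$-arcs behave asymmetrically under forcing in type $A$ (exactly the subtlety highlighted in Definition~\ref{def:forcingB}), which is also what made the proof of Proposition~\ref{prop:shardPolytopeFanB} intricate. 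If the lift turns out not to exist in full generality, a fallback would be to work with the matroid polytope interpretation conjecturally extended to type $B$ and adapt Nguyen's connectedness criterion directly.
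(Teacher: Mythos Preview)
This statement is labeled \texttt{conjecture} in the paper, not a theorem: the authors do \emph{not} prove it. They say explicitly that it was ``verified by computer experiments up to $n=4$'', that the McMullen criterion of Theorem~\ref{thm:indecomposabilityCriterion} fails for certain overlapped $B$-arcs (they exhibit two concrete $B_4$ examples), and that ``the proof \dots\ would require a much finer understanding of the facets of the type~$B$ shard polytopes.'' So there is no paper proof to compare against; the question is whether your proposal actually closes the gap. It does not.

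Your separated case is fine, and the singular sketch is plausible though incomplete. The overlapped case, however, has two genuine obstructions, not just technicalities:
\begin{enumerate}
\item \textbf{Lifting summands through a projection is not automatic.} If $\polytope{Q}$ is a weak Minkowski summand of $\rhoB(\polytope{P})$, there is in general no summand $\widetilde{\polytope{Q}}$ of $\polytope{P}$ with $\rhoB(\widetilde{\polytope{Q}}) = \polytope{Q}$. The normal fan of $\polytope{Q}$ lives in $\Bhyp$; extending its height data to the full normal fan of $\polytope{P}$ so that the wall-crossing inequalities hold on \emph{all} walls (not just those meeting $\Bhyp$) is exactly the hard part, and there is no reason to expect it to succeed generically. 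You acknowledge this, but it is not a detail to be filled in later---it is the whole problem.
\item \textbf{Summands of $\shardPolytope[\arc] + \shardPolytope[-\arc]$ need not split.} Even granting a centrally symmetric lift $\widetilde{\polytope{Q}}$, you assert it ``should decompose as a Minkowski sum of a summand of $\shardPolytope[\arc]$ and its central reflection.'' This is false in general: the type cone of a Minkowski sum $\polytope{P}_1 + \polytope{P}_2$ is typically much larger than the cone spanned by $\polytope{P}_1$ and $\polytope{P}_2$, and its rays include indecomposable polytopes that are summands of neither $\polytope{P}_1$ nor $\polytope{P}_2$. Central symmetry alone does not force $\widetilde{\polytope{Q}}$ into the two-dimensional slice $\R_{\ge 0}\shardPolytope[\arc] + \R_{\ge 0}\shardPolytope[-\arc]$.
\end{enumerate}
Both steps would require substantial new arguments specific to the face structure of overlapped $B$-shard polytopes---precisely what the paper says is missing. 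Your fallback (a type~$B$ matroid interpretation plus Nguyen) is a reasonable direction to explore, but as stated it is a hope, not a proof.
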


This conjecture was verified by computer experiments up to~$n = 4$.
Note that the simple indecomposability criterion of \cref{thm:indecomposabilityCriterion} that we used in the proof of \cref{prop:SPindecomposable} fails in type~$B$.
Namely, consider the $B$-arcs
\[
\Barc \eqdef \raisebox{-1.1cm}{\includegraphics[scale=1]{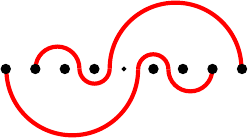}} 
\qquad\text{and}\qquad
\Barc' \eqdef \phi(\Barc) = \raisebox{-1.1cm}{\includegraphics[scale=1]{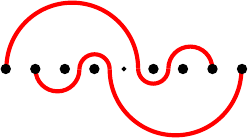}} \; . 
\]
The shard polytope~$\shardPolytope[\Barc]$ (resp.~$\shardPolytope[\Barc']$) is indecomposable, but the edge joining the characteristic vectors of alternating matchings~$\chi(\varnothing) = \b{0}$ to~$\chi(\{-3, 4\}) = - \b{e}_3 - \b{e}_4$ does not meet the facet defined by the inequality~$\b{x}_2 = 1$ (resp.~$\b{x}_2 = -1$).
The arcs $\Barc$ and~$\Barc'$ are the unique shards for which the criterion fails in type~$B_n$ for~$n \le 4$, and they are both unsortable.
We have not found a sortable shard failing the simple criterion of \cref{thm:indecomposabilityCriterion}.
A more general criterion is given in~\cite[Thm.~2]{McMullen1987}, but applying it would require a much better understanding of the faces of the type $B$ shard polytopes.
Note that \cref{conj:BshardPolytopesIndecomposable} would imply the type~$B$ analogue of \cref{thm:shardPolytopesRaysTypeCone} by~\cite{PadrolPaluPilaudPlamondon}.

\para{Newton polytopes of $F$-polynomials}
We have seen in \cref{rem:shardPolytopesAlreadyExisted} that type~$A$ shard polytopes are Newton polytopes of $F$-polynomials of cluster variables with respect to acyclic initial seeds.
\cref{conj:BshardPolytopesIndecomposable} would also imply by~\cite{BazierMatteDouvilleMousavandThomasYildirim} the analogue property for sortable shards.
In fact, we believe that this holds for arbitrary finite Weyl groups.

\begin{conjecture}
\label{conj:NewtonPolytopesFPolynomials}
In any finite type cluster algebra, Newton polytopes of $F$-polynomials are indecomposable shard polytopes.
\end{conjecture}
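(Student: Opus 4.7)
The plan is to leverage the established connection between rays of type cones of Cambrian fans and Newton polytopes of $F$-polynomials from \cite{BazierMatteDouvilleMousavandThomasYildirim}, combined with the general theory of simplicial type cones developed in \cite{PadrolPaluPilaudPlamondon}. First I would fix a finite type cluster algebra with an acyclic initial seed, which corresponds to a Coxeter element $c$ in the associated finite Coxeter group~$W$. The $c$-Cambrian fan $\Fan[F]_c$ has simplicial type cone $\ctypeCone(\Fan[F]_c)$ (by results collected in \cite{PadrolPaluPilaudPlamondon} building on \cite{ArkaniHamedBaiHeYan, BazierMatteDouvilleMousavandThomasYildirim}), so the number of rays of $\ctypeCone(\Fan[F]_c)$ equals the number of shards of $\Fan[F]_c$, and each ray is represented by an indecomposable polytope.

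Next, I would invoke \cite{BazierMatteDouvilleMousavandThomasYildirim} to identify the Newton polytope of the $F$-polynomial of each non-initial cluster variable (with respect to this seed) as a representative of exactly one ray of $\ctypeCone(\Fan[F]_c)$. From this I get indecomposability for free (rays of a simplicial cone are automatically indecomposable), and for the shard polytope property I would argue as in the type~$A$ case of \cref{thm:shardPolytopesRaysTypeCone}: the polytope represents a ray of the type cone of $\Fan[F]_c$, so its normal fan is coarsened by $\Fan[F]_c$; the particular ray it represents corresponds to a specific shard $\shard$ of $\Fan[F]_c$, meaning that shard appears as a wall of the normal fan and all other walls arise from shards of $\Fan[F]_c$, which are precisely the shards of $W$ forcing $\shard$. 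Varying over all acyclic initial seeds then establishes the conjecture for every \emph{sortable} shard of $W$, which recovers \cref{prop:SPindecomposable} and its type~$B$ counterpart \cref{conj:main14} as special cases.

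The hard part is the \emph{unsortable} shards, which already appear in rank~$3$ type~$B$ (the shards $L$ and $O$ visible in \cref{fig:B3shards}) and cannot arise as Newton polytopes of $F$-polynomials for any acyclic initial seed. Two routes would be worth pursuing. First, one could extend the correspondence between rays of type cones and Newton polytopes to non-acyclic initial seeds: here the type cone of a general $g$-vector fan is typically no longer simplicial, so one would need a finer analysis (possibly through mutation of $F$-polynomials) to isolate the appropriate rays and to verify that varying the initial seed across all cluster complexes covers every shard of~$W$. Alternatively, in the spirit of \cref{conj:existenceShardPolytopes} one could construct shard polytopes for unsortable shards directly, for instance by folding or projecting from a simply-laced type and then identifying them combinatorially with brick polytope summands of subword complexes via \cite{PilaudStump-brickPolytope, BrodskyStump, JahnLoweStump}; comparison with the $F$-polynomial Newton polytopes in the non-acyclic case would then complete the proof. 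In either approach the main obstruction is that the existing cluster-algebraic dictionary only cleanly handles the sortable part of the shard structure, and a genuinely new construction (or a significant extension of \cite{BazierMatteDouvilleMousavandThomasYildirim}) is needed to capture the unsortable shards.
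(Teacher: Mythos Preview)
The statement is a \emph{conjecture} in the paper, not a theorem; there is no proof to compare against. The paper explicitly flags the gap in the approach you outline and proposes scattering diagrams as an alternative route.

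Your argument for the sortable case contains a genuine error. You claim the walls of the Newton polytope's normal fan ``arise from shards of $\Fan_c$, which are precisely the shards of $W$ forcing $\shard$.'' The last clause is false: the shards of the $c$-Cambrian fan are the shards forcing the \emph{Cambrian} shard (the forcing-minimal element of the ideal), not the shards forcing your particular~$\shard$. Knowing that the normal fan coarsens $\Fan_c$ only places the walls inside the union of \emph{all} Cambrian shards, which is strictly weaker than the shard polytope property. Nor is it automatic that $\shard$ itself appears as a wall. The paper makes exactly this point: indecomposability ``does not directly imply that Newton polytopes of $F$-polynomials behave properly with respect to shards.'' Note that \cref{thm:shardPolytopesRaysTypeCone} runs in the opposite direction --- it starts from an explicit combinatorial construction of shard polytopes, proves the shard property directly (\cref{prop:shardPolytopeFan}), and only then identifies them with the rays of the type cone --- so it cannot simply be reversed as you propose.

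You have also misread the scope of the conjecture. It concerns Newton polytopes of $F$-polynomials, which correspond to cluster variables and hence only to \emph{sortable} shards; the paper says explicitly that the conjecture ``would nevertheless not tell anything about unsortable shards.'' Your discussion of the unsortable case is therefore addressing a different (harder) question, namely \cref{conj:existenceShardPolytopes}.
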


Note that, for simply-laced types, it was proved in~\cite{BazierMatteDouvilleMousavandThomasYildirim} that Newton polytopes of $F$-polynomials are indecomposable Minkowski summands of the associahedron of~\cite{HohlwegLangeThomas}.
Note by the way that the simple indecomposability criterion of \cref{thm:indecomposabilityCriterion} seems to hold for all Newton polytopes of $F$-polynomials (checked computationaly in types~$A_4$, $A_5$, $B_3$, $B_4$, $D_4$, $D_5$).
It however does not directly imply that Newton polytopes of $F$-polynomials behave properly with respect to shards.

A tempting approach to \cref{conj:NewtonPolytopesFPolynomials} is to use scattering diagrams~\cite{GrossHackingKeelKontsevich, Reading-scatteringFans} which reveals the connection between the cluster algebra and the geometry of its $\b{g}$-vector fan.
The philosophy is that the $F$-polynomial of a cluster variable~$x$ is obtained by applying to the $\b{g}$-vector~$\b{g}_x$ of~$x$ the scattering functions along any path from a $\b{g}$-vector cone containing~$\b{g}_x$ to the positive orthan.
The main observation here is that each $2$-face of a $W$-associahedron is either a $W'$-associahedron or a square.
Since all $W'$-associahedra are either pentagons or hexagons, we can choose a path that only crosses forced shards.
The details are however a bit more subtle.

Note that \cref{conj:NewtonPolytopesFPolynomials} would show the existence of shard polytopes for all sortable shards in finite Weyl groups.
It would nevertheless not tell anything about unsortable shards.
However, one can hope that scattering techniques could still provide shard polytopes for unsortable shards, even if it would go beyond the scope of consistent scattering diagrams.

\para{Supersolvable arrangements}
Coxeter arrangements of types $A_{n-1}$, $B_n$ and $I_2(m)$ are examples of supersolvable hyperplane arrangements (they are actually the only irreducible reflection arrangements that are supersolvable~\cite[Thm.~5.1]{BarceloIhrig1999}).
It is natural to ask whether our constructions for shard polytopes and quotientopes extend to the whole family, generalizing~\cref{prop:shardPolytopeFan,prop:shardPolytopeFanB,prop:rank2ShardPolytopes} to all supersolvable arrangements (at least in the semidistributive case).

A hyperplane arrangement is called \defn{supersolvable} if its intersection lattice is supersolvable in the sense of R.~Stanley~\cite{Stanley1972}.
The following alternative inductive geometric characterization was given by A.~Bj\"orner, P.~Edelman and G.~Ziegler in~\cite[Thm.~4.3]{BjornerEdelmanZiegler}.
See \cref{fig:supersolvable} for illustration of the decomposition in types~$A_3$ and~$B_3$.

\begin{definition}
Every hyperplane arrangement of rank at most~$2$ is supersolvable.
A hyperplane arrangement~$\HA$ of rank~$r \ge 3$ is supersolvable if and only if it can be written as $\HA = \HA_0 \sqcup \HA_1$,~where
\begin{enumerate}[(i)]
\item $\HA_0$ is a supersolvable arrangement of rank~$r-1$.
\item For any distinct $\polytope{H}', \polytope{H}'' \in \HA_1$, there is a unique $\polytope{H} \in \HA_0$ such that $\polytope{H}' \cap \polytope{H}'' \subseteq \polytope{H}$.
\end{enumerate}
\end{definition}

\begin{figure}
	\capstart
	\centerline{\includegraphics[scale=.5]{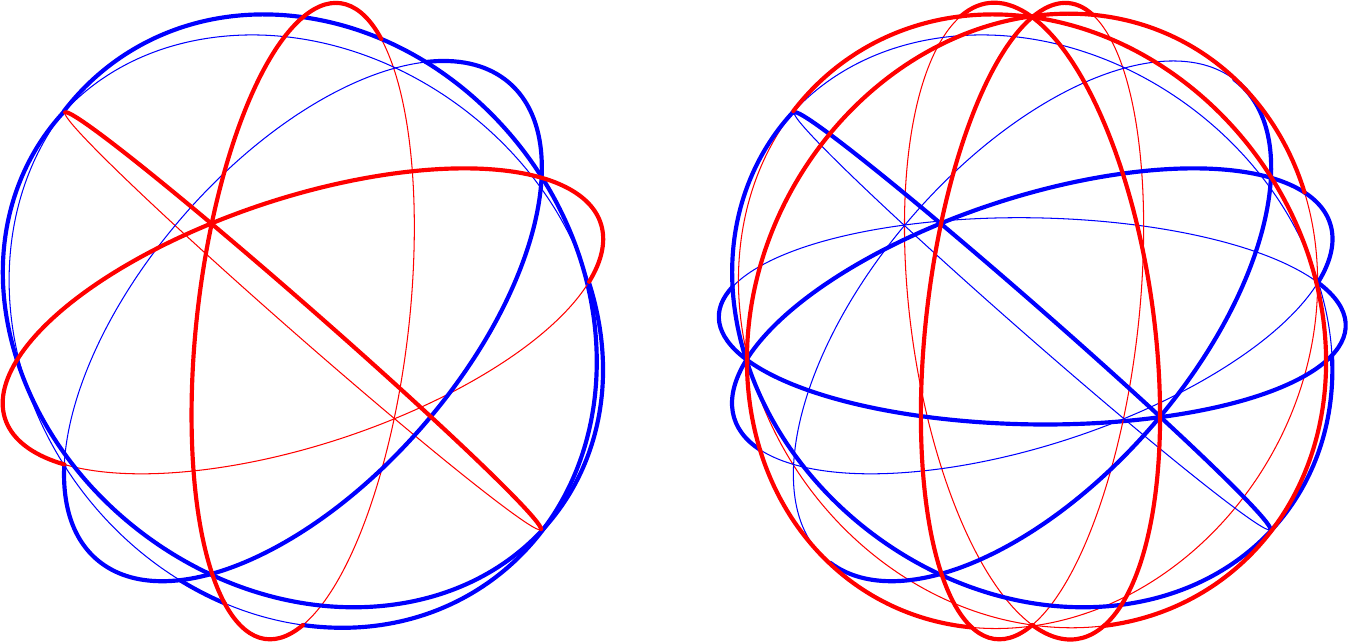}}
	\caption{The decomposition~$\HA = \HA_0 \sqcup \HA_1$ (where~$\HA_0$ is red and $\HA_1$ is blue) for the Coxeter arrangements of type~$A_3$ (left) and~$B_3$ (right).}
	\label{fig:supersolvable}
\end{figure}

By definition, for a fixed region~$R$ of $\HA_0$, the set of regions of $\HA$ contained in $R$ form a path in the adjacency graph of~$\HA$. In~\cite{BjornerEdelmanZiegler}, a \defn{canonical base region} of a supersolvable arrangement is defined inductively as follows. Any region of an arrangement of rank~$2$ is a canonical base region. If $\HA = \HA_0 \sqcup \HA_1$ is of rank at least~$3$, and $\polytope{B}_0$ is a canonical base region of~$\HA_0$, then both extremities of the path of regions of $\HA$ contained in $\polytope{B}_0$ are canonical base regions of~$\HA$.

As observed in~\cite{BjornerEdelmanZiegler}, the poset of regions~$\PR(\HA, \polytope{B})$ of a supersolvable arrangement~$\HA$ with respect to a canonical base region~$\polytope{B}$ is always a lattice, and N.~Reading showed in~\cite{Reading-posetRegions} that it is actually always a congruence normal lattice. This is a weaker condition than congruence uniformity. In fact, a finite lattice is congruence uniform if and only if it is both congruence normal and semidistributive, see for example~\cite[Thm.~3-2.41]{AdarichevaNation}.

Besides our examples in types $A_{n-1}$, $B_n$ and $I_2(m)$, we provide more evidence towards a proof of \cref{conj:existenceShardPolytopes} for supersolvable arrangements (with a canonical base region) by constructing shard polytopes for a particular shard in each hyperplane. Thanks to this, an analogue of the valuation-like formula from \cref{thm:inductiveMinkowskiSum} would be the only missing ingredient for a generalized construction for shard polytopes in supersolvable arrangements.

To define the \defn{canonical shard} of a hyperplane~$\polytope{H} \in \HA$, note that in a  supersolvable arrangement ordered with respect to a canonical base region~$\polytope{B}$, the hyperplanes in $\HA_1$ do not cut shards in the hyperplanes in~$\HA_0$.
It suffices hence to do an inductive definition.
If $\HA$ is of rank~$1$, then there is a single hyperplane and a single shard, which is the canonical shard. If $\polytope{H} \in \HA_1$, then its canonical shard is the shard that contains $\polytope{H} \cap \polytope{B}_0$, where $\polytope{B}_0$ is the canonical base region of $\HA_0$ containing~$\polytope{B}$.

\begin{proposition}
\label{prop:canonicalShardSupersolvable}
Let~$\HA$ be a supersolvable arrangement with a canonical base region~$\polytope{B}$.
Then every canonical shard of~$\PR(\HA, \polytope{B})$ admits a shard polytope.
\end{proposition}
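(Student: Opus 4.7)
The plan is to argue by induction on the rank $r$ of $\HA$. The base case $r \le 2$ follows directly from \cref{prop:rank2ShardPolytopes}, which produces a shard polytope (a segment for a basic shard, a triangle otherwise) for every shard of any rank~$2$ arrangement, in particular for any canonical one. For the inductive step, I decompose $\HA = \HA_0 \sqcup \HA_1$ with $\HA_0$ supersolvable of rank $r-1$, let $\polytope{B}_0$ be the canonical base region of $\HA_0$ containing $\polytope{B}$, and split according to whether the hyperplane $\polytope{H}$ carrying the canonical shard $\shard$ belongs to $\HA_0$ or to $\HA_1$.

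If $\polytope{H} \in \HA_0$, the key structural input is the observation (recorded in the excerpt just before the statement) that hyperplanes of $\HA_1$ do not cut shards supported on $\HA_0$-hyperplanes. Two consequences follow: the canonical shard of $\polytope{H}$ in $\HA$ coincides with its canonical shard in $\HA_0$; and the forcing relation between any two $\HA_0$-shards, being governed entirely by codimension~$2$ cuts inside $\HA_0$, is preserved when passing to $\HA$. The inductive hypothesis therefore supplies a shard polytope $\polytope{P}_0$ for $\shard$ in $\HA_0$, and viewing $\polytope{P}_0$ in the ambient space of $\HA$, its normal fan has walls supported on the same $\HA_0$-hyperplanes, so both conditions defining a shard polytope transfer verbatim.

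If $\polytope{H} \in \HA_1$, the canonical shard $\shard$ contains the full-dimensional subregion $\polytope{H} \cap \polytope{B}_0$, and the construction of $\polytope{P}_\shard$ should be modelled on the type~$A$ template: in the supersolvable decomposition $\HA_n = \HA_{n-1} \sqcup \{\b{x}_1 = \b{x}_i \mid i \ge 2\}$, the canonical shard on $\{\b{x}_1 = \b{x}_j\}$ is exactly $\shard(1,j,{]1,j[},\varnothing)$, and it is realized by the simplex
\[
\shardPolytope[(1,j,{]1,j[},\varnothing)] = \conv\{\b{0},\, \b{e}_1 - \b{e}_j,\, \dots,\, \b{e}_{j-1} - \b{e}_j\}
\]
(\cref{exm:LodayAssoMinkowskiSum}), whose vertices are precisely $\b{0}$ together with the normals to the hyperplanes $\{\b{x}_i = \b{x}_j\}$ for $i = 1, \dots, j-1$. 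Abstracting, I would build $\polytope{P}_\shard$ as the convex hull of $\b{0}$ together with one vertex per basic hyperplane of $\HA$ that cuts $\polytope{H}$ at a codimension~$2$ boundary face of $\shard$, chosen to sit along the corresponding normal direction. A direct examination of edge directions would then locate the walls of the normal fan of $\polytope{P}_\shard$ on $\polytope{H}$, on those basic hyperplanes, and on hyperplanes arising from pairwise intersections between them.

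The hardest part, and the main obstacle, is verifying that each such wall lies inside the union of shards forcing $\shard$. The supersolvability hypothesis is exactly what enables this: the condition that any two distinct $\polytope{H}',\polytope{H}''\in\HA_1$ intersect inside a unique $\polytope{H}'''\in\HA_0$ forces every codimension~$2$ face of $\polytope{H}$ to sit inside a rank~$2$ sub-arrangement $\HA_F$, to which the base case applies. Combining this local reduction with the inductive definition of canonicality --- so that the basic hyperplanes at each such face carry canonical shards of $\HA_F$ and hence shards forcing $\shard$ --- one should be able to verify the required containment wall by wall. The type~$A$ computation of $\shardPolytope[(1,j,{]1,j[},\varnothing)]$ serves both as a prototype for the general argument and as a concrete sanity check.
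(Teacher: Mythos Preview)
Your inductive skeleton and your treatment of the case $\polytope{H} \in \HA_0$ are correct and match the paper. The gap is in the case $\polytope{H} \in \HA_1$: your abstraction from the type~$A$ template is off, and the missing ingredients are precisely what makes the paper's argument go through.

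Concretely, in your type~$A$ example the nonzero vertices $\b{e}_i - \b{e}_j$ are normals to the hyperplanes $\{\b{x}_i = \b{x}_j\}$ for $i = 1, \dots, j-1$. But these are \emph{not} ``basic hyperplanes of $\HA$ that cut $\polytope{H}$'': only $\{\b{x}_{j-1} = \b{x}_j\}$ is basic in $\HA$, and $\{\b{x}_1 = \b{x}_j\}$ is $\polytope{H}$ itself, not a cutter. If instead you mean basic hyperplanes of the local rank~$2$ subarrangement at each codimension~$2$ boundary face, there are two such per face, so the count is wrong as well. Either reading leaves you without a prescription for which normals to take, and --- more importantly --- without a way to normalize them so that their convex hull with $\b{0}$ has the right normal fan.

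The paper supplies exactly these missing pieces. It uses two structural features of the pair $(\HA_0, \HA_1)$ that your sketch does not exploit: the linear order $\polytope{H}_1 \prec \dots \prec \polytope{H}_m$ on $\HA_1$ induced by the path of regions inside $\polytope{B}_0$, and the one-dimensional lineality direction $\b{u}$ of $\HA_0$. The shard polytope for the canonical shard in $\polytope{H}_k$ is then the pyramid $\conv\big(\{\b{0}\} \cup \{\b{v}_1, \dots, \b{v}_k\}\big)$, where $\b{v}_i$ is the normal to $\polytope{H}_i$ normalized by $\langle \b{v}_i, \b{u} \rangle = 1$. The verification goes through the polar cone $\polytope{C}_k = \bigcap_{i \le k} \polytope{H}_i^-$: one of its facets is the canonical shard in $\polytope{H}_k$, the others lie in canonical shards of earlier $\polytope{H}_i$, and the remaining walls of the normal fan (those involving the apex direction $\b{u}$) land in the $\HA_0$-hyperplanes $\polytope{H}_{ij}$ containing $\polytope{H}_i \cap \polytope{H}_j$. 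In your type~$A$ decomposition this pyramid is the reflection $\b{e}_1 - \triangle_{[j]}$ of your $\triangle_{[j]} - \b{e}_j$; same normal fan, so your template was a valid shard polytope, but the abstraction you drew from it was not the right one.
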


\begin{proof}
We consider the case of rank greater than~$2$, as arrangements of rank~$2$ are covered by~\cref{prop:rank2ShardPolytopes}. By induction, it suffices to consider hyperplanes in~$\HA_1$. The linear order of the regions contained in $\polytope{B}_0$ induces a linear order~$\polytope{H}_1 \prec \polytope{H}_2 \prec \cdots \prec \polytope{H}_n$ on the hyperplanes of $\HA_1$, with $\polytope{H}_1$ bounding~$\polytope{B}$. For a fixed $\polytope{H}_k \in \HA_1$, we say that $\polytope{H}_i, \polytope{H}_j \in \HA_1$ are \defn{$\polytope{H}_k$-parallel} if $\polytope{H}_i \cap \polytope{H}_k = \polytope{H}_j \cap \polytope{H}_k$. Then, $\polytope{H}_i \in \HA_1$ cuts~$\polytope{H}_k$ if and only if $i < k$ and $\polytope{H}_i$ is the smallest in its $\polytope{H}_k$-parallelism class. And $\polytope{H} \in \HA_0$ cuts $\polytope{H}_k \in \HA_1$ if $\polytope{H}_i \cap \polytope{H}_k \subseteq \polytope{H}$ for some $\polytope{H}_i \in \HA_1$ with $i < k$. This defines the shard forcing order.
 
By construction, $\HA_0$ has a one-dimensional linearity.
Let $\b{u}$ be a non-zero vector in this linearity oriented towards the base region.
For $i \in [n]$, let $\b{v}_i$ be a normal vector to $\polytope{H}_i$ such that $\dotprod{\b{v}_i}{\b{u}} = 1$.
We claim that the pyramid $\polytope{P}_k \eqdef \conv \big( \{\b{0}\} \cup \set{\b{v}_i}{i \in [k]} \big)$ is a shard polytope for the canonical shard in~$\polytope{H}_k$.
 
Indeed, consider first the unbounded cone generated by the~$\b{v}_i$'s.
Its polar is the polyhedral cone $\polytope{C}_k \eqdef \bigcap_{i \le k} \polytope{H}_i^-$ if we consider all $\polytope{H}_i \in \HA_1$ oriented towards the base region.
One of its facets is $\polytope{H}_k \cap \bigcap_{i < k} \polytope{H}_i^-$, which is the canonical shard in $\polytope{H}_k$.
And its other facets are contained in the canonical shards of the $\polytope{H}_i$ with~$i < k$.
The normal fan of~$\polytope{P}_k$ contains the cone~$\polytope{C}_k$ (it is the one corresponding to the vertex at the origin).
The remaining cones of the fan are generated by the facets of~$\polytope{C}_k$ together with~$\b{u}$.
Their facets belong to the hyperplanes $\polytope{H}_{ij} \in \HA_0$ such that $\polytope{H}_i \cap \polytope{H}_j \subseteq \polytope{H}_{ij}$ for hyperplanes $\polytope{H}_i, \polytope{H}_j \in \HA_1$ with $\polytope{H}_i$ cutting $\polytope{H}_j$ in the forcing order.
As the $\polytope{H}_i$ do not cut the hyperplanes in $\HA_0$, these pieces of $\polytope{H}_{ij}$ are subsets of the union of all the shards of $\polytope{H}_{ij}$ that meet the intersection of the canonical shards of $\polytope{H}_i$ and $\polytope{H}_j$.
\end{proof}


\newpage
\addtocontents{toc}{ \vspace{.1cm} }
\appendix
\section{Detailed descriptions of Minkowski sums of shard polytopes}

This appendix is devoted to detailed vertex and facet descriptions of the Minkowski sums of shard polytopes considered in \cref{coro:MinkowskiSumShardPolytopes}.
These descriptions first require us to understand the normal cones of the vertices of shard polytopes.


\subsection{Normal cones of vertices and edges of shard polytopes}
\label{subsec:normalConeDescriptions}

The next two lemmas provide detailed descriptions of the normal cones of the vertices and of the edges of the shard polytopes.
Recall that we say that a pair~$(i,j)$ (with~$i < j$) is contained in an alternating matching~$M$ when~$i, j \in M$ and $k \notin M$ for all~$i < k < j$.

\begin{lemma}
\label{lem:normalconematch}
Consider an arc~$\arc \eqdef (a, b, A, B) \in \arcs_n$, and let~$i \in \{a\} \cup A$ and~$j \in B \cup \{b\}$ with~$i < j$.
A vector~$\b{t} \in \R^n$ belongs to the normal cone of the shard polytope~$\shardPolytope$ corresponding to an \mbox{$\arc$-alternating} matching $M$ containing the pair $(i,j)$ if and only if the following conditions~hold:
\begin{itemize}
\item $\b{t}_i \ge \b{t}_j$,
\item for all $i < k < j$, if $k\in A$ then $\b{t}_i \ge \b{t}_k$ and if $k \in B$ then $\b{t}_k \ge \b{t}_j$,
\item for all $i < j' < i' < j$ with  $i' \in A$  and $j' \in B$ we have $\b{t}_{i'} \le \b{t}_{j'}$,
\item for all $a \le i' < i$ with $i' \in \{a\} \cup A$ and $\b{t}_{i'} > \b{t}_i$ there is $i' < h < i$ with $h \in B$ and $\b{t}_i \ge \b{t}_h$,
\item for all $j < j' \le b$ with $j' \in B \cup \{b\}$ and $\b{t}_{j'} < \b{t}_j$ there is $j < \ell < j'$ with $\ell \in A$ and $\b{t}_\ell \ge \b{t}_j$.
\end{itemize}
\end{lemma}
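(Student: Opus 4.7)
The plan is to characterize the normal cone of $\chi(M)$ as the set of $\b{t} \in \R^n$ for which $\chi(M)$ maximizes $\dotprod{\b{t}}{\cdot}$ over the vertex set of $\shardPolytope$, which by \cref{prop:elemPropShardPolytope}\,\eqref{it:verticesShardPolytope} is exactly the set of characteristic vectors of $\arc$-alternating matchings. The five conditions then translate into inequalities $\dotprod{\b{t}}{\chi(M) - \chi(M')} \ge 0$ for specific ``local'' perturbations $M'$ of $M$.

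For the necessity direction, I would exhibit an explicit competitor $M'$ for each condition. For condition~1, take $M' \eqdef M \ssm \{i,j\}$, which remains an $\arc$-alternating matching since $(i,j)$ is a pair of $M$, giving $\b{t}_i - \b{t}_j \ge 0$. For condition~2, replace in the pair $(i,j)$ the endpoint $i$ by $k \in A$ with $i<k<j$ (respectively $j$ by $k \in B$); the result is still an $\arc$-alternating matching because $(i,j)$ is consecutive in $M$. Condition~3 is obtained by splitting the pair $(i,j)$ into the two pairs $(i,j')$ and $(i',j)$, producing an $\arc$-alternating matching $M'$ with $\chi(M') - \chi(M) = \b{e}_{i'} - \b{e}_{j'}$. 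Conditions~4 and~5 are obtained by ``sliding'' the left or right endpoint of $(i,j)$ outward: if $i' < i$ has $i' \in \{a\} \cup A$ and there is no $h \in B$ with $i'<h<i$, then replacing $i$ by $i'$ (after possibly dropping the pair of $M$ immediately preceding $(i,j)$) yields an $\arc$-alternating matching $M'$, and optimality of $M$ forces $\b{t}_{i'} \le \b{t}_i$; in the presence of some such $h$'s, a slightly more careful choice of $M'$ (swap $i$ for $i'$ and insert a pair involving some $h$) shows that the minimum such $h$ must satisfy $\b{t}_i \ge \b{t}_h$, and symmetrically for condition~5.

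For sufficiency, assume that all five conditions hold and that some $\arc$-alternating matching $M'$ achieves $\dotprod{\b{t}}{\chi(M')} > \dotprod{\b{t}}{\chi(M)}$. I would apply the matching union lemma (\cref{lem:matchingUnionLemma}) to the pair $\{M, M'\}$: either the multiset $M \cup M'$ can be redecomposed as a disjoint pair $\{M_3, M_4\}$ of $\arc$-alternating matchings, in which case we may replace $\{M, M'\}$ by this new pair and induct on, say, $|M \symdif M'|$, or we are in one of the four exceptional families of the lemma. In each exceptional family, $M$ and $M'$ differ by one of the elementary moves used above for necessity, so the corresponding inequality among the five conditions directly contradicts $\dotprod{\b{t}}{\chi(M')} > \dotprod{\b{t}}{\chi(M)}$; the induction concludes.

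The main obstacle I anticipate is the careful bookkeeping in conditions~4 and~5, where sliding an endpoint of $(i,j)$ past elements of $M$ requires simultaneously deleting or reshuffling the pair of $M$ that sits in the interval $(i', i)$ (respectively $(j, j')$). Handling this cleanly amounts to choosing the ``nearest'' witness $h$ (respectively $\ell$) so that the resulting modification stays alternating; the role of conditions~4 and~5 is precisely to certify the existence of this witness, and it is this combinatorial delicacy—rather than any geometric subtlety—that drives the proof.
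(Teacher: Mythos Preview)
Your necessity direction matches the paper's in outline, with the same bookkeeping in conditions~4 and~5 that you flag as delicate.

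The sufficiency direction has two gaps. First, the lemma is existential: the five conditions involve only $i$, $j$ and $\b{t}$, and characterize when there \emph{exists} an $\arc$-alternating matching $M$ containing $(i,j)$ whose normal cone contains~$\b{t}$. You write ``assume that all five conditions hold and that some $M'$ achieves $\dotprod{\b{t}}{\chi(M')} > \dotprod{\b{t}}{\chi(M)}$'' as if $M$ were given, but different matchings containing $(i,j)$ have different normal cones, and nothing in your proposal says which $M$ to take. The paper constructs $M$ explicitly by concatenating a $\b{t}$-optimal matching on $[a,i[$, the pair $(i,j)$, and a $\b{t}$-optimal matching on $]j,b]$; conditions~4 and~5 are exactly what is needed to glue these pieces compatibly.

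Second, your induction on $|M \symdif M'|$ via \cref{lem:matchingUnionLemma} has no descent. When you redecompose the multiset $M \cup M'$ as $M_3 \cup M_4$, the equality $\chi(M_3)+\chi(M_4)=\chi(M)+\chi(M')$ does force one of $M_3,M_4$ to beat $M$ in direction~$\b{t}$, but nothing guarantees that it has smaller symmetric difference with~$M$. The paper avoids this entirely: since $\chi(M)$ is a vertex, $\b{t}$ lies in its normal cone if and only if $\dotprod{\b{t}}{\chi(M)} \ge \dotprod{\b{t}}{\chi(M')}$ for every \emph{edge}-neighbor $\chi(M')$, and by \cref{prop:elemPropShardPolytope}\,\eqref{it:edgesShardPolytope} these are exactly the $M'$ with $|M \symdif M'|=2$. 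So one only needs to check the six possible shapes of $M \symdif M'$ (namely $\{i,i'\}$, $\{j,j'\}$, $\{i,j\}$, $\{i',j'\}$ with $i<j'<i'<j$, $\{i,j'\}$ with $j'=b_{m-1}$, and $\{i',j\}$ with $i'=a_{m+1}$), which is a finite case analysis with no induction.
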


\begin{proof}
Let $M'$ and $M''$ be the alternating matchings maximized by $\b{t}$ in~$[a,i[$ and~$]j,b]$ respectively, and let $M \eqdef \{M',i,j,M''\} = \{a_1 < b_1 < \dots < a_m = i < b_m = j <\dots, a_r, b_r\}$.
We claim that the vector~$\b{t}$ belongs to the normal cone of~$\shardPolytope$ corresponding to~$M$.
 
Indeed, if this was not the case, there would be an edge of $\shardPolytope$ connecting the characteristic vectors of two $\arc$-alternating matchings~$M$ and~$M'$ such that $\tau(M') > \tau(M)$ (recall that we denote~$\tau(M) \eqdef \dotprod{\b{t}}{\chi(M)}$) and $(i,j)$ is not an pair of~$M'$ (either $\{i,j\} \nsubseteq M'$ or there is $k \in M$ with $i < k < j$).
By \cref{prop:elemPropShardPolytope}\,\eqref{it:edgesShardPolytope}, we have~$|M \symdif M'| = 2$.
We distinguish the following cases:
\begin{itemize}
\item If $M \symdif M' = \{i,i'\}$ with $i' \in \{a\} \cup A$, then we must have $b_{m-1} < i' < b_m = j$. Then $\tau(M')-\tau(M) = \b{t}_{i'}-\b{t}_i$. If $i < i' < j$ then $\b{t}_{i'} \le \b{t}_i$. If $i' < i$, then either $\b{t}_{i'} \le \b{t}_i$ or there is $h \in B$ such that $\b{t}_{i'} > \b{t}_i \ge \b{t}_h$ which would contradict the maximality of $M'$ in~$[a,i[$, as $M' \cup \{i',h\}$ increases the value in direction~$\b{t}$.
  
\item An analogous reasoning works for the case $M \symdif M'=\{j,j'\}$ with $j'\in B\cup\{b\}$.

\item If $M \symdif M' = \{i,j\}$, then $\tau(M')-\tau(M) = \b{t}_{j}-\b{t}_i \le 0$.

\item If $M \symdif M' = \{i',j'\}$ with $i' \in \{a\} \cup A \ssm \{i\}$ and $j' \in B \cup \{b\} \ssm \{j\}$, then $i < j' < i' < j$. We have $\tau(M')-\tau(M) = \b{t}_{i'}-\b{t}_{j'} \ge 0$.
  
\item If $M \symdif M' = \{i,j'\}$ with $j' \in B \cup \{b\} \ssm \{j\}$, then $j' = b_{m-1}$ and $\tau(M')-\tau(M) = \b{t}_{b_{m-1}}-\b{t}_{i}$. Note that $\b{t}_{b_{m-1}} \le \b{t}_{a_{m-1}}$ (by the maximality of $M'$). If $\b{t}_{a_{m-1}} \le \b{t}_i$ then $\tau(M')-\tau(M) \le 0$. Otherwise, there is $a_{m-1} < h < i$ with $h \in B$ such that $\b{t}_i \ge \b{t}_h$. We must have $\b{t}_h \ge \b{t}_{b_{m-1}}$ (by the maximality of $M'$), and hence $\b{t}_i \ge \b{t}_{b_{m-1}} \le 0$.
    
\item  An analogous reasoning works for the case $M \symdif M'=\{i',j\}$ with $i'\in \{a\} \cup A\ssm\{i\}$.
\end{itemize}

The same case analysis also shows that all the conditions are necessary.
Indeed, assume that  $M =\{a_1 < b_1 < \dots < a_m = i < b_m = j <\dots, a_r, b_r\}$ is an alternating matching such that~$\chi(M)$ maximizes~$\b{t}$.
We distinguish the following cases:
\begin{itemize}
\item If $\b{t}_i< \b{t}_j$ then $\tau(M \ssm \{i,j\}) > \tau(M)$.
\item If there is $i < k < j$ with $k \in A$ and $\b{t}_i < \b{t}_k$, then $\tau(M \cup \{k\} \ssm \{i\}) > \tau(M)$, and if there is $k \in B$ with $\b{t}_k < \b{t}_j$, then $\tau(M \cup \{k\} \ssm \{j\}) > \tau(M)$.
\item If there are $i < j' < i' < j$ with $i' \in A$ and $j' \in B$ and $\b{t}_{i'} > \b{t}_{j'}$, then ${\tau(M \cup \{i',j'\}) > \tau(M)}$.
\item If there is some $a \le i' < i$ with $i' \in \{a\} \cup A$ and $\b{t}_{i'} > \b{t}_i$ such that there is no $i' < h < i$ with $h \in B$ such that $\b{t}_i \ge \b{t}_h$, then $i' \notin M$, and $\tau(M \cup \{i'\} \ssm \{i\}) > \tau(M)$.
\item If there is some $j < j' \le b$ with $j' \in B \cup \{b\}$ and $\b{t}_{j'} < \b{t}_j$ such that there is no $j < \ell < j'$ with $\ell \in A$ such that $\b{t}_\ell \ge \b{t}_j$, then $\tau(M \cup \{j'\} \ssm \{j\}) > \tau(M)$.
\qedhere
\end{itemize}    
\end{proof}

Note that, together with the following observation, this allows for a precise characterization of the normal cones of the edges of $\shardPolytope$, and hence could be used to give an alternative proof for \cref{prop:shardPolytopeFan}.

\begin{lemma}
\label{lem:normalconeedge}
Let $\arc \eqdef (a, b, A, B) \in \arcs_n$ and $1 \le i < j \le n$.
The vector~$\b{t} \in \R^n$ is in the normal cone of an edge in direction~$\b{e}_i - \b{e}_j$ if and only if $\b{t}_i = \b{t}_j$ and
\begin{itemize}
 \item if $i \in \{a\} \cup A$ and $j \in B \cup \{b\}$, $\b{t}$ belongs to the normal cone of an alternating matching containing the pair~$(i,j)$,
 \item if $i \in B \cup \{b\}$ and $j \in \{a\} \cup A$, there are $a'<i<j<b'$ with~${a' \in \{a\} \cup A}$~and~${b' \in B \cup \{b\}}$ such that $\b{t}$ belongs to the normal cone of an alternating matching containing the pair~$(a',b')$ (and hence also of an alternating matching containing the pairs~$(a',i)$ and $(j,b')$),
 \item if $i, j \in A \cup \{a\}$, there is some $i<j<b'$ with $b' \in B \cup \{b\}$ such that $\b{t}$ belongs to the normal cone of an alternating matching containing the pair~$(i,b')$ (and hence also of an alternating matching containing the pair~$(j,b')$),
 \item if $i, j \in B \cup \{b\}$, there is some $a'<i<j$ with $a' \in \{a\} \cup A$ such that $\b{t}$ belongs to the normal cone of an alternating matching containing the pair~$(a',j)$ (and hence also of an alternating matching containing the pair~$(a',i)$).
\end{itemize}
\end{lemma}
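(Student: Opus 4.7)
The plan is to reduce the statement to \cref{lem:normalconematch} via the elementary fact that a vector $\b{t}$ lies in the normal cone of an edge of a polytope if and only if it lies in the normal cones of both endpoints of that edge. Since any edge of $\shardPolytope$ in direction $\b{e}_i - \b{e}_j$ joins characteristic vectors $\chi(M)$ and $\chi(M')$ with $\chi(M)-\chi(M')=\pm(\b{e}_i-\b{e}_j)$, orthogonality of $\b{t}$ to this direction forces $\b{t}_i=\b{t}_j$. This gives the necessary condition displayed in the lemma in all four cases.

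Next I would enumerate the edges of $\shardPolytope$ whose direction is $\b{e}_i-\b{e}_j$. By \cref{prop:elemPropShardPolytope}\,\eqref{it:edgesShardPolytope}, such an edge corresponds to a pair $\{M,M'\}$ of $\arc$-alternating matchings with $|M \symdif M'| = 2$. A case analysis on $M\symdif M'$, parallel to the proof of \cref{lem:matchingUnionLemma}, yields exactly four types of edges, one per bullet of the lemma:
\begin{itemize}
\item if $i\in\{a\}\cup A$ and $j\in B\cup\{b\}$, then $M\symdif M'=\{i,j\}$ with one matching containing the pair $(i,j)$ and the other obtained by removing it (case (1) of \cref{lem:matchingUnionLemma});
\item if $i\in B\cup\{b\}$ and $j\in\{a\}\cup A$, then (case (4)) there exist $a'<i<j<b'$ with $a'\in\{a\}\cup A$ and $b'\in B\cup\{b\}$ such that one matching contains the pair $(a',b')$ while the other contains both pairs $(a',i)$ and $(j,b')$;
\item if $i,j\in\{a\}\cup A$, then (case (3)) there exists $b'>j$ with $b'\in B\cup\{b\}$ such that one matching contains $(i,b')$ and the other contains $(j,b')$;
\item if $i,j\in B\cup\{b\}$, then (case (2)) there exists $a'<i$ with $a'\in\{a\}\cup A$ such that one matching contains $(a',i)$ and the other contains $(a',j)$.
\end{itemize}

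I would then apply \cref{lem:normalconematch} to each endpoint of each type of edge. The forward direction is immediate: if $\b{t}$ lies in the normal cone of the edge, it lies in the normal cones of both endpoints, which by \cref{lem:normalconematch} yields the stated condition on the associated pair. The converse direction requires showing that membership in the normal cone of just one of the two endpoints, combined with $\b{t}_i=\b{t}_j$, already forces membership in the normal cone of the other; this is the content of the parenthetical \emph{``and hence also of''} clauses.

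The main obstacle is precisely establishing these parenthetical implications. This amounts to checking that the five conditions of \cref{lem:normalconematch} for the pair associated to one endpoint, together with $\b{t}_i=\b{t}_j$, imply the five conditions for the pair(s) associated to the other endpoint. For instance, for the second bullet one has to show that if the conditions hold for $(a',b')$, then they hold simultaneously for $(a',i)$ and for $(j,b')$; this requires partitioning ${]a',b'[}$ into ${]a',i[}\cup\{i,j\}\cup{]j,b'[}$, redistributing the inequalities accordingly, and reindexing the lookback/lookahead conditions at $i'<a'$ and $j'>b'$ using $\b{t}_i=\b{t}_j$. The other bullets require entirely analogous but lighter bookkeeping, since only one pair is involved on each side of the edge.
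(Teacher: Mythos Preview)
Your approach is correct and matches the paper's treatment: the paper states this lemma as an ``observation'' following \cref{lem:normalconematch} without giving an explicit proof, and your plan to reduce to that lemma via the edge classification of \cref{prop:elemPropShardPolytope}\,\eqref{it:edgesShardPolytope} and the four cases of \cref{lem:matchingUnionLemma} is precisely the intended argument.

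One simplification worth noting: the ``main obstacle'' you describe---verifying the parenthetical \emph{hence-also} clauses by re-checking the five conditions of \cref{lem:normalconematch} for the other endpoint---is lighter than you suggest. If $\b{t}$ lies in the normal cone of~$\chi(M)$ and $\b{t}_i=\b{t}_j$, and $M'$ is the $\arc$-alternating matching with $M\symdif M'=\{i,j\}$, then $\dotprod{\b{t}}{\chi(M)}=\dotprod{\b{t}}{\chi(M')}$ directly, so $\chi(M')$ also maximizes~$\b{t}$ and hence $\b{t}$ lies in the normal cone of~$\chi(M')$ as well. No bookkeeping on the five conditions is required. The only thing to check is that the modified matching $M'$ is a valid $\arc$-alternating matching containing the claimed pair(s), which is immediate from your description of the four edge types (and from the fact that $(i,j)$, respectively the ambient pair, leaves the required gap in~$M$).
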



\subsection{Vertex and facet descriptions of Minkowski sums of shard polytopes}
\label{subsec:vertexFacetDescriptionsShardSumotopes}

We have seen in \cref{prop:elemPropShardPolytope}\,(\ref{it:verticesShardPolytope}--\ref{it:facetsShardPolytope}) that \cref{prop:shardPolytope} gives the vertex and facet descriptions of a single shard polytope.
In this section we provide explicit vertex and facet descriptions of the Minkowski sums of shard polytopes considered in \cref{coro:MinkowskiSumShardPolytopes}.
The vertex description requires the following definition motivated by \cref{lem:normalconematch}.

\begin{definition}
\label{def:vertexMaximizingDirection}
Fix a vector~$\b{t} \in \R^n$.
For any arc~$\arc \eqdef (a, b, A, B) \in \arcs_n$, define~${\b{v}(\b{t}, \arc) \in \{-1, 0, 1\}^n}$ as the point with $j$th coordinate
\begin{enumerate}[$\quad\bullet$]
\item $1$ if~$j \in \{a\} \cup A$ and
	\begin{enumerate}[$\circ$]
	\item for all $a \le i < j$ with $i \in \{a\} \cup A$ and $\b{t}_i > \b{t}_j$, there is~$i < h < j$ with~$h \in B$~and~$\b{t}_h < \b{t}_j$,
	\item there is~$j < k \le b$ with $k \in B \cup \{b\}$ and $\b{t}_j > \b{t}_k$, and for all~$j < \ell < k$, if~$\ell \in A$ then~${\b{t}_j > \b{t}_\ell}$.
	\end{enumerate}
\item $-1$ if~$j \in B \cup \{b\}$ and
	\begin{enumerate}[$\circ$]
	\item \mbox{there is~$a \le i < j$ with $i \in \{a\} \cup A$ and $\b{t}_i > \b{t}_j$, and for all~$i < h < j$, if~$h \in B$~then~$\b{t}_h > \b{t}_j$},
	\item for all~$j < k \le b$ with $k \in B \cup \{b\}$ and $\b{t}_j > \b{t}_k$, there is~$j < \ell < k$ with~$\ell \in A$ and $\b{t}_j < \b{t}_\ell$.
	\end{enumerate}
\end{enumerate}
For any arc ideal~$\arcs$, define~$\b{v}(\b{t}, \arcs) \eqdef \sum_{\arc \in \arcs} \b{v}(\b{t}, \arc)$.
\end{definition}

\begin{proposition}
\label{prop:vertexMaximizingDirection}
For any vector~$\b{t} \in \R^n$ and any arc~$\arc \in \arcs_n$ (resp.~arc ideal~$\arcs \subseteq \arcs_n$), the point~$\b{v}(\b{t}, \arc)$ (resp.~$\b{v}(\b{t}, \arcs)$) is a vertex of the shard polytope~$\shardPolytope$ (resp.~quotientope~$\shardPolytope[\arcs]$) maximizing the direction~$\b{t}$.
\end{proposition}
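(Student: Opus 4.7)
The first step is a standard reduction. For a Minkowski sum of polytopes, the face (hence in particular the vertex, if there is a unique one) maximizing a given direction is the Minkowski sum of the faces of the summands maximizing that direction. Hence the identity $\b{v}(\b{t},\arcs) = \sum_{\arc \in \arcs} \b{v}(\b{t},\arc)$ built into Definition~\ref{def:vertexMaximizingDirection} immediately reduces the statement for~$\shardPolytope[\arcs]$ to the statement for a single shard polytope~$\shardPolytope$. So from now on I would focus on a fixed arc~$\arc \eqdef (a,b,A,B)$.

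By Proposition~\ref{prop:elemPropShardPolytope}\,\eqref{it:verticesShardPolytope}, the vertices of~$\shardPolytope$ are exactly the characteristic vectors~$\chi(M)$ of~$\arc$-alternating matchings~$M$. Since $\chi(M)$ has $j$th coordinate~$+1$ when $j \in \{a\}\cup A$ is a left endpoint of a pair of~$M$, $-1$ when $j \in B \cup \{b\}$ is a right endpoint of a pair of~$M$, and $0$ otherwise, the proposition reduces to showing the following: there is an $\arc$-alternating matching $M_\b{t}$ whose characteristic vector maximizes~$\tau(M) \eqdef \dotprod{\b{t}}{\chi(M)}$ over all $\arc$-alternating matchings, and whose left- and right-endpoints are precisely the positions for which Definition~\ref{def:vertexMaximizingDirection} assigns the value~$+1$ and $-1$, respectively.

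The plan is to use Lemma~\ref{lem:normalconematch} as the key technical tool. In the forward direction, pick any optimal $M_\b{t}$ and let $(i,j)$ be one of its pairs. The five conditions of Lemma~\ref{lem:normalconematch} applied to this pair translate, after a direct inspection, into exactly the two conditions in Definition~\ref{def:vertexMaximizingDirection} that force the $i$th coordinate of~$\b{v}(\b{t},\arc)$ to equal~$+1$ (no better left-endpoint to the left of~$i$ and the existence of a compatible right partner~$k$); a symmetric reading gives the~$-1$ clause at position~$j$. Positions~$j$ outside any pair of~$M_\b{t}$ are handled by observing that violating one of the two clauses produces a strictly better alternating matching by a single swap or pair insertion, as in the final paragraph of the proof of Lemma~\ref{lem:normalconematch}.

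In the converse direction, I would construct an optimal matching by a greedy left-to-right sweep of~$[a,b]$: open a new pair at each position $j \in \{a\} \cup A$ satisfying the two~$+1$-clauses, and close the currently open pair at the next position $j \in B \cup \{b\}$ satisfying the two dual~$-1$-clauses. The resulting $M_\b{t}$ then satisfies $\chi(M_\b{t}) = \b{v}(\b{t},\arc)$ by construction, and is $\tau$-maximal by the exchange argument just described. The main obstacle will be this converse: the conditions in Definition~\ref{def:vertexMaximizingDirection} are phrased position-by-position, and one has to verify that the~$+1$ and~$-1$ positions alternate correctly and can be paired into a single coherent alternating matching, rather than producing conflicting local choices. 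I expect this to reduce, after grouping the non-zero positions of~$\b{v}(\b{t},\arc)$ into maximal intervals of activity, to the same four local exchange moves appearing in Lemma~\ref{lem:matchingUnionLemma}, which were already used to analyze adjacency in~$\shardPolytope$ in the proof of Proposition~\ref{prop:shardPolytopeFan}.
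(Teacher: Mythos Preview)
Your approach is essentially the same as the paper's: reduce to a single shard polytope via the standard Minkowski sum property, then invoke Lemma~\ref{lem:normalconematch}. The paper's entire proof is two sentences (``For the shard polytope~$\shardPolytope$, it is a direct consequence of \cref{lem:normalconematch}. The result for the quotientope~$\shardPolytope[\arcs]$ follows by standard properties of Minkowski sums\ldots''), so the verification you flag as the ``main obstacle'' --- that the $+1$ and $-1$ positions prescribed by Definition~\ref{def:vertexMaximizingDirection} actually alternate and assemble into a single coherent $\arc$-alternating matching --- is simply left implicit there. Your greedy sweep and exchange argument is a reasonable way to make that step explicit; note that the strict inequalities in Definition~\ref{def:vertexMaximizingDirection} (versus the non-strict ones in Lemma~\ref{lem:normalconematch}) are exactly the tie-breaking convention that singles out one specific optimal vertex when~$\b{t}$ lies on a wall, which resolves the ``conflicting local choices'' issue you anticipate.
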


\begin{proof}
For the shard polytope~$\shardPolytope$, it is a direct consequence of \cref{lem:normalconematch}.
The result for the quotientope~$\shardPolytope[\arcs]$ follows by standard properties of Minkowski sums recalled in \cref{subsec:fansPolytopes}.
\end{proof}

Applying \cref{prop:vertexMaximizingDirection} to a vector~$\b{t} \in \R^n$ with distinct coordinates yields the vertex description of the quotientope~$\shardPolytope[\arcs]$ of \cref{coro:MinkowskiSumShardPolytopes}.

\begin{corollary}
For any arc ideal~$\arcs \subseteq \arcs_n$, the vertices of the quotientope~$\shardPolytope[\arcs]$ are given by~$\b{v}(\pi^{-1}, \arcs)$, for a set of permutations~$\pi$ representing the congruence classes of~$\equiv_\arcs$ (\eg the minimal permutation of each congruence class).
\end{corollary}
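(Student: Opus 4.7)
\medskip
\noindent
\textbf{Proof plan.}
The plan is to combine three ingredients already in place: \cref{coro:MinkowskiSumShardPolytopes}, \cref{thm:quotientFanGlueing}, and \cref{prop:vertexMaximizingDirection}. By \cref{coro:MinkowskiSumShardPolytopes}, the normal fan of the Minkowski sum~$\shardPolytope[\arcs]$ is the quotient fan~$\Fan_\arcs$. Vertices of a polytope correspond bijectively to the maximal cones of its normal fan, and by \cref{thm:quotientFanGlueing} the chambers of~$\Fan_\arcs$ are exactly the unions of the braid chambers~$\polytope{C}(\sigma)$ over the congruence classes of~$\equiv_\arcs$. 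Hence the vertices of~$\shardPolytope[\arcs]$ are in canonical bijection with the congruence classes of~$\equiv_\arcs$, and it suffices to identify, for each class~$X$ with any representative~$\pi \in X$, the vertex of~$\shardPolytope[\arcs]$ attached to~$X$.

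The first key step is the elementary observation that the vector~$\pi^{-1} \in \R^n$ with $i$th coordinate~$\pi^{-1}(i)$ lies (up to translation along~$\R\one$, which does not affect the maximizer of a linear functional on a polytope contained in a hyperplane orthogonal to~$\one$) in the relative interior of~$\polytope{C}(\pi)$. Indeed, setting~$\b{x} \eqdef \pi^{-1}$ gives~$\b{x}_{\pi_k} = \pi^{-1}(\pi_k) = k$, so~$\b{x}_{\pi_1} < \dots < \b{x}_{\pi_n}$ strictly. Consequently~$\pi^{-1}$ lies in the interior of the unique chamber of~$\Fan_\arcs$ corresponding to the class~$X$ of~$\pi$, and this chamber depends only on the class, not on the chosen representative. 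The second key step is then to invoke \cref{prop:vertexMaximizingDirection} with~$\b{t} = \pi^{-1}$: since~$\pi^{-1}$ has pairwise distinct coordinates, it is generic for~$\Fan_\arcs$, hence singles out a unique vertex of~$\shardPolytope[\arcs]$, which the proposition identifies as~$\b{v}(\pi^{-1}, \arcs)$. Letting~$\pi$ run over a system of representatives of the congruence classes of~$\equiv_\arcs$, the directions~$\pi^{-1}$ visit the interior of every chamber of~$\Fan_\arcs$, so~$\b{v}(\pi^{-1}, \arcs)$ enumerates each vertex of~$\shardPolytope[\arcs]$ exactly once.

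I do not expect a serious obstacle, as all the combinatorial content has been absorbed upstream: \cref{prop:vertexMaximizingDirection} already packages, summand by summand, the $\arc$-alternating matching selected by~$\pi^{-1}$, and standard Minkowski-sum calculus glues these selections into a single vertex of~$\shardPolytope[\arcs]$. The only point that merits care is the independence of~$\b{v}(\pi^{-1}, \arcs)$ from the representative~$\pi$ within its class; this is automatic from the paragraph above, since all such~$\pi^{-1}$ share the same chamber of~$\Fan_\arcs$ and therefore induce the same maximizing vertex in each summand~$\shardPolytope$.
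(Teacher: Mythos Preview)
Your proposal is correct and follows essentially the same approach as the paper's proof: both argue that the direction~$\pi^{-1}$ lies in the braid chamber~$\polytope{C}(\pi)$, hence in the chamber of~$\Fan_\arcs$ corresponding to the $\equiv_\arcs$-class of~$\pi$, and then invoke \cref{prop:vertexMaximizingDirection} to identify the corresponding vertex. Your version is somewhat more detailed (explicitly verifying~$\b{x}_{\pi_k}=k$, addressing the translation by~$\R\one$, and checking independence from the representative), but the underlying argument is the same.
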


\begin{proof}
Recall that for any permutation~$\pi$, the direction~$\pi^{-1}$ belongs to the cone~$\polytope{C}(\pi)$ of the braid fan.
Therefore, the vertex~$\b{v}(\pi^{-1}, \arcs)$ of~$\shardPolytope[\arcs]$ maximizing the direction~$\pi^{-1}$ corresponds to the $\equiv_\arcs$-congruence class of~$\pi$.
The set of all vertices of~$\shardPolytope[\arcs]$ is thus obtained by choosing a collection of representatives of the congruence classes of~$\equiv_\arcs$.
\end{proof}

We now give the facet description of the quotientope~$\shardPolytope[\arcs]$.
Note that we already obtained a description in terms of inequalities in \cref{prop:RHStoSP}, passing through decompositions of shard polytopes as Minkowski sums and differences of faces of the standard simplex.
Here, we simply apply \cref{prop:vertexMaximizingDirection} to the vectors~$\one_R$ for~$\varnothing \ne R \subsetneq [n]$, which motivates the following definition.

\begin{definition}
\label{def:h}
Fix a proper subset~$\varnothing \ne R \subsetneq [n]$.
For any arc~$\arc \eqdef (a, b, A, B) \in \arcs_n$, define~$h(R, \arc)$ as the number of pairs~$1 \le r < s \le n$ with~$r \in (\{a\} \cup A) \cap R$ and~$s \in (B \cup \{b\}) \ssm R$, and such that $\ell \notin B \symdif R$ for any~$r < \ell < s$.
For any arc ideal~$\arcs$, define~$h(R, \arcs) \eqdef \sum_{\arc \in \arcs} h(R, \arc)$.
\end{definition}

\begin{proposition}
\label{prop:h}
For any proper subset~$\varnothing \ne R \subsetneq [n]$, the value~$h(R, \arc)$ (resp.~$h(R, \arcs)$) is the maximum of the scalar product~$\dotprod{\one_R}{\b{x}}$ over the shard polytope~$\shardPolytope$ (resp.~quotientope~$\shardPolytope[\arcs]$).
\end{proposition}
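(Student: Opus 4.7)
The plan is to leverage \cref{prop:vertexMaximizingDirection}, which provides an explicit vertex $\b{v}(\one_R, \arc)$ maximizing the direction $\one_R$ over the shard polytope $\shardPolytope$. By definition of the normal fan, this gives
\[
\max_{\b{x} \in \shardPolytope} \dotprod{\one_R}{\b{x}} = \dotprod{\one_R}{\b{v}(\one_R, \arc)},
\]
so the real work is to evaluate this scalar product combinatorially and match it with $h(R,\arc)$.

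The first step is to specialize \cref{def:vertexMaximizingDirection} to $\b{t} = \one_R$, whose coordinates lie in $\{0,1\}$. A quick inspection of the two conditions then shows:
\begin{itemize}
\item $v_j = +1$ forces $j \in (\{a\} \cup A) \cap R$ (condition (ii) requires some strictly smaller coordinate to the right, which with $0/1$ values means $j \in R$), and in that case the non-vacuous content of the definition reduces to the existence of some $k > j$ with $k \in (B \cup \{b\}) \ssm R$ such that every $\ell \in A \cap {]j,k[}$ satisfies $\ell \notin R$;
\item $v_j = -1$ forces $j \in (B \cup \{b\}) \ssm R$, so such positions contribute $0$ to $\dotprod{\one_R}{\b{v}(\one_R,\arc)}$.
\end{itemize}
Consequently $\dotprod{\one_R}{\b{v}(\one_R,\arc)} = |\{j : v_j = +1\}|$, and the whole problem becomes counting the positions $j$ in the first bullet.

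The second and key step is to identify this count with $h(R,\arc)$ via an explicit bijection. Given $j \in (\{a\} \cup A) \cap R$ with $v_j = +1$, let $s_j$ be the \emph{smallest} $k > j$ satisfying the condition in the first bullet. I claim that $(j, s_j)$ is a counted pair in the sense of \cref{def:h}: the $A$-side condition ($\ell \in A \cap {]j,s_j[} \Rightarrow \ell \notin R$) is built in, and if the $B$-side condition failed, i.e.\ some $\ell \in B \cap {]j, s_j[}$ had $\ell \notin R$, then $\ell$ itself would be an admissible $k$ strictly smaller than $s_j$, contradicting minimality. Conversely, any counted pair $(r,s)$ trivially realizes the existence condition and hence witnesses $v_r = +1$; moreover, one checks (as in the discussion preceding \cref{thm:inductiveMinkowskiSum}-type arguments) that each $r \in (\{a\}\cup A)\cap R$ belongs to at most one counted pair, because two such pairs $(r,s_1)$, $(r,s_2)$ with $s_1 < s_2$ would place $s_1 \in B \cap R$ against the definition ($s_1 \notin R$). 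The map $j \mapsto (j,s_j)$ is therefore a bijection, yielding $\dotprod{\one_R}{\b{v}(\one_R,\arc)} = h(R,\arc)$.

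The main obstacle is the careful case analysis underlying the simplification of \cref{def:vertexMaximizingDirection} when $\b{t}$ has only values in $\{0,1\}$; one must verify that conditions phrased with strict inequalities among general real coordinates indeed reduce to the clean combinatorial statements above, including the vacuity of several clauses. Once this is done, the quotientope case is automatic: since $\shardPolytope[\arcs] = \sum_{\arc \in \arcs} \shardPolytope$ and support functions are additive under Minkowski sums (\cref{subsec:fansPolytopes}), we get
\[
\max_{\b{x} \in \shardPolytope[\arcs]} \dotprod{\one_R}{\b{x}} = \sum_{\arc \in \arcs} \max_{\b{x} \in \shardPolytope} \dotprod{\one_R}{\b{x}} = \sum_{\arc \in \arcs} h(R,\arc) = h(R,\arcs),
\]
which completes the proof.
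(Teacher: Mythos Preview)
Your proof is correct and follows essentially the same approach as the paper: invoke \cref{prop:vertexMaximizingDirection} to reduce the maximum to $\dotprod{\one_R}{\b{v}(\one_R,\arc)}$, specialize \cref{def:vertexMaximizingDirection} to the $\{0,1\}$-valued vector $\one_R$, and then match the surviving $+1$ coordinates bijectively with the pairs in \cref{def:h} via the leftmost admissible $s$. The paper's proof is terser but the logic is identical; your explicit verification of the bijection (in particular, the uniqueness of the counted pair for each $r$) simply spells out what the paper leaves implicit.
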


\begin{proof}
For any~$\varnothing \ne R \subsetneq [n]$ and any arc~$\arc$, the maximum of the scalar product~$\dotprod{\one_R}{\b{x}}$ over~$\shardPolytope$ is given by~$\dotprod{\one_R}{\b{v}(\one_R, \arc)} = \sum_{r \in R} \b{v}(\one_R, \arc)_r$.
The description of \cref{def:vertexMaximizingDirection} ensures that for~$r \in R$, we have either~$\b{v}(\one_R, \arc)_r = 0$, or~$\b{v}(\one_R, \arc)_r = 1$ if $r \in \{a\} \cup A$ and there exists~$r < s \le b$ such that~$s \in (B \cup \{b\}) \ssm R$ and~$\ell \notin R \ssm B$ for all~$r < \ell < s$.
Considering for each such~$r$ the leftmost such~$s$, each non-vanishing coordinate of~$\b{v}(\one_R, \arc)$ in~$R$ can thus be associated to a pair~$1 \le r < s \le n$, where~$r \in (\{a\} \cup A) \cap R$ while~$s \in (B \cup \{b\}) \ssm R$ and such~$\ell \notin B \symdif R$ for all~$r < \ell < s$.
This yields the result for the shard polytope~$\shardPolytope$.
The result for the quotientope~$\shardPolytope[\arcs]$ follows by standard properties of Minkowski sums recalled in \cref{subsec:fansPolytopes}.
\end{proof}

\begin{corollary}
\label{prop:inequalitiesShardsumotopesOuterNormals}
For any arc ideal~$\arcs \subseteq \arcs_n$, the quotientope~$\shardPolytope[\arcs]$ is given by 
\[
\shardPolytope[\arcs]=\bigset{\b{x} \in \R^n}{\dotprod{\one}{\b{x}} = 0 \text{ and } \dotprod{\one_R}{\b{x}} \le h(R, \arcs) \text{ for all } \varnothing \ne R \subsetneq [n]}.
\]
\end{corollary}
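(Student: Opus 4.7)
The plan is to derive this corollary as an immediate consequence of Proposition \ref{prop:h} together with standard facts about support functions of polytopes whose normal fan coarsens the braid fan.

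First, I would observe that by Proposition \ref{prop:shardPolytope}, every shard polytope $\shardPolytope$ lies in the hyperplane $\hyp = \{\b{x} \in \R^n : \sum_{i \in [n]} \b{x}_i = 0\}$. Since the Minkowski sum of sets contained in a linear subspace stays in that subspace, $\shardPolytope[\arcs] \subseteq \hyp$, which yields the equation $\dotprod{\one}{\b{x}} = 0$.

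Next, I would recall from \cref{coro:MinkowskiSumShardPolytopes} that the normal fan of $\shardPolytope[\arcs]$ is the quotient fan $\Fan_\arcs$, which by construction coarsens the braid fan $\Fan_n$. In particular, every ray of $\Fan_\arcs$ is a ray of $\Fan_n$, hence of the form $\polytope{C}(R)$ for some proper subset $\varnothing \ne R \subsetneq [n]$, with representative vector $\one_R$ (or any positive multiple thereof; passing from $\ray(R) = |R|\one - n\one_R$ to $-n\one_R$ only changes orientation, and restricted to $\hyp$ the vectors $\one_R$ and $-\frac{1}{n}\ray(R)$ define the same halfspace). Since a polytope coincides with the intersection of its affine hull with the halfspaces defined by its support function on a generating set of the rays of its normal fan, we get
\[
\shardPolytope[\arcs] = \bigset{\b{x} \in \hyp}{\dotprod{\one_R}{\b{x}} \le \max_{\b{p} \in \shardPolytope[\arcs]} \dotprod{\one_R}{\b{p}} \text{ for all } \varnothing \ne R \subsetneq [n]}.
\]
By Proposition \ref{prop:h}, the right-hand maximum equals $h(R, \arcs)$, which gives the claimed description.

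There is essentially no obstacle: all the geometric content has been packaged into Proposition \ref{prop:h} (which in turn rests on the vertex description given by Proposition \ref{prop:vertexMaximizingDirection} and the standard fact that the support function of a Minkowski sum is the sum of support functions). The only mild care is to allow redundant inequalities: some of the halfspaces $\dotprod{\one_R}{\b{x}} \le h(R,\arcs)$ may not correspond to facets of $\shardPolytope[\arcs]$ (namely those whose ray $\polytope{C}(R)$ is not a ray of $\Fan_\arcs$, as characterized in \cref{lem:raysQuotientFan}), but since $\Fan_\arcs$ coarsens $\Fan_n$, these extra inequalities are automatically valid and simply redundant, so the equality of sets still holds.
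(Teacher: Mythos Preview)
Your proposal is correct and takes essentially the same approach as the paper, which treats this corollary as immediate from Proposition~\ref{prop:h} without spelling out a proof. Your added care about the affine hull $\hyp$, the coarsening of the braid fan, and the redundancy of non-facet inequalities is exactly the standard unpacking the paper leaves implicit.
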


The facet description of the quotientope~$\shardPolytope[\arcs]$ immediately follows from this statement and the description of the rays of the quotient fan given in \cref{lem:raysQuotientFan}.
To fit with the facet description of \cref{exm:LodayAsso,exm:HohlwegLangeAsso}, it is convenient to translate the outer normal description of \cref{prop:inequalitiesShardsumotopesOuterNormals} into an inner normal description (this is a transparent operation since the quotientope~$\shardPolytope[\arcs]$ lives in the hyperplane~$\dotprod{\one}{\b{x}} = 0$).

\begin{corollary}
\label{prop:facetsShardsumotopesInnerNormals}
For any arc ideal~$\arcs \subseteq \arcs_n$, the facets of the quotientope~$\shardPolytope[\arcs]$ are given by the inequalities $\dotprod{\one_R}{\b{x}} \ge -h([n] \ssm R, \arcs)$, for all proper subsets~$\varnothing \ne R \subsetneq [n]$ satisfying the conditions of \cref{lem:raysQuotientFan}.
\end{corollary}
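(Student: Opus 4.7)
The plan is to derive this facet description as a direct translation of Corollary~\ref{prop:inequalitiesShardsumotopesOuterNormals}, combined with the normal-fan interpretation given by Corollary~\ref{coro:MinkowskiSumShardPolytopes} and the ray description of Lemma~\ref{lem:raysQuotientFan}. There are really two separate things to argue: first, that the inequality $\dotprod{\one_R}{\b{x}} \ge -h([n]\ssm R,\arcs)$ is correct whenever it appears; second, that exactly the subsets $R$ satisfying the conditions of Lemma~\ref{lem:raysQuotientFan} give facet-defining inequalities.

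For the first point, observe that the quotientope~$\shardPolytope[\arcs]$ lies in the affine hyperplane $\dotprod{\one}{\b{x}}=0$, on which $\dotprod{\one_{[n]\ssm R}}{\b{x}} = \dotprod{\one}{\b{x}} - \dotprod{\one_R}{\b{x}} = -\dotprod{\one_R}{\b{x}}$. Applying this to the inequality $\dotprod{\one_{[n]\ssm R}}{\b{x}} \le h([n]\ssm R,\arcs)$ from Corollary~\ref{prop:inequalitiesShardsumotopesOuterNormals} and relabeling yields the stated inner-normal form. Equivalently, on the hyperplane $\dotprod{\one}{\b{x}}=0$, the vectors $\one_R$ and the ray representative $\ray(R) = |R|\one - n\one_R$ from Section~\ref{subsec:braidFanPermutahedron} are proportional (up to sign), so they define the same supporting hyperplane.

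For the second point, I would invoke Corollary~\ref{coro:MinkowskiSumShardPolytopes}, which states that the normal fan of $\shardPolytope[\arcs]$ is exactly the quotient fan~$\Fan_\arcs$. The facets of any polytope are in bijection with the rays of its normal fan, so the facet-defining inequalities of $\shardPolytope[\arcs]$ are indexed precisely by the rays of $\Fan_\arcs$. By Lemma~\ref{lem:raysQuotientFan}, these rays are the rays $\polytope{C}(R)$ of the braid fan $\Fan_n$ such that, for every $1 \le a < b \le n$, one has $(a, b, \varnothing, ]a,b[) \in \arcs$ whenever $a, b \in R$ and ${]a,b[} \cap R = \varnothing$, and $(a, b, ]a,b[, \varnothing) \in \arcs$ whenever $a, b \notin R$ and ${]a,b[} \subseteq R$. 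This is the condition appearing in the statement.

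Combining the two points, the inequalities $\dotprod{\one_R}{\b{x}} \ge -h([n]\ssm R, \arcs)$ coming from the outer-normal description are in bijection with all proper subsets $\varnothing \ne R \subsetneq [n]$, but exactly those satisfying the Lemma~\ref{lem:raysQuotientFan} conditions are tight (and hence facet-defining); the remaining ones are redundant because the corresponding rays of $\Fan_n$ are collapsed in the coarsening to $\Fan_\arcs$. The main (minor) obstacle is to confirm that the inequalities associated to collapsed rays are genuinely redundant rather than merely non-facet; this is immediate from the fact that the normal fan of $\shardPolytope[\arcs]$ equals $\Fan_\arcs$, so any functional $\one_R$ whose ray is not in $\Fan_\arcs$ must attain its maximum over the polytope on a face of codimension at least two and its inequality is implied by those associated to the neighboring rays of~$\Fan_\arcs$.
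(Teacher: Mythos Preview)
Your proof is correct and follows essentially the same route as the paper: translate the outer-normal description of Corollary~\ref{prop:inequalitiesShardsumotopesOuterNormals} into inner-normal form using $\dotprod{\one}{\b{x}}=0$, then invoke the fact that the normal fan of $\shardPolytope[\arcs]$ is the quotient fan $\Fan_\arcs$ (Corollary~\ref{coro:MinkowskiSumShardPolytopes}) together with the ray description of Lemma~\ref{lem:raysQuotientFan} to single out the facet-defining inequalities.

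One terminological slip: you write that ``exactly those satisfying the Lemma~\ref{lem:raysQuotientFan} conditions are tight (and hence facet-defining)''. In fact \emph{all} the inequalities are tight, since $h(R,\arcs)$ is by definition the maximum of $\dotprod{\one_R}{\b{x}}$ over the polytope (Proposition~\ref{prop:h}); what distinguishes the subsets satisfying Lemma~\ref{lem:raysQuotientFan} is that their inequalities are \emph{facet-defining}, i.e.\ the face where equality holds has codimension~$1$. Your final paragraph shows you understand this distinction, so this is only a wording issue.
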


\begin{example}
Following up \cref{exm:ZonoMinkowskiSum}, for a basic arc, we have
\begin{itemize}
\item $\b{v}(\b{t}, (i, i+1, \varnothing, \varnothing)) = \b{e}_i - \b{e}_{i+1}$ if ${\b{t}_i > \b{t}_{i+1}}$ (\ie if $\b{t}$ has a descent at~$i$), and~$0$ otherwise,
\item $h(R, (i, i+1, \varnothing, \varnothing)) = 1$ if~$i \in R$ and~$i+1 \notin R$, and~$0$ otherwise.
\end{itemize}
For the ideal of basic arcs~$\arcs_\textrm{rec} \eqdef \set{(i, i+1, \varnothing, \varnothing)}{i \in [n-1]}$, the parallelotope~${\shardPolytope[\arcs_\textrm{rec}]}$ has 
\begin{itemize}
\item a vertex $\b{v}(\b{t}, \arcs_\textrm{rec}) = \sum_{\b{t}_i > \b{t}_{i+1}} \b{e}_i - \b{e}_{i+1}$ for each recoil class,
\item two facets defined by~$0 \le \dotprod{\one_{[i]}}{\b{x}} \le 1$ for each~$i \in [n-1]$.
\end{itemize}
\end{example}

\begin{example}[Tamari]
\label{exm:LodayShardPolytope}
Following up \cref{exm:sylvesterCongruence,exm:noncrossingPartitions,exm:LodayAsso,exm:shardsAsso,exm:LodayAssoMinkowskiSum}, for an up arc, we have
\begin{itemize}
\item $\b{v}(\b{t}, (a, b, {]a,b[}, \varnothing)) = \b{e}_j - \b{e}_b$ where ${j \in [a,b]}$ maximal such that~${\b{t}_j = \max_{i \in [a,b]} \b{t}_j}$,
\item $h(R, (a, b, {]a,b[}, \varnothing)) = 1$ if~${[a,b[} \cap R \ne \varnothing$ and~$b \notin R$, and~$0$ otherwise.
\end{itemize}
For the ideal of up arcs~$\arcs_\textrm{sylv} = \set{(a, b, {]a,b[}, \varnothing)}{1 \le a < b \le n}$, the associahedron~$\shardPolytope[\arcs_\textrm{sylv}]$ has
\begin{itemize}
\item a vertex~$\b{v}(\b{t}, \arcs_\textrm{sylv})$ for each sylvester class, with coordinates $\b{v}(\b{t}, \arcs_\textrm{sylv})_j = (j-i) (k-j) - j$, where~$i = \max \big( \{0\} \cup \set{h \in {[1,j[}}{\b{t}_h > \b{t}_j} \big)$ and ${k = \min \big( \set{\ell \in {]j,n]}}{\b{t}_j < \b{t}_\ell}} \cup \{n+1\} \big)$,
\item a facet defined by~$\dotprod{\one_{[i,j]}}{\b{x}} \ge -(i-1)(j-i+1)$ for each interval $1 \le i \le j \le n$.
\end{itemize}
These descriptions show that~$\shardPolytope[\arcs_{\textrm{sylv}}]$ is the translate by the vector~$(-1, \dots, -n)$ of J.-L.~Loday's associahedron~$\Asso$ constructed in~\cite{ShniderSternberg,Loday} and described in \cref{exm:LodayAsso}.
Indeed, note that:
\begin{enumerate}[(i)]
\item Our formula for the vertices~$\b{v}(\b{t}, \arcs_\textrm{sylv})$ of~$\shardPolytope[\arcs_{\textrm{sylv}}]$ shows that they correspond to sylvester classes: $\b{t}$ and~$\b{t'}$ belong to the same chamber of the Tamari quotient fan if and only if the permutations~$\pi$ and~$\pi'$ such that~${\b{t}_{\pi(1)} < \dots < \b{t}_{\pi(n)}}$ and~${t'_{\pi'(1)} < \dots < t'_{\pi'(n)}}$ differ by repeated applications of the rewriting rule~$U i k V j W \equiv_\textrm{sylv} U k i V j W$ for~${i < j < k}$.
\item Sylvester classes are known to be labeled by binary trees: the direction~$\b{t}$ belongs to a binary tree~$T$ if and only if $\b{t}_i < \b{t}_j$ when $i$ is a descendant of $j$ in~$T$ (using the infix labeling of~$T$). In our formula for~$\b{v}(\b{t}, \arcs_\textrm{sylv})_j = (j-i) (k-j) - j$, we can then reinterpret~$(j-i)$ and~$(k-j)$ as the numbers of leaves in the left and right subtrees of~$j$ in~$T$, thus recovering J.-L.~Loday's vertex coordinates~\cite{Loday}, up to a global translation of vector~$(-1, \dots, -n)$.
\item Applying this translation, we also recover that the facet inequalities of J.-L.~Loday's associahedron are given by~$\dotprod{\one_{[i,j]}}{\b{x}} \ge -(i-1)(j-i+1) + \sum_{k = i}^j k = \binom{j-i+2}{2}$ for all intervals~$1 \le i \le j \le n$.
\item As already mentioned in \cref{exm:LodayAssoMinkowskiSum}, we recover the description of~$\Asso$ as the Minkowski sum of the faces of the standard simplex corresponding to intervals~\cite{Postnikov}.
\end{enumerate}
\end{example}

\begin{example}[Cambrian]
\label{exm:HohlwegLangeAssoVertexFacetDescription}
Following up \cref{exm:CambrianCongruences,exm:HohlwegLangeAsso,exm:HohlwegLangeAssoMinkowskiSum}, consider the $\arc$-Cambrian congruence of an arc ${\arc \eqdef (a, b, A, B)}$ and the corresponding arc ideal~$\arcs_\arc$ generated by~$\arc$.
The associahedron~$\shardPolytope[\arcs]$~has
\begin{itemize}
\item a vertex~$\b{v}(\b{t}, \arcs_\arc)$ for each $\arc$-Cambrian class, with $j$th coordinate given by
	\begin{enumerate}[$\circ$]
	\item if~$j \in \{a\} \cup A$, then
	\[
	\hspace{2cm} \b{v}(\b{t}, \arcs_\textrm{sylv})_j = (|{]i,j[} \cap A| + 1) (|{]j,k[} \cap A| + 1) - (j-a+1)
	\]
	where
	\begin{align*}
	\hspace{2cm} i & = \max \big( \{a-1\} \cup \set{h \in {[1,j[} \cap (\{a\} \cup A)}{\b{t}_h > \b{t}_j \text{ and } \b{t}_g > \b{t}_j \text{ for all } g \in {]h,j[} \cap B} \! \big), \\
	k & = \min \big( \! \set{\ell \in {]j,n]} \cap (A \cup \{b\})}{\b{t}_j < \b{t}_\ell \text{ and } \b{t}_j < \b{t}_m \text{ for all } m \in {]j,\ell[} \cap B} \cup \{b+1\} \big),
	\end{align*}
	\item if~$j \in B \cup \{b\}$, then
	\[
	\hspace{2cm} \b{v}(\b{t}, \arcs_\textrm{sylv})_j = b-a+2 - (|{]i,j[} \cap B| + 1) (|{]j,k[} \cap B| + 1) - (j-a+1)
	\]
	where
	\begin{align*}
	\hspace{2cm} i & = \max \big( \{a-1\} \cup \set{h \in {[1,j[} \cap (\{a\} \cup B)}{\b{t}_h < \b{t}_j \text{ and } \b{t}_g < \b{t}_j \text{ for all } g \in {]h,j[} \cap A} \! \big), \\
	k & = \min \big( \! \set{\ell \in {]j,n]} \cap (B \cup \{b\})}{\b{t}_j > \b{t}_\ell \text{ and } \b{t}_j > \b{t}_m \text{ for all } m \in {]j,\ell[} \cap A} \cup \{b+1\} \big).
	\end{align*}
	\end{enumerate}
\item a facet defined by~$\dotprod{\one_R}{\b{x}} \ge \binom{|R|+1}{2} - \sum_{r \in R} (r-a+1)$ for any subset~$\varnothing \ne R \subsetneq [a,b]$ described in \cref{exm:HohlwegLangeAsso}. Indeed, for any such~$R$ and any~$\arc' \in \arcs_\arc$ with endpoints~$a' < b'$, we have~$h([n] \ssm R, \arc') = 0$ if~$a' \in R$ or~$b' \notin R$, while~$h([n] \ssm R, \arc') = 1$ if~$a' \notin R$ and~$b' \in R$ since we count only the pair~$(r,s)$ given by~$r \eqdef \max \big( {[a',b']} \cap (\{a\} \cup A) \ssm R \big)$ and~$s \eqdef \min \big( {[a',b']} \cap (B \cup \{b\}) \cap R \big)$. Therefore, $h([n] \ssm R, \arcs_\arc)$ counts the number of pairs~$a \le a' < b' \le b$ such that~$a' \notin R$ and~$b' \in R$, which is given by~$\sum_{i \in [k]} r_i-a+1-i$ for~$R = \{r_1 < \dots < r_k\}$.
\end{itemize}
These descriptions show that~$\shardPolytope[\arcs_\arc]$ is the translate by the vector~$-\sum_{i \in [a,b]} (i-a+1) \, \b{e}_i$ of C.~Hohlweg and C.~Lange's associahedron~$\Asso[\arc]$ constructed in~\cite{HohlwegLange} and described in \cref{exm:HohlwegLangeAsso}.
Indeed, the connection between the facet descriptions given here and in \cref{exm:HohlwegLangeAsso} is immediate, and the reader familiar with the combinatorics of permutrees will recognize here the vertex description given in \cref{exm:HohlwegLangeAsso}.
\cref{fig:shardPolytopeSums3,fig:associahedra} provide illustrations when~$n = 3$ and~$n = 4$.
As mentioned in \cref{exm:HohlwegLangeAssoMinkowskiSum}, this Minkowski sum of Cambrian associahedra in terms of shard polytopes already appeared in the context of brick polytopes.
\end{example}

\begin{example}
Following up \cref{exm:weirdPermMinkowskiSum}, for the ideal of all arcs~$\arcs_n$, the polytope~$\shardPolytope[\arcs_n]$~has
\begin{itemize}
\item a vertex~$\b{v}(\b{t}, \arcs_n) = \Big[ 2^{n-1} \cdot \Big( \sum_{\substack{i < j \\ \b{t}_i > \b{t}_j}} 2^{i-j} - \sum_{\substack{j < k \\ \b{t}_j > \b{t}_k}} 2^{j-k} \Big) \Big]_{j \in [n]}$ for each permutation (note that the indices that appear in the sums are the inversions with~$j$),
\item a facet defined by~$\dotprod{\one_R}{\b{x}} \le \sum_{\substack{i < j \\ i \in R, \; j \notin R}} 2^{n+i-j}$ for each proper subset~$\varnothing \ne R \subsetneq [n]$.
\end{itemize}
As already mentioned in \cref{exm:weirdPermMinkowskiSum}, the resulting quotientope is not the permutahedron~$\Perm$.
Even when we translate its barycenter to the origin, its vertex set is not invariant by coordinate permutations.
See \cref{fig:shardPolytopeSums3,fig:weirdPermutahedron} for illustrations when~$n = 3$ and~$n = 4$.
\end{example}


\newpage
\addtocontents{toc}{ \vspace{.2cm} }
\bibliographystyle{alpha}
\bibliography{shardsumotopes}
\label{sec:biblio}

\end{document}